\title{Partial compactification of monopoles and metric asymptotics}
\author{Chris Kottke}
\address{Northeastern University\\Department of Mathematics}
\email{c.kottke@neu.edu}
\author{Michael Singer}
\address{University College London\\Department of Mathematics}
\email{michael.singer@ucl.ac.uk}
\begin{document}
\maketitle

\begin{abstract}
We construct a partial compactification of the moduli
space, $\monM_k$, of $\SU(2)$ magnetic monopoles on $\bbR^3$, wherein monopoles of
charge $k$ decompose into widely separated `monopole clusters' of lower
charge going off to infinity at comparable rates. The hyperK\"ahler metric
on $\monM_k$ has a complete asymptotic expansion up to the boundary, the leading term of which
generalizes the asymptotic metric discovered by Bielawski, Gibbons and
Manton in the case that each lower charge is 1.
\end{abstract}

\tableofcontents
\section{Introduction} \label{S:intro}

This paper is the first in a series aimed at studying the asymptotic
regions of the monopole moduli spaces and the behaviour of the $L^2$
metric in these regions.

Recall that an $\SU(2)$ monopole is a (gauge equivalence class) of
solutions of the {\em Bogomolny equations}
\begin{equation}\label{e3.16.5.15}
*F_A = \nabla_A\Phi,
\end{equation}
where $(A,\Phi)$ is a pair consisting of a connection $A$ on a
principal $\SU(2)$-bundle $P$ over $\RR^3$ and $\Phi$, the {\em Higgs field}, is a section
of the associated adjoint bundle $\ad(P)$.  The Bogomolny equations are
supplemented by assuming that the {\em Yang--Mills--Higgs} action is finite,
\begin{equation}\label{e4.16.5.15}
\int_{\RR^3} ( |F_A|^2 + |\nabla_A\Phi|^2) < \infty,
\end{equation}
and that
\begin{equation}\label{e5.16.5.15}
  |\Phi(z)| \smallto 1\mbox{ as }|z| \smallto\infty\mbox{ in }\RR^3,
\end{equation}
see below for further discussion.  Then \eqref{e5.16.5.15} entails that
the degree of $\Phi$ 
over a large sphere in $\RR^3$ is a positive integer $k$, the {\em magnetic
charge} (or monopole number).  By the device of framing
$(A,\Phi)$ at infinity, and restricting to the gauge group $\gauG_0$ of elements
of $\Aut(P)$ that approach the identity at infinity, the
{\em moduli space} $\monM_k$ of {\em framed monopoles of charge $k$}
is defined as the set of solutions of \eqref{e3.16.5.15}
satisfying \eqref{e4.16.5.15} and \eqref{e5.16.5.15}, divided by the
action of $\cG_0$.    It is known that $\monM_k$ is a smooth manifold
of real dimension $4k$, non-compact, but carrying a complete
riemannian $L^2$ metric $G_k$
\cite{AH,taubes1983stability,taubes1985min}.  More precisely, if
$(A,\Phi)$ 
represents a point $m$ of $\monM_k$, then 
\begin{equation}\label{e3.21.9.15}
T_m\monM_k = L^2\mbox{-}\Ker(L_{A,\Phi})
\end{equation}
where $L_{A,\Phi}$ is the linear operator
\begin{equation}\label{e4.21.9.15}
L_{A,\Phi}
: C^\infty(\RR^3, \Lambda\otimes \ad(P))
\longrightarrow 
C^\infty(\RR^3, \Lambda\otimes \ad(P)),
\end{equation}
and
\begin{equation}
L_{A,\Phi} = 
\begin{bmatrix} *\rd_A & -\rd_A \\
                                                -\rd_A^* &
                                                0 \end{bmatrix} +
                                              \ad(\Phi)\otimes \Id
\label{e4a.21.9.15}
\end{equation}
and $\Lambda = \Lambda^1\oplus \Lambda^0$.  The `top row' of this
operator is the linearization at $(A,\Phi)$ of the Bogomolny
equations; the bottom row is the Coulomb gauge fixing condition that 
$(a,\phi)$ is $L^2$-orthogonal to the tangent space of the gauge orbit
containing $(A,\Phi)$.  The Riemannian metric, $G_k$, on the
moduli space is defined by the formula
\begin{equation}\label{e2.21.9.15}
\|(a,\phi)\|_{G_k}^2 = \int_{\RR^3} |a|^2 + |\phi|^2
\end{equation}
for $(a,\phi)$ in the tangent space \eqref{e3.21.9.15}.
Here we have chosen the $\SU(2)$-invariant metric
$-\frac{1}{2}\tr(A^2)$ on the Lie algebra $\su(2)$.

The metric $G_k$ is hyperK\"ahler \cite{hitchin1987hyperkahler,AH}.
It is known that $G_1$ is the flat metric on $\RR^3\times S^1$ and
that $G_2$ is essentially the riemannian product of $\RR^3\times S^1$
and the famous Atiyah--Hitchin metric \cite{AH}.  For $k\geq 3$ it is
not feasible to compute $G_k$ explicitly but one may hope to 
gain a partial understanding of it 
asymptotically in terms of the metrics $G_{k_j}$ on moduli spaces of
lower charge.  

In order to explain this idea more carefully, recall \cite[Prop.\
3.8]{AH} about the asymptotic structure of $\monM_k$.  The following
statement uses the fact that a monopole $m \in \monM_k$ has a well-defined
centre of mass (cf.\ \S\ref{s1.29.10.15}); denote by $\moncM_k$ the
moduli space of monopoles centred at the origin, a submanifold of
$\monM_k$ of dimension $4k-3$. 

\begin{thm}[\cite{AH},Proposition~3.8\footnote{We have
  slightly rephrased the statement of this Proposition.}]
Given an infinite sequence of points of $\monM_k$, there exists a
subsequence $(m_\nu)$, a partition $k = k_0 + \cdots + k_N$ with
$k_i>0$, $i \geq 1$ (we allow $k_0 = 0$) and sequences of points $(z^i_\nu) \in \RR^3$ ($i=0,\ldots,N$)
such that
\begin{enumerate}
\item[(i)] the sequence $m_\nu(\cdot - z^i_\nu)$ 
%= T^i_\nu(m_{\nu})$, where $T^i_\nu$ is the translation $z \mapsto z - z^i_\nu$, 
converges weakly
(i.e. on compact subsets of $\RR^3$) to a $k_i$-monopole $m^i$,
 centered at the origin,
\item[(ii)] as $\nu \smallto \infty$, the $|z_\nu^iz_\nu^j| \smallto \infty$ for $i\neq j$ and the
  unit vectors
$$
\frac{\overrightarrow{z_{\nu}^iz_{\nu}^j}}{|z_\nu^iz_{\nu}^j|}
$$
converge in $\bbS^2$. Here we assume that $z^0_\nu \smallto z^0$ converges in $\bbR^3$, so $\abs{z^i_\nu} \smallto \infty$
for $i \geq 1$.
\end{enumerate}
\label{t1.16.5.15}\end{thm}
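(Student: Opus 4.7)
The strategy is a clustering argument based on the location of the zeros of the Higgs field (equivalently, the concentration loci of the curvature), combined with a Taubes--Uhlenbeck weak compactness theorem for solutions of the Bogomolny equation on compact subsets of $\RR^3$. For a representative $(A_\nu, \Phi_\nu)$ of $m_\nu$, the Higgs field $\Phi_\nu$ has exactly $k$ zeros $p^1_\nu, \ldots, p^k_\nu$ counted with multiplicity, and outside a fixed-size neighbourhood of these points $|\Phi_\nu|$ is uniformly bounded away from $0$. I would first pass to a subsequence along which the relation $i \sim j \iff \sup_\nu |p^i_\nu - p^j_\nu| < \infty$ stabilizes, giving equivalence classes $C_0, \ldots, C_N \subset \{1,\ldots,k\}$ of sizes $k_i = |C_i|$.

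Next, for each class $C_i$ pick a representative $z^i_\nu$ (say the average of the $p^j_\nu$, $j \in C_i$). By construction $|z^i_\nu - z^j_\nu| \to \infty$ for $i \neq j$, and by extracting a further subsequence the unit vectors $\overrightarrow{z^i_\nu z^j_\nu}/|z^i_\nu z^j_\nu|$ converge in $\bbS^2$. Translating all $m_\nu$ by a common vector, I can arrange that one distinguished class (labelled $C_0$, possibly empty, corresponding to zeros that remain bounded) satisfies $z^0_\nu \to z^0 \in \RR^3$.

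For fixed $i$, consider the translated sequence $m_\nu(\cdot - z^i_\nu)$. On any compact $K \subset \RR^3$ the only Higgs zeros of the translated representative lying in $K$ for large $\nu$ are those indexed by $C_i$, and the Yang--Mills--Higgs energy on $K$ is bounded by $8\pi k$. Applying Uhlenbeck compactness on a nested exhaustion of $\RR^3$ by compacts, and taking a diagonal subsequence, I obtain gauge transformations $g^i_\nu$ such that $g^i_\nu \cdot m_\nu(\cdot - z^i_\nu)$ converges smoothly on compact sets to a smooth solution $(A^i, \Phi^i)$ of \eqref{e3.16.5.15}. Its degree over a large sphere enclosing the limiting zeros equals $k_i$, so it is a charge-$k_i$ monopole after centering: shifting $z^i_\nu$ by the centre of mass of $(A^i, \Phi^i)$ (a bounded perturbation) places the limit in $\moncM_{k_i}$.

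The main obstacle is the last step of verifying that $(A^i,\Phi^i)$ is a genuine finite-energy monopole, i.e.\ that \eqref{e4.16.5.15}--\eqref{e5.16.5.15} hold in the limit, and that the partition accounts for all of $k$ so no charge is lost. This requires the quantitative Taubes estimate that outside uniform neighbourhoods of the zeros of $\Phi_\nu$, the density $|F_{A_\nu}|^2 + |\nabla_{A_\nu}\Phi_\nu|^2$ decays exponentially in $1 - |\Phi_\nu|$, combined with a bound on the latter in annular regions between clusters. This localization of the energy within the clusters ensures both that each weak limit captures exactly $8\pi k_i$ energy and that $|\Phi^i|\to 1$ at infinity, ruling out further bubbling beyond the one encoded by the clusters $C_0,\ldots,C_N$ and thereby pinning down the partition $k = k_0 + \cdots + k_N$.
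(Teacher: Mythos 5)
The paper does not give a proof of this statement: it is quoted directly from Atiyah--Hitchin~\cite{AH}, Proposition~3.8, so there is no ``paper's own proof'' to compare against. What the paper does say, in the paragraph immediately following the theorem, is that the refined version of this picture is due to Taubes \cite{taubes1985min}, who partitions $\RR^3$ into \emph{strong-field regions} (where the energy density concentrates and the fields are essentially nonabelian) and a weak-field region (where the fields are nearly abelian), and the $z^i_\nu$ are taken to be centers of the connected components of the strong-field region. Your sketch is a reasonable blind reconstruction of that circle of ideas, and the overall skeleton---cluster, translate, extract via Uhlenbeck compactness on an exhaustion, then rule out charge loss via exponential decay and energy quantization---is the right one.

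The one place where you deviate in a way that creates a real gap is the choice to cluster by the \emph{zero set} of $\Phi_\nu$. You assert, as a starting point, that $\Phi_\nu$ has exactly $k$ zeros counted with multiplicity and that $|\Phi_\nu|$ is \emph{uniformly} bounded away from $0$ outside fixed-size neighbourhoods of those zeros. The first assertion is a nontrivial theorem about monopoles (not a generic fact about sections of a rank-3 bundle, and not robust under the multiplicity degenerations that actually occur, e.g.\ for axially symmetric solutions), and the second is precisely the content of Taubes's uniform diameter bound on connected components of $\set{\abs{\Phi}\leq 1/2}$, which you only invoke in the final paragraph but are already using implicitly in the first. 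Taubes's formulation in terms of strong-field components sidesteps both difficulties: the components are open sets of a priori bounded diameter, with a well-defined integer ``charge'' given by the degree of $\Phi/\abs{\Phi}$ on their boundary, and the number of components is a priori finite. If you replace ``zeros of $\Phi_\nu$'' by ``centers of strong-field components of $\Phi_\nu$'' the rest of your argument goes through essentially unchanged, and the transitivity of your equivalence relation (by the triangle inequality), the subsequence extraction, and the degree/energy bookkeeping are all correct.
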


In fact, Taubes \cite{taubes1985min} has proved more refined results, showing
that, along appropriate sequences, $\bbR^3$ can be divided into
`strong-field' regions in which most the energy \eqref{e4.16.5.15} is
concentrated, the centers of whose path components can be associated with the
sequences $(z_\nu^i)$ above, and a `weak-field' region in which the fields are
approximately abelian, and to high order satisfy the abelian, or `Dirac',
monopole equations (cf.\ \S\ref{S:dirac} below), with prescribed singularities approaching the strong-field
regions. (This dichotomy is reflected in our geometric construction below.)

Thus the non-compactness of $\monM_k$ is captured by sequences of monopoles of
lower charge escaping to $\infty$ in $\RR^3$.  Note that in the above theorem,
there is no control on the relative sizes of the $|z_\nu^iz_{\nu}^j|$ for
different pairs $ij$.  The simplest part of the asymptotic region of
$\monM_k$, the subject of this paper, corresponds to the case that all these
lengths are uniformly comparable as $\nu \smallto\infty$.

The asymptotic behaviour of $G_k$ has been studied in special cases by
various authors. 
In \cite{gibbons1995moduli}, Gibbons and Manton derived a model metric for the
asymptotic region corresponding to $k_0 = 0$ and $k_i = 1$, $i \geq 1$, and in
\cite{RB_GM} Bielawski proved that the $L^2$ metric is
exponentially close to the model in this region. Using the representation of
monopoles via spectral curves, Bielawski also proved the existence of
simplified hyperK\"ahler models for cluster regions of higher charge in
\cite{bielawski2008monopoles}, though a description of these models in terms of
the metrics on lower moduli spaces appears to be difficult to obtain
directly from this work.

To study these asymptotic regions directly, we construct a partial compactification
of $\monM_k$ by associating boundary hypersurfaces to these limits.
These boundary hypersurfaces are moduli spaces of {\em ideal
  monopoles}, objects which roughly speaking
consist of the following data (they are defined more precisely below): a list $\ul{k} = (k_0,k_1,\ldots,k_N)$, 
of integers with $k_0\geq 0$, $k_i\geq 1$ for $i=1,\ldots,N$, and
$\sum_{i=0}^N = k$; a collection of monopoles respectively of charges $k_0,\ldots,
k_N$; and an asymptotic configuration of distinct non-zero points
$(\zeta_1,\ldots,\zeta_N)$ up to scale. To be more specific, define
\begin{equation}\label{e1.19.11.15}
\cE_N^* = \Big\{\ul \zeta = (\zeta_1,\ldots,\zeta_N) \in (\bbR^3)^N :
\zeta_i \neq \zeta_j, \mbox{ for }i\neq j, \zeta_i \neq 0,\ \sum_i
\abs{\zeta_i}^2 = 1\Big\} 
\end{equation}
which we view as part of the boundary of the radial compactification
of $\RR^{3N}$.  The moduli space, $\cI_{\ul{k}}$, of ideal monopoles of
type $\ul{k}$ is then a non-trivial fibre bundle
\begin{equation}
	\cI_{\ul k} \to \cE_N^*
	\label{E:idmon_bundle}
\end{equation}
over the space of ideal configurations $\cE^*_N$, with fibre
\begin{equation}
	(\cI_{\ul k})_{\ul \zeta} = \monM_{k_0} \times \moncM_{k_1} \times \cdots \times \moncM_{k_N}.
	\label{E:idmon_fiber}
\end{equation}
The $\cI_{\ul k}$ (or more correctly, their quotients by a
symmetric group interchanging factors of equal charge) form the boundary
hypersurfaces of our partial compactification, with normal
coordinate given by a scaling paramter $\ve$, defined so that the $z^i = \zeta_i/\ve$.  Thus the paramters in the base,
$\cE_N^*$, represent the limiting locations of the monopole clusters of charges
$k_i$, $i = 1,\ldots, N$ which have gone off to infinity, with a cluster of
charge $k_0$ which remains behind, while the parameters in the fiber represent
the (possibly recentered) clusters themselves.

In \S\ref{S:ideal}, we shall describe the structure of the bundle
$\cI_{\ul{k}}$ over $\cE^*_N$. The type $\ul k$ determines a 
 rank $N+1$ {\em Gibbons-Manton
torus bundle} $\TGM \to \cE^*_N$ with respect to which
%consisting of the direct sum of principal
%$\UU(1)$-bundles $\TGM = \bigoplus_{j=0}^N L_j$ with cohomology classes
%$[L_j] = \sum_{i \neq j} k_i \eta_{ji} \in H^2(\cE^*_N; \bbZ)$, where
%$\eta_{ji}$ is a generator of $H^2(\cE^*_N; \bbZ)$ obtained by pulling back the
%generator of $H^2(\bbR^3 \setminus \set 0; \bbZ)$ along the difference map $\ul
%\zeta \mapsto \zeta_i - \zeta_j$, (by convention $\zeta_0 = 0$). Then
$\idmon_{\ul k}$ is the associated fiber bundle
\[
	\idmon_{\ul k} = \TGM \times_{\UU(1)^{N+1}} (\monM_{k_0}\times
	 \moncM_{k_1} \times \cdots \times \moncM_{k_N}) \to \cE^*_N
\]
given by the quotient by $\UU(1)^{N+1}$ acting on $\TGM$
on the left and by the circle actions on the framed moduli spaces on
the right.  This description generalizes the case studied by
Gibbons--Manton and Bielawski, where $k_0=0$, $k_i=1$, the centred
moduli spaces are reduced to circles, and so the asymptotic moduli
space is just the Gibbons--Manton torus bundle itself of type
$(0,1,\ldots,1)$ over $\cE^*_k$.

Our first main result is 
\begin{thm}
For each $\iota_0 \in \cI_{\ul k}$, there exists a neighborhood $\cU \ni \iota_0$,
$\ve_0 > 0$, and a smooth map
\begin{equation}
	\Psi : \cU \times (0,\ve_0) \to \monM_k
	\label{E:intro_gluing_map}
\end{equation}
which is a local diffeomorphism onto its image, and such that $\Psi(\iota,\ve)
\to \iota$ as $\ve \to 0$, with a complete asymptotic expansion in $\ve$.
\label{T:main_one}
\end{thm}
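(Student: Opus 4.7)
The plan is to build $\Psi(\iota,\ve)$ by the standard gluing paradigm: assemble an approximate monopole from the ideal data, then correct it to an exact solution via the implicit function theorem, exploiting that cluster monopoles are exponentially close to their abelian asymptotics.

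First I would construct a pre-glued configuration $(A_\ve, \Phi_\ve)$ on $\bbR^3$ from $\iota = ((m_0, m_1, \ldots, m_N); \ul\zeta) \in \cI_{\ul k}$ near $\iota_0$. Place a representative of $m_0$ near the origin and, for each $i \geq 1$, place a representative of $m_i$ centered at $z^i = \zeta_i/\ve$, the gauge phase being determined by the Gibbons--Manton torus factor. On the complementary `weak-field' region, interpolate via cutoff functions to a Dirac (abelian) configuration having singularities of charge $k_i$ at each $z^i$, matched to the Higgs fields of each cluster to leading order. Because each $m_i$ differs exponentially from its Dirac asymptotics at infinity, the Bogomolny defect $*F - \nabla\Phi$ is pointwise $O(e^{-c/\ve})$ and supported in the interpolation annuli.

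Next I would set up uniform linear analysis for $L_\ve := L_{A_\ve,\Phi_\ve}$ from \eqref{e4a.21.9.15}. The natural setting is a polyhomogeneous function space on a resolution of $\bbR^3 \times [0,\ve_0)_\ve$ obtained by blowing up the cluster locations (where $|x - \zeta_i/\ve|$ is bounded) and the boundary face at spatial infinity. At each cluster the model operator is $L$ at $m_i$, whose $L^2$-kernel is $T_{m_i}\moncM_{k_i}$ (or $T_{m_0}\monM_{k_0}$ for $i=0$); on the weak-field region the model is essentially a Dirac Laplacian, invertible on the appropriate weighted spaces modulo boundary data matching the $\UU(1)^{N+1}$-phases. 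These local models combine to give a right inverse $Q_\ve$ for $L_\ve$, uniformly bounded as $\ve \to 0$, modulo a finite-dimensional approximate cokernel naturally identified with $T_\iota \cI_{\ul k}$. With $Q_\ve$ in hand, a standard Picard iteration produces an exponentially small correction $(a_\ve,\phi_\ve)$ such that $(A_\ve + a_\ve, \Phi_\ve + \phi_\ve)$ solves \eqref{e3.16.5.15} in Coulomb gauge, and quotienting by $\cG_0$ defines $\Psi(\iota,\ve) \in \monM_k$.

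The complete asymptotic expansion in $\ve$ then follows by iterating the solution scheme inside the polyhomogeneous calculus on the resolved space: successive iterates contribute successive terms of the expansion, the coefficients arising from the Dirac multipole expansion at each cluster together with the gluing data. The local diffeomorphism assertion is verified by computing $d\Psi$ at $\ve = 0^+$: variations of $\iota$ yield independent approximate zero modes of $L_\ve$ (deformations of each cluster monopole, of the $\zeta_i$, and of the Gibbons--Manton phases), while $\partial_\ve$ provides a transverse direction corresponding to overall rescaling of cluster separations. Dimension matching and smooth parameter dependence of the contraction then complete the argument. The main obstacle is this uniform linear analysis: producing $Q_\ve$ with norm bounded independently of $\ve$ in function spaces refined enough that the corrections are polyhomogeneous on the compactified parameter space. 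This requires carefully matching the model operators across the boundary faces of the resolution and controlling the interaction between the Dirac-type weak-field model and the cluster linearizations.
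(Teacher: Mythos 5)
Your claim that the pre-glued Bogomolny defect is pointwise $O(e^{-c/\ve})$ is the key place where the argument breaks down. It is true that the \emph{off-diagonal} (non-abelian) components of each cluster monopole decay rapidly — this is Taubes's rapid off-diagonal decay \eqref{e11.8.10.15}, which gives $O(\rho^\infty)$ errors from that source. But the \emph{diagonal} (abelian) parts of the cluster Higgs fields and connections have power-law expansions at spatial infinity, with dipole, quadrupole, etc. corrections, and the multi-pole Dirac configuration on the weak-field region likewise has polynomial corrections near each $z_j = \zeta_j/\ve$ coming from the other singularities (constant shift $\sum_{i\neq j} k_i / (2|z_i - z_j|) = O(\ve)$, linear-in-$(z-z_j)$ dipole contribution $O(\ve^2 |z-z_j|)$, and so on). No choice of cutoff annulus scale $R$ makes $\ve^2 R + R^{-3}$ exponentially small; the best one can achieve from this kind of matching is polynomial. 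Concretely, the paper shows the pregluing error (after solving the restricted Bogomolny equation on each cluster face \emph{and} solving the full abelian Dirac problem on the interstitial face, including the sub-leading $\Phi_1$ condition \eqref{e31.25.10.15}) is still only $\cO(\ve \rho_\fD^2)$, i.e.\ $\cO(\ve)$. A single Picard iteration against that defect cannot produce the claimed exponentially small correction.

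This is not a cosmetic difference. Because the initial error is only polynomial, closing the loop requires the full asymptotic iteration of Theorem~\ref{T:formal_solution}: one must alternately solve the model linearized operator $L_{\fX}$ at the cluster faces and $L_\fD$ at the Dirac face, each solve degrading regularity by one power of $\rho$ while introducing logarithmic terms, so that the correction $(a,\phi)$ is only polyhomogeneous conormal (with index sets \eqref{E:formal_soln_index} that in fact have growing logarithmic multiplicity), not smooth and certainly not exponentially small. Only after this infinite formal iteration is an $\cO(\ve^\infty)$ error obtained, which can then be killed by the parametrix-plus-contraction step. Your proposal reaches the contraction step prematurely, skipping the formal iteration, and the ``complete asymptotic expansion'' you invoke is precisely the thing that still has to be constructed. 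Your second obstacle — uniform invertibility of the glued linearization and correct identification of the approximate cokernel with $T_\iota \cI_{\ul k}$ — is also genuinely delicate (it is where the split Sobolev spaces and the matching of indicial data at $\fD \cap \fX_i$ enter), but you correctly flag it as a difficulty rather than assert it, so the exponential-error claim is the concrete gap to fix.
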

\noindent
The map itself depends on the choice of gauge representative for $\iota_0$, though two choices
will agree to leading order in $\ve$.

Monopole gluing theorems are not new; indeed, Taubes' gluing theorem in
\cite{JT} for widely separated charge 1 monopoles was the first existence
result for monopoles of higher charge. More recently, Oliveira \cite{Oliveira}
and Foscolo \cite{Foscolo} have obtained gluing results for monopoles on asymptotically
conic 3-manifolds, and on $\bbR^2\times \bbS^1$, respectively. In Donaldson's
representation \cite{donaldson1984nahm} of $\monM_k$ as a space of rational
maps, gluing corresponds simply to addition of rational maps, though metric
information is not readily available in this picture. The chief advantage of
our approach over more traditional techniques is that we obtain complete
asymptotic expansions in the gluing parameter, and in particular can
compute the metric to leading order in this parameter.

The tangent bundle of $\cU\times (0,\ve_0)$ can be identified with the product
\begin{equation}
	T\monM_{k_0} \oplus (\bigoplus_{i=1}^N T \moncM_{k_i} \oplus \bbR^3).
	\label{E:prod_tangt_space}
\end{equation}
Here the $N$ factors of $\bbR^3$ come from identifying $\cE_N^*\times
(0,\ve_0)$ with the configuration space $\cC^*_N$ of distinct non-zero
points, {\em not} up to scaling, 
which is an open subset of $(\bbR^3)^N$. 
%(In fact, upon passing to $\cU \times [0,\ve_0)$, this identification is canonical over $\ve = 0$.) 
Our second main result states
\begin{thm}
The pulled back metric $\Psi^*(G_k)$ has a complete asymptotic expansion as $\ve \smallto 0$, 
with leading order 
\begin{equation}
	\Psi^*(G_k) \cong G_{k_0} \oplus (\bigoplus_{i = 1}^N G^c_{k_i} \oplus 2\pi k_i \eta_i) + \cO(\ve)
	\label{E:intro_metric_leading_order}
\end{equation}
with respect to the identification with \eqref{E:prod_tangt_space}.
\label{T:main_two}
\end{thm}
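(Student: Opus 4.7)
The strategy is to compute $\Psi^*(G_k)$ directly from its definition \eqref{e2.21.9.15} on tangent vectors adapted to the decomposition \eqref{E:prod_tangt_space}. Fix $\iota_0 \in \cI_{\ul k}$, $(\iota, \ve) \in \cU \times (0, \ve_0)$, and set $(A, \Phi) = \Psi(\iota, \ve)$. For each infinitesimal direction $(\xi_0; \xi_1, \ldots, \xi_N; v_1, \ldots, v_N)$ in \eqref{E:prod_tangt_space}, the pushforward $d\Psi$ applied to this direction gives a variation of $(A, \Phi)$ which does not automatically lie in Coulomb gauge. To obtain the corresponding element of $L^2\mbox{-}\Ker(L_{A,\Phi})$, one must add an infinitesimal gauge transformation; the pulled back metric is then the $L^2$ inner product of the resulting $L^2$ tangent vectors.

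For the cluster-deformation directions $\xi_0 \in T\monM_{k_0}$ and $\xi_i \in T\moncM_{k_i}$, I would use that tangent vectors to the individual moduli spaces decay exponentially away from the supporting cluster, at a rate controlled by the mass of the asymptotic Higgs field. Since cluster separations are of order $1/\ve$, integration of $|a|^2 + |\phi|^2$ over $\RR^3$ localizes near each cluster and reproduces $G_{k_0}(\xi_0, \xi_0) + \sum_i G^c_{k_i}(\xi_i, \xi_i)$ up to exponentially small error. Cross terms between distinct clusters are likewise exponentially small, and therefore absorbed into the $\cO(\ve)$ remainder.

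The substantive computation concerns the translational directions $v_i \in \RR^3$. The naive derivative of $(A, \Phi)$ with respect to moving $z^i = \zeta_i/\ve$ has long-range behaviour matching the derivative of a Dirac $k_i$-monopole centered at $z^i$, and so is not $L^2$ on its own. Imposing Coulomb gauge amounts to solving an elliptic equation that produces a correction with a compensating long-range tail; the resulting $L^2$ tangent vector is, to leading order, a harmonic abelian field on the weak-field region whose $L^2$ norm evaluates to the Gibbons--Manton quadratic form. The coefficient $2\pi k_i$ arises naturally as the integrated square of the curvature of a Dirac $k_i$-monopole against the model harmonic function on $\cE^*_N$. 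The appearance of $\eta_i$ records the fact that, after gauge fixing, the translational tangent vector picks up a $\UU(1)$-component along the gauge fiber of cluster $i$ equal to the holonomy of the Gibbons--Manton connection on $\TGM$ along the corresponding path in the configuration space $\cC^*_N$. Off-diagonal couplings between $v_i$ and $v_j$, $i \neq j$, involve integrals of products of two Dirac gradients and, by standard manipulations, contribute only at order $\ve$.

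The chief obstacle is performing the Coulomb gauge correction carefully enough to separate its contribution into the three geometric pieces above and to establish mutual orthogonality modulo $\cO(\ve)$. This relies on uniform invertibility of $L_{A,\Phi}$ on weighted Sobolev spaces adapted to the partial compactification, which is the technical input underlying Theorem \ref{T:main_one}. Once the leading order is controlled, the complete polyhomogeneous expansion in $\ve$ follows by iterating the gauge correction order-by-order, in parallel with the expansion of $\Psi$ itself.
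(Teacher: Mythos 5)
Your overall strategy --- push forward the decomposition \eqref{E:prod_tangt_space} under $d\Psi$, correct into Coulomb gauge, and compute the $L^2$ pairing --- is the right one, and your treatment of the cluster-deformation directions $T\monM_{k_0}$, $T\moncM_{k_i}$ is in the correct spirit. The paper organizes this differently (restriction of the polyhomogeneous density $|\wt u|^2\,\rd\mu_e$ to the boundary hypersurfaces $\fX_j$ of the gluing space, rather than exponential-decay estimates on $\RR^3$), but the substance is the same, and the localization claim and $\cO(\ve)$ cross-term bound would go through. The gap is in the translational directions.

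First, the claim that the derivative of $(A,\Phi)$ with respect to the cluster position $z^i$ ``is not $L^2$ on its own'' is incorrect. The derivative of the weak-field Dirac potential with respect to a pole position falls off like $r^{-2}$ at spatial infinity, which is $L^2$ on $\RR^3$; near the cluster the nonabelian field is smooth and its translational derivative is $\tau_\xi$, which lies in $T_m\monM_{k_i}\subset L^2$. The Coulomb-gauge correction is needed to land in $\Ker\gauD^*_{A,\Phi}$, not to restore integrability, and in fact the paper's gauge correction $\xi$ is constructed to vanish at $\ve=0$ (Lemma \ref{l1.23.11.15}), so it contributes only at $\cO(\ve)$ and does not produce the leading coefficient.

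Second, and more seriously, the mechanism you propose for the $2\pi k_i$ coefficient --- the $L^2$ norm of a ``harmonic abelian field on the weak-field region'' built from the Dirac curvature --- does not match the paper and, as stated, would not close: the translational tangent vector has its $L^2$ mass concentrated in the strong-field region, not the Dirac tail, and no weak-field computation alone yields the constant. What the paper actually does is lift the translation $\xi\cdot\pa_{z_i}$ to a vector field $\wt V$ on $\Z$ using the Gibbons--Manton connection, show (Lemma \ref{L:lift_translation_to_Z}, Corollary \ref{C:translation_on_solution}) that $\wt V\cdot(A,\Phi)$ vanishes at $\fD$ and at $\X_j$ for $j\neq i$ and restricts on $\X_i$ to $-\tau_\xi$, and then read off $\|\tau_\xi\|^2 = 2\pi k_i|\xi|^2$ from Proposition \ref{p2.23.11.15} --- an identity on $\monM_{k_i}$ proved from \eqref{e1.9.10.15} and the quaternionic structure, with no weak-field input. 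So the translational directions are handled by exactly the same boundary-restriction argument as the cluster deformations, once the correct lift is chosen; they are not a separate harder case.

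Third, the interpretation of $\eta_i$ as the ``holonomy of the Gibbons--Manton connection on $\TGM$'' is wrong. At leading order $\eta_i$ is simply the Euclidean metric on the $i$-th $\RR^3$ factor; the Gibbons--Manton connection enters only through the horizontal lift $\wt V$, which is what makes $\wt V\cdot(A,\Phi)$ vanish cleanly at $\fD$ and $\X_j$, $j\neq i$. The Gibbons--Manton corrections to the metric appear only at the next order in $\ve$, as indicated in the remark following the theorem statement in the introduction.
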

\noindent
Further refinements (not proved here) give the next order term in the metric as
well, with the result that $\Psi^*(G_k)$ generalizes the asympototic metric of
Gibbons and Manton \cite{gibbons1995moduli} for the case where $k_0 =
0$, $k_i = 1$, $i = 1,\ldots, N$.  

\begin{rmk}  The structure of this compactification, and in particular
  the Gibbons--Manton bundles associated to a general $\ul k$, is also
  known to Bielawski.  More details will appear in \cite{B_future}.
\end{rmk}

\subsection{Overview of the construction} \label{S:intro_overview}
We give a brief overview of our construction, highlighting the advantages of
our approach, and explaining how ideal monopoles enter.

The first step is to pass to the radial compactification, $\fX = \ol{\bbR^3}$, of $\bbR^3$ as a convenient
way to deal with the non-compactness of $\bbR^3$. 
Having done so, the Euclidean metric becomes a smooth, bounded,
positive definite metric on the so-called scattering tangent bundle $\scT \fX$ of $\fX$, and 
monopoles may be defined in terms of data which are smooth up to $\pa \fX$ and are regarded as
sections of $\bigwedge^j \scT^\ast \fX\otimes \adP$, where we
introduce the notation $\adP = \ad(P)_{\CC}$. This point of view, as applied
to the classical theory of monopoles on $\bbR^3$, is summarized in \S\ref{S:bogo1}.

The next step is to incorporate the parameter $\ve$ into the geometry of the
problem. We begin with the product $\fZ_0 = \fX
\times [0,\infty)_\ve$, equipping each fiber of the projection $\fb : \fZ_0 \to
[0,\infty)$ with the Euclidean metric on $\fX$. 
Fixing a configuration $\ul
\zeta \in \cE^\ast_N$, the paths $(z(\ve),\ve) = (\zeta_j/\ve,\ve)$ approach the 
corner $\pa \fX \times \set 0 \subset \fZ_0$, and we let
\[
	\fZ_1 = [\fZ_0; \pa \fX \times \set 0]
\]
be the real blow-up of this corner in $\fZ_0$. The new boundary face obtained
by this blow-up, denoted by $\fD$, is diffeomorphic to a cylinder, with a
natural interpretation as the blow-up $D \cong [\ol{\bbR^3}; \set 0]$ of the
radial compactification of $\bbR^3$ at the origin. Thus equipped with the
interior Euclidean coordinate $\zeta$, $\fD$ meets the boundaries of the lifted
curves $z(\ve) = \zeta_i/\ve$ transversally at the points where $\zeta =
\zeta_j$. (See Figure~\ref{F:Z_1}.) 

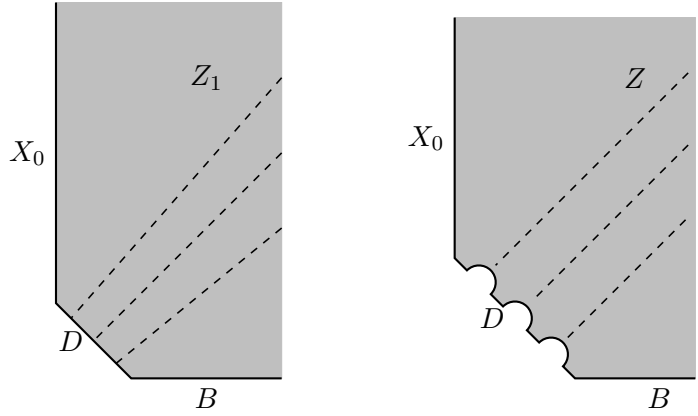
\begin{figure}[htb]
\[
\begin{tikzpicture}[>=stealth,domain=-4:4,smooth,scale=1]%,rotate=90, xscale = -1]
%
%  Shading in the blow-up
%
\filldraw[color = lightgray]
(1,0) -- (3,0) -- (3,5) -- (0,5) -- (0,1) -- (1,0);
\draw[thick]
(3,0) -- (1,0) --(0,1) -- (0,5);
\draw (2,0) node[below] {$B$};
\draw (0,3) node[left] {$X_0$};
\draw (0.5,.5) node[left] {$D$};
\draw (2,4) node {$\fZ_1$};
\draw [semithick,dashed] (3,2) -- (0.8,.2);
\draw [semithick,dashed] (3,3) -- (0.5,.5);
\draw [semithick,dashed] (3,4) -- (0.2,.8);
\end{tikzpicture}
\qquad\qquad
\begin{tikzpicture}[>=stealth,domain=-4:4,smooth,scale=1.6]%,rotate=90, xscale = -1]
%
%  Shading in the blow-up
%
\filldraw[color = lightgray]
(2,0) --(1,0)--(.9,.1) arc (-45:135:.14)
-- (.6,.4) arc (-45:135:.14) 
-- (.3,.7) arc (-45:135:.14)
--(0,1) --  (0,3) -- (2,3) -- (2,0);
%\draw[thick]
%(3,0) -- (1,0) --(0,1) -- (0,5);
\draw (1.7,0) node[below] {$B$};
\draw (0,2) node[left] {$X_0$};
\draw (0.5,.5) node[left] {$D$};
\draw (1.5,2.5) node {$\fZ$};
\draw [semithick,dashed]  (0.94,.34) -- (1.94, 1.34);
\draw [semithick,dashed] (1.94,1.94) -- (0.64,.64);
\draw [semithick,dashed] (1.94,2.54) -- (0.34,.94);
%\draw [thick] (.9,.1) arc (-45:135:.14);
\draw [thick](2,0) --(1,0)--(.9,.1) arc (-45:135:.14)
-- (.6,.4) arc (-45:135:.14) 
-- (.3,.7) arc (-45:135:.14)
--(0,1) --  (0,3);
\end{tikzpicture}
\]
\caption{The space $\fZ_1$ with the lifts of the curves $z = 
\zeta_j/\ve$ hitting the new face $D$ transversely in points $\zeta_j$, and the blow-up of this to the space $\fZ$.}
\label{F:Z_1}
\end{figure}

The geometric construction is completed by blowing these points up in $\fD$, setting
\[
	\fZ = [\fZ_1; \set{\zeta_1,\ldots,\zeta_N}].
\]
The new faces are denoted by $\fX_i$, $i = 1,\ldots,N$, and the lift of the original face $\fX \times \set 0$ is
denoted by $\fX_0$ (see Figure~\ref{F:Z_1}). These admit canonical isometries $\fX_i \cong \fX$ with the radial compactification of $\bbR^3$
with respect to the lift of the original fiberwise Euclidean metric on $\fZ_0$.

We also lift the projection to the parameter interval to a map $\fb : \fZ \to
[0,\infty)_\ve$. For $\ve > 0$, the fiber $\fb^{-1}(\ve)$ is canonically
isometric to the original radially compactified $\bbR^3$, whereas $\fb^{-1}(0)$
is a manifold `with normal crossings' consisting of (the lifted) $\fD \cong
[\ol{\bbR^3}; \set{0,\zeta_1,\ldots,\zeta_N}]$ and the $\fX_i$, identified
along their boundaries. 
We denote by $\rho_\fD$, $\rho_{\fX}$, and $\rho_\fB$
choices of respective boundary defining functions for $\fD$, $\fX_0\cup \fX_1
\cup \cdots \cup \fX_N$, and $\fB$, the lift of the original boundary face $\pa
\fX \times [0,\infty)$ of $\fZ_0$. We may assume that $\ve = \rho_\fD
\rho_\fX$.

Next, we fix a principal $\SU(2)$ bundle $P$ over $\fZ$ and consider the
fiberwise Bogomolny operator
\[
	\Bogo(A,\Phi) = \star_{\fb} F_{A_\fb} - \nabla_{A_\fb} \Phi_\fb
\]
on pairs, $(A,\Phi)$, of connection and Higgs bundle on $P$, where the notation
indicates that the data are restricted to each fiber $\fb^{-1}(\ve)$ before
computing the curvature, covariant derivative, and Hodge star operator.

One checks that if $(A,\Phi)$ is smooth up to the boundaries of $\fZ$, then,
as a section of the appropriate fiberwise 1-form bundle twisted by $\adP$, 
$\Bogo(A,\Phi)$ is smooth, and in fact
\begin{equation}
	\Bogo(A,\Phi) = \cO(\rho_\fD).
	\label{E:Bogo_intro_initial_error}
\end{equation}
We ultimately embark on the process of improving the error term on the right hand side of
\eqref{E:Bogo_intro_initial_error} until it vanishes identically for small
$\ve$, thereby obtaining 1-parameter families of monopoles on the fibers
$\fb^{-1}(\ve)$ for $\ve > 0$.

The restriction of $\Bogo$ to $\fX_i$ is nothing other than the Euclidean
Bogomolny equation $\star F_{A \rst \fX_i} - \nabla_{A\rst\fX_i} (\Phi \rst
\fX_i)$, so if we choose $(A,\Phi)$ so that its restriction, $(A,\Phi)\rst
\fX_i$, solves this equation for each $i = 0,\ldots,N$, then
\eqref{E:Bogo_intro_initial_error} improves by a factor of $\rho_\fX$ and
\begin{equation}
	\Bogo(A,\Phi) = \cO(\rho_\fD\rho_\fX) = \cO(\ve).
	\label{E:Bogo_intro_error_two}
\end{equation}
Over $\fD$, the leading part of the Bogomolny operator is just $\nabla_{A\rst
\fD} (\Phi \rst \fD)$, so choosing this to vanish improves the approximation by
a factor of $\rho_\fD$. The  sub-leading order part of the Bogomolny
operator over $\fD$ imposes a condition on $A \rst \fD$. In more detail, $A
\rst \fD$ reduces to a connection on a $\UU(1)$-bundle over $\fD$ determined by
$\Phi \rst \fD$, and must solve the $\UU(1)$ monopole equations there, with
prescribed conic singularities at $\set{\zeta = \zeta_j : j = 0,\ldots,N}$.
Imposing this condition improves the error term to
\begin{equation}
	\Bogo(A,\Phi) = \cO(\rho_\fD^3\rho_\fX) = \cO(\ve\rho_\fD^2).
	\label{E:Bogo_intro_error_three}
\end{equation}

An ideal monopole is properly defined to be the restriction to $\fb^{-1}(0) = D
\cup \fX_0 \cup \cdots \cup \fX_N$ of a smooth configuration $(A,\Phi)$ on
$\fZ$ such that \eqref{E:Bogo_intro_error_three} holds. Ideal monopoles are
acted on by a gauge group given by the restriction to $\fb^{-1}(0)$ of gauge
transformations on $P \to \fZ$ which are the identity at $\fB$. In fact, though
there appears to be additional information represented by the Dirac monopole on
the `interstitial region', $\fD$, where the symmetry is broken down to
$\UU(1)$, this extra information disappears when we consider gauge equivalence
classes of ideal monopoles.

The choice of data $(A,\Phi)$ on $\fZ$, representing a given ideal monopole on
$\fb^{-1}(0)$, is the starting point for an iteration, carried out in \S\ref{S:Mgl}, in which we successively
improve the error term on the right hand side of
\eqref{E:Bogo_intro_error_three}. The output of this iteration is a section 
\[
	(a,\phi) \in \cA(\fZ; (\Lambda^1\oplus \Lambda^0)\otimes \adP)
\]
with the property that
\[
	\Bogo(A+a,\Phi+\phi) = \cO(\rho_\fD^\infty\rho_\fX^\infty\rho_\fB^\infty).
\]
Here the notation $\cA$ means that $(a,\phi)$ is smooth in the interior of
$\fZ$ and has complete asymptotic expansions in powers $\rho^j(\log \rho)^k$
for each of the boundary defining functions $\rho$, with compatibility between
these expansions at the corners. (In precise terms, $(a,\phi)$ is {\em polyhomogeneous conormal}, 
c.f.\ \cite{CCN} and \S\ref{S:functions_phg}.)

The final step---which is carried out in \S\ref{S:global} smoothly in families,
on a space fibering over $\idmon$ with fibers given by the $\fZ$---is a further
correction $(\wt a, \wt \phi) \in \rho^\infty C^\infty(\fZ; (\Lambda^1\oplus \Lambda^0)\otimes \adP)$ 
such that
\[
	\Bogo(A + a + \wt a, \Phi + \phi + \wt \phi) = 0\ \text{on}\ \fb^{-1}\big([0,\ve_0\big))
\]
for some $\ve_0 > 0$. This step follows from the construction of a
pseudodifferential parametrix for the linearization of $\Bogo$ and an implicit
function theorem argument.

Having constructed such smooth families, we then consider variations with
respect to the parameters in \S\ref{S:metric}. These represent solutions to the
linearized, fiberwise Bogomolny equations on $\fZ$, which by small perturbation
can be put into Coulomb gauge along each fiber. This gives the differential of
the gluing map \eqref{E:intro_gluing_map}, and allows the metric
$\Psi^\ast(G_k)$ to be computed by pairing two such variations and integrating
over the fibers of $\fb: \fZ \to [0,\ve_0)$. The leading order contribution in
$\ve$, \eqref{E:intro_metric_leading_order}, reduces to explicit $L^2$ pairings
of Euclidean monopole variations over the boundary faces $\fX_i$, the
computations of which already appear in \S\ref{S:bogo1}.

\subsection{Outlook: monopole compactification}

In future work with Melrose, we shall complete the compactification of
$\monM_k$ as a smooth manifold with corners and study the metric on this
compactification. (An alternative approach is being pursued by
Bielawski \cite{B_future}.)  We explain why a compactification as a manifold with
corners is natural for this problem, illustrating with the simplest
possible cases.

Let $k=3$.  According to our gluing theorem there are two asymptotic
regions of $\monM_{3}$, let's call them $\cV_{111}$ and $\cV_{21}$
corresponding respectively to the partition $3 =1+1+1$ and $3=
2+1$.   There is, however, a `transition region' between $\cV_{111}$
and $\cV_{21}$ which we can describe in terms of $\log$-smooth
$2$-parameter families $m(\ve_1,\ve_2) \in \monM_3$ with the following
properties. For fixed $\ve_1>0$, $\ve_2 \mapsto m(\ve_1,\ve_2)$ is a
smooth curve in $V_{111}$, while for $\ve_2>0$, $\ve_1 \mapsto
m(\ve_1,\ve_2)$ is a smooth curve in $V_{21}$. The parameter $\ve_2^{-1}$
is a measure of the separation of the charge-2 monopole in $\moncM_2$, so
for smaller values $\ve_2$, this charge-2 monopole is going to
infinity in $\moncM_2$.  From the other point of view,  on the curve 
$\ve_1 \mapsto m(\ve_1,\ve_2)$, $\ve_2$ is a measure of the distance
between $z_1$ and $z_2$ relative to $z_3$.  Thus for small $\ve_2$,
the configuration is becoming less and less widely separated, and the charge-1
monopoles centred at $z_1$ and $z_2$ have to be treated as a monopole
of charge 2.

The construction of such smooth $2$-parameter families will be the
work of our next paper. Implemented systematically, this will allow us
to build up a compactification of $\monM_k$ as a smooth manifold with
corners, with control of the $L^2$ metric near each corner.  The
two-parameter families described in the previous paragraph correspond
to codimension-2 corners of the moduli space.

\subsection*{Acknowledgements}
The authors would like to thank Richard Melrose
for his input into the approach we take to prove our main results. They also
would like to thank Roger Bielawski, Nicholas Manton and Karsten Fritzsch for
enlightening discussions.

\section{The Bogomolny equations on a scattering manifold}
\label{S:bogo1}

In this section we shall introduce smooth definitions of the monopole
moduli spaces.  Our approach is to pass to the radial compactification
$X = \overline{\RR^3}$ of $\RR^3$ and regard the euclidean metric as a
scattering metric on this space; we shall recall these notions
below; but $X$ is a compact manifold with boundary $\pa X$ diffeomorphic
to the 2-sphere $\bbS^2$, and instead of the decay conditions when 
$(A,\Phi)$ are regarded as fields on $\RR^3$, we assume smoothness up
to the boundary.   The whole classical theory of monopoles can be
developed in this setting and is equivalent to the `usual' approach.
The equivalence follows from the basic regularity theorems proved in
Jaffe--Taubes \cite{JT} which show that the finite-energy conditions
\eqref{e4.16.5.15} imply decay properties of the fields equivalent to smoothness
up to the boundary of $X$.

\subsection{Gauge theory on a manifold with boundary}
 \label{S:bogoscat_prelim}

Let $X$ be a manifold with boundary $\pa X$. Recall that a {\em boundary
defining function} is a smooth function $\rho : X \to [0,\infty)$, such
that $\pa X = \rho^{-1}(0)$ and $\rd \rho\neq 0$ on $\pa X$.

If $p$ is any point of $\pa X$, then there exist adapted local
coordinates $(x,y)$ defined in a neighbourhood $U$ of $p$ in $X$ such
that $x\geq 0$, $y$ is a system of local coordinates near $p$ in $\pa
X$, and such that $\rho|U = x$.

Let $P \to X$ be a smooth principal $G$-bundle, where $G$ is some Lie
group.  From here on, `smooth' will always mean `smooth up to and
including $\pa X$' unless otherwise stated, and the vector space of all
smooth functions will be denoted by $C^\infty(X)$. It may be convenient to
recall that we can also think of $C^\infty(X)$ as the space of
restrictions of smooth functions from a slightly larger manifold
$\hat{X}$ in which $X$ sits as the closed subset $\set{\rho\geq 0}$.

\begin{defn}
The space of smooth $G$-connections on $P$ will be denoted by 
$\frA(X; P)$, or just $\frA$ when there is no risk of confusion. The
gauge group $\gauG(X,P) =\gauG$  is the space of smooth sections of
$\Ad(P)$, the bundle of 
groups associated to $P$.  If $A \in  \frA$, the curvature is denoted
by $F_A$ or $F(A)$ and is a smooth section of $\Lambda^2T^*X\otimes
\ad(P)$.
\end{defn}

Similarly we define the space of smooth Yang-Mills-Higgs (YMH) configurations:
\begin{defn}  Let $P$ and $X$ be as above.  The space of smooth YMH
  (or monopole) configurations, $\cfgC(X,P)$ is defined to be
\begin{equation}
\cfgC(X,P) = \frA(X,P)\times C^\infty(X,\ad(P)).
\end{equation}
This will be abbreviated to $\cfgC(X)$ or even $\cfgC$ when there is
no risk of confusion.
\end{defn}

The gauge group $\gauG$ acts on $\cfgC$ by conjugation:
\begin{equation}
\gamma(A,\Phi) = (\gamma A,\Ad(\gamma)\Phi)
\end{equation}
and we have
\begin{equation}
F(\gamma (A) )= \Ad(\gamma)F(A),\;\nabla_{\gamma(A)}(\Ad(\gamma) \Phi) =
\Ad(\gamma)\nabla_A\Phi,
\end{equation}
where $\nabla_A$ is the covariant derivative operator
\begin{equation}\label{e1.7.10.15}
\nabla_A : C^\infty(X,\ad(P)) \to C^\infty(X,T^*X\otimes \ad(P)).
\end{equation}
\begin{rmk}
We pause to emphasise what smoothness at the boundary means here. From
the point of view of principal bundles, where connections are defined
as smooth families of horizontal subspaces of $TP$, we mean that this
family is smooth up to the boundary, and in particular admits a smooth
extension as a connection on a principal bundle $\hat{P}$ over the
larger manifold $\hat{X}$. Alternatively, for the first-order differential operator
\eqref{e1.7.10.15}, smoothness means that all coefficients are smooth
when expressed in terms of smooth (up-to-the-boundary) local
trivializations of the bundles.
\end{rmk}

\subsubsection{Restriction and extension}
\label{s2.8.10.15}
Let $E\to X$ be any smooth vector bundle, and let $\nabla_A$ be a
smooth connection, identified with a covariant derivative operator, on
$E$.   Denote by $\jmath: \pa X \to X$ the inclusion.  Denote by
$E_\pa = \jmath^*(E)$ the restriction of $E$ to the boundary.  As is
well known we have a natural exact sequence
a natural restriction map $\jmath^*$
\begin{equation}\label{e2.8.10.15}
0 \to \rho C^\infty(X,E) \to C^\infty(X,E) \stackrel{\jmath^*}{\to} C^\infty(\pa X, E_\pa)
\to 0.
\end{equation}
We note the corresponding result for connections:
\begin{prop}\label{p1.18.10.15}
Let the data be above.  Then there is a natural exact sequence 
\begin{equation}\label{e3.8.10.15}
0 \to
\rho C^\infty(X, \boT^*X\otimes \ad(P))
\to 
\frA(X,P) 
\stackrel{\jmath^*}{\to}\frA(\pa X, P_\pa) \to 0
\end{equation}
\end{prop}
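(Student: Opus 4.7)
The plan is to exploit the affine structure of $\frA(X,P)$ over $C^\infty(X, T^*X \otimes \ad(P))$ and reduce the statement to the restriction sequence \eqref{e2.8.10.15} applied to an appropriate cotangent bundle. Fixing a reference $A_0 \in \frA(X,P)$, every other connection has the form $A_0 + \alpha$ with $\alpha \in C^\infty(X, T^*X \otimes \ad(P))$, so the restriction map becomes $\alpha \mapsto \jmath^*\alpha$, with $\jmath^*$ on 1-forms understood as pullback via the inclusion. The proposition is thus equivalent to exactness of
\[
0 \to \rho C^\infty(X, \boT^*X \otimes \ad(P)) \to C^\infty(X, T^*X \otimes \ad(P)) \xrightarrow{\jmath^*} C^\infty(\pa X, T^*\pa X \otimes \ad(P_\pa)) \to 0.
\]

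For surjectivity in the original statement, I would fix a collar $U \cong [0,\epsilon)_\rho \times \pa X$ together with a product trivialization $P|_U \cong \pi^* P_\pa$, where $\pi : U \to \pa X$. Given $A_\pa \in \frA(\pa X, P_\pa)$, the pullback $\pi^* A_\pa$ is a smooth connection on $P|_U$ that restricts to $A_\pa$ on $\{\rho = 0\}$. Choosing a cutoff $\chi \in C^\infty(X)$ equal to $1$ near $\pa X$ with support in $U$, the convex combination $A := \chi\, \pi^* A_\pa + (1-\chi) A_0$ is a smooth connection on $P$ (since $\frA(X,P)$ is affine over a vector space) satisfying $\jmath^* A = A_\pa$.

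For the kernel, I would argue locally and then patch. In adapted coordinates $(\rho, y)$ near $\pa X$, write $\alpha = a\, d\rho + \sum_i b_i\, dy_i$ with smooth coefficients. Since $\jmath^* d\rho = 0$ and $\jmath^* dy_i = dy_i$, the pullback is $\sum_i b_i(0, y)\, dy_i$, which vanishes iff $b_i = \rho \tilde b_i$ for some smooth $\tilde b_i$. In that case
\[
\alpha = \rho\bigl(a\, d\rho/\rho + \sum_i \tilde b_i\, dy_i\bigr),
\]
and since $d\rho/\rho, dy_1, \ldots, dy_{n-1}$ is a local frame for $\boT^*X$, this exhibits $\alpha$ as $\rho$ times a smooth section of $\boT^*X$. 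The reverse inclusion is immediate. A partition of unity subordinate to a cover by adapted charts globalizes the identification, and the naturality claimed in the statement is then manifest from the locality of the construction.

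I do not anticipate a substantive obstacle: the content of the proposition is the pointwise observation that the $\rho$-factor exactly cancels the singular generator $d\rho/\rho$ of $\boT^*X$, so that $\rho \boT^*X$ embeds into $T^*X$ as precisely the subbundle whose pullback to $\pa X$ vanishes. The only mild subtlety is being careful to distinguish restriction of $\ad(P)$-valued 1-forms (via boundary pullback) from the bare restriction of sections appearing in \eqref{e2.8.10.15}, but once the affine reduction has been made this is a routine local calculation.
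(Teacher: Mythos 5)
Your argument is correct and its skeleton is the same as the paper's: fix a background connection, use the affine structure of $\frA(X,P)$ over $C^\infty(X,T^*X\otimes\ad(P))$, and reduce the claim to the analogous statement for $1$-forms. Where you diverge is in how you finish the two halves. The paper defers to the sequence \eqref{e4.8.10.15} — the known variant of \eqref{e2.8.10.15} with $T^*X$ and $\boT^*X$ replacing the generic $E$ — which supplies both surjectivity (extend the difference $B - A_\pa$ to a global $1$-form) and the kernel description in one citation. You instead prove both halves from scratch: surjectivity by pulling $A_\pa$ back through a collar projection and gluing with a cutoff against $A_0$, and the kernel identification by the local computation $\alpha = a\,d\rho + \sum b_i\,dy_i$, $\jmath^*\alpha=0 \Leftrightarrow b_i = \rho\tilde b_i$, which exhibits $\alpha$ as $\rho$ times a smooth $\bo$-form. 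Your version is more self-contained and elementary — a reader who has not yet internalized why \eqref{e4.8.10.15} holds gets the pointwise reason ($\rho$ exactly cancels the $d\rho/\rho$ generator of $\boT^*X$) spelled out — while the paper's is more economical and keeps the proposition's proof parallel to the scheme used throughout the section of citing the generic restriction sequence. One small point worth being careful about in your cutoff construction: the convex combination $\chi\,\pi^*A_\pa + (1-\chi)A_0$ only literally makes sense once you read it as $A_0 + \chi(\pi^*A_\pa - A_0)$ with the second summand a compactly supported $\ad(P)$-valued $1$-form on $U$; as written, $\pi^*A_\pa$ is not globally defined. You clearly intend this, but it is worth saying.
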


Here there is a harmless abuse of notation in that the second and
third spaces are affine spaces, not vector spaces; we read from the
sequence in particular that given any connection $A_\pa$ on $P_\pa \to
\pa X$, there is a connection $A$ on $P$ with $\jmath^*A = A_\pa$, and
that $A$ is unique up to the addition of an element in
\begin{equation}\label{e3.5.8.10.15}
\rho C^\infty(X, \boT^*X\otimes \ad(P)).
\end{equation}
\begin{proof}
This is reduced to the exactness of \eqref{e2.8.10.15} by picking a
background connection.  In more detail, pick any connection $A$ on $P$
and put $\jmath^*A = A_{\pa}$. To show that $\jmath^*$ is surjective,
suppose $B$ is any other connection on $P_\pa$. Then $B - A_\pa$ is a
section of $C^\infty(\pa X, T^*Y\otimes \ad(P_\pa))$.  Now we use the
variant
\begin{equation}\label{e4.8.10.15}
0 \to \rho C^\infty(X,\boT^*X) \to C^\infty(X,T^*X) \to C^\infty(\pa X,
T^*\pa X) \to 0
\end{equation}
of \eqref{e2.8.10.15} to find an extension $\widetilde{a}$ of $B-
A_\pa$ to $X$. (See \S\ref{s1.8.10.15} below for the $\bo$-cotangent bundle
$\boT^*$.) 

Then $A + \widetilde{a}$ is an extension of $A_{\pa}$
to $X$.  Using the exactness of \eqref{e4.8.10.15} we obtain similarly
that any two extensions of $A_{\pa}$ differ by an element of \eqref{e3.5.8.10.15}.

Naturality corresponds to the statement that if $A_\pa = \jmath^*(A)$, then
\begin{equation}
\nabla_{A_\pa}\, \jmath^*(s) = \jmath^*(\nabla_A s).
\end{equation}
\end{proof}

The restriction to the boundary $A_\pa$ of $A$ is then a
connection on $E|\pa X$, and induces a covariant derivative operator
\begin{equation}\label{e1.8.10.15}
\nabla_{A_\pa}: C^\infty(\pa X, E|\pa X) \to C^\infty(\pa X,
T^*\pa X\otimes (E|\pa X)).
\end{equation}

\subsection{$\RR^3$ as a scattering manifold}

In order to write down the Bogomolny equations in this setting, we
replace $\RR^3$ by its radial compactification $X$
\cite{RBM_scat}.  This can be defined as follows: first identify
$\RR^3\setminus 0$ with $(0,\infty)_r\times \bbS^2$, where $r$ is
distance from $0$.  Now glue the half-closed cylinder
$[0,\infty)_x\times \bbS^2$ to $\RR^3\setminus 0$ by the identification
$x = r^{-1}$ to obtain the radial compactification $X$. The boundary
$x=0$ corresponds to the `sphere at infinity' in $\RR^3$.  Equip $X$
with the $C^\infty$ structure so that $x$ is a (local) boundary
defining function.

If $y$ stands for local coordinates on the boundary $\bbS^2$, the euclidean metric
takes the form
\begin{equation}
\eta = \frac{\rd x^2}{x^4} + \frac{h(x,y,dy}{x^2}
\end{equation}
near $x=0$, where for each $x$, $h(x,y,dy)$ represents a metric on $\bbS^2$
such that $h(0,y,dy)$ is the round metric.
In particular it does not extend smoothly as a metric on
$TX$.  There is, however, a replacement, the `scattering tangent
bundle' $\scT$ which we now recall.

Denote by $\cV(X)$ the space of all smooth vector fields on $X$ and by
$\cV_{\scat}(X)$ the subspace of those of finite length with respect
to $\eta$. As shown in \cite{RBM_scat}, the subspace $\cV_{\scat}(X)$ is the
full space of $C^\infty$ sections of the {\em scattering tangent
  bundle} $\scT X \to X$, equipped with a smooth bundle map
\begin{equation}
\iota : \scT X \to TX
\end{equation}
which is an isomorphism over the interior of $X$.  More precisely, the
map induced by $\iota$ on global sections
\begin{equation}
\iota: C^\infty(X,\scT X) \to C^\infty(X,TX)
\end{equation}
gives an isomorphism $C^\infty(X,\scT X) = \cV_{\scat}(X)$. 

If $p$ is a point of $\pa X$ and we choose adapted local coordinates
$(x,y_1,y_2)$ in a neighbourhood $U$ of $p$, then $\cV_{\scat}(X)$ is locally spanned by the
elements
\begin{equation}\label{e1.3.8.15}
x^2 \pa_x, x\pa_{y_1}, x \pa_{y_2}.
\end{equation}
and there is a basis $e_0,e_1,e_2$ of $\scT U$ such that 
\begin{equation}
\iota(e_0) = x^2\pa_x, \iota(e_1) = x \pa_{y_1}, \iota(e_2) = x
\pa_{y_2}.
\end{equation}
We shall often abuse notation by regarding the vector fields
\eqref{e1.3.8.15} as a local frame for $\scT X$.

We note that (tautologically) the euclidean metric $\eta$ extends from
$\mathring X$ to define a smooth metric on $\scT X$; equivalently it is a
smooth, positive definite section of $S^2( \scT^* X)$, where $\scT^*$
is the scattering cotangent bundle, dual to $\scT$. 

We remark that $\scT$ and $\cV_{\scat}(X)$ are intrinsically
associated to $X$ as a smooth manifold with boundary: this is a
question of seeing that the definitions are independent of choice of
boundary defining function.   Essentially the same observation means
that if $X$ is any manifold with boundary, we can introduce $\scT$ and
$\cV_{\scat}(X)$; from the geometric point of view, a smooth metric on
$\scT X$ corresponds to (the big end of) an asymptotically conic
metric on $\mathring X$.

The advantage of introducing $\scT$ is that quantities naturally
associated to the euclidean metric extend uniformly to the boundary.
For example, the Hodge star operator gives an isometry
\begin{equation}
\star : \Lambda^2 \,\scT^* \to \scT^*
\end{equation}
of bundles over $X$, including over the boundary.

\subsubsection{$\bo$-structure}\label{s1.8.10.15}
  We pause to mention the parallel
definitions of the algebra $\cV_{\bo}(X)$ of vector fields that are
tangent to $\pa X$, as this algebra will also play an
important role later.  We have $\cV_{\scat}(X) = x\cV_{\bo}(X)$; there
is a $\bo$-tangent bundle $\boT$ with the property that
$C^\infty(X,\boT) = \cV_{\bo}(X)$ and so on. In place of the local
description \eqref{e1.3.8.15} for $\scT$ we have a local frame
$$
x\pa_x,  \pa_{y^1}, \pa_{y^2}
$$
for $\boT$. We refer the reader to \cite{RBMgreenbook} for a complete account.

\subsection{Bogomolny equations}
\label{S:bogoscat_monopoles}
Let $X$ be the radial compactification of $\RR^3$, equipped with the
euclidean metric (regarded, as above, as a scattering metric on $X$),
and fix a $G$ principal-bundle $P \to X$.  Denote by $\rho$ any
boundary defining function for $\pa X$.

We consider here the definition of the monopole moduli spaces from the
smooth point of view (i.e. building the boundary regularity of the
data from the beginning, rather than assuming just the boundedness of
the Yang--Mills--Higgs action) and recall some standard properties of
these moduli spaces.  There is nothing really new here: the
definitions of framed and unframed moduli spaces are in \cite{AH} and
the equivalence of the smooth definition with standard works on monopoles
follows from the regularity results in \cite{JT}.

\begin{lem}
The Bogomolny operator $*F_A - \nabla_A\Phi$ extends from the interior
to define a smooth map
\begin{equation}
\Bogo(A,\Phi) : \cfgC(X,P) \to \rho C^\infty(X,\scT^*X\otimes \ad(P)).
\end{equation}
\end{lem}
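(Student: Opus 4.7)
The plan is to work in local adapted coordinates $(x,y_1,y_2)$ near an arbitrary point of $\pa X$, with $x = \rho$ a boundary defining function, and to verify separately that $\nabla_A\Phi$ and $*F_A$ each extend from the interior to smooth sections of $\scT^*X\otimes\ad(P)$ that in fact lie in $\rho C^\infty$. Because the Bogomolny operator is polynomial of degree two in $(A,\Phi)$ with fixed smooth-up-to-the-boundary coefficients (the connection derivative and the metric-induced Hodge star), smoothness of $\Bogo$ as a map between the affine space $\cfgC$ and this target space is automatic once the image has been pinpointed. The only real content is to track powers of $x$ that arise under the natural identification $T^*X\hookrightarrow \scT^*X$, using the pair of dual frames $\{dx,dy_1,dy_2\}$ and $\{dx/x^2,\, dy_1/x,\, dy_2/x\}$.

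For $\nabla_A\Phi = d\Phi + [A,\Phi]$: since $\Phi$ and $A$ are smooth up to $\pa X$, this is manifestly a smooth section of $T^*X\otimes\ad(P)$. Converting to the scattering frame via $dx = x^2(dx/x^2)$ and $dy_i = x(dy_i/x)$, its coefficients in the $\scT^*X$ frame vanish to order $x^2$ and $x$ respectively, so $\nabla_A\Phi \in \rho C^\infty(X,\scT^*X\otimes\ad(P))$. The leading boundary behaviour is determined by the tangential components, i.e.\ by the covariant derivative of $\Phi|_{\pa X}$ in the induced boundary connection.

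For $*F_A$: the curvature is smooth in $\Lambda^2 T^*X\otimes\ad(P)$, and the analogous bookkeeping $dx\wedge dy_i = x^3(dx/x^2)\wedge(dy_i/x)$ and $dy_i\wedge dy_j = x^2(dy_i/x)\wedge(dy_j/x)$ yields $F_A \in \rho^2 C^\infty(X,\Lambda^2\scT^*X\otimes\ad(P))$. The key point is that the Euclidean metric is a smooth positive-definite section of $S^2(\scT^*X)$, so the induced Hodge star $* : \Lambda^2\scT^*X \to \scT^*X$ is a smooth bundle isomorphism up to and including $\pa X$. Thus $*F_A \in \rho^2 C^\infty(X,\scT^*X\otimes\ad(P)) \subset \rho C^\infty$.

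Combining gives $\Bogo(A,\Phi) = *F_A - \nabla_A\Phi \in \rho C^\infty(X,\scT^*X\otimes\ad(P))$, as required. I do not expect any real obstacle here: the statement is essentially a compatibility check between two natural bundles on a manifold with boundary, and the one subtle point --- that the local identifications are independent of choice of coordinates and of boundary defining function --- is immediate from the intrinsic characterization of $\scT X$ recalled above. A useful byproduct, relevant to the scheme in \eqref{E:Bogo_intro_initial_error}--\eqref{E:Bogo_intro_error_three}, is that $*F_A$ vanishes to one higher order in $\rho$ than $\nabla_A\Phi$, so the leading boundary symbol of $\Bogo$ is carried entirely by $\nabla_A\Phi|_{\pa X}$, already identifying the condition that $\Phi|_{\pa X}$ be covariantly constant as the first obstruction to be killed when improving the approximation.
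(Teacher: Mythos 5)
Your proof is correct and takes essentially the same approach as the paper's: both arguments reduce to tracking powers of $x$ under the inclusion of smooth forms into scattering forms. The only organizational difference is that you first convert $F_A$ to a $\rho^2$-section of $\Lambda^2\scT^*X$ and then invoke smoothness of the scattering Hodge star, whereas the paper computes $*(\rd x\wedge \rd y_1)=x^2\,\rd y_2$, $*(\rd y_1\wedge \rd y_2)=\rd x$ directly on ordinary $2$-forms and then reads off the $\rho^2$ gain when viewing the result in $\scT^*X$; these are the same computation factored in a different order, and your closing observation about the extra order of vanishing of $*F_A$ relative to $\nabla_A\Phi$ is exactly what the paper uses immediately afterward in Proposition~2.7.
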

\begin{proof}
The proof is very easy, but instructive.  First of all, any smooth
$1$-form extends canonically as an element of $\rho
C^\infty(X,\scT^*X)$. In particular,
\begin{equation}
(A,\Phi) \in \cfgC \Rightarrow \nabla_A\Phi \in\rho
C^\infty(X,\scT^*X\otimes\ad(P)).
\end{equation}
Next, suppose that $p$ is a point of $\pa X$, and that $(x,y_1,y_2)$ are adapted
coordinates near $p$. If we suppose that $x^{-2}\rd x$, $x^{-1}\rd
y_1$ and $x^{-1}\rd y_2$ are $\eta$-orthonormal at $p$, then
\begin{equation}
*(\rd x \wedge \rd y_1) = x^2 \rd y_2, 
\quad *(\rd y_1\wedge \rd y_2) = \rd x
\end{equation}
and it follows that $*$ maps $C^\infty(X,\Lambda^2T^*X)$ into $\rho^2
C^\infty(X,\scT^*X)$.  Hence the result.
\end{proof}

The fact that $*F(A)$ (as a section of $\scT^*X$) is an order of
magnitude smaller than $\nabla_A\Phi$ 
at $\pa X$ has the following important consequences for the boundary
behaviour of smooth monopoles.

Denote by $\ovA$ and $\ovPhi$ the restrictions of $A$ and $\Phi$
 to $\pa X$ (cf.\  \S\ref{s2.8.10.15}).

\begin{prop}\label{p2.8.10.15}
Suppose that $(A,\Phi)\in \cfgC$ satisfies
$$
\Bogo(A,\Phi)=0.
$$
Then $\nabla_{\ovA}\ovPhi=0$.
\end{prop}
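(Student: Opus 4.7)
The plan is to exploit the asymmetric boundary decay of the two summands of the Bogomolny operator noted in the preceding lemma: as a section of $\scT^*X\otimes \adP$, the term $*F_A$ is of order $\rho^2$ while $\nabla_A\Phi$ is only of order $\rho$ in general. The equation $*F_A = \nabla_A\Phi$ then forces $\nabla_A\Phi$ to have the better $\rho^2$-vanishing, and this extra power of $\rho$ is precisely what encodes the vanishing of the boundary restriction $\nabla_{\ovA}\ovPhi$.

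More concretely, first I would unpack what $\rho^2 C^\infty(X,\scT^*X)$ looks like in adapted coordinates $(x,y_1,y_2)$ with $\rho = x$. The scattering dual frame to \eqref{e1.3.8.15} is $x^{-2}\rd x,\ x^{-1}\rd y_1,\ x^{-1}\rd y_2$, so a 1-form $\omega = a\,\rd x + b_1\,\rd y_1 + b_2\,\rd y_2$ with $a,b_i \in C^\infty(X,\adP)$ reads in this frame as
\begin{equation*}
\omega = (x^2 a)(x^{-2}\rd x) + (xb_1)(x^{-1}\rd y_1) + (xb_2)(x^{-1}\rd y_2).
\end{equation*}
Membership of $\omega$ in $\rho C^\infty(X,\scT^*X \otimes \adP)$ is automatic, but membership in $\rho^2 C^\infty(X,\scT^*X\otimes\adP)$ requires each of $xb_1,\ xb_2$ to lie in $x^2 C^\infty$, equivalently $b_1,b_2 \in x C^\infty$, i.e., both tangential coefficients vanish on $\pa X$.

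Second, I would apply this to $\nabla_A\Phi$: the lemma's computation shows $*F_A \in \rho^2 C^\infty(X,\scT^*X\otimes\adP)$ because Hodge star sends smooth $2$-forms into this space (via $*(\rd y_1\wedge \rd y_2) = \rd x$ and $*(\rd x\wedge \rd y_1) = x^2\rd y_2$, etc.). The hypothesis $\Bogo(A,\Phi)=0$ then transfers this order to $\nabla_A\Phi$, so in adapted coordinates the tangential coefficients of $\nabla_A\Phi$ vanish on $\pa X$.

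Third, I invoke the naturality identity $\jmath^*(\nabla_A s) = \nabla_{A_\pa}\,\jmath^*(s)$ from the end of the proof of Proposition~\ref{p1.18.10.15}, applied to $s = \Phi$. In adapted coordinates this pulled-back $1$-form on $\pa X$ is exactly $b_1(0,y)\,\rd y_1 + b_2(0,y)\,\rd y_2$, which is zero by the previous step. Hence $\nabla_{\ovA}\ovPhi = 0$. No real obstacle is anticipated; the main care is to keep the two incarnations of $\nabla_A\Phi$ (as a smooth section of $T^*X\otimes\adP$ versus a section of $\scT^*X\otimes\adP$) straight and to observe that the $\scT^*X$-order exactly controls the ordinary boundary restriction of the tangential components.
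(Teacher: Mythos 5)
Your argument is correct and is essentially the paper's proof: both observe that $*F_A = \cO(\rho^2)$ while $\nabla_A\Phi$ is a priori only $\cO(\rho)$ as sections of $\scT^*X\otimes\adP$, then identify the leading $\rho$-coefficient of $\nabla_A\Phi$ with $\nabla_{\ovA}\ovPhi$ via the inclusion of ordinary $1$-forms into scattering $1$-forms, so that the equation forces this coefficient to vanish. Your version just makes the coordinate unpacking and the appeal to naturality of restriction from Proposition~\ref{p1.18.10.15} explicit, where the paper is terser.
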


\begin{proof} We have seen that $*F_A = \cO(\rho^2)$ whereas
$\nabla_A\Phi=\cO(\rho)$, as sections of $\scT^*\otimes \ad(P)$.   By
computations similar to those above, the coefficient of $\rho$ in
$\nabla_A\Phi$ is just $\nabla_{\ovA}\ovPhi$, under the map
$$
C^\infty(X,T^*X) \to \rho C^\infty(X,\scT^*X)
$$
which regards an ordinary $1$-form as a scattering $1$-form
which vanishes at the boundary.  Thus the conclusion
of the Proposition follows from the weaker assumption
$$
\Bogo(A,\Phi) \in \rho^2C^\infty(X,\scT^*X\otimes \ad(P)).
\qedhere
$$
\end{proof}

\noindent
From now on we take $G = \SU(2)$.

\begin{defn}
The {\em mass}, $m$, of the monopole $(A,\Phi)$ is the value of
$|\ovPhi|$.  If $m>0$, the
{\em charge}, or {\em monopole number}, $k$, of $(A,\Phi)$ is the
degree of the map $\ovPhi:\pa X \to \su(2)$.
\end{defn}

Proposition~\ref{p2.8.10.15} implies that $|\ovPhi|$ is constant. Hence the mass is
well-defined. By trivializing $P$ over $\pa X$, $\ovPhi$ becomes a map
into the sphere of radius $m$ in $\su(2)$. The degree of this map is
independent of the trivialization, so the charge $k$ is also well-defined.

From now on we assume $m>0$, and usually $m=1$.  The latter is no loss
of generality in euclidean space, for if $*_c$ denotes the Hodge star
operator with respect to the rescaled euclidean metric $c^2\eta$, we
have $*_c F(A) = c *F(A)$ and so if $(A,\Phi)$ is a monopole of mass
$m$, then $(A, c\Phi)$ is a monopole of mass $mc$ with respect to
$c^2\eta$.  If $m=0$, then $A$ is flat and $\Phi=0$ \cite[Theorem 10.3]{JT}.

We now come to the definition of the unframed moduli space.

\begin{defn}
Fix an $\SU(2)$-bundle $P$ over $X$, a positive mass $m$ and a
non-negative integer $k$.  Then the moduli space of monopoles of mass
$m$ and charge $k$ is defined to be the quotient
\begin{equation}\label{e2.26.7.15}
\unframe_{k,m} = \{(A,\Phi)\in \cfgC_{k,m}(X,P) : *F(A) = \nabla_A\Phi\}/\gauG.
\end{equation}
\end{defn}

We note that the boundary regularity we have built in means that our
configuration space $\cfgC$ is smaller than the usual one where
smoothness over $\RR^3$ is assumed along with the finiteness of the
action \eqref{e4.16.5.15}.   Taubes \cite[Theorem 10.5, p.\ 157]{JT} has
however proved that any finite-action solution of the Bogomolny
equations extends, in suitable gauges, to the radial compactification
$X$ of $\RR^3$.  Although the results explicitly given there give only
uniform bounds on $F_A$ and $\nabla_A\Phi$, bounds on their derivates
were obtained in Taubes's PhD thesis, and from such bounds it is
possible to prove that there are gauges in which a monopole $(A,\Phi)$
extends smoothly to $X$. Thus our `smooth' definition is equivalent to the
usual one.

It is also well known that $\unframe_{k,m}$ is a smooth manifold of real
dimension $4k-1$.  The reader is referred to \cite{AH} for more
background on $\cN_k$.  For $k=1$, every solution is, up to
translations of $\RR^3$, gauge equivalent to the spherically symmetric
solution found by  't Hooft and Polyakov \cite{tHooft,Polyakov}
Thus $\unframe_{1,m} = \RR^3$.

\subsection{The boundary connection and symmetry-breaking}

We saw in Proposition~\ref{p2.8.10.15} that the boundary values
$(\ovA, \ovPhi)$ of a smooth monopole satisfy $\nabla_{\ovA}\ovPhi=0$
on $\pa X$ and in fact this conclusion follows from the weaker
condition $\Bogo(A,\Phi) = \cO(\rho^2)$.  In this section we take this
discussion one stage further to determine $\ovA$ up to gauge.

Before we go ahead, consider the action of $\Phi$ on the
complexification $\adP := \ad(P)_\CC$ of $\ad(P)$.  At
every point this action is diagonalizable and at points where
$\Phi\neq 0$ there are precisely three distinct eigenvalues, $0$ and
$\pm i|\Phi|$. (We are using $G= \SU(2)$ here.)   Accordingly, at all
points of $X$ with $\Phi\neq 0$, and in particular in a neighbourhood
of $\pa X$, we have a decomposition $\adP = \frp_0 \oplus \frp_1$
where $\frp_0$ is the bundle annihilated by $\ad(\Phi)$, while
$\frp_1$ is its orthogonal complement.

Returning to monopoles, $\Phi\neq 0$ at infinity gives a reduction of
the symmetry group from $\SU(2)$ to the $\UU(1)$ subgroup stabilizing $\Phi$. 

Now if $(A,\Phi)$ is a solution of the Bogomolny equations, Taubes
\cite[\S IV.10, Theorem~10.5]{JT} has
proved not only the boundary regularity (in suitable gauges) but
also {\em rapid off-diagonal decay},
\begin{equation}\label{e11.8.10.15}
\Ad(\Phi)F_A = \cO(\rho^\infty),\mbox{ or equivalently }
\Ad(\Phi)\nabla_A\Phi = \cO(\rho^\infty).
\end{equation}
From this it follows that there exist local gauges in which $\Phi$ is
diagonal and $A$ is represented by a connection $1$-form which is
diagonal modulo terms of order $\rho^\infty$.

From our point of view, \eqref{e11.8.10.15} can be proved by an
iterative argument which 
combines the Bogomolny equations with the assumption that all data
are smooth up to the boundary. We shall use \eqref{e11.8.10.15} to
simplify the formal construction in \S\ref{S:Mgl}.  We shall not give
a complete proof of \eqref{e11.8.10.15} here, but we shall prove the
result to the next order.  At the same time we shall determine the
curvature of $\ovA$, which will also be important in the definition of
the framed moduli space.

Let $p\in \pa X$ be any point, and let $U$ be a product neighbourhood
of $p$, with coordinates $(x,y)$, $0\leq x < \delta$.   We may choose
`boundary radial gauge' to write $\nabla_A$ in the form
\begin{equation}\label{e21.8.10.15}
\nabla_A = \ol{\nabla} + \rd x \otimes \pa_x + xb + \cO(x^2)
\end{equation}
where $b(y,\rd y)$ is an $\ad(P)$-valued $1$-form on $\pa X$ and we've
written $\ol{\nabla}$ for $\nabla_{\ovA}$. 

In this gauge, we may expand $\Phi$ in the form
\begin{equation}\label{e22.8.10.15}
\Phi = \ovPhi + x\Phi_1(y) + \cO(x^2).
\end{equation}

\begin{prop}\label{p3.8.10.15}
Suppose that $(A,\Phi)$ have the above expressions in $U$ and satisfy
$*F(A) = \nabla_A\Phi$. Then we have
\begin{equation}\label{e11.26.7.15}
\ol{\nabla}\, \ovPhi = 0, \mbox{ and } \ol{\nabla}\,\Phi_1 = 0.
\end{equation}
We also have
\begin{equation}\label{e23.8.10.15}
\ad(\Phi_0)b = 0, \quad \ad(\Phi_0)\Phi_1 =0
\end{equation}
and
\begin{equation}\label{e24.8.10.15}
\dagger \ol{F} = \Phi_1,
\end{equation}
where $\dagger$ is the Hodge star operator with respect to the metric of
the round unit sphere $\pa X$. 
\end{prop}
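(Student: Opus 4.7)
The plan is to expand both sides of the Bogomolny equation as scattering $1$-forms to order $x^2$ in the coframe dual to \eqref{e1.3.8.15}, and match coefficients. Writing $e_0^\ast = x^{-2}\rd x$, $e_i^\ast = x^{-1}\rd y_i$, we have $\rd x = x^2 e_0^\ast$ and $\rd y_i = x e_i^\ast$, so that ordinary forms and scattering forms carry different $x$-weights.

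First I would substitute \eqref{e21.8.10.15}--\eqref{e22.8.10.15} into $\nabla_A \Phi$ to get
\[
\nabla_A \Phi \;=\; \ol{\nabla}\,\ovPhi \;+\; \rd x\cdot \Phi_1 \;+\; x\bigl(\ol{\nabla}\Phi_1 + [b,\ovPhi]\bigr) \;+\; \cO(x^2),
\]
which in the scattering coframe reads
\[
\nabla_A \Phi \;=\; x\,\ol{\nabla}\,\ovPhi \;+\; x^2\bigl(\Phi_1\, e_0^\ast + \ol{\nabla}\Phi_1 + [b,\ovPhi]\bigr) \;+\; \cO(x^3).
\]
Next I would expand the curvature as $F_A = \ol F + \rd x\wedge b + x\,\ol{\nabla} b + \cO(x^2)$ and apply $\star$ using the explicit formulas $\star(\rd x\wedge \rd y_1) = x^2 \rd y_2$, $\star(\rd y_1\wedge \rd y_2) = \rd x$ from the proof of the preceding lemma. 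The term $\star \ol F$ contributes $(\dagger \ol F)\,\rd x = x^2 (\dagger \ol F)\, e_0^\ast$, while $\star(\rd x\wedge b)$ and $x\star(\ol{\nabla} b)$ both contribute only at scattering order $x^3$. Hence
\[
\star F_A \;=\; x^2\,(\dagger \ol F)\,e_0^\ast \;+\; \cO(x^3).
\]

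Matching coefficients in $\Bogo(A,\Phi) = 0$ then yields three identities: at scattering order $x$ in the $e_y^\ast$ directions, $\ol{\nabla}\,\ovPhi = 0$, recovering Proposition~\ref{p2.8.10.15}; at scattering order $x^2$ in the $e_0^\ast$ direction, $\dagger \ol F = \Phi_1$, which is \eqref{e24.8.10.15}; and at scattering order $x^2$ in the $e_y^\ast$ directions the intermediate identity
\[
\ol{\nabla}\Phi_1 \;+\; [b,\ovPhi] \;=\; 0.
\]

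Finally, to extract \eqref{e23.8.10.15} and the second half of \eqref{e11.26.7.15}, I would apply the standard curvature identity $\ol{\nabla}^{\,2}\ovPhi = [\ol F,\ovPhi]$. Combined with $\ol{\nabla}\,\ovPhi = 0$, this gives $[\ol F,\ovPhi] = 0$, and hence $[\dagger \ol F,\ovPhi] = 0$; substituting $\dagger\ol F = \Phi_1$ gives $\ad(\ovPhi)\Phi_1 = 0$. Applying $\ad(\ovPhi)$ to the intermediate identity above, the term $\ad(\ovPhi)\ol{\nabla}\Phi_1$ vanishes because $\ol{\nabla}$ commutes with $\ad(\ovPhi)$ modulo $\ol{\nabla}\,\ovPhi = 0$ and $\ad(\ovPhi)\Phi_1 = 0$, while the second term yields $-\ad(\ovPhi)^2 b$. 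Thus $\ad(\ovPhi)^2 b = 0$. Since $\ad(\ovPhi)$ on $\adP$ is diagonalizable with eigenvalues $0,\pm i\abs{\ovPhi}$, its kernel coincides with that of its square, so $\ad(\ovPhi)\,b = 0$. Feeding this back into the intermediate identity gives $\ol{\nabla}\Phi_1 = 0$.

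The main obstacle is pure bookkeeping: tracking the distinct $x$-weights of $\rd x$ versus $\rd y_i$ under the passage to the scattering coframe, so that the matching at scattering orders $x$ and $x^2$ is balanced correctly. Once this is set up, each of the four conclusions of the Proposition follows by a short algebraic manipulation.
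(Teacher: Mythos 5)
Your proposal is correct and follows essentially the same route as the paper: expand both sides of the Bogomolny equation in adapted coordinates, keep careful track of the $x$-weights under the passage to the scattering coframe, and match coefficients to obtain the three identities $\ol{\nabla}\,\ovPhi = 0$, $\dagger\ol{F}=\Phi_1$, and $\ol{\nabla}\Phi_1 + [b,\ovPhi] = 0$. Your finishing step is slightly more elaborate than the paper's — the paper simply observes that the first term of the intermediate identity lies in $\frp_0$ while the second lies in $\frp_1$, so both must vanish — but your version is equivalent and in fact makes explicit the curvature identity $[\ol F,\ovPhi]=\ol\nabla^2\ovPhi=0$ and the $\ol\nabla$-invariance of $\ker\ad(\ovPhi)$ that the paper's phrasing ("the first term is a multiple of $\ovPhi$") takes for granted.
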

\begin{proof}
It is useful to keep in mind that with respect to the euclidean
metric,
\begin{equation}
|\rd x| = \cO(x^2),\;\; |\rd y| = \cO(x).
\end{equation}
Then we compute
\begin{eqnarray}
\nabla_A\Phi &=& (\ol{\nabla} + \rd x\otimes \pa_x + xb)
(\ovPhi + x\Phi_1 +\cO(x^2)) \nonumber\\
&=& \ol{\nabla}\,\ovPhi  \label{e13.26.7.15}\\
&+& x (\ol{\nabla}\, \Phi_1 + [b,\ovPhi]) \label{e14.26.7.15} \\
&+& \rd x(\Phi_1 + [a,\ovPhi]) \label{e15.26.7.15} \\
&+& \cO(x^3).
\end{eqnarray}%\todo{I don't like the formatting (though I set it myself)}
We have set the terms out so that \eqref{e13.26.7.15} is $\cO(x)$, while
\eqref{e14.26.7.15} is $\cO(x^2)$ and tangential, while
\eqref{e15.26.7.15} is $\cO(x^2)$ and normal to $\pa X$.
The expansion to $\cO(x^3)$ of $F_A$ is much simpler:
\begin{equation}\label{e25.8.10.15}
F_A = \ol{F} + \cO(x^3)
\end{equation}
so 
\begin{equation}\label{e1.21.9.15}
*F_A = \dagger \ol{ F}\,\rd x + \cO(x^3).
\end{equation}
We now simply equate coefficients. At $\cO(x)$ we recover
Proposition~\ref{p2.8.10.15}, which is the first of \eqref{e11.26.7.15}.
The normal component at $\cO(x^2)$ gives
\begin{equation}
\dagger \ol{F} = \Phi_1.
\end{equation}

The tangential component at $\cO(x^2)$ gives
\begin{equation}
\ol{\nabla}\,\Phi_1 + [b,\ovPhi] = 0.
\end{equation}
But the first term is a multiple of $\ovPhi$, while the second is
orthogonal to it. Hence both terms vanish, completing the proof.
\end{proof}

\subsection{Framed moduli space}
\label{S:bogoscat_framed}
We now define the framed moduli space $\monM_k$.  This will be a
$\UU(1)$ bundle over $\unframe_k$ and is in many ways the more natural
object.  Motivated by Propositions~\ref{p2.8.10.15} and
\ref{p3.8.10.15} we make the following definition:
\begin{defn}
A pair $(\ovA,\ovPhi)$ over $\pa X$ is called {\em admissible} if
\begin{equation}\label{e1.5.8.15}
\ol{\nabla}\,\ovPhi=0,\; \ovPhi\neq 0\mbox{ and }\ol{\nabla}\,\dagger\!\ol{F} =0, 
\end{equation}
where as above we have written $\ol{\nabla}$ for $\nabla_{\ovA}$
and $\ol{F}$ for $F_{\ovA}$.
We call the pair admissible {\em charge-$k$} boundary data if
\begin{equation}\label{e31.8.10.15}
\dagger\! \ol{F} = \frac{k}{2m}\ovPhi.
\end{equation}
where $m = |\ovPhi|$.
\end{defn}

\begin{rmk}
Let $\ol{E} \to \pa X$ be the complex vector bundle associated by the
fundamental representation of $\SU(2)$. At each point $p$ of $\pa X$, we have the eigenspaces $(L_{\pm})_p$ of
$\ovPhi_p$, viewed as an endomorphism of $\ol{E}_p$.  By the first two
conditions of \eqref{e1.5.8.15}, these eigenspaces patch together to
form a pair of complex line bundles $L_{\pm}$ over $\pa X$. Then
$\Phi$ acts as multiplication by $\pm im$ on $L_{\pm}$ and $\ovA$
preserves each of $L_{\pm}$.  Thus there are $\UU(1)$ connections
$\nabla_{\pm}$ on $L_{\pm}$ such that 
$$
\ol{\nabla} = \diagl(\nabla_+, \nabla_-)
$$
with respect to the isomorphism $E = L \oplus L^{-1}$, with $\ovPhi =
\diagl(im,-im)$.  Then the curvature form of $\ol{\nabla}$ takes the
form
$$
\ol{F} = \diagl(f_+, f_-)(\dagger 1)
$$
where $f_{\pm}$ are imaginary functions on $\pa X$. In fact, $L_+$ and
$L_-$ are mutually dual, the connections $\nabla_{\pm}$ respect this
and so $f_- = - f_+$. Since $\ol{\nabla}$ acts as $\rd$ on the
diagonal components of any endomorphism (these components being
sections of a canonically trivial line bundle), the third part of
\eqref{e1.5.8.15} gives $\rd f_{\pm} = 0$, so $f_{\pm} = \pm
i\lambda$, for some real constant $\lambda$.  By Chern--Weil theory,
$\lambda = -\frac{1}{2}c_1(L_+)$, where we have identified the $c_1$
with an integer via the fundamental class of $\pa X$.   This gives
\eqref{e31.8.10.15} if $c_1(L_{\pm}) = \mp k$.

The arrangement of signs comes from a standard computation which we
recall here. We have 
$$
(F_A - *\nabla_A\Phi)\wedge *(F_A -*\nabla_A\Phi) = F_A\wedge *F_A +
\nabla_A\Phi \wedge *\nabla_A\Phi - 2F_A\wedge \nabla_A\Phi.
$$
Taking $(-1/2)\tr$ of both sides and imposing the Bogomolny equations,
$$
\|F_A\|^2 + \|\nabla_A\Phi\|^2 = -\int\rd \tr(\Phi F_A) = - \int_{\pa
  X}\tr(\ovPhi \ol{F}).
$$
If $\ovPhi = \diagl(im,-im)$, $\dagger\ol{F} = (ik/2, - ik/2)$ it
follows that
\begin{equation}\label{e1.9.10.15}
\|F_A\|^2 + \|\nabla_A\Phi\|^2 = 4\pi mk
\end{equation}
and so $k>0$ as required.
\end{rmk}

Given any choice of admissible boundary data, 
define the framed configuration space
\begin{equation}\label{e3.5.8.15}
\cfgC_0(X,P) = \{(A,\Phi)\in \cfgC(X,P): (A,\Phi)|\pa X =
(\ol{A},\ol{\Phi})\},
\end{equation}
the framed gauge group
\begin{equation}\label{e4.5.8.15}
\gauG_0(X,P) = \{ \gamma \in \gauG(X,P) : \gamma|\pa X = 1\},
\end{equation}
and the framed moduli space
\begin{equation}\label{e5.5.8.15}
\monM_{k,m} = 
\{(A,\Phi) \in \cfgC_0 : \Bogo(A,\Phi) = 0\}/\gauG_0.
\end{equation}

It is known that $\monM_k$ is non-compact smooth manifold of real
dimension $4k$.   The $L^2$ metric $G_k$ from \eqref{e2.21.9.15} gives $\monM_k$
the structure of a complete hyperk\"ahler manifold.  We shall say more
about the local structure of $\monM_k$ in the next subsection.

We note also that there is a free isometric action of $\UU(1)$ on
$\monM_k$, with $\monM_k/\UU(1) = \unframe_k$.  This action is given
explicitly by using $\Phi$ as a gauge transformation:
\begin{equation}\label{e6.5.8.15}
t\cdot(A,\Phi) = e^{t\Phi}(A,\Phi),\; t\in \RR;
\end{equation}
the point is that the boundary data $(\ovA,\ovPhi)$ are fixed by this
action.  If $0 < t< 2\pi/m$, then $e^{t\Phi}$ is not in $\gauG_0$ so
\eqref{e6.5.8.15} is a non-trivial action.  The action is periodic,
however, of period $2\pi/m$ and so \eqref{e6.5.8.15} defines an action
of the circle $\RR/(2\pi \ZZ/m)$ on $\monM_{k,m}$.  It is clear that
the quotient is $\unframe_k$.

\begin{rmk}
Although our definition of $\monM_k$ depends on a choice of admissible
boundary data, any two choices lead to the same moduli space.  To be
more precise, let $\cfgC_0'$ and $\monM_k'$ be respectively the
configuration space and the moduli space defined by replacing
$(\ovA,\ovPhi)$ by a different choice $(\ovA',\ovPhi')$ of admissible
boundary data.  Then we claim that there is a principal $\UU(1)$-set
of natural identifications $\gamma_t : \monM_k \simeq \monM'_k$,
$t\in \RR/(2\pi\ZZ/m)$, the
$\UU(1)$ action being given by composition with the above
$\UU(1)$-action on either of $\monM_k$ or $\monM_k'$.

The main point is that since $\ol{A}'$ and $\ol{A}$ have the same
curvature and the base is simply connected, $\ol{A}$ and $\ol{A}'$ are
gauge-equivalent by some gauge transformation $\ol{g}$.  Clearly
$\ol{g}$ is unique up to composition with
gauge transformations which preserve $\ol{A}$, and this group is isomorphic
to the $\UU(1)$ generated by $\exp(t\ovPhi)$.   If $g \in \gauG$ is
any extension of $\ol{g}$, then
$$
(A,\Phi) \mapsto g(A,\Phi)
$$
defines a diffeomorphism from $\cfgC_0$ to $\cfgC_0'$ and this induces
the desired identification of $\monM_k$ with $\monM_k'$.
\end{rmk}

\subsection{The local structure of $\monM_k$}
\label{S:local}
Let $(A,\Phi)$ be a solution of the Bogomolny equations representing a
point $m$ of $\monM_k$. The infinitesimal structure of $\monM_k$ is
described by the {\em deformation complex}
\begin{equation}\label{e1.10.10.15}
0 \to \Lie(\gauG_0) \stackrel{\gauD}{\to} T_{(A,\Phi)}\cfgC_0 \stackrel{D\Bogo}{\to}
C^\infty(X,\scT^*\otimes\ad(P)) \to 0.
\end{equation}
Here
\begin{eqnarray}
\Lie(\gauG_0) &=&\rho C^\infty(X,\ad(P)), \label{e2.10.10.15}\\
T_{(A,\Phi)}\cfgC_0 & = &
\rho C^\infty(X,\ad(P))\oplus
\rho^2C^\infty(X,\scT^*\otimes \ad(P))  \label{e3.10.10.15}
\end{eqnarray}
and 
\begin{equation}
\gauD_{(A,\Phi)}(\xi) = (-\rd_A u, - \ad(\Phi)u).
\end{equation}
The cohomology in the middle of \eqref{e1.10.10.15} is the
(virtual) tangent space of $\monM_k$ at $m$, and may be identified with the true tangent
space if $D\Bogo$ is surjective and the action of $\gauG_0$ is free.
Fortunately these conditions are always satisfied for monopoles.

As in the introduction, the linearization $D\Bogo$ of the Bogomolny
equation combines with the Coulomb gauge fixing operator $\gauD^*_{(A,\Phi)}$ to
give the operator $L_{(A,\Phi)}$ of \eqref{e4.21.9.15}.  One shows
\cite{kottke2015dimension} that the cohomology in the middle of \eqref{e1.10.10.15} is
  isomorphic to the null-space of $L_{(A,\Phi)}$,
\begin{equation}\label{e5.10.10.15}
L_{(A,\Phi)}:
\rho^2 C^\infty(X,\Lambda\otimes\ad(P)) \to
\rho^2 C^\infty(X,\Lambda\otimes\ad(P)),
\end{equation}
where $\Lambda = \scLam^1\oplus \scLam^0$.
\begin{rmk}
The reason why the domain is different from \eqref{e3.10.10.15} is
that the coefficient of $\rho$ in $\phi$ is fixed by
Proposition~\ref{p2.8.10.15} (by the Bogomolny equations).  In
particular any solution of the linearized Bogomolny equations in the
domain \eqref{e3.10.10.15} is actually $\cO(\rho^2)$.
\end{rmk}
By a slight abuse of notation, set
\begin{equation}\label{e6.10.10.15}
T_{(A,\Phi)}\monM_k = N(L_{A,\Phi}),
\end{equation}
the null-space of $L_{A,\Phi}$ with domain \eqref{e5.10.10.15}.

One obtains the dimension of $\monM_k$ and its smoothness by combining
an index theorem for \eqref{e5.10.10.15} with the Weitzenbock formula
\begin{equation}\label{e7.10.10.15}
L_{A,\Phi}L^*_{A,\Phi} = \nabla_A^*\nabla_A + \ad(\Phi^*)\ad(\Phi).
\end{equation}
This formula is valid whenever the base metric is Ricci-flat and $\Bogo(A,\Phi)=0$.  The
index has been studied by Taubes \cite{taubes1983stability} and Kottke \cite{kottke2015dimension} and the result is
that there is a Fredholm framework for $L_{A,\Phi}$ making it 
surjective (between spaces which are suitable completions of those in
\eqref{e5.10.10.15}) with index $4k$.

The same analytic framework allows us to prove that $\monM_k$ is a
smooth manifold.  One proves first that for all sufficiently small
$(\widetilde{a},\widetilde{\phi})\in \cfgC_0$, there is an $(a,\phi)$
gauge related to $(\widetilde{a},\widetilde{\phi})$ in Coulomb gauge
with respect to $(A,\Phi)$,
\begin{equation}
\gauD^*_{(A,\Phi)}(a,\phi) =0.
\end{equation}
Hence any nearby point $m'\in \monM_k$ is represented by
$(A+a,\Phi+\phi)$ where
\begin{equation}\label{e11.10.10.15}
L_{(A,\Phi)}(a,\phi) = - Q(a,\phi),
\end{equation}
and 
\begin{equation}\label{e12.10.10.15}
Q(a,\phi) = *[a,a] - [a,\phi]
\end{equation}
is the nonlinear part of $\Bogo$.  It follows from the surjectivity of
$L_{(A,\Phi)}$ and the implicit function theorem that the set of small
solutions to \eqref{e11.10.10.15} is the graph of a nonlinear operator
from $T_m\monM_k$ to its $L^2$-orthogonal complement in 
$\rho^2C^\infty(X,\Lambda\otimes\ad(P))$.

\subsection{The centre of a monopole}
\label{s1.29.10.15}
The unframed moduli space for monopoles of charge $1$ is diffeomorphic
to $\RR^3$; the interepretation of this is that given the 't Hooft
monopole $m_0$ centred at the origin of $\RR^3$, every other monopole
of charge $1$ is a translation of $m_0$ by some vector in $\RR^3$.

For $k>0$, one cannot generally distinguish $k$ individual charge-1
monopoles with definite centres, but every $m\in \monM_k$ does have a
well-defined centre.  To explain the definition, revert to euclidean
coordinates $z\in \RR^3$, and observe that the Higgs field has an
expansion of the form
\begin{equation}\label{e25.10.10.15}
\Phi = \begin{bmatrix} i & 0 \\ 0 & - i \end{bmatrix}
\left( m - \frac{k}{2|z|} - \frac{k}{2}\frac{v\cdot z}{|z|^3} + \cO(|z|^{-3}) \right)
\end{equation}
relative to the decomposition of $\frp = \frp_0\oplus \frp_1$.  Here $v \in \RR^3$ is some vector.

\begin{defn}
For $m\in \monM_k$ with $\Phi$ given by \eqref{e25.10.10.15}, the {\em
  centre} of $m$ is defined to be the vector $v/m$ (relative to the
  origin of the $z$ coordinates). The moduli space of monopoles with
  centre at $0$ is denoted $\monM^c_k$.
\end{defn} 

The definition can be motivated by considering the change in $v$ when
we translate the monopole.  More precisely, if $c\in \RR^3$, consider
$(A_c,\Phi_c)$, the result of pulling back by the translation
$z\mapsto z +c$. Then the expansion of $\Phi_c$ is
\begin{equation}
\Phi_c(z) = \Phi(z-c) = 
\begin{bmatrix} i & 0 \\ 0 & - i \end{bmatrix}
\left( m - \frac{k}{2|z|} - \frac{k}{2}\frac{(v+c)\cdot z}{|z|^3} + \cO(|z|^{-3}) \right)
\end{equation}
Thus our definition is consistent with the translation action on
$\RR^3$, for the centre of $m$ should be translated by $c$ if the
whole monopole is translated by $c$.

\begin{rmk}
We note without proof that the $\UU(1)$ action on $\monM_k$ is
triholomorphic. Viewing the associated hyperk\"ahler moment map $\mu$
as a map to $\RR^3$, $\mu(m)$ is equal to the centre of $m$.
\end{rmk}

\begin{rmk}
From the previous remark $\monM_k^0 := \monM_k^c/\UU(1)$ is a hyperk\"ahler
quotient of $\monM_k$, hence a hyperk\"ahler manifold of dimension
$4k-4$.  As discussed in \cite[p.\ 20]{AH}, its universal cover
$\wt{\monM}_k^0$ is a $k$-fold cover, called the moduli space of
strongly centred monopoles.  This appears in the decomposition 
\begin{equation}
\wt{\monM}_k = \RR^3\times S^1 \times \wt{\monM}_k^0
\end{equation}
as a riemannian product \cite[p.\ 21]{AH}. We shall discuss this at the infinitesimal
level in the next section.
\end{rmk}

\subsection{Structure of the tangent space}
\label{s11.25.11.15}
Although we shall not make great use of it, recall that $T_m\monM_k$,
defined as in \eqref{e6.10.10.15}, has a quaternionic structure.   For
this, pick any oriented orthonormal basis $(e_1,e_2,e_3)$ of $\RR^3$, identify
it with the corresponding frame for $T^*\RR^3$,
and write any element of $T_m\monM_k$ in the form
$$
a_0 = \phi, \ a = \sum a_je_j
$$
where $a_0,\ldots,a_3 \in \rho^2C^\infty(\bbR^3; \ad(P))$. This decomposition
identifies the domain of $L_{(A,\Phi)}$ in \eqref{e5.10.10.15} with 
$\rho^2C^\infty(\bbR^3; \ad(P))\otimes_{\RR}\RR^4$ and identifying $\RR^4$
with the quaternions we have endowed this domain with the structure of
a quaternionic vector space. One checks that $L_{(A,\Phi)}$ commutes
with this quaternionic structure, and it follows that $T_m\monM_k =
N(L_{(A,\Phi)})$ inherits a quaternionic structure.

Our next task is to describe the subspace of $T_m\monM_k$
corresponding to the action of translations or $\RR^3$ and changes of
framing. This will turn out to be a quaternionic subspace of
$T_m\monM_k$, and its orthogonal complement is identifiable with the
elements of $T_m\monM_k$ which are $\cO(\rho^3)$ at $\pa X$.  The
decomposition is the infinitesimal version of the riemannian splitting
of the universal cover of $\monM_k$ as a product of $\RR^3\times S^1$
with the moduli space of strongly centred monopoles \cite{AH}.

For any infinitesimal gauge transformation $\xi \in \Lie(\gauG)$, the
corresponding tangent vector at $(A,\Phi)$ in $\cfgC$ is
$\gauD_{A,\Phi}\xi$. Taking $\xi = \Phi$, we get the tangent vector
\begin{equation}\label{e21.10.10.15}
\tau_0 = ( - \nabla_A \Phi, 0)
\end{equation}
which is easily verified to lie in $T_m\monM_k$. (One has to check
only that $\gauD^*_{A,\Phi}\tau_0=0$ and that $\tau_0 = \cO(\rho^2)$.)
For an orthonormal basis $e_i$ of $\RR^3$, set
\begin{equation}
\tau_i = ( \iota_{e_i}F_A , \nabla_{e_i} \Phi);
\end{equation}
this is the element of $T_{(A,\Phi)}\monM_k$ corresponding to
the infinitesimal translation $z \mapsto z + te_i$ of $(A,\Phi)$.
Note that this is nothing other than the derivative of $(A,\Phi)$ with respect
to $e_i$, using the connection $A$ itself to lift $e_i$ to $P$.

We note that the $\tau_a$, $a=0,1,2,3$ span a quaternionic subspace of
$T_{A,\Phi}\monM_k$: $\tau_i$ is obtained from $\tau_0$ by applying the
$i$-th complex structure.   We leave the verification that
$L_{A,\Phi}\tau_a=0$ to the reader.

\begin{prop}\label{p2.23.11.15}
Let $(A,\Phi)$ represent an element of $\monM_k$.  There is an
orthogonal direct-sum decomposition
\begin{equation}\label{e1.6.8.15}
T_m\monM_k = \langle \tau_0,\ldots,\tau_3\rangle \oplus T'_m\monM_k
\end{equation}
where
\begin{equation}\label{e3.6.8.15}
T_m'\monM_k
= \{u \in C^\infty(X,\Lambda\otimes \ad(P)) : |u| = \cO(\rho^3)\}
\end{equation}
Moreover, if
\begin{equation}\label{e4.6.8.15}
v = \sum \lambda_a\tau_a, 
\end{equation}
we have
\begin{equation}\label{e5.6.8.15}
\|v\|^2_{L^2} = 2\pi m k(\lambda_0^2 + \cdots + \lambda_3^2).
\end{equation}
\end{prop}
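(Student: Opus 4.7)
The plan is to prove the three claims in sequence: the $L^2$-orthogonality of $\langle \tau_0,\ldots,\tau_3\rangle$ with $T'_m\monM_k$, the direct-sum decomposition of $T_m\monM_k$, and the norm identity. Throughout I take the already-noted fact that each $\tau_a \in T_m\monM_k$, i.e.\ $L_{(A,\Phi)}\tau_a = 0$, as given. For orthogonality, let $v' = (a,\phi) \in T'_m\monM_k$. Integration by parts gives
\[
\langle \tau_0, v'\rangle_{L^2} = -\int_X \langle \nabla_A\Phi, a\rangle\, dV = -\int_X \langle \Phi, d_A^* a\rangle\, dV + B,
\]
where the boundary term $B$ over a sphere of radius $R$ vanishes as $R \to \infty$ because $|a| = \cO(R^{-3})$ while $|\Phi|$ is bounded. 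The second component of $L_{(A,\Phi)}(a,\phi) = 0$ is the Coulomb gauge condition $d_A^* a = [\Phi, \phi]$, so the integrand reduces to $\langle \Phi, [\Phi,\phi]\rangle$, which vanishes pointwise by $\Ad$-invariance of the pairing. For $\tau_i = I_i \tau_0$, I observe that the quaternionic complex structures $I_1, I_2, I_3$ act fiberwise on $\Lambda\otimes \ad(P)$ and preserve pointwise scattering norms, so $T'_m\monM_k$ is $I_i$-invariant; combined with the $L^2$-skew-adjointness of the $I_i$ (an immediate consequence of $I_i^2 = -1$ and isometry), this gives $\langle \tau_i, v'\rangle = -\langle \tau_0, I_i v'\rangle = 0$.

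For the decomposition $T_m\monM_k = \langle \tau_a\rangle \oplus T'_m\monM_k$, I first note that the $\tau_a$ have nontrivial, linearly independent leading $\rho^2$-coefficients at $\pa X$. Using Proposition~\ref{p3.8.10.15} together with $\Phi_1 = (k/2m)\ovPhi$ (from \eqref{e24.8.10.15} and \eqref{e31.8.10.15}), the 1-form component of $\tau_0$ has a nontrivial normal $\rho^2$-term; similarly, $\tau_i$ carries independent leading data from the components of $\ol F$ and $\nabla_A\Phi$ along $e_i$. Consequently $\langle \tau_a\rangle \cap T'_m\monM_k = 0$. The nontrivial content is showing that any $v \in T_m\monM_k$ has its leading $\rho^2$-coefficient in the span of those of the $\tau_a$, so that $v - \sum \lambda_a \tau_a$ lies in $T'_m\monM_k$ for some unique choice of $\lambda_a$. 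This is the main obstacle, to be supplied by an indicial analysis of $L_{(A,\Phi)}$ as a scattering operator at $\pa X$: the relative Fredholm index between the kernel on $\rho^2 C^\infty$ and that on $\rho^3 C^\infty$ equals the total multiplicity of indicial roots crossed in the weight interval $(2,3]$, which a direct computation of the indicial family shows to be exactly four---matching the four gauge-inequivalent directions generated by the $\RR^3 \times S^1$ symmetry and realized by the $\tau_a$.

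For the norm identity, by the Bogomolny equations and pointwise isometry of $*$,
\[
\|\tau_0\|^2_{L^2} = \int_X |\nabla_A\Phi|^2\, dV = \int_X |F_A|^2\, dV,
\]
so $2\|\tau_0\|^2 = \|F_A\|^2 + \|\nabla_A\Phi\|^2 = 4\pi m k$ by \eqref{e1.9.10.15}, giving $\|\tau_0\|^2 = 2\pi m k$. Since the $I_i$ are $L^2$-isometries, $\|\tau_i\|^2 = \|\tau_0\|^2 = 2\pi m k$. Cross terms vanish: $\langle \tau_0, \tau_i\rangle = \langle \tau_0, I_i \tau_0\rangle = 0$ by the skew-adjointness of $I_i$, and for $i \neq j$ the quaternion relations $I_i I_j = \pm I_k$ give $\langle \tau_i, \tau_j\rangle = -\langle \tau_0, I_i I_j \tau_0\rangle = \mp\langle \tau_0, I_k \tau_0\rangle = 0$. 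Expanding $\|\sum \lambda_a \tau_a\|^2$ yields \eqref{e5.6.8.15}.
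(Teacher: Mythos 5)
Your orthogonality and norm computations are correct and run close to the paper's. The paper's orthogonality argument uses the single identity $\sum\lambda_a\tau_a = L^*_{(A,\Phi)}(\lambda\Phi)$ and pairs it against $Lv=0$, which handles all four $\tau_a$ at once; your argument handles $\tau_0$ by direct integration by parts plus the Coulomb condition and then leverages the complex structures $I_i$ for $\tau_1,\tau_2,\tau_3$. Both are sound, and in both the real content is that the boundary term vanishes for $O(\rho^3)$ sections but not for $O(\rho^2)$ ones — your computation of the interior integral being zero is true for any Coulomb-gauged pair and so by itself proves nothing, a point worth being explicit about. Your norm computation is essentially the paper's.

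The genuine gap is in the decomposition, and it is precisely the part you label ``the main obstacle'' and then defer. You invoke an indicial analysis but do not carry it out; the claim that the weight interval $(2,3]$ contributes ``exactly four'' is asserted, and your justification — that this ``matches the four gauge-inequivalent directions generated by the $\RR^3\times S^1$ symmetry'' — is circular, since the decomposition you are trying to prove is exactly the statement that those symmetries exhaust the $O(\rho^2)$ part of the tangent space. Moreover, equating the kernel-dimension jump with the total multiplicity of crossed indicial roots is only correct if the cokernel dimension does not change across the interval; this requires the surjectivity of $L_{(A,\Phi)}$ at both weights, which you do not address. The paper avoids the relative index theorem entirely: it uses rapid off-diagonal decay to reduce to the $\frp_0$ component where $L$ agrees with the flat model $L = \begin{bmatrix}*\rd & -\rd\\ -\rd^* & 0\end{bmatrix}$ modulo $O(\rho^\infty)$, then uses $L^2=\Delta$ and Proposition~\ref{p1.11.8.15} to classify all homogeneous solutions at order $\rho^2$: the four components must each be $\langle c,z\rangle/|z|^3$, and imposing $Lf=0$ leaves exactly the four-dimensional family $\{f_0, f_c : c\in\RR^3\}$, which can be matched against the leading asymptotics of the $\tau_a$. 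This explicit classification is the step your proof needs and does not supply.
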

\begin{rmk}
If $m \in \monM_k^c$, then $T'_m$ is identifiable with
$T_m\monM_k^0$, where we have made no distinction between $m$ and the
point it represents in $\monM_k^c/\UU(1)$.  More generally, $T'_m$ is
the subspace of infinitesimal changes in $m$ which keep its centre and
framing fixed.
\end{rmk}

\begin{proof}%\todo{This whole proof needs to be polished.}
We verify \eqref{e5.6.8.15} first.  For $\tau_0$, we have by \eqref{e1.9.10.15}
\begin{equation}\label{e6.6.8.15}
\|\tau_0\|^2 = \int |\nabla_A\Phi|^2 = 2\pi mk.
\end{equation}
One verifies $\|\tau_i\|^2 = 2\pi mk$ either by hand, or by noting
that $\tau_i = I_i\tau_0$, where $I_1,I_2,I_3$ are the three complex
structures on $T_{m}\monM_k$, which are isometries
  of this space.

Next, consider any solution,
\begin{equation}\label{e11.6.8.15}
L_{(A,\Phi)} u =0,\;\; |u| = \cO(\rho^2).
\end{equation}
By boundary regularity for $L_{A,\Phi}$, the off-diagonal components
of $u$ are $\cO(\rho^\infty)$, and so if $u_0$ is the diagonal $\frp_0$
component of $u$, we have
\begin{equation}\label{e12.6.8.15}
L_{A,\Phi}u_0 = \cO(\rho^\infty).
\end{equation}
The action of $L_{A,\Phi}$ on $\Lambda\otimes\frp_0$
is, up to rapidly decreasing terms at $\pa X$, that of the model operator
\begin{equation}\label{e13.6.8.15}
L = \begin{bmatrix} *\rd & - \rd \\ -\rd^* & 0 \end{bmatrix},
\end{equation}
(cf.\ Appendix~\ref{S:linear})
and so the asymptotic expansion of $u_0$ in \eqref{e12.6.8.15} is a
sum of homogeneous solutions $f$ of $Lf=0$.  Since $L^2 = \Delta$, the
Laplacian on $\Lambda$, if $f$ is $\cO(\rho^2)$, then the $4$
components of $f$ all have to be of the form $\langle
c,z\rangle/|z|^3$, for some constant $c\in \RR^3$.  Then for $Lf=0$,
we need
\begin{equation}
f_0 = \begin{bmatrix} 0 \\ z/|z|^3 \end{bmatrix},\;\;\mbox{ or }
f_c = \begin{bmatrix} \langle c,z\rangle/|z|^3
 \\ - (c\wedge z)/|z|^3 \end{bmatrix}.
\end{equation}
On the other hand, each of these solutions can be continued over $X$
as a linear combination of the $\tau_a$. It follows that we have an
exact sequence
\begin{equation}
0 \to T_{(A,\Phi)}\monM_k^c \to T_{(A,\Phi)}\monM_k \to \RR^4 \to 0
\end{equation}
where $T_{(A,\Phi)}\monM_k^c$ is as in \eqref{e3.6.8.15}.

For the orthogonality, note that we can rewrite
\begin{equation}\label{e21.6.8.15}
\sum \lambda_a\tau_a = L_{(A,\Phi)} \begin{bmatrix} \lambda_0 \Phi \\
  \ul{\lambda}\Phi\end{bmatrix}
= L^*_{(A,\Phi)}
\begin{bmatrix} \lambda_0 \Phi \\
  \ul{\lambda}\Phi\end{bmatrix}
\end{equation}
where $\ul{\lambda}$ is the euclidean vector
$(\lambda_1,\lambda_2,\lambda_3)$. 
(Similarly, the model solution $f$ can be written
\begin{equation}
f = L\begin{bmatrix} \lambda_0|z|^{-1} \\ \ul{\lambda}|z|^{-1}
\end{bmatrix}.)
\end{equation} 
%\todo{This discussion is closely related to the
  %calculation of the indicial roots of $L$ and we need to decide how
  %to distribute the material between here and the relevant appendix.}

Now if $v\in T_{(A,\Phi)}\monM_k^c$, we compute
\begin{equation}
(v,\sum \lambda_a\tau_a) =
\int_X \langle v, L^*_{(A,\Phi)}(\lambda \Phi)\rangle
\end{equation}
Restricting to the sphere $\rho = \delta$ 
we are left with a boundary term of the form
\begin{equation}
\int_{\rho = \delta} \langle v, \lambda\Phi\rangle
\end{equation}
because $L_{(A,\Phi)}v=0$.  Because $|v| = \cO(\rho^3)$, $|\lambda\Phi|
= \cO(1)$ but the area of $\rho = \delta$ is $4\pi \delta^{-2}$, this boundary term
goes to zero as $\delta \to 0$. This completes the proof of the proposition.
\end{proof}

\subsection{Dirac monopoles} \label{S:dirac}

If we take $G=\UU(1)$, the Bogomolny equations reduce to equations
studied by Dirac in relation to ordinary magnetic monopoles.   There
are no non-trivial solutions on $\RR^3$ without singularities, but
these Dirac monopoles are also a key ingredient in the gluing theorem
that will be discussed below.  We therefore give a quick account
geared to our later applications.  In particular we give a careful
discussion of moduli spaces of Dirac monopoles with fixed
singularities.   Although the discussion can easily be extended to
other $3$-manifolds \cite{Oliveira} we shall confine ourselves to the
case of $\RR^3$.

Let $U$ be an open set of $\RR^3$. Let $Q \to U$ be a $\UU(1)$
principal bundle with connection $a$ and let $\phi$ be section of the
adjoint bundle.  Since this is canonically trivial, $\phi$ can be
identified canonically with an imaginary function on $U$.  Similarly
the curvature $F_a$ is canonically an imaginary closed $2$-form on $U$ and
$(i/2\pi)F_a$ represents $c_1(Q)$\footnote{In this section we are
  using $(a,\phi)$ for $\UU(1)$-valued monopole data, rather than
  infinitesimal deformations of $\SU(2)$-monopoles. We hope that no
  confusion will result from this}. 

Suppose that $(a,\phi)$ satisfies the Bogomolny equations
\begin{equation}\label{e1.11.10.15}
*F_a = \nabla_a\phi = \rd \phi.
\end{equation}
Since $\rd F_a = 0$ we deduce from this that $\phi$ is harmonic,
\begin{equation}\label{e2.11.10.15}
\Delta \phi = 0\mbox{ in }U.
\end{equation}

Conversely, given an imaginary harmonic function on $U$ with  {\em
  integral periods} in the sense that
\begin{equation}\label{e3.11.10.15}
\frac{i}{2\pi}\int_{\Sigma} *\rd \phi \in \ZZ\mbox{ for all }\Sigma \in H_2(U,\ZZ)
\end{equation}
there exists a pair $(Q,a)$ if a $\UU(1)$ bundle and connection with 
\begin{equation}\label{e4.11.10.15}
F_a = *\rd \phi.
\end{equation}
In this case the gauge group is $C^\infty(U, \UU(1))$ and if $\gamma$
is an element of the gauge group,
\begin{equation}
\gamma\cdot(a,\phi) = (a - (\rd \gamma)\gamma^{-1}, \phi)
\end{equation}

We shall reserve the term `Dirac monopole' for the case that $U =
\RR^3\setminus \{z_1,\ldots,z_N\}$ and $\phi$ has the simplest
possible singularities at the points $z_j$.  In keeping with the
general philosophy of this paper, we shall replace this punctured
non-compact manifold by a compact manifold with boundary
\begin{equation}\label{e6.11.10.15}
	D := [\ol{\bbR^3}; z_1,\ldots,z_N].
\end{equation}
This is the real blow-up at the points $z_j$ (assumed distinct) of the
radial compactification $X$ of $\RR^3$.
Thus $D$ has $N+1$ boundary hypersurfaces, each of which is
diffeomorphic to the $2$-sphere $\bbS^2$, and which will be denoted by $S_1,\ldots,S_N$ and
$S_\infty$.  Here $S_j$ is the boundary hypersurface introduced by
blowing up $z_j$ while $S_\infty$ is the original boundary of $X$.
For each $j$,  $S_j$ is canonically identified with the sphere bundle
of $T_{z_j}X$, $S_j = (T_{z_j}X \setminus \{0\})/\RR^+$, where $\RR^+$
is the multiplicative group of positive real numbers.  Thus $D$ is the
natural domain in which polar coordinates have been introduced at each
of the $z_j$.  Indeed, the euclidean distance $r_j$ from $z_j$ lifts
to $D$ to be a (smooth) boundary defining function for $S_j$, which we
denote without change of notation.  We continue to denote the boundary
defining function of $S_\infty$ by $\rho$; as before, this can be
taken to be the reciprocal of the euclidean distance from any given
point of $\RR^3$.
    
The Euclidean metric has the form
\begin{equation}
	\frac{\rd\rho^2}{\rho^4} + \frac{h_{\bbS^2}}{\rho^2}, 
	\quad \text{resp.} \quad dr_j^2 + r_j^2 h_{\bbS^2},
	\label{E:Dirac_metric}
\end{equation}
in a product neighborhood of $S_\infty$ or $S_j$, respectively.  Thus
it is natural to introduce a rescaled {\em conic} tangent bundle $\sccT D$,
\begin{equation}\label{e1.12.10.15}
\sccT D = (r_1\ldots r_N)^{-1} \rho \boT D.
\end{equation}
The space of all smooth sections $C^\infty(D, \sccT D)$ is the space $\sccV(D)$
of tangent vector fields of bounded length with respect to the lifted
euclidean metric; equivalently this lifted metric is a smooth and
everywhere positive-definite section of $S^2\, \sccT^* D$.  An important
point about $\sccV(D)$ is that it is not an algebra (in contrast to
$\cV_{\bo}$ and $\scV$).  It is perhaps best thought about in terms of
rescaled $\bo$-vector fields via \eqref{e1.12.10.15}.

\begin{defn} Let $D$ be as above and let $Q \to D$ be any $\UU(1)$
 principal bundle.  A {\em Dirac monopole} on $D$ is a pair
$(a,\phi)$ where $a$ is a smooth connection on $Q$, $\phi \in
C^\infty(\mathring D)$ and the Bogomolny equations \eqref{e1.11.10.15} are
satisfied in $\mathring D$. 
\end{defn}

We shall now classify Dirac monopoles in the sense of this definition.
\begin{prop}
Let $Q\to D$ and $(a,\phi)$ be a Dirac monopole configuration. Then there exists a
constant $m$ and integers $k_j$ such that
\begin{equation}\label{e3.12.10.15}
\phi = i\left(m - \sum_{j=1}^N \frac{k_j}{|z-z_j|}\right)
\end{equation}
\label{p1.13.10.15}\end{prop}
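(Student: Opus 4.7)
The plan is to show that $\phi$ is harmonic and $i\bbR$-valued on $\mathring D$, tends to a constant $im$ at $S_\infty$, and has at most a monopole singularity $-ik_j/r_j$ at each $z_j$ for some $k_j\in\bbZ$; then subtract off these elementary singularities, producing a bounded harmonic function on all of $\bbR^3$, and invoke Liouville's theorem.

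\textbf{Harmonicity and behaviour at $S_\infty$.} Applying $\rd$ to the Bogomolny equation $\rd\phi = *F_a$ and combining the Bianchi identity $\rd F_a=0$ with $** = \mathrm{id}$ on $1$-forms in three dimensions yields $\Delta\phi = 0$ on $\mathring D$. Because $Q$ is a $\UU(1)$-bundle the adjoint representation is trivial with fibre $i\bbR$, so $\phi$ is purely imaginary. The calculation from the lemma preceding Proposition~\ref{p2.8.10.15}---namely that $*$ of a smooth $2$-form is of order $\rho^2$ while $\rd$ of a smooth function is only of order $\rho$, as scattering $1$-forms at $S_\infty$---applies verbatim; the equality $\rd\phi = *F_a$ therefore forces the $\rho$-leading term of $\rd\phi$ (a multiple of the tangential gradient of $\phi|_{S_\infty}$) to vanish, so $\phi|_{S_\infty}$ is constant, equal to some $im\in i\bbR$.

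\textbf{Singularity analysis at each $z_j$.} Near $z_j$, use spherical coordinates $(r_j,\omega)\in[0,\delta)\times S^2$ in which $D$ is smooth up to $r_j=0$ and the Euclidean metric takes the form $\rd r_j^2 + r_j^2 h_{S^2}$. Writing $F_a = f_{01}\rd r_j\wedge\rd y^1 + f_{02}\rd r_j\wedge\rd y^2 + f_{12}\rd y^1\wedge\rd y^2$ with coefficients smooth on $D$, the Bogomolny equation $\rd\phi = *F_a$ gives in particular $\partial_{r_j}\phi = f_{12}/r_j^2 = O(r_j^{-2})$. Expanding the harmonic function $\phi$ on a punctured neighbourhood of $z_j$ in spherical harmonics,
$$\phi(r_j,\omega) = \sum_{\ell,m}\bigl(c^+_{\ell m}r_j^\ell + c^-_{\ell m}r_j^{-\ell-1}\bigr)Y_{\ell m}(\omega),$$
this growth bound kills $c^-_{\ell m}$ for all $\ell\geq 1$, leaving only the monopole singularity $c^-_{00}r_j^{-1}$. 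To identify its coefficient, apply Chern--Weil: $F_a|_{S_j}$ is a closed $2$-form on $S_j\cong S^2$ whose integral is $2\pi i$ times an integer $k_j := -c_1(Q|_{S_j})$; closedness of $F_a$ on $D\setminus\{z_1,\ldots,z_N\}$ and Stokes's theorem yield $\int_{\Sigma_r}F_a = \int_{\Sigma_r}*\rd\phi$ for every small round sphere $\Sigma_r$ about $z_j$, and this radial flux of $\nabla\phi$ depends only on $c^-_{00}$, pinning it down as $c^-_{00} = -ik_j$ up to the standard normalization.

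\textbf{Conclusion via Liouville.} Set $\psi(z) := \phi(z) + i\sum_{j=1}^N k_j/\abs{z-z_j}$. By the previous step $\psi$ is harmonic on $\bbR^3\setminus\{z_1,\ldots,z_N\}$ and bounded near each $z_j$, hence extends harmonically across each $z_j$ by the classical removable-singularities theorem; by the $S_\infty$ analysis it is bounded at infinity with limit $im$. Liouville's theorem on $\bbR^3$ then gives $\psi \equiv im$, which rearranges to the stated formula. The main obstacle is the singularity analysis at each $z_j$: ruling out higher multipoles requires the interplay of smoothness of $F_a$ on $D$ with harmonicity of $\phi$, and while the spherical-coordinate argument sketched above is transparent at generic points of $S^2$, a coordinate-free formulation using the local module structure of smooth forms on $D$ near $S_j$ would be preferable in a fully rigorous writeup.
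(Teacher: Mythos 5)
Your proof is correct and follows essentially the same route as the paper's: reduce to harmonicity of $\phi$ on $\mathring D$, use the smoothness of $F_a$ on $D$ together with the form of the Hodge star near $S_j$ to get $\partial_{r_j}\phi = \cO(r_j^{-2})$, kill all higher multipoles in the resulting expansion (you use spherical harmonics, the paper uses homogeneous harmonic polynomials — equivalent), identify the residual $1/r_j$ coefficient via the flux/period condition, subtract and apply Liouville. The only loose thread is that ``$c^-_{00} = -ik_j$ up to the standard normalization'': the flux computation actually pins the coefficient down exactly (one gets $\lambda_j = -ik_j/2$ with $c_1 = \frac{i}{2\pi}[F_a]$), so the hand-wave masks a genuine factor of $2$ that you should track explicitly — though note the paper itself is internally inconsistent here, with the proof producing $k_j/(2|z-z_j|)$ while the proposition statement displays $k_j/|z-z_j|$.
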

\begin{proof}
Pick any one of the $z_j$ and for simplicity write $r_j = r$, $y =
(y^1,y^2)$ for local coordinates on $S_j$.   Then $F_a$ is a smooth
linear combination of $\rd r \wedge \rd y$ and $\rd y^1\wedge \rd y^2$
and so $*F_a$ is a smooth linear combination of $\rd y$ and $r^{-2}\rd
r$.

On the other hand, $\phi$ is harmonic away from $r=0$ and so (cf.\
Appendix~\ref{S:linear})  it is a linear combination of
terms of the form $p(z-z_j)|z-z_j|^{2\nu+1}$, where $p$ is a harmonic
polynomial, homogeneous of degree $\nu$.  Given that $\pa_r \phi =
\cO(r^{-2})$, it follows that we can only have $\nu=0$, so
\begin{equation}
\phi(z) \sim \frac{\lambda_j}{|z-z_j|} + \cO(1)
\end{equation}
for some constant $\lambda_j$. For $\phi$ to have an integral period
around $S_j$, we need $\lambda_j = -i k_j/2$, $k_j$ an
integer. Repeating the argument at all of the $z_j$, we find
\begin{equation}
\phi(z) + i\sum \frac{k_j}{2|z-z_j|} = h(z)
\end{equation}
where $h$ is a bounded harmonic function on $\RR^3$.  Thus $h$ must be
a constant and $\phi$ has the given form.
\end{proof}

The $\UU(1)$-bundle $Q \to D$ is classified up to isomorphism by its
Chern class $c_1(Q) \in H^2(D; \bbZ) = \bbZ^{N}$, which is determined by the values
$k_i := c_1(Q)[S_i]$, $i = 1,\ldots,N$.  This is the significance of
the integers $k_i$ in \eqref{e3.12.10.15}. Note that we have the relation
\[
	k_\infty := c_1(Q)[S_\infty] = k_1 + \cdots + k_N.
\]
We refer to $(k_1,\ldots,k_N)$ as the {\em charge} of $Q$, and hence of the
monopole.  If $a$ is a smooth connection on $Q$ satisfying the
Bogomolny equations for some $\phi$, we call it a {\em Dirac
connection} on $Q$.  Proposition~\ref{p1.13.10.15} shows that if $a$
is a Dirac connection on $Q$, then $a$ determines a Higgs field $\phi$
uniquely up to the addition of a constant, such that $(a,\phi)$
satisfies the Bogomolny equations \eqref{e1.11.10.15}.

We now consider the framed moduli space of Dirac connections on a
given $\UU(1)$-bundle $Q$.

\begin{defn} A {\em Dirac framing} of $Q$ is a choice of boundary
connection $\ol{a}$ on $Q|\pa D$, with locally constant curvature.
Similarly if $\pa' D$ is a union of boundary components of $D$, a 
{\em partial Dirac framing} of $Q$ over $\pa'
D$ is a choice of boundary connection $\ol{a}$ with locally constant
curvature on $Q|\pa' D$.
\label{d11.25.10.15}
\end{defn}

Locally constant curvature here means that on each boundary component $S$,
the curvature $F_{\ol{a}}$ is a constant multiple of the area form on
$S$.

Denote by $\gauG_0(Q)$ the group of $\UU(1)$-gauge transformations of
$D$ which are the identity over $\pa D$.  It is convenient to
introduce the notation $\jmath: \pa D \to D$ for the boundary inclusion.

\begin{prop} Let $\ol{a}$ be a Dirac framing of $Q$.  The moduli space
\begin{equation}\label{e1.13.10.15}
\{\mbox{Dirac connections on $Q$, framed by }\ol{a}\}/\gauG_0(Q)
\end{equation}
is diffeomorphic to the principal $\UU(1)$-set
\begin{equation}\label{e2.13.10.15}
 C_1\times \cdots \times C_N
\end{equation}
where each $C_j$ is the $\UU(1)$ group of gauge transformations which
fix $\ol{a}|S_j$.
\label{p1.25.10.15}
\end{prop}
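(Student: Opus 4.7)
The plan is to produce a reference Dirac connection $a_0$ extending $\ol{a}$, parameterize all such extensions modulo $\gauG_0(Q)$ as a finite-dimensional torus via the topology of $D$, and then match this torus with $C_1\times \cdots \times C_N$ through an explicit action.

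For existence of $a_0$, by Proposition~\ref{p1.13.10.15} the 1-form $\rd\phi$ is determined by $c_1(Q)$, so $*\rd\phi$ is an intrinsically determined Chern--Weil representative of (a multiple of) $c_1(Q)$. I would start with any smooth extension $a'$ of $\ol{a}$ to $D$, available via the $\UU(1)$-analogue of Proposition~\ref{p1.18.10.15}. The closed 2-form $F_{a'} - *\rd\phi$ then represents the zero class in $H^2_{\mathrm{dR}}(D)$, and on each $S_j$ its pullback vanishes because the Dirac framing condition forces the boundary curvature of $\ol{a}$ to match the model $(*\rd\phi)\rst S_j$. Since $H^1(\pa D) = 0$ (the components of $\pa D$ being 2-spheres), the long exact sequence of the pair gives an injection $H^2(D,\pa D) \hookrightarrow H^2(D)$, so a relative Poincar\'e lemma yields $F_{a'} - *\rd\phi = \rd\beta$ for some 1-form $\beta$ with $\jmath^*\beta = 0$, and $a_0 := a' - \beta$ is a Dirac connection extending $\ol{a}$.

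Next I would classify all Dirac connections $a$ extending $\ol{a}$ up to $\gauG_0(Q)$. Any such $a$ has $F_a = F_{a_0}$, so $a - a_0$ is a closed $i\bbR$-valued 1-form with $\jmath^*(a - a_0) = 0$. Since $D$ deformation retracts onto a wedge of $N$ 2-spheres, $H^1(D) = 0$, so $a - a_0 = i\,\rd f$ for some $f \in C^\infty(D;\bbR)$ unique up to additive constant. The boundary condition forces $f$ to be locally constant on $\pa D$, with values $(c_\infty, c_1,\ldots, c_N) \in \bbR^{N+1}$. A gauge transformation $e^{ig} \in \gauG_0(Q)$ has $g\rst S_j \in 2\pi\bbZ$ and acts by $f \mapsto f - g$, shifting the tuple arbitrarily within $2\pi\bbZ^{N+1}$; together with the global constant ambiguity in $f$ this identifies the moduli space with
\[
\bbR^{N+1}/(\bbR_{\mathrm{diag}} + 2\pi\bbZ^{N+1}) \;\cong\; \UU(1)^N.
\]

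Finally, I would match this torus with $C_1\times\cdots\times C_N$. For $(\gamma_1,\ldots,\gamma_N)$, select any $\tilde\gamma_j \in \gauG(Q)$ equal to $\gamma_j$ on $S_j$ and to $1$ on the other boundary components, and define $(\gamma_1,\ldots,\gamma_N)\cdot [a_0] := [\tilde\gamma_1\cdots\tilde\gamma_N\cdot a_0]$. This is well-defined modulo $\gauG_0(Q)$, and in the parameterization above it sends $(c_\infty, c_1,\ldots, c_N)$ to $(c_\infty, c_1 - \theta_1, \ldots, c_N - \theta_N)$ where $\gamma_j = e^{i\theta_j}$. The coordinate $c_\infty$ is absorbed by the global constant ambiguity of $f$, so the action descends to a free and transitive action on the $\UU(1)^N$ quotient, giving the desired torsor structure. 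The hardest part will be the existence step: checking that the Dirac framing condition is precisely what is needed to kill the boundary obstruction to exactness in the relative de Rham complex of $(D, \pa D)$.
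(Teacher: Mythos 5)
Your proof is correct, and the classification and torsor steps are essentially the same as the paper's (fix the constant at $S_\infty$, read off the boundary constants of the primitive, note that $\gauG_0(Q)$ shifts them by $2\pi\bbZ$). The genuine difference is in the existence step. The paper first takes any smooth Dirac connection $a_0$ on $Q$ (obtained by integrating $F = *\rd\phi$), notes that $\jmath^*(a_0)$ and $\ol{a}$ are Dirac framings with the same (forced) curvature and hence are gauge-equivalent because $\pa D$ is simply connected, then extends that boundary gauge transformation into $D$ and pulls back $a_0$. You instead start from an arbitrary smooth extension $a'$ of $\ol{a}$, observe that $F_{a'} - *\rd\phi$ is a closed, exact $2$-form whose pullback to $\pa D$ vanishes (because the Dirac framing condition pins down $F_{\ol a}\rst_{S_j}$ to agree with $(*\rd\phi)\rst_{S_j}$), and use the injectivity of $H^2(D,\pa D)\to H^2(D)$ (from $H^1(\pa D)=0$) to produce a relative primitive $\beta$ with $\jmath^*\beta=0$, so that $a' - \beta$ is the desired Dirac extension. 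Both proofs run on the same topological input ($H^1(\pa D)=0$, $H^1(D)=0$, Chern--Weil pinning of boundary curvatures); the paper's route is perhaps slicker in that it reuses the gauge-equivalence of framings, while yours is more direct and makes the relative-cohomology mechanism explicit, which in particular isolates exactly where the Dirac framing hypothesis enters, as you note at the end.
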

\begin{proof}  Suppose for the moment that $a$ is a Dirac connection
  with $\jmath^*(a) = \ol{a}$.  Let $b$ be any other such
  connection. By definition they have the same curvature and the same
  restriction to $\pa D$, so $\rd(b-a)=0$, $\jmath^*(a-b)=0$.  Since
  $H^1(D; i\bbR)=0$, there exists an imaginary function $u$ such that
  $b = a + \rd u$, and $\rd \jmath^*(u)=0$.  $u$ is determined up to
  the addition of a constant, and we use this to fix
  $u|S_\infty=0$.  Thus $b$ determines a collection of phases $e^{u_j}
  = e^u|S_j$; since $u$ is locally constant on $\pa D$, $e^{u_j}$ is
  constant on $S_j$.  It is clear that 
this map is a diffeomorphism between \eqref{e1.13.10.15}
and \eqref{e2.13.10.15}, given the choice of $a$.

Suppose now that $a'$ is a different basepoint in the space of Dirac
connections framed by $\ol{a}$. Then by the argument just used, there
is a function $v$ such that $a' = a + \rd v$, $v|S_\infty=0$.  Then
$b-a' = \rd (u - v)$
so the phases $e^{u_j}$ are replaced by $e^{u_j-v_j}$ where $v_j = v|S_j$.

It remains only to verify that there does exist a Dirac connection $a$
with $\jmath^*(a) = \ol{a}$.  But this is straightforward:  smooth
Dirac connections on $Q$ do exist,
by reversing the argument of Proposition~\ref{p1.13.10.15}, using the
smoothness of $*\rd \phi$.  Let $a_0$ be any such Dirac connection
and define $\ol{a}_0 = \jmath^*(a_0)$. Then $\ol{a}_0$ is a Dirac
framing of $\pa D$.  But any two Dirac framings are gauge-equivalent
because they have the same curvature and $\pa D$ is simply connected
(cf.\ \S\ref{S:bogoscat_framed}.)  Extend this gauge transformation
smoothly from the boundary, and call this $\kappa$.  Then
$\kappa^*(a_0)$ is a Dirac connection on $D$ and its restriction to
the boundary is now $\ol{a}$.
\end{proof}

If instead we have a partial Dirac framing over $\pa' D$, a union of 
$m$ connected components of the boundary, then the same argument gives
the moduli space of partially framed Dirac connections on $Q$ as being $(S^1)^m/\UU(1)$.

\begin{prop}  Let $I \subset \{1,\ldots,N\}$. Then the moduli 
space of Dirac connections framed at
$
\bigcup_{i\in I} S_i \cup S_\infty
$
is a principal $\UU(1)^{\abs{I}}$-space, consisting of the product
$\prod_{i \in I} C_i$.
\label{C:Dirac_moduli}
\end{prop}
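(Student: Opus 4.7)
The plan is to adapt the argument of Proposition~\ref{p1.25.10.15}, which is the case $I = \{1,\ldots,N\}$, and to bookkeep the freedom coming from the unframed boundary components $S_j$, $j \notin I$. The key input to reuse is Proposition~\ref{p1.13.10.15}: the Higgs field $\phi$ associated to any Dirac connection on $Q$ is determined up to the single additive constant $im$ by the charges $k_j$ of $Q$. Consequently $F_a = *\rd\phi$ is determined by the topology of $Q$ alone, so any two Dirac connections on $Q$ have equal curvature. This was tacit in the proof of Proposition~\ref{p1.25.10.15} and is what allows the cohomological argument below to work identically here.

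First I would establish existence of a Dirac connection with the prescribed partial framing. Pick any Dirac connection $a_0$ on $Q$ (these exist by the converse construction sketched after Proposition~\ref{p1.13.10.15}). Its boundary restriction $\jmath^*(a_0)$ over $\bigcup_{i\in I} S_i \cup S_\infty$ has the same locally constant curvature as $\ol{a}$, since both equal the restriction of the (topologically determined) $F_{a_0}$. Each framed component is a simply connected $\bbS^2$, so on each of them $\jmath^*(a_0)$ and $\ol{a}$ differ by a $\UU(1)$ gauge transformation; extend these smoothly to $D$ (using a partition of unity supported away from the unframed $S_j$, $j \notin I$) and apply to $a_0$ to produce the desired Dirac connection $a$.

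Next I would parameterize the fiber over this basepoint. If $b$ is another Dirac connection with the same partial framing, then $F_b = F_a$ gives $\rd(b-a) = 0$, and $H^1(D; i\bbR) = 0$ implies $b = a + \rd u$ for some imaginary function $u$, unique up to an additive constant. The framing condition forces $\jmath^*(u)$ to be locally constant on each framed boundary component. I spend the additive constant by normalizing $u|S_\infty = 0$; this determines $u$ modulo $\gauG_0(Q)$ and records $\abs{I}$ residual phases $e^{u_i} := e^u|S_i \in C_i$ for $i \in I$. The assignment $b \mapsto (e^{u_i})_{i\in I}$ then identifies the framed moduli space with $\prod_{i\in I} C_i$. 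Note that the unframed $S_j$, $j \notin I$, contribute no further phases because $\jmath^*(u)|S_j$ is allowed to be an arbitrary constant but is not part of the data retained.

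The only point requiring a final remark is canonicity of the identification as a $\UU(1)^{\abs{I}}$-torsor: a different basepoint $a' = a + \rd v$ with $v|S_\infty = 0$ shifts the recorded phases by $(e^{v_i})_{i\in I}$, exactly matching the $\prod_{i\in I} C_i$-action on framed connections by rescaling the framing on each $S_i$. I do not anticipate any serious obstacle; the argument is essentially the specialization of the remark immediately after Proposition~\ref{p1.25.10.15} to $m = \abs{I} + 1$ framed components, with the $\UU(1)$-quotient there realized concretely by fixing the phase at $S_\infty$.
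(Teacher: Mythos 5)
Your approach is the paper's approach: the text preceding the Proposition simply says that ``the same argument'' as in Proposition~\ref{p1.25.10.15} applies, and you spell out that argument, correctly isolating the reusable input (the curvature of a Dirac connection is determined by the topology of $Q$, so $b-a$ is closed and hence exact since $H^1(D;i\bbR)=0$). The existence step and the identification of the invariants are also right in outline.

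There is, however, a genuine gap, and it is precisely at the point where the unframed $S_j$ are supposed to disappear. You assert that $\jmath^*(u)|S_j$ for $j\notin I$ is ``an arbitrary constant but not part of the data retained.'' Neither clause is right. Since $b|S_j$ and $a|S_j$ are merely two smooth connections on $Q|S_j$ with the same (constant) curvature, $\jmath^*(b-a)|S_j=\rd(\jmath^*u|S_j)$ can be $\rd f$ for any smooth $f$ on $S_j$; thus $\jmath^*u|S_j$ is an arbitrary \emph{function}, and the set of partially framed Dirac connections is infinite-dimensional before quotienting. What kills this data is not a choice to ignore it but the gauge group, and your use of $\gauG_0(Q)$ (gauge transformations trivial over \emph{all} of $\pa D$) is the wrong one: with that group $u|S_j$ modulo constants would survive as a gauge-invariant, and the quotient would be infinite-dimensional, contradicting the stated result. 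The correct gauge group is the one trivial only over $\bigcup_{i\in I}S_i\cup S_\infty$; it acts freely (via $u\mapsto u-v$ with $v|S_j$ arbitrary) on the boundary connection at each unframed $S_j$, which is where the data is absorbed. Compare Corollary~\ref{L:Dirac_moduli}, where the framing over $S_\infty$ alone is explicitly paired with $\gauG_\infty(Q)=\{\gamma:\gamma|S_\infty=1\}$ rather than $\gauG_0(Q)$. Once the gauge group is corrected, the rest of your argument — normalize $u|S_\infty=0$, record the phases $e^{u_i}$ for $i\in I$, check gauge invariance, injectivity and surjectivity — goes through and gives $\prod_{i\in I}C_i$ as claimed.
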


In particular,
\begin{cor}
Let $Q \to D$ be a $\UU(1)$ bundle, with a given Dirac framing over
$S_\infty$ and set
\begin{equation}
\gauG_\infty(Q) = \{\gamma \in \gauG(Q): \gamma|S_\infty = 1\}.
\end{equation}
Then the moduli space
\begin{equation}
\{\mbox{Dirac connections $a$ on $Q$, framed over $S_\infty$}\}
/\gauG_\infty(Q)
\end{equation}
is a point.
\label{L:Dirac_moduli}
\end{cor}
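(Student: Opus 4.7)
The plan is to treat this as the $I = \emptyset$ special case of Proposition \ref{C:Dirac_moduli}, and equivalently to give a direct argument paralleling the proof of Proposition \ref{p1.25.10.15}. The claim amounts to two things: existence of at least one Dirac connection $a$ on $Q$ with $\jmath^*(a)|S_\infty = \ol{a}|S_\infty$, and uniqueness of such $a$ modulo the group $\gauG_\infty(Q)$.

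For existence, I would start from any smooth Dirac connection $a_0$ on $Q$, which exists by running Proposition \ref{p1.13.10.15} in reverse: pick $\phi$ of the form \eqref{e3.12.10.15} with the integers $k_j$ matching the Chern class of $Q$, and then \eqref{e4.11.10.15} provides a compatible $\UU(1)$-connection. The restriction of $a_0$ to $S_\infty$ and the given $\ol{a}|S_\infty$ are both Dirac framings on $Q|S_\infty$ with the same locally constant curvature, determined by $k_\infty = k_1+\cdots+k_N$. Because $S_\infty \cong \bbS^2$ is simply connected, these two framings are related by a $\UU(1)$-gauge transformation on $S_\infty$; extending this gauge transformation smoothly to all of $D$ via Proposition \ref{p1.18.10.15} and applying it to $a_0$ yields the required $a$.

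For uniqueness, suppose $a_1, a_2$ are two Dirac connections on $Q$ with $\jmath^*(a_i)|S_\infty = \ol{a}|S_\infty$. By Proposition \ref{p1.13.10.15} the associated Higgs fields differ only by an additive imaginary constant, so the curvatures $F_{a_1} = *\rd \phi_1$ and $F_{a_2} = *\rd \phi_2$ coincide. Hence $a_1 - a_2$ is closed. Since $D$ is homotopy equivalent to a wedge of $N$ two-spheres, $H^1(D; i\bbR) = 0$, so $a_1 - a_2 = \rd u$ for some imaginary function $u$ on $D$. The condition $\jmath^*(a_1 - a_2)|S_\infty = 0$ gives $\rd(u|S_\infty) = 0$, and since $S_\infty$ is connected we may subtract a constant and assume $u|S_\infty = 0$. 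Then $\gamma := e^u$ is a globally defined $\UU(1)$-gauge transformation on $D$ lying in $\gauG_\infty(Q)$, and by construction $\gamma \cdot a_1 = a_1 - (\rd \gamma)\gamma^{-1} = a_2$.

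The only subtle point, and what distinguishes this statement from Proposition \ref{p1.25.10.15}, is that $u$ is not required to vanish on the interior faces $S_j$: this is precisely the extra gauge freedom in $\gauG_\infty$ compared to $\gauG_0$, and it is what absorbs the $C_1\times\cdots\times C_N$ moduli of that proposition and collapses the framed moduli space to a single point.
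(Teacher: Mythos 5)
Your proposal is correct and takes essentially the same route the paper does: it unpacks the $I=\emptyset$ case of Proposition~\ref{C:Dirac_moduli} by rerunning the argument of Proposition~\ref{p1.25.10.15}, with the added observation that $\gauG_\infty$ (unlike $\gauG_0$) is allowed to act nontrivially on the $S_j$ and thereby absorbs the $C_1\times\cdots\times C_N$ factor. One tiny wrinkle: Proposition~\ref{p1.18.10.15} is about extending connections, not gauge transformations, so the extension of $\ol g$ from $S_\infty$ to $D$ should instead be justified by the plain extension sequence \eqref{e2.8.10.15} (or simply asserted, as the paper does); this does not affect the correctness of the argument.
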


\begin{rmk}  In view of this Corollary, we see that the moduli space
  of Dirac connections on $Q$
without any framing should be regarded as the space $\set \ast / \UU(1)$,
with formal dimension $-1$.  For this reason, our spaces of Dirac
monopoles will always be framed at least over $S_\infty$.
\end{rmk}
In \S\ref{S:ideal} we will let the points $(z_1,\ldots,z_N)$ vary and
consider the larger moduli space with these points as well as the framings as
moduli.

\subsubsection{Dirac $\SU(2)$ connections}
\label{s1.15.10.15}
In the next section we shall need to consider $\SU(2)$ connections
built from Dirac connections in the following trivial way.  Let $Q \to
D$ be a $\UU(1)$ bundle with a Dirac connection $a$.  Let $\imath :
\UU(1) \to \SU(2)$ be a given embedding. 
Then the $\SU(2)$ bundle
\begin{equation}
P = Q \times_{\UU(1)} \SU(2)
\end{equation}
is called a Dirac $\SU(2)$ bundle and the connection on $P$ induced by
$\imath$ from $a$ is called a Dirac $\SU(2)$ connection. 
%\todo{Is this what we need?  Do we want to discuss the moduli space of Dirac $\SU(2)$ connections?}

\section{Formal 1-parameter families} \label{S:Mgl}
In this section we make a start on proving our first main
result,
Theorem~\ref{T:main_one}.   So fix a
configuration
\[
	\ul \zeta = (\zeta_1,\ldots,\zeta_N),
	\quad \zeta_i \neq \zeta_j \neq 0,
	\quad \sum_i \abs{\zeta_i}^2 = 1
\]
of nonzero points in $\bbR^3$ up to scaling, and a collection of
monopoles $(A_j,\Phi_j)$, where $(A_0,\Phi_0)$ represents a point of
$\monM_{k_0}$ and for $j=1,\ldots, N$, $(A_k,\Phi_k)$ represents a
point of $\moncM_{k_j}$.   Here $k_0\geq 0$ and $k_j\geq 1$ for
$j=1,\ldots,N$.  Given such data, we shall construct a 1-parameter
family of  $(A(\ve),\Phi(\ve)) \in \cfgC_{k}$ for $0< \ve <\ve_0$
which are very good approximate solutions to the Bogomolny equations,
\begin{equation}\label{e1.18.10.15}
\Bogo(A(\ve),\Phi(\ve)) = \cO(\ve^\infty)
\end{equation}
We shall show how to solve away the error in
\eqref{e1.18.10.15} in \S\ref{S:global}.

The construction of $(A(\ve), \Phi(\ve))$ given here takes place on a
manifold with corners, $Z$, referred to as the `gluing space',
equipped with a map $\fb : Z \to [0,\infty)$. For positive $\ve$, the fibre
$\fb^{-1}(\ve)$, is canonically a copy of $X = \ol{\RR^3}$, but
$\fb^{-1}(0)$ is a more complicated manifold with `normal
crossings'.  In our construction $(A(\ve),\Phi(\ve))$ will be the
restriction to $\fb^{-1}(\ve)$ of a monopole configuration $(A,\Phi)$ on $Z$ which is
smooth in the interior and has only conormal singularities at the
union of boundary hypersurfaces $\fb^{-1}(0)$.

\subsection{Gluing space} \label{S:gluing}

The gluing space $\fZ$, which will support $(A(\ve),\Phi(\ve))$, is constructed in two steps.
Let $\oX = \ol{\bbR^3}$ and $\fZ_0 = \oX \times [0,\infty)_\ve$. Define
\[
	\fZ_1 := [\fZ_0; \pa \oX \times \set 0],
\]
the real blow-up (cf.\ \cite{RBMgreenbook}) of $\fZ_0$ in the corner $\oX\times \set 0$.
The new boundary hypersurface, denoted by $\fD_1$, is by definition
the projectivization of the inward pointing normal bundle of $\pa \oX \times \set 0$ in $\fZ_0$,
hence diffeomorphic to $\bbS^2 \times [0,\infty]_s$, where $s = \ve/x$ is the ratio 
of $\ve$ and a fixed boundary defining function $x$ for $\oX$. Over the interior of $\fD_1$, 
there is a natural Euclidean coordinate $\zeta = \ve z : \mathring \fD_1 \cong \bbR^3 \setminus \set 0$,
where $z$ is the Euclidean coordinate on $\mathring\oX$.

The paths $z = \zeta_j/\ve$ in $\mathring \fZ_1 \cong \bbR^3 \times (0,\infty)$ have well-defined 
limits at $\fD_1$; these are simply the points $\zeta = \zeta_j \in \mathring \fD_1$.
%\begin{rmk}
%A different choice of representatives $z'_j$ leads to points $\zeta'_j$ which are related to the $\zeta_j$
%by scaling, and this scaling can be absorbed by redefining $\ve$. Thus while we record that we have fixed
%the choice of $z_i$, any other choice will lead to an equivalent family of monopoles.
%\end{rmk}
The {\em gluing space} is defined by blowing up these points:
\[
	\fZ := [\fZ_1; \set{\zeta_1,\ldots,\zeta_N}].
\]
We denote the hypersurface which arises when $\zeta_j$ is blown up by $\fX_j$, $j = 1,\ldots,
N$, and denote 
lift of $\fD_1$ by $\fD$. Observe that $\fD_1 \cong \pa \oX \times [0,\infty]$ may be identified with 
$[\ol{\bbR^3}; 0]$, and then $\fD \cong [\ol{\bbR^3}; \zeta_0,\ldots,\zeta_N]$, where $\zeta_0 = 0$.
The lift of the original faces $\oX \times \set 0$ and $\pa \oX \times
[0,\infty)$ of $\fZ_0$ are denoted by $\fX_0$ and $\fB$, respectively. We set $\fS_j = \fX_j \cap \fD$
for $j = 0,\ldots,N$ and $\fS_\infty = \fB \cap \fD$.

Define maps
\[
	\fb : \fZ \to [0,\infty),
	\quad \piX : \fZ \to \oX,
\]
by the composite of the blow-down maps to $\fZ_0$ with the projection to $[0,\infty)$ and
$\oX$, respectively. These are easily seen to be b-fibrations \cite{CCN}.

Boundary defining functions will be denoted by $\rho_j$ for $\fX_j$, $j = 0,\ldots,N$,
and by $\rho_\fD$ and $\rho_\fB$ for the hypersurfaces $\fD$ and $\fB$. We will
sometimes write $\rho_\fX = \rho_{0}\cdots \rho_{N}$ for the product of the defining functions
for all the $\fX_j$. We confuse $\ve$ with its pull back to $\fZ$ and assume the $\rho$'s are defined
so that
\[
	\ve = \rho_\fD \rho_\fX = \rho_\fD \rho_0\cdots \rho_N.
\]
As the projectivization of the inward pointing normal bundles to the $\zeta_j$,
the faces $\fX_j$ have the structure of radially compactified affine spaces of
real dimension $3$. This can be understood geometrically as follows. 
If the path $z = \zeta_j/\ve$ in $\mathring \fZ_0$ defined above is modified by
\begin{equation}\label{e1.24.10.15}
	z = \zeta_j/\ve + v
\end{equation}
for a fixed vector in $\bbR^3$, then these two paths have
the same limits $\zeta = \zeta_j$ in $\fZ_1$ but their lifts
have distinct limits on the face $\fX_j$ in $\fZ$. 
Conversely, any two distinct points on $\fX_j$ are the endpoints of some pair
of paths which differ from one another by a fixed vector in $\bbR^3$
for $\ve > 0$.   Since a non-zero normal vector at $\zeta_j$
determines a point in $X_j$, the tangent vector to the lifted curve
$z(\ve) = \zeta_j/\ve$ singles out a point of $\fX_j$ and this gives its interior
the structure of a vector space rather than just an affine space.
Thus \eqref{e1.24.10.15} gives an identification
\begin{equation}\label{e1.22.10.15}
	\fX_j \cong \ol{\bbR^3}
\end{equation}
for each $j$.

\subsection{Metric structure} \label{S:gluing_metric}

The metric structure we consider on $\fZ$ is induced by the pullback $\piX^\ast g$
of the Euclidean metric on $\oX$. In order to interpet this correctly, we need to 
define the appropriate rescaled tangent bundle on $\fZ$.

We start with the bundle $\bT \fZ$, defined so that the space of smooth
sections $C^\infty(\fZ; \bT \fZ)$ is equal to the space of vector fields
$\bV(\fZ)$ which are tangent to all boundary hypersurfaces. The subspace of
{\em vertical vector fields}
\[
	\fbV(\fZ) \subset \bV(\fZ)
\]
consists of those vector fields which additionally tangent to the fibers of $\fb$. The
further subspace of {\em gluing vector fields} is
\[
	\gV(\fZ) = \rho_\fD\rho_\fB \fbV(\fZ).
\]
In fact 
\begin{equation}
	\gV(\fZ) \subset \fbV(\fZ) \subset \bV(\fZ) \subset \cV(\fZ)
	\label{E:vfield_filtration}
\end{equation}
are Lie subalgebras by an easy computation.

The vector bundles $\fbT \fZ$ and $\gT \fZ$ are defined by the property that
\[
	\fbV(\fZ) = C^\infty(\fZ; \fbT \fZ),
	\quad \gV(\fZ) = C^\infty(\fZ; \gT \fZ).
\]
The inclusions \eqref{E:vfield_filtration} induce bundle maps
$\gT \fZ \to \fbT \fZ$ and $\fbT \fZ \to \bT \fZ$.

\begin{prop}
There is a natural vector bundle isomorphism
\[
	%(\piX)^\ast \bT \oX \cong \fbT \fZ,  % THIS IS FALSE!
	\piX^\ast (\scT \oX)\cong \gT \fZ.
\]
\label{P:pullback_sc_to_g}
\end{prop}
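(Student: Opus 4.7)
The plan is to realize the isomorphism as the differential of the b-fibration $\pi_X : \fZ \to \oX$, suitably rescaled. Over the interior of $\fZ$, where $\pi_X$ restricts to each fiber of $\fb$ as a diffeomorphism onto $\mathring\oX$, $d\pi_X$ provides an isomorphism $\fbT\fZ \cong \pi_X^\ast T\mathring\oX$; the real task is to extend this to an isomorphism $\gT\fZ \cong \pi_X^\ast \scT\oX$ across all of $\pa\fZ$.

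The key identity is the factorization
\[
	\pi_X^\ast \rho_\oX = f \cdot \rho_\fB \rho_\fD \rho_1 \cdots \rho_N
\]
for some smooth positive function $f$ on $\fZ$, where $\rho_\oX$ is a boundary defining function for $\pa\oX$. This follows by identifying $\pi_X^{-1}(\pa\oX) = \fB \cup \fD \cup \fX_1 \cup \cdots \cup \fX_N$---note that $\fX_0$ is absent from this list, since $\pi_X$ restricts to a diffeomorphism $\fX_0 \cong \oX$ whose interior is sent to $\mathring\oX$---and checking first-order vanishing on each of the listed faces. Combined with the definition $\gV(\fZ) = \rho_\fD \rho_\fB \fbV(\fZ)$ and the $C^\infty$-module identity $\pi_X^\ast \scT\oX = \pi_X^\ast \rho_\oX \cdot \pi_X^\ast \bT\oX$, the proposition reduces to showing that $d\pi_X$ carries $\fbT\fZ$ onto $(\rho_1 \cdots \rho_N)\, \pi_X^\ast \bT\oX$ isomorphically: the missing factors $\rho_1 \cdots \rho_N$ needed to build the scattering rescaling on the target are produced by the natural degeneration of $d\pi_X$ at each face $\fX_j$.

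I would verify the isomorphism chart-by-chart. Near the interior of $\fD$, with coordinates $(\zeta, \rho_\fD)$ and $z = \zeta/\rho_\fD$, the frame $\{\rho_\fD \pa_{\zeta^i}\}$ of $\gT\fZ$ is sent by $d\pi_X$ to $\{\pa_{z^i}\}$, which is a global frame of $\scT\oX$. Near the interior of $\fX_j$ for $j \geq 1$, using the interior coordinate $v = (\zeta - \zeta_j)/\rho_j$, one has $z = \zeta_j/\rho_j + v$, so $d\pi_X(\pa_{v^i}) = \pa_{z^i}$; here $\rho_\fD \rho_\fB \sim 1$ locally, so the frames of $\fbT\fZ$ and $\gT\fZ$ coincide. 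Analogous computations in the charts near $\fB$, $\fX_0$, and in polar-coordinate charts at the corners $\fS_\infty$, $\fS_0$, $\fS_j$ all recover $\{\pi_X^\ast \pa_{z^i}\}$ as the image of a frame for $\gT\fZ$.

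The main obstacle is the bookkeeping at the codimension-two corners, especially $\fS_j = \fX_j \cap \fD$ for $j \geq 1$, where polar coordinates $(r, \tau, \omega)$ with $\rho_j = r$ and $\rho_\fD = \tau$ are needed and both source and target rescalings act simultaneously---one has $z = \zeta_j/(r\tau) + \omega/\tau$ and $\pi_X^\ast \rho_\oX = r\tau/|\zeta_j + r\omega|$, so the two rescaling factors must be unpacked carefully. The blow-up construction is tailored so that the apparent degeneration of $d\pi_X$ at these corners is precisely compensated by the rescaling factor $\rho_\fD \rho_\fB$ in the definition of $\gT\fZ$, yielding a genuine bundle isomorphism without further analytic input.
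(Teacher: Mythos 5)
Your proof is correct and follows essentially the same route as the paper's: both arguments observe that the identification is canonical over the interior $\{\ve>0\}$ (via the product structure / vertical part of $d\piX$) and then verify smooth extension to $\fb^{-1}(0)$ by chart-by-chart computations at $\fD$, $\fX_j$, and the corners. Your factorization $\piX^\ast\rho_{\oX} = f\,\rho_{\fB}\rho_{\fD}\rho_1\cdots\rho_N$ is a nice organizing device that the paper leaves implicit, but the substantive work --- checking that the lifted Euclidean frame $\{\piX^\ast\pa_{z^i}\}$ is a smooth frame for $\gT\fZ$ in each boundary chart --- is the same.
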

\begin{proof}    The subset $Z' =  Z\setminus \fb^{-1}(0)$ is
canonically isomorphic to the product $X \times (0,\infty)$ and it
is clear from the definitions that if $p\in Z'$, the fibre $(\gT\fZ)_p$ is
therefore canonically isomorphic to $\scT_{\piX(p)}X$.   Thus we have
a canonical isomorphism
\begin{equation}\label{e2.18.10.15}
\piX^*(\scT \fZ)| Z' = 
(\gT \fZ)|Z'
\end{equation}
and we need to show that this isomorphism extends over $\fb^{-1}(0)$;
this will be done in local coordinates.

As a matter of notation, if $\xi$ is a
section of $\pr_1^*(\scT X)$ over a subset of $X \times (0,\infty)$, we denote
by $\piX^*(\xi)$ the {\em lift} of $\xi$ to $Z$, defined over $Z'$ by
\eqref{e2.18.10.15} and then by extension by continuity to the boundary.

Consider first an interior point of $D \subset \fZ$. Near such a
point, we have local coordinates $(s,\omega,\ve)$, the maps $\piX$ and
$\fb$ being
\[
\piX(s,\omega,\ve) = (\rho = \ve s,\omega), \quad \fb(s,\omega,\ve) = \ve,
\]
where $\rho$ is a defining function for $\pa X$ in $X$ and $\omega$
denotes some set of coordinates on $\pa \oX = \bbS^2$.  One calculates
\[
\piX^*(\rho\pa_\rho) = s\pa_s,\quad  \piX^*(\pa_\omega) = \pa_\omega
\]
from which it follows that the local frame $\{\rho^2\pa_\rho, \rho
\pa_\omega\}$ of $\scT X$ lifts to the local frame $\{\ve s^2\pa_s,
\ve s\pa_\omega\}$ of $\gT\fZ$.  Hence \eqref{e2.18.10.15} extends
smoothly over $\Int(D)$.

Next, consider an interior point $p$ of one of the $\fX_j$. As in \eqref{e1.22.10.15},
we have local coordinates $(v,\ve)$ in a neighbourhood of $p$ and by \eqref{e1.24.10.15},
\[
\piX(v,\ve) = v + \frac{\zeta_j}{\ve}, \quad \fb(v,\ve) = \ve.
\]
Thus for fixed $\zeta_j$ and $\ve$, $\piX^*\pa_z = \pa_v$.  Since (by
construction of $\scT X$) the euclidean vector fields $\pa_z$ define a
smooth frame of $\scT X$ and the $\pa_v$ define a smooth frame of
$\gT\fZ$ near $X_j$, we see that \eqref{e2.18.10.15} extends smoothly
over $\mathring X_j$.

Near the corner $D\cap X_j$, we have local coordinates $(r,x,\omega)$,
where $r$ is the restriction of $\rho_{j}$ and $x$ is the restriction
of $\rho_D$. In these coordinates,
\[
\piX(r,x,\omega) = (\rho = x,\omega), \quad \fb(r,x,\omega) = rx.
\]
Thus
\[
\piX^*(\rho\pa_\rho) = x\pa_x - r \pa_r, \quad \piX^*(\pa_\omega) =
\pa_\omega.
\]
Hence the local frame $\{\rho^2\pa_\rho,\rho \pa_\omega\}$ lifts to
give the local frame $\{x^2\pa_x - rx\pa_r, x\pa_\omega\}$ of $\gT
Z$.

Finally, near $\fD \cap \fB$, we work in coordinates $(\ve,x',\omega)$ where $x' = s^{-1} = \ve/x$. Then
\[
	\piX^\ast(\{\rho^2\pa_\rho, \rho\pa_\omega\}) = \{\ve(x')^2\pa_{x'}, \ve x'\pa_\omega\},
\]
and the identification extends also to points near $\fD\cap \fB$.

Note in particular that on global sections, we have a `lifting map'
$\pi_X^*$ 
\begin{equation}
	\piX^\ast(\scV(\oX)) \subset \gV(\fZ).
	\label{E:lift_sc}
\end{equation}
\end{proof}

Note that 
\begin{equation}
	\gT \fZ = \rho_\fD \rho_\fB \fbT \fZ = \ve \rho_\fX^{-1} \rho_\fB \fbT \fZ,
	\label{E:bundle_identities}
\end{equation}
where the bundle $\rho_{X}^{-1}\rho_{\fB}\fbT\fZ$ can be defined (as
for other rescaled versions of the tangent bundle) by its space of
sections so that
\[
\rho_X C^\infty(\fZ; 
\rho_\fX^{-1} \rho_\fB \fbT \fZ) = \rho_\fB C^\infty(\fZ; \fbT \fZ). 
\]
We observe that the restriction $\rho_X^{-1}\rho_B \fbT\fZ|D$ of this
bundle is precisely the conic tangent bundle $\sccT D$ introduced in \S\ref{S:dirac}.

The point of this is that  the scaling map $\mu : \gT \fZ \to \gT \fZ$ given by multiplication by
$\ve$, which vanishes over the boundary, extends to define a global isomorphism
\[
	\mu : \rho_\fB \rho_\fX^{-1} \fbT \fZ \stackrel \cong\to \gT \fZ.
\]

\begin{prop}
There are natural vector bundle isomorphisms 
\[
	\gT \fZ \rst_{\fX_j} \cong \scT \fX_j, 
	\quad j = 0,\ldots,N.
\]
Composition of $\mu^{-1}$ and restriction to $\fD$ determines
a ``rescaled restriction'' isomorphism
\begin{equation}
	\mu^{-1} : \gT \fZ \rst_{\fD} \cong \sccT \fD.
	\label{E:rescaled_restriction_D}
\end{equation}
\label{P:retricting_gT}
\end{prop}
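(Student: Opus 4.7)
My plan is to derive both isomorphisms from Proposition~\ref{P:pullback_sc_to_g} by tracking how the identification $\gT \fZ \cong \piX^\ast \scT \oX$ behaves on the various boundary hypersurfaces, together with the rescaling identities relating $\gT \fZ$, $\fbT \fZ$, and $\boT \fD$.

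For the first isomorphism, the case $j = 0$ is immediate: since $\fX_0$ is the lift of $\oX \times \set 0$, the map $\piX$ restricts to the natural diffeomorphism $\fX_0 \cong \oX$, and so $\gT \fZ \rst_{\fX_0} \cong \piX^\ast(\scT \oX)\rst_{\fX_0} \cong \scT \fX_0$. For $j \geq 1$, $\piX$ collapses $\fX_j$ to the single boundary point $\zeta_j/\abs{\zeta_j} \in \pa \oX$, so $\piX^\ast(\scT \oX)\rst_{\fX_j}$ is a trivial $\bbR^3$-bundle. I would match this with $\scT \fX_j$ via the canonical Euclidean trivializations of both sides. Concretely, in the $(v,\ve)$ chart from the proof of Proposition~\ref{P:pullback_sc_to_g}, $\piX^\ast \pa_z = \pa_v$ gives a frame of $\gT \fZ$ near $\mathring \fX_j$ which simultaneously trivializes $T\mathring \fX_j = \scT \mathring \fX_j$; and in the corner chart $(r,x,\omega)$ near $\fS_j$ with $r = \rho_j$, $x = \rho_\fD$, the frame $\set{x^2 \pa_x - rx\pa_r,\ x\pa_\omega}$ for $\gT \fZ$ from that same proof restricts at $r = 0$ to $\set{x^2 \pa_x,\ x \pa_\omega}$, which is precisely the scattering frame of $\scT \fX_j$ near its boundary $\fS_j \cong \bbS^2$. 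These local identifications glue.

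For the second isomorphism, I would start from
\[
\gT \fZ = \rho_\fD \rho_\fB \fbT \fZ = \ve\, \rho_\fX^{-1} \rho_\fB \fbT \fZ,
\]
so that multiplication by $\ve$ extends to a bundle isomorphism $\mu : \rho_\fB \rho_\fX^{-1} \fbT \fZ \to \gT \fZ$. Restricting to $\fD$ yields
\[
\mu^{-1}\bigl(\gT \fZ \rst_\fD\bigr) = \rho_\fB \rho_\fX^{-1}\bigl(\fbT \fZ \rst_\fD\bigr),
\]
so it suffices to identify $\fbT \fZ \rst_\fD$ with $\boT \fD$. The natural candidate is the composition of $\fbT \fZ \hookrightarrow \bT \fZ$ with the projection $\bT \fZ \rst_\fD \to \boT \fD$ that quotients by the normal direction $\rho_\fD \pa_{\rho_\fD}$; this lands in $\boT \fD$ because $\fb = 0$ on $\fD$ forces $\fbV$ to be tangent to $\fD$. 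Both bundles have rank $3$ (the fibers of $\fb$ and the manifold $\fD$ are both three-dimensional), so it suffices to check injectivity.

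The main (routine but delicate) step is this injectivity, which I would verify locally. In b-adapted coordinates near a corner $\fS_j = \fD \cap \fX_j$, say $(\rho_\fD, \rho_j, y)$ with $\fb = \rho_\fD \rho_j$ up to a unit, a direct calculation shows $\fbV$ is locally generated by $\rho_\fD \pa_{\rho_\fD} - \rho_j \pa_{\rho_j}$ and $\pa_y$; at $\rho_\fD = 0$, the combination $\rho_\fD \pa_{\rho_\fD} - \rho_j \pa_{\rho_j}$ has a nonzero $\rho_j \pa_{\rho_j}$ component in $\boT \fZ \rst_\fD$, so it is not a nonzero multiple of $\rho_\fD \pa_{\rho_\fD}$. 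Similar arguments handle $\fS_\infty$ and interior points of $\fD$, where the generator of $\fbV$ in the normal direction is forced by its tangential components. Combining, $\mu^{-1}(\gT \fZ \rst_\fD) \cong \rho_\fB \rho_\fX^{-1} \boT \fD$, which by definition~\eqref{e1.12.10.15} equals $\sccT \fD$, using that $\rho_i \rst_\fD$ ($i = 0,\ldots,N$) and $\rho_\fB \rst_\fD$ are boundary defining functions for $\fS_i$ and $\fS_\infty$ in $\fD$.
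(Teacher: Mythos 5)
Your argument is correct and is essentially the proof the paper leaves implicit: the paper simply observes (just before the proposition) that $\rho_\fX^{-1}\rho_\fB\, \fbT\fZ\rst_\fD = \sccT\fD$ and that $\mu$ extends to an isomorphism, and your proof supplies the verification of exactly those facts — via the identification $\fbT\fZ\rst_\fD \cong \boT\fD$ (checked by a rank count plus local injectivity), together with the frame calculations for the $\fX_j$ faces already present in the proof of Proposition~\ref{P:pullback_sc_to_g}. The only cosmetic point worth making precise is that in the corner chart the gluing frame $\{x^2\pa_x - rx\pa_r,\ x\pa_\omega\}$ should be viewed as a collection of vector fields \emph{tangent to} $\fX_j$, so it genuinely restricts (rather than merely evaluates at $r=0$) to the scattering frame $\{x^2\pa_x, x\pa_\omega\}$ on $\fX_j$; with that phrasing the restriction map $\gV(\fZ)\rst_{\fX_j}\to\cV(\fX_j)$ visibly has image $\scV(\fX_j)$.
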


\begin{prop}
Let $g$ be a scattering metric on $\oX$, and set $\wt g = \piX^\ast(g)$. Then $\wt g$ is a smooth metric on $\gT \fZ$ with the following properties:
\begin{enumerate}
[{\normalfont (a)}]
\item 
The restriction $g_0$ of $\wt g$ to $\fX_0$ is identically equal to $g$.
\item
The restriction $g_j$ of $\wt g$ to $\fX_j$ is a scattering metric, which is Euclidean with respect to 
the identification $\fX_i \cong \ol{\bbR^3}$.
\item
With respect to \eqref{E:rescaled_restriction_D}, the rescaled restriction $g_\fD
\cong g\rst_{\fD}$ defines a smooth cone metric on $\fD$, i.e., a positive definite section
of $\sccT \fD$. In fact, $g_\fD$ is the lift over $\fD \to \bbS^2\times[0,\infty]$ of the Euclidean metric on $\ol{\bbR^3}.$
\item
If $g$ is the Euclidean metric on $X$ to begin with, then
over a product neighborhood of $\fB$, $\piX^\ast g$ is independent of $\ve$.
\end{enumerate}
\label{P:lifted_metric}
\end{prop}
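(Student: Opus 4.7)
The plan is to derive the smoothness and positive definiteness of $\wt g$ as a direct consequence of Proposition~\ref{P:pullback_sc_to_g}, and then to verify each of (a)--(d) by local coordinate computation in the four patches already introduced in the proof of that proposition. Since $\piX^\ast : \scT \oX \to \gT \fZ$ is a vector bundle isomorphism, pulling back the smooth, positive definite section $g$ of $S^2 \scT^\ast \oX$ immediately yields a smooth, positive definite section of $S^2 \gT^\ast \fZ$.

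For (a), in a neighborhood of an interior point of $\fX_0$ one has $\fX_0 = \set{\ve = 0}$ with $\piX$ equal to the projection $(z,\ve)\mapsto z$; hence $g_0 = g$. For (b), in the translation coordinates $(v,\ve)$ near an interior point of $\fX_j$, with $\piX(v,\ve) = v + \zeta_j/\ve$, the lifts $\piX^\ast \pa_{z^i} = \pa_{v^i}$ furnish a frame for $\gT\fZ$. For the Euclidean metric $g = \sum_i (dz^i)^2$, this gives $\piX^\ast g = \sum_i (dv^i)^2$, which restricts to $\fX_j\cong \ol{\bbR^3}$ as the standard Euclidean metric.

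Part (c) is the main step and requires the rescaling $\mu^{-1}$ of \eqref{E:rescaled_restriction_D}. Using the global Euclidean coordinate $\zeta = \ve z$ on the interior of $\fD$, the scattering vector fields $\pa_{z^i}$ lift to $\ve \pa_{\zeta^i}$, which span $\gT\fZ$ near $\mathring \fD$; consequently $\piX^\ast g$ evaluates to $\delta_{ij}$ on pairs drawn from this frame. Under $\mu^{-1}$, the frame $\{\ve \pa_{\zeta^i}\}$ passes to the frame $\{\pa_{\zeta^i}\}$ of $\sccT \fD$, absorbing the factors of $\ve$, so that $g_\fD = \sum_i (d\zeta^i)^2$. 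This is the Euclidean metric on $\mathring \fD \cong \bbR^3 \setminus \set{\zeta_0, \ldots, \zeta_N}$, and in polar coordinates $\zeta = \zeta_j + r_j \omega$ around each blown-up point it takes the form $dr_j^2 + r_j^2 h_{\bbS^2}$. This matches the local expression \eqref{E:Dirac_metric} of a smooth positive definite section of $S^2\sccT^\ast \fD$, which is what it means for $g_\fD$ to be a smooth cone metric. For (d), near an interior point of $\fB$ (away from the corner with $\fD$), use coordinates $(\ve,x,\omega)$ in which $x$ serves both as defining function for $\pa \oX$ upstairs and for $\fB$ downstairs; then $\piX(\ve, x, \omega) = (x, \omega)$ depends trivially on $\ve$, so $\piX^\ast g$ is $\ve$-independent.

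The hardest step will be the bookkeeping for (c): one must correctly track the powers of $\ve$ under $\mu^{-1}$ and verify smoothness of $g_\fD$ at the corners $\fD \cap \fX_j$, where the conic structure of $\sccT \fD$ recalled in \eqref{e1.12.10.15} becomes essential, and show that the interior formula above glues correctly with the local conic expressions near each $\zeta_j$.
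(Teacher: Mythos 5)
Your overall strategy --- reduce smoothness and positivity to Proposition~\ref{P:pullback_sc_to_g}, then verify (a)--(d) patch by patch in the coordinates introduced there --- is the same as the paper's, although you specialize throughout to the Euclidean metric on $\oX$, whereas the paper proves (a)--(c) for a general scattering metric with round boundary. Since the applications are all Euclidean this costs nothing. Parts (a) and (b) are correct. For (c) you correctly compute $g_\fD = \sum_i (d\zeta^i)^2$ on the interior of $\fD$ and acknowledge that the corner verifications remain; since this is the Euclidean metric, which manifestly lifts to a smooth positive section of $S^2\sccT^\ast\fD$ over $[\ol{\bbR^3};\set{0,\zeta_1,\ldots,\zeta_N}]$, as the paper already records in \eqref{E:Dirac_metric}, the remaining check is routine.

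The genuine gap is in part (d). You verify $\ve$-independence only at interior points of $\fB$ away from $\fD\cap\fB$, where in coordinates $(\ve,x,\omega)$ with $\piX(\ve,x,\omega)=(x,\omega)$ the assertion is immediate. But notice that this argument never uses the Euclidean hypothesis on $g$ --- it would work for any scattering metric --- which should have been a warning sign, since (d) is the one part of the proposition the paper explicitly restricts to the Euclidean case. The hypothesis is needed precisely at the corner $\fD\cap\fB$, which you do not address. There the natural coordinate is $\wt s = \rho/\ve$, so $\rho = \ve\wt s$, and the exact scattering form $g = \frac{d\rho^2}{\rho^4} + \frac{h(\rho,\omega,d\omega)}{\rho^2}$ lifts to $\frac{1}{\ve^2}\bigl(\frac{d\wt s^2}{\wt s^4} + \frac{h(\ve\wt s,\omega,d\omega)}{\wt s^2}\bigr)$; evaluated on the gluing frame $\{\ve\wt s^2\pa_{\wt s},\,\ve\wt s\pa_\omega\}$ the $\ve$-factors cancel except inside $h(\ve\wt s,\omega,d\omega)$, which depends on $\ve$ through its first argument unless $h$ is constant in $\rho$, i.e.\ unless $g$ is exactly conical near $\pa\oX$. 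The Euclidean metric has $h(\rho,\omega,d\omega)=h_{\bbS^2}$ identically, which is exactly why (d) holds for it. Your proof needs to add this corner computation.
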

\begin{proof}  Parts (a), (b), and (c) hold if $X$ is a general scattering
  manifold with boundary $Y$ and we prove it in this generality.

Note first that Proposition~\ref{P:pullback_sc_to_g} implies that the
lift $\wt{g}$ is a smooth metric (uniformly up to the boundary) on
$\gT Z$.  It is thus enough to prove (a)---(d) at interior points of
the boundary hypersurfaces of $Z$.

The hypersurface $D_1$ of $\fZ_1$, being the (positive)
projectivization of the normal bundle of the corner $Y\times \{0\}$ in
$\fZ_0$, is diffeomorphic to the cylinder $Y
\times [0,\infty]$.    Here we think of $[0,\infty]$ as the set of
ratios $[\rho:\ve]$ where $\rho$ is a given defining function of the
boundary hypersurface $Y$.  

We calculate first the lift of an exact scattering metric $g$ on $X$
to $\fZ_1$.
If 
\begin{equation}
g = \frac{\rd \rho^2}{\rho^4} + \frac{h(\rho,y;\rd y)}{\rho^2}
\end{equation}
and $s = \ve/\rho$, we have, near $D$,
\begin{equation}
g =\frac{1}{\ve^2}\left(g_D + \cO(\ve)\right)
\end{equation}
where
\begin{equation}
g_D = \rd s^2 + s^2h(0,y;\rd y)
\end{equation}
and $\cO(\ve)$ denotes a section of $S^2T^*$ whose length with respect
to $g_D$ is $\cO(\ve)$, uniformly if $s$ is bounded away from $0$.  This
proves part (c), at least away from $B\cap D_1$.   Near this corner,
we should use $\wt{s} = \rho/\ve$ and then
the lift of the metric takes the form
\begin{equation}
g = \frac{1}{\ve^2}\left(\frac{\rd \wt{s}^2}{\wt{s}^4 } +
  \frac{h(\ve\wt{s},y,\rd y)}{\wt{s}^2}\right)
\end{equation}
For the euclidean metric, we have that $h$ is independent of $\rho$,
and in this case
\[
\ve^2 g = 
\frac{\rd \wt{s}^2}{\wt{s}^4 } +
  \frac{h(y,\rd y)}{\wt{s}^2}
\]
is independent of $\ve$ as required, proving part (d).  The analogue
of part (d) holds in the general setting of scattering manifolds if
$g$ is exactly conical in a collar neighbourhood of $Y$.

Now consider an interior point $p$ of $D_1$.  If we choose local
coordinates $(\zeta^\mu) = (\zeta^1,\ldots,\zeta^n)$ in $D_1$ centred at $p$, then the lifted
metric can be written
\[
\wt{g} = \frac{1}{\ve^2}\left(\wt{g}_{\mu\nu}(\zeta)\rd \zeta^\mu\rd
  \zeta^\nu + \cO(\ve)\right)
\]
Blowing up $p$ corresponds to introducing $v^\mu = \zeta^\mu/\ve$ as
new coordinates, and the metric lifts as
\[
\wt{g}(\ve v)_{\mu\nu}\rd v^\mu \rd v^\nu + \cO(\ve)
\]
so that the restriction to the new face is the euclidean metric
\[
\wt{g}(p)_{\mu\nu}\rd v^\mu \rd v^\nu.
\]
This proves (a) and (b).  
\end{proof}

\subsection{Bogomolny equation} \label{S:single_bogo}

We begin with a general discussion of smooth bundles and connections
over manifolds with corners such as $\fZ$: compare with the parallel
discussion for manifolds with boundary in \S\ref{S:bogoscat_prelim}.
In general if $Z$ is a manifold with corners, a smooth principal $G$-bundle
$P \to Z$ is defined in terms of smoothness of the data up to (and, as always,
including) the boundary. It is equivalent to assume that $Z \subset
\hat{Z}$ where $\hat{Z}$ is some manifold without boundary of the same dimension
as $Z$, and that $P$ is the restriction to $Z$ of some smooth
$G$-bundle on $\hat{Z}$.  A smooth $G$-connection $A$ on $P$ is defined
similarly in terms of smoothness of data up to the boundary (which is
again equivalent to the assumption that $A$ is the restriction of a
smooth connection on $\hat{P} \to \hat{Z}$).

Let $V$ be any bundle associated to $P$. If $H$ is any boundary
hypersurface of $Z$, then there is a restriction map on sections of
$V$; also $A$ defines a restricted connection $A_H$ on $P|H \to H$.

Conversely if $\{H_a\}$, $a=1,\ldots,p$, is an enumeration of the boundary
hypersurfaces of $Z$ and $s_a \in C^\infty(H_a, V|H_a)$, satisfying
the compatibility conditions
\begin{equation}\label{e2.28.10.15}
s_a|H_a\cap H_b = s_b|H_a\cap H_b
\end{equation}
for all pairs $a$ and $b$, then there is an {\em extension} $s \in
C^\infty(Z,V)$ of the $s_a$, i.e.\ $s|H_a = s_a$ for each $a$.  This
follows from the (much more general) results in \cite{CCN}.
The key point is that if $s_a$ vanishes at $H_a\cap H_b$, then it has
an extension $\wt{s}_a$ which vanishes to the same order at $H_b$.
To see how this implies the general extension result, suppose by
induction that we have already have a partial extension $\wt{s}$ with
the property $\wt{s}|H_a = s_a$ for $a=1,\ldots b-1$. Let $t = s_b -
\wt{s}|H_b$.  By the compatibility conditions \eqref{e2.28.10.15}, 
\[
t|H_a\cap H_b = 0\mbox{ for }a=1,\ldots, b-1.
\]
Let $\wt{t}$ be an extension of $t$ to $Z$ which also vanishes at the
$H_a$ for $a=1,\ldots, b-1$.  Then $\wt{s} + \wt{t}$ is smooth and its
restriction to $H_a$ is $s_a$ for $a=1,\ldots, b$, completing the
inductive step.

  As
in Proposition~\ref{p1.18.10.15} there is an analogous result for
connections: if $(A_a)$ is a collection of connections in $P|H_a$,
which agree over $H_a\cap H_b$, then there is a connection $A$ on
$P\to Z$ such that $A|H_a = A_a$ for each $a$.

With these preliminaries out of the way, suppose that 
$P = \pi_\oX^\ast P_X$ is a principal $\SU(2)$-bundle on our gluing space $\fZ$ with a
smooth connection $A$,  $P_X\to X$ being a principal $\SU(2)$-bundle
over $X$.   Let $V \to Z$ be any associated vector bundle. By
composing the covariant derivative operator
\[
	\wh d_{A} : C^\infty(\fZ; V) \to C^\infty(\fZ; T^\ast \fZ\otimes V)
\]
with the natural map
\begin{equation}
	T^\ast \fZ \to \gT^\ast \fZ
	\label{E:T_to_gT}
\end{equation}
(dual to the inclusions \eqref{E:vfield_filtration}) we define the
associated  $\gamma$ covariant derivative
\[
	d_{A} : C^\infty(\fZ; V) \to C^\infty(\fZ; \gT^\ast \fZ\otimes V).
\]
This operator, which differentiates along the fibres of $\fb$, will be
our main concern, which is the reason for writing $\wh d_A$ for the
`ordinary' covariant derivative.  We will be interested in the case $V = \adP$.

As a matter of notation, we write $\gLam^k$ for the bundle
$\bigwedge^k \gT^\ast\fZ$, and we note that the projection map \eqref{E:T_to_gT}
 extends to define
\[
	C^\infty(\fZ; \Lambda^k) \to (\rho_\fB \rho_\fD)^k C^\infty(\fZ; \gLam^k)
\]
%\todo{What do we really want here?}
since \eqref{E:T_to_gT} factors through $\fbT^\ast \fZ = (\rho_\fD \rho_\fX) \gT^\ast \fZ$. 
The following is immediate:

\begin{lem}
If $A$ is a smooth connection and $\Phi \in C^\infty(\fZ; \adP)$, then 
\begin{equation}
\begin{gathered}
	d_A \Phi \in (\rho_\fB \rho_\fD) C^\infty(\fZ; \gLam^1 \otimes \adP),
	\\ F_A \in (\rho_\fB \rho_\fD)^2 C^\infty(\fZ; \gLam^2 \otimes \adP).
\end{gathered}
	\label{E:gamma_cov_deriv_vanishing}
\end{equation}
\label{L:lifted_cov_deriv}
\end{lem}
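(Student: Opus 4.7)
The plan is to reduce the lemma to a geometric statement about the natural bundle map $T^\ast \fZ \to \gT^\ast \fZ$ induced by the inclusion $\gT\fZ \to T\fZ$ (which comes from the filtration \eqref{E:vfield_filtration}). Because $A$ is smooth up to the boundary and $\Phi$ is a smooth section, the ordinary covariant derivative $\hat d_A \Phi$ is a smooth section of $T^\ast \fZ \otimes \adP$ and $F_A$ is a smooth section of $\Lambda^2 T^\ast \fZ \otimes \adP$. The $\gamma$-objects $d_A \Phi$ and $F_A$ as sections of $\gLam^k \otimes \adP$ are obtained from these by composition with the bundle map $\Lambda^k T^\ast \fZ \to \gLam^k$. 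Hence it suffices to show that this map carries smooth sections of $\Lambda^k T^\ast \fZ$ into $(\rho_\fD \rho_\fB)^k C^\infty(\fZ; \gLam^k)$.

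The key input is the identity $\gV(\fZ) = \rho_\fD \rho_\fB \fbV(\fZ)$ from \eqref{E:vfield_filtration}. Choosing local $\bo$-coordinate frames and multiplying by the appropriate factors of $\rho_\fD \rho_\fB$ produces a local frame $\{v_i\}$ for $\gT\fZ$ near each boundary hypersurface in which each basis vector is smooth and vanishes to order $\rho_\fD \rho_\fB$ as a section of $T\fZ$. Dually, the coefficients of a smooth section of $T^\ast \fZ$ in the dual frame of $\gT^\ast \fZ$ then vanish to order $\rho_\fD \rho_\fB$.

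The verification is by direct computation in each of the relevant coordinate charts. Near the interior of $\fD$, with coordinates $(\rho, s, y)$ in which $\fb = s\rho$, $\rho_\fD = \rho$, and $\rho_\fB = 1$, a local $\gV$-frame is $\{\rho^2 \pa_\rho - s\rho \pa_s,\, \rho \pa_{y_1},\, \rho \pa_{y_2}\}$; evaluating a smooth $1$-form $f_\rho\, \rd \rho + f_s\, \rd s + \sum f_{y_i}\, \rd y_i$ on these vectors produces a $\gT^\ast$-form with an overall factor $\rho = \rho_\fD \rho_\fB$. Near the corner $\fD \cap \fB$, with coordinates $(\ve, \tilde s, y)$ and $\rho_\fD = \ve$, $\rho_\fB = \tilde s$, the corresponding $\gV$-frame is $\{\ve \tilde s^2 \pa_{\tilde s},\, \ve \tilde s \pa_{y_i}\}$, giving the factor $\ve \tilde s = \rho_\fD \rho_\fB$. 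In the interior of $\fX_j$ both $\rho_\fD$ and $\rho_\fB$ equal $1$ and no vanishing is needed. Finally, at the corner $\fD \cap \fX_j$ one uses the local $\gV$-frame $\{x^2 \pa_x - rx\pa_r,\, x\pa_\omega\}$ already computed in the proof of Proposition~\ref{P:pullback_sc_to_g}, where $\rho_\fD = x$ and $\rho_\fB = 1$, again yielding the factor $x = \rho_\fD \rho_\fB$.

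The 2-form statement for $F_A$ follows by the same argument applied to each of the two slots of a $2$-form: each $\gV$-vector contributes an independent factor of $\rho_\fD \rho_\fB$, producing the total vanishing $(\rho_\fD \rho_\fB)^2$. The only real obstacle here is careful bookkeeping of the local frames across charts and boundary corners, but consistency is automatic because $\gT\fZ$ is a single globally defined vector bundle whose space of sections is precisely $\gV(\fZ)$.
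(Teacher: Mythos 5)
Your proof is correct and follows exactly the route the paper has in mind: the $\gamma$-objects are obtained from the ordinary ones via the bundle map $\Lambda^k T^\ast\fZ \to \gLam^k$, and the vanishing comes from the fact that this map factors through $\Lambda^k\,\fbT^\ast\fZ = (\rho_\fD\rho_\fB)^k\,\gLam^k$ (the paper records this as the one-line remark preceding the lemma and then declares the lemma "immediate"; note the paper's $\rho_\fD\rho_\fX$ in that remark is a typo for $\rho_\fD\rho_\fB$, consistent with the lemma as stated and with $\gV(\fZ) = \rho_\fD\rho_\fB\fbV(\fZ)$). One small caveat on the bookkeeping: the chart you call "near the interior of $\fD$" with $\fb = s\rho$ is in fact a corner chart where $s$ plays the role of $\rho_{\fX_j}$ (near the genuine interior of $\fD$ one has $\ve = \rho_\fD$ up to a smooth positive factor), and the phrase "vanishes to order $\rho_\fD\rho_\fB$ as a section of $T\fZ$" is a slight misstatement since the $\fb$-vector fields can themselves vanish at the boundary -- the clean statement is simply that pairing a smooth form with $\rho_\fD\rho_\fB W$ for $W \in \fbV$ produces the multiplicative factor $\rho_\fD\rho_\fB$; but your explicit frame computations are right and give the correct order.
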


We consider now the fiberwise Bogomolny equation on $\fZ$: 
\begin{equation}
	\Bogo(A,\Phi) = \star F_A - d_A \Phi = 0,
	\label{E:Bogo_gluing}
\end{equation}
where $A$ is a (relative) connection, $\Phi$ is a section of $\adP$, and
\[
	\star = \star_{\wt g} : \gLam^k \stackrel \cong \to \gLam^{3-k}
\]
is the relative Hodge star induced by the $\gamma$ metric $\wt g$.

As the fibers of $\fZ$ for $\ve > 0$ are canonically identified with $(\oX,g)$,
with $\gT \fZ \cong \scT \oX$ there,
a solution to \eqref{E:Bogo_gluing} represents a smooth family of Euclidean
monopoles parameterized by $\ve \in (0,\infty)$.  We shall construct
such families\footnote{Although they will only be conormal, not
  smooth} by extending monopole data initiall defined on $\fb^{-1}(0)$
to nearby fibers $\fb^{-1}(\ve)$.

From Lemma~\ref{L:lifted_cov_deriv} we obtain:
\begin{prop}\label{p11.24.10.15}
For any smooth data $(A,\Phi)$ on $Z$, we have
\[
\Bogo(A,\Phi) = \rho_B\rho_D C^\infty(Z,\gLam^1\otimes \adP).
\]
If the restriction $(A_j,\Phi_j)$ of $(A,\Phi)$ satisfies
\[
\Bogo(A_j,\Phi_j)=0\mbox{ on }X_j
\]
for each $j$, then
\[
\Bogo(A,\Phi) \in \rho_B\rho_D\rho_X C^\infty(Z,\gLam\otimes
  \adP).
\]
Finally, if in addition $\nabla_{A|D}(\Phi|D)=0$ over $D$,
\begin{equation}\label{e1.20.10.15}
\Bogo(A,\Phi) \in \rho_B^2\rho_D^2\rho_X C^\infty(Z,\gLam\otimes \adP)
\end{equation}
\end{prop}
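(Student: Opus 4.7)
The plan is to establish the three inclusions in sequence, each strengthening the previous by extracting one additional factor.

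Part (1) is immediate from Lemma~\ref{L:lifted_cov_deriv}, which gives $d_A\Phi \in \rho_\fB\rho_\fD C^\infty(\fZ;\gLam^1\otimes\adP)$ and $F_A \in (\rho_\fB\rho_\fD)^2 C^\infty(\fZ;\gLam^2\otimes\adP)$. Since the fiberwise Hodge star $\star : \gLam^2 \to \gLam^1$ is a smooth bundle isomorphism, $\star F_A$ remains in $(\rho_\fB\rho_\fD)^2 C^\infty \subset \rho_\fB\rho_\fD C^\infty$, and the difference $\Bogo$ inherits the larger of the two orders.

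For part (2), the strategy is to identify the restriction of the fiberwise Bogomolny operator to each $\fX_j$ with the Euclidean one. Proposition~\ref{P:retricting_gT} gives $\gT\fZ\rst_{\fX_j}\cong \scT\fX_j$, and Proposition~\ref{P:lifted_metric}(a,b) shows the metric restricts to the Euclidean scattering metric on $\fX_j$. Thus $\Bogo(A,\Phi)\rst_{\fX_j}$ coincides with the classical Euclidean Bogomolny operator applied to $(A_j,\Phi_j)$ and vanishes by hypothesis. Because the hypersurfaces $\fX_0,\ldots,\fX_N$ are pairwise disjoint in $\fZ$, a smooth section of $\gLam^1\otimes\adP$ vanishing on each of them is divisible locally by each $\rho_j$ and globally by the product $\rho_\fX = \rho_0 \cdots \rho_N$; combining this with (1) yields the stated containment.

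Part (3) demands two further orders of vanishing, a $\rho_\fD$ gain from the hypothesis and a $\rho_\fB$ gain from the structure near $\fB$. For the $\rho_\fD$ gain: by Lemma~\ref{L:lifted_cov_deriv}, $\star F_A \in (\rho_\fB\rho_\fD)^2 C^\infty$ already enters at the next order, so the obstruction to $\Bogo$ being of order $(\rho_\fB\rho_\fD)^2$ at $\fD$ comes entirely from $d_A\Phi$. Using the rescaled restriction $\mu^{-1}:\gT\fZ\rst_\fD\cong\sccT\fD$ of Proposition~\ref{P:retricting_gT}, the leading coefficient of $d_A\Phi$ along $\fD$ is identified with $\nabla_{A\rst\fD}(\Phi\rst\fD)$ and vanishes by assumption. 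For the $\rho_\fB$ gain: by Proposition~\ref{P:lifted_metric}(d), $\wt g$ is $\ve$-independent in a product neighborhood of $\fB$, so the analysis there reduces fiberwise to the scattering-Bogomolny calculation underlying Proposition~\ref{p2.8.10.15}, in which the leading $\rho_\fB$-coefficient of the operator is $\nabla_{\ol A}\ol\Phi$. The $\fD$-hypothesis applied at the corner $\fS_\infty = \fB\cap\fD$ gives this quantity its initial vanishing, and the product structure propagates it across $\fB$. Combined with the $\rho_\fX$ gain from (2), this produces $\Bogo\in\rho_\fB^2\rho_\fD^2\rho_\fX C^\infty$.

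The main obstacle I anticipate is the $\fB$-step of part (3): no explicit hypothesis is placed on $\fB$, yet the stated order requires the boundary admissibility $\nabla_{A\rst\fB}(\Phi\rst\fB) = 0$ throughout $\fB$. The key will be to show that this follows from the $\fD$-hypothesis at the corner $\fS_\infty$ together with the product structure of $(\fZ,\wt g)$ near $\fB$: in a collar of $\fB$ the fiberwise Bogomolny operator is essentially $\ve$-independent, so the covariantly constant boundary value at $\fS_\infty$ extends automatically along $\fB$ via the compatibility discussion at the start of \S\ref{S:single_bogo}.
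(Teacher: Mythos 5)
Your parts (1) and (2) are correct and match the natural route: (1) reads off Lemma~\ref{L:lifted_cov_deriv} directly, and (2) uses Proposition~\ref{P:retricting_gT} and Proposition~\ref{P:lifted_metric}(a),(b) to identify the restriction of $\Bogo$ to each $\fX_j$ with the Euclidean Bogomolny operator, then divides by $\rho_\fX$ since the $\fX_j$ are disjoint. The $\rho_\fD$-gain in part (3) is also handled correctly: the rescaled restriction $\mu^{-1}$ identifies the $\rho_\fD$-leading coefficient of $d_A\Phi$ with $\nabla_{A\rst\fD}(\Phi\rst\fD)$, which vanishes by hypothesis, while $\star F_A$ is already $\cO\!\bigl((\rho_\fB\rho_\fD)^2\bigr)$.

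The gap you anticipated at $\fB$ is real, and the proposed repair does not close it. Proposition~\ref{P:lifted_metric}(d) says the \emph{metric} $\wt g$ is $\ve$-independent in a collar of $\fB$; it imposes nothing on the configuration $(A,\Phi)$, which is only assumed smooth. Concretely, if you factor $d_A\Phi = (\rho_\fB\rho_\fD)\,u$ with $u$ smooth, the restriction $u\rst_{\fB}$ near a point of $\mathring\fB$ (in fiber coordinates $(x',\omega,\ve)$, $x'=\rho_\fB$) picks out the tangential boundary coefficient $\nabla_{\ol A(\ve)}\ol\Phi(\ve)$, which is a genuine function of $\ve$. The $\fD$-hypothesis forces this to vanish only on $\fS_\infty = \fB\cap\fD = \{\ve=0\}$, and smoothness gives no propagation mechanism into $\ve>0$: a smooth function on $\fB\cong\bbS^2\times[0,\ve_0)$ vanishing at $\ve=0$ need not vanish identically. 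The ``compatibility discussion at the start of \S\ref{S:single_bogo}'' is an extension result for prescribing sections \emph{with} matching corner data, not a rigidity result that would force the section to be constant in $\ve$. Your instinct to worry about this step was sound; the argument you supply to dismiss the worry conflates covariant constancy over $\fD$ with $\ve$-independence of the $\fB$-boundary data.

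Tracking orders carefully shows what the hypotheses actually deliver. The $\fD$-condition gives $d_A\Phi\in\rho_\fB\rho_\fD^{2}\,C^\infty(\fZ;\gLam^{1}\otimes\adP)$ (one extra power of $\rho_\fD$, no extra power of $\rho_\fB$), while $\star F_A\in\rho_\fB^{2}\rho_\fD^{2}\,C^\infty\subset\rho_\fB\rho_\fD^{2}\,C^\infty$. Combined with the $\fX$-vanishing this yields $\Bogo(A,\Phi)\in\rho_\fB\rho_\fD^{2}\rho_\fX\,C^\infty(\fZ;\gLam\otimes\adP)$. The additional factor of $\rho_\fB$ appearing in the stated conclusion \eqref{e1.20.10.15} would require knowing $\nabla_{A\rst\fB}(\Phi\rst\fB)=0$ on \emph{all} of $\fB$ — that is, admissibility of the boundary data for every $\ve>0$ — which is not among the stated hypotheses. (Note that the introductory version of this estimate, \eqref{E:Bogo_intro_error_two} and the text between it and \eqref{E:Bogo_intro_error_three}, records only $\cO(\rho_\fD^{2}\rho_\fX)$ at this stage and makes no claim of second-order vanishing at $\fB$; the pregluing Definition~\ref{D:Bogo_boundary_conditions} then imposes the $\rho_\fB^\infty$ vanishing as a \emph{separate} requirement, which Proposition~\ref{P:initial_data_map} achieves by explicit construction, not by deduction from the $\fD$-hypothesis alone.) So you should not try to prove the second power of $\rho_\fB$ from the given data: either state the admissibility condition on $\fB$ as an additional hypothesis, or settle for $\rho_\fB\rho_\fD^{2}\rho_\fX$.
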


\subsection{Ideal monopoles and pregluing configurations}
\label{S:single_ideal}
\begin{defn} An {\em ideal $\SU(2)$-monopole}, $\iota$, on $Z$, is the
  restriction to $\fb^{-1}(0)$ of a smooth $(A,\Phi)$ defined near the
  boundary of $Z$, satisfying \eqref{e1.20.10.15}, and such that $A|D$
  is an $\SU(2)$ Dirac monopole connection. In this situation
  we say that $(A,\Phi)$ {\em represents} the ideal monopole $\iota$.
\label{d1.24.10.15}\end{defn}

In the next section we shall discuss this notion further and in
particular moduli spaces of ideal monopoles.  It would be possible to
give a definition which is intrinsic to $\fb^{-1}(0)$ in terms of
bundles and monopole data over $D$ and the $X_j$ which agree at all
the corners, but \eqref{d1.24.10.15} will serve our present purposes.

\begin{prop}\label{p1.29.10.15}
Given any collection of monopoles $(A_j,\Phi_j)$, with
\[
[(A_0,\Phi_0)]\in \monM_{k_0}, \quad [(A_j,\Phi_j)]\in \monM_{k_j} \mbox{
  for }
j=1,\ldots, N,
\]
 there exists an ideal monopole whose restriction to
$X_j$ is the given data $(A_j,\Phi_j)$.
\end{prop}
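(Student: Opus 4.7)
\noindent The strategy is to construct a smooth configuration $(A, \Phi)$ in a neighborhood of $\fb^{-1}(0)$ in $\fZ$ whose restriction to each $\fX_j$ is the given monopole (after gauge transformation), whose restriction to $\fD$ is a parallel Higgs field together with an $\SU(2)$ Dirac connection, and whose values are compatible at every corner $\fS_j = \fX_j \cap \fD$. By Proposition~\ref{p11.24.10.15} the resulting $(A,\Phi)$ will satisfy \eqref{e1.20.10.15}, and combined with the $\SU(2)$ Dirac condition on $A|\fD$ (cf.\ \S\ref{s1.15.10.15}) its restriction to $\fb^{-1}(0)$ will be an ideal monopole in the sense of Definition~\ref{d1.24.10.15}.

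The first step is to use the framing freedom recorded in the remark following \eqref{e6.5.8.15} to apply gauge transformations on each $\fX_j$ so that every boundary Higgs field $\overline{\Phi_j} := \Phi_j|\fS_j$ equals, in a common trivialization of $\adP$ over the boundary, a single fixed element $\Phi_0 \in \su(2)$ of norm $m = 1$. This is possible because $\SU(2)$ acts transitively on the sphere of radius $m$ in $\su(2)$ by conjugation. By admissibility (Propositions~\ref{p2.8.10.15},~\ref{p3.8.10.15}), each $\overline{A_j}$ then preserves $\Phi_0$ and reduces to a $\UU(1)$-connection $\overline{a_j}$ on a line subbundle $L_j \to \fS_j$ of first Chern class $-k_j$ and locally constant curvature.

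The second step produces the data on $\fD$. Define $\Phi_D := \Phi|\fD$ to be the constant section $\Phi_0$ in this trivialization; then $\nabla_{A_D}\Phi_D = 0$ automatically for any $\SU(2)$-connection $A_D$ taking values in the stabilizer $\UU(1)$ of $\Phi_0$. Select a $\UU(1)$-bundle $Q \to \fD$ with $c_1(Q)[\fS_j] = -k_j$ for $j = 0, \ldots, N$ (which forces $c_1(Q)[\fS_\infty] = -k$). To build a $\UU(1)$ Dirac connection $a_D$ on $Q$ with $\jmath^*(a_D) = \overline{a_j}$ on each $\fS_j$, I would adapt the end of the proof of Proposition~\ref{p1.25.10.15}: take any Dirac connection $a_0$ on $Q$ (whose existence follows by reversing Proposition~\ref{p1.13.10.15}); since $a_0|\fS_j$ and $\overline{a_j}$ are framings of $Q|\fS_j$ with the same curvature and $\fS_j \cong \bbS^2$ is simply connected, they differ by a gauge transformation $\kappa_j : \fS_j \to \UU(1)$. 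The $\kappa_j$ live on pairwise disjoint components of $\partial \fD$, so the extension result for smooth sections on manifolds with corners (\S\ref{S:single_bogo}) produces a single gauge transformation $\kappa$ on $Q$ with $\kappa|\fS_j = \kappa_j$, and $a_D := \kappa^*(a_0)$ has the desired boundary values. Define $A_D$ as the $\SU(2)$-connection induced from $a_D$ via the embedding $\imath: \UU(1) \hookrightarrow \SU(2)$ determined by $\Phi_0$ (\S\ref{s1.15.10.15}).

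With the matching boundary data $(A_j, \Phi_j)$ on the $\fX_j$ and $(A_D, \Phi_D)$ on $\fD$ now agreeing on every corner $\fS_j$, the extension theorem for smooth connections on manifolds with corners (the generalization of Proposition~\ref{p1.18.10.15} discussed in \S\ref{S:single_bogo}) yields a smooth $(A, \Phi)$ on a neighborhood of $\fb^{-1}(0)$ in $\fZ$ with the prescribed restrictions. Proposition~\ref{p11.24.10.15} then gives \eqref{e1.20.10.15}, and $A|\fD = A_D$ is by construction an $\SU(2)$ Dirac connection, so $(A, \Phi)|\fb^{-1}(0)$ is the required ideal monopole. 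The principal obstacle is the second step: simultaneously realizing on $\fD$ the $N+1$ prescribed boundary framings by a single Dirac connection, which is exactly what the dimension count in Proposition~\ref{C:Dirac_moduli} accommodates.
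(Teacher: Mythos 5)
Your first step contains a topological error. In any smooth trivialization of $\adP|_{\fS_j}$, the boundary Higgs field $\ol{\Phi}_j$ gives a map $\fS_j\cong\bbS^2\to\su(2)$ valued in the sphere of radius $m$, whose degree is precisely the charge $k_j$---this is how the paper \emph{defines} the charge. Any change of gauge acts on $\adP$ through $\SO(3)$, and since $\pi_2(\SO(3))=0$ it is null-homotopic, hence preserves that degree. As $k_j\geq 1$ for $j\geq 1$, the $\ol{\Phi}_j$ cannot be gauge-transformed to constants, and no ``common trivialization'' exists in which they all equal a fixed $\Phi_0\in\su(2)$. This error propagates into your second step: the identity $\nabla_{A_D}\Phi_D=d\Phi_0+[A_D,\Phi_0]=0$ requires a bundle-wide constant section, which is exactly what is unavailable.

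The correct substitute, which is what the paper's proof does, is to avoid any trivialization of $\adP$ and instead use the eigenbundle splitting $E|_{\fS_j}=L_j\oplus L_j^{-1}$ of the associated rank-two bundle, with $L_j$ the $+im$-eigenspace of $\ol{\Phi}_j$; this is a line bundle of \emph{nonzero} degree. Relative to this (nontrivial) splitting, $\ol{\Phi}_j=\diagl(im,-im)$ and $\ol{A}_j$ reduces to a $\UU(1)$-connection $a_j$ on $L_j$ with $\dagger F(a_j)=ik_j/2$ by Proposition~\ref{p3.8.10.15}. One then chooses a line bundle $L$ over a collar $\cU$ of $\fD$ restricting to $L_j$ on $\cU\cap\fX_j$, observes that the $a_j|_{\fS_j}$ give a Dirac framing, and invokes Proposition~\ref{p1.25.10.15} to produce a Dirac connection $a_D$ on $L|_\fD$ with that framing. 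Setting $\Phi_D=\diagl(im,-im)$ with respect to $L\oplus L^{-1}$ gives a \emph{covariantly} constant Higgs field, not a constant $\su(2)$-valued function. Your second and third steps---matching the $N+1$ boundary $\UU(1)$-framings using Proposition~\ref{p1.25.10.15} and then extending across the corners of $\fb^{-1}(0)$ via \S\ref{S:single_bogo}---otherwise follow the same architecture as the paper, so once the line-bundle splitting replaces the nonexistent global trivialization the rest of your outline goes through.
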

\begin{proof}
Let $\cU = \{0 \leq \rho_D < \delta\}$ be a product neighbourhood of
$D$.  It is convenient to work with vector bundles, so we denote by
$E$ the complex rank-2 vector bundle associated to $P$ by the fundamental
representation of $\SU(2)$.  We can equip the restriction $E|X_j$ with
the monopole data $(A_j,\Phi_j)$ for each $j$.   We can also assume that
$\delta$ is chosen so small $\Phi_j|\cU\cap X_j$ is non-zero.  Then as
in \S\ref{S:bogo1} we have the decomposition
\begin{equation}\label{e12.25.10.15}
E|\cU\cap X_j = L_j \oplus L_j^{-1}
\end{equation}
into the eigenbundles of $\Phi_j$.  With respect to
\eqref{e12.25.10.15}, we have
\[
\Phi_j = \diagl(im - \phi_j, -im + \phi_j)
\]
and
\begin{equation}\label{e11.25.10.15}
A_j = \diagl(a_j,-a_j) + b_j
\end{equation}
where 
\begin{equation}\label{e11a.25.10.15}
\phi_j = \frac{ik_j}{2}\rho_D + \cO(\rho_D^2)
\end{equation}
and
\[
b_j \in \rho_D^\infty C^\infty(\cU\cap X_j, T^*X_j \otimes \frp_1).
\]
In \eqref{e11.25.10.15}, we have committed a slight abuse of notation;
it is to be understood that $a_j$ is a connection in $L_j$ and $-a_j$
is the dual connection in $L_j^{-1}$.  By Proposition~\ref{p3.8.10.15}
we have
\begin{equation}\label{e19.25.10.15}
\dagger\! F(a_j) = \frac{ik_j}{2}.
\end{equation}

Now let $L \to \cU$ be a complex line bundle with 
\[
L|\cU\cap X_j = L_j.
\]
By \eqref{e19.25.10.15}, the data $\{a_j|D\cap X_j\}$ comprise a Dirac
framing in the sense of Definition~\ref{d11.25.10.15}.  By
Proposition~\ref{p1.25.10.15}, there exists a Dirac connection $a_D$ on
$L$ agreeing with $a_j$ over $D\cap X_j$.  Hence there exists a
connection $\wt{a}_D$ on $L$ over $\cU$ which agrees $a_j$ over $X_j\cap \cU$
and with $a_D$ over $D$.  We now define a connection $A$ on $L\oplus
L^{-1}$ by choosing smooth extensions $\wt{b}_j$ of $b_j$, $\wt{b}_j
\in \rho_D^\infty C^\infty(Z,\gT^*Z\otimes \frp_1)$ and putting
\[
A = \diagl(\wt{a}, - \wt{a}) + \sum_{j=0}^N \wt{b}_j.
\]
Then $A$ is smooth and it restricts to $A_j$ over $X_j$ and to the Dirac
connection $A_D$ over $D$.

If we define $\Phi_D = \diagl(im,-im)$ with respect to the
decomposition $E|D = L\oplus L^{-1}$, then $\nabla_{A|D}\Phi_D = 0$
and moreover
$\Phi_D$ agrees over $D\cap X_j$ with $\Phi_j$ for each $j$. Hence
there is a smooth extension $\Phi \in C^\infty(\fZ; \ad(P))$ of these data over $Z$.
Then $(A,\Phi)$ represents the given ideal monopole configuration as required.
\end{proof}

Before starting on solving the fibrewise Bogomonly equations in
earnest, it is convenient to introduce a better choice of smooth
configuration representing a given ideal monopole. Thus we make the
following definition:

\begin{defn}\label{D:Bogo_boundary_conditions}
A {\em pregluing configuration} is a smooth configuration $(A,\Phi)$
on $\fZ$ with the following properties:
\begin{enumerate}[{\normalfont (i)}]
\item
\begin{equation}	\label{E:Bogo_pregluing_error}
\Bogo(A,\Phi) = \cO(\rho_\fX \rho_\fD^3\rho_\fB^\infty)
\end{equation}
\item
$\nabla_A$ is diagonal to infinite order with respect to the splitting
\begin{equation}
	\adP = \adPz \oplus \adPo := \sspan_\bbC \backPhi \oplus \backPhi^\perp
	\label{E:adP_splitting}
\end{equation}
in a product neighbourhood of $D$.
\end{enumerate}
\end{defn}

\begin{prop}  Let $\iota$ be an ideal monopole configuration on $Z$.  Then there exists a
  pregluing configuration on $Z$ which represents $\iota$.
\label{P:initial_data_map}
\end{prop}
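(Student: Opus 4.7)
The plan is to take an ideal-monopole representative $(A^0,\Phi^0)$ of $\iota$ furnished by Proposition~\ref{p1.29.10.15} and modify it in two stages: first by a gauge transformation near $\fD$ securing the diagonalization condition~(ii), and then by a Borel-summed correction vanishing to infinite order at $\fD\cup \fX_0\cup\cdots\cup \fX_N$ which secures the infinite-order vanishing of $\Bogo$ at $\fB$ in condition~(i).

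For Step~1, I apply Taubes' rapid off-diagonal decay \eqref{e11.8.10.15} on each $\fX_j$, which yields a gauge transformation $g_j$ in a collar of $\fS_j=\fX_j\cap\fD$ in $\fX_j$ after which $A^0\rst \fX_j$ is diagonal modulo $\rho_\fD^\infty$ in the eigenbundle decomposition of $\adP$ induced by $\Phi^0\rst \fX_j$. On $\fD$ itself, $A^0\rst \fD$ is by construction (\S\ref{s1.15.10.15}) already diagonal in the splitting $E\rst \fD = L\oplus L^{-1}$, and since $\Phi^0$ is smooth across each $\fS_j$, this splitting agrees with the $g_j$-induced one there. Hence $\{g_j,\mathrm{Id}_\fD\}$ is a compatible boundary datum in the sense of \eqref{e2.28.10.15} and extends to a gauge transformation $g$ on a neighbourhood of $\fD$ in $\fZ$; after cutoff, $(A^1,\Phi^1):=g\cdot(A^0,\Phi^0)$ still represents $\iota$ and satisfies~(ii).

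For Step~2, gauge invariance of $\Bogo$ together with Proposition~\ref{p11.24.10.15}(iii) and the Dirac condition on $\fD$ (improving the $\rho_\fD$-order by one, as in the overview of \S\ref{S:intro_overview}) give
\[
  \Bogo(A^1,\Phi^1)\in \rho_\fB^2 \rho_\fD^3 \rho_\fX C^\infty(\fZ;\gLam^1\otimes \adP).
\]
To improve the $\rho_\fB$-order to infinity, I expand $(A^1,\Phi^1)$ near $\fB$ in a boundary-radial gauge as in \eqref{e21.8.10.15}--\eqref{e22.8.10.15} and solve the fibrewise Bogomolny equation iteratively in powers of $\rho_\fB$: at order $k\geq 2$, requiring the coefficient of $\rho_\fB^k$ in $\Bogo$ to vanish is a linear equation on $\partial \oX$ whose solvability is guaranteed by the admissibility of the boundary data $(\ovA,\ovPhi)$ (\S\ref{S:bogoscat_framed}) and the higher-order analogues of Proposition~\ref{p3.8.10.15}. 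Each correction can be chosen in $\rho_\fD^\infty \rho_\fX^\infty C^\infty$, preserving both the ideal-monopole data on $\fb^{-1}(0)$ and the Step~1 diagonalization. Borel summation then yields $(\alpha,\psi)\in \rho_\fB^\infty \rho_\fD^\infty \rho_\fX^\infty C^\infty(\fZ;(\gLam^1\oplus\gLam^0)\otimes\adP)$ with $\Bogo(A^1+\alpha,\Phi^1+\psi)=\cO(\rho_\fX \rho_\fD^3 \rho_\fB^\infty)$.

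The main obstacle is the bookkeeping at the corner $\fS_\infty=\fB\cap \fD$: I must verify that the Taylor coefficients produced at $\fB$ in Step~2 are consistent to all orders with the Dirac data already fixed on $\fD$ in Step~1. This holds because the Dirac monopole equations on $\fD$ are exactly the restriction to $\rho_\fB=0$ of the fibrewise Bogomolny equation near $\fS_\infty$, so the formal $\rho_\fB$-series produced along $\fB$ and the Dirac data prescribed along $\fD$ assemble into a joint polyhomogeneous expansion at $\fS_\infty$ in the sense of \cite{CCN}.
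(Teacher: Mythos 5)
Your claimed improvement to $\Bogo(A^1,\Phi^1)\in\rho_\fB^2\rho_\fD^3\rho_\fX C^\infty$ does not follow merely from ``the Dirac condition on $\fD$'': that condition on the connection gives $\cO(\rho_\fD^2)$ (Proposition~\ref{p11.24.10.15}), but the next order of vanishing requires the subleading Higgs coefficient $\Phi_1$ in the expansion $\Phi=\Phi_0+\ve\Phi_1+\cO(\rho_\fD^2)$ to solve the abelian Dirac equation $\star_\fD F_{A\rst\fD}=d_{A\rst\fD}\Phi_1$ (cf.\ \eqref{e1.28.10.15}, \eqref{e31.25.10.15}). This is a condition on $\Phi$, not on $A$, and it is not automatic from the configuration built in Proposition~\ref{p1.29.10.15}. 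In fact the substance of the paper's proof is precisely this step: one sets $\phi_D=\tfrac{i}{2}\sum_j k_j/|\zeta-\zeta_j|$, verifies that $\psi=\rho_0\cdots\rho_N\phi_D$ extends smoothly over $\fD$ with corner values $ik_j/2$ matching the $\rho_\fD$-coefficients of the $\Phi_j$ on each $\fX_j$, and then defines $\Phi_1$ from a smooth extension $\wt\phi$. Your argument silently assumes this construction has already been carried out.

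Your iterative Borel-sum argument for the $\rho_\fB^\infty$ condition also fails structurally. You restrict the corrections to lie in $\rho_\fD^\infty\rho_\fX^\infty C^\infty$, so their contribution to $\Bogo$ is again $\cO(\rho_\fD^\infty)$. But the $\rho_\fB^k$-coefficients of $\Bogo(A^1,\Phi^1)$ in its Taylor expansion at $\fB$ are only $\cO(\rho_\fD^3)$ at the corner $\fS_\infty=\fB\cap\fD$, and a correction contributing $\cO(\rho_\fD^\infty)$ to $\Bogo$ cannot cancel a coefficient with nontrivial $\rho_\fD^3$-part. Relaxing the $\rho_\fD$-order of the corrections would then destroy the properties you are trying to preserve. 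The paper circumvents iteration entirely: by Proposition~\ref{P:lifted_metric}(d), the lifted Euclidean metric is $\ve$-independent in a product neighbourhood of $\fB$, so one chooses the extensions $\wt a$, $\wt\phi$ to be $\ve$-independent there and the off-diagonal terms $\wt b_j\equiv 0$; then the fibrewise Bogomolny operator reduces exactly to the Dirac equation $\ve^2\left(\star_\fD F_{a_D}-d\,\phi_D\right)\equiv 0$, so $\Bogo(\backA,\backPhi)$ vanishes identically near $\fB$ --- stronger than $\cO(\rho_\fB^\infty)$ and with no iteration at all.
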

\begin{proof}
The proof is a continuation of the previous proposition and we make use of the notation
established there.  Let $(A,\Phi)$ be as constructed from the boundary
data in the proof of Proposition~\ref{p1.29.10.15}.  In particular,
$\Bogo(A,\Phi) = \cO(\rho_B^2\rho_D^2\rho_X)$ and $A|D$ is an $\SU(2)$
Dirac connection.

To see how to improve the order of
vanishing at $D$, use $\ve = \rho_\fD \rho_\fX$ to write the Taylor
expansion of $\Phi$ in $\cU$ in the form
\[
	\Phi = \Phi_0 + \ve \Phi_1 + \cO(\rho_\fD^2), 
	\quad \Phi_1 \in \rho_\fX^{-1}C^\infty(\fD; \adP).
\]
Then
\[
	d_A\Phi = \underbrace{d_A \Phi_0}_{\cO(\rho_\fD)} + \underbrace{\ve d_A \Phi_1}_{\cO(\rho_\fD^2)} + \cO(\rho_\fD^3)
\]
as a section of $\gLam^1\otimes \adP$. (Note that $\ve$ commutes with the
$\gamma$ connection $d_A$ since $[\ve,\fbV] = 0$ by definition. By contrast,
$\rho_\fD$ does not commute with $d_A$.) Hence
\[
	\Bogo(A,\Phi) = - \underbrace{d_A \Phi_0}_{\cO(\rho_\fD)} + \underbrace{\star F_A - \ve d_A \Phi_1}_{\cO(\rho_\fD^2)} + \cO(\rho_\fD^3) \in C^\infty(U; \gLam^1\otimes \adP)
\]
To interpret this in terms of the geometry
of $\fD$ we make use of the rescaled restriction isomorphism 
\begin{equation}
	(\mu^{-1})^\ast : \gLam^k \fZ\rst_{\fD} \cong \sccLam^k \fD
	\label{E:rescaled_restriction_forms_D}
\end{equation}
dual to \eqref{E:rescaled_restriction_D}.
There is a commutative diagram
\begin{equation}
\begin{tikzpicture}[->,>=to,auto]
\matrix (m) [matrix of math nodes, column sep=1cm, row sep=1cm, text height=2ex, text depth=0.25ex]
{\gLam^k \fZ \rst_{\fD} & \gLam^{k+1}\fZ \rst_{\fD} \\ \sccLam^k\fD & \sccLam^{k+1}\fD \\};
\path (m-1-1) edge node {$d$} (m-1-2); %top
\path (m-1-2) edge node {$(\mu^{-1})^\ast$} (m-2-2); %right
\path (m-2-1) edge node {$\ve\, d$} (m-2-2); %bot
\path (m-1-1) edge node {$(\mu^{-1})^\ast$} (m-2-1); %left
\end{tikzpicture}
	\label{E:cd_rescaled_d}
\end{equation}
and likewise $d_A$ is intertwined with $\ve\,d_{A\rst \fD}$, where $d_{A \rst \fD}$ is the 
covariant derivative of scattering/conic type on $\fD$ induced by the restriction of $A$. It follows that
\begin{equation}\label{e1.28.10.15}
	\Bogo(A,\Phi) = -\ve d_{A\rst \fD} \Phi_0 + \ve^2(\star_\fD F_{A\rst \fD} - d_{A\rst \fD} \Phi_1) + \cO(\rho_\fD^3),
\end{equation}
where we are regarding the coefficients of $\ve^k$ as sections of $\sccLam^1 \fD \otimes \adP$.
%\todo{MAS: I think a remark needs to be made here that the
  %coefficients of $\ve$ and $\ve^2$ are not smooth sections of $\gT^*$
  %up to the boundary.}
Vanishing of the first term has already been discussed. In order for the term
in parentheses to vanish, we need
\begin{equation}\label{e31.25.10.15}
\star_\fD F_{A\rst \fD} - d_{A\rst \fD} \Phi_1=0
\end{equation}
which is to say that $A|D$ is an $\SU(2)$ Dirac connection (cf.\ \S\ref{s1.15.10.15})
with abelian Higgs field $\Phi_1$.

In the previous proposition, we constructed $A$ so that $A|D$ was an
$\SU(2)$ Dirac monopole.  So it remains to show that $\Phi$ can be
constructed so that $\Phi_1$ is an abelian Higgs field.  For this,
define 
\begin{equation}\label{e21.25.10.15}
\phi_D = \frac{i}{2}\sum_{j=0}^N \frac{k_j}{|\zeta - \zeta_j|}
\end{equation}
so that $*_D \rd \phi_D$ is the curvature $F(a_D)$ of $a_D$. If we
choose $\rho_j = \rho_{X_j} = |\zeta-\zeta_j|$ near $D\cap X_j$, then clearly
\[
\psi : = \rho_0\cdots \rho_N \phi_D \in C^\infty(D)
\]
and $\psi|D\cap X_j = ik_j/2$. Similarly $\psi_j= \rho_D^{-1}\phi_j$
is smooth on $X_j$ and $\psi_j|D \cap X_j = i k_j/2$ by \eqref{e11a.25.10.15}.  Let $\wt{\psi}$
be a smooth extension of $\psi_D$ and the $\psi_j$ to $Z$, and let
$\wt{\phi} = (\rho_0\cdots \rho_N)^{-1}\wt{\psi}$.  By construction, $\ve\wt{\phi}|X_j\cap \cU
= \phi_j$, and near any interior point of $D$, 
\begin{equation}
\wt{\phi}  = \phi_D + \cO(\ve).
\end{equation}
We now define $\Phi_1 = \diagl(\wt{\phi}, - \wt{\phi})$ and by construction
$\Phi_0 + \ve \Phi_1$ is smooth and  has the correct Taylor expansion
at $\fD$.

Finally, to arrange the vanishing of $\Bogo(\backA,\backPhi)$ to
infinite order near $\fB$, note that a neighbourhood $U = \cU \cap \{0\leq
\rho_B\leq \delta'\}$ can be identified with the product of $\{0\leq
\rho_D < \delta\}$ with a neighbourhood $U'$ of $S_\infty$ in $D$, in
such a way that the fibrewise metric $\wt{g}$ on $\rho_D = \ve$ is
independent of $\ve$ in $U$ (part (d) of
Proposition~\ref{P:lifted_metric}).  In such a neighbourhood we can
choose the extensions $\wt{a}$ and $\wt{\phi}$ also to be independent
of $\ve$.  We may also suppose that all $\wt{b}_j=0$ in this
neighbourhood.   Then the argument leading to \eqref{e1.28.10.15} has
no error term here and gives
\[
	\Bogo(\backA,\backPhi) = \star F_\backA - d_\backA \backPhi \cong \ve^2 \star_\fD F_{A\rst_\fD} 
	  - \ve d_{A\rst_\fD} (\ve \phi_\fD) \equiv 0
 \mbox{ over $U$}
\]
as required.
\end{proof}

\subsection{The iteration}

Suppose $(\backA,\backPhi)$ is a pregluing configuration.
The task is now to find a perturbation $(a,\phi)$ such that $\Bogo(\backA + a,\backPhi + \phi)$
vanishes to high order in $\ve$.

For $(a,\phi) \in C^\infty(\fZ; (\gLam^1\oplus \gLam^0)\otimes \adP)$,
\[
\begin{gathered}
	\Bogo(\backA + a, \backPhi + \phi) = \Bogo(\backA,\backPhi) + D\Bogo_{\backA,\backPhi}(a,\phi) + Q(a,\phi)
	%\in C^\infty(\fZ, \gLam^1\otimes \adP)
	\\ D\Bogo_{\backA,\backPhi}(a,\phi) = \star d_\backA a - d_\backA \phi + [\backPhi,a],
	\\ Q(a,\phi) = \star [a,a] - [a,\phi]
\end{gathered}
\]
The linear operator $D\Bogo_{\backA,\backPhi}$ is not elliptic owing to the action
of the gauge group. To remedy this, we impose the {\em Coulomb gauge}
condition
\begin{equation}
	\rd_{\backA,\backPhi}^*(a,\phi) =  d^*_\backA  a - [\backPhi,\phi] = 0
	\in C^\infty(\fZ; \adP).
	\label{E:CG}
\end{equation}
just as in the discussion in \S\ref{S:local}, where now $d^*_A$ is the
formal adjoint of the $\gamma$-covariant derivative $d_A$ on $\adP$.
Over the $\ve > 0$ fibers of $\fZ$, this condition is known to determine 
a slice for the action of the gauge group provided $(a,\phi)\rst_{\ve}$ is sufficiently small. 

Later, when we take into account the parameters of the gluing construction, we
will show that \eqref{E:CG} determines a slice globally and uniformly in the
parameters, for sufficiently small $\ve$ (c.f.\ Theorem~\ref{T:coulomb}).
%\todo{Clarify?}

Thus we seek to solve
\begin{equation}
\begin{gathered}
	\Bogo(\backA,\backPhi) + \backL (a,\phi) + Q(a,\phi) 
	 = 0 
	%\in C^\infty(\fZ; (\gLam^1\oplus \gLam^0)\otimes \adP)
	\\ \backL = D\Bogo_{\backA,\backPhi} + d^*_{\backA,\backPhi},
	\\ \backL(a,\phi) = \begin{bmatrix} \star d_\backA & -d_\backA \\ -d_\backA^\ast & 0\end{bmatrix} 
	\begin{bmatrix} a \\ \phi \end{bmatrix} + \ad(\backPhi)\begin{bmatrix} a \\ \phi \end{bmatrix}
%\star d_\backA a - d_\backA^\ast a - d_\backA \phi
	%+ [\backPhi,a] + [\backPhi,\phi],
	%= \begin{pmatrix} \star d_\backA + \ad \backPhi & - d_\backA\\- d_\backA^\ast & \ad \backPhi\end{pmatrix}.
\end{gathered}
	\label{E:Bogo_expansion}
\end{equation}
where $L$ is the operator \eqref{e5.10.10.15} acting along the fibres
of $\fb$.

Over $\fX_j$, $d_\backA$ restricts to the covariant derivative $d_{A_j}$ acting
on sections of $\scLam^\ast\fX_j\otimes \adP \cong \gLam^\ast \otimes \adP \rst_{\fX_j}$,
and $\star$ is identified with the Hodge star associated to the metric $g_j$. A consequence
is the following:
\begin{prop} Denote by $L_j$ the operator $L_{(A_j,\Phi_j)}$ over
  $X_j$.  Then $\backL|\fX_j = L_j$ in the sense that
if $(a,\phi) \in C^\infty(\fZ; \gLam\otimes \adP)$, then
\[
	(\backL (a,\phi))\rst_{\fX_j} = L_j ((a,\phi)\rst_{\fX_j}) 
	  \in C^\infty(\fX_j; \scLam\otimes\adP).
\]
\label{P:L_rstn_X}
\end{prop}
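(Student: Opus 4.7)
The plan is to verify the identity term-by-term using the identifications established in Proposition~\ref{P:retricting_gT} and Proposition~\ref{P:lifted_metric}, which together give an isometry $\gT\fZ\rst_{\fX_j} \cong \scT\fX_j$ with $\wt g \rst_{\fX_j} = g_j$, where $g_j$ is the (Euclidean) scattering metric on $\fX_j \cong \ol{\bbR^3}$. Under this isomorphism the bundle $\gLam^k \fZ\rst_{\fX_j}$ is identified with $\scLam^k\fX_j$, so the claim becomes the statement that each of the four operators appearing in $\backL$---namely $d_\backA$, $d_\backA^\ast$, $\star$, and $\ad(\backPhi)$---commutes with restriction to $\fX_j$ and reduces to the corresponding Euclidean operator at $(A_j,\Phi_j)$.

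First I would note that by the construction of the pregluing configuration (Proposition~\ref{P:initial_data_map}), the restrictions satisfy $\backA\rst_{\fX_j} = A_j$ and $\backPhi\rst_{\fX_j} = \Phi_j$ by design, so $\ad(\backPhi)$ restricts to $\ad(\Phi_j)$ trivially, and the Hodge star $\star = \star_{\wt g}$ restricts to $\ast = \star_{g_j}$ since the isometry identifies the metrics. Next, for the covariant derivative: sections of $\gT\fZ$ are tangent to all boundary faces, in particular to $\fX_j$, so for any $s \in C^\infty(\fZ; \adP)$ and $V \in C^\infty(\fZ; \gT\fZ)$, one has $(V \cdot s)\rst_{\fX_j} = (V\rst_{\fX_j}) \cdot (s\rst_{\fX_j})$. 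Combined with the identification $\backA\rst_{\fX_j} = A_j$, this gives $(d_\backA s)\rst_{\fX_j} = d_{A_j}(s\rst_{\fX_j})$, and likewise on $\gLam^k\otimes \adP$.

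The remaining point is the formal adjoint $d_\backA^\ast$. Since $d_\backA^\ast$ is a local first-order differential operator whose coefficients are polynomial expressions in the coefficients of $d_\backA$ and in the metric $\wt g$ (and its determinant, entering through the volume form of each fiber), and since both $d_\backA$ and $\wt g$ restrict correctly to $d_{A_j}$ and $g_j$ on $\fX_j$, the same holds for the adjoint: $(d_\backA^\ast \alpha)\rst_{\fX_j} = d_{A_j}^\ast(\alpha\rst_{\fX_j})$. In local adapted coordinates near an interior point of $\fX_j$ of the form $(\rho_j, y)$ with $y$ coordinates on $\fX_j$, the vertical metric $\wt g$ is independent of $\rho_j$ at $\rho_j = 0$ in a suitable trivialization (by the scattering-bundle normal form), so the formula $d_\backA^\ast = -\star d_\backA \star$ (up to signs, on the appropriate forms) makes the restriction manifest. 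This is really the only step requiring any computation, but it is routine.

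Combining these four identifications and applying them to the matrix expression for $\backL(a,\phi)$ in \eqref{E:Bogo_expansion} entrywise, one sees that for $(a,\phi) \in C^\infty(\fZ; \gLam\otimes \adP)$ the restriction of $\backL(a,\phi)$ to $\fX_j$ equals
\[
\begin{bmatrix} \ast d_{A_j} & -d_{A_j} \\ -d_{A_j}^\ast & 0 \end{bmatrix} \begin{bmatrix} a\rst_{\fX_j} \\ \phi\rst_{\fX_j} \end{bmatrix} + \ad(\Phi_j) \begin{bmatrix} a\rst_{\fX_j} \\ \phi\rst_{\fX_j} \end{bmatrix} = L_j\bigl((a,\phi)\rst_{\fX_j}\bigr),
\]
which is the desired equality. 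The main (and only) subtlety is the adjoint identification, but it reduces to the fact that the fiberwise volume form of $\wt g$ restricts to the Riemannian volume form of $g_j$ under the isometry $\gT\fZ\rst_{\fX_j} \cong \scT\fX_j$.
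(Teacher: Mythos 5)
Your proof is correct and follows exactly the approach the paper intends: the paper in fact gives no formal proof of this proposition, stating it as ``A consequence'' of the one-sentence observation immediately preceding it (that $d_{\backA}$ and $\star$ restrict over $\fX_j$ to $d_{A_j}$ and $\star_{g_j}$). You have simply spelled out that observation in full, tracking each of the four constituents of $\backL$ through the restriction, including the adjoint term (via $d^\ast = -\star d\, \star$) which the paper leaves implicit. No gaps.
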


At $\fD$ the situation is a bit more complicated. 
First, we decompose $\backL$ according to the splitting $\adP = \adPz\oplus \adPo$
near $\fD$.
Note that, as a section of $\adPz$, $\backPhi$ acts trivially 
on $\adPz$ and nondegenerately on $\adPo$. 
Combined with the assumption that $\backA$ is diagonal to infinite order, we may write
\begin{equation}
	\backL = \begin{pmatrix} L_0 & \cO(\rho_\fD^\infty\rho_\fB^\infty)
	  \\\cO(\rho_\fD^\infty \rho_\fB^\infty)& L_1 + \backPhi_1 \end{pmatrix}
	\label{E:backL}
\end{equation}
with respect to $\adP = \adPz\oplus \adPo,$ where $\backPhi_1$ denotes the
nondegenerate restriction to $\adPo.$ 

We focus attention on $L_0$; we will not need to consider $L_1 + \backPhi_1$ until \S\ref{S:global}.
Using the fact that $\backA$ is trivial on $\adPz = \adQ$ at $\fD$, the diagram
\eqref{E:cd_rescaled_d}, and the fact that $\star$ is interwtined with
$\star_\fD$ under the rescaled restriction isomorphism, we have the following
\begin{prop}
The rescaled restriction $\lim_{\ve \smallto 0} \ve^{-1}L_0$ of $L_0$ is the operator
\[
\begin{gathered}
	L_\fD : C^\infty(\fD; \sccLam) \to C^\infty(\fD; \sccLam),
	\\ L_\fD(a,\phi) = 
	\begin{bmatrix} \star_\fD d & -d \\ d^\ast & 0 \end{bmatrix} \begin{bmatrix} a \\ \phi \end{bmatrix}
	%\star_\fD d a - d^\ast a  - d\phi,
\end{gathered}
\]
in the following sense:
if $(a,\phi) \in C^\infty(U; (\gLam^1\oplus \gLam^0)\otimes \adPz)$, then
with respect the rescaled restriction isomorphism \eqref{E:rescaled_restriction_forms_D}
\[
	(\ve^{-1}L_0 (a,\phi))\rst_{\fD} \cong  L_\fD ((a,\phi)\rst_{\fD})
	  \in C^\infty(\fD; \sccLam^1\oplus \sccLam^0).
\]
\label{P:L_rstn_D}
\end{prop}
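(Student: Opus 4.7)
The plan is to identify $L_0$ explicitly using the pregluing conditions, then push it through the rescaled restriction isomorphism using the commutative diagram~\eqref{E:cd_rescaled_d} and its dual/adjoint analogue.

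First, I would unpack $L_0$ from \eqref{E:backL} using the properties of a pregluing configuration. Since $\nabla_\backA$ is diagonal to infinite order with respect to $\adP = \adPz \oplus \adPo$ in a neighborhood of $\fD$, the connection $d_\backA$ restricted to $\adPz$-valued forms agrees modulo $\cO(\rho_\fD^\infty)$ with the connection inherited from $\adPz = \sspan_\bbC \backPhi$. The line bundle $\adPz$ is canonically trivialized by $\backPhi/\abs{\backPhi}$ at $\fD$, so under this trivialization $d_\backA$ becomes the ordinary de Rham differential $d$, up to terms vanishing to infinite order at $\fD$. Moreover, $\ad(\backPhi)$ acts as zero on $\adPz$, so the $\ad(\backPhi)$ term in $\backL$ drops out of $L_0$. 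This reduces the task to studying the operator
\[
\begin{bmatrix} \star d & -d \\ -d^\ast & 0 \end{bmatrix}
\]
(acting on $\gLam$-valued sections) at $\fD$.

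Next, I would apply the intertwining diagram \eqref{E:cd_rescaled_d}, which says that under the rescaled restriction isomorphism $(\mu^{-1})^\ast : \gLam^k \fZ \rst_\fD \cong \sccLam^k \fD$, the operator $d$ corresponds to $\ve \cdot d$ on the right-hand side. The analogous statement for the formal adjoint $d^\ast$ follows by dualizing: the $\gamma$-metric $\wt g$ and its rescaled restriction to $\fD$ (identified with the cone metric $g_\fD$ on $\fD$ via Proposition~\ref{P:lifted_metric}(c)) are related by the factor $\ve$ in the rescaling $\mu$, so $d^\ast_{\wt g}$ restricts to $\ve \cdot d^\ast_{g_\fD}$. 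Similarly, the Hodge star $\star_{\wt g}: \gLam^k \to \gLam^{3-k}$ intertwines under rescaled restriction with $\star_\fD : \sccLam^k \to \sccLam^{3-k}$ (with no $\ve$ prefactor, since the star exchanges $k$-forms with $(3-k)$-forms and the rescaling factors cancel in the natural way).

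Putting these ingredients together, each entry of $L_0$ picks up exactly one factor of $\ve$ when pushed to $\fD$ via $(\mu^{-1})^\ast$, yielding $L_0 \rst_\fD = \ve\, L_\fD$ modulo $\cO(\rho_\fD^\infty \rho_\fB^\infty)$ errors coming from the off-diagonal components of $\backA$. Dividing by $\ve$ gives the claimed identification of $\lim_{\ve \to 0} \ve^{-1} L_0$ with $L_\fD$. The only real bookkeeping issue to watch carefully is the precise sign and $\ve$-weighting in the intertwining of $d^\ast$, which must be computed from the definition of $\fbV(\fZ)$-duality and the relation $\gT \fZ = \ve \rho_\fX^{-1}\rho_\fB \fbT \fZ$ from \eqref{E:bundle_identities}; this is a routine local calculation in the adapted coordinates used in the proof of Proposition~\ref{P:pullback_sc_to_g}, but it is the only step requiring genuine care.
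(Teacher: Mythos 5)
Your proposal is correct and follows essentially the same route as the paper. The paper's justification is a short paragraph preceding the proposition, which cites exactly the three ingredients you isolate: the triviality of $d_{\backA}$ on $\adPz$ at $\fD$ (from the pregluing diagonality condition), the commutative diagram \eqref{E:cd_rescaled_d} intertwining $d$ with $\ve\, d$, and the compatibility of $\star$ with $\star_\fD$ under the rescaled restriction isomorphism; you spell these out slightly more explicitly, in particular deducing the $d^\ast \leftrightarrow \ve\, d^\ast_\fD$ intertwining from $d^\ast = -\star d\, \star$ rather than treating it as a separate computation, which is a clean way to package the bookkeeping you rightly flag as the only delicate point. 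One minor remark: the $(2,1)$ entry of $L_\fD$ as displayed in the proposition has a sign discrepancy with the definition of $\backL$ in \eqref{E:Bogo_expansion} and with $L_\fD$ as it appears in Appendix~\ref{S:linear_D} (which both carry $-d^\ast$); your derivation correctly produces $-d^\ast$, so this is a typo in the proposition statement rather than a gap in your argument.
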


The operators $L_\fX$ and $L_\fD$ are analyzed in Appendix~\ref{S:linear}, in
particular the regularity of solutions to $L_\bullet u = f$ is discussed. In
order to state these results, and for the formal construction below, we first
need to introduce a weakening of smoothness to allow functions with asymptotic
expansions having logs and noninteger powers of boundary defining functions.
 
\subsection{Polyhomogeneity} \label{S:functions_phg}

Briefly, a polyhomogeneous function is one having a complete asymptotic expansion
at all boundaries with terms of the form $x^s (\log x)^p$ where $x$ is a boundary defining
function. The exponents $(s,p) \in \bbR \times \bbN_0$ which appear in the expansion
are recorded by a (real) {\em index set}, which is  a discrete subset $E \subset \bbR \times \bbN_0$
satisfying the property that 
for each $\alpha \in \bbR$, there are only finitely many $(s,p) \in E$ with
$s \leq \alpha,$ and only finitely many $(s,p)$ for each fixed $s$. An index set is {\em smooth} if 
\[
	(s,p) \in E \implies (s + n, q) \in E, 
	\quad \forall\ n \in \bbN_0, \ 0 \leq q \leq p.
\]
%This condition is necessary for the exponents appearing in the expansion of a
%function to be independent of coordinates.  
Unless otherwise specified, all index sets will be smooth.

\begin{defn}
If $X$ is a manifold with boundary, then the polyhomogeneous space,
$\cAphg^E(X)$, is the space of smooth functions on $X\setminus \pa X$ having an
asymptotic expansion
\[
	u \sim \sum_{(s,p) \in E} u_{(s,p)} x^s (\log x)^p,
	\quad u_{(s,p)} \in C^\infty(\pa X)
\]
where $x$ is a boundary defining function for $\pa X$, and the equivalence is
modulo $x^\infty C^\infty(X)$. The finiteness condition on $E$
insures that such sums are Borel summable. If $E$ is smooth then $\cAphg^E(X)$
is independent of the choice of $x.$ 

Suppose now that $X$ is a manifold with corners with boundary hypersurfaces
$H_1,\ldots,H_N,$ and let $\cE = (E_1,\ldots,E_N)$ be an {\em index family},
meaning an $N$-tuple of smooth index sets. Then $\cAphg^\cE(X)$ is recursively
defined as the space of smooth functions on $X\setminus\pa X$ having asymptotic
expansions
\[
	u \sim \sum_{(s,p) \in E_k} u_{(s,p)}\rho_k^s (\log \rho_k)^p,
	\quad u_{(s,p)} \in \cAphg^{\cE(k)}(H_k).
\]
Here $\cE(k)$ is the index family obtained from $\cE$ which is associated to
boundary hypersurfaces of $H_k$, which consist of connected components of
$H_k \cap H_j$ for various $j$.

If $V \to X$ is a vector bundle, the spaces $\cAphg^\cE(X; V)$ are defined as
above in terms of local trivializations.
\label{D:phg}
\end{defn}

There are a number of notational conventions and operations on index sets which
we use below. First, we identify $s \in \bbR$ with the smallest smooth index
set containing $(s,0)$, namely $\set{(s + n, 0) : n \in \bbN_0}$.  In
particular, $\cAphg^s(X) \equiv x^s C^\infty(X)$ for a manifold with boundary,
and $\cAphg^{(s_1,\ldots,s_N)}(X) = \rho_1^{s_1}\cdots \rho_N^{s_N}C^\infty(X)$
for a manifold with corners. We also write $\infty$ for the empty index set,
which is consistent with the identity $\cAphg^\infty(X) = x^\infty C^\infty(X).$

We order elements of $\bbR \times \bbN_0$ lexicographically, with the opposite
order on $\bbN_0$, so that
\[
	(s,p) < (t,q) \iff s < t,\text{ or } s = t \text{ and } p > q.
\]
(This is consistent with the idea that $\cO(x^s(\log x)^p)$ has worse decay
than $\cO(x^t(\log x)^q)$ as $x \smallto 0$.) We then order index sets by
comparing their minimal elements (which exist by the finiteness conditions)
with respect to this order: thus $E < F$ (respectively $E \leq F$) if and only if
$\min E < \min F$ (respectively $\min E \leq \min F$). In particular, for $m
\in \bbR$,
\[
\begin{gathered}
	E > m \iff \min\set{ s : (s,p) \in E} > m, 
	\\ E \geq m \iff E > m, \text{ or } (m,0) \in E \text{ but } (m,p) \notin E\ \forall\ p \geq 1,
\end{gathered}
\]
and if $X$ is a manifold with boundary, $\cAphg^E(X) \subset C^0(X)$ if and
only if $E \geq 0$ and $\cAphg^E(X) \subset C^0_0(X)$ if and only if $E > 0$.

If $E$ and $F$ are (smooth) index sets, then so too are $E + F$ and $E
\cup F$. These correspond respectively to multiplication and addition of
sections of $V$, in the sense that 
\[
\begin{gathered}
	\cAphg^\cE(X; V) \times \cAphg^\cF(X; W) \ni (u,v) \mapsto  u\otimes v \in \cAphg^{\cE + \cF}(X; V\otimes W),
	\\ \cAphg^\cE(X; V) \times \cAphg^\cF(X; V) \ni (u,v) \mapsto u + v \in \cAphg^{\cE \cup \cF}(X; V).
\end{gathered}
\]

The {\em extended union} of $E$ and $F$, denoted $E \ol \cup F$, is the index set
\[
	E \ol \cup F = E \cup F \cup \set{(s,p + q + 1) : (s,p) \in E,\ (s,q) \in F}.
\]
This arises in the context of fiber integration \cite{CCN}.

Finally, we introduce one more notational convention which will be used below. 
For $n \in \bbZ$, $k \in \bbN_0$, let
\[
	\ol{(n,k)} = \set{(n+l, j) : l \in \bbN_0, 0 \leq j \leq k+l}.
\]
This is the smallest smooth index set containing $\set{(n + l, k + l ) : l \in
\bbN_0}$, and $\cAphg^{\ol{(n,l)}}(X)$ consists of functions whose asymptotic
expansions have logarithmic terms with powers growing linearly 
with the powers of $x$.

\subsection{Formal solution} \label{S:formal_formal}
The key solvability properties of $L_\fX$ and $L_\fD$, proved in Appendix~\ref{S:linear},
are summarized in the following:

\begin{thm}
[Thm.~\ref{T:LX_package}.\eqref{I:LX_package_solv}, Thm~\ref{T:LD_package}.\eqref{I:LD_package_solv}]
\mbox{}
\begin{enumerate}
[{\normalfont (a)}]
\item 
Let $f \in \cAphg^\ast(\fX; (\scLam^1\oplus \scLam^0)\otimes \adP)$ with $f = f_0\oplus f_1$
near $\pa \fX$ with $f_i \in \cAphg^{F_i}(\fX; \scLam^\ast \otimes\adPi)$, $i = 1,2$,
and suppose $F_i > \tfrac 3 2$.  Then there is a unique solution to $L_\fX u =
f$ with $u$ in $\Null(L_\fX)^\perp$ with respect to the $L^2$ pairing over $\fX$,
and $u \in \cAphg^\ast(\fX; (\scLam^1\oplus \scLam^0)\otimes \adP)$ with
$u = u_0\oplus u_1 \in \cAphg^{E_0} \oplus \cAphg^{E_1}$ near $\pa \fX$, where
\[
	E_0 = \ol{(2,0)} \ol \cup (F_0 - 1), 
	\quad E_1 = F_1.
\]
\item
let $f \in \cAphg^{F_\fX,F_\fB}(\fD; \sccLam^1\oplus \sccLam^0)$ 
with $F_\fB > \tfrac 3 2$ and $F_\fX > - \tfrac 3 2.$ Then there exists a unique solution to $L_\fD u = f$
with $u \in \cAphg^{E_\fX,E_\fB}(\fD; \sccLam^1\oplus \sccLam^0)$ where
\[
	E_\fX = \ol{(0,0)} \ol\cup (F_\fX + 1).
	\quad E_\fB = \ol{(2,0)} \ol\cup (F_\fB - 1),
\]
\end{enumerate}
\label{T:L_solvability}
\end{thm}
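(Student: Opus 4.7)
The plan is to deduce both parts from the scattering (respectively mixed scattering–conic) pseudodifferential calculus, exploiting the Weitzenbock identity \eqref{e7.10.10.15}, namely $L_{(A,\Phi)}L_{(A,\Phi)}^\ast = \nabla_A^\ast \nabla_A + \ad(\Phi^\ast)\ad(\Phi)$, together with the splitting $\adP = \adPz \oplus \adPo$ from \eqref{E:adP_splitting}, and then carefully tracking index sets through a formal iterative construction followed by a final correction step.

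For part (a), I would split $L_\fX$ according to $\adP = \adPz\oplus \adPo$. On $\adPo$, where $|\Phi|^2 \to 1$ at $\pa\fX$, the Weitzenbock formula makes $L_\fX L_\fX^\ast$ fully elliptic in the scattering calculus (the mass providing the boundary symbol), so a fully–elliptic scattering parametrix yields a solution $u_1 \in \cAphg^{F_1}$ with no index shift. On $\adPz$, $\ad(\Phi)$ vanishes asymptotically and $L_\fX$ reduces to leading order to the flat model $L = \left[\begin{smallmatrix}\star d & -d\\ -d^\ast & 0\end{smallmatrix}\right]$ with $L^2 = \Delta$. This operator is only $b$–elliptic at $\pa\fX$; its indicial operator on $\bbS^2$ has indicial roots at the integers (coming from the multipole decomposition of $\Delta$), and its polyhomogeneous kernel at the boundary starts at $\rho^2$ — precisely the translation/framing vectors $\tau_a$ identified in Proposition~\ref{p2.23.11.15}. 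A term–by–term Mellin/indicial iteration then solves $L u_0 = f_0$ formally with index set $\ol{(2,0)}\;\ol\cup\;(F_0 - 1)$: the shift $-1$ arises because a first–order scattering differential operator sends $\rho^s$ to $\rho^{s+1}$ as a section of $\scLam^1$ (so its parametrix shifts powers by $-1$), the $\ol{(2,0)}$ piece collects the homogeneous contributions, and logs appear through the $\ol\cup$ precisely when an iterated coefficient resonates with an indicial root. The threshold $F_i > \tfrac{3}{2}$ is the standard condition for $f$ to be $L^2$ with respect to the scattering half–density (i.e.\ Euclidean volume on $\bbR^3$), and uniqueness modulo $\ker L_\fX$ follows from the known Fredholm theory for $L_\fX$ (cf.\ \cite{kottke2015dimension}) which gives surjectivity onto $(\ker L_\fX^\ast)^\perp$.

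For part (b), the strategy is structurally parallel but must accommodate two distinct boundary types of $\fD$: scattering at $\fS_\infty$ and conic at each cone face $\fS_j$. $L^2$–solvability again comes from $L_\fD^2 = \Delta$ and injectivity of $\Delta$ on functions decaying at infinity in $\bbR^3$; the pair of thresholds $F_\fX > -\tfrac{3}{2}$ and $F_\fB > \tfrac{3}{2}$ is precisely what places $f$ in $L^2(\fD, d^3z)$. The indicial shift at $\fS_j$ is opposite in sign to that at $\fS_\infty$: a scalar $r^s$ satisfies $dr^s = s r^{s-1}\,dr$, and since the sc–conic cotangent frame is $\{dr,\, r\,dy\}$, this lies in $O(r^{s-1}) \cdot \sccLam^1$ but actually in $O(r^s)\cdot\sccLam^1$ when one accounts for the $r$ rescaling — inverting thus gains one power of $r$, giving the summand $(F_\fX + 1)$. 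The homogeneous contribution $\ol{(0,0)}$ at $\fS_j$ consists of the harmonic Taylor polynomials of $\Delta$ at the cone point (constants, linear harmonics, etc.), while at $\fS_\infty$ the analysis reproduces $\ol{(2,0)}\,\ol\cup\,(F_\fB - 1)$ verbatim from part (a); the two are then stitched together compatibly on $\fD$ via standard corner-compatibility of polyhomogeneous expansions.

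The principal obstacle is bookkeeping of the index sets through the iterative construction, in particular verifying the precise emergence of logarithmic terms when newly generated coefficients hit indicial roots, and then passing from a formal (Borel–summed) polyhomogeneous solution to an honest solution by correcting a rapidly–decaying smooth residual. For part (a) this correction is made using the fully elliptic scattering theory on the $\adPo$ component applied to the diagonal residual; for part (b) it reduces to standard $L^2$ solvability for $\Delta$. Once the indicial analysis is carried out explicitly in the appendix, both conclusions follow.
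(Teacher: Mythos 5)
Your proposal matches the paper's approach in all essentials: decompose by $\adP = \adPz \oplus \adPo$ near $\pa \fX$, treat the $\adPo$ part as a fully elliptic scattering problem (no index shift, $E_1 = F_1$), treat the $\adPz$ part via b-calculus indicial analysis after conjugating the flat model $L$ by powers of $\rho$ (producing the $\pm 1$ shifts, with $\ol{(2,0)}$ and $\ol{(0,0)}$ coming from the homogeneous solutions at $\fS_\infty$ and the cone points respectively), use the Bochner identity for vanishing of the adjoint kernel and hence surjectivity, and read off the precise polyhomogeneous index sets a posteriori via Melrose's regularity theorem for b-operators (the paper's Proposition~\ref{P:a_posteriori_phg}). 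Two small slips that do not affect the conclusion: full scattering ellipticity of the $\adPo$ block is not a consequence of the Weitzenbock identity but of the fact that $\ad\Phi$ is skew-adjoint, nondegenerate on $\adPo$, and commutes with the Clifford action to leading order at $\pa\fX$ (the Weitzenbock identity enters only in the surjectivity argument), and the indicial roots of the conjugated flat operator are the \emph{nonzero} integers $\bbZ\setminus\{0\}$, not all integers --- the absence of $0$ is precisely what makes the weight interval $(-1,1)$ available.
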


This leads to the following technical result, which is fundamental to the iterative step
in our construction.

\begin{lem}
Denote the bundle $(\gLam^1\oplus \gLam^0)\otimes \adP$ by $W$, and in a
neighborhood $U$ of $\fD \cup \fB$, write $W_i = (\gLam^1\oplus \gLam^0)\otimes
\adPi.$ In the following statements, both $f$ and $u$ have rapidly vanishing $W_1$
components in $U$, i.e., the $W_1$ projections of their restrictions to $U$ lie
in $\cAphg^{\infty_\fD,\infty_\fB,\ast_\fX}(U; W_1)$.
\begin{enumerate}
[{\normalfont (a)}]
\item 
Let $f \in \cAphg^{\ol{(2,j)}_\fD,0_\fX,\infty_\fB}(\fZ; W)$. Then
there exists $u \in \cAphg^{\ol{(1,j+1)}_\fD, 0_\fX, \infty_\fB}(\fZ; W)$
%supported in a neighborhood of $\fX$, 
such that
\[
	\backL u - f \in \cAphg^{\ol{(2,j+1)}_\fD,1_\fX,\infty_\fB}(\fZ; W).
\]
\item
Let $f \in \cAphg^{0_\fD, \ol{(-1,j)}_\fX,\infty_\fB}(\fZ; W)$ be arbitrary. Then
there exists $u \in \cAphg^{0_\fD, \ol{(0,j+1)}_\fX,\ol{(2,0)}_\fB}(\fZ; W)$
supported in a neighborhood of $\fD$, such that
\[
	\backL u - \ve f \in \cAphg^{2_\fD,\ol{(0,j+1)}_\fX,\infty_\fB}(\fZ; W),
\]
and furthermore, the $W_1$ projection of $u$ vanishes identically.
\end{enumerate}
\label{L:solvability}
\end{lem}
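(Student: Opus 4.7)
The plan is to reduce both parts to solving model equations on the boundary faces using Theorem~\ref{T:L_solvability}, then lift the resulting solutions to $\fZ$ via Borel summation in the polyhomogeneous calculus. The block decomposition \eqref{E:backL} of $\backL$ with respect to $\adP=\adPz\oplus\adPo$ is diagonal modulo coupling terms that vanish rapidly at $\fD\cup\fB$, and since $L_1+\backPhi_1$ is a fiberwise invertible bundle endomorphism to infinite order at $\fD\cup\fB$, any $W_1$ component arising during the iteration can be solved away to infinite order there. Thus the substantive content lies in inverting $L_0$ against $W_0$-valued right-hand sides.

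For part (a), I extract the leading $\rho_\fD$-behavior of $f$ and observe that, via Proposition~\ref{P:L_rstn_D}, the equation $\backL u=f$ reduces at leading order to a model equation $L_\fD v_p=\tilde f_p$ on $\fD$ for each log-power $p\le j$, where $\tilde f_p\in\cAphg^{0_\fX,\infty_\fB}(\fD;\sccLam^\ast\otimes\adPz)$ is the appropriate rescaled-restriction coefficient. Theorem~\ref{T:L_solvability}(b) applies with $F_\fX=0$, $F_\fB=\infty$, producing $v_p$ of index $\ol{(0,0)}$ at the interior boundaries $\fS_i$ of $\fD$; crucially, the $\ol\cup$ operation with the indicial root at $\fS_i$ absorbs the $p$-th log factor and raises the log exponent by one. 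Borel-summing the $v_p$ via powers of $\rho_\fD$ into a neighborhood of $\fD$ in $\fZ$ produces $u\in\cAphg^{\ol{(1,j+1)}_\fD,0_\fX,\infty_\fB}(\fZ;W)$, and the subleading terms in the $\ve$-expansion of $L_0$, together with an extra factor of $\rho_\fX$ from the next power of $\ve$, place the residual error in $\cAphg^{\ol{(2,j+1)}_\fD,1_\fX,\infty_\fB}(\fZ;W)$.

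For part (b), the argument is parallel using $L_\fX$. By Proposition~\ref{P:L_rstn_X}, $\backL$ restricts on each $\fX_j$ to $L_j=L_{(A_j,\Phi_j)}$, and the source $\ve f$ produces model data of polyhomogeneous type with $\pa\fX_j$-index $\ol{(0,j)}$. Theorem~\ref{T:L_solvability}(a), applied log-coefficient by log-coefficient, delivers solutions with $W_0$ component of index $\ol{(2,0)}\,\ol\cup\,\ol{(-1,j)}$ at $\pa\fX_j$, which lift to the claimed indices $\ol{(0,j+1)}_\fX$ and $\ol{(2,0)}_\fB$ on $\fZ$. The $W_1$ components of the solutions can be taken to vanish identically, as the $W_1$ part of the source vanishes rapidly by hypothesis. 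A Borel sum across the transverse $\rho_j$-expansion, followed by a cutoff localizing near $\fD$ to enforce the claimed support condition, yields the required $u$; the cutoff commutator is rapidly vanishing at $\fX$ and absorbed into the error.

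The main technical obstacle is the careful bookkeeping of index sets through the model solvers: verifying that the $\ol\cup$ operations yield precisely the stated log shifts from $\ol{(2,j)}\to\ol{(1,j+1)}$ and from $\ol{(-1,j)}\to\ol{(0,j+1)}$, and ensuring that the off-diagonal blocks of \eqref{E:backL} do not spoil the rapid vanishing of the $W_1$ projection of $u$. The latter is handled by alternating solves — invert $L_0$ on $W_0$, then use the infinite-order invertibility of $L_1+\backPhi_1$ to absorb any $W_1$ coupling — each iteration gaining an extra order of vanishing at $\fD\cup\fB$.
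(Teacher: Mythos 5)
Your proposal has the roles of the two model solves reversed across parts (a) and (b), and this is not a cosmetic relabelling: it means each step of the iteration would fail to reduce the error at the boundary hypersurface where the reduction is needed.

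Look at the index sets. In part~(a) the datum $f$ has index $0$ at $\fX$ (non-vanishing there) and index $\ol{(2,j)}$ at $\fD$; the claimed error $\backL u - f$ has index $1$ at $\fX$ and $\ol{(2,j+1)}$ at $\fD$. So the \emph{only} improvement is at $\fX$: the leading $\rho_\fX^0$ coefficient of $f$ must be killed, which forces one to solve $L_\fX u_\fX = f\rst_\fX$ on $\fX$ via Theorem~\ref{T:L_solvability}(a), extend $u_\fX$ via a cutoff $\chi(\rho_\fX)$ into a product neighborhood of $\fX$, and check that the $\rho_\fD$-index $\ol{(2,0)}\,\ol\cup\,\ol{(1,j)}\subset\ol{(1,j+1)}$ of $u_\fX$, raised by one when $\backL$ is applied (since $\backL\in\rho_\fD\rho_\fB\fbDiff^1$), lands inside the stated $\ol{(2,j+1)}_\fD$. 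Your $L_\fD$ solve with $v_p$ indexed over powers of $\rho_\fD$ does nothing to the $\rho_\fX^0$ leading coefficient and cannot produce the $1_\fX$ in the error; in fact cutting off near $\fD$, as you suggest, would put $u$ in exactly the wrong place for this step.

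Symmetrically, in part~(b) the error must improve at $\fD$: $\ve f \in \cAphg^{1_\fD,\ol{(0,j)}_\fX,\infty_\fB}$ while the claimed error has index $2_\fD$, and $u$ must be supported near $\fD$ with the distinctive $\ol{(2,0)}_\fB$ index---this is precisely the output of Theorem~\ref{T:L_solvability}(b) for $L_\fD$ applied to $f\rst_\fD$, not of the $L_\fX$ solve you invoke. The $L_\fX$ approach has no mechanism to produce the $\ol{(2,0)}_\fB$ index at $\fB$ (which comes from the indicial roots of $L_\fD$ at $\fS_\infty$), nor to supply the requirement that the $W_1$ projection of $u$ vanish \emph{identically} (which the paper achieves by choosing the extension off $\fD$ to have zero $\adPo$ component, trivial because $L_\fD$ acts purely on $\adPz = \adQ$). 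A further mismatch: your part~(b) does not explain the matching near $\fB$; the paper there uses crucially that in the collar of $\fB$ the operator $L_0$ agrees with $\ve L_\fD$ fiberwise, so one can choose the extension to solve the equation exactly near $\fB$ and obtain the $\infty_\fB$ in the error. Swapping (a)$\leftrightarrow$(b), and with it swapping the supports of the cutoffs (near $\fX$ in (a), near $\fD$ in (b)), repairs the skeleton of your argument and recovers the paper's.
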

\noindent Observe that the index set $0 = (0,0)$ means a smooth expansion up to
a given face, $1$ and $2$ means smooth and vanishing to first and second order,
respectively, and $\infty$ means rapid decay.
\begin{proof}
By Theorem~\ref{T:L_solvability}, we can solve $L_\fX u_\fX = f\rst_\fX$ for a unique
$u_\fX \in \cAphg^{E}(\fX) \cap \Null(L_\fX)^\perp$ where 
\begin{equation}
\begin{gathered}
	E_0 = \ol{(2,0)} \ol \cup \ol{(1,j)} \subset \ol{(1,j+1)},
	\\ E_1 = F_1 = \infty.
	\label{E:index_solving_at_X}
\end{gathered}
\end{equation}
In a product neighborhood of $\fX$ which does not meet $\fB$, we define
\[
	u = \chi(\rho_\fX) u_\fX \in \cAphg^{\ol{(1,j+1)}_\fD, 0_\fX, \infty_\fB}(\fZ; W)
\]
where $\chi$ is a smooth cutoff and we confuse $u_\fX$
with its pullback to a function independent of $\rho_\fX$.
Now $L$ can be expressed as $\rho_\fD\rho_\fB$ times a first order operator
of $\fb$ type; in particular we have
\[
	L : \cAphg^{F_\fD,F_\fX,F_\fB}(\fZ; W) \to \rho_\fD \rho_\fB \cAphg^{F_\fD,F_\fX,F_\fB}(\fZ; W) = 
	\cAphg^{F_\fD+1,F_\fX,F_\fB+1}(\fZ; W),
\]
so it follows that $Lu \in \cAphg^{\ol{(2,j+1)}_\fD, 0_\fX, \infty_\fB}$.
However, by construction $\backL u$ and $f$ are polyhomogeneous sections smooth
in $\rho_\fX$ which have the same restriction to $\fX$, so their difference
vanishes to first order there.

The second result is similar. We solve $L_\fD u_\fD = f\rst_\fD$ for a unique
$u_\fD \in \cAphg^{E_\fX,E_\fB}(\fD; W_0)$ with 
\[
	E_\fX = \ol{(0,0)} \ol \cup \ol{(0,j)} = \ol{(0,j+1)},
	\quad E_\fB = \ol{(2,0)}.
\]
Letting $u$ be any smooth extension of $u_\fD$ off of $\fD$ as a section of $W$
with identically vanishing $\adPo$ component, it follows that $\backL(\ve^{-1} u)
= \ve^{-1} \backL(u) \in \cAphg^{0_\fD, \ol{(-1,j+1)}_\fX,\ol{(3,0)}_\fB}$ and
$f$ have the same restriction at $\fD$, so
\[
	\backL u - \ve f \in \ve\,\cAphg^{1_\fD,\ol{(-1,j+1)}_\fX,\ol{(3,0)}_\fB}(\fZ; W)
	= \cAphg^{2_\fD,\ol{(0,j+1)}_\fX,\ol{(3,0)}_\fB}(\fZ; W).
\]
Finally, since there is a neighborhood of $\fB$ in which $\backA$ and the
metric are independent of $\ve$, Proposition~\ref{P:L_rstn_D} extends to say
that the restriction of $L_0$ to any $\ve$ fiber of this neighborhood agrees
with $\ve L_\fD$ there, so we may take the extension $u$ over this neighborhood
to satisfy $\ve L_\fD u_0(\ve) = \ve f_0(\ve)$, where $f_0$ is the $W_0$
projection of $f$, and then it follows that 
\[
	\backL u - \ve f \in \
	= \cAphg^{2_\fD,\ol{(0,j+1)}_\fX,\infty_\fB}(\fZ; W)
\]
since $\backL u_0 - \ve f_0$ vanishes identically near $\fB$ while $u_1$ and
$f_1$ are rapidly vanishing.
\end{proof}

We return now to the equation \eqref{E:Bogo_expansion}, and the main result
of this section. 

\begin{thm}
Let $(\backA,\backPhi)$ be a smooth pregluing configuration as in Definition~\ref{D:Bogo_boundary_conditions}.
Then there exists a solution $(a,\phi) \in \cAphg^{\cF}(\fZ;
(\gLam^1\oplus\gLam^0)\otimes \adP)$ to
\[
	\Bogo(\backA,\backPhi) + L(a,\phi) + Q(a,\phi) = 0 
	  \mod  \ve^\infty C^\infty(\fZ; (\gLam^1\oplus \gLam^0)\otimes \adP)
\]
where
\begin{equation}
\begin{gathered}
	\cF = (F_\fD,F_\fX,F_\fB),
	\quad F_\fD = \set{(n, 2n - 4) : 2 \leq n \in \bbN}, 
	\\ F_{\fX} = (1,0) \cup \set{(n,2n - 3) : 2 \leq n \in \bbN}, 
	\quad F_\fB = \ol{(2,0)}.
\end{gathered}
	\label{E:formal_soln_index}
\end{equation}
In other words, $\Bogo(\backA + a,\backPhi + \phi) = 0$ (along with the Coulomb
gauge condition $d^\ast_{(\backA,\backPhi)}(a,\phi) = 0$) is satisfied up to an
error which is smooth on $\fZ$ and rapidly vanishing in $\ve$.

Furthermore, the $\adPo$ components of $(a,\phi)$ are rapidly vanishing in $\rho_\fD$ and $\rho_\fB$,
and each coefficient in the expansion of $(a,\phi)$ at $\fX_j$ is $L^2$ orthogonal
to $\Null(L_\fX).$
\label{T:formal_solution}
\end{thm}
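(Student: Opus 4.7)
The strategy is a formal iterative construction, in which we build $(a,\phi)$ as an asymptotic sum $\sum_{n \geq 1} u_n$ of correctors and then apply Borel summation over polyhomogeneous classes to obtain a genuine polyhomogeneous section. At each step $n$ we use Lemma~\ref{L:solvability} to kill the leading part of the current residual
\[
	r_n := \Bogo(\backA,\backPhi) + \backL(u_1 + \cdots + u_n) + Q(u_1 + \cdots + u_n)
\]
at either $\fX$ or $\fD$. The two cases of the lemma give dual improvements: case~(a) kills a leading $\fX$ coefficient of the residual at the cost of adding one log-power at $\fD$, while case~(b) kills a leading $\fD$ coefficient (of the residual written as $\ve\cdot f$) at the cost of adding one log-power at $\fX$. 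Alternating these two operations is precisely what produces the linearly growing log-powers $2n{-}4$ and $2n{-}3$ in the index sets $F_\fD$ and $F_\fX$.

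The initial residual $r_0 = \Bogo(\backA,\backPhi)$ lies in $\cAphg^{3_\fD,\,1_\fX,\,\infty_\fB}(\fZ; W)$ by Definition~\ref{D:Bogo_boundary_conditions}, which is contained in the hypothesis of Lemma~\ref{L:solvability}(a) with $j=0$; after factoring $\ve = \rho_\fD\rho_\fX$ out of $r_0$ it also fits the hypothesis of case~(b). An induction on $n$, tracking how the index sets produced by each application of the lemma nest inside $\cF$, shows that the cumulative partial sums $u_1+\cdots+u_n$ lie in $\cAphg^{\cF_n}$ with $\cF_n \nearrow \cF$, while the residual $r_n$ gains one order of $\ve$ at each step. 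The nonlinear contribution $Q$ is handled automatically: since $Q$ is quadratic and each $u_k$ has positive order in $\ve$, the increment $Q(u_1+\cdots+u_n)-Q(u_1+\cdots+u_{n-1})$ is of at least twice the order of $u_n$, which slots into the next expected index at no additional cost and does not require a separate step.

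Several side conditions in the theorem are handled automatically. The Coulomb gauge condition is built into $\backL$, so inverting $\backL$ via Lemma~\ref{L:solvability} produces correctors already in Coulomb gauge. The orthogonality of the coefficients of the $\fX_j$ expansion to $\Null(L_\fX)$ comes from the uniqueness clause in Theorem~\ref{T:L_solvability}(a), which selects the solution in $\Null(L_\fX)^\perp$. The $\adPo$ rapid vanishing at $\fD$ and $\fB$ is preserved at each stage because the block structure \eqref{E:backL} decouples the two components to infinite order near $\fD\cup\fB$, the pregluing configuration already has a rapidly vanishing $\adPo$ part of $r_0$, and Lemma~\ref{L:solvability}(b) is proved with the $\adPo$ component of the corrector identically zero. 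Finally, rapid vanishing at $\fB$ is preserved because Proposition~\ref{P:initial_data_map} ensures $\infty$-order vanishing of $r_0$ at $\fB$, and both cases of the solvability lemma preserve such vanishing in the solution and residual.

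The main obstacle is the precise index-set bookkeeping. One must verify inductively that the residual after $n$ iterations satisfies the exact hypothesis of the next application of Lemma~\ref{L:solvability}, that the log-powers accumulate exactly as in $F_\fD = \{(n,2n-4)\}$ and $F_\fX = (1,0) \cup \{(n,2n-3)\}$, and that corner compatibilities at $\fS_i = \fD\cap \fX_i$ are maintained throughout. The delicate point is that solving at one face worsens the other by a log-factor, so one must check that alternating cases~(a) and (b) produces a monotonic improvement in the weights at $\fD$ and $\fX$ separately rather than a cyclic degradation; this reduces to a direct composition of the index maps produced by the two lemmas and a check that the resulting family matches $\cF$.
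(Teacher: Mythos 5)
Your overall strategy matches the paper's: alternate applications of Lemma~\ref{L:solvability}(a) and (b), accumulate linearly growing log-powers, and control $Q$ by its quadratic structure. You also correctly identify the side conditions (Coulomb gauge built into $\backL$, orthogonality to $\Null(L_\fX)$ forced by the uniqueness clause, preservation of $\adPo$ rapid decay via the block structure of $\backL$). However, there is a genuine gap in the claim that the iteration closes by ``a direct composition of the index maps produced by the two lemmas.''

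After solving at $\fX$ via case (a), the residual lies in a class with log-powers at $\fD$; after dividing by $\ve^2 = (\rho_\fD\rho_\fX)^2$, the resulting forcing term still has log-powers at $\fD$, whereas the hypothesis of case (b) admits log-growth \emph{only at $\fX$} (the class $\cAphg^{0_\fD,\ol{(-1,j)}_\fX,\infty_\fB}$). There is no direct composition of index maps here: the output class of one lemma is not literally inside the input class of the other. The mechanism that makes the iteration close is the identity $(\log\rho_\fD)^j = (\log\ve - \log\rho_\fX)^j$ (and its mirror), used to expand the residual as a finite polynomial in $\log\ve$ whose coefficients lie in $\cAphg$ classes with \emph{bounded} log-powers at $\fD$ or $\fX$. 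The correction at each step is then not a single $\ve^n v$ but a finite $\log\ve$-polynomial $\ve^n\sum_{j}(\log\ve)^j v_j$, and the inductive hypothesis must carry this $(\ve,\log\ve)$ structure explicitly. Without this reparametrization, the naive alternation does not demonstrate monotone improvement in any fixed weight; it merely shows $\ve$-order improvement while log-powers pile up at one face, which is precisely the ``cyclic degradation'' you flagged as a worry but then waved away.

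A secondary inaccuracy: the increment $Q(u_1+\cdots+u_n) - Q(u_1+\cdots+u_{n-1}) = 2Q(u_1+\cdots+u_{n-1},u_n) + Q(u_n)$ has order $\ve^{n+1}$ (from the cross term), not ``twice the order of $u_n$.'' The $\ve$-order is fine, but the log-structure of the $Q$-contribution is not automatic. In the paper it is controlled by a separate inductive hypothesis that the bilinear pairing $Q(u_n,\cdot)$ expands as $\sum_m\ve^m\sum_j(\log\ve)^jQ(\wt u_{m,j},\cdot)$ with coefficients smooth at $\fX$ and rapidly vanishing at $\fD\cup\fB$; this in turn depends on the case-(b) correctors having identically zero $\adPo$ component so that $Q(w,\cdot)\equiv 0$. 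You state the conclusion but not the hypothesis that propagates it.
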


\begin{proof}
Define
\[
	N(a,\phi) = L(a,\phi) + Q(a,\phi) + \Bogo(\backA,\backPhi),
\]
so we wish to solve $N(a,\phi) = 0 \mod \ve^\infty$.  For notational
convenience, for the remainder of the section we will denote $(a,\phi)$ by a
single letter and omit the bundle $(\gLam^1\oplus \gLam^0)\otimes \adP$ from
the notation. 

Though the proof is by induction, we take the first few steps by hand to
illustrate the main idea before stating the full inductive step.  From the
construction of the initial data $(\backA,\backPhi)$, we begin with
\[
	N(0) = \Bogo(\backA,\backPhi) = \ve f,
	\quad f \in \cAphg^{2_\fD,0_\fX,\infty_\fB}(\fZ),
\]
with $\adPo$ components which are rapidly vanishing in $\rho_\fD$ and
$\rho_\fB$.  Expanding in $\rho_\fX$, We write the error as
\[
	N(0) = \ve(f' + f''), 
	\quad f' \in \cAphg^{2_\fD, 0_\fX, \infty_\fB}(\fZ),
	\quad f'' \in \cAphg^{2_\fD, 1_\fX, \infty_\fB}(\fZ).
\]
%where $f'$ is supported away from $\fB$ using a cutoff function. 

By Lemma~\ref{L:solvability}, there exists $v \in
\cAphg^{\ol{(1,0)}_\fD,0_\fX,\infty_\fB}(\fZ)$ such that $L v + f' \in
\cAphg^{\ol{(2,0)}_\fD,1_\fX,\infty_\fB}(\fZ)$ (that the index set is
$\ol{(2,0)}$ rather than $\ol{(2,1)}$ at $\fD$ follows from the fact that the
forcing term is smooth; in \eqref{E:index_solving_at_X} we have
$\ol{(2,0)}\,\ol \cup \, (1,0) \subset \ol{(1,0)}$). Since $v$ has rapidly
vanishing $\adPo$ components at $\fD \cup \fB$, and since the quadratic pairing is trivial on
$\adPz$, it follows that $Q(v) \in \cAphg^{\infty_\fD,0_\fX, \infty_\fB}(\fZ).$
Then
\[
\begin{gathered}
	\ve v \in \cAphg^\cF(\fZ),
	\\ N(\ve v) 
	= \ve( L v + f') + Q(\ve v) + \ve f'' 
	= \ve \wt h, 
	\quad \wt h \in \cAphg^{\ol{(2,0)}_\fD, 1_\fX,\infty_\fB}(\fZ).
\end{gathered}
\]
(Here we have used that $L$ commutes with $\ve$ and $Q(\ve v) = \ve^2 Q(v)$.)
Since $v$ has rapidly vanishing $\adPo$ components, so does this new error
term.

The next step is to factor out $\ve^2 = (\rho_\fD \rho_\fX)^2$ and, expanding
in $\rho_\fD$, write the new error term as
\[
	N(\ve v) = \ve^3 (h' + h''), 
	\quad h' \in \cAphg^{0_\fD,-1_\fX,\infty_\fB}(\fZ),
	\quad h'' \in \cAphg^{\ol{(1,1)}_\fD,-1_\fX, \infty_\fB}(\fZ).
\]
There exists $w \in \cAphg^{0_\fD,\ol{(0,1)}_\fX,\ol{(2,0)}_\fB}(\fZ)$
such that $Lw + \ve h' \in \cAphg^{2_\fD,\ol{(0,1)}_\fX, \infty_\fB}(\fZ)$
by Lemma~\ref{L:solvability}, and then
\[
\begin{gathered}
	\ve^2 w \in \cAphg^\cF(\fZ),
	\\ N(\ve v + \ve^2 w) 
	= \ve^2(\wt f + \wt g),
	\\ \wt f = \ve h'' \in \cAphg^{\ol{(2,1)}_\fD, 0_\fX, \infty_\fB}(\fZ)
	\\ \wt g = (Lw + \ve h') + \ve 2Q(v, w) + \ve^2 Q(w) = Lw + \ve h'
	  \in \cAphg^{2_\fD, \ol{(0,1)}_\fX, \infty_\fB}(\fZ).
\end{gathered}
\]
Here we abuse notation by confusing $Q$ and its associated bilinear form,
%Here $\wt f = \ve h'$ and $\wt g$ is composed of the other terms, 
and we use the fact that since $w$ is supported near $\fD$ and has identically
vanishing $\adPo$ component, $Q(w,\cdot) \equiv 0.$ Again $w$, $\wt f$ and $\wt g$
have rapidly vanishing $\adPo$ components.

A key point in the iteration to follow is that we keep separate track of the
error terms which have growth in their powers of $\log \rho_\fD$ but not
$\log \rho_\fX$, such as $\wt f$ above, and those which have growth in
their powers of $\log \rho_\fX$ but not $\log \rho_\fD$, such as $\wt g$.

Finally, observe that $\wt g$ has leading order at $\fX$ given by
$\log \rho_\fX$. Using the identity $\log \rho_\fX = \log \ve - \log \rho_\fD$, this may be 
effectively removed, allowing us to write
\[
	\wt g = \log \ve g_0 + g_1,
	\quad g_j \in \cAphg^{(2,j)_\fD, \ol{(0,0)}_\fX, \infty_\fB}(\fZ),
\]
where $(n,m)$ denotes the smallest smooth index set containing $(n,m)$, namely
$\set{(k,l) : n \leq k, 0 \leq l \leq m}$.

Now we begin the induction. Suppose that we have $u_n \in \cA^\cF(\fZ)$ with 
\[
\begin{gathered}
	N(u_n) = \ve^n(\log \ve)^{2n-3}(g_0) + \ve^n(\log \ve)^{2n-4}(g_1 + f_1) + \\
	\cdots + \ve^n(\log \ve)^0(g_{2n-3} + f_{2n-3}) + \cO(\ve^{n+1}) \\
	f_j \in \cAphg^{\ol{(2,j)}_\fD,0_\fX,\infty_\fB}(\fZ), 
	\quad g_j \in \cAphg^{(2,j)_\fD,\ol{(0,0)}_\fX,\infty_\fB}(\fZ),
\end{gathered}
\]
all having rapidly vanishing $\adPo$ components near $\fD \cup \fB$,
and where $\cO(\ve^{n+1})$ denotes a finite number of terms of the form above with $n$ replaced
by $m > n$.
Furthermore, suppose that, with respect to pairing by the bilinear form,
\begin{equation}
	Q(u_n,\cdot) = \sum_{m = 1}^{n-1} \ve^m\sum_{j=0}^{2m - 3}(\log \ve)^jQ(\wt u_{m,j},\cdot),
	\quad \wt u_{m,j} \in \cAphg^{\infty_\fD, 0_\fX,\infty_\fB}(\fZ),
	\label{E:inductive_bilinear_hypothesis}
\end{equation}
i.e., with respect to multiplication, $u_n$ has an expansion in $\ve^m(\log \ve)^j$ 
with coefficients which are smooth at $\fX$ and rapidly vanishing elsewhere.
The case $n = 2$ is furnished by $u_2 = \ve v + \ve^2 w$ above,
with $f_1 = \wt f$.

Expanding in $\rho_\fX$, we write
\[
\begin{aligned}
	g_j &= g'_j + g''_j, & g'_j &\in \cAphg^{(2,j)_\fD, 0_\fX, \infty_\fB}(\fZ),
	  & g''_j &\in \cAphg^{(2,j)_\fD, \ol{(1,1)}_\fX, \infty_\fB}(\fZ),
	\\ f_j &= f'_j + f''_j, & f'_j &\in \cAphg^{\ol{(2,j)}_\fD, 0_\fX, \infty_\fB}(\fZ),
	  & f''_j &\in \cAphg^{\ol{(2,j)}_\fD, 1_\fX, \infty_\fB}(\fZ).
\end{aligned}
\]	
Invoking Lemma~\ref{L:solvability}, for each $j$ there exists
$v_j \in \cAphg^{\ol{(1,j+1)}_\fD, 0_\fX, \infty_\fB}(\fZ)$ such that 
$L v_j + (g'_j + f'_j) \in \cAphg^{\ol{(2,j+1)}_\fD, 1_\fX, \infty_\fB}(\fZ)$ for each $j$.
Then
\[
\begin{gathered}
	\ve^n v := \ve^n(\log \ve)^{2n-3} v_0 + \cdots + \ve^n(\log \ve)^0 v_{2n-3} \in \cAphg^\cF(\fZ), 
	\\ N(u_n + \ve^n v) = \ve^n(\log \ve)^{2n-3}(\wt h_1 + \wt k_0) 
	  + \ve^n(\log \ve)^{2n - 4}(\wt h_2 + \wt k_1) + 
	\\ \cdots + \ve^n(\log \ve)^0(\wt h_{2n-2} + \wt k_{2n-3}) + R_{n+1}
	\\ \wt k_j = g''_j \in \cAphg^{(2,j)_\fD, \ol{(1,1)}_\fX, \infty_\fB}(\fZ),
	\\ \wt h_j = (L v_{j-1} + g'_{j-1} + f'_{j-1}) + f''_j \in \cAphg^{\ol{(2,j)}_\fD,1_\fX,\infty_\fB}(\fZ),
	\\ R_{n+1} = 2Q(u_n,\ve^n v) + Q(\ve^n v) = \cO(\ve^{n+1})
\end{gathered}
\]

Next, before solving at $\fD$, we factor out $\ve^2 =
(\rho_\fD\rho_\fX)^2$ and use the identity $(\log \rho_\fD)^j = (\log \ve - \log
\rho_\fX)^j$ to remove the leading powers of $\log \rho_\fD$ and distribute them as
powers of $\log \ve$ and $\log \rho_\fX$. Thus we may write
\[
\begin{gathered}
	N(u_n + \ve^n v) = \ve^{n+2}(\log\ve)^{2n-2}(h_0) + \ve^{n+2}(\log \ve)^{2n-3}(h_1 + k_1) + \\
	\cdots + \ve^{n+2}(\log \ve)^0(h_{2n-2} + k_{2n-2}) + R_{n+1}, \\
	h_j \in \cAphg^{\ol{(0,0)}_\fD,(-1,j)_\fX,\infty_\fB}(\fZ), 
	  \quad k_j \in \cAphg^{0_\fD,\ol{(-1,j)}_\fX,\infty_\fB}(\fZ).
\end{gathered}
\]

Expanding in $\rho_\fD$, we write
\[
\begin{aligned}
	h_j &= h'_j + h''_j, & h'_j &\in \cAphg^{0_\fD, (-1,j)_\fX, \infty_\fB}(\fZ),
	  & h''_j &\in \cAphg^{\ol{(1,1)}_\fD, (-1,j)_\fX, \infty_\fB}(\fZ),
	\\ k_j &= k'_j + k''_j, & k'_j &\in \cAphg^{0_\fD, \ol{(-1,j)}_\fX, \infty_\fB}(\fZ),
	  & k''_j &\in \cAphg^{1_\fD, \ol{(-1,j)}_\fX, \infty_\fB}(\fZ).
\end{aligned}
\]
By Lemma~\ref{L:solvability}, there exist $w_j \in
\cAphg^{0_\fD,\ol{(0,j+1)}_\fX,\ol{(2,0)}_\fB}(\fZ)$ such that $L w_j + \ve
(h'_j + k'_j) \in \cAphg^{1_\fD, \ol{(0,j+1)}_\fX,\infty_\fB}(\fZ)$ for each
$j$. Then
\[
\begin{gathered}
	\ve^{n+1} w := \ve^{n+1}(\log \ve)^{2n-2}  w_0 + \cdots + \ve^{n+1}(\log \ve)^0  w_{2n - 2} 
	  \in \cAphg^\cF(\fZ), 
	\\ N(u_n + \ve^n  v + \ve^{n+1} w) = \ve^{n+2}(\log \ve)^{2n-2}(\wt f_0 + \wt g_1) 
	  + \ve^{n+2}(\log \ve)^{2n-3}(\wt f_1 + \wt g_2) + 
	\\ \cdots + \ve^{n+2}(\log \ve)^0(\wt f_{2n-2} + \wt g_{2n-1}) + R_{n+1}
	\\ \wt f_j = h''_j \in \cAphg^{\ol{(1,1)}_\fD,(-1,j)_\fX,\infty_\fB}(\fZ), 
	\\ \wt g_j = (Lw_{j-1} + \ve (h'_{j-1} + k'_{j-1})) + k''_j 
	  \in \cAphg^{1_\fD,\ol{(-1,j)}_\fX,\infty_\fB}(\fZ).
\end{gathered}
\]	
Here we use that $Q(w,\cdot) \equiv 0$ by the fact that $w$ is supported
near $\fD$ with vanishing $\adPo$ component.
Finally, we set $u_{n+1} = u_n + \ve^n v + \ve^{n+1}w$ and rewrite the leading $\log \rho_\fX$ terms
as $\log \ve - \log\rho_\fD$, after which we have
\[
\begin{gathered}
	N(u_{n+1}) = \ve^{n+1}(\log \ve)^{2(n+1) - 3} g_0 + \ve^{n+1}(\log \ve)^{2(n+1) - 4} (g_1 + f_1) +
	\\ \cdots + \ve^{n+1}(\log \ve)^0(g_{2(n+1)-3} + f_{2(n+1)-3}) + \cO(\ve^{n+2}),
	\\ f_j \in \cAphg^{\ol{(2,j)}_\fD,0_\fX,\infty_\fB}(\fZ), 
	\quad g_j \in \cAphg^{(2,j)_\fD,\ol{(0,0)}_\fX,\infty_\fB}(\fZ).
\end{gathered}
\]
Here the $f_j$ include the leading terms from $R_{n+1}$, which has the form
\[
\begin{gathered}
	R_{n+1} = \ve^{n+1}\sum_{j=0}^{2n-3}(\log \ve)^j Q(\wt u_{1,0},v_j) + \cO(\ve^{n+2}),
	\\ Q(\wt u_{1,0},v_j) \in \cAphg^{\infty_\fD, 0_\fX, \infty_\fB}(\fZ),
\end{gathered}
\]
and $\cO(\ve^{n+2})$ is used in the sense above. Note that $u_{n+1}$ satisfies
\eqref{E:inductive_bilinear_hypothesis} since $Q(\wt u_{m,j}, v_k) \in
\cAphg^{\infty_\fD, 0_\fX, \infty_\fB}(\fZ)$ and $Q(w_j, \cdot) = 0$. This
completes the induction.
\end{proof}

\section{Moduli of ideal monopoles}\label{S:ideal}

In this section we determine the moduli space of ideal monopoles. More
precisely, we consider first the effect of passsng to equivalence classes with
respect to the appropriate notion of gauge transformation in \S\ref{S:1.29.10.15}, and
then, after discussing the moduli space of configurations $\ul \zeta$ in
\S\ref{S:cfg}, we consider in \S\ref{S:mon_ideal} the effect of allowing the
configuration data $\ul \zeta$ of an ideal monopole to vary.

\subsection{Moduli of ideal monopoles for a fixed configuration of points}
\label{S:1.29.10.15}

The definition of an ideal monopole was given in
\S\ref{S:single_ideal}. We shall now introduce the appropriate
notions of framing and gauge transformation and define the moduli space of ideal monopoles for
a fixed configuration $\ul \zeta = (\zeta_1,\ldots,\zeta_N)$ of points.
Having fixed $\ul \zeta$, let $Z = Z(\ul{\zeta})$ be the corresponding gluing
space.  Fix integers $k_0\geq 0$, $k_j\geq 1$ ($j=1,\ldots, N$) and
set
\begin{equation}\label{e11.29.10.15}
	k_\infty = k_0 + k_1 + \cdots + k_N.
\end{equation}

\begin{defn}
\label{D:ideal_monopole}
Let $(\ol{A}, \ol{\Phi})$ be admissible monopole boundary data over $S_\infty$
of charge $k_\infty$. Let $\iota$ be an ideal monopole 
represented by the smooth configuration $(A,\Phi)$ on $Z$.  
We denote the restriction of $(A,\Phi)$ to $\fD$ and $\fX_i$ by $(A_\fD,\Phi_\fD)$ and $(A_i,\Phi_i)$, respectively.
We say that $\iota$ is {\em framed} by $(\ol{A},\ol{\Phi})$ if $(A,\Phi)|B\cap D =
(\ol{A},\ol{\Phi})$. We say that $\iota$ is {\em centered} if for $j=1,\ldots,
N$, the $(A_j,\Phi_j)$ represent an element of the centered moduli space
$\moncM_{k_j}$ (but {\em not} $j = 0$). The group of {\em framed ideal gauge transformations} $\gauG_I$
is the group of all restrictions to $\fb^{-1}(0)$ of elements of the group,
$\gauG_B = \set{g \in C^\infty(Z(\ul \zeta); \Aut(P)) : g  \rst B = 1}$, of
gauge transformations acting by the identity on $B$.  
%A {\em ideal
%monopole} (framed at $\fS_\infty$) is an equivalence class of framed ideal monopole configurations with
%respect to the action of $\gauG_I$.

The moduli space of ideal monopoles (framed at $S_\infty$) for this fixed $\ul
k$ and $\ul \zeta$ is denoted by $\idmon_{\ul \zeta,\ul k}$, and sometimes abbreviated to $\cI_{\ul \zeta}$.
%An ideal monopole is {\em centered} if each $(A_i,\Phi_i)$ for $j = 1,\ldots,N$
%(but {\em not} $j = 0$!) is centered (cf.\ \S\ref{s1.29.10.15}).  
The moduli
space of centered ideal monopoles is denoted by $\idmonc_{\ul \zeta,\ul k}$.
\end{defn}

\begin{prop}
For fixed $\ul{\zeta}$, $\ul{k}$ and framing $(\ol{A},\ol{\Phi})$, 
%the map
%\begin{equation}
	%(A,\Phi) \mapsto \big( (A_0,\Phi_0),\ldots, \cdot(A_N,\Phi_N)\big)
	%\label{E:idmon_to_mon_product_mapsto}
%\end{equation}
there exist diffeomorphisms
\begin{equation}	\label{E:idmon_to_mon_product}
\begin{gathered}
	\idmon_{\ul \zeta} \stackrel \cong \to \monM_{k_0}\times \monM_{k_1}\times \cdots \times \monM_{k_N},
	\\ \idmonc_{\ul \zeta} \stackrel \cong \to \monM_{k_0}\times \moncM_{k_1}\times \cdots \times \moncM_{k_N}.
\end{gathered}
\end{equation}
\end{prop}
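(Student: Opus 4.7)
The plan is to construct $\Psi$ by restriction to the boundary faces $\fX_j$, and to use Corollary~\ref{L:Dirac_moduli} to kill the extra degrees of freedom contributed by the Dirac data on $\fD$. First I would fix once and for all a reference $\SU(2)$ Dirac configuration $(\backA_\fD,\backPhi_\fD)$ on $\fD$ framed by $(\ol{A},\ol{\Phi})$ at $\fS_\infty$, and define $(\ol{A}_j,\ol{\Phi}_j) := (\backA_\fD,\backPhi_\fD)\rst_{\fS_j}$ for $j=0,\ldots,N$. By Proposition~\ref{p3.8.10.15} (applied on $\fD$) these are admissible monopole boundary data of charge $k_j$ on $\fS_j$, and I would use them to define the factor $\monM_{k_j}$ in the product.

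Next, for an ideal monopole $\iota$ represented by $(A,\Phi)$, Corollary~\ref{L:Dirac_moduli} produces a $\gauG_\infty$--gauge transformation $\kappa_\fD$ on $\fD$ carrying $(A_\fD,\Phi_\fD)$ to $(\backA_\fD,\backPhi_\fD)$. Because $\fX_j$ meets the remainder of $\fb^{-1}(0)$ only along $\fS_j$, each $\kappa_\fD\rst_{\fS_j}$ can be extended to a gauge transformation $\kappa_j$ on $\fX_j$ (by a partition of unity argument, or directly from the extension statement that follows Proposition~\ref{p1.18.10.15} applied to $\Aut(P)$). The tuple $(\kappa_\fD,\kappa_0,\ldots,\kappa_N)$ is compatible at the $\fS_j$ by construction and is the restriction of some element of $\gauG_\fB$, hence an element of $\gauG_I$. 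After applying it, the new representative has $(A_j,\Phi_j)\rst_{\fS_j}=(\ol{A}_j,\ol{\Phi}_j)$, so $[\kappa_j\cdot(A_j,\Phi_j)] \in \monM_{k_j}$ is well defined, and I set $\Psi(\iota)$ equal to this tuple. Independence from the choice of $\kappa_\fD$ and of the extensions $\kappa_j$ reduces to the observation that different choices differ by an element of $\gauG_I$ whose restriction to $\fX_j$ lies in the framed gauge group $\gauG_0$ of $\fX_j$, hence does not change the class in $\monM_{k_j}$.

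For surjectivity I would feed given representatives $(A_j,\Phi_j)\in\cfgC_0(\fX_j,P)$, framed by $(\ol{A}_j,\ol{\Phi}_j)$, together with $(\backA_\fD,\backPhi_\fD)$, directly into the extension procedure of Proposition~\ref{p1.29.10.15}: its proof is precisely the construction of a smooth configuration $(A,\Phi)$ on $\fZ$ representing an ideal monopole with these boundary data, so $\Psi$ hits the desired tuple. For injectivity, if $\iota,\iota'$ have the same image then after normalization both are represented by configurations with identical Dirac part $(\backA_\fD,\backPhi_\fD)$ and with $\fX_j$--restrictions related by elements $g_j\in\gauG_0(\fX_j)$; since each $g_j$ is the identity at $\fS_j$, the tuple $(\mathrm{id}_\fD,g_0,\ldots,g_N)$ is compatible and lifts to an element of $\gauG_I$ carrying $\iota$ to $\iota'$. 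The centered version follows by restricting on both sides: the centering condition on $(A_j,\Phi_j)$ for $j\geq 1$ is gauge invariant and is imposed coordinate--freely once $\fX_j\cong\ol{\bbR^3}$ is identified as in \eqref{e1.22.10.15}.

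The main obstacle I expect is the bookkeeping of stabilizers: an element of $\gauG_I$ stabilizing $(\backA_\fD,\backPhi_\fD)$ need not be trivial on $\fS_j$, and the potential $\UU(1)$--ambiguity at each $\fS_j$ must be shown to be absorbed either by the freedom in choosing the extension $\kappa_j$ off of $\fS_j$, or by the intrinsic $\UU(1)$ action on $\monM_{k_j}$ from~\eqref{e6.5.8.15}. Concretely, I would use the diagonal splitting of $\backA_\fD$ provided by the remark after Definition~\ref{d11.25.10.15} together with simple connectivity of $\fD$ to reduce the stabilizer to the diagonal $\UU(1)$ generated by $\exp(t\backPhi_\fD)$, and then verify that its action on $\fS_j$ is exactly cancelled by the framed gauge action on $\monM_{k_j}$. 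Once this is dealt with, the rest of the argument is a routine patching of extension and restriction statements already in place.
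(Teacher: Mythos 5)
Your proposal is correct and arrives at the same diffeomorphism, but it organizes the argument genuinely differently from the paper. The paper's proof fixes framings $(\ol A_j,\ol\Phi_j)$ at every corner $\fS_j$, then exhibits an exact sequence $1\to \gauG_{I,0}\to\ol\gauG_I\to\UU(1)^{N+1}\to 1$ and performs a two-step quotient: first by $\gauG_{I,0}$, which produces the Dirac torus $C_0\times\cdots\times C_N$ as a genuine factor alongside $\prod_j\monM_{k_j}$, and then by $\UU(1)^{N+1}$, whose diagonal action absorbs the torus into the framing circles of the $\monM_{k_j}$. You instead collapse the Dirac factor at the outset by fixing a reference $(\backA_\fD,\backPhi_\fD)$ and invoking Corollary~\ref{L:Dirac_moduli} (the framed-at-$S_\infty$ Dirac moduli space is a point) to normalize every ideal monopole's Dirac part, after which only the $\monM_{k_j}$ data remain and bijectivity is checked by hand. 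Your version is shorter and closer to the informal statement in the introduction that the Dirac data ``disappears'' after gauging; the paper's version is more systematic and, importantly, is the form that survives globalization in \S\ref{S:mon_ideal} where the $\UU(1)^{N+1}$ becomes the structure group of the Gibbons--Manton bundle, so the identification is only a local trivialization there.

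One clarification worth making: the stabilizer ambiguity you flag in your final paragraph is not actually present. An element of $\gauG_\infty(\fD)$ stabilizing $(\backA_\fD,\backPhi_\fD)$ is parallel and diagonal, hence of the form $\exp(t\backPhi_\fD)$ for a constant $t$ (here simple connectivity is used, as you say), but the constraint that it restrict to the identity on $\fS_\infty$ forces $t\in (2\pi/m)\bbZ$, i.e.\ it is trivial. So $\kappa_\fD$ is actually unique, and the $\UU(1)$ absorption you anticipate having to verify at each $\fS_j$ is vacuous; the only residual freedom is the choice of extensions $\kappa_j$ off $\fS_j$, which lie in $\gauG_0(\fX_j)$ and therefore do not move the class in $\monM_{k_j}$ -- exactly as you already note. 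With that simplification, and provided the surjectivity step is run by substituting the fixed reference Dirac connection for the one produced generically inside the proof of Proposition~\ref{p1.29.10.15} (which your wording already indicates), the argument is complete.
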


\begin{proof}
Pick framings (admissible boundary data) $(\ol{A}_j,\ol{\Phi}_j)$ over
$S_j = D\cap X_j$.  For each $j$, composing with an ideal gauge transformation
if necessary,  we may assume that $(A_j,\Phi_j)|S_j
= (\ol{A}_j,\ol{\Phi}_j)$; fixing such data reduces the gauge group to
a subgroup $\ol{\gauG}_I$ of
gauge transformations $\gamma$ over $\fb^{-1}(0)$ such that
$\gamma|S_j$ lies in the $\UU(1)$ subgroup which preserves
$(\ol{A}_j,\ol{\Phi}_j)$.  Thus we have an exact sequence
\begin{equation}\label{e23.30.10.15}
1\to \gauG_{I,0} \to \ol{\gauG}_I \to \UU(1)^{N+1} \to 1
\end{equation}
where $\gauG_{I,0}$ is the subgroup of gauge transformations which are
the identity at all corners.  Thus
\begin{equation}\label{e21.30.10.15}
\gauG_{I,0} = \gauG_0\times \cdots \times \gauG_N
\end{equation}
and if we divide by this subgroup first we get the product 
\begin{equation}\label{e22.30.10.15}
(C_1\times \cdots \times C_N) \times (\monM_{k_0}\times \monM_{k_1}\times \cdots \times \monM_{k_N})
\end{equation}
where the product of principal $\UU(1)$-spaces $C_1\times\cdots\times C_N$ is the moduli space 
\eqref{e2.13.10.15} of Dirac monopoles
over $D$, framed at all boundary faces.

The $j$-th $\UU(1)$ factor in \eqref{e23.30.10.15} acts in the obvious
way simultaneously on the $j$-th $S^1$ factor and the $j$-th moduli
space (see \eqref{e6.5.8.15}) in \eqref{e22.30.10.15}.  Dividing by $\UU(1)^{N+1}$,
we are left with the product of moduli spaces, as claimed.
\end{proof}

\begin{rmk}
Though diffeomorphisms \eqref{E:idmon_to_mon_product} exist, they are not canonical;
different choices amount to trivializations of the circles $C_j$ in \eqref{e22.30.10.15}.
\end{rmk}

We next consider the global topological behavior of the parameters in our
gluing construction, and the circle factors in \eqref{e22.30.10.15} will play
a significant role.

\subsection{Configurations of points} \label{S:cfg}
Denote by 
\[
	\cfg N = \cfg N(\bbR^3) = \set{(z_1,\ldots,z_N) \in (\bbR^3)^N : z_i \neq z_j,\ i \neq j}
\]
the configuration space of $N$ distinct points in $\bbR^3$. We denote the
components of each euclidean coordinate by $z_j = (z^1_j,z^2_j,z^3_j)$.

We write
\[
	\ncfg N = \set{(z_1,\ldots,z_N) : z_i \neq z_j \neq 0}
\]
for the configurations of distinct {\em nonzero} points in $\bbR^3$. By reindexing,
we have an identification of this space as a subset of $\cfg{N+1}$:
\begin{equation}
	\ncfg N \cong \set{(z_0,z_1,\ldots,z_N) \in \cfg{N+1} : z_0 = 0},
	\label{E:ncfg_in_cfg}
\end{equation}
which is evidently a homotopy retraction. 
%Using this we denote the generators of $H^2(\ncfg N; \bbZ)$ by $\eta_{ij}$ for
%$0 \leq i < j \leq N$, where $\eta_{0j} = f_{0j}^\ast \omega$ is the pullback
%by the map $f_{0j}(z_1,\ldots,z_N) = - z_j.$

This space may be decomposed by splitting off an overall scaling factor:
\begin{equation}
	\ncfg N \cong \ncfgs N \times (0,\infty)_\ve,
	\label{E:cfg_split_scaling}
\end{equation}
where
\[
	\ncfgs N = \Big\{(\zeta_1,\ldots,\zeta_N) \in \ncfg N : 
	  \textstyle\sum_i \abs{\zeta_i}^2 = 1\Big\}
	\cong \ncfg N / (0,\infty) 
\]
represents configurations of nonzero points up to scaling, with the quotient by
the scaling action
\[
	(0,\infty) \times \bbR^3 \ni (\ve, z) \mapsto z/\ve \in \bbR^3,
\]
%(With this convention $\abs{\alpha(\ve,z)} \smallto \infty$ as $\ve \smallto
%0$.) 
The isomorphism \eqref{E:cfg_split_scaling} is given by
\begin{equation}
	(z_1,\ldots,z_N) = (\ve^{-1}\zeta_1,\ldots,\ve^{-1}\zeta_N),
	\label{E:z_zeta}
\end{equation}
which we will frequently make use of below. There is again a homotopy
retraction using \eqref{E:cfg_split_scaling}, so that $\ncfgs N \sim \ncfg N
\sim \cfg{N+1}$.

We partially compactify $\ncfg N$ to the space 
\[
	\bncfg N := \ncfgs N \times [0,\infty),
\]
where the set $\ncfgs N \times \set 0$ represents configurations of points
which have gone off to infinity. Note that this captures both the directions
$\zeta_i/\abs{\zeta_i} \in \bbS^2 = \pa\ol{\bbR^3}$ of the points as well as
their ``relative velocities'' 
\[
	(\abs {\zeta_1},\ldots,\abs {\zeta_N}) \in (0,1)^N, \quad 
	\abs{\zeta_1}^2 + \cdots + \abs{\zeta_N}^2 = 1.
\]

We are interested in scattering tangent vectors and vector fields on $\bncfg N$, which are
evidently generated by $\big\{\ve^2\pa_{\ve},\ve \pa_{\zeta^i_j}\big\}$, (subject to a relation
coming from the condition $\sum_j \abs{\zeta_j}^2 = 1$). However, there is another more
convenient frame. Indeed, a simple computation using \eqref{E:z_zeta} proves the following:

\begin{prop}
The tangent vectors 
\[
	\big\{\pa_{z_j^i} : j = 1,\ldots,N, \ i = 1,2,3\big\} \subset T_{\ul z} \ncfg N
\]
determine a global frame for the bundle $T \ncfg N$, and extend
by continuity to a global frame for the scattering tangent bundle $\scT \bncfg N$,
giving a trivialization
\begin{equation}
	\scT \bncfg N \cong (\bbR^3)^N\times \bncfg N.
	\label{E:scTcfg_trivn}
\end{equation}
\label{P:scTcfg_frame}
\end{prop}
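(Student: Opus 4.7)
The plan is to verify the claim by direct calculation in the coordinates $(\zeta,\ve)$ adapted to the partial compactification $\bncfg N = \ncfgs N \times [0,\infty)$, using the relation $z_j = \zeta_j/\ve$ from \eqref{E:z_zeta} together with the sphere constraint $\sum_j \abs{\zeta_j}^2 = 1$.

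In the interior $\ncfg N$, which is an open subset of $(\bbR^3)^N$, the $3N$ Euclidean coordinate vector fields $\pa_{z_j^i}$ are manifestly a global frame for $T\ncfg N$, which coincides with $\scT\bncfg N$ over the interior. It therefore suffices to check the behaviour at the boundary $\pa\bncfg N = \ncfgs N\times\set 0$. Near there, the scattering tangent bundle is $\scT\bncfg N = \ve\,\bT\bncfg N$, locally generated as a $C^\infty(\bncfg N)$-module by $\ve^2\,\pa_\ve$ and by $\ve$ times the tangent vectors to $\ncfgs N$; in particular its rank matches $\dim \bncfg N = 3N$.

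Next I would compute the transition matrix. From $\ve^{-2} = \sum_{j,i}(z_j^i)^2$ one finds $\pa_{z_j^i}\ve = -\ve^2\zeta_j^i$, and then from $\zeta_k^l = \ve z_k^l$ one gets $\pa_{z_j^i}\zeta_k^l = \ve(\delta_{jk}\delta^{il} - \zeta_j^i\zeta_k^l)$. Combining these yields the key identity
\[
    \pa_{z_j^i} = (\delta_{jk}\delta^{il} - \zeta_j^i\zeta_k^l)\,(\ve\,\pa_{\zeta_k^l}) - \zeta_j^i\,(\ve^2\,\pa_\ve).
\]
The coefficients are smooth functions on $\bncfg N$ (they depend only on $\zeta$), so each $\pa_{z_j^i}$ extends smoothly as a section of $\scT\bncfg N$ across the boundary. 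One also checks that the tangential part $(\delta_{jk}\delta^{il} - \zeta_j^i\zeta_k^l)\pa_{\zeta_k^l}$ is genuinely tangent to $\ncfgs N$, i.e.\ is annihilated by contraction with $\sum_{k,l}\zeta_k^l\,d\zeta_k^l$, which is the differential of the sphere constraint.

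To finish, I would check pointwise linear independence at the boundary. Suppose $\sum c_j^i\pa_{z_j^i} = 0$ at $(\zeta,0)$. Matching coefficients of $\ve^2\pa_\ve$ forces $\sum c_j^i\zeta_j^i = 0$; feeding this back into the tangential part collapses the relation to $\sum_{k,l} c_k^l\,(\ve\,\pa_{\zeta_k^l}) = 0$ in $T_\zeta\ncfgs N$, so $(c_k^l)$ must be a scalar multiple of $(\zeta_k^l)$, which together with the first relation forces $\lambda = 0$ and hence all $c_k^l = 0$. This gives the trivialization \eqref{E:scTcfg_trivn}. The only mildly delicate point, and the one I would expect to be the main obstacle, is the Euler redundancy coming from $\sum\zeta_k^l\,\pa_{\zeta_k^l}\equiv 0$ on $\ncfgs N$: the vector fields $\ve\pa_{\zeta_k^l}$ only span a $(3N-1)$-dimensional subbundle of $\scT\bncfg N$, and one must carefully exploit this redundancy together with the $\ve^2\pa_\ve$ equation to eliminate the $c_k^l$. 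Once this bookkeeping is in place, the remainder is a routine linear algebra calculation.
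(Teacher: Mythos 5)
Your proof is correct and does exactly what the paper intends: the paper offers no explicit proof, stating only that the proposition follows from ``a simple computation using \eqref{E:z_zeta},'' which is precisely the chain-rule calculation you carry out. The key identity
\[
    \pa_{z_j^i} = \sum_{k,l}\bigl(\delta_{jk}\delta^{il} - \zeta_j^i\zeta_k^l\bigr)\,\bigl(\ve\,\pa_{\zeta_k^l}\bigr) - \zeta_j^i\,\bigl(\ve^2\,\pa_\ve\bigr)
\]
is right, the coefficients are visibly smooth on $\bncfg N$, and the vector is genuinely tangent to the sphere since $\sum_{k,l}\zeta_k^l(\delta_{jk}\delta^{il}-\zeta_j^i\zeta_k^l)=0$.

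One small remark on the closing paragraph: the ``Euler redundancy'' you flag as the main obstacle is in fact a non-issue, and the final linear-algebra step is cleaner than you suggest. Once the $\ve^2\pa_\ve$-component gives $\sum_{j,i}c_j^i\zeta_j^i=0$, the tangential part of $\sum c_j^i\pa_{z_j^i}$ collapses to $\ve\sum_{k,l}c_k^l\pa_{\zeta_k^l}$, and this should be read as a vector in the \emph{ambient} $T_\zeta\bbR^{3N}$ (it is automatically tangent to the sphere, since $c\perp\zeta$). The ambient $\pa_{\zeta_k^l}$ are linearly independent, so its vanishing forces $c_k^l=0$ directly -- there is no $(3N-1)$-dimensional kernel to disentangle, and no need to argue that $c\propto\zeta$ and then kill the scalar. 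The redundancy you worried about only arises if one insists on viewing the $\pa_{\zeta_k^l}$ as \emph{restricted} vector fields on $\ncfgs N$, which is not the natural reading of the chain-rule expansion.
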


\subsection{Moduli of ideal monopoles}\label{S:mon_ideal}

We now consider the moduli space of ideal monopoles, considering their
configurations as part of the moduli. To this end, we define here a provisional
global version of the gluing space from \S\ref{S:gluing} which fibers over the
partial compactification $\bncfg N$ of $\ncfg N$. (The gluing space will be
further enlarged in \S\ref{S:global} to fiber over a bigger parameter space
involving the ideal monopoles themselves.)

Let $\oX = \ol{\bbR^3}$ and begin with the product 
\[
	\Z_0 = \oX \times \bncfg N= \oX\times \ncfgs N \times [0,\infty).
\]
From \eqref{E:cfg_split_scaling}, the interior of $\Z_0$ is identified with the
space $\bbR^3\times \ncfg N$ with coordinates $(z,z_1,\ldots,z_N) =
(z,\zeta_1/\ve,\ldots,\zeta_N/\ve)$. The {\em vertical diagonals} $\set{z =
z_j}$, $j = 1,\ldots,N$ extend to $\Z_0$, where they meet the boundary in the
corner $\pa \oX \times (\ncfgs N \times \set 0)$. As before, the first step is to set
\[
	\Z_1 = [\Z_0; \pa \oX \times (\ncfgs N \times \set 0)].
\]
The front face, which we denote by $\D_1$, is diffeomorphic to
$\bbS^2\times[0,\infty]\times \ncfgs N$, the leftmost factors of which we
identify with the space $[\ol{\bbR^3}; \set 0]$. This blow-up resolves the
vertical diagonals, in the sense that they now meet the boundary transversally
over the interior of $\D_1$. Indeed, the Euclidean coordinate $\zeta = \ve z$
extends over $\ve = 0$ to a coordinate on the interior of $\D_1$, identified
with $\bbR^3 \setminus \set 0$, and then the boundary of each 
vertical diagonal $\set{z = z_j}$ is the submanifold $\cP_j = \set{\zeta = \zeta_j} \subset
\D_1$. We blow-up the boundaries of these diagonals, setting
\[
	\Z' = [\Z_1; \cP_1,\ldots,\cP_N].
\]
We denote the new front faces by $\X'_j$, $j = 1,\ldots,N$ and the lift of
$\D_1$ by $\D'$. The lift of the original faces $\oX \times (\ncfgs N \times
\set 0)$ and $\pa \oX \times (\ncfgs N \times [0,\infty))$ are denoted $\X'_0$
and $\B'$, respectively. We set $\gS'_j = \D' \cap \X'_j$ for $j = 0,\ldots, N$
and $\gS'_\infty = \D' \cap \B'$.

Note that $\D'$ fibers over $\ncfgs N$ and there is a natural identification
\[
	\D' \cong [\ol{\bbR^3}\times \ncfgs N; 
	  \set{\zeta = 0},\set{\zeta = \zeta_1},\ldots,\set{\zeta = \zeta_N}].
\]
The fact that $\set{\zeta = 0}$ is blown up here (in fact already in $\D_1$) is
consistent with the identification \eqref{E:ncfg_in_cfg}.

For each fixed $\ul \zeta = (\zeta_1,\ldots,\zeta_N) \in \ncfgs N$, the fiber of $\Z'$ over 
$\ul \zeta$ is equal to the gluing space $\fZ = \fZ(\ul \zeta)$ constructed in \S\ref{S:gluing}. 
We consider now the problem of obtaining families of ideal monopoles over the $\ve = 0$ boundaries
of $\Z'$ parameterized by their configurations $\ul\zeta$.

\begin{lem}
For each $j = 0,\ldots,N$, there is a canonical diffeomorphism
\begin{equation}
	\X'_j \cong \ol{\bbR^3} \times \ncfgs N.
	\label{E:triv_Xj}
\end{equation}
\label{L:triv_Xj}
\end{lem}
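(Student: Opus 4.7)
The plan is to reduce the statement to a fiberwise analysis over $\ncfgs N$, using the fact that both rounds of blow-ups defining $\Z'$ are performed along p-submanifolds that fiber smoothly over $\ncfgs N$. For $j = 1,\ldots,N$, the center $\cP_j = \set{\zeta = \zeta_j} \subset \D_1$ is a global section of the natural projection $\D_1 \smallto \ncfgs N$, hence a codimension-$4$ boundary p-submanifold of $\Z_1$ diffeomorphic to $\ncfgs N$; as the $\cP_j$ are pairwise disjoint, the blow-up $\Z'$ restricts over each fixed $\ul\zeta$ to the single-fiber gluing space $\fZ(\ul\zeta)$ of \S\ref{S:gluing}, so each $\X'_j$ fibers over $\ncfgs N$ with fiber $\fX_j$. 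It therefore suffices to exhibit the product structure explicitly in each case.

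For $j = 0$, I would note that the proper transform of $\oX \times \ncfgs N \times \set 0 \subset \Z_0$ under the first blow-up is canonically diffeomorphic to $\oX \times \ncfgs N$ via the product structure of $\Z_0$, meeting $\D_1$ along $\pa\oX \times \ncfgs N$. The second-round centers $\cP_j$ lie in the interior of $\D_1$ at $\zeta = \zeta_j \neq 0$, whereas this corner sits at the $\set{s=0}$ end of $\D_1$ corresponding to $\zeta \smallto 0$; hence the $\cP_j$ are disjoint from the lift of $\oX \times \ncfgs N \times \set 0$, and the further blow-ups leave it unchanged, giving $\X'_0 \cong \ol{\bbR^3} \times \ncfgs N$ canonically. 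For $j \geq 1$, I would work in a local model near $\cP_j$ in which $\Z_1$ is diffeomorphic to $\bbR^3_v \times [0,\infty)_\eta \times \ncfgs N$, with $v = \zeta - \zeta_j$ the Euclidean displacement along $\D_1$ and $\eta$ a defining function for $\D_1$. The blow-up of $\set{v=0,\ \eta=0}$ produces a front face diffeomorphic to $\bbS^3_+ \times \ncfgs N$, and the standard identification $\bbS^3_+ \cong \ol{\bbR^3}$ (as the radial compactification of $\bbR^3 \subset \bbR^3 \oplus \bbR_{\geq 0}$) yields $\X'_j \cong \ol{\bbR^3} \times \ncfgs N$. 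This is the family analogue of \eqref{e1.22.10.15} established in the single-fiber case.

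The main point to verify is that these identifications are canonical, i.e., globally defined over $\ncfgs N$ and independent of the auxiliary choices of $\eta$ or of a local tubular trivialization. For $j = 0$ this is immediate from the product structure of $\Z_0$. For $j \geq 1$, independence from $\eta$ is the standard invariance property of the real blow-up, and the absence of any twisting of the $\ol{\bbR^3}$-factor across $\ncfgs N$ reflects the intrinsic affine structure of $\bbR^3$: since $\zeta$ and $\zeta_j$ are both valued in the same Euclidean space, the displacement $v = \zeta - \zeta_j$ globally trivializes the normal bundle to $\cP_j$ in $\D_1$, parallel to the trivialization \eqref{E:scTcfg_trivn} of $\scT \bncfg N$ by the constant frame $\set{\pa_{z_j^i}}$.
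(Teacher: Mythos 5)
Your argument correctly identifies the geometric content of the lemma, and your two-case split ($j=0$ via proper transform, $j\geq 1$ via blow-up of a point in the interior of $\D_1$) is a reasonable reorganization of what the paper does uniformly. The paper instead works with the single global function $w = z - z_j$ on $\mathring\Z'$ for all $j$ (with $z_0 = 0$), observes $(\zeta - \zeta_j)/\ve = z - z_j$, and then checks boundary compatibility; your local-model computation recovers this same coordinate once one unwinds the definitions.

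However, there is a gap in the canonicity discussion for $j \geq 1$. You invoke ``a defining function $\eta$ for $\D_1$'' and assert that independence from $\eta$ ``is the standard invariance property of the real blow-up.'' That invariance applies to the blow-up $[\Z_1;\cP_j]$ as a manifold with corners and to the blow-down map, but \emph{not} to the trivialization $\X'_j \cong \ol{\bbR^3}\times\ncfgs N$ you are trying to produce. Concretely, replacing $\eta$ by $\eta' = f\eta$ with $0 < f \in C^\infty$ changes the interior coordinate from $v/\eta$ to $v/\eta'$, so the two resulting fiber diffeomorphisms $\X'_j \cong \ol{\bbR^3}\times\ncfgs N$ differ by the fiberwise scaling $w \mapsto w/f(\ul\zeta)$, a genuinely $\ncfgs N$-dependent reparametrization whenever $f\rst_{\cP_j}$ is nonconstant. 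Your appeal to the affine structure of $\bbR^3$ only trivializes $N(\cP_j/\D_1)$; it does nothing for the conormal line $N(\D_1/\Z_1)\rst_{\cP_j}$, which is what the choice of $\eta$ trivializes. The fix is to note that near $\cP_j$ (which sits in the interior of $\D_1$, away from the corners with $\X'_0$ and $\B$) the pullback of $\ve$ from the $[0,\infty)$ factor \emph{is} a boundary defining function for $\D_1$, and it is the canonical one supplied by the fibration $\fb$. With $\eta = \ve$, your interior coordinate becomes $v/\ve = (\zeta - \zeta_j)/\ve = z - z_j$, which is exactly the coordinate the paper extends from the interior, and canonicity follows. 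Your argument for $j = 0$ is fine as written.
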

\begin{proof}
The difference function
\[
	\mathring \Z' \cong \bbR^3\times \ncfg N \ni (z,z_1,\ldots,z_N) 
	\mapsto w = z - z_j \in \bbR^3
\]
extends to a smooth, bounded function on the interior of $\X'_j$. Indeed, $\zeta
= \ve z$ serves as an interior coordinate on $\D_1$, after which $\X'_j$ arises
from the blow-up of $\zeta - \zeta_j = \ve(z - z_j)$ at $\ve = 0$, recovering
the Euclidean coordinate $w = z - z_j$ on the interior. Along with the
projection $\X'_j \to \ncfgs N$, this gives a diffeomorphism $\mathring \X'_j
\cong \bbR^3\times\ncfgs N$. As for the boundary of $\X'_j$, the function $\zeta
- \zeta_j : \D_1 \to \bbR^3$ lifts to a map from $\D'$ to $[\bbR^3; \set 0]$
sending $\gS_j$ to the front face of the blow-up of $\set 0$, which along with
$\D' \to \ncfgs N$ gives a diffeomorphism $\gS'_j \cong \bbS^2 \times \ncfgs N$
which is consistent with the radial compactification of $\bbR^3 \times \ncfgs
N \cong \mathring \X'_j$.
\end{proof}

\begin{lem}
The cohomology group $H^2(\D'; \bbZ) = H^2(\mathring \D'; \bbZ)$ splits as a direct sum
\[
	H^2(\D'; \bbZ) \cong \bbZ^{N+1} \oplus H^2(\ncfgs N; \bbZ),
	\quad H^2(\ncfgs N; \bbZ) = \bbZ^{N(N+1)/2}
\]
with the first factor representing the cohomology of the fiber $\bbR^3
\setminus \set{0,\zeta_1,\ldots,\zeta_N}$ of $\mathring \D'$.  
In terms of the generator $\omega$ of $H^2(\bbR^3\setminus \set0; \bbZ) =
\bbZ$, generators of $H^2(\D'; \bbZ)$ are given by
\begin{equation}
\begin{gathered}
	\xi_j = g_j^\ast \omega, 
	\quad j = 0,\ldots,N,
	\\ g_j : \mathring \D' \ni (\zeta,\zeta_1,\ldots,\zeta_N) 
	  \mapsto \zeta - \zeta_j \in \bbR^3 \setminus \set 0
\end{gathered}
	\label{E:cohom_D_gj}
\end{equation}
for the first factor, and 
\begin{equation}
\begin{gathered}
	\eta_{ij} = f_{ij}^\ast \omega,
	\quad 0 \leq i < j \leq N,
	\\ f_{ij}: \mathring \D' \ni (\zeta,\zeta_1,\ldots,\zeta_N) 
	  \mapsto \zeta_i - \zeta_j \in \bbR^3 \setminus \set 0
\end{gathered}
	\label{E:cohom_D_fij}
\end{equation}
for the second, where we define $\zeta_0 = 0.$

There are canonical de Rham representatives of $\xi_j$ and $\eta_{ij}$ given by
the pullbacks $g_j^\ast (\tau)$ and $f_{ij}^\ast (\tau)$, 
where
\begin{equation}
	\tau = \star d \big( \tfrac 1 r\big) \in C^\infty(\bbR^3 \setminus \set 0; \Lambda^2),
	\label{E:dR_gen_sphere}
\end{equation}
is the generator of $H^2(\bbR^3 \setminus \set 0; \bbR)$.
\label{L:cohom_D}
\end{lem}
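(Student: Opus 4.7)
The plan is to reduce the computation of $H^2(\D'; \bbZ)$ to that of a fiber bundle over $\ncfgs N$, invoke the Leray--Hirsch theorem with the $\xi_j$ as the distinguished fiber classes, and then identify the remaining $H^2$ of the base with the Arnold--Cohen generators $\eta_{ij}$.

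First, since $\D'$ is a smooth manifold with corners, the collar retraction gives a homotopy equivalence $\D' \simeq \mathring \D'$, so it suffices to compute $H^2(\mathring \D'; \bbZ)$. The interior $\mathring \D'$ is the total space of a smooth fiber bundle $\pi : \mathring \D' \to \ncfgs N$ whose fiber over $\ul\zeta$ is
\[
	F_{\ul\zeta} = \bbR^3 \setminus \set{0,\zeta_1,\ldots,\zeta_N},
\]
as follows from the blow-up description of $\D'$. The fiber is homotopy equivalent to a wedge of $N+1$ copies of $S^2$, so $H^0(F) = \bbZ$, $H^1(F) = 0$, and $H^2(F; \bbZ) = \bbZ^{N+1}$, the last group having basis dual to small spheres around each of the $N+1$ punctures.

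Next, the key observation is that the maps $g_j(\zeta, \ul\zeta) = \zeta - \zeta_j$ are defined on all of $\mathring \D'$ (with values in $\bbR^3\setminus \set 0$), and their restriction to any fiber $F_{\ul\zeta}$ sends $\zeta \mapsto \zeta - \zeta_j$, which is translation taking a small sphere around $\zeta_j$ isomorphically onto a small sphere around $0$. Hence the classes $\xi_0,\ldots,\xi_N$ restrict to a basis of $H^2(F)$ on every fiber, and together with $1 \in H^0(F)$ they furnish the hypothesis of Leray--Hirsch. I conclude
\[
	H^*(\mathring \D'; \bbZ) \cong H^*(\ncfgs N; \bbZ) \otimes_\bbZ H^*(F; \bbZ)
\]
as $H^*(\ncfgs N; \bbZ)$-modules, and in particular
\[
	H^2(\mathring \D'; \bbZ) \cong H^2(\ncfgs N; \bbZ)\otimes 1 \,\oplus\, H^0(\ncfgs N; \bbZ) \otimes H^2(F; \bbZ),
\]
the right summand being spanned precisely by the $\xi_j$.

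To identify $H^2(\ncfgs N; \bbZ)$, I would use the homotopy equivalences $\ncfgs N \sim \ncfg N \sim \cfg{N+1}(\bbR^3)$ from \eqref{E:ncfg_in_cfg} and \eqref{E:cfg_split_scaling}. By the classical computation of the integral cohomology of configuration spaces due to Arnold and F.~Cohen, $H^2(\cfg{N+1}(\bbR^3); \bbZ)$ is free abelian of rank $\binom{N+1}{2} = N(N+1)/2$ with basis given by the pullbacks of the generator $\omega \in H^2(S^2; \bbZ)$ under the Arnold maps $(x_0,\ldots,x_N) \mapsto (x_i - x_j)/\abs{x_i - x_j}$ for $0 \le i < j \le N$. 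Since $f_{ij}$ factors through $\ncfgs N$, and since radial projection $\bbR^3\setminus\set 0 \to S^2$ is a homotopy equivalence intertwining $f_{ij}$ with the Arnold map, each $\eta_{ij}$ is precisely the corresponding Arnold class, completing the identification. Finally, $\tau = \star d(1/r)$ is a smooth closed 2-form on $\bbR^3 \setminus \set 0$ of solid-angle type, with $\int_{S^2}\tau = -4\pi$, hence a multiple of the real generator of $H^2(\bbR^3\setminus\set 0; \bbR)$; pulling back yields the claimed de Rham representatives $g_j^*\tau$ and $f_{ij}^*\tau$.

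The main point requiring care is the applicability of Leray--Hirsch: one must verify that the $\xi_j$, defined globally on $\mathring \D'$, genuinely restrict to a $\bbZ$-basis of $H^2(F_{\ul\zeta})$ for \emph{every} $\ul\zeta$ (not merely rationally, and without any monodromy sign ambiguity). This is handled by the explicit translation picture above, which shows the $\xi_j|_F$ are dual to the obvious embedded spheres. Beyond that, invoking the Arnold--Cohen theorem on $H^*(\cfg{N+1}(\bbR^3); \bbZ)$ is a citation rather than an obstacle.
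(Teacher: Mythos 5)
Your proof is correct and follows essentially the same route as the paper: both reduce to the fiber bundle $\mathring\D' \to \ncfgs N$ with fiber $\bbR^3 \setminus \set{0,\zeta_1,\ldots,\zeta_N}$ and split $H^2$ by a Leray--Hirsch / spectral sequence argument, with the $g_j^\ast\omega$ furnishing the fiber classes and the $f_{ij}^\ast\omega$ generating the base. The one genuine difference is in how $H^2(\ncfgs N;\bbZ)$ is computed: you cite Arnold--Cohen for $H^\ast(\cfg{N+1}(\bbR^3);\bbZ)$, whereas the paper reproves exactly what is needed in a self-contained induction, using the tower of fibrations $\cfg m \to \cfg{m-1}$ (whose fibers $\bbR^3\setminus\set{z_1,\ldots,z_{m-1}}$ have vanishing $H^1$ and $H^3$) to establish $H^1 = H^3 = 0$ and $H^2(\cfg{m+1}) \cong \bbZ^m \oplus H^2(\cfg m)$ inductively. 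The paper also explicitly identifies $\mathring\D' \to \ncfgs N$ with the restriction of $\cfg{N+2} \to \cfg{N+1}$ over $\ncfgs N \subset \cfg{N+1}$ via the relabelling $(\zeta, 0, \zeta_1,\ldots,\zeta_N) = (z_1,\ldots,z_{N+2})$, which makes the generators fall out uniformly; your direct verification via the translation picture that the $\xi_j$ restrict to a $\bbZ$-basis of $H^2$ on every fiber is a nice way to secure the Leray--Hirsch hypothesis and is arguably more careful on that point than the paper's terse appeal to the spectral sequence. Either route is perfectly sound; the citation saves an induction, while the paper's argument keeps the proof self-contained.
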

\begin{rmk}
It is convenient to allow $i > j$ as well for the $\eta_{ij}$, which are then 
defined via $\eta_{ij} = - \eta_{ji}$; this follows from the action by $-1$ 
of the antipodal map in $\bbR^3 \setminus \set 0$ on $H^2(\bbR^3\setminus \set 0; \bbZ)$.
\end{rmk}
\begin{proof}
Consider the configuration spaces $\cfg m = (\bbR^3)^m \setminus \Delta$ for
arbitrary $m$, where here $\Delta$ denotes the union of all pairwise diagonals.
These admit fiber bundle structures
\begin{equation}
	\cfg m \to \cfg {m-1}
	\label{E:V_fibn}
\end{equation}
with fiber $\bbR^3 \setminus \set{z_1,\ldots,z_m}$,
and it follows by induction on $m$ and the spectral sequence for
\eqref{E:V_fibn} that $H^1(\cfg m; \bbZ) = H^3(\cfg m; \bbZ) = 0$ for all $m$ and 
\[
\begin{aligned}
	H^2(\cfg {m+1}; \bbZ) &= H^2(\bbR^3 \setminus \set{z_1,\ldots,z_m}; \bbZ)\oplus H^2(\cfg {m}; \bbZ)
	\\&= \bbZ^{m}\oplus \bbZ^{m(m-1)/2} = \bbZ^{m(m+1)/2}.
\end{aligned}
\]
Moreover, $H^2(\cfg m; \bbZ)$ is generated by the pullbacks of the generator
$\omega$ of $H^2(\bbR^3 \setminus \set 0; \bbZ)$ via the maps
\[
	\cfg m \to \bbR^3 \setminus 0,
	\quad (z_1,\ldots,z_m) \mapsto z_i - z_j, \quad 1 \leq i < j \leq m.
\]

As a manifold with boundary, $\D'$ has the same cohomology as its interior,
$\mathring \D'$, and the fiber bundle $\mathring \D' \to \ncfgs N$ is
equivalent to the restriction of $\cfg {N+2} \to \cfg {N+1}$ over the subspace
$\ncfgs N \subset \cfg {N+1}$ (which is a homotopy retraction), via the relabelling
$(\zeta, 0, \zeta_1,\ldots,\zeta_N) = (z_1,\ldots,z_{N+2})$.
\end{proof}

A family of ideal monopoles represented by data on the fibers of
$\Z' \to \bncfg N$ requires, as part of its definition, a family of
$\SU(2)$-Dirac connections on $\D'$. We take a moment to consider the existence
of such connections.

In the first place, we require a $\UU(1)$ bundle $Q' \to \D'$ whose restriction
to each fiber over $\ncfgs N$ has class $(k_0,\ldots,k_N) \in H^2([\ol{\bbR^3};
\set{0,\zeta_1,\ldots,\zeta_N}]; \bbZ)$; that such bundles exist follows from
Lemma~\ref{L:cohom_D}.  We call a bundle $Q' \to \D'$ with
\[
	[Q'] = \sum_{j=0}^N k_j\,\xi_j \in H^2(\D'; \bbZ),
\]
a {\em universal Dirac bundle}.

Next, we require a connection on $Q'$ restricting to a Dirac connection
fiberwise, which is to say that $a$ solves the fiberwise abelian Bogomolny
equation with respect to some Higgs field $\phi$. Set
\[
	\phi_{\D'} = \sum_{i=0}^N k_i\, g_i^\ast \big(\tfrac 1 r\big)
	=\sum_{i=0}^n \frac{k_i}{\abs{\zeta - \zeta_i}},
\]
and consider the 2-form
\begin{equation}
	F_a = \sum_{j=0}^N k_j\, g_j^\ast (\tau).
	\label{E:global_Dirac_curv}
\end{equation}
Restricted to a fiber, $\fD'$, of $\D' \to \ncfgs N$, this satisfies $F_a \rst
\fD' = \star d\phi_{\D'} \rst \fD'$ by \eqref{E:dR_gen_sphere}. Evidently
$[F_a] = [Q'] \in H^2(\D'; \bbR)$, so there exists a connection $a$ with
curvature $F_a$, and since $H^1(\D'; \bbZ) = 0$, the pair $(Q',a)$ is unique up
to isomorphism. In fact, pulling back a Dirac framing $\ol a$ on $\bbS^2$ to
$\gS'_\infty \cong \bbS^2 \times \ncfgs N$, we may assume that $a \rst
\gS'_\infty = \ol a$, and two such connections are uniquely gauge equivalent.
We call such an $a$ a {\em universal Dirac connection}, and the association of
$a$ to a connection $A$ on $Q' \times_{\UU(1)} \SU(2)$ will be called a {\em
universal $\SU(2)$-Dirac connection}.

For each $j = 0,\ldots, N$ we likewise define a (unique up to isomorphism)
circle bundle with connection 
\[
	(L_j, a_j) \to \ncfgs N,
	\quad [L_j] = \sum_{i \neq j} k_i \eta_{ji} \in H^2(\ncfgs N; \bbZ),
\]
with curvature
\[
	F_{a_j} = \sum_{i \neq j} k_i f_{ji}^\ast (\tau)
\]
over $\ncfgs N$. We call $L_j$ a {\em Gibbons-Manton circle factor}, and make the following

\begin{defn}
The (weighted) {\em Gibbons-Manton torus bundle} corresponding to $\ul k$ is
\begin{equation}
	\TGM = \bigoplus_{j=0}^N L_j \to \ncfgs N.
	\label{E:TGM}
\end{equation}
We equip $\TGM$ with the connection $\bigoplus_{j=0}^N a_j$.
%We call $L_j\to \ncfgs N$ a {\em (weighted) Gibbons-Manton torus factor}.
\label{D:TGM}
\end{defn}

\begin{prop}
Fix a degree $k_j$ bundle $Q_{k_j} \to \bbS^2$ with any Dirac framing $\ol a_{k_j}$.
Then with respect to the trivialization $\gS'_j \cong \bbS^2\times \ncfgs N$, the restriction of a universal
Dirac bundle $(Q',a)$
to $\gS'_j$ admits an isomorphism
\begin{equation}
\begin{tikzpicture}[->,>=to,auto,baseline=(current bounding box.center)]
\matrix (m) [matrix of math nodes, column sep=1cm, row sep=1cm, text height=2ex, text depth=0.25ex]
{ (Q',a) \rst_{\gS'_j} & \pr_1^\ast (Q_{k_j},\ol a_{k_j})\otimes \pr_2^\ast (L_j,a_j) \\ \gS'_j & \bbS^2\times \ncfgs N \\};
\path (m-1-1) edge node {$\cong$} (m-1-2); %top
\path (m-1-2) edge (m-2-2); %right
\path (m-2-1) edge node {$\cong$} (m-2-2); %bot
\path (m-1-1) edge (m-2-1); %left
\end{tikzpicture}
	\label{E:Q_framing_downstairs}
\end{equation}
%where $Q_{k_j} \to \bbS^2$ is a circle bundle of degree $k_j$ equipped with any Dirac framing $\ol a_{k_j}$.
\label{P:GM_framing}
\end{prop}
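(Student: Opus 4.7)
The approach is to compute the restrictions of the cohomology generators $\xi_i = g_i^\ast \omega$ and of the curvature form \eqref{E:global_Dirac_curv} to the face $\gS'_j \cong \bbS^2\times \ncfgs N$, and then match them with the analogous data for $\pr_1^\ast (Q_{k_j},\ol a_{k_j})\otimes \pr_2^\ast (L_j,a_j)$. First, I would use that $\gS'_j$ is the front face of the blow-up of $\cP_j = \set{\zeta = \zeta_j}\subset \D_1$. On this face the map $g_j(\zeta,\ul\zeta) = \zeta - \zeta_j$ vanishes on the base but lifts under the blow-up to the first-factor projection $\pr_1 : \gS'_j \to \bbS^2$ onto the boundary of $[\ol{\bbR^3};\set 0]$, so $\xi_j\rst \gS'_j = \pr_1^\ast \omega$. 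For $i\neq j$, the map $g_i$ extends smoothly and non-vanishingly to $\gS'_j$, where it factors through $\pr_2$ as $\ul\zeta \mapsto \zeta_j - \zeta_i$. Invoking the antipodal sign convention $\eta_{ij} = -\eta_{ji}$ from the remark following Lemma~\ref{L:cohom_D}, this gives $\xi_i\rst \gS'_j = \pr_2^\ast \eta_{ji}$. Summing,
\[
	[Q'\rst \gS'_j] = k_j\,\pr_1^\ast\omega + \sum_{i\neq j} k_i\,\pr_2^\ast\eta_{ji} = \pr_1^\ast[Q_{k_j}] + \pr_2^\ast[L_j].
\]

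Since $H^1(\gS'_j;\bbZ) = 0$ (by K\"unneth from $H^1(\bbS^2) = 0$ and the vanishing of $H^1(\cfg m;\bbZ)$ recorded in the proof of Lemma~\ref{L:cohom_D}), $\UU(1)$-bundles on $\gS'_j$ are classified by their first Chern class, yielding a topological bundle isomorphism $Q'\rst \gS'_j \cong \pr_1^\ast Q_{k_j}\otimes \pr_2^\ast L_j$. To promote this to an isomorphism of bundles with connection, I would repeat the same restriction computation at the level of the canonical de Rham representatives \eqref{E:dR_gen_sphere}; this yields
\[
	F_a\rst \gS'_j = k_j\,\pr_1^\ast\tau + \sum_{i\neq j} k_i\,\pr_2^\ast f_{ji}^\ast\tau = \pr_1^\ast F_{\ol a_{k_j}} + \pr_2^\ast F_{a_j},
\]
after normalizing $\ol a_{k_j}$ so that $F_{\ol a_{k_j}} = k_j\,\tau\rst \bbS^2$; any Dirac framing of $Q_{k_j}$ is gauge equivalent to such a choice by Proposition~\ref{p1.25.10.15}. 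Because $H^1(\gS'_j;\bbR) = 0$, two connections on the same bundle with equal curvature differ by $du$ for some smooth $u$, and are hence intertwined by the global gauge transformation $e^u$; applying this to the difference of $a\rst\gS'_j$ and $\pr_1^\ast \ol a_{k_j} + \pr_2^\ast a_j$ (after transporting via the topological isomorphism) upgrades the identification to one of bundles-with-connection.

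The principal subtlety is bookkeeping of the antipodal sign: the map $z\mapsto -z$ on $\bbR^3\setminus 0$ reverses orientation and hence acts by $-1$ on $H^2(\bbR^3\setminus 0;\bbZ)$. This is precisely why the restriction of $\xi_i$ (for $i\neq j$) at $\gS'_j$ produces $\eta_{ji}$ rather than $\eta_{ij}$, which is exactly what couples with the definition $[L_j] = \sum_{i\neq j} k_i\,\eta_{ji}$ in Definition~\ref{D:TGM}. Beyond keeping these signs straight, no substantive analytic difficulty arises; everything is a consequence of the topology of the blow-up $\D'$, the K\"unneth theorem, and the de~Rham calculation already set up in Lemma~\ref{L:cohom_D}.
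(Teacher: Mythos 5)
Your proposal is correct and follows essentially the same route as the paper's proof: restrict the cohomology generators $\xi_i$ and the curvature $F_a$ to $\gS'_j$, identify $\wt g_j\rst_{\gS'_j}$ with $\pr_1$ and $\wt g_i\rst_{\gS'_j}$ (for $i\neq j$) with $-f_{ij}\circ\pr_2$, and then use vanishing of $H^1(\gS'_j)$ to promote the equality of curvature forms to a bundle-with-connection isomorphism. The only cosmetic differences are that you spell out the K\"unneth argument for $H^1=0$ and explicitly gauge-normalize $\ol a_{k_j}$ via Proposition~\ref{p1.25.10.15}, both of which the paper leaves implicit.
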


\begin{proof}
This is a computation in cohomology using the generators $\xi_j$ and
$\eta_{ij}$. As noted in the proof of Lemma~\ref{L:triv_Xj}, the isomorphism
$\gS'_j \cong \bbS^2 \times \ncfgs N$ is obtained by what amounts to the change
of variables $(\zeta,\zeta_1,\ldots,\zeta_N) \to (\zeta' = \zeta - \zeta_j,
\zeta_1,\ldots,\zeta_N)$ on $\mathring \D_1\cong (\bbR^3 \setminus \set
0)\times \ncfgs N$, after which $\gS'_j$ is identified with the front face of
the blow-up of $\zeta' = 0$.

In terms of this change of coordinates, the map $g_j$ from \eqref{E:cohom_D_gj}
becomes simply
\[
	g_j(\zeta',\zeta_1,\ldots,\zeta_N) = \zeta' \in \bbR^3\setminus \set 0,
\]
so its extension, $\wt g_j : \D' \to [\ol{\bbR^3}; \set 0]$, is identified over
$\gS'_j$ with the projection map
\[
	\wt g_j \cong \pr_1 : \gS'_j \cong \bbS^2\times \ncfgs N \to \bbS^2.
\]
On the other hand, for $i \neq j$, the map $g_i$ can be written as 
\[
	g_i = g_j - f_{ij} : (\zeta',\zeta_1,\ldots,\zeta_N) = \zeta' - (\zeta_i - \zeta_j).
\]
The extension, $\wt g_i$, to $\D'$ never vanishes on $\gS'_j$ (it vanishes
precisely over $\gS'_i$, which is disjoint), while $\zeta' \equiv 0$ over
$\gS'_j$ as an $\bbR^3$-valued function, so we obtain
\[
	\wt g_i\rst_{\gS'_j} \cong - f_{ij} \circ \pr_2 : \gS'_j \cong \bbS^2 \times \ncfgs N \to \bbR^3 \setminus \set0.
\]
It follows that the cohomology class $[Q'] = \sum_i k_i\,\xi_i$ restricts over
$\gS'_j$ to
\[
	[Q'] =  k_j \xi_j - \sum_{i \neq j}k_i \eta_{ij} \cong k_j\omega \oplus [L_j]
	\in H^2(\bbS^2\times \ncfgs N) = H^2(\bbS^2)\oplus H^2(\ncfgs N).
\]

The curvature form $F_a$ behaves similarly. Indeed,the restriction of the form
\eqref{E:dR_gen_sphere} to the front face of the blow-up of the origin in
$\bbR^3$ is the standard volume form on $\bbS^2$, so it follows that
\[
	F_a \cong \pr_1^\ast (F_{\ol a_{k_j}}) +  \pr_2^\ast(F_{a_j}),
\]
and since $H^1(\gS_j'; \bbR) = 0$, the respective bundles with connection $(Q',a)$ and
$\pr_1^\ast(Q_{k_j}, \ol a_{k_j})\otimes \pr_2^\ast(L_j,a_j)$ are intertwined by an
isomorphism which is unique up to a constant gauge transformation.
\end{proof}

Proposition~\ref{P:GM_framing} shows that it is not possible
to choose global framings for a Dirac monopole at the $\gS'_j$, $j =
0,\ldots,N$. Indeed, such a framing over $\gS'_j$  would amount to the existence of an
isomorphism $(Q,a)\rst_{\gS'_j} \cong \pr_1^\ast(Q_{k_j},\ol a_{k_j})$, which
is obstructed by the nontriviality of $L_j$.

Proceeding instead locally, we restrict to the preimage, $U'$, in $\D'$ of an
open set $U \subset \ncfgs N$ over which $\TGM$ is trivial, and then we may
assume that $(a,\phi_{\D'})\rst_{U'}$ is framed at the faces $\gS'_j \cap U'$.
Any two such Dirac monopoles are identified by a gauge transformation $g
\in C^\infty(U'; \Ad Q')$ such that $g \rst_{\gS'_\infty \cap U'} = 1$ and $g
\rst_{\gS'_j \cap U'}$ is constant on the fibers of $\gS'_j \to \ncfgs N$; such
$g$ is unique modulo the subgroup $\gauG_0(U'; Q)$ of gauge transformations
which are the identity over each $\gS'_j$. Dividing by this subgroup, we are left
with the action of the quotient group, identified with the fiberwise constant gauge transformations
on each $\gS'_j \to U$, or equivalently the group $C^\infty(U; \UU(1)^{N+1})$.

It follows that the moduli space of framed Dirac monopoles with configurations
$\ul \zeta \in U \subset \ncfgs N$ is a product 
\begin{equation}
	C_0\oplus \cdots \oplus C_N \to U
	\label{E:trivial_circles}
\end{equation}
of trivial $\UU(1)$ bundles $C_j$ over $U$. 
The isomorphism \eqref{E:Q_framing_downstairs} intertwines the action of the
fiberwise constant gauge transformations on $\gS'_j \to \ncfgs N$ with the
fiberwise $\UU(1)$ action on $L_j \to \ncfgs N$, and thus determines a natural
isomorphism
\[
	C_j \cong L_j \rst_U.
\]
We conclude the following result; compare Proposition~\ref{p1.25.10.15}.

\begin{prop}
The moduli space of framed Dirac monopoles of charge $\ul k$ with point configurations in $\ncfgs N$
is isomorphic to the Gibbons-Manton torus bundle \eqref{E:TGM}.
\label{P:global_moduli_framed_Dirac}
\end{prop}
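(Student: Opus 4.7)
The plan is to globalize the local identification $C_j \cong L_j\rst_U$ obtained in the discussion immediately preceding the proposition. Cover $\ncfgs N$ by open sets $\{U_\alpha\}$ over each of which $\TGM$ (and hence each $L_j$) trivializes. Fix a universal Dirac connection $(Q',a)$ framed at $\gS'_\infty$, and over each $U_\alpha$ choose trivializations $\tau_j^{(\alpha)}$ of $L_j\rst_{U_\alpha}$. Via Proposition~\ref{P:GM_framing} these determine reference framings $\ol a_j^{(\alpha)}$ at the faces $\gS'_j \cap U'_\alpha$, and the local analysis preceding the proposition then furnishes a diffeomorphism
\[
	\Xi_\alpha : \mathcal{M}\rst_{U_\alpha} \stackrel{\cong}{\to} C_0^{(\alpha)}\oplus\cdots\oplus C_N^{(\alpha)} \cong \bigoplus_{j=0}^N L_j\rst_{U_\alpha} = \TGM\rst_{U_\alpha},
\]
where $\mathcal{M}$ denotes the moduli space in question.

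To patch the $\Xi_\alpha$ into a global isomorphism, I would check that on overlaps $U_\alpha \cap U_\beta$ the two reference framings $\ol a_j^{(\alpha)}$ and $\ol a_j^{(\beta)}$ at $\gS'_j$ differ by the cocycle relating $\tau_j^{(\alpha)}$ and $\tau_j^{(\beta)}$, i.e.\ by the transition function of $L_j$. This uses that the isomorphism of Proposition~\ref{P:GM_framing} is unique up to a constant $\UU(1)$-gauge transformation, since
\[
	H^1(\gS'_j; \bbR) = H^1(\bbS^2) \oplus H^1(\ncfgs N; \bbR) = 0,
\]
the second summand vanishing by the argument in the proof of Lemma~\ref{L:cohom_D}. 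The intertwining property recorded in the last sentence preceding the proposition---that \eqref{E:Q_framing_downstairs} takes the fiberwise-constant $\UU(1)$-gauge action on each $C_j$ to the natural $\UU(1)$-action on $L_j$---then forces $\Xi_\beta \circ \Xi_\alpha^{-1}$ to coincide with the $\TGM$ transition function, and the $\Xi_\alpha$ assemble into the desired global diffeomorphism $\mathcal{M} \cong \TGM$.

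The main obstacle is the bookkeeping in the gluing step: one has to verify that the residual $\UU(1)^{N+1}$-gauge action on $\mathcal{M}$ really matches the fiberwise $\UU(1)^{N+1}$-action on $\bigoplus_j L_j$ under change of local trivializations, rather than differing by a spurious cocycle. This reduces entirely to the intertwining property of Proposition~\ref{P:GM_framing}, so once the uniqueness-up-to-constant of that isomorphism is noted, no further analytic input is required.
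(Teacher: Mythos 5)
Your argument is correct and takes essentially the same route as the paper: the paper presents this proposition as a direct conclusion of the preceding discussion (local identification of the framed Dirac moduli with $C_0 \oplus \cdots \oplus C_N \to U$, then $C_j \cong L_j\rst_U$ via the intertwining property of Proposition~\ref{P:GM_framing}), leaving the patching over a cover of $\ncfgs N$ implicit. Your proposal merely makes that cocycle bookkeeping explicit, correctly invoking $H^1(\gS'_j;\bbR) = 0$ to pin down the isomorphism of Proposition~\ref{P:GM_framing} up to a constant $\UU(1)$-gauge transformation, which is exactly what the paper relies on.
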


Similarly, we have

\begin{thm}
For fixed $\ul k = (k_0,\ldots,k_N)$, the full moduli space, $\idmon_{\ul k}$, of ideal monopoles is a
fiber bundle associated to $\TGM$ with respect to the product action of
$\UU(1)^{N+1}$ on $\monM_{k_0}\times \cdots \times \monM_{k_N}$:
\begin{equation}
	\idmon_{\ul k} = \TGM \times_{\UU(1)^{N+1}} (\monM_{k_0}\times\cdots \times \monM_{k_N})
	\to \ncfgs N.
	\label{E:idmon}
\end{equation}
Here the notation means we take the quotient of $\TGM\times \monM_{k_0}\times
\cdots \times \monM_{k_N}$ by the diagonal action of $\UU(1)^{N+1}$ acting on
the right of $\TGM$ and on the left on the monopole moduli spaces.

Similarly, 
\begin{equation}
	\idmonc_{\ul k} = \TGM \times_{\UU(1)^{N+1}} (\monM_{k_0}\times \moncM_{k_1} \times \cdots \times \moncM_{k_N})
	\label{E:idmonc}
\end{equation}
\label{T:idmon_as_moduli}
\end{thm}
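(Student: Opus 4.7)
The plan is to globalize the fiberwise identification \eqref{E:idmon_to_mon_product} by using the results on the moduli of framed Dirac monopoles (Proposition~\ref{P:global_moduli_framed_Dirac}) to track how local trivializations patch together over $\ncfgs N$.

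First, I would choose an open cover $\{U_\alpha\}$ of $\ncfgs N$ over which $\TGM$ is trivial, and let $U'_\alpha$ denote the preimage in $\D'$. By Proposition~\ref{P:global_moduli_framed_Dirac} and the discussion preceding it, over each such $U_\alpha$ one can choose a universal Dirac connection with framings at $\gS'_\infty \cap U'_\alpha$ and at each $\gS'_j \cap U'_\alpha$ for $j = 0,\ldots,N$. Fixing over $\gS'_j \cap U'_\alpha \cong \bbS^2 \times U_\alpha$ admissible boundary data $(\ol A_j,\ol \Phi_j)$ pulled back from $\bbS^2$, the argument used to establish \eqref{E:idmon_to_mon_product} for fixed $\ul \zeta$ goes through smoothly in $\ul \zeta \in U_\alpha$, producing a local trivialization
\[
	\idmon_{\ul k}\rst_{U_\alpha} \cong U_\alpha \times (\monM_{k_0} \times \cdots \times \monM_{k_N}).
\]

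Second, I would analyze the transition functions. Over $U_\alpha \cap U_\beta$, the difference between two local choices of the framed universal Dirac monopole is a gauge transformation which is constant on each fiber of $\gS'_j \to U_\alpha \cap U_\beta$, yielding a $\UU(1)^{N+1}$-valued function on $U_\alpha \cap U_\beta$. The isomorphism \eqref{E:Q_framing_downstairs} in Proposition~\ref{P:GM_framing} identifies the circle factor $C_j$ locally with $L_j \rst_{U_\alpha}$, so that the $j$-th component of this function is precisely the transition cocycle of the Gibbons--Manton circle factor $L_j$; taken together, the transition functions coincide with those of $\TGM$.

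Third, I would use that the residual $\UU(1)^{N+1}$ gauge action acts on the circle factors $C_0\oplus\cdots\oplus C_N$ (via its natural right action on $L_j$, after identifying $C_j\cong L_j\rst_{U_\alpha}$) and simultaneously on the left of each $\monM_{k_j}$ via the circle action \eqref{e6.5.8.15}. Consequently, the local trivializations of $\idmon_{\ul k}$ patch together as the associated bundle
\[
	\idmon_{\ul k} \cong \TGM \times_{\UU(1)^{N+1}} (\monM_{k_0}\times\cdots \times \monM_{k_N}).
\]
The centred version \eqref{E:idmonc} is immediate, since the circle action preserves the subspace $\moncM_{k_j}\subset\monM_{k_j}$ (for $j\geq 1$), so the same patching argument restricts to it.

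The main obstacle, and the step requiring the greatest care, is the cocycle bookkeeping in the second step: one must confirm that the $j$-th factor of the transition cocycle really is that of $L_j$ and not of some other $\UU(1)$-bundle. This rests on the canonical identification $C_j \cong L_j \rst_{U_\alpha}$ supplied by Proposition~\ref{P:GM_framing}; once this is in place the associated bundle description follows essentially tautologically.
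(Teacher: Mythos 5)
Your proposal is correct and takes essentially the same approach as the paper: locally trivialize over open sets where $\TGM$ is trivial using fixed framings at the $\gS'_j$, identify the residual $\UU(1)^{N+1}$ gauge freedom via $C_j \cong L_j\rst_U$ from Proposition~\ref{P:GM_framing}, and patch. The paper phrases the patching by exhibiting explicit local diffeomorphisms and inverses before invoking $C_j \cong L_j\rst_U$, whereas you phrase it via the transition cocycle, but the underlying argument is the same.
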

\begin{proof}
We prove the result for $\idmon$; the argument for $\idmonc$ is similar.
Fix a sufficiently small open subset $U \subset \ncfgs N$ and a trivialization
of $\TGM \rst_U$.  We restrict consideration to the preimage of
$U\times[0,\infty)$ in $\Z'$ without change of notation.  Suppose $(A,\Phi)$
represents a smooth family of ideal monopoles on $\Z' \to U
\times [0,\infty)$. In particular, $A \rst \D'$ is associated to some Dirac connection $a$
by an identification $P \cong Q \times_{\UU(1)} \SU(2)$.

Fix charge $k_j$ admissible monopole boundary data $(\ol A_j,\ol \Phi_j)$ on $\bbS^2$ for $j =
0,\ldots,N$, and pull these back to $\gS'_j$ using the diffeomorphism $\gS'_j
\cong \bbS^2 \times U$ and left projection.  By the trivialization of
the $L_j \rst_U$, we may suppose that $(A,\Phi) \rst \gS'_j = (\ol A_j,\ol
\Phi_j)$, composing with a gauge transformation if necessary. This reduces the
gauge freedom to the subgroup $\ol \gauG_I$ of gauge transformations $\gamma$
over $\fb^{-1}(U \times \set 0)$ such that $\gamma \rst \gS'_j$ reduces to a
$\UU(1)$ gauge transformation which is constant on each fiber over $U$. This determines an exact sequence
\[
	1 \to \gauG_{I,0} \to \ol \gauG_I \to \cH \to 1
\]
where $\gauG_{I,0} = \gauG_0(\D') \times \gauG_0(\X'_0)\times \cdots \times
\gauG_0(\X'_N)$ and $\cH = C^\infty(U; \UU(1)^{N+1})$ may be regarded as the
group of gauge transformations on a trivial $\UU(1)^{N+1}$ bundle over $U$.

Thus, we have a well-defined map
\[
\begin{gathered}
	\idmon\rst_U \to (C_0\oplus \cdots \oplus C_N)
	  \times_{\UU(1)^{N+1}}(\monM_{k_0}\times \cdots \times \monM_{k_N}),
	\\ [(A,\Phi)] \mapsto [(A_{\D'},\Phi_{\D'}), (A_0,\Phi_0),\ldots, (A_N,\Phi_N)],
\end{gathered}
\]
with $C_0\oplus \cdots \oplus C_N$ as in \eqref{E:trivial_circles}.
Here we take the quotient by gauge transformations on $\Z'$ restricting to the identity at $\B'$ on the left, and
on the right we take the quotient by $\ol \gauG_I$ on the right, first by $\gauG_{I,0}$ and then by $\cH$.

Conversely, if we fix a universal framed $\SU(2)$-Dirac monopole 
$(A'_{\D'},\Phi'_{\D'})$ on $\D'$ over $U$, then
\[
\begin{gathered}
	\monM_{k_0}\times \cdots \times\monM_{k_N}\times U \to \idmon\rst_U,
	\\ ([A_0,\Phi_0],\ldots,[A_N,\Phi_N]) \mapsto [(A'_{\D'},\Phi'_{\D'}),(A_0,\Phi_0),\ldots,(A_N,\Phi_N)]
\end{gathered}
\]
gives an inverse map.

Using the fact that $C_j \cong L_j \rst_U$ and patching together various local
trivializations for $\TGM$ over $\ncfgs  N$ gives the global result.
\end{proof}

\begin{rmk}
We proceed to define the gluing map \eqref{E:intro_gluing_map} on the moduli
space $\cI_{\ul k}$ of all ideal monopoles, as our construction works perfectly
well in this setting. However, in order to obtain a local diffeomorphism onto
moduli space, it will eventually be necessary to restrict to centered ideal
monopoles.

In addition, the correct spaces to consider should really be the quotients
$\idmonc_{\ul k} / \Sigma_{\ul k}$, where $\Sigma_{\ul{k}}$ is the the subgroup
of the symmetric group on $N$ letters which preserves the sequence
$(k_1,\ldots,k_N)$ (so $\Sigma_{\ul{k}} = \Sigma_N$ if all the $k_j$ are equal
and is equal to $\set{1}$ if they are all distinct), acting by permutation on
the configurations $\ul \zeta$ and by the obvious factor exchange on the fibers
\eqref{E:idmon_to_mon_product}. Indeed, one expects these quotient spaces, not
the  $\idmonc_{\ul k}$ themselves, to form the boundary hypersurfaces of the
compactification of $\monM_k$.  However, since our gluing map is local, and we
only aim to prove that it is a local diffeomorphism onto its image, it suffices
to work with $\idmon_{\ul k}$ for simplicity.

Henceforth we will suppress the dependence on $\ul k$ from the notation, writing
simply $\idmon$ or $\idmonc$.
\end{rmk}

Taking the product with $[0,\infty)$, we obtain the fibration 
\begin{equation}
	\varphi : \base \to \ncfgs N\times [0,\infty) = \bncfg N.
	\label{E:ideal_moduli_extension}
\end{equation}
With respect to the scattering structure on the base which was considered
above, $\base$ inherits a natural {\em fibered boundary structure}
\cite{MM_FB}. These
vector fields, denoted by $\phiV(\base)$, are defined as those vector fields
$V$ on $\base$ for which $V\ve \in \ve^2C^\infty(\base)$ and which are tangent
to the fibers of $\varphi : \idmon \to \ncfgs N$ over $\set{\ve = 0}$.
Essentially this means that $V$ behaves as a scattering vector field along the
base $\bncfg N$ and an ordinary vector field along the fibers
$\monM_{k_0}\times \cdots \times \moncM_{k_N}$. The associated tangent bundle
will be denoted $\phiT(\base)$, and we note that we have an exact sequence
\begin{equation}
	0 \smallto T\monM_{k_0}\times T\moncM_{k_1}\times \cdots \times T\moncM_{k_N}
	\smallto \phiT (\base)
	\smallto \scT \bncfg N \cong (\bbR^3)^N \smallto 0,
	\label{E:phiT_sequence}
\end{equation}
where we use Proposition~\ref{P:scTcfg_frame} to trivialize the latter space.

In fact we have a natural splitting of \eqref{E:phiT_sequence} since
\eqref{E:ideal_moduli_extension} is an associated fiber bundle to the extension of $\TGM$
to $\bncfg N$, which comes equipped with a canonical connection. Such
a connection induces a splitting of the tangent bundle sequence \eqref{E:phiT_sequence} for any associated bundle.

%\section{Gluing with parameters}
%\input{global}

\section{Universal gluing space and parameterized gluing}
\label{S:global}
We now define the universal version of the gluing space from \S\ref{S:gluing}, 
The {\em universal gluing space} is 
\[
	\Z = \varphi^\ast \Z' \to \base.
\]
This pullback factors through the lift of $\Z_0$ 
%to $\varphi^\ast \Z_0 = \oX \times \base$,
giving a map
\begin{equation}
	\Z \to \varphi^\ast \Z_0 = \oX\times \base.
	\label{E:Z_to_oXbase}
\end{equation}
We denote the lifts of $\D'$, $\X'_j$, $\gS'_j$ and $\B'$ simply by $\D$, $\X_j$, $\gS_j$ and
$\B$, respectively. As before, we fix boundary defining functions $\rho_\D$,
$\rho_\B$ and  $\rho_j = \rho_{\X_j}$ such that $\ve = \rho_\D \rho_\X$, with
$\rho_\X := \rho_0\ldots,\rho_N$.

Composing \eqref{E:Z_to_oXbase} with projections, we obtain the three fundamental
maps
\begin{align}
	\fu &: \Z \to \idmon,
	\label{E:fu}
	\\ \fb &: \Z \to \base, 
	\label{E:fb}
	\\ \piX &: \Z \to \oX.
	\label{E:pi_oX}
\end{align}
The maps $\fb$ and $\piX$ are b-fibrations, while $\fu$ is a smooth fiber bundle whose fibers are
manifolds with corners.
Indeed, each fiber of the map $\fu$ is a single parameter gluing space as
defined in \S\ref{S:Mgl}, and then $\fb$ and $\piX$ restrict over each such fiber
to the maps of the same names in \S\ref{S:gluing}.

To conform to the notation in \S\ref{S:gluing},
we observe a notational convetion whereby fibers of $\fu$ are denoted by non-calligraphic versions of
the global spaces; thus a typical fiber of $\Z$ is denoted by $\fZ$, and fibers
of $\fu \equiv \fb : \D \to \idmon$ and $\fu \equiv \fb : \X_i \to \idmon$ are denoted by
$\fD$ and $\fX_i$, respectively. 

The relevant geometric structures on $\Z$ are generated by Lie subalgebras of
$\cV(\Z)$ as in \S\ref{S:gluing}.
Within the algebra $\bV(\Z)$ of vector fields tangent to
the boundary faces of $\Z$, we let $\fuV(\Z)$ and $\fbV(\Z)$ denote the vector
fields which are additionally tangent to the fibers of $\fu$ and $\fb$,
respectively. 

Note that $\bV(\Z)$ includes vector fields in the parameter directions, while
$\fuV(\Z)$ consists solely of the b vector fields along the fibers $\fZ$, i.e.,
$\fuV(\Z)\rst_{\fZ} \equiv \bV(\fZ)$. Likewise, $\fbV(\Z)$ consists of the $\fb$ 
vector fields along the fibers $\fZ$.

The algebra $\gV(\Z)$ is defined by $\gV(\Z) = \rho_\D\rho_\B \fbV(\Z)$ and
consists of the fiberwise $\gl$ vector fields (as defined in \S\ref{S:gluing})
fiberwise. We have a filtration
\[
	\gV(\Z) \subset \fbV(\Z) \subset \fuV(\Z) \subset \bV(\Z) \subset \cV(\Z).
\]

The associated tangent bundles are defined as before, via $C^\infty(\Z; {}^\bullet T \Z) = \cV_\bullet(\Z)$,
for $\bullet \in \set{\mathrm{b},\fu,\fb,\gamma}$, and the results of \S\ref{S:gluing} carry over; namely,
$\piX^\ast(\scT \oX)$ is naturally isomorphic to $\gT \Z$, which in turn restricts over $\X_j$ to the fiberwise
scattering tangent bundle with respect to the fibration $\fX_j \to \X_j \to \idmon$, and rescaled restriction
gives an $\ve$-dependent isomorphism of $\gT \Z \rst_\D$ with the fiberwise conic tangent bundle with respect
to $\fD \to \D \to \idmon$.

\subsection{Bogomolny equation on $\Z$} \label{S:bogo_global}
As in \S\ref{S:gluing} we define the $\gamma$ metric 
\begin{equation}
	\wt g = \piX^\ast g 
	\label{E:wtg_global}
\end{equation}
and consider the Bogomolny operator
\begin{equation}
	(A,\Phi) \mapsto \Bogo(A,\Phi) = \star F_A - d_A \Phi \in \cAphg^\ast(\Z; \gLam^1\otimes \adP)
	\label{E:Bogo_on_Z}
\end{equation}
for $P = \piX^\ast P$. 

We now wish to globalize the formal construction of \S\ref{S:formal_formal}, to
the extent possible, over $\base$. The first step is the choice of a universal ``pregluing configuration.''

\begin{prop}
Fix framed monopole data $(A_j,\Phi_j)$ 
with $[(A_j,\Phi_j)] \in \in \monM_{k_j}$, $j = 0,\ldots,N$.
%and let $K \subset \ncfgs N$ be any compact set. 
Then there exist neighborhoods $U_j$ of the $[(A_j,\Phi_j)]$, closed with respect to
the $\UU(1)$ actions on $\monM_{k_j}$, and a smooth {\em pregluing
configuration} $(\backA,\backPhi)$ on $\Z \rst_{\U\times [0,\infty)}$ where $\U$ is the
set
\begin{equation}
	\U = \TGM \times_{\UU(1)^{N+1}}(U_0\times \cdots \times U_N) \subset \idmon.
	\label{E:U_in_idmon}
\end{equation}
The configuration satisfies the following properties:
\begin{enumerate}
[{\normalfont (a)}]
\item 
$(A,\Phi)$ is an approximate solution to the Bogomolny equation \eqref{E:Bogo_on_Z} with error
\[
	\Bogo(\backA,\backPhi) = \cO(\rho_\X \rho_\D^3\rho_\B^\infty).
\]
\item \label{I:pregluing_tautological}
For every ideal monopole $\iota \in \U$, the ideal monopole represented by the
restriction of $(\backA,\backPhi)$ to $\ve = 0$ in the fiber $\fZ = \Z_\iota$
is precisely $\iota$.
\item
The $\gl$ covariant derivative $d_A$ on $\gLam^\ast\otimes \adP$ is diagonal to
infinite order at $\D \cup \B$ with respect to the splitting
\[
	\adP = \adPz \oplus \adPo := \sspan_\bbC \backPhi \oplus \backPhi^\perp
\]
\end{enumerate}
\label{P:global_pregluing}
\end{prop}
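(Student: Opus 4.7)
The plan is to parameterize the single-configuration construction from Proposition~\ref{P:initial_data_map} smoothly over a local chart on $\idmon$, using the Gibbons-Manton description of the moduli space from Theorem~\ref{T:idmon_as_moduli} to handle the global twisting.

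First I would work in a local chart on $\ncfgs N$ where $\TGM$ is trivial, and restrict $\idmon$ to the preimage of this chart. Choose admissible boundary data $(\ol A_j, \ol \Phi_j)$ on $\bbS^2$ of charge $k_j$ for each $j$, and slice representatives $(A_j,\Phi_j)$ near each basepoint $[(A_j,\Phi_j)]\in \monM_{k_j}$ in small $\UU(1)$-invariant neighborhoods $U_j$, varying smoothly as functions on $U_j$ (such slices exist by the smooth local structure of $\monM_{k_j}$ established in \S\ref{S:local}). Using the trivialization of $\TGM$, Theorem~\ref{T:idmon_as_moduli} allows us to identify $\U$ with a product $\TGM\rst_U \times (U_0\times\cdots\times U_N)/\UU(1)^{N+1}$, and to lift locally to a smooth family of tuples $((A_j,\Phi_j))$ indexed by $\U$.

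Next I would choose a universal framed $\SU(2)$-Dirac connection $(A_\D,\Phi_\D)$ on $\D$, arising from the universal Dirac connection $a$ of \S\ref{S:mon_ideal} (its existence guaranteed by the construction preceding Proposition~\ref{P:global_moduli_framed_Dirac}), with $\Phi_\D = \diag(im, -im)$ relative to the splitting $E\rst_\D = L\oplus L^{-1}$, and with the Higgs subleading term $\Phi_1 = \diag(\wt\phi, -\wt\phi)$ constructed as in \eqref{e21.25.10.15}--\eqref{e11a.25.10.15}. By Proposition~\ref{P:GM_framing} and our trivialization of $\TGM$, the restriction of $a$ to each $\gS_j$ may be identified with $\pr_1^\ast(\ol a_{k_j})$. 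I would then parameterize the line bundle extension from the proof of Proposition~\ref{p1.29.10.15}: define $L \to \cU$ with $L\rst_{\cU\cap \X_j} = L_j$ (the positive eigenbundle of $\Phi_j$) and $L\rst_\D$ the universal Dirac bundle, now smoothly in $\iota \in \U$. The existence of such a smooth family of line bundles reduces, after local trivializations, to an extension problem for diagonal connection 1-forms satisfying compatibility \eqref{e2.28.10.15} at the corners $\gS_j$, which admits smooth solutions by the general extension result for connections on manifolds with corners stated at the beginning of \S\ref{S:single_ideal} (itself a consequence of \cite{CCN}).

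With a smooth base connection $A$ built by gluing the diagonal part $\diag(\wt a, -\wt a)$ to smooth parameterized extensions $\wt b_j \in \rho_\D^\infty C^\infty(\Z, \gT^\ast\Z\otimes \frp_1)$ of the off-diagonal pieces $b_j$ from \eqref{e11.25.10.15} (available uniformly by the rapid off-diagonal decay \eqref{e11.8.10.15} applied to each $(A_j,\Phi_j)$), and with $\Phi = \Phi_\D + \ve\Phi_1 + \cO(\rho_\D^2)$ as in \eqref{e22.8.10.15} extended smoothly over $\U$, I would verify properties (a), (b), (c) using exactly the computations of Proposition~\ref{P:initial_data_map}: the leading-order vanishing $d_{A\rst\fD}\Phi_0 = 0$ follows from $\Phi_0 = \diag(im,-im)$, the next-order vanishing \eqref{e31.25.10.15} holds by construction of the universal Dirac monopole, and tautologically the restriction to $\fZ_\iota$ represents $\iota$ by the way we set up the trivialization.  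Near $\fB$ we arrange the extensions $\wt a$ and $\wt\phi$ to be $\ve$-independent using part (d) of Proposition~\ref{P:lifted_metric}, yielding the $\rho_\B^\infty$ decay in (a); the parameterized calculation \eqref{e1.28.10.15} then gives the claimed $\cO(\rho_\X \rho_\D^3\rho_\B^\infty)$ error.

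The main obstacle will be the bookkeeping of gauge freedom across the $\UU(1)^{N+1}$ quotient defining $\idmon$: we must ensure that the smooth pregluing configuration built on a local lift $U_0\times \cdots\times U_N \times \TGM\rst_U$ descends consistently (up to the overall $\gauG_\fB$ gauge transformations quotiented out in (b)) to the associated bundle $\U$. This is why we insist $U_j$ is closed under the circle action and work with slice representatives that are $\UU(1)$-equivariant. Patching together local trivializations of $\TGM$ produces the pregluing configuration globally on $\U$; since the statement is local in $\iota_0$, it suffices to construct $\U$ as in \eqref{E:U_in_idmon} for one trivializing chart, which is what we have done.
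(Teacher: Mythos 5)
Your outline assembles the right ingredients — Coulomb-gauge slices $\wt U_j \subset \monM_{k_j}$ orthogonal to $\nabla_{A_j}\Phi_j$, the universal Dirac monopole on $\D'$, the framing identification from Proposition~\ref{P:GM_framing}, the corner-compatible extension argument of Proposition~\ref{P:initial_data_map}, and the $\ve$-independence near $\fB$ via Proposition~\ref{P:lifted_metric}(d) — and the error analysis you cite is correct. However, there is a genuine gap in how you handle the base directions.

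You begin by restricting to a local chart $U \subset \ncfgs N$ over which $\TGM$ is trivial, construct a pregluing configuration there, and then in your final paragraph alternate between the claim ``patching together local trivializations of $\TGM$ produces the pregluing configuration globally on $\U$'' and the claim that ``the statement is local in $\iota_0$, it suffices to construct $\U$ \ldots for one trivializing chart.'' These two statements are in tension, and neither resolves the problem: the proposition requires a smooth pregluing configuration over the \emph{entire} set $\U = \TGM \times_{\UU(1)^{N+1}}(U_0 \times\cdots\times U_N)$, which is an associated bundle over all of $\ncfgs N$ — the base is not shrunk. A construction over one chart does not produce this, and you give no argument for why locally-built configurations would agree (up to the $\gauG_\fB$-gauge quotient) across chart overlaps of $\TGM$; the transition functions of $\TGM$ act nontrivially on the Dirac monopole framings.

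The paper avoids this patching problem entirely by a different use of the slice construction. Having chosen the $\UU(1)$-invariant neighborhoods $U_j \cong \wt U_j \times \UU(1)$, one obtains the \emph{global} product trivialization
\[
\U \cong \TGM \times (\wt U_0 \times \cdots \times \wt U_N)
\]
of \eqref{E:trivn_U}, valid over all of $\ncfgs N$ since the slices $\wt U_j$ trivialize the $\UU(1)^{N+1}$ action. The key observation — which your proposal misses — is that the pullback of each Gibbons--Manton circle factor $L_j \to \ncfgs N$ to $\cU$ factors through $\TGM$, and the pullback of $L_j$ to the total space $\TGM$ (a torus bundle having $L_j$ as a factor) is \emph{canonically trivial}. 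Consequently every $L_j \to \cU$ is globally trivialized by the choice of representatives $(A_j,\Phi_j)$, no local trivialization of $\TGM$ is introduced, and the Dirac monopole framings on the $\gS_j$ can be pulled back from $\bbS^2$ in one global step. You should replace the local-chart construction and the unjustified patching claim with this canonical-triviality argument.

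A minor additional point: your identification ``$\U$ with a product $\TGM\rst_U \times (U_0\times\cdots\times U_N)/\UU(1)^{N+1}$'' is misstated; once $\TGM$ is trivialized over $U$ the associated bundle becomes $U \times (U_0\times\cdots\times U_N)$, not a further quotient, and the clean statement is the global product trivialization above.
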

\begin{rmk}
The need to represent monpoles by smooth families of data $(A,\Phi)$
necessitates the restriction to the neighborhoods $U_i \subset \monM_{k_i}$;
however we are able to work globally in the base directions. Note that
$(\backA,\backPhi)$, and hence the map into moduli space $\monM_{k}$ that we
later construct, is dependent on the choices of initial representatives
$(A_i,\Phi_i)$.
\end{rmk}

\begin{rmk}
We refer to any configuration satisfying property
\eqref{I:pregluing_tautological} as a {\em tautological configuration}.
\end{rmk}

\begin{proof}
First, we take $\wt U_i \subset \monM_{k_i}$ to be the set of classes
represented by solutions of the form $(A_i + a,\Phi_i + \phi)$ on $\ol{\bbR^3}$
for sufficiently small $(a,\phi)$, where $(a,\phi)$ satisfy the Coulomb gauge
condition $\gauD^\ast_{A_i,\Phi_i}(a,\phi) = 0$ and in addition $\phi$ is $L^2$
orthogonal to $\nabla_{A_i} \Phi_i$ (see \eqref{e21.10.10.15}). Thus $(A_i +
a,\Phi_i + \phi)$ gives a slice for the semidirect product of the gauge group
and the $\UU(1)$ action on $\monM_{k_i}$. We then let $U_i \cong \wt U_i \times
\UU(1)$ consist of  the $\UU(1)$ orbits of the elements in $\wt U_i$. Defining
$\cU$ by \eqref{E:U_in_idmon}, it follows that we have a trivialization
\begin{equation}
	\U \cong \TGM \times (\wt U_0\times \cdots \times \wt U_N).
	\label{E:trivn_U}
\end{equation}

Denote by $L_j \to \cU$ the pull back of the Gibbons-Manton torus factor from
$\ncfgs N$ to $\cU$.  In light of \eqref{E:trivn_U}, this is equivalent to the
product of pull back of $L_j$ to $\TGM$, which is canonically trivial, with
$\wt U_0\times \cdots \times \wt U_N$. Thus each $L_j \to \cU$ is trivialized by our
choices of $(A_i,\Phi_i)$.

This allows us to consider global framings for ideal monopoles on
$\Z \rst \cU$. Indeed, Lemma~\ref{L:triv_Xj} gives a diffeomorphism $\X_j \cong
\ol{\bbR^3}\times \U$ with respect to which any universal Dirac bundle $Q \to
\D$, obtained by pull back from $\D'$, admits an isomorphism
\begin{equation}
\begin{tikzpicture}[->,>=to,auto]
\matrix (m) [matrix of math nodes, column sep=1cm, row sep=1cm, text height=2ex, text depth=0.25ex]
{ Q \rst_{\gS_j} & \pr_1^\ast Q_{k_j}\otimes \pr_2^\ast L_j\\ \gS_j & \bbS^2\times \U \\};
\path (m-1-1) edge node {$\cong$} (m-1-2); %top
\path (m-1-2) edge (m-2-2); %right
\path (m-2-1) edge node {$\cong$} (m-2-2); %bot
\path (m-1-1) edge (m-2-1); %left
\end{tikzpicture}
	\label{E:triv_Q_globally}
\end{equation}
By triviality of $L_j \to \cU$, we may consider monopole framings $(\ol A_j,\ol
\Phi_j)$ on $\gS_j \rst \U$ which are pulled back from $\bbS^2$.

We equip each $\X_j$
with a canonical smooth family of monopoles; regarding $\X_j$ as the space
\[
	\X_j \cong \ol{\bbR^3} \times \U \cong \ol{\bbR^3}\times \TGM\times (\wt U_0\times \cdots \times \wt U_N),
\]
with the corresponding projection $\X_j \to \wt U_j$,
we endow the $\ol{\bbR^3}$ factors with the smooth family $(A_j+a,\Phi_j + \phi)$
determined by the projection to $\wt U_j$. The framings
$P\rst_{\gS_j} \cong \pr_1^\ast Q_{k_j}\times_i \SU(2)$ determined by this
family then identify $P\rst_{\gS_j}$ with a universal Dirac bundle $Q
\rst_{\gS_j}$. 

Next, we may pull back a universal Dirac monopole $(a,\phi_{\D'}$
from $\D'$ to $\D$, and by associating $Q$ to an $\SU(2)$ bundle, we obtain a
familiy of $\SU(2)$-Dirac connections $A_\D$, and an $\SU(2)$ Higgs field
$\Phi_\D$ satisfying $\nabla_{A_\D} \Phi_\D = 0$. Composing with a gauge
transformation if necessary, we may assume that $(A_\D,\Phi_\D)$ agrees with
the framing of $(A_i,\Phi_i)$ (and therefore of $(A_i+a,\Phi_i +\phi)$ and the
$\UU(1)$ orbits of these) at $\gS_i$.
%
%Indeed, since $\idmon$ is a bundle over the simply connected
%space $\ncfgs N$ whose fibers $\monM_{k_0}\times \cdots \times \moncM_{k_N}$
%have purely torsion cohomology in degree 1, so by the spectral sequence
%for $\idmon\to \ncfgs N$ it follows that $H^1(\idmon;\bbR) = \set 0$; thus any two 
%connections with curvature $F_D$ are related by a unique gauge transformation
%on $Q$ which is trivial over the $\gS_j$.

Finally, proceeding as in Proposition~\ref{P:initial_data_map}, we may produce
a smooth pair $(\backA,\backPhi)$ extending $(A_j + a,\Phi_j + \phi)$ on $\X_j$
and $(A_D, 1 + \ve \Phi_D)$ on $\D$, with the required properties.
\end{proof}

Before proceeding with the construction of a solution from this background
configuration $(\backA,\backPhi)$ we need to discuss two topics related to
analysis on $\Z$: normal operators and Sobolev spaces.

%%%%%%%%%%%%%%%%%%%%%%%%%%%%%%%%%%%%%%%%%%%%%%%%%%%%%%%%%%%%%%%%%%%%%%%%%

\subsection{Differential and normal operators} \label{S:global_normal}

The algebras of vector fields $\fbV(\Z)$ and $\gV(\Z)$ give rise to algebras
of differential operators: $\fbDiff^\ast(\Z)$ and $\gDiff^\ast(\Z)$ are
essentially the respective universal enveloping algebras of $\fbV$ and $\gV$, generated by 
composition with respect to the action on $C^\infty(\Z)$. If $E$ and
$F$ are vector bundles over $\Z$, we have similar spaces of differential
operators $\fbDiff^\ast(\Z; E,F)$ and $\gDiff^\ast(\Z; E,F)$ acting from
$C^\infty(\Z; E)$ to $C^\infty(\Z; F).$

As $\gV = \rho_\D \rho_\B \fbV$, we have inclusions
\[
	(\rho_\D\rho_\B)^k \fbDiff^k \subset \gDiff^k,
\]
though equality does not hold, the difference being in the lower order terms. For example,
\[
	\gDiff^1 = (\rho_\D \rho_\B) \fbDiff^1 + \fbDiff^0
\]
and so on.

For elements of $\fbV(\Z)$, restriction to $\D$ makes sense as this face lies in a
fiber of $\fb$; this restriction can be identified with the quotient
\[
	\fbV(\Z) \to \fbV(\Z)/\rho_\D \fbV(\Z)
\]
where $\rho_\D\fbV(\Z) \subset \fbV(\Z)$ is easily seen to be an ideal. The restriction to $\X_j$
is similar. Along with restriction of smooth functions, this generates maps from
$\fbDiff^\ast(\Z; E,F)$ to differential operators on $\D$ or $\X_j$, which we
call the {\em normal operator homomorphisms} $\noD^\fb$ and $\noX^\fb$.

\begin{prop}
The normal operator homomorphisms define short exact sequences
\[
\begin{gathered}
	0 \to \rho_\D \fbDiff^k(\Z; E,F) \to \fbDiff^k(\Z; E,F) \stackrel{\noD^\fb}
	\to \bDiff^k(\D/\idmon; E,F) \to 0
	\\ 0 \to \rho_j\fbDiff^k(\Z; E,F) \to \fbDiff^k(\Z; E,F) \stackrel{\noX^\fb}
	\to \bDiff^k(\X_j/\idmon; E,F) \to 0
\end{gathered}
\]
The latter spaces denote fiberwise b differential operators with respect
to the fibrations 
\begin{equation}
	\fX_j \to \X_j \to \idmon \text{ and } \fD \to \D \to \idmon.
	\label{E:XD_fibrations}
\end{equation}
\label{P:fb_normal_seqs}
\end{prop}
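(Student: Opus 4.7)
The two sequences are parallel, so I focus on $\noD^\fb$; the argument for $\noX^\fb$ is identical after replacing $\D$ by $\X_j$, using that $\X_j$ also lies over $\set{\ve=0}\subset\base$ and hence is a union of boundary fibers of $\fb$. For \emph{well-definedness}, I would observe that any $V\in\fbV(\Z)$ is tangent to $\D$, so $V(\rho_\D)\in\rho_\D C^\infty(\Z)$ and $V$ preserves the ideal $\rho_\D C^\infty(\Z)$; by induction on order every $P\in\fbDiff^k(\Z)$ preserves this ideal and hence descends to an operator on $C^\infty(\Z)/\rho_\D C^\infty(\Z)\cong C^\infty(\D)$. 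Tangency of $P$ to fibers of $\fb$ passes to tangency of the descended operator to fibers of $\D\to\idmon$, and tangency to the hypersurfaces $\X_j,\B$ passes to tangency to the corners $\gS_j,\gS_\infty$, placing the image in $\bDiff^k(\D/\idmon;E,F)$. Extending from vector fields to higher orders by filtered algebra generation then defines $\noD^\fb$.

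For the \emph{kernel}, the inclusion $\rho_\D\fbDiff^k(\Z)\subset\ker\noD^\fb$ is immediate. For the converse, I would work in a local $\fbV$-frame adapted to the corner structure of $\Z$ near $\D$: a frame consisting of $\rho_\D\pa_{\rho_\D}$, tangential vector fields along $\D$, and the appropriate b-normal directions at incident $\X_j$ or $\B$. Any $P\in\fbDiff^k$ can be written as a polynomial of order at most $k$ in this frame with $C^\infty(\Z)$ coefficients, and the condition $\noD^\fb P=0$ forces the leading ($\rho_\D=0$) part of the coefficients to vanish, which unwinds to $P\in\rho_\D\fbDiff^k(\Z)$; this is a standard b-calculus computation.

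\emph{Surjectivity} is the main technical step, and the one I expect to be the main obstacle. For $k=1$, given $V'\in\bV(\D/\idmon)$, I would pick a locally finite cover $\set{U_\alpha}$ of a neighborhood of $\D$ in $\Z$ by coordinate charts trivializing $\fb$ (possible since $\fb$ is a b-fibration and $\D$ is a union of boundary fibers) and adapted to the full corner structure of $\Z$. On each $U_\alpha$ one can expand $V'\rst_{\D\cap U_\alpha}$ in an adapted frame on $\D$ and extend the coefficients to $U_\alpha$ as constants in the $\rho_\D$ direction; adaptedness of the chart ensures the extension is tangent to $\X_j$, $\B$, and to the fibers of $\fb$, so lies in $\fbV(U_\alpha)$. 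Patching by a partition of unity and cutting off away from $\D$ produces a global preimage $V\in\fbV(\Z)$. Surjectivity at order $k$ then follows by induction since $\fbDiff^k(\Z)$ and $\bDiff^k(\D/\idmon;E,F)$ are filtered algebras generated modulo lower order by vector fields over $C^\infty$, and $\noD^\fb$ respects this structure. The subtlest point to verify cleanly is simultaneous compatibility of the local extensions at the corners $\gS_j,\gS_\infty$ where $\D$ meets $\X_j$ or $\B$; adaptedness of the cover to the whole boundary of $\Z$ is what makes this automatic.
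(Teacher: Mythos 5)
Your proof is correct and follows essentially the same route as the paper's: the paper reduces the claim to the Lie algebra quotient identifications $\fbV(\Z)/\rho_\D\fbV(\Z)\cong\fbV(\D)$ and $\fbV(\Z)/\rho_j\fbV(\Z)\cong\fbV(\X_j)$, verified by an explicit coordinate computation near the corner $\D\cap\X$, and your three-part argument (well-definedness, kernel, surjectivity) simply unfolds that same local-coordinate check together with the partition-of-unity patching that the paper leaves implicit.
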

\begin{proof}
This follows immediately from the basic claim that the quotient algebras
$\fbV(\Z)/\rho_\D \fbV(\Z)$ and $\fbV(\Z) / \rho_j\fbV(\Z)$ may be identified
with $\fbV(\D)$ and $\fbV(\X_j)$, respectively, which are precisely the fiberwise
b vector fields with respect to the fibrations \eqref{E:XD_fibrations}. This is
an easy exercise in local coordinates; for instance, near $\D \cap \X$, a
general element of $\fbV(\Z)$ is $a(x,r,y,m)(x\pa_x - r\pa_r) + \sum_{j}
b_j(x,r,y,m)\pa_{y_j}$ with local coordinates $(x,r,y)$ on the fiber $\fZ$ and
coordinate $m$ on the base $\idmon$, and the quotient by $\set{x(x\pa_x -
r\pa_r),x\pa_y}$ amounts to expanding the coefficients in Taylor series about
$x = 0$ and throwing out terms of order $\cO(x)$ and identifying $x\pa_x -
r\pa_r$ with $-r\pa_r$, giving $- a(0,r,y,m)r\pa_r + \sum_j b_j(0,r,y,m)\pa_{y_j}$.
\end{proof}

For the algebra $\gDiff^\ast$, the normal operators at $\X_j$ and $\D$ are
quite different from one another. On the one hand, the quotient map $\gV(\Z) \to
\gV(\Z)/\rho_j \gV(\Z)$ is easily identified with the ordinary restriction of
vector fields to $\X_j$; in fact the quotient can be identified with the algebra
$\scV(\X_j/\idmon)$ of fiberwise scattering vector fields, and we have:
%a normal operator homomorphism from $\gDiff^k(\fZ; E,F)$ to $\scDiff^k(\fX_j;
%E,F)$:
\begin{prop}
The sequence
\[
	0 \to \rho_j \gDiff^k(\Z; E,F) \to \gDiff^k(\Z; E,F) \stackrel{\noX^\gamma}\to \scDiff^k(\X_j/\idmon; E,F) \to 0
\]
is exact.
\label{P:g_normal_seq_X}
\end{prop}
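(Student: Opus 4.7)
The plan is to prove exactness first at the level of vector fields ($k=1$), then bootstrap to general order using that $\gDiff^\ast$ is generated by $C^\infty(\Z)$ and $\gV(\Z)$, mirroring the strategy behind Proposition~\ref{P:fb_normal_seqs}.

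The starting observation is that $\X_j$ does not meet $\B$, since the blown-up points $\zeta_j$ lie in the interior of $\D_1$. Hence $\rho_\B$ is smooth and strictly positive on $\X_j$, and among the boundary defining functions of $\Z$, only $\rho_\D$ restricts nontrivially along $\X_j$ (defining the corner $\gS_j = \fX_j\cap \fD$); the other $\rho_i$, $i\neq j$, are positive there. Moreover, $\ve = \rho_\D\rho_\X$ vanishes identically along $\X_j$, so $\fb\rst \X_j$ coincides with the projection $\fu : \X_j \to \idmon$, and the fibers of $\fb \rst_{\X_j}$ are exactly the fibers $\fX_j$ of $\fu$.

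The core technical verification is local. At interior points of $\X_j$ away from $\gS_j$, using Euclidean coordinates $(v,m)$ as in \eqref{E:triv_Xj}, one checks that $\fbV$ restricts to the Euclidean vector fields $\pa_{v_i}$ along the fibers and, since $\rho_\D\rho_\B\equiv 1$ there, $\gV$ restricts to the same frame, which is precisely a local frame for $\scV(\X_j/\idmon)$ in the interior of $\fX_j \cong \ol{\bbR^3}$. Near the corner $\gS_j$, using coordinates $(r,x,\omega,m)$ with $r=\rho_j$, $x=\rho_\D$, and $\ve = rx$ (as in Proposition~\ref{P:pullback_sc_to_g}), a local frame for $\fbV(\Z)$ is $\{r\pa_r - x\pa_x,\ \pa_\omega\}$; multiplying by $\rho_\D\rho_\B = x\cdot 1$ gives the local frame $\{xr\pa_r - x^2\pa_x,\ x\pa_\omega\}$ for $\gV(\Z)$, and restricting to $r=0$ yields $\{-x^2\pa_x,\ x\pa_\omega\}$, which is a local frame for $\scV(\X_j/\idmon)$ near $\gS_j$ (allowing smooth dependence on $m$). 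Thus restriction defines a surjective $C^\infty(\idmon)$-linear map $\noX^\gamma : \gV(\Z) \to \scV(\X_j/\idmon)$, with kernel exactly $\rho_j\gV(\Z)$ by reading off the coefficients in the same coordinate frames; global surjectivity and the global identification of the kernel then follow from a partition-of-unity argument.

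To extend to $\gDiff^k$, I proceed by induction on $k$, exploiting that $\gDiff^k = \gV\cdot\gDiff^{k-1} + \gDiff^{k-1}$. Surjectivity is immediate, since $\scDiff^k(\X_j/\idmon)$ is generated analogously by $\scV(\X_j/\idmon)$. For exactness in the middle, if $P\in\gDiff^k(\Z;E,F)$ has $\noX^\gamma P = 0$, write $P = \sum V_\alpha P_\alpha + P_0$ with $V_\alpha\in \gV(\Z)$ and $P_\alpha,P_0\in\gDiff^{k-1}(\Z;E,F)$; applying $\noX^\gamma$ and using the $k=1$ and $k-1$ cases, one recursively transfers each summand into $\rho_j\gDiff^k$ by writing $V_\alpha = \rho_j V'_\alpha + V''_\alpha$ where $V'_\alpha \in\gV$ and $V''_\alpha$ restricts to a fixed extension of $\noX^\gamma V_\alpha$, commuting $\rho_j$ past $V'_\alpha$ modulo $\rho_j\gDiff^{k-1}$ and invoking the inductive hypothesis on the remainder. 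The bundle case follows by passing to local trivializations of $E$ and $F$ near $\X_j$.

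The main obstacle is the inductive step for general $k$: one must be careful that the factorization through $\rho_j$ is global and respects the bundle structure. This is standard in the theory of Melrose-style operator algebras on manifolds with corners (and parallels the argument sketched for Proposition~\ref{P:fb_normal_seqs}), but it requires keeping track of commutators $[\rho_j, V]\in \rho_j \gDiff^0 + \gV$ for $V\in \gV$, which introduce only lower-order remainders and are absorbed into the induction.
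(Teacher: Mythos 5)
Your reduction to the $k=1$ case and the local frame computation near $\gS_j = \fX_j\cap\fD$ is exactly right, and it is the same argument the paper gestures at: the frame $\{xr\pa_r - x^2\pa_x,\ x\pa_\omega\}$ for $\gT\Z$ near the corner restricts at $r=\rho_j=0$ to $\{-x^2\pa_x,\ x\pa_\omega\}$, a frame for $\scT(\fX_j/\idmon)$, identifying $\gV/\rho_j\gV$ with $\scV(\X_j/\idmon)$ and reading the kernel off the coefficients. The preliminary observation that $\X_j$ is disjoint from $\B$ and that $\fb\rst_{\X_j}$ factors through $\fu$ is also correct and worth noting.

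The one place that wobbles is the inductive step for $k>1$. The decomposition $V_\alpha = \rho_j V'_\alpha + V''_\alpha$ with $V''_\alpha$ an arbitrary lift of $\noX^\gamma V_\alpha$ does not by itself make progress: $V_\alpha$ is already such a lift, so after transferring the $\rho_j V'_\alpha P_\alpha$ terms into $\rho_j\gDiff^k$ you are left with $\sum V''_\alpha P_\alpha + P_0$, and you still do not know that $\noX^\gamma$ vanishes on the individual $P_\alpha$. The clean argument — and the one the paper's proof of Proposition~\ref{P:fb_normal_seqs} actually models — is to work in the adapted local chart near $\gS_j$ and write $P=\sum_{|\alpha|\le k}a_\alpha(r,x,\omega,m)V^\alpha$ in a \emph{fixed} ordering of the frame monomials $V^\alpha$. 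Restriction to $r=0$ just restricts the coefficients, so $\noX^\gamma P = 0$ forces $a_\alpha\rst_{r=0}=0$ for all $\alpha$ (by uniqueness of coefficients in the restricted frame), hence $a_\alpha\in \rho_j C^\infty$ and $P\in\rho_j\gDiff^k$; surjectivity is likewise immediate. A partition of unity and local bundle trivializations finish the argument. With this replacement for the inductive step your proof matches the paper's.
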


On the other hand, $[\gV(\Z),\gV(\Z)] \subset \rho_\D \gV(\Z)$, so in fact
the quotient $\gV(\Z)/\rho_\D \gV(\Z)$ is an {\em abelian} Lie algebra,
i.e., the bracket is trivial. Elements of this quotient can be regarded as
families of first order, constant coefficient differential operators along the
fibers of the vector bundle $\gT \Z \to \D$, and in general the normal operator
homomorphism is a map
\[
	\noD^\gamma : \gDiff^k(\Z; E,F) \to \Diff_{I,\fib}^k(\gT \Z \rst_\D; E,F)
\]
where $\Diff^k_{I,\fib}(\gT \Z \rst_\D)$ denotes fiberwise constant
coefficient differential operators. It is then convenient to use the fiberwise
Fourier transform to identify such operators with polynomials in the fibers of
$\gT^\ast \Z$, and identify composition of those differential operators with
multiplication of polynomials. We denote the Fourier transform of $\noD^\gamma$ by
$\sigma_\D$.

\begin{prop}
The sequence
\begin{multline*}
	0 \smallto \rho_\D \gDiff^k(\Z; E,F) \smallto \gDiff^k(\Z; E,F) 
	\\ \stackrel{\sigma_\D} \smallto C^\infty(\D; P^k(\gT \Z)\otimes \Hom(E,F)) \smallto 0
\end{multline*}
is exact, where $P^k(\gT \Z) = \bigoplus_{l \leq k} S^l (\gT \Z)$ is a sum of
symmetric products of $\gT \Z$, whose sections are 
polynomials on the fibers of $\gT^\ast \Z$.
\label{P:g_normal_seq_D}
\end{prop}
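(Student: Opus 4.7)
The plan is to reduce the statement to the standard symbol-type exact sequence for an abelian Lie algebra of vector fields, exploiting the crucial observation already stated in the text that $[\gV(\Z),\gV(\Z)] \subset \rho_\fD \gV(\Z)$. The first task is therefore to verify this inclusion carefully. Writing $V_i = \rho_\fD\rho_\fB X_i$ with $X_i \in \fbV(\Z)$, the fact that each $X_i$ is tangent to both $\fD$ and $\fB$ gives $X_i(\rho_\fD) \in \rho_\fD C^\infty(\Z)$ and $X_i(\rho_\fB) \in \rho_\fB C^\infty(\Z)$. Expanding $[V_1,V_2]$ via Leibniz yields a sum of terms each lying in $(\rho_\fD\rho_\fB)^2\fbV(\Z) = \rho_\fD\rho_\fB \gV(\Z) \subset \rho_\fD \gV(\Z)$.

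Next, I would construct $\sigma_\fD$. On zeroth order operators, $\sigma_\fD$ is restriction to $\fD$, valued in $S^0(\gT \Z) = C^\infty(\fD)$. On first order g-vector fields it is the natural restriction map
\[
\gV(\Z) \longrightarrow C^\infty(\fD; \gT\Z\rst_\fD) = C^\infty(\fD; S^1(\gT\Z\rst_\fD)),
\]
which is well defined by the very definition $C^\infty(\Z;\gT\Z) = \gV(\Z)$. Because $\gV(\Z)/\rho_\fD\gV(\Z)$ is abelian, the algebra homomorphism from $\gDiff^\ast(\Z)$ to $C^\infty(\fD; P^\ast(\gT\Z\rst_\fD))$ generated by these two prescriptions is well defined into the symmetric, rather than the full tensor, algebra: any reordering of factors produces commutator errors that land in $\rho_\fD\gDiff^{k-1}$ and hence map to zero. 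Under the fiberwise Fourier transform, multiplication in $S^\ast(\gT\Z\rst_\fD)$ corresponds to composition of fiberwise constant coefficient differential operators on the total space of $\gT\Z\rst_\fD$, so $\sigma_\fD$ agrees with $\noD^\gamma$ as stated.

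To prove exactness I would work locally near a point $p \in \fD$. Choose adapted coordinates in which $\rho_\fD$ is a coordinate and pick a local frame $e_0, \ldots, e_n$ for $\gT\Z$ (taking for example lifts of $\piX^\ast$ of a scattering frame on $\oX$). Any $P \in \gDiff^k$ can be written, using the commutator inclusion inductively, as
\[
P = \sum_{|\alpha| \leq k} a_\alpha(x) \, e^{\alpha} \pmod{\rho_\fD \gDiff^k(\Z;E,F)},
\]
with $a_\alpha \in C^\infty(\Z; \Hom(E,F))$ and $e^\alpha$ a chosen symmetric ordering of $|\alpha|$-fold products of the $e_i$. Then $\sigma_\fD(P)$ is the polynomial $\sum a_\alpha\rst_\fD \xi^\alpha$ on the fibers of $\gT^\ast\Z\rst_\fD$, so surjectivity is immediate by prescribing the $a_\alpha\rst_\fD$, and $\sigma_\fD(P) = 0$ if and only if all $a_\alpha\rst_\fD = 0$, i.e.\ $a_\alpha \in \rho_\fD C^\infty$, which places $P$ in $\rho_\fD \gDiff^k(\Z; E,F)$. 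Gluing the local statements using a partition of unity (which respects the filtration and the normal operator) yields the global sequence.

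The main obstacle will be keeping the bookkeeping of the filtration clean in the inductive reordering step: when passing a product of $k$ local g-vector fields to symmetric order, one picks up commutators that are a priori in $\rho_\fD\rho_\fB \gDiff^{k-1}$, but one must verify these remain inside $\rho_\fD \gDiff^k(\Z;E,F)$ (not merely $\rho_\fD\gDiff^{k-1}$) and hence do not contribute to the degree-$k$ symbol. This is handled by rewriting $\rho_\fD\rho_\fB \gDiff^{k-1} = \rho_\fD(\rho_\fB \gDiff^{k-1}) \subset \rho_\fD \gDiff^{k-1} \subset \rho_\fD \gDiff^k$, together with the analogous, and routine, extension of the calculation that established $[\gV,\gV]\subset \rho_\fD\gV$ in the first place.
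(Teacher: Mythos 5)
Your proposal is correct and takes essentially the same route as the paper's own proof: both identify the quotient $\gV(\Z)/\rho_\fD\gV(\Z)$ as an abelian Lie algebra via the commutator inclusion $[\gV,\gV]\subset\rho_\fD\gV$, realize it as translation-invariant vector fields on the fibers of $\gT\Z\rst_\fD$, and pass through the fiberwise Fourier transform, with exactness checked in a local frame. The paper's version is a brief coordinate sketch (using the explicit frame $\{x^2\pa_x - xr\pa_r, x\pa_y\}$ near $\D\cap\X_j$), while you flesh out the symbol-map construction and the filtration bookkeeping; the "obstacle" you flag in your final paragraph is in fact a non-issue, as you yourself note via the chain $\rho_\fD\rho_\fB\gDiff^{k-1}\subset\rho_\fD\gDiff^{k-1}\subset\rho_\fD\gDiff^k$.
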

\begin{proof}
We again give an indication of how this works in local coordinates. The abelian
lie algebra generated by $\set{x^2\pa_x - xr\pa_r, x\pa_y}$ may be identified
with the one generated by the translation invariant vector fields
$\set{\pa_\xi, \pa_{\eta}}$ on $\gT \Z$, where $(\xi,\eta)$ are linear
coordinates on $\gT \Z$ associated to the basis $\set{x^2\pa_x -
xr\pa_r,x\pa_y}$. Using ordinary restriction for smooth functions,
the general local vector field $a(x,r,y,m)(x^2\pa_x - xr\pa_r) + \sum_j
b_j(x,r,y,m)x\pa_{y_j}$ is then identified with $a(0,r,y,m)\pa_\xi +
\sum_j b_j(0,r,y,m)\pa_{\eta_j}$, and under the fiberwise Fourier transform 
this becomes $a(0,r,y,m)\hat \xi + \sum_j b_j(0,r,y,m)\hat \eta_j$, 
where $(\hat\xi,\hat \eta)$ are the dual coordinates on $\gT^\ast \Z$.
\end{proof}

The content of Propositions~\ref{P:L_rstn_X} and \ref{P:L_rstn_D} in this
setting is the following. Let $(\backA,\backPhi)$ be the pregluing
configuration from Proposition~\ref{P:global_pregluing}, and let
\begin{equation}
	\backL = D\Bogo_{\backA,\backPhi} + \gauD^\ast_{\backA,\backPhi}
	\label{E:backL_global}
\end{equation}
denote the linearized Bogomolny operator at $(\backA,\backPhi)$, augmented by
the gauge fixing operator, which we decompose relative to the
splitting $\adP = \adPz \oplus \adPo$ as
\begin{equation}
	\backL = \begin{pmatrix} L_0 & \cO(\rho_\D^\infty\rho_\B^\infty)
	  \\\cO(\rho_\D^\infty \rho_\B^\infty)& L_1 + \backPhi_1 \end{pmatrix}
	\label{E:backL_again}
\end{equation}
near $\D\cup \B$.

\begin{prop}
The $\gamma$ normal operator at $\X_j$ of $\backL$ is the family of linearized gauge fixed operators
\[
	\noX^\gamma(\backL)%(a,\phi) 
	= L_\fX%(a,\phi) 
	= \begin{bmatrix} \star d_\backA + \ad (\backPhi) & -d_\backA \\ -d_\backA^\ast & \ad (\backPhi)\end{bmatrix}
%\star d_\backA a - d_\backA^\ast a - d_\backA \phi + [\backPhi,a] + [\backPhi,\phi] 
	\in \scDiff^1(\X_j/\idmon; (\scLam^1\oplus \scLam^0)\otimes \adP),
\]
and the $\fb$ normal operator of $(\rho_\D\rho_\B)^{-1}L_0$ is identified with
the family of operators
\[
\begin{gathered}
	\noD^\fb((\rho_\D\rho_\B)^{-1}L_0) = 
	\rho_{\X}\rho_{\B}^{-1} L_D \in \bDiff^1(\D/\idmon; \sccLam^1\oplus \sccLam^0),
	\\ L_D = \begin{bmatrix} \star_D d & -d \\ -d^\ast & 0\end{bmatrix}
\in \sccDiff^1(\D/\idmon; \sccLam^1\oplus \sccLam^0),
	%\quad L_D(a,\phi) = \star_D d a - d^\ast a  - d\phi,
\end{gathered}
\]
where we use the trivialization of $\adPz$ over $\D$ and the rescaled
restriction isomorphisms $\gLam^k\Z \cong \sccLam^k \D$, and $d^\ast$ denotes
the $L^2$ adjoint of $d$ with respect to the family of fiberwise conic metrics
$g_\D$ on the fibration $\D \to \idmon$.
\label{P:global_normal_ops}
\end{prop}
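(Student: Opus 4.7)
The statement is essentially the parameterized version of Propositions~\ref{P:L_rstn_X} and \ref{P:L_rstn_D}, and the plan is to reduce to those fiberwise results using the normal operator machinery of \S\ref{S:global_normal}.

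For part (a), I would apply Proposition~\ref{P:g_normal_seq_X} to each of the building blocks of $\backL$ separately. By the construction of the pregluing data in Proposition~\ref{P:global_pregluing}, the connection $\backA$ restricts on each fiber $\fX_j \subset \X_j$ to a smooth family of monopoles $(A_j + a, \Phi_j + \phi)$ parameterized by a neighborhood in $\monM_{k_j}$; since elements of $\gV(\Z)$ restrict over $\X_j$ to elements of $\scV(\X_j/\idmon)$ under the quotient $\gV(\Z)/\rho_j\gV(\Z) \cong \scV(\X_j/\idmon)$, we get $\noX^\gamma(d_\backA) = d_{\backA\rst \X_j}$ acting as the fiberwise scattering covariant derivative. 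The same restriction applied to $\star = \star_{\wt g}$ yields $\star_{g_j}$ by part (b) of Proposition~\ref{P:lifted_metric}, while $\ad(\backPhi)$ is a zeroth order operator whose restriction is simply $\ad(\backPhi\rst \X_j)$. Assembling the pieces as in \eqref{E:backL_again} produces the claimed $L_\fX$, which coincides on each fiber with the operator analyzed in Proposition~\ref{P:L_rstn_X}.

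For part (b), the main subtlety is that $L_0 \in \gDiff^1$ whereas the $\fb$-normal operator lives in $\bDiff^1(\D/\idmon)$, so the rescaling by $(\rho_\D\rho_\B)^{-1}$ is essential. I would combine two ingredients. First, the diagonal assumption on $\backA$ in the $\adPz \oplus \adPo$ splitting (valid to infinite order at $\D$) reduces $L_0$ to a $\UU(1)$ operator in which $\ad(\backPhi)$ annihilates $\adPz$, so only the $d_\backA, d_\backA^*, \star$ terms survive. Second, I would invoke the rescaled restriction diagram \eqref{E:cd_rescaled_d} together with the isomorphism \eqref{E:rescaled_restriction_D} and parts (c)--(d) of Proposition~\ref{P:lifted_metric}: these show that on $\D$, the $\gamma$-covariant derivative $d_{\backA}$ is intertwined with $\ve\, d_{A\rst \fD}$ (the induced conic covariant derivative), and $\star$ is intertwined with $\star_\fD$. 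Consequently, modulo a term in $\rho_\D \gDiff^1$, one has $L_0 \equiv \ve\, L_\fD$ at $\D$ in the sense of rescaled restriction. Since $\ve = \rho_\D\rho_\X$, dividing by $\rho_\D\rho_\B$ and extracting the $\fb$-normal operator via Proposition~\ref{P:fb_normal_seqs} yields the claimed formula $\rho_\X\rho_\B^{-1} L_\fD$.

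The main technical obstacle is bookkeeping: matching the filtration $\gDiff^k \subset (\rho_\D\rho_\B)^k \fbDiff^k + \text{lower}$ with the family of operators $L_\fD \in \sccDiff^1(\D/\idmon)$, and checking that the rescaled restriction genuinely lands in $\bDiff^1(\D/\idmon)$ with the correct normalization $\rho_\X\rho_\B^{-1}$. Once the intertwining \eqref{E:cd_rescaled_d} is applied consistently on the two diagonal blocks of \eqref{E:backL_again} and the fiber bundle structure $\fD \to \D \to \idmon$ is respected, the computation of $\sigma_\D$ reduces to the fiberwise statement of Proposition~\ref{P:L_rstn_D}, completing the proof.
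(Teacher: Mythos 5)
Your proposal is correct and takes essentially the paper's route. The authors give no explicit proof, describing the proposition as ``the content of Propositions~\ref{P:L_rstn_X} and~\ref{P:L_rstn_D} in this setting,'' and your reduction via the normal-operator sequences of \S\ref{S:global_normal} (Propositions~\ref{P:g_normal_seq_X} and~\ref{P:fb_normal_seqs}), the rescaled restriction diagram \eqref{E:cd_rescaled_d}, Proposition~\ref{P:lifted_metric}, and the identity $\ve = \rho_\D\rho_\X$ to recover the $\rho_\X\rho_\B^{-1}$ normalization reproduces exactly the fiberwise reasoning they are invoking.
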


As for the operator $L_1 + \backPhi_1$, we note that $L_1$ is a twisting of a Dirac
operator on $\gLam^1\oplus \gLam^0$ by the bundle $\adPo$, hence its normal
symbol in the sense of Proposition~\ref{P:g_normal_seq_D} is the corresponding Clifford multiplication.

\begin{prop}
The normal symbol of $L_1 + \Phi$ %$\in \gDiff^1(U; (\gLam^1\oplus \gLam^0)\otimes \adPo)$ 
is invertible, and is given at $(x,\xi) \in \gT\Z)$ by
\[
	\sigma_\D(L_1 + \Phi)(x,\xi) = i\cl(\xi)\otimes 1 + 1\otimes\ad\Phi_x \in \End((\gLam^1\oplus \gLam^0)\otimes \adPo),
\]
where $\cl$ is a skew adjoint Clifford action on $\gLam^1\oplus \gLam^0$.
\label{P:normal_symbol_D}
\end{prop}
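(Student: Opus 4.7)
The plan is to compute the $\gamma$-normal symbol as the sum of contributions from $L_1$ (first order) and $\backPhi_1$ (zero order), and then verify invertibility of the resulting endomorphism via its spectrum.

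First I would compute $\sigma_\D(L_1)$. By \eqref{E:backL_again}, $L_1$ is the restriction to $\adPo$ of the first-order operator
\[
\begin{bmatrix} \star d_\backA & -d_\backA \\ -d_\backA^\ast & 0 \end{bmatrix},
\]
which lies in $\gDiff^1$ since $d_\backA$ is built from $\gamma$-covariant differentiation. Applying Proposition~\ref{P:g_normal_seq_D} amounts to replacing each derivative by $i$ times its corresponding cotangent variable; using the standard formulas $\sigma(d) = i\xi\wedge$ and $\sigma(d^\ast) = -i\iota_\xi$, together with the fact that $\adPo$ enters only as a twist, gives
\[
\sigma_\D(L_1)(x,\xi) = i \begin{bmatrix} \star(\xi \wedge \cdot\,) & -\xi\cdot \\ \iota_\xi & 0 \end{bmatrix} \otimes 1_{\adPo} =: i\cl(\xi)\otimes 1.
\]
I would verify directly in dimension three, using $\iota_\xi(\xi\wedge a) + \xi\wedge \iota_\xi a = |\xi|^2 a$ together with the identity $\star(\xi\wedge \star\,\cdot\,) = \iota_\xi$ on $\gLam^1$, that $\cl(\xi)^2 = -|\xi|^2\cdot 1$, so that $\cl$ is a Clifford action; skew-adjointness $\cl(\xi)^\ast = -\cl(\xi)$ then follows from $(\xi \wedge)^\ast = \iota_\xi$ with respect to the Hermitian metric induced by $\wt g$ on $\gLam^1\oplus \gLam^0$. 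This identifies $\gLam^1\oplus \gLam^0 \rst_\D$ with the complexified spin bundle on the fibers of $\D\to \idmon$. Since $\backPhi_1$ has order zero, the $\sigma_\D$-map in Proposition~\ref{P:g_normal_seq_D} reduces to ordinary restriction to $\D$, so $\sigma_\D(\backPhi_1)(x,\xi) = 1 \otimes \ad(\Phi_x)$, independent of $\xi$. Adding gives the stated formula.

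Finally, invertibility follows from the spectral structure: the summands $i\cl(\xi)\otimes 1$ and $1\otimes \ad(\Phi_x)$ act on independent tensor factors and so commute. The first is self-adjoint with eigenvalues $\pm|\xi|$ (read off from $\cl(\xi)^2 = -|\xi|^2$). For the second, because $\backPhi$ is skew-Hermitian and $\adPo$ is defined precisely as the orthogonal complement of $\sspan_\bbC \backPhi$, $\ad(\Phi_x)\rst_{\adPo}$ is skew-adjoint with spectrum $\{\pm i|\Phi_x|\}$. Simultaneous diagonalization yields joint eigenvalues $\pm|\xi| \pm i|\Phi_x|$, which lie off $0 \in \bbC$ because $|\Phi_x|>0$ on $\D\cup\B$ by the pregluing hypothesis (Proposition~\ref{P:global_pregluing}). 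Hence the symbol is invertible at every $(x,\xi)$. The main technical step is the Clifford identification above, which is essentially a three-dimensional realization of spinors by $\gLam^1\oplus\gLam^0$; once in place, the invertibility is automatic from the decomposition into commuting self-adjoint and skew-adjoint pieces.
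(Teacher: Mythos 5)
Your proposal is correct and follows essentially the same strategy as the paper: the paper's proof consists of the single observation that $i\cl(\xi)\otimes 1$ and $1\otimes\ad\Phi_x$ commute and are self-adjoint and skew-adjoint respectively with the latter nondegenerate, which is precisely the core of your invertibility argument. Your additional verification of the symbol formula via Proposition~\ref{P:g_normal_seq_D} and the explicit check that $\cl(\xi)^2 = -\abs{\xi}^2$ are details the paper asserts without comment, but they are consistent with its claims.
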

\begin{proof}
Invertibility follows from the fact that $i\cl_\odd(\xi)$ and $\ad \Phi_x$ commute and are
self-adjoint and skew-adjoint, respectively, with the latter nondegenerate. 
\end{proof}

\subsection{Sobolev spaces} \label{Ss:sobolev}
We first define fiberwise $L^2$-based Sobolev spaces with respect to $\fu : \Z \to \idmon$.
The bilinear form $\wt g$ in \eqref{E:wtg_global} is a fiberwise metric with respect to $\fb : \Z \to \base$, 
so to obtain a metric on the fibers of $\fu$ we set
\begin{equation}
	\olg = \wt g + \pi_{[0,\infty)}^\ast \tfrac{d\ve^2}{\ve^2}.
	\label{E:fZ_metric}
\end{equation}
This is a complete metric on the interior of any fiber $\fZ$ of $\fu$, and
\[
	L^2(\fZ; \olg) = (\rho_\D\rho_\B)^{3/2} \bL^2(\fZ),
\]
where the latter space is the $L^2$ space defined by any $b$-metric on $\fZ$.
For what follows we fix a fiber $\fZ$ and a Hermitian vector bundle $V \to \M$.
%Moreover, the left hand side can be identified with $\bL^2([0,1); L^2(\fib;
%g))$ with respect to the fibration $\fb : \fZ \to [0,1)$, by excising the set
%$\set{\ve = 0}$.

For $k,l,m \in \bbN$, let
\begin{multline}
	\bpgH^{k,l,m}(\fZ;V) \ni v 
	\iff \gV^{m'}\cdot \fbV^{l'}\cdot \fuV^{k'} v \in L^2(\fZ; V; \olg),
	\\ \forall\ m' \leq m,\ k' \leq k,\ l' \leq l.
	\label{E:basic_sobolev}
\end{multline}
Here the vector fields are lifted to act on sections of $V$ by a choice of
$\gl$, $\fb$, and $\fu$ connections $\nabla_\gl$, $\nabla_\fb$ and $\nabla_\fu$, respectively,
on which choices \eqref{E:basic_sobolev} does not depend.
The subspaces \eqref{E:basic_sobolev} may then be equipped with inner products 
associated to the norms
\[
	\norm{u}^2_{\bpgH^{k,l,m}} = \sum_{\substack{0\leq m' \leq m,\\0 \leq k' \leq k,\\ 0 \leq l'\leq l}}
	\norm{\nabla_\gl^{m'}\nabla_\fb^{k'}\nabla_\fu^{l'}u}^2_{L^2(\fZ; V; \olg)},
\]
with respect to which the \eqref{E:basic_sobolev} are Hilbert spaces, whose
topology is independent of the choice of connections. For brevity, we write
$\bpH^{k,l}(\fZ; V) = \bpgH^{k,l,0}(\fZ; V)$ and $\bgH^{k,l}(\fZ; V) =
\bpgH^{k,0,l}(\fZ; V)$.
Some properties of
these spaces, including multiplicativity results, are proved in
Appendix~\ref{S:sobolev}.

\begin{rmk}
An alternate (and in many ways more convenient) definition of $\pH^\ast(\fZ)$
and $\gH^\ast(\fZ)$ in terms of pseudodifferential operators is given in
Appendix~\ref{S:double} which permits the order to take any real value.
However, nonnegative integer orders will suffice for our purposes.
\end{rmk}

Due to the nature of the operator \eqref{E:backL}, we will need to measure reguarity
differently near $\D \cup \B$ according to the splitting $\adP = \adPz \oplus \adPo$.

Thus, for a fixed smooth $\Phi \in C^\infty(\Z; \adP)$ such that $\Phi \neq 0$ on $\D \cup \B$,
the {\em split Sobolev spaces} are defined on a fiber $\fZ$ via the norm
\[
	\threeH^{k,l}(\fZ; \adP) 
	\ni v \iff
	\norm{\rho^{l-1}\chi v_0}_{\bpH^{k,l}} 
	  + \norm{\chi v_1}_{\bgH^{k,l}} 
	  + \norm{(1 - \chi) v}_{H^{k+l}} < \infty,
\]
where $\rho = \rho_\D \rho_\B$, $\chi$ is a cutoff function near $\D \cup \B$
with support in the neighborhood where the splitting $\adP = \adPz \oplus
\adPo = \bbC\pair \Phi \oplus \Phi^\perp$ is defined, and $\chi v = \chi v_0 + \chi v_1$ denotes the decomposition
with respect to the splitting. Thus, $v_1$ supports up to $l$ derivatives of
gluing type with $k$ additional $\fu$-derivatives in $L^2$, while $v_0$
supports up to $l$ derivatives of $\fb$ type, with up to $k$ additional
$\fu$-derivatives in $\rho^{1-l} L^2.$ In other words, near $\fD \cup \fB$,
\[
	\threeH^{k,l}(\fZ; \adP)
	\simeq \rho^{1-l} \bpH^{k,l}(\fZ; \adPz) \oplus \bgH^{k,l}(\fZ; \adPo).
\]
The spaces $\threeH^{k,l}(\fZ; \adP)$ are independent of the choice of $\chi$, and
the definition extends naturally to the spaces $\threeH^{k,l}(\fZ; \gLam^\ast\otimes \adP)$.

For fixed $k > 2$,
these are the basic
(fiberwise) Sobolev spaces with which we work, where
\[
	\threeH^{k,2}(\fZ; \gLam^\ast\otimes \adP)
	  \simeq \rho^{-1}\bpH^{k,2}(\fZ; \gLam^\ast \otimes \adPz)
	  \oplus \bgH^{k,2}(\fZ; \gLam^\ast\otimes \adPo)
\]
supports the infinitesimal gauge transformations,
\[
	\threeH^{k,1}(\fZ; \gLam^\ast\otimes\adP)
	  \simeq \bpH^{k,1}(\fZ; \gLam^\ast \otimes \adPz)
	  \oplus \bgH^{k,1}(\fZ; \gLam^\ast\otimes \adPo)
\]
supports the infinitesimal monopole data, and
\[
	\threeH^{k,0}(\fZ; \gLam^\ast\otimes\adP)
	  \simeq \rho^{1}\fuH^{k}(\fZ; \gLam^\ast \otimes \adPz)
	  \oplus \fuH^{k}(\fZ; \gLam^\ast\otimes \adPo)
\]
is the range of the Bogomolny map. 
By increasing $k$ we increase the overall regularity, and we note that the space
$\threeH^{\infty,l} = \bigcap_k \threeH^{k,l}$ includes polyhomogeneous sections with 
appropriate decay. 

Letting the fiber $\fZ$ vary,
we obtain Hilbert space bundles over $\idmon$ with fibers given by the
$\threeH^{k,l}(\fZ; \gLam^\ast \otimes \adP)$. When working globally over any open set $\U \subset
\idmon$, we use the Fr\'echet spaces
\begin{equation}
	\sH^{k,l}(\Z\rst_\U; \gLam^\ast\otimes \adP) := 
	C^\infty(\U; \threeH^{k,l}(\fZ; \gLam^\ast\otimes \adP)),
	\label{E:global_sobolev}
\end{equation}
consisting of smooth sections of these Hilbert bundles. 

We shall also need to restrict to a smaller range in $\ve$; thus we denote
\[
	\sH^{k,l}(\Z \rst_{\U\times[0,\ve_0]}; \gLam^\ast\otimes \adP)
	= C^\infty(\U; \threeH^{k,l}(\fZ \rst_{\set{0\leq \ve \leq \ve_0}}; \gLam^\ast \otimes \adP)),
\]
which is to say the smooth sections over $\U$ with values in the Sobolev space
of sections on fibers $\fZ$ restricted over $[0,\ve_0]$ which admit extensions
to \eqref{E:global_sobolev}. (Note that the fibers $\fZ \cap \set{0 \leq \ve
\leq \ve_0}$ are not complete with respect to $\olg$, though this will not
cause any problems.)

The next result is proved in Appendix~\ref{S:sobolev}.

\begin{thm}
For each $k > 2$, there is a well-defined
gauge group 
\[
	\gauG^{k}(\Z) = \sH^{k,2}(\Z; \Ad P) 
	%= C^\infty(\idmon; \threeH^{k,2}(\fZ; \Ad P)) 
\]
with Lie algebra consisting of
$\sH^{k,2}(\Z; \adP))$. This group acts on the spaces
%$C^\infty(\idmon;\threeH^{k,1}(\fZ; \gLam^\ast \otimes \adP))$ and
$\sH^{k,1}(\Z; \gLam^\ast \otimes \adP))$ and
%$C^\infty(\idmon;\threeH^{k,0}(\fZ; \gLam^\ast \otimes \adP)).$ 
$\sH^{k,0}(\Z; \gLam^\ast \otimes \adP))$.
Additionally, the product
on $\gLam^\ast\otimes \adP$ extends to a continuous bilinear map
\begin{equation}
	\sH^{k,1}(\Z; \gLam^\ast \otimes \adP)
	\times \sH^{k,1}(\Z; \gLam^\ast \otimes \adP) 
	\to \sH^{k,0}(\Z; \gLam^\ast\otimes \adP) 
	\label{E:Frechet_sobolev_mult}
\end{equation}
\fixthmeq
\label{T:gauge_spaces}
\end{thm}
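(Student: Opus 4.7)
The assertion reduces to continuity of the bracket/product on $\adP$-valued sections in the following forms:
\[
  \sH^{k,2}\times \sH^{k,2}\to \sH^{k,2},\qquad \sH^{k,2}\times \sH^{k,l}\to \sH^{k,l}\ (l=0,1),\qquad \sH^{k,1}\times\sH^{k,1}\to\sH^{k,0}.
\]
The first gives the Banach Lie algebra structure; the Banach Lie group structure on $\gauG^k(\Z) = \sH^{k,2}(\Z;\Ad P)$ then follows via the exponential map and the Baker--Campbell--Hausdorff formula, using that $k>2$ gives Sobolev embedding into $C^0$ on the $4$-dimensional fibers of $\fu$. The second statement gives the (infinitesimal, hence full) gauge action on configurations and their tangent spaces; the third is the stated bilinear product~\eqref{E:Frechet_sobolev_mult}.

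By a partition of unity on $\Z$ I localize to neighborhoods of the various boundary strata. Away from $\fD \cup \fB$ the split Sobolev spaces collapse to ordinary fiberwise scattering (or b) Sobolev spaces, and the desired multiplicativity is the standard Sobolev-algebra property on a $4$-dimensional domain for $k>2$. The nontrivial analysis takes place in a neighborhood of $\fD\cup \fB$, where the splitting $\adP = \adPz \oplus \adPo$ with $\adPz = \bbC\cdot \backPhi$ is active. Writing $u = u_0 + u_1$, $v = v_0 + v_1$, the bracket decomposes as
\[
[u,v] \;=\; \underbrace{[u_0,v_0]}_{=0} + [u_0,v_1] + [u_1,v_0] + [u_1,v_1],
\]
using that $\adPz$ is one-dimensional (hence abelian), that $[\adPz,\adPo]\subset\adPo$, and that $[\adPo,\adPo]\subset\adPz$ (products of the $\pm i$-eigenvectors of $\ad\backPhi$ land in its kernel). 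It is precisely the vanishing of $[u_0,v_0]$ that makes the split structure self-consistent: a naive $\rho^{-1}(\cdots)\cdot\rho^{-1}(\cdots) \sim \rho^{-2}$ product would otherwise overflow the $\rho^{-1}$-weight allowed on the $\adPz$ component of the target.

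The remaining mixed and $\adPo$-valued brackets are then handled by two intrinsic multiplicativity estimates on each fiber,
\[
\bpH^{k,l}(\fZ)\cdot \bgH^{k,l}(\fZ)\hookrightarrow \bgH^{k,l}(\fZ),\qquad \bgH^{k,l}(\fZ)\cdot\bgH^{k,l}(\fZ)\hookrightarrow \rho^{-1}\bpH^{k,l}(\fZ),
\]
where $\rho = \rho_\D\rho_\B$. The first follows from the Leibniz rule together with the identity $\gV = \rho\,\fbV$: any $\gamma$-derivative falling on the $u_0$ factor is converted into a $\fb$-derivative times $\rho$, exactly cancelling the $\rho^{-1}$ weight carried by $u_0$. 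The second uses the same identity to trade one $\rho$ for one $\fb$-derivative on a product of two $\gamma$-type factors. The main obstacle is the combinatorial bookkeeping of derivatives and $\rho$-weights in the Leibniz expansion: on each term, the factor carrying more than half the total number of derivatives is estimated in $L^\infty$ via the Sobolev embedding $\bpgH^{k,l,m}\hookrightarrow L^\infty$ valid for $k+l+m>2$, while the other factor is estimated in the appropriate weighted $L^2$ space matching the target. The condition $k>2$ together with the $4$-dimensional fiber makes this work, and the detailed proof of the intrinsic multiplicativity inequalities is deferred to Appendix~\ref{S:sobolev}.
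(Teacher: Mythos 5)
Your bracket decomposition $[u,v]_0=[u_1,v_1]$, $[u,v]_1=[u_0,v_1]+[u_1,v_0]$ and the reduction to the split-Sobolev multiplicativity estimates of Lemma~\ref{L:mult_in_H} match the paper's proof. But your claim that the vanishing $[u_0,v_0]=0$ ``makes the split structure self-consistent,'' with a naive $\rho^{-1}\cdot\rho^{-1}\sim\rho^{-2}$ product overflowing the $\rho^{-1}$-weight otherwise, is a misconception: estimate~\eqref{E:mult_ppp} gives
\[
\rho^{-1}\bpH^{k,2}\cdot\rho^{-1}\bpH^{k,2}\longrightarrow\rho^{-2+3/2}\bpH^{k,2}=\rho^{-1/2}\bpH^{k,2}\subset\rho^{-1}\bpH^{k,2},
\]
where the extra $+\tfrac 3 2$ records the identity $L^2(\fZ;\olg)=\rho^{3/2}L^2(\fZ;\bg)$; the classical Sobolev-algebra gain holds unweighted with respect to the b-metric $\bg=\rho^{-2}\olg$, and converting back to $\olg$ produces precisely this shift. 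So a product of two $\adPz$-valued sections is controlled even when nonzero.

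The misconception bites precisely at the group-structure step, which is the genuine gap in your plan. The paper does not construct $\gauG^k(\Z)$ by BCH; it shows that $\threeH^{k,2}(\fZ;\envU(\adP))$ is a Banach algebra, where $\envU(\adP)$ is locally the $2\times2$ matrices split into diagonal $\envU(\adP)_0$ and anti-diagonal $\envU(\adP)_1$, and realizes the gauge group inside the unitalization. In $\envU(\adP)$ the product is $(uv)_0=u_0v_0+u_1v_1$, $(uv)_1=u_0v_1+u_1v_0$: the $u_0v_0$ term does \emph{not} cancel, and it is \eqref{E:mult_ppp} (the estimate you omitted from your list) that controls it. Your BCH/exponential route does not sidestep this: asserting that $\gauG^{k}(\Z)=\sH^{k,2}(\Z;\Ad P)$ is a well-defined group requires showing that pointwise products of arbitrary elements of $\sH^{k,2}(\Z;\Ad P)$ (not just products of exponentials of Lie-algebra elements) remain in the Sobolev space, and that is exactly the $\envU(\adP)$-algebra property. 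Because you believed the $u_0v_0$ product to be problematic, your plan steers around the very estimate that makes the group closed under multiplication; with the weight counting corrected as above, the obstruction you feared disappears, and the paper's enveloping-algebra argument closes directly from \eqref{E:mult_ppp}--\eqref{E:mult_ggp}.
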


\subsection{Gauge fixing} \label{S:gauge}
Having just defined the Sobolev versions of the gauge group we will consider
over $\Z$, we digress briefly to discuss the issue of gauge fixing. Fixing a
sufficiently smooth configuration $(A,\Phi)$, the infinitesimal action of the
gauge group at $(A,\Phi)$ is given by the operator
\begin{equation}
\begin{gathered}
	\gauD_{A,\Phi} : C^\infty(\Z; \adP) \to C^\infty(\Z; (\gLam^1\oplus \gLam^0)\otimes \adP),
	\\ \gauD_{A,\Phi} \eta = (-d_A \eta, - [\Phi,\eta]).
\end{gathered}
	\label{E:gauge_action}
\end{equation}
Note that if $(A',\Phi') = (A + a',\Phi + \phi')$, then 
\[
	\gauD_{A',\Phi'} = \gauD_{A,\Phi} - (a',\phi')
\]
where the latter multiplication operator acts by $(a',\phi') \cdot \eta =
([a',\eta],[\phi',\eta])$.

For smooth $(A,\Phi)$, $\gauD_{A,\Phi}$ admits a bounded extension 
\[
	\gauD_{(A,\Phi)} : 
	\sH^{k,2}(\Z; \adP) \to \sH^{k,1}(\Z; (\gLam^1\oplus \gLam^0)\otimes \adP)
\]
for all $k$, and if $(A',\Phi') = (A+ a',\Phi+\phi')$ is a perturbation with
$(a',\phi') \in \sH^{k,1}(\Z; (\gLam^1\oplus \gLam^0)\otimes \adP)$
then $\gauD_{A',\Phi'}$ admits a similar extension for $k > 2$ by Theorem~\ref{T:gauge_spaces}.

We recall that a section $(a,\phi)$ of $(\gLam^1\oplus \gLam^0)\otimes \adP)$ is in Coulomb gauge
with respect to $(A,\Phi)$ if
\begin{equation}
	\gauD^\ast_{A,\Phi}(a,\phi) = -d_A^\ast a + [\Phi,\phi] = 0,
	\label{E:coulomb_gauge}
\end{equation}
where the adjoints are taken with respect to the formal fiberwise $L^2$ pairing
on sections $\gLam^\ast\otimes\adP$ using the volume form from the $\gamma$
metric $\wt g$. In particular, $d_A^\ast = - \star d_A \star$. 

This is naturally an infinitesimal condition, where $(a,\phi)$ are considered
as elements in the tangent space to the space of configurations at $(A,\Phi)$,
and then $\Null(\gauD^\ast_{(A,\Phi)})$ determines a subspace complementary to
the action of the gauge group in this tangent space. However, it is also known to 
give local slices for the gauge action on the configuration space itself. In the present
setting, this takes the form of the following result, proved in Appendix~\ref{S:coulomb}.

\begin{thm}
Suppose $A$ is a smooth (true) connection on $\adP$ over $\Z$, and $\Phi \in
C^\infty(\Z; \adP)$, where both are diagonal to infinte order with respect to the
splitting $\adP = \adPz\oplus \adPo$ near $\D \cup \B$.  Fix $l > 2$ in $\bbN$. 

Then for any compact set $\K \subset \idmon$, there exists $\ve_\K > 0$
such that for all sufficiently small
\[
	(a,\phi) \in \sH^{l,1}(\Z\rst_{\K\times[0,\ve_\K]}; (\gLam^1\oplus \gLam^0)\otimes \adP),
\]
there exists a unique gauge transformation
\[
	\gamma \in \gauG^{l}(\Z \rst_{\K\times [0,\ve_\K]})
\]
such that
\[
	\gamma\cdot(A + a,\Phi +\phi)
\]
is in Coulomb gauge with respect to $(A,\Phi)$ on $\Z$ over $\K \times
[0,\ve_\K]$. Furthermore, if $(a,\phi)$ is additionally in $\sH^{l+n,1}(\Z\rst_{\K\times [0,\ve_\K]};(\gLam^1\oplus \gLam^0)\otimes \adP)$ for any
$n \geq 0$, then in fact $\gamma \in \gauG^{l+n,2}(\Z\rst_{\K\times[0,\ve_\K]})$ as well.
\label{T:coulomb}
\end{thm}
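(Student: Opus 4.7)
The plan is to reformulate the Coulomb gauge condition as a nonlinear equation for the Lie algebra element $\eta$ generating $\gamma = \exp(\eta)$, and to solve it by an implicit function theorem on the Sobolev spaces of Theorem~\ref{T:gauge_spaces}. Expanding the gauge action,
\[
\gamma \cdot (A+a, \Phi+\phi) - (A,\Phi) = (a,\phi) + \gauD_{(A,\Phi)}\eta + R(\eta, a, \phi)
\]
with $R$ smooth in its arguments and at least quadratic at the origin; applying $\gauD^*_{(A,\Phi)}$ and rearranging yields the equation
\[
F(\eta, a, \phi) := P\eta + \gauD^*_{(A,\Phi)}(a,\phi) + \gauD^*_{(A,\Phi)} R(\eta, a, \phi) = 0,
\qquad P := \gauD^*_{(A,\Phi)}\gauD_{(A,\Phi)} = d_A^* d_A + \ad(\Phi)^*\ad(\Phi),
\]
to be solved for $\eta \in \sH^{l,2}(\Z\rst_{\K\times[0,\ve_\K]}; \adP)$ given small $(a,\phi)\in \sH^{l,1}$.

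The central analytic step is the uniform invertibility of $P$ as a map $\sH^{l,2} \to \sH^{l,0}$ over $\K\times[0,\ve_\K]$. I would first establish fiberwise invertibility of $P_\iota$ on $\threeH^{l,2}(\fZ;\adP)$, exploiting the splitting $\adP = \adPz \oplus \adPo$ near $\D \cup \B$. On $\adPo$ the mass term $\ad(\Phi)^*\ad(\Phi)$ is bounded below by $|\Phi|^2 \geq c > 0$, yielding invertibility in the gluing-Sobolev component $\bgH^{\ast,2}$ with exponential decay. On $\adPz$ the operator $P$ reduces to the $\fb$-Laplacian of the $\UU(1)$ Dirac connection, and here the weight $\rho^{-1}$ built into $\threeH^{\ast,2}$ is chosen precisely so that $P$ acts off the indicial roots of its normal operators at $\D$, $\X_j$, and $\B$ (cf.\ Propositions~\ref{P:fb_normal_seqs} and \ref{P:global_normal_ops}), giving a Fredholm operator of index zero with trivial kernel by the Weitzenbock identity \eqref{e7.10.10.15} and the chosen weight. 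Fiberwise invertibility, together with smooth dependence on parameters and compactness of $\K$, then provides a uniformly bounded inverse $P^{-1}$ for $\ve_\K$ sufficiently small.

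With this in hand, the implicit function theorem applied fiberwise produces, for each $(\iota,\ve)\in\K\times[0,\ve_\K]$, a unique small $\eta = \eta(\iota,\ve,a,\phi)$ solving $F = 0$, depending smoothly on all its arguments; smoothness in the parameters promotes the solution to an element of $\sH^{l,2}(\Z\rst_{\K\times[0,\ve_\K]}; \adP)$. Uniqueness in the stated sense follows from the injectivity of $D_\eta F\rst_0 = P$ after possibly shrinking $\ve_\K$. The higher regularity assertion is proved by bootstrapping: if $(a,\phi) \in \sH^{l+n,1}$, then elliptic regularity for $P$ combined with the multiplicativity property \eqref{E:Frechet_sobolev_mult} (which controls the nonlinear term $R(\eta,a,\phi)$) yields inductively that $\eta \in \sH^{l+n,2}$.

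The hard part will be the uniform invertibility of $P$ as $\ve \smallto 0$, where the fiberwise metric collapses and $P$ degenerates into a family of model operators on $\fX_j$, $\fD$, and $\fB$; this requires a parametrix construction matching the inverses of the corresponding normal operators at each boundary face, and is precisely the reason for introducing the split Sobolev spaces with their asymmetric weights between $\adPz$ and $\adPo$.
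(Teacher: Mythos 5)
Your overall strategy---reformulate the Coulomb gauge condition as a nonlinear equation and apply an implicit function theorem governed by the invertibility of $P = \gauD^\ast_{(A,\Phi)}\gauD_{(A,\Phi)}$---is the same as the paper's, and your decomposition of $P$ according to $\adP = \adPz \oplus \adPo$ near $\D\cup\B$ correctly identifies the two distinct analytic regimes. However, the central linear step is where your argument has a genuine gap.

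You propose to establish invertibility of $P$ on each fiber $\fZ$ by showing Fredholmness of index zero (via avoidance of indicial roots for the $\adPz$ component and full ellipticity for the $\adPo$ component) plus trivial kernel from the Weitzenbock identity, and then invoke smooth dependence on $\idmon$ and compactness of $\K$. This does not work as stated. The operator $P$ on a full fiber $\fZ$---which includes the entire $\ve$-interval and in particular the boundary hypersurface $\fB$ (spatial infinity for all $\ve$)---is \emph{not} Fredholm on the split Sobolev spaces. The parametrix error constructed in the paper (Proposition~\ref{P:coulomb_parametrix}) has index sets that are strictly positive at the faces $\DD$, $\XX$, $\scD$ of the double space (the faces over $\ve = 0$) but only bounded, $\geq 0$, at $\BB$, so there is no compactness gain there. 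The paper's mechanism is not Fredholmness at all: the parametrix is arranged so that the error has the explicit form $\ve^\delta E$ with $E$ a \emph{bounded} (not compact) operator, and the factor $\ve^\delta$---which comes from the product of boundary defining functions for those faces where the index sets are strictly positive---forces the operator norm to be $\leq C\ve_\K^\delta < 1$ once one restricts to $\ve \leq \ve_\K$. Inversion is then by Neumann series, not by a Fredholm alternative. This is why ``for $\ve_\K$ sufficiently small'' is essential: it is what absorbs the non-decaying part of the error at $\fB$. You do acknowledge at the end that a parametrix matching normal operators is ``the hard part,'' but the preceding discussion presents the Fredholm route as if it settles the question, which it does not.

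The bootstrap for higher regularity is sketched in the right direction but needs more care: the paper differentiates the identity $G(a,\phi,\gamma) = 0$ along b-vector fields and inverts the linearization $G_2(a,\phi,\exp(\eta))$ (not just $P$) inductively on consecutive Sobolev levels, keeping track of the fact that the solution map's domain of definition does not shrink with $l$. Your appeal to ``elliptic regularity plus multiplicativity'' is in spirit correct, but the required input is the consistency of the inverses of $G_2$ across Sobolev orders, obtained from the inductive hypothesis, rather than elliptic regularity per se.
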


\noindent This justifies the addition of the Coulomb gauge operator to $\backL$ in \eqref{E:backL_global}.

The proof of Theorem~\ref{T:coulomb} makes use of the following invertibility result for the associated linear
operator, which will also be used in \S\ref{S:metric}.

\begin{prop}
Let $(\backA,\backPhi)$ satisfy the hypotheses of Theorem~\ref{T:coulomb}. Then
for any $k \geq 0$, the linear operator
\[
	\gauD_{A,\Phi}^\ast \gauD_{A,\Phi} = \Delta_A + (\ad\Phi)^\ast(\ad \Phi)
	: \sH^{k,2}(\Z; \adP) \to \sH^{k,0}(\Z; \adP)
\]
is invertible over sets of the form $\K \times [0,\ve_\K]\subset \base$, with inverse independent of $k$. 
\label{P:Coulomb_linear}
\end{prop}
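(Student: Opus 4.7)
The plan is to establish fiberwise invertibility on $\threeH^{k,2}(\fZ; \adP) \to \threeH^{k,0}(\fZ; \adP)$, with uniform estimates over $\K$ as $\ve \to 0$, by combining an energy estimate for injectivity with a normal-operator analysis for surjectivity. For injectivity, note that $L := \gauD^\ast_{A,\Phi}\gauD_{A,\Phi} = \Delta_A + (\ad\Phi)^\ast(\ad\Phi)$ is formally self-adjoint and nonnegative. For $\eta \in \sH^{k,2}$ with $L\eta = 0$, integration by parts fiberwise is justified because the $\bpH^{k,2}$/$\bgH^{k,2}$ decay of $\eta$ at $\fB$ kills the boundary term there, and at $\fD$ the $\gamma$-volume form scales like $(\rho_\D\rho_\B)^3$, more than compensating the weight $\rho_\D^{-1}\rho_\B^{-1}$ attached to the $\adPz$ summand of $\threeH^{k,2}$. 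This yields
\[
	0 = \langle L\eta,\eta\rangle = \norm{d_A\eta}^2_{L^2} + \norm{[\Phi,\eta]}^2_{L^2},
\]
so $[\Phi,\eta] = 0$ (forcing $\eta$ to take values in $\adPz = \sspan_\bbC \Phi$ near $\D \cup \B$) and $d_A\eta = 0$; since $A$ is diagonal to infinite order on the splitting and $\eta$ decays appropriately at $\fB$, we conclude $\eta \equiv 0$.

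For surjectivity, I verify invertibility of the normal operators of $L$ at every boundary face. At $\fX_j$, Proposition~\ref{P:global_normal_ops} identifies $\noX^\gamma(L)$ with the Euclidean scattering operator $\Delta_{A_j} + (\ad\Phi_j)^\ast(\ad\Phi_j)$ on $\fX_j \cong \ol{\bbR^3}$: on $\adPo$ the nondegenerate mass gives uniform invertibility as a scattering operator, and on $\adPz$ the scalar Laplacian between the weighted scattering spaces prescribed by the split norms is invertible by standard Newtonian potential theory on $\bbR^3$. At $\fD$, the rescaled $\fb$-normal operator on $\adPz$ reduces to a multiple of $L_\fD^\ast L_\fD$, the scalar conic Laplacian on $\fD$, whose invertibility on the appropriate weighted $\mathrm{b}$-Sobolev spaces is contained in Theorem~\ref{T:LD_package}. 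At $\fB$ the normal operator is essentially the translation-invariant $\Delta + m^2$ with $m = |\ol\Phi|>0$, manifestly invertible. A standard parametrix construction in the joint $\fb/\gamma$ pseudodifferential calculus of Appendix~\ref{S:double} then produces a two-sided inverse modulo a residual operator which is smoothing and $\cO(\ve^\infty)$, hence compact on the split Sobolev spaces for $\ve_\K$ small. Combined with injectivity and formal self-adjointness, this yields a bounded inverse.

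Finally, the $k$-independence follows from elliptic regularity: $L$ contains the full Laplacian $\Delta_A$ and is in particular $\fu$-elliptic in the interior, so if $u \in \sH^{0,2}$ satisfies $Lu = f \in \sH^{k,0}$ then $u \in \sH^{k,2}$, and the inverses on successive $k$-scales agree. The main obstacle is ensuring \emph{uniform} invertibility over the compact set $\K$ as $\ve\to 0$; this reduces to smooth dependence of each normal operator inverse on the ideal monopole parameter, which is automatic since the universal Dirac data on $\D$ and the boundary data $(A_i,\Phi_i)$ on $\X_i$ vary smoothly over $\idmon$ by the construction in \S\ref{S:ideal}, and since the mass $m = |\ol\Phi|$ controlling the $\fB$-normal operator is constant on the framing.
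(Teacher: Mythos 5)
Your approach is genuinely different from the paper's, and it contains a gap. The paper's proof of this proposition is short: it invokes Proposition~\ref{P:coulomb_parametrix}, which produces left/right parametrices $Q^L,Q^R$ with errors of the form $\ve^\delta E^L,\ \ve^\delta E^R$ where $E^{L/R}$ are merely \emph{bounded} (not compact) on the split Sobolev spaces and $0<\delta<\tfrac12$. On $\K\times[0,\ve_\K]$, the multiplication operator $\ve^\delta$ has norm $\leq \ve_\K^\delta$, so for $\ve_\K$ small enough the errors have operator norm $<1$ and one inverts $I-\ve^\delta E^{L/R}$ by Neumann series, obtaining a two-sided inverse directly. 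No injectivity argument, no compactness, no index count.

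Your route — injectivity by an energy estimate, then Fredholmness via a \emph{compact} parametrix error, then index zero by self-adjointness — depends crucially on the claim that ``the residual operator is smoothing and $\cO(\ve^\infty)$, hence compact.'' Neither half of that claim holds. The parametrix construction of Proposition~\ref{P:coulomb_parametrix} gives error $\cO(\ve^\delta)$ with $\delta<\tfrac12$, not $\cO(\ve^\infty)$; but more seriously, even $\cO(\ve^\infty)$ decay would give gain only at the boundary faces where $\ve = \rho_\fD\rho_\fX$ vanishes, and provides \emph{no} extra decay at the lateral face $\fB$ (and correspondingly at $\BB,\BM,\MB$ on the double space, where the error index sets are only $\geq 0$). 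Since the fiber $\fZ$ is noncompact at $\fB$ with respect to $\olg$, an operator that gains interior regularity but not decay there is bounded, not compact, so Fredholmness does not follow. Without Fredholmness the index argument has nothing to bite on, and your injectivity argument — which is correct and could be salvaged as an alternative to the paper's more abstract reasoning — does not by itself give surjectivity. There is also a minor slip: the invertibility of the conic Laplacian on $\fD$ that you need is Theorem~\ref{T:linear_coulomb_D_b}, not Theorem~\ref{T:LD_package}. The fix is to replace the compactness/index argument by the paper's smallness/Neumann-series argument, after which the injectivity computation becomes superfluous.
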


We also record another important fact concerning the background configuration $(\backA,\backPhi)$.
\begin{prop}
Let $(\backA,\backPhi)$ be the pregluing configuration on $\Z \rst_\U$ of
Proposition~\ref{P:global_pregluing} and let $\K \subset \U$ be any compact
set. Then there exists $\ve_\K > 0$ such that the map
\[
	\K\times (0,\ve_\K) \ni (m,\ve) \mapsto (A,\Phi) \rst_{(m,\ve)} \in \cfgC(\ol{\bbR^3})
\]
is smooth and transverse to the orbits of the gauge group.
\label{P:transverse_to_gauge}
\end{prop}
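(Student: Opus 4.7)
The plan is to verify smoothness directly from the construction of the pregluing pair $(\backA,\backPhi)$, and then to reduce transversality at each $(m,\ve)$ to injectivity of the composition of $d\Psi$ with the orthogonal projection onto the Coulomb gauge slice, computing this to leading order at $\ve = 0$ and extending by continuity to small positive $\ve$.

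For smoothness, recall from Proposition~\ref{P:global_pregluing} that $(\backA,\backPhi)$ is a smooth connection-Higgs pair on the universal gluing space $\Z$, whose projection $\fb: \Z \to \base$ is a b-fibration whose restriction over $\U \times (0,\infty)$ is canonically identified with the product $\oX \times \U \times (0,\infty)$. Composing the pregluing section with this identification gives a smooth family of configurations on $\oX$, hence a smooth map $\Psi$ into the Fr\'echet configuration space $\cfgC(\oX)$; smoothness on $\K \times (0,\ve_\K)$ for any compact $\K$ and $\ve_\K > 0$ small is immediate.

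For transversality, write $(A_0,\Phi_0) = \Psi(m,\ve)$ and $\gauD = \gauD_{(A_0,\Phi_0)}$, and let $P = 1 - \gauD(\gauD^\ast \gauD)^{-1}\gauD^\ast$ denote the orthogonal projection onto $\Null(\gauD^\ast)$. By Proposition~\ref{P:Coulomb_linear}, $\gauD^\ast \gauD$ is invertible uniformly over $\K \times [0,\ve_\K]$, so $P$ is a well-defined bounded projection with kernel equal to the tangent space to the gauge orbit. Transversality of $\Psi$ to the gauge orbit through $(A_0,\Phi_0)$ is therefore equivalent to injectivity of $P \circ d\Psi_{(m,\ve)}$ on the $4k$-dimensional space $T_m\U \oplus \bbR\pa_\ve$; by dimension count this matches $\dim \monM_k$, and my plan is to verify injectivity at leading order in $\ve$ and invoke continuity.

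The main obstacle, and the heart of the argument, is analyzing $d\Psi$ at $\ve = 0$. The tautological property of $(\backA,\backPhi)$ from Proposition~\ref{P:global_pregluing}\eqref{I:pregluing_tautological} implies that variations coming from $T_m\U$ split into deformations of the factors $\monM_{k_j}$ (represented by Coulomb-gauge tangent vectors $\tau^{(j)} \in T_{(A_j,\Phi_j)}\monM_{k_j}$ whose extensions into $\Z$ are supported in neighborhoods of the faces $\fX_j$), Gibbons-Manton circle deformations (which modify the Dirac connection $A_\fD$ on $\fD$ along $\adPz$), and configuration deformations in $\cE^\ast_N$ (which act on $\fD$ by varying the abelian Higgs field $\phi_\fD$ of \eqref{e21.25.10.15}), while the $\pa_\ve$ direction simultaneously rescales the cluster locations $z_j = \zeta_j/\ve$. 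Because each of these variations restricts to a different stratum of the singular fiber $\fb^{-1}(0) = \fD \cup \fX_0 \cup \cdots \cup \fX_N$, a relation $\sum_\alpha c_\alpha\,d\Psi(X_\alpha) = \gauD\,\xi$ can be analyzed face by face: restricting to each $\fX_j$ and using that the $\tau^{(j)}$ are chosen in Coulomb gauge on $\fX_j$ in the sense of Section~\ref{S:local}, one concludes that the leading coefficient in each cluster factor must vanish, after which a similar analysis on $\fD$ forces the remaining coefficients to vanish as well. Equivalently, the Gram matrix of $P \circ d\Psi$ with respect to an adapted frame limits, as $\ve \smallto 0$, to the block-diagonal positive-definite hyperK\"ahler pairing $G_{k_0} \oplus \bigoplus_i (G^c_{k_i} \oplus 2\pi k_i\,\eta_i)$ of \eqref{E:intro_metric_leading_order}, whose invertibility forces injectivity. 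Continuity of $P \circ d\Psi$ in $\ve$ together with compactness of $\K$ then propagates injectivity to $(0,\ve_\K)$ after possibly shrinking $\ve_\K$, completing the proof.
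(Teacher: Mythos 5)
Your proposal follows the same architecture as the paper's own proof: reduce transversality to injectivity of the Coulomb-slice projection $\Pi = I - \gauD_{(A,\Phi)} G \gauD_{(A,\Phi)}^\ast$ applied to $V\cdot(A,\Phi)$, using Proposition~\ref{P:Coulomb_linear} for invertibility of $\gauD^\ast\gauD$, verify nonvanishing at $\ve=0$, and propagate by continuity over compact $\K$. The paper's argument for the $\ve=0$ step is, however, much shorter than yours: it simply observes that by construction of the pregluing configuration the variation $(a,\phi) = V\cdot(A,\Phi)$ is \emph{already} in the Coulomb slice over $\ve=0$, so $\Pi(a,\phi)\rst_{\ve=0} = (a,\phi)\rst_{\ve=0}$, which is nonvanishing because the tautological property of Proposition~\ref{P:global_pregluing} guarantees the restriction to $\fb^{-1}(0)$ reproduces the ideal monopole and its tangent data. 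Your face-by-face analysis on the strata of $\fb^{-1}(0)$ captures the same content but is more elaborate than necessary.

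One genuine concern with your reformulation: the claim that the Gram matrix of $P\circ d\Psi$ converges to the hyperK\"ahler pairing $G_{k_0} \oplus \bigoplus_i(G^c_{k_i}\oplus 2\pi k_i\eta_i)$ of \eqref{E:intro_metric_leading_order} appeals to the metric asymptotics of Theorem~\ref{T:main_two}, which is proved in \S\ref{S:metric} --- \emph{after} this proposition and indeed depending on it (the metric computation requires that $\Psi$ be a local diffeomorphism, cf.\ Theorem~\ref{T:local_diffeo} and Proposition~\ref{P:transverse_to_gauge}). Invoking the leading-order metric formula here is therefore circular. It is also unnecessary: the face-by-face linear independence argument you sketch in the sentences preceding the Gram matrix claim is already a self-contained substitute, and the paper's even more economical route --- nonvanishing of $(a,\phi)$ at $\ve=0$ together with membership in the Coulomb slice --- suffices. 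You should strike the Gram matrix/metric sentence and close the argument directly from the Coulomb-slice observation and compactness of the unit sphere bundle of $\phiT(\K\times[0,\ve_\K])$, as the paper does.
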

\begin{proof}
It suffices to show that the derivative of this map has nontrivial projections
onto a complementary subspace to the infinitesimal gauge action. Recall that
the infinitesimal gauge action at $(A,\Phi)$ is given by the image of the map
\[
	\gauD_{A,\Phi} : \eta \mapsto (-d_A \eta, -[\Phi,\eta])
\]
and that a natural complementary subspace is the nullspace of
$\gauD^\ast_{A,\Phi}$, which is the Coulomb gauge slice. Projection onto $\Null(\gauD^\ast_{A,\Phi})$
is given by the operator
\[
	\Pi = I - \gauD_{(A,\Phi)} G \gauD_{(A,\Phi)}^\ast,
	\quad G = (\gauD_{(A,\Phi)}^\ast \gauD_{(A,\Phi)})^{-1}.
\]

By Proposition~\ref{P:Coulomb_linear},
we have a bounded operator
\[
	\Pi : \sH^{\infty,1}(\Z\rst_{\K\times [0,\ve_\K]}; (\gLam^1\oplus \gLam^0)\otimes \adP)
	\to \sH^{\infty,1}(\Z\rst_{\K\times [0,\ve_\K]}; (\gLam^1\oplus \gLam^0)\otimes \adP),
\]
representing projection onto the Coulomb gauge slice of $(A,\Phi)$ over each
fiber $(m,\ve) \in \K\times [0,\ve_\K].$

Let $V \in \phiT_{m,\ve}(\K\times [0,\ve_\K])$ be any fibered boundary vector, as
defined at the end of \S\ref{S:ideal}.
For $\ve > 0$ this is any tangent vector. We may extend $V$ as a vector field over
a neighborhood of $(m,\ve)$ and apply it to $(A,\Phi)$ to get 
\[
	(a,\phi) := V\cdot (A,\Phi) \in \cAphg^\ast(\Z\rst_{\K \times [0,\ve_\K]}; (\gLam^1\oplus \gLam^0)\otimes \adP).
\]

By construction of the pregluing configuration $(a,\phi)$ is in the Coulomb slice over $\ve = 0$
so 
\[
	\Pi(a,\phi) \in \sH^{\infty,1}(\Z\rst_{\K\times [0,\ve_\K]}; (\gLam^1\oplus \gLam^0)\otimes \adP)
\]
is nonvanishing at $\ve = 0$, and therefore also for $\ve \leq \ve_V$ for some $\ve_V > 0$. Minimizing $\ve_V$ over the unit
sphere bundle of $\phiT (\K\times [0,\ve_\K])$, we obtain $\ve_\K$, and the result is proved.
\end{proof}

\subsection{True solution} \label{S:true}

Given the pregluing configuration $(\backA,\backPhi)$, the formal solution
procedure in \S\ref{S:formal_formal}, applied fiber by fiber over $\U$,
produces an asymptotic series for a correction $(a,\phi) \in \cAphg^\ast(\Z\rst_{\U\times[0,\infty)};
(\gLam^1\oplus\gLam^0)\otimes \adP)$ at the faces $\D$ and $\X = \cup_j \X_j$.
To sum such a series, which has smooth coefficients in the parameters $\idmon$,
it is necessary to restrict to a compact set. Thus let $\K \subset \U$
be any compact subset of the space \eqref{E:U_in_idmon}.

\begin{prop}
Given the pregluing configuration $(\backA,\backPhi)$ from
Proposition~\ref{P:global_pregluing} and a compact set $\K \subset
\U$, there exists $(a,\phi) \in \cAphg^\cF(\Z\rst_{\K\times[0,\infty)}; (\gLam^1\oplus
\gLam^0)\otimes \adP)$ such that $\Bogo(\backA + a, \backPhi + \phi) =
\cO(\ve^\infty)$, with $\cF$ given as in \eqref{E:formal_soln_index}.
\label{P:formal_global}
\end{prop}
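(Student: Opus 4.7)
The plan is to carry out the iterative construction of Theorem~\ref{T:formal_solution} fiber-by-fiber over $\U$, arguing that each step can be performed smoothly in the parameters $m \in \idmon$, and then sum the resulting formal series in $\ve$ via Borel's lemma on the compact set $\K$. The key point is that all the ingredients in the single-fiber proof—namely the solvability statements from Theorem~\ref{T:L_solvability} for $L_\fX$ and $L_\fD$, and the construction of Lemma~\ref{L:solvability}—are already formulated in a form that depends only on the connection and Higgs data $(\backA,\backPhi)$, which by Proposition~\ref{P:global_pregluing} varies smoothly in $m \in \U$.

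First I would note that the normal operators $L_\fX$ and $L_\fD$ at each fiber depend smoothly on $m \in \idmon$ by Proposition~\ref{P:global_normal_ops}. For the solvability, one must produce the inverses of Theorem~\ref{T:L_solvability} as smooth families in $m$: for $L_\fX$ one solves in the orthogonal complement of $\Null(L_\fX)$, but this nullspace is precisely the tangent space $T_{[(A_j,\Phi_j)]}\monM_{k_j}$ (via Proposition~\ref{P:L_rstn_X}), which has constant dimension and varies smoothly over $\wt U_j$ by the construction of the slice in Proposition~\ref{P:global_pregluing}. For $L_\fD$ the solution is unique, so smooth parameter dependence is automatic. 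Hence at each step of the iteration in the proof of Theorem~\ref{T:formal_solution} the corrections $v_j$ and $w_j$ can be chosen smoothly in $m$, and the polyhomogeneous index sets are unaffected by this parameter dependence.

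Next, for each $n$ the induction produces $u_n \in \cAphg^\cF(\Z \rst_{\U\times[0,\infty)}; (\gLam^1\oplus\gLam^0)\otimes\adP)$, but these are only approximate solutions with error $\cO(\ve^n(\log\ve)^{2n-3})$, and the series they form need not converge. On the compact set $\K$, however, each coefficient has uniformly bounded seminorms in any fixed $\cAphg^\cF$-norm over $\Z \rst_{\K\times[0,\infty)}$, so a standard Borel summation argument (cutting off with a sequence of bump functions $\chi(\ve/\ve_n)$ with $\ve_n\downarrow 0$ chosen rapidly enough relative to the growth of the seminorms of successive increments $u_{n+1}-u_n$) produces a genuine polyhomogeneous section $(a,\phi) \in \cAphg^\cF(\Z\rst_{\K\times[0,\infty)};(\gLam^1\oplus\gLam^0)\otimes\adP)$ with $(a,\phi) \sim \sum (u_{n+1}-u_n)$ and $\Bogo(\backA+a,\backPhi+\phi) = \cO(\ve^\infty)$.

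The main obstacle is the bookkeeping to make the Borel summation work uniformly in $\K$ while respecting the polyhomogeneous structure—in particular, ensuring that cutoff functions in $\ve$ do not destroy the prescribed index sets at $\D$, $\X$, and $\B$. This is handled because $\ve = \rho_\D \rho_\X$ is a product of boundary defining functions, so a cutoff $\chi(\ve/\ve_n)$ is smooth on $\Z$ and its product with a polyhomogeneous section of index family $\cF$ remains polyhomogeneous with the same index family; one only needs to verify that the seminorms of each increment can be estimated uniformly in $\K$, which follows from compactness and the smooth parameter dependence established above.
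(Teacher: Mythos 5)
Your proposal is correct and matches the paper's (very terse) argument: the paper likewise applies the iteration of Theorem~\ref{T:formal_solution} fiber by fiber over $\U$, observing that the coefficients vary smoothly in the parameters, and then invokes asymptotic summation on a compact set $\K$ to produce the polyhomogeneous section. The details you supply—smooth dependence of $L_\fX$ and $L_\fD$ on $m$, smooth variation of $\Null(L_\fX)$ because it is identified with the constant-rank bundle of tangent spaces to the $\monM_{k_j}$, and the fact that a cutoff $\chi(\ve/\ve_n)$ respects the index sets at $\fD$, $\fX$, and $\fB$—are exactly the points the paper leaves implicit.
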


Next we remove the $\cO(\ve^\infty)$ error of our solution to the Bogomolny
equation which remains after the formal construction, over a region where $\ve$
is sufficiently small. The first step is the existence of a good right
parametrix for the linear operator $L$. The following is proved in
Appendix~\ref{S:parametrices} using the pseudodifferential operator calculus
developed in Appendix~\ref{S:double}. The main ingredients are the
invertibility of the normal operators/symbols of $\backL$ as in
Propositions~\ref{P:global_normal_ops} and \ref{P:normal_symbol_D}.

\begin{prop}
There exists a right parametrix, $R$ to $\backL$ with bounded extensions
\[
	R: \sH^{k,0}(\Z\rst_\U; (\gLam^1\oplus\gLam^0)\otimes \adP)
	\to \sH^{k,1}(\Z\rst_\U; (\gLam^1\oplus \gLam^0)\otimes \adP),
	\quad k > 2,
\]
%, defined as a conormal distribution on the double
%space $\dpM$ (see \S\ref{S:double}), 
such that, for some $0 < \delta < \tfrac 1 2$, 
\[
	\backL R = I - \ve^\delta E,
\]
where $E$ extends to a map
\begin{equation}
	E : \sH^{k,0}(\Z\rst_\U; (\gLam^1\oplus \gLam^0) \otimes \adP) 
	  \to \sH^{k,0}(\Z\rst_\U; (\gLam^1\oplus \gLam^0)\otimes \adP),
	\quad k > 2,
	\label{E:pmtx_error_mapping}
\end{equation}
i.e., is a smooth section over $\U$ of bounded linear
maps on the Hilbert bundle fibers $\threeH^{k,0}(\fZ; (\gLam^1\oplus\gLam^0)\otimes \adP)$.
\label{P:bogo_pmtx}
\end{prop}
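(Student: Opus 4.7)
\medskip
\noindent\textbf{Proof plan for Proposition~\ref{P:bogo_pmtx}.} The plan is to construct $R$ block-diagonally with respect to the splitting $\adP = \adPz \oplus \adPo$ near $\fD \cup \fB$, exploiting the fact that the off-diagonal terms of $\backL$ in \eqref{E:backL_again} are $\cO(\rho_\fD^\infty \rho_\fB^\infty)$ and can therefore be absorbed into the $\ve^\delta E$ error. On the $\adPo$ block, where $\ad(\backPhi)$ is nondegenerate, the operator $L_1 + \backPhi_1$ has a \emph{pointwise invertible} normal symbol at $\fD$ by Proposition~\ref{P:normal_symbol_D}. On the $\adPz$ block the situation is subtler: $L_0$ is an honest $\gamma$-operator with nontrivial fiberwise normal operators, namely $L_\fX$ at the faces $\fX_j$ (scattering type) and $(\rho_\fD\rho_\fB)^{-1}L_0$ restricts under $\noD^\fb$ to a rescaling of the conic operator $L_\fD$ on $\fD$ per Proposition~\ref{P:global_normal_ops}.

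First I would build the parametrix on $\adPo$. Using the pseudodifferential calculus of Appendix~\ref{S:double}, the invertibility of $\sigma_\fD(L_1 + \backPhi_1) = i\cl(\xi) + \ad\Phi$ at every point allows one to write down a full symbolic inverse by iteratively inverting the principal symbol and correcting lower-order terms in the calculus. This yields $R_1$ with $(L_1 + \backPhi_1) R_1 = I - E_1$ where $E_1$ is residual, i.e., gains a factor of $\ve^\delta$ and is bounded between the relevant Sobolev spaces; the residual nature is automatic because the symbolic inverse is exact modulo the standard order-filtration error of the calculus.

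Next I would construct the parametrix on $\adPz$ by inverting the two fiberwise normal operators in a compatible way. Theorem~\ref{T:L_solvability}(a) produces a partial inverse $L_\fX^{-1}$ on the $L^2$-orthogonal complement of $\Null(L_\fX) \simeq T\monM_{k_j}$, and Theorem~\ref{T:L_solvability}(b) produces $L_\fD^{-1}$ outright. These fiberwise inverses glue into a parametrix $R_0$ in the fibered-corner pseudodifferential calculus of Appendix~\ref{S:double}, with composition $L_0 R_0 = I - E_0$ where $E_0$ is an $\ve^\delta$-residual term modulo a projection onto the vertical null bundle $\mathrm{Null}(L_\fX)$; this projection is itself smoothing and $\cO(\ve^\delta)$ because the summation of the Neumann series for the parametrix gains a positive power of $\ve$ off the face $\fX$, using $\ve = \rho_\fD\rho_\fX$ and the mapping properties of $L_\fX^{-1}$ on the polyhomogeneous spaces from Theorem~\ref{T:L_solvability}(a). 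Assembling $R = \mathrm{diag}(R_0, R_1)$ and incorporating the off-diagonal $\cO(\rho_\fD^\infty\rho_\fB^\infty)$ terms as part of $E$, one obtains $\backL R = I - \ve^\delta E$.

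The main obstacle, which I expect to require the bulk of the appendix work referenced, is two-fold. First, the normal operator at $\fD$ of $L_0$ is \emph{not} invertible --- it only has $\fb$-type rather than $\gamma$-type invertibility --- and one must track the rescaling $\rho_\fX \rho_\fB^{-1}$ carefully so that $L_\fD^{-1}$ genuinely produces an operator gaining one full order in the $\gamma$-calculus and mapping between the split Sobolev spaces $\threeH^{k,0}$ and $\threeH^{k,1}$ of \S\ref{Ss:sobolev}. Second, composing the parametrices at $\fD$ and at the $\fX_j$ requires that their error terms match at the corners $\fD \cap \fX_j$; this is where the exponent $\delta < \tfrac 1 2$ enters, since the weighted Sobolev boundedness of $E$ across the corner will only hold for some positive but non-integer power of $\ve$ determined by the indicial roots of $L_\fD$ at its boundary faces (computed in Appendix~\ref{S:linear}). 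The mapping property \eqref{E:pmtx_error_mapping} then follows from the standard $L^2$-boundedness theorem for residual operators in the pseudodifferential calculus, applied fiberwise and then integrated smoothly over $\U$.
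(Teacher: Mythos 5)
Your overall architecture matches the paper's: build $R$ block-diagonally with respect to $\adP = \adPz \oplus \adPo$ on the double space, absorb the $\cO(\rho_\fD^\infty\rho_\fB^\infty)$ off-diagonal terms of $\backL$ into the error, use the pointwise-invertible symbol of $L_1 + \backPhi_1$ on the $\adPo$ block, and match the fiberwise inverses of $L_\fX$ and $L_\fD$ at the corners to construct $\wt R_0$ on the $\adPz$ block. The paper carries out exactly this programme. However, there is a genuine misconception in your handling of the $\adPz$ block: you assert that $L_0 R_0 = I - E_0$ only ``modulo a projection onto the vertical null bundle $\Null(L_\fX)$'' and then propose to kill that projection via a gain in $\ve$ from the Neumann series. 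No such projection appears. Since you are constructing a \emph{right} parametrix, the relevant composition at $\fX_j$ is $L_\fX \circ N_{\fX_j}(\wt R_0)$, and $L_\fX$ is surjective (Theorem~\ref{T:LX_package}.\eqref{I:LX_package_fred}), so its right inverse $G$ satisfies $L_\fX G = I$ on the nose. The projection $\Pi_N$ only enters in the opposite composition $G L_\fX = I - \Pi_N$, which would matter for a \emph{left} parametrix. Moreover, even if a projection onto $\Null(L_\fX)$ did appear, it would be a fixed finite-rank operator independent of $\ve$, so the Neumann-series mechanism you invoke would not produce the $\cO(\ve^\delta)$ gain you claim.

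A secondary inaccuracy: the bound $\delta < \tfrac 1 2$ is not directly a consequence of ``the indicial roots of $L_\fD$.'' In the paper it arises from the Sobolev boundedness criteria of Theorem~\ref{T:dpM_mapping} on $\dpM$ interacting with the conjugation $\pi_L^\ast\rho\,\wt R_0\,\pi_R^\ast\rho^{-2}$: this shifts the index sets of $\wt R_0$ at $\DX$, $\BM$ by $+2$ (giving $G^0_{\DX}, G^0_{\BM} \geq 3$) while the boundedness threshold at those faces is $\tfrac 5 2$; the difference $3 - \tfrac 5 2 = \tfrac 1 2$ is the critical exponent. The indicial roots of $L_\fD$ and $L_\fX$ control the leading-order vanishing at $\DD$, $\XX$ ($>0$), but the binding constraint comes from the weighted faces one step removed from the diagonal.
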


Next we set up an inverse function theorem type fixed point argument in the
range space $\sH^{k,0}(\Z\rst_\U; (\gLam^1\oplus\gLam^0) \otimes \adP))$.

\begin{prop}
For any compact set $\K \subset \U$, there exists $\ve_\K > 0$ and $(\wt a ,
\wt \phi) \in \ve^\infty \sH^{\infty,1}(\Z\rst_{\K\times[0,\ve_\K]};
(\gLam^1\oplus\gLam^0)\otimes \adP))$ such that $\cB(\backA + a + \wt a,
\backPhi + \phi + \wt \phi) = 0$ on $\Z$ over $\K\times[0,\ve_\K].$
\label{P:true_soln}
\end{prop}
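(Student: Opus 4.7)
The plan is to convert
\[
\Bogo\big(\backA + a + \wt a,\ \backPhi + \phi + \wt \phi\big) = 0
\]
into a fixed point problem for $\wt u = (\wt a, \wt \phi)$ solvable by contraction. Set $f := \Bogo(\backA + a, \backPhi + \phi)$; by Proposition~\ref{P:formal_global}, $f \in \ve^\infty\sH^{\infty,0}(\Z\rst_{\K\times[0,\infty)}; (\gLam^1\oplus\gLam^0)\otimes\adP)$. Taylor expansion rewrites the equation as
\[
L_0\wt u + P\wt u + Q(\wt u) = -f,
\]
where $L_0 = D\Bogo_{(\backA,\backPhi)}$, the perturbation $P\wt u := \bigl(D\Bogo_{(\backA+a,\backPhi+\phi)} - L_0\bigr)\wt u$ is a zeroth-order multiplication operator with coefficients depending linearly on $(a,\phi)$, and $Q$ is the usual quadratic nonlinearity $Q(b,\psi) = \star[b,b] - [b,\psi]$. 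I augment with Coulomb gauge relative to $(\backA,\backPhi)$, seeking a solution of
\[
\backL\wt u = \begin{pmatrix} -f - P\wt u - Q(\wt u) \\ 0 \end{pmatrix} =: G(\wt u),
\]
where $\backL = L_0 + \gauD^*_{(\backA,\backPhi)}$ is the gauge-augmented linearization of \eqref{E:backL_global}. A solution satisfies the Coulomb gauge condition as a bonus and solves Bogomolny on the $\gLam^1$ component.

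Fix $k > 2$. By Proposition~\ref{P:bogo_pmtx}, the right parametrix $R$ for $\backL$ satisfies $\backL R = I - \ve^\delta E$, with $R : \sH^{k,0} \to \sH^{k,1}$ and $E$ bounded on $\sH^{k,0}$. Writing $\wt u = Rv$ reduces the augmented equation to the fixed-point problem
\[
v = T(v) := \ve^\delta E v + G(Rv)
\]
for $v \in \sH^{k,0}(\Z\rst_{\K\times[0,\ve_\K]};(\gLam^1\oplus\gLam^0)\otimes\adP)$. I will show $T$ is a contraction on a ball $B_r$ of small radius $r$ for sufficiently small $\ve_\K$. The requisite estimates are: (i) $\|\ve^\delta Ev\| \leq C\ve_\K^\delta\|v\|$ by Proposition~\ref{P:bogo_pmtx}; (ii) $\|Q(Rv_1) - Q(Rv_2)\| \leq Cr\|v_1 - v_2\|$ on $B_r$, from the bounded bilinear product \eqref{E:Frechet_sobolev_mult} of Theorem~\ref{T:gauge_spaces} applied to $Rv_j \in \sH^{k,1}$; (iii) $\|P(Rv_1 - Rv_2)\| \leq C\|(a,\phi)\|_{\sH^{k,1}}\|v_1 - v_2\|$, again via the same product estimate, with $\|(a,\phi)\|_{\sH^{k,1}(\K\times[0,\ve_\K])} = \cO(\ve_\K)$ since by \eqref{E:formal_soln_index} the leading order of $(a,\phi)$ is $\rho_\fD^2\rho_\fX = \ve\,\rho_\fD$; and (iv) $\|f\| \leq C_N\ve_\K^N$ for every $N$. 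Combining yields a Lipschitz constant $\leq C(\ve_\K^\delta + r + \ve_\K)$ for $T$ on $B_r$ and $\|T(0)\| \leq C_N\ve_\K^N$. Choosing $r$ and then $\ve_\K$ small enough, the Banach fixed-point theorem produces a unique $v \in B_r$.

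It remains to upgrade $v$ from $\sH^{k,0}$ to $\ve^\infty\sH^{\infty,0}$, so that $\wt u = Rv \in \ve^\infty\sH^{\infty,1}$ as required. For the $\sH^{\infty,0}$ claim, run the same contraction argument in each higher Sobolev space $\sH^{k',0}$, $k' > k$; the resulting fixed points agree with $v$ by uniqueness in the weaker space $\sH^{k,0}$, and taking the intersection gives $v \in \sH^{\infty,0}$. For the $\ve^\infty$ decay, iterate $v = T(v)$ and read off powers: if $v \in \ve^m\sH^{\infty,0}$ then $\ve^\delta Ev \in \ve^{m+\delta}\sH^{\infty,0}$, $PRv \in \ve^{m+1}\sH^{\infty,0}$ by the $\ve$-vanishing of the coefficients $(a,\phi)$, $Q(Rv) \in \ve^{2m}\sH^{\infty,0}$, and $-f \in \ve^\infty\sH^{\infty,0}$, so $T(v) \in \ve^{m + \min(\delta,1)}\sH^{\infty,0}$. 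Iterating from $m = 0$ yields $v \in \ve^\infty\sH^{\infty,0}$.

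The main obstacle lies in ensuring the contraction estimates hold \emph{uniformly} over the compact parameter set $\K \subset \U$; this hinges on the mapping properties of $R$ and $E$ from Proposition~\ref{P:bogo_pmtx} and of the bilinear pairing in Theorem~\ref{T:gauge_spaces} being uniform in $m \in \idmon$, which is built into the Fr\'echet definition $\sH^{k,l}(\Z\rst_\U; \cdot) = C^\infty(\U;\threeH^{k,l}(\fZ;\cdot))$. The other delicate point is the interlocking regularity/decay bootstrap, which requires that $R$, $E$, $P$, and the pointwise product all preserve the $\sH^{k,\cdot}$ scale and commute (up to bounded error) with multiplication by powers of $\ve$; these are precisely the structural properties furnished by Proposition~\ref{P:bogo_pmtx} and Theorem~\ref{T:gauge_spaces}.
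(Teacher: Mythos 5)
Your high-level strategy matches the paper's: reduce to a fixed-point problem $v = T(v)$ with $\wt u = Rv$ via the parametrix of Proposition~\ref{P:bogo_pmtx}, solve by contraction over a compact parameter set, then bootstrap. Your splitting of the lowest-order term into $P\wt u$ (difference of linearizations) and $Q(\wt u)$ is equivalent to the paper's $G(v) = Q(u_1 + Rv) - Q(u_1) = 2Q(u_1, Rv) + Q(Rv)$, and your contraction estimates are fine. However, both of your bootstrap steps have genuine gaps, and they are precisely the places where the paper's proof does something more delicate.

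For the $\sH^{\infty,0}$ upgrade, you propose running the contraction in each $\sH^{k',0}$, $k' > k$, and intersecting. The problem is that each such application may require shrinking $\ve_\K$ and the ball radius $r$: the parametrix error $E$, the multiplication constants, and $\|(a,\phi)\|_{\sH^{k',1}}$ are all controlled for each fixed $k'$ but there is no a priori uniformity in $k'$, so nothing prevents $\ve_\K(k') \to 0$. The paper avoids this by shrinking $\ve_m$ \emph{once} to get a fixed point in $\ve^N\threeH^{k+1,0}$ together with invertibility of $I - dT_v$ there, and then, instead of re-running the contraction, differentiating the identity $v - T(v) = 0$ by a $b$-vector field $V$. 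Since $(I - dT_v)(V\cdot v) = -(V\cdot T)(v)$ and the commutators $[V,\ve^\delta E]$ and $[V,R]$ are bounded on the relevant spaces (a structural fact about the calculus of Appendix~C), one reads off $V\cdot v \in \ve^N\threeH^{k+1,0}$, hence $v \in \ve^N\threeH^{k+2,0}$, and so on with no further reduction of $\ve_m$.

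For the $\ve^\infty$ upgrade, your iteration ``if $v \in \ve^m$ then $T(v) \in \ve^{m+\min(\delta,1)}$'' does not get off the ground: you list $Q(Rv) \in \ve^{2m}\sH^{\infty,0}$ among the terms, and for $m = 0$ this gives $\ve^0$, so $\min(m+\delta,\, m+1,\, 2m,\, \infty) = 0$ and the fixed-point identity $v = T(v)$ yields no gain. Your stated conclusion dropped the $2m$ from the minimum. What actually works, and what the paper does, is to run the contraction \emph{directly} in $\ve^N\threeH^{k,0}$ for each $N$: because $\ve$ commutes with $E$ and $R$, conjugating $T$ by $\ve^N$ leaves the $\ve^\delta E$ and $2Q(u_1, R\cdot)$ Lipschitz constants unchanged and only shrinks the genuinely quadratic term, so the same $\ve_m$ and $R_m$ work uniformly in $N$; uniqueness of the fixed point in the unweighted space then forces $v \in \bigcap_N \ve^N\threeH^{k,0} = \ve^\infty\threeH^{k,0}$. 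You should also be more explicit, as the paper is, that the contraction is carried out in the Banach space $\threeH^{k,0}$ fiber by fiber over $m \in \K$ and $\ve_\K := \min_{m\in\K}\ve_m$ is taken at the end; stating it in the Fr\'echet space $\sH^{k,0}(\Z\rst_\U;\cdot)$ leaves the contraction step ill-defined.
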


\begin{proof}

For notational convenience for the remainder of this proof, we omit the
decoration $\gLam^\ast$ from the bundle, understand $\Z$ to be restricted over
$\K \times[0,\infty)$, and let $u_0 = (\backA,\backPhi)$ and $u_1 = (a,\phi)$,
so that $U = u_0 + u_1$ is the the formal gauge-fixed solution of the previous
section:
\[
	\cB(U) \in \ve^\infty \sH^{\infty,0,0}(\Z; \adP).
\]
We seek $u \in \ve^\infty \sH^{\infty,0}(\Z; \adP)$ such that $\cB(U +
u) = 0$, for sufficiently small $\ve$. Expanding the gauge-fixed Bogomolny
equation into background, linear and quadratic parts, we have
\[
\begin{aligned}
	\cB(U + u) &= \cB(u_0) + L_{u_0}(u_1 + u) + Q(u_1 + u)
	\\& = \cB(U) + L_{u_0}u + Q(u_1 + u) - Q(u_1).
\end{aligned}
\]
Here $\cB(U) = \cO(\ve^\infty)$, $Q$ is the zeroth order quadratic term, and
$L_{u_0} = L$ is the linear operator to which we constructed a right parametrix
above. It follows from the multiplicativity result in Theorem~\ref{T:gauge_spaces}
that $u \mapsto Q(u_1 + u) - Q(u_1)$ is a bounded map from $\ve^N\sH^{k,1}(\Z; \adP)$
to $\ve^N\sH^{k,0}(\Z; \adP)$.

We initially seek a solution of the form $u = Rv$, $v \in \ve^N
\sH^{k,0}(\Z; \adP)$ for fixed $N$ and $k > 2$,
which should satisfy
\[
\begin{gathered}
	0 = \cB(U) + v - \ve^\delta E v + Q(u_1 + Rv) - Q(u_1),
	\\ \iff v = Tv := \ve^\delta Ev - \cB(U) - Q(u_1 + Rv) + Q(u_1).
\end{gathered}
\]
Restricting consideration to a single fiber $\fZ$ over $m \in \K$ for the moment,
we claim that, for $\ve_m$ sufficiently small, $T$ is a contraction mapping on
a ball of sufficiently small radius in the Hilbert space
$\ve^N\threeH^{k,0}(\fZ\rst_{[0,\ve_m]}; \adP)$. Indeed, as a
bounded operator on the latter space, $\norm{\ve^\delta E} \leq \ve_0^\delta C$ for
some $C > 0$, and $G(v) := Q(u_1 + Rv) - Q(u_1)$ vanishes at $0$ along with its
derivative, hence by the mean value theorem, there exists $R_{m} > 0$ such that
\[
	\norm{v} \leq R_{m} \implies 
	\norm{G(v)} \leq \frac 1 3\norm{v}, \quad \norm{G'_v v'} \leq \frac 1 3\norm{v'}.
\]
Then taking $\ve_m$ small enough that $\ve_m^\delta C < \tfrac 1 3$ and $\norm{\cB(U)} < \tfrac 1 3$,
it follows that 
\begin{multline*}
	T : B(0,R_{m}) \subset \ve^N \threeH^{k,0}(\fZ\rst_{[0,\ve_m]}; \adP)
	\to  B(0,R_{m}) \subset \ve^N \threeH^{k,0}(\fZ\rst_{[0,\ve_m]}; \adP)
\end{multline*}
is a contraction mapping, hence has a unique fixed point $v$. By uniqueness of
$v$, along with the fact that $\ve$ commutes with the linear operators $E$ and
$R$, it follows that in fact we may take $N \smallto \infty$ without altering the
size of $\ve_m$ or $R_m.$

To see that we may also take $k \smallto \infty$, we proceed as follows. We have
shown thus far that there exists a unique $v \in B(0,R_m) \subset
\ve^N\threeH^{k,0}$ such that $v - T(v) = 0$; in particular, 
\begin{equation}
	I - dT_v : \ve^N\threeH^{k,0} \to \ve^N\threeH^{k,0},
	\quad dT_v = \ve^\delta E - Q'(u_1 + Rv) \,R
	\label{E:lin_fixed_point_iso_k}
\end{equation}
is an isomorphism. Applying the above argument in the space $\ve^N
\threeH^{k+1,0}$ and shrinking $\ve_m$ and $R_m$ once if necessary, we
likewise conclude that $v \in \ve^N \threeH^{k+1,0}$, and that
\begin{equation}
	I - dT_v : \ve^N\threeH^{k+1,0} \to \ve^N \threeH^{k+1,0}
	\label{E:lin_fixed_point_iso_k1}
\end{equation}
is an isomorphism which coincides with \eqref{E:lin_fixed_point_iso_k} where
defined.

Now let $V \in \bV(\fZ)$ and apply it to $v - T(v) = 0$ to obtain
\[
	0 = V\cdot v - V\cdot (T(v)) = (I - dT_v)(V\cdot v) + (V\cdot T)(v),
\]
where 
\begin{multline*}
	(V\cdot T)(v) = [V,\ve^\delta E]\,v - Q'(u_1 + Rv)\,[V,R]\,v 
	\\+ (Q'(u_1) - Q'(u_1 + Rv))(V\cdot u_1) - V\cdot \cB(U) \in \ve^N \threeH^{k+1,0}.
\end{multline*}
Indeed, $[V,\ve^\delta E]$ is bounded on $\ve^N \threeH^{k+1,0}$ and
$[V,R]$ is bounded from $\ve^N \threeH^{k+1,0}$ to $\ve^N
\threeH^{k+2,0}$ as shown in Appedix~\ref{S:double}, and $u_1$ and
$\cB(U)$ are polyhomogeneous. We conclude that $V\cdot v \in \ve^N \threeH^{k+1,0}$, and since $V$ was
arbitrary, $v \in \ve^N \threeH^{k+2,0}$ and the linear map
\[
	I - dT_v : \ve^N \threeH^{k+2,0} \to \ve^N \threeH^{k+2,0}
\]
is an isomorphism coinciding with \eqref{E:lin_fixed_point_iso_k} and
\eqref{E:lin_fixed_point_iso_k1} where defined. Proceeding inductively, we
conclude that $v \in \ve^N \threeH^{\infty,0}$ without any further
reductions on $\ve_m$.

Applying the above argument fiber by fiber over $\K$ and taking $\ve_\K =
\min\set{\ve_m : m \in \K}$ gives $v \in
\ve^\infty\sH^{k,0}(\Z\rst_{\K\times[0,\ve_\K]}; \adP)$.
\end{proof}

Combining the previous results, we have proved the following.

\begin{thm}
Let $\U \subset \idmon$ be the set \eqref{E:U_in_idmon}, determined by choices
$(A_j,\Phi_j)$ of framed monopole solutions in $\monM_{k_i}$
$i = 0,\ldots,N$. Then for every compact set $\K \subset \U$, there exists
$\ve_\K > 0$ and a solution to $\Bogo(A,\Phi) = 0$ on $\Z \rst_{\K
\times[0,\ve_\K]}$ which is tautological over $\ve = 0$. The solution has the
form $(A_0 + a,\Phi_0 + \phi)$ where $(A_0,\Phi_0)$ is smooth and $(a,\phi) \in
\cAphg^\cF(\Z; \gLam^\ast\otimes \adP) \subset \sH^{\infty,1}(\Z;
\gLam^\ast\otimes \adP)$, with $\cF$ given as in
Theorem~\ref{T:formal_solution}.

In particular, $(A,\Phi)$ is smooth on $\Z\rst_{\K\times (0,\ve_\K)} =
\ol{\bbR^3}\times \K\times(0,\ve_\K)$ and therefore determines a smooth map
\begin{equation}
	\Psi : \K\times(0,\ve_\K) \to \monM_{k},
	\quad (\iota,\ve) \mapsto [(A,\Phi)\rst_{\iota,\ve}]
	\label{E:smooth_map_to_moduli}
\end{equation}

We denote the restriction of $\Psi$ to the sets $\U^c = \U \cap \idmonc$ and $\K^c = \K \cap \idmonc$ 
of initial data representing centered ideal monopoles by
\begin{equation}
	\Psi^c : \K^c\times(0,\ve_\K) \to \monM_{k},
	\quad (\iota,\ve) \mapsto [(A,\Phi)\rst_{\iota,\ve}]
	\label{E:smooth_map_from_centered}
\end{equation}
\label{T:smooth_map}
\end{thm}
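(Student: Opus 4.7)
The plan is to assemble the statement by concatenating the three preceding constructions—the pregluing configuration, the polyhomogeneous formal correction, and the Sobolev error correction—and to check that the assembled object indeed yields a smooth map into moduli space.

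First, I would invoke Proposition~\ref{P:global_pregluing} to obtain the pregluing pair $(\backA,\backPhi)$ on $\Z\rst_{\U\times[0,\infty)}$ associated to the choices $(A_j,\Phi_j)$, so that the open set $\U\subset\idmon$ is as in \eqref{E:U_in_idmon} and $(\backA,\backPhi)$ is tautological at $\ve=0$, is diagonal to infinite order at $\D\cup\B$, and satisfies $\Bogo(\backA,\backPhi)=\cO(\rho_\X\rho_\D^3\rho_\B^\infty)$. Next, given the compact set $\K\subset\U$, Proposition~\ref{P:formal_global} supplies a polyhomogeneous correction $(a,\phi)\in\cAphg^\cF(\Z\rst_{\K\times[0,\infty)};(\gLam^1\oplus\gLam^0)\otimes\adP)$ with $\cF$ as in \eqref{E:formal_soln_index}, improving the error of the Bogomolny equation to $\cO(\ve^\infty)$ while maintaining the Coulomb gauge condition (and preserving the tautological property at $\ve=0$, since each term added is $\cO(\ve)$). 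Finally, I would apply Proposition~\ref{P:true_soln} to produce $(\wt a,\wt\phi)\in\ve^\infty\sH^{\infty,1}(\Z\rst_{\K\times[0,\ve_\K]};(\gLam^1\oplus\gLam^0)\otimes\adP)$ and $\ve_\K>0$ such that
\[
\Bogo(\backA+a+\wt a,\backPhi+\phi+\wt\phi)=0\quad\text{on}\ \Z\rst_{\K\times[0,\ve_\K]}.
\]
Setting $(A_0,\Phi_0)=(\backA,\backPhi)$ and combining the two corrections into $(a,\phi)+(\wt a,\wt\phi)$ yields the decomposition asserted in the statement, with regularity $\cAphg^\cF\subset\sH^{\infty,1}$ from Appendix~\ref{S:sobolev}.

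The step that requires the most care is verifying that $(A,\Phi)$ descends to a \emph{smooth} map into $\monM_k$. Over $\Z\rst_{\K\times(0,\ve_\K)}$ the formal correction $(a,\phi)$ is smooth (polyhomogeneity is only relevant at the boundary faces $\D$, $\X$, $\B$, all of which are disjoint from the open set $\ve>0$ in the interior), and the true correction $(\wt a,\wt\phi)$ lies in $\sH^{\infty,1}$, hence is smooth in the parameters $\iota\in\K$ by the definition \eqref{E:global_sobolev} of the global Sobolev spaces and Sobolev embedding along the fibers $\fZ\rst_{\set{\ve\geq\ve'}}$ (which are compact subsets of the interior of $\fZ$ and therefore ordinary compact subsets of $\ol{\bbR^3}$). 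Since $\Z\rst_{\K\times(0,\ve_\K)}$ is canonically identified with $\ol{\bbR^3}\times\K\times(0,\ve_\K)$ via $\piX$ and $\fb$, the pair $(A,\Phi)$ restricts on each slice $\set{(\iota,\ve)}$ to a smooth monopole on $\ol{\bbR^3}$ representing a class $[(A,\Phi)\rst_{\iota,\ve}]\in\monM_k$, and the assignment $(\iota,\ve)\mapsto[(A,\Phi)\rst_{\iota,\ve}]$ depends smoothly on $(\iota,\ve)$ because $(A,\Phi)$ is a smooth section.

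Finally, the restrictions $\Psi^c$ to the centered locus $\K^c=\K\cap\idmonc$ are obtained by restricting $\Psi$ to $\K^c\times(0,\ve_\K)$; the only observation needed is that the construction of $(\backA,\backPhi)$, $(a,\phi)$, and $(\wt a,\wt\phi)$ is natural in the initial monopole data on each $\X_j$, so the restriction to centered initial data is automatic. I expect the only subtle point to be the claim that smoothness of the Sobolev correction $(\wt a,\wt\phi)$ in the base directions follows from $\sH^{\infty,1}$; this relies on Theorem~\ref{T:gauge_spaces} together with the fact that fixed-point iteration in Proposition~\ref{P:true_soln} was performed simultaneously over all $k$ and all parameters in $\K$, yielding a solution whose base-parameter derivatives of all orders remain in the appropriate Sobolev space.
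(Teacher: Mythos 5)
Your proof is correct and takes essentially the same approach as the paper, which simply states ``Combining the previous results, we have proved the following'' and offers no further argument: the theorem is a direct concatenation of Proposition~\ref{P:global_pregluing}, Proposition~\ref{P:formal_global}, and Proposition~\ref{P:true_soln}, exactly as you have laid out. One small imprecision: the sublevel sets $\set{\ve\geq\ve'}$ inside $\fZ$ are not compact subsets of the interior of $\fZ$, since they still reach the boundary face $\fB$; the smoothness up to and including that face instead comes from the fact that $\threeH^{\infty,1}$ controls arbitrarily many $\fu$-derivatives (which are full $\bV$-derivatives along the fibers and so include all directions tangent to $\fB$), together with the smooth dependence on the $\idmon$-parameters built into the definition \eqref{E:global_sobolev}. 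With this adjustment your argument is complete and matches the paper's intent.
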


We show below that $\Psi^c$ is a local diffeomorphism onto its image (for
possibly smaller $\ve_\K$), and then compute the metric asymptotics to leading order
in $\ve$. However, it is convenient for the metric computation to allow
variation in the centers of the ideal monopole data, hence our defining $\Psi$
as we have.

\begin{thm}
For possibly smaller $\ve_\K$, the map \eqref{E:smooth_map_from_centered} is a local diffeomorphism
onto its image.
\label{T:local_diffeo}
\end{thm}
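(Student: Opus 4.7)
The plan is to prove injectivity of $d\Psi^c_{(\iota,\ve)}$ for all $(\iota,\ve) \in \K^c\times(0,\ve_\K)$ after possibly shrinking $\ve_\K$; a dimension count gives $\dim\idmonc_{\ul k} = 4N + 4k_0 + \sum_{j\geq 1}(4k_j-3) - (N+1) = 4k-1$, so both source and target have dimension $4k$, and injectivity will yield the local diffeomorphism property via the inverse function theorem. Using the identification \eqref{E:prod_tangt_space}, in which $\cE_N^\ast$ and the $\ve$-direction combine via $\cE_N^\ast\times(0,\ve_\K)\cong \cC_N^\ast$ and Proposition~\ref{P:scTcfg_frame}, I would write a tangent vector as
\[
	V = V_0 \oplus \bigoplus_{j=1}^N(V_j \oplus w_j),
	\quad V_0 \in T\monM_{k_0}, \ V_j \in T\moncM_{k_j}, \ w_j \in \bbR^3.
\]

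First I would extend $V$ to a local vector field, lift it to a fibered-boundary vector field on $\base$ using the connection splitting \eqref{E:phiT_sequence}, and further lift it to $\wt V \in \fuV(\Z)$ tangent to the fibers of $\piX$. Applying $\wt V$ to the true solution $(A,\Phi)$ of Theorem~\ref{T:smooth_map} gives a variation $(a,\phi) := \wt V\cdot(A,\Phi)$ which is fiberwise a solution of the linearized Bogomolny equation, and the differential is
\[
	d\Psi^c_{(\iota,\ve)}(V) = \Pi_{(\iota,\ve)}(a,\phi),
	\quad \Pi = I - \gauD_{A,\Phi}G\gauD^\ast_{A,\Phi},
\]
with $G$ from Proposition~\ref{P:Coulomb_linear} and $\Pi$ a continuously varying family over $\K^c\times[0,\ve_\K]$ by Proposition~\ref{P:transverse_to_gauge}.

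The core of the argument is to analyze the restrictions of $(a,\phi)$ to the faces $\fX_j$ of $\fb^{-1}(\iota,0)$: by the tautological property \eqref{I:pregluing_tautological}, one expects $(a,\phi)\rst_{\fX_0} = V_0$, and via the rescaling $z = \zeta_j/\ve + v$, $(a,\phi)\rst_{\fX_j} = V_j + \sum_{i=1}^3 w_j^i \tau_i^{(j)}$ for $j\geq 1$, where the $\tau_i^{(j)}$ are the translation tangent vectors on $\fX_j$ from Proposition~\ref{p2.23.11.15}. Since $T\moncM_{k_j}\oplus\langle\tau_1^{(j)},\tau_2^{(j)},\tau_3^{(j)}\rangle = T\monM_{k_j}$ by that proposition, the map $V \mapsto \bigoplus_j(a,\phi)\rst_{\fX_j}$ will then be an isomorphism onto the $4k$-dimensional space $\bigoplus_{j=0}^N T\monM_{k_j}$, no nonzero element of which is pure gauge; injectivity of $d\Psi^c$ for sufficiently small $\ve > 0$ will follow by continuity of $\Pi$ and $(a,\phi)$ as $\ve \to 0$. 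The hard part will be making this $\ve\to 0$ analysis rigorous within the polyhomogeneous/fibered-boundary framework of \S\ref{S:global}: the variation $(a,\phi)$ is only polyhomogeneous (not smooth) at $\ve = 0$, and the Coulomb-gauge projection $\Pi$ must be controlled uniformly in the split Sobolev spaces of \S\ref{Ss:sobolev}. I would handle this by combining the uniform invertibility from Proposition~\ref{P:Coulomb_linear} with a compactness argument over $\K^c$, shrinking $\ve_\K$ further if needed to absorb error terms.
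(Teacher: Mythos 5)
Your proposal is correct and follows essentially the same strategy as the paper: dimension count reduces the claim to an immersion statement, the differential is identified as the Coulomb projection of the $A$-covariant derivative of $(A,\Phi)$ along a lift $\wt V$, and the nonvanishing of this projection at $\ve = 0$ is tied to the tautological property of the solution and propagated by continuity. The one structural difference is that the paper compresses your steps 5--7 into a single citation of Proposition~\ref{P:transverse_to_gauge} (whose proof is exactly your boundary-face analysis), so your write-up effectively re-derives that proposition inline; this costs length but gains self-containedness.

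Two small points to fix: (i) you write ``$\wt V \in \fuV(\Z)$ tangent to the fibers of $\piX$,'' but $\fuV(\Z)$ consists of vector fields tangent to the fibers of $\fu$, which a lift of $V$ is \emph{not}---you want a lift of $V$ along $\fu$ that is tangent to the fibers of $\piX$, which is a complement to $\fuV(\Z)$, not a subspace of it; (ii) the phrase ``no nonzero element of which is pure gauge'' applied to $\bigoplus_j T\monM_{k_j}$ is slightly off, since these tangent spaces are already gauge quotients---what you want to say is that each nonzero restriction $(a,\phi)\rst_{\fX_j}$ lies in the Coulomb slice for $(A_j,\Phi_j)$ and hence has nonzero $\Pi$-projection, so $\Pi(a,\phi)\rst_{\ve=0}\neq 0$. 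Neither is a gap in the argument.
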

\begin{proof}
Since $\K\times(0,\ve_\K)$ and $\monM_{k}$ are both smooth manifolds of
dimension $4 k$, it suffices to verify that $\Phi^c$ is an immersion. Letting $V
\in T_p(\K\times(0,\ve_\K))$ be a nonzero tangent vector, we may regard it 
as an element of $\phiT_p(\K\times[0,\ve_\K))$ and extend it locally to a vector field $\wt V$.
%which is constant in the translation (i.e., $\bncfg N$) directions in $\K\times[0,\ve_\K)$, 
%in particular in $\ve$. 
The derivative is given by
\[
	D\Psi^c_p V = [(\iota_{\wt V} F_A,\nabla_{\wt V}\Phi) \rst_{p}] \in T_{[A,\Phi]} \monM_k,
\]
where have used the connection $A$ to lift $\wt V$ to act on $(A,\Phi)$.
As $(A,\Phi)$ are a smooth family of solutions to $\Bogo(A,\Phi) = 0$, it
follows that $(a,\phi) := \wt V\cdot (A,\Phi)$ satisfies
$D\Bogo_{A,\Phi}(a,\phi) = 0$. To see that $[(a,\phi)] \neq 0 \in T_{[A,\Phi]}
\monM_k$, it suffices to verify that $(a,\phi)$ are transverse to the gauge
orbit. However, shrinking $\ve_\K$ if necessary, this follows from
Proposition~\ref{P:transverse_to_gauge} since $(A,\Phi)$ satisfies the
tautological property. Thus, for some $\ve_\K > 0$, 
\[
	V \neq 0 \implies d\Phi_p V \neq 0, \quad p \in \K \times (0,\ve_\K),
\]
so that $\Phi^c$ is an immersion.
\end{proof}

%\section{True solution} \label{S:true}
%\input{true}

\section{The metric}
\label{S:metric}
In this section we show how to compute the monopole metric to leading
order in $\ve$ at a monopole $m(\ve)$, say, which is in the image of
our `gluing map' $\Psi$ from Theorem~\ref{T:smooth_map}.  In
particular we shall prove Theorem~\ref{T:main_two}. 

Let $Z$ be the gluing space of \S\ref{S:Mgl} used in the construction
of a 1-parameter family of monopoles with the configuration data
$\ul \zeta = (\zeta_1,\ldots, \zeta_N)$ fixed.  The formal solution constructed in
that section (and then improved to a genuine solution in
\S\ref{S:true}) is of the general form
\begin{equation}\label{e2.8.11.15}
(A,\Phi) = (\ol{A},\ol{\Phi}) + (a,\phi)
\end{equation}
where $(\ol{A},\ol{\Phi})$ is a smooth pre-gluing
configuration\footnote{Elsewhere $(\ol{A},\ol{\Phi})$ has typically been used for
framings. We hope the reader will forgive us for the present change of usage.}
\begin{equation}\label{e3.8.11.15}
(a,\phi) \in \cA_s(Z),
\end{equation}
this space $\cA_s(Z)$ being the conormal space 
given in Theorem~\ref{T:formal_solution} (cf.\ \eqref{E:formal_soln_index}):
\begin{eqnarray}
F_B &=& \{(2,0), (3,1),\ldots,\}, \nonumber  \\
F_D &=& \{(2,0), (3,1),\ldots\}, \label{e10.8.11.15} \\
F_X &=& \{(1,0), (2,0),  (3,1),\ldots\}. \nonumber
\end{eqnarray}
(The index sets $F_B$ and $F_D$ start at order $2$ because of the
definition of pregluing configuration.)     In this section we
generally suppress the coefficient bundles $\gLam = \gLam^0\oplus
\gLam^1$ as well as $\frp$. We recall that the component in $\frp_1$
of $(a,\phi)$ is rapidly decreasing at $B\cup D$.

It will also be convenient to denote the restriction of $(A,\Phi)$ to
the fibre $\fb^{-1}(\ve)$ by $(A(\ve),\Phi(\ve))$.  We apply this
convention similarly to other data, so for example we shall write
$\monM_k(\ve)$ for the framed moduli space of monopoles of charge $k$
on $\fb^{-1}(\ve)$ and shall denote by $m(\ve)$ the point of
$\monM_k(\ve)$ represented by $(A(\ve),\Phi(\ve))$.  Recall that for
positive $\ve$ all fibres $\fb^{-1}(\ve)$ are canonically identified
with the original $\ol{\RR}^3$, so we can equally regard $m(\ve)$ as a
1-parameter family in $\monM_k$.

For the purposes of this section, it is convenient to allow 
uncentred ideal monopoles as initial data in our
construction, which is to say we consider the map \eqref{E:smooth_map_to_moduli};
instead, we restrict the configuration 
data to $\ul \zeta$. For clarity, we denote this restricted gluing map by
\begin{equation}\label{e1.20.11.15}
\psi = \Psi \rst_{\ul \zeta} : U_0\times U_1\times \cdots \times U_N \times(0,\ve_0)
 \longrightarrow \monM_k
\end{equation}
for bounded open neighhbourhoods $U_j$ of $m_j$ in $\monM_{k_j}$.  
%We use $\psi$ to avoid confusion with the map $\Psi$  of
%Theorem~\ref{T:smooth_map}.
%\todo{You were unhappy here before. Is it OK now?}

Now consider a smooth one-parameter family of ideal monopoles
$\iota_t$.
As in Prop.~\ref{P:global_pregluing} we
may also assume that $\iota_t|D$ is independent of $t$.
We assume $\iota_0 = \iota$ is a centred ideal monopole and
that $\iota_t \in U_0\times U_1\times\cdots \times U_N$ is in the domain of \eqref{e1.20.11.15}.
Let
\begin{equation}
u_j = \left.\frac{\rd}{\rd t}\left( \iota_t|X_j\right)\right|_{t=0}.
\end{equation}
This is the tangent vector to $(A_j,\Phi_j) = \iota|X_j$ in the
$1$-parameter family $\iota_t$. We may assume that $u_j$ is in Coulomb
Gauge with respect to $(A_j,\Phi_j)$ so that 
\begin{equation}
L_j u_j =0 \mbox{ on }\mathring{X}_j
\end{equation}
where as before, $L_j$ is the linearization/gauge-fixing operator
associated to $(A_j,\Phi_j)$ (cf.\ \S\ref{S:local}).

Our 1-parameter family $\iota_t$ gives
rise to a smooth $1$-parameter family $(\ol{A}_t,\ol{\Phi}_t)$ of pregluing
configurations and hence a smooth $1$-parameter family of solutions
\begin{equation}\label{e12.8.11.15}
(A_t, \Phi_t) = (\ol{A}_t,\ol{\Phi}_t) + (a_t,\phi_t).
\end{equation}
Then the derivative $D\psi$ of $\psi$  assigns to $(u_0,\ldots,u_n)$
the field
\begin{equation}\label{e4.8.11.15}
u(\ve) = \left.\left(\left.\frac{d}{d t}(A_t,\Phi_t)\right|_{t=0}\right)\right|_{\fb^{-1}(\ve)}.
\end{equation}
on $\fb^{-1}(\ve)$ which represents  a tangent vector $[u(\ve)]$ to $\monM_k(\ve)$.

\begin{lem}
If $u$ is as defined in \eqref{e4.8.11.15}, then 
\begin{equation}\label{e5.8.11.15}
u \in \cA'_s(Z),\; u|X_j = u_j,
\end{equation}
where the index sets of $\cA'_s(Z)$ are
\begin{eqnarray}
F'_B &=& \{(2,0), (3,1),\ldots,\}, \nonumber  \\
F'_D &=& \{(2,0), (3,1),\ldots\}, \label{e1a.8.11.15} \\
F'_X &=& \{(0,0), (1,0), (2,0),  (3,1),\ldots\}. \nonumber
\end{eqnarray}
\end{lem}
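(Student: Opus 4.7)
The plan is to exploit the decomposition \eqref{e12.8.11.15} and analyze the two $t$-derivatives separately. Write
\[
  u = v + w, \quad v := \frac{d}{dt}(\ol A_t,\ol \Phi_t)\Big|_{t=0}, \quad w := \frac{d}{dt}(a_t,\phi_t)\Big|_{t=0},
\]
so that the desired regularity is obtained by combining the two contributions.

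First I would analyze $v$, which is straightforward. The pregluing family can be chosen smoothly in $t$, so $v$ is \emph{smooth} on $Z$. The restriction of a pregluing configuration to $X_j$ is the ideal monopole datum $(A_j,\Phi_j)|_{X_j}$ (in the gauge in which we have built the pregluing), so $v|X_j = u_j$, giving a leading $(0,0)$ term at $X_j$. Since we have arranged $\iota_t|D$ to be independent of $t$, $v|D = 0$; moreover, the first-order Taylor term of $\ol\Phi_t$ in $\rho_D$ is the abelian Higgs field $\Phi_1$ built from $\iota_t|D$ (cf.\ the proof of Proposition~\ref{P:initial_data_map}), which is likewise $t$-independent. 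Hence $v$ vanishes to order $2$ at $D$. Finally, the framing at $B$ is fixed and the extensions of the pregluing near $B$ in the proof of Proposition~\ref{P:initial_data_map} are $\ve$-independent and $t$-independent, so $v$ vanishes rapidly at $B$. Altogether $v \in \cA'_s(Z)$ with leading index data exactly $(F'_B,F'_D,F'_X) = (\{(2,0),\dots\},\{(2,0),\dots\},\{(0,0),(1,0),\dots\})$.

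Next I would analyze $w$ by showing that the formal solution $(a_t,\phi_t)$ depends smoothly on $t$ as a polyhomogeneous family with the index sets $(F_B,F_D,F_X)$ of \eqref{e10.8.11.15}. For the formal part, the iteration in Theorem~\ref{T:formal_solution} produces $(a_t,\phi_t)$ as a sum of terms obtained by repeatedly applying the solution operators of Lemma~\ref{L:solvability} to the error; each such operator is uniquely determined (by the orthogonality to $\Null(L_\fX)$ and by the analogous normalization at $\fD$) and thus preserves smooth dependence in the parameter $t$. For the true solution, the correction of Proposition~\ref{P:true_soln} is produced by a contraction mapping whose linearization $I - dT_v$ is invertible on every $\sH^{k,0}$; the implicit function theorem then yields smooth $t$-dependence, and the inductive regularity argument there upgrades this to smooth $t$-dependence in $\sH^{\infty,0}$, hence in the polyhomogeneous space $\cA_s(Z)$. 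Differentiating at $t=0$ therefore gives $w \in \cA_s(Z)$.

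Finally I would combine the two. Since $F'_B = F_B$, $F'_D = F_D$, and $F'_X = F_X \cup \{(0,0)\}$, we have $\cA_s(Z) \subset \cA'_s(Z)$ and smooth functions on $Z$ vanishing to second order at $B\cup D$ with smooth leading term at $X_j$ are also in $\cA'_s(Z)$; hence $u = v + w \in \cA'_s(Z)$. For the restriction to $X_j$, note that $w|X_j = 0$ because $F_X$ starts at $(1,0)$, while $v|X_j = u_j$ by construction, so $u|X_j = u_j$. The main obstacle is the second step, namely ensuring that the nonlinear correction $(a_t,\phi_t)$ depends smoothly on $t$ without loss of polyhomogeneous regularity; this is where we must invoke uniqueness in the formal construction and the parameter-dependent implicit function theorem in the Sobolev setting of \S\ref{S:true}.
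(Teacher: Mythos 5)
Your decomposition $u = v + w$ and the index-set estimates for each piece reproduce exactly the paper's argument: the paper likewise shows that the pregluing derivative lies in $\rho_B^2\rho_D^2C^\infty(Z)$ and restricts to $u_j$ on $X_j$, and then simply asserts that the variation of $(a,\phi)$ lies in $\cA_s(Z)$. Your fuller justification of that second point, via uniqueness of the solution operators in the formal iteration and a parameter-dependent implicit function theorem for the true correction, is a reasonable filling-in of a step the paper leaves unexplained; your passing claim that $v$ vanishes \emph{rapidly} at $B$ is stronger than the $\rho_B^2$ the paper records, but (since it only shrinks the index set there) it does not affect the conclusion.
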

\begin{proof}
Because the framings at the corners are independent of $t$, we can take the
restriction to $D$ of the pregluing configuration also to be
independent of $t$, as in the proof of Prop.~\ref{P:global_pregluing}.
Then we have
\begin{equation}\label{e6.8.11.15}
\left.\frac{d}{d t}(A_t,\Phi_t)\right|_{t=0} \in
\rho_B^2\rho_D^2C^\infty(Z) \subset \cA'_s(Z)
\end{equation}
and its restriction to $X_j$ is just the variation
$u_j=(\dot{A}_j,\dot{\Phi}_j)$ of $\iota_0|X_j$.  The variation
in $(a,\phi)$ lies in $\cA_s(Z) \subset \cA'_s(Z)$ and so the result follows.
\end{proof}

To compute the length-squared of $[u(\ve)]$ with respect to the
monopole metric $G(\ve)$ on $\monM_k(\ve)$, we need to replace
$u$ by a representative of the same element of
$T_{m(\ve)}\monM_k(\ve)$ but which is
in Coulomb gauge with respect to $(A(\ve),\Phi(\ve))$.  This is
accomplished in the following:

\begin{lem}\label{l1.23.11.15}
Given the above data, there exists an infinitesimal gauge
transformation $\xi$ over $Z$ such that
\begin{equation}\label{e1.8.11.15}
d^*_{A,\Phi}\left(u - d_{A,\Phi}\xi\right) = 0,
\end{equation}
where
$\xi$ is smooth over the fibers $\fb^{-1}(\ve)$ for $\ve > 0$ and
vanishing over $\fb^{-1}(0)$.
%\begin{equation}\label{e2a.8.11.15}
%\xi = O(\ve \rho_B^2\rho_D)
%\end{equation}
\end{lem}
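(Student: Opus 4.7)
The plan is to produce $\xi$ as the unique solution of the elliptic equation
\[
d^*_{A,\Phi}\,d_{A,\Phi}\,\xi \;=\; d^*_{A,\Phi}\,u
\]
on $Z$, after which $u - d_{A,\Phi}\xi$ will automatically satisfy \eqref{e1.8.11.15}. The first task is to check that the right hand side vanishes identically at the boundary $\fb^{-1}(0) = D\cup X_0\cup\cdots\cup X_N$. On each $X_j$, the normal operator of $d^*_{A,\Phi}$ along the fibres of $\fb$ is $d^*_{A_j,\Phi_j}$ (Proposition~\ref{P:L_rstn_X} applied to the second row of $L_{(A,\Phi)}$), and $u\rst X_j = u_j$ was chosen in the Coulomb slice of $(A_j,\Phi_j)$, so $d^*_{A_j,\Phi_j}u_j = 0$. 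On $D$, since the pregluing configurations can be arranged independent of $t$ (as in the construction of Proposition~\ref{P:global_pregluing}), the observation \eqref{e6.8.11.15} gives $u\rst D = 0$, and the rescaled restriction of $d^*_{A,\Phi}$ to $D$ therefore sends $u\rst D$ to $0$. Since $u$ is polyhomogeneous, vanishing on every face of $\fb^{-1}(0)$ is equivalent to divisibility by $\ve = \rho_D\rho_X$, so we may write $d^*_{A,\Phi}u = \ve\, f$ with $f \in \sH^{\infty,0}(Z;\adP)$.

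Second, I would invoke Proposition~\ref{P:Coulomb_linear}: over any set of the form $\{\ul\zeta\}\times[0,\ve_0]\subset\base$, the operator
\[
d^*_{A,\Phi}d_{A,\Phi} = \Delta_A + (\ad\Phi)^*(\ad\Phi)\colon \sH^{k,2}(Z;\adP) \longrightarrow \sH^{k,0}(Z;\adP)
\]
is an isomorphism with $k$-independent inverse $G$. Because $d_{A,\Phi}$ is built entirely from fibrewise ($\gamma$-type) vector fields, the operator $d^*d$ commutes with multiplication by $\ve$; hence setting $\xi' = Gf \in \sH^{\infty,2}(Z;\adP)$, one has $d^*d(\ve\xi') = \ve\, d^*d\,\xi' = \ve f = d^*u$. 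The uniqueness clause in Proposition~\ref{P:Coulomb_linear} then forces the solution $\xi = G(d^*u)$ to equal $\ve\xi'$, so in particular $\xi \in \ve\,\sH^{\infty,2}(Z;\adP)$ and $\xi\rst\fb^{-1}(0) = 0$. Fibrewise smoothness of $\xi$ for $\ve > 0$ follows from standard scattering-elliptic regularity for $\Delta_A + (\ad\Phi)^*(\ad\Phi)$ on $\ol{\bbR^3}$ (nondegeneracy at the boundary sphere is supplied by the non-vanishing of $\Phi$ there).

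The main point requiring care is matching this Sobolev $\xi$ to the polyhomogeneous/conormal framework in which $u$ was built, since the metric computation of \S\ref{S:metric} requires explicit asymptotic information about $u - d_{A,\Phi}\xi$ at $\fb^{-1}(0)$. This is obtained by the standard formal-solution argument: expand $d^*d\,\xi = d^*u$ as an asymptotic series in $\ve$, solve at each order by inverting $d^*_{A_j,\Phi_j}d_{A_j,\Phi_j}$ on $X_j$ and its conic analogue on $D$ (both invertible on the appropriate weighted spaces as in Appendix~\ref{S:linear}), Borel-sum to get a polyhomogeneous formal solution, and identify it with $\xi$ by the uniqueness in Proposition~\ref{P:Coulomb_linear}. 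The resulting $\xi$ lies in $\ve\,\cA_s(Z)$, which both delivers the lemma and supplies the asymptotic control needed downstream.
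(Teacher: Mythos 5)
Your proof is correct, and it lands on exactly the same auxiliary elliptic equation and the same invertibility input (Proposition~\ref{P:Coulomb_linear} plus a formal expansion for polyhomogeneity), but the way you establish the crucial $\ve$-divisibility of $d^*_{A,\Phi}u$ is genuinely different from the paper's. The paper observes that $u$ is in Coulomb gauge with respect to the \emph{background} $(\ol A,\ol\Phi)$ on every fibre, so $d^*_{A,\Phi}u = d^*_{A,\Phi}u - d^*_{\ol A,\ol\Phi}u = \{(a,\phi),u\}$ is a quadratic term pairing $(a,\phi)=O(\ve)$ against $u=O(\rho_B^2\rho_D^2)$, giving $d^*_{A,\Phi}u=O(\ve\rho_B^4\rho_D^3)$. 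You instead argue face by face on $\fb^{-1}(0)$: the $\gamma$-normal operator at $\X_j$ is $d^*_{A_j,\Phi_j}$ (using the tautological property $(A,\Phi)|_{X_j}=(A_j,\Phi_j)$) and kills $u_j$ by the Coulomb-slice assumption, while $u=O(\rho_D^2)$ kills the contribution at $\D$. Both establish divisibility by $\ve=\rho_D\rho_\X$; the paper's route produces a sharper weight at $\B$ and $\D$ (which it quietly uses to conclude $\xi=O(\ve\rho_B^2\rho_D)$), whereas yours has the virtue of being purely boundary-restriction bookkeeping and makes the role of the normal operators transparent. Your observation that $\ve$ commutes with $\gDiff$ operators, hence $\xi=G(\ve f)=\ve Gf$, is a slightly cleaner way of getting the vanishing of $\xi$ at $\fb^{-1}(0)$ than the paper's reliance on the order estimate from the formal solution.

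Two small cautions, neither fatal. First, Proposition~\ref{P:Coulomb_linear} is stated for the pregluing configuration $(\backA,\backPhi)$; to apply it at the genuine solution $(A,\Phi)=(\backA,\backPhi)+(a,\phi)$ you should remark that the perturbation is small (it vanishes at $\ve=0$) so that invertibility carries over by Neumann series on $\K\times[0,\ve_\K]$ for small enough $\ve_\K$---the paper sidesteps this by constructing a formal solution first. Second, your phrase ``the rescaled restriction of $d^*_{A,\Phi}$ to $D$ therefore sends $u|D$ to $0$'' conflates two things: the honest point is simply that $d^*_{A,\Phi}\in\gDiff^1$ does not decrease order of vanishing at $\D$, so $u=O(\rho_\D^2)$ forces $d^*_{A,\Phi}u=O(\rho_\D^2)$; invoking the \emph{rescaled} (conic) normal operator on $\D$ would require dividing by $\ve$, which mixes up the orders at the corner $\D\cap\X_j$ and is unnecessary here.
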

\begin{proof}
(Cf.\ also Proposition~\ref{P:transverse_to_gauge}.)  It is convenient
in this discussion to use the notation $f =
O(\rho_B^a\rho_D^b\rho_X^c)$ to mean that $f$ has a polyhomogeneous
conormal expansion on $Z$ with smooth index sets of the kinds that
have appeared throughout this paper, with the lower bounds
$(a,0)_B$, $(b,0)_D$ and $(c,0)_X$.  Thus
$u = O(\rho_B^2\rho_D^2)$ by virtue \eqref{e5.8.11.15}.  Recall also
the notation
$$
d_{A,\Phi}\xi = (d_A\xi, [\Phi,\xi])
$$
for the infinitesimal action of the gauge group on monopole configurations.

By construction, on each fibre $u$ is in Coulomb gauge with respect to
$(\ol{A},\ol{\Phi})$,
\begin{equation}\label{e51.8.11.15}
d^*_{\ol{A},\ol{\Phi}}u|\fb^{-1}(\ve) = 0
\end{equation}
and 
\begin{equation}\label{e52.8.11.15}
(d^*_{A,\Phi}u  -  d^*_{\ol{A},\ol{\Phi}}u)|\fb^{-1}(\ve)  = \{(a,\phi),u\}|\fb^{-1}(0)
\end{equation}
by \eqref{e2.8.11.15}, where $\{\cdot,\cdot\}$ on the RHS is some
bilinear operation with smooth coefficients. Hence
\begin{equation}\label{e53.8.11.15}
d^*_{A,\Phi}u  = O(\ve \rho_B^4\rho_D^3).
\end{equation}
The equation
\begin{equation}\label{e54.8.11.15}
d^*_{A,\Phi}(u - d_{A,\Phi}\xi) = 0
\end{equation}
can be solved with $\xi = O(\ve \rho_B^2\rho_D)$ and $d_{A,\Phi}\xi$ therefore
$O(\ve \rho_B^3\rho_D^2)$. Indeed, we may first construct a formal solution,
proceeding as in \S\ref{S:formal_formal}, giving the asymptotic estimate, and
then remove the rapidly vanishing error using
Proposition~\ref{P:Coulomb_linear}.    The above estimates give that
$\xi$ vanishes on $\fb^{-1}(0)$, completing the proof.

\end{proof}

With $\xi$ from the Lemma, define
\begin{equation}
\wt{u} = u - d_{A,\Phi}\xi
\end{equation}
Then for each positive $\ve$,
\begin{equation}
[\wt{u}(\ve)] = [u(\ve)] \in T_{m(\ve)}\monM_k(\ve)
\end{equation}

In this way we define a family of mappings
\begin{equation} \label{E:tangent_mappings}
\wt{f}_\ve : 
T_{m_0}\monM_{k_0} \times
T_{m_1}\monM_{k_1} \times
T_{m_N}\monM_{k_N}  \to T_{m(\ve)}\monM_{k}(\ve)
\end{equation}
where 
\begin{equation}
\wt{f}_\ve(u_0,u_1,\ldots,u_N) = \wt{u}(\ve)
\end{equation}
which represents the differential, $D\psi$, of \eqref{e1.20.11.15},
the advantage being that
\begin{equation}\label{e1.23.11.15}
\wt{u}(\ve)\neq 0 \Leftrightarrow [\wt{u}(\ve)] \neq 0.
\end{equation}

We note
\begin{lem} The map $\wt{f}_\ve$ is an isomorphism for sufficiently
  small $\ve>0$.  In particular $\Psi$ is a local diffeomorphism for
  sufficiently small $\ve > 0$.
\end{lem}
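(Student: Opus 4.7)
Since both source and target of $\wt f_\ve$ have real dimension $4k$, by \eqref{e1.23.11.15} it suffices to show that for all sufficiently small $\ve > 0$, the identity $\wt u(\ve) \equiv 0$ on $\fb^{-1}(\ve)$ forces $u_j = 0$ for every $j$. The plan is a rescaling/compactness argument, exploiting the fact that $\wt u$ is polyhomogeneous conormal on $Z$ with prescribed restrictions to the faces $X_j$. The crucial structural observation is that $\wt u \in \cA'_s(Z)$ still satisfies $\wt u|_{X_j} = u_j$. Indeed, by Lemma~\ref{l1.23.11.15} the gauge parameter $\xi$ is of order $\ve \rho_B^2 \rho_D$, so $d_{A,\Phi}\xi$ is of order $\ve \rho_B^3 \rho_D^2$ and in particular vanishes on each $X_j$; the restriction $\wt u|_{X_j}$ therefore agrees with $u|_{X_j} = u_j$.

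Assuming the desired injectivity fails, I extract a sequence $\ve_n \downarrow 0$ together with, for each $n$, a tuple $(u_0^{(n)}, \ldots, u_N^{(n)})$ of unit norm in $\prod_j T_{m_j}\monM_{k_j}$ (relative to any fixed inner product) with $\wt u^{(n)}(\ve_n) = 0$ on $\fb^{-1}(\ve_n)$. By compactness of the unit sphere, after passing to a subsequence $(u_0^{(n)},\ldots,u_N^{(n)}) \to (u_0,\ldots,u_N)$ in unit norm. Smoothness of the parameterized gluing construction of \S\ref{S:true}, combined with the polyhomogeneity of $\wt u$ on $Z$, implies that for each $j$ and each compact $K \subset \bbR^3 \cong \mathring X_j$, the evaluations
\[
    w \longmapsto \wt u^{(n)}(\ve_n)\bigl(\zeta_j/\ve_n + w\bigr)
\]
converge uniformly on $K$ to $u_j(w)$ as $n \to \infty$. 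Since the left side vanishes identically, each $u_j \equiv 0$, contradicting the unit-norm condition.

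The main obstacle in executing this plan cleanly is justifying the rescaled limit above, i.e., showing that the $n$-dependent sections $\wt u^{(n)}$ organize into a well-behaved conormal family on $Z$ whose evaluation near $X_j$ commutes with translation by $\zeta_j/\ve_n$. For this I would appeal to the smoothness of parameterized Coulomb gauge fixing (Theorem~\ref{T:coulomb}) applied to a smoothly varying one-parameter family of inputs, together with the explicit identifications $\gT Z|_{X_j} \cong \scT X_j$ from Proposition~\ref{P:retricting_gT} which trivialize the ambient bundles near the faces. The second assertion, that $\Psi$ is a local diffeomorphism, follows in the centered setting (where source and target dimensions both equal $4k$) by combining the isomorphism of $\wt f_\ve$ on the ideal-monopole directions with the transversality of the $\ve$-direction to the gauge orbits established in Proposition~\ref{P:transverse_to_gauge}.
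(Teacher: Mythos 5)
Your proposal is sound in its essential logic but takes a genuinely different route from the paper, and, in doing so, introduces more machinery than the situation calls for.  The paper argues directly: if some $u_j\neq 0$, then since $\wt u\in\cA'_s(Z)$ is polyhomogeneous conormal on $Z$ with $\wt u|_{X_j}=u_j$ (exactly the observation you make via the orders in Lemma~\ref{l1.23.11.15}), the restriction $\wt u(\ve)$ to $\fb^{-1}(\ve)$ is close to $u_j$ near $X_j$ and hence nonzero for small $\ve$; then \eqref{e1.23.11.15} converts $\wt u(\ve)\neq 0$ into $[\wt u(\ve)]\neq 0$, giving injectivity, and dimension counting gives the isomorphism.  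You instead argue by contradiction: you extract a sequence $\ve_n\downarrow 0$ and unit-norm tuples $u^{(n)}$ annihilated by $\wt f_{\ve_n}$, pass to a subsequential limit by compactness of the unit sphere, and try to conclude that the limit tuple is zero by a rescaled-limit argument near each $X_j$.  This is correct in outline, but the ``rescaled limit'' step you flag as the main obstacle is in fact handled more simply by noting that $\wt f_\ve$ is \emph{linear} in the tuple: fixing a basis of $\prod_j T_{m_j}\monM_{k_j}$, $\wt u = \wt f_\cdot(u)$ is a fixed finite linear combination of polyhomogeneous sections on $Z$, so the estimate $\wt u(\ve)(\zeta_j/\ve + \cdot)-u_j = O(\ve)$ holds automatically with constants depending only on the basis, uniformly over unit tuples.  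Once that uniformity is noted, the compactness/contradiction scaffolding is unnecessary and you could have argued directly as the paper does.  What your approach ``buys'' is the explicit extraction of a uniform threshold $\ve_0$, which the paper leaves implicit in the smooth parametrization; what it costs is a more indirect argument with a convergence step that, as written, relies on a somewhat vague appeal to smoothness of the parametrized gluing construction rather than the cleaner linearity observation.
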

\begin{proof}  If $u_1$, say, is non-zero, then from the estimates in
Lemma~\ref{l1.23.11.15}, it follows that $\wt{u}(\ve)\neq 0$ near
$X_1$. By \eqref{e1.23.11.15}, this means that $[\wt{u}(\ve)]\neq 0$
and so the map is injective.  Since $\wt{f}_\ve$ is a linear map
between vector spaces of the same dimension, it is an isomorphism.
\end{proof}

We can now prove Theorem~\ref{T:main_two}.  We have seen in
Prop.~\ref{p2.23.11.15} that for a centred monopole $m$,  the metric
on $T_m\monM_k$ decomposes 
canonically as an orthogonal direct sum of $\RR^3$, with $2\pi k$
times the euclidean metric, and $T_m\monM_k^c$. It is therefore enough
to prove that the metric $G_\ve$ on $T_{m(\ve)}\monM_k(\ve)$  is
approximately equal to the product metric on the product of
$T_{m_j}\monM_{k_j}$. 

\begin{thm}
Let $f_\ve$ be as above.  Then $f_{\ve}$ is an approximate isometry.
\end{thm}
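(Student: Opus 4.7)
The plan is to compute the $L^2$ inner product defining $G_\ve$ on tangent vectors $\wt u(\ve), \wt v(\ve)$ arising from the lifts $\wt u, \wt v$ of Lemma~\ref{l1.23.11.15}, and to extract its leading behaviour as $\ve\downarrow 0$ by interpreting the integral over $\fb^{-1}(\ve)$ as the evaluation at $\ve$ of a polyhomogeneous pushforward along $\fb:Z\to[0,\ve_0)$. Concretely, I would write
\[
  \|f_\ve(\ul u)\|^2_{G_\ve} \;=\; \int_{\fb^{-1}(\ve)} |\wt u|^2_{\wt g}\, d\vol_{\wt g}
\]
and split $\wt u = u - d_{A,\Phi}\xi$. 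From Lemma~\ref{l1.23.11.15} we have $\xi = O(\ve\,\rho_B^2\rho_D)$, so $d_{A,\Phi}\xi = O(\ve\,\rho_B^3\rho_D^2)$; combined with $u\in\cA'_s(Z)$ the cross and $|d_{A,\Phi}\xi|^2$ terms contribute at strictly higher order in $\ve$, so it suffices to analyse $\fb_\ast(|u|^2\, d\vol_{\wt g})$.

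Next, I would exploit the polyhomogeneity of $u$ together with the structure of $\wt g=\piX^*g$ on $\gT Z$. By the pushforward theorem for polyhomogeneous sections through $\mathrm b$-fibrations, $\fb_\ast(|u|^2\,d\vol_{\wt g})$ is polyhomogeneous on $[0,\ve_0)$, with the coefficients in its expansion at $\ve=0$ determined by the restrictions of $|u|^2\, d\vol_{\wt g}$ to the hypersurfaces of $\fb^{-1}(0)$. On each face $X_j$, Proposition~\ref{P:retricting_gT} identifies $\wt g\rst_{X_j}$ with the standard scattering (Euclidean) metric, and the restriction of $u$ to $X_j$ is exactly $u_j$; a direct computation in local coordinates $(v,\ve)$ as in \eqref{e1.24.10.15} shows that the contribution from a neighbourhood of $X_j$ to the integral tends to $\int_{X_j}|u_j|^2\, d\vol_g = \|u_j\|^2_{G_{k_j}}$ as $\ve\downarrow 0$.

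The remaining contribution comes from a neighbourhood of $D$, and this is where the geometry of the blow-up does the work. Via \eqref{E:rescaled_restriction_D}, $\wt g\rst_D$ is $\ve^{-2}$ times the conic metric $g_\fD$ on $D$; consequently $d\vol_{\wt g}$ near $D$ equals $\ve^3$ times a smooth volume form on $D$ (times a smooth function of $\rho_D$). Combined with $u = O(\rho_D^2)$ in the $\frp_0$ component (the $\frp_1$ component is rapidly decreasing), the contribution from $D$ is $O(\ve)$. Summing over faces and invoking Proposition~\ref{p2.23.11.15} to identify the quaternionic subspace $\langle\tau_0,\ldots,\tau_3\rangle$ (whose orthogonal complement carries the $L^2$ metric of $\moncM_{k_j}$), we obtain
\[
  \|f_\ve(\ul u)\|^2_{G_\ve} \;=\; \sum_{j=0}^N \|u_j\|^2_{G_{k_j}} + O(\ve),
\]
which is the statement that $f_\ve$ is an approximate isometry.

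The main obstacle will be the bookkeeping near the corners $X_j\cap D$: one has to track the conflicting weights from $\rho_D$, $\rho_{X_j}$, and $\ve=\rho_D\rho_\fX$ simultaneously, and confirm that the index sets \eqref{e1a.8.11.15} of $u$ together with the rescalings in Proposition~\ref{P:lifted_metric} really do make the boundary term at the corner integrable and of order $O(\ve)$. This is precisely the kind of calculation the pushforward theorem is designed for; once the geometric identifications above are in place, the estimate reduces to explicit power counting in the boundary defining functions.
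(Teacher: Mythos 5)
Your overall strategy matches the paper's: express the $L^2$ norm as a pushforward through the b-fibration $\fb:Z\to[0,\ve_0)$ of a polyhomogeneous density, and read off the leading coefficient from the restrictions to the boundary hypersurfaces of $\fb^{-1}(0)$, with the $X_j$ supplying $\int|u_j|^2 d\mu_j = G_{k_j}(u_j,u_j)$ and $D$ contributing $O(\ve)$.

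There is, however, a real error in the exponent near $D$. You wrote that $d\vol_{\wt g}$ there is $\ve^3$ times a smooth volume form on $D$, but the sign is reversed: since $\wt g\cong \ve^{-2}(g_\fD + O(\ve))$ as a singular metric on $TD$ (Prop.~\ref{P:lifted_metric}), the associated density is $\ve^{-3}\,d\vol_{g_\fD}$ -- or, as the paper does it more cleanly, $\gT = \rho_B\rho_D\fbT$ forces $d\mu_e = (\rho_B\rho_D)^{-3}\mu_{\bo}$ with $\mu_{\bo}$ a smooth b-density. Combined with $|\wt u|^2 = O(\rho_B^4\rho_D^4)$, the b-density $|\wt u|^2 d\mu_e$ is then $O(\rho_B\rho_D)\mu_{\bo}$, precisely index $1$ at $D$, which is what yields the sharp $O(\ve)$ remainder under pushforward. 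Your $\ve^3$ happens to give a bound that is still $O(\ve)$, so the stated conclusion survives, but the intermediate order is off by $\ve^6$, and had the index sets \eqref{e1a.8.11.15} been less favourable the mistake would have masked a genuine divergence. Two smaller points: the splitting $\wt u = u - d_{A,\Phi}\xi$ and the treatment of cross terms is superfluous -- Lemma~\ref{l1.23.11.15} already hands you $\wt u = O(\rho_B^2\rho_D^2)$ with $\wt u|_{X_j} = u_j$, so you can work with $\wt u$ throughout; and Proposition~\ref{p2.23.11.15} is used in the paper \emph{before} the theorem to reduce it to the product-metric comparison on the factors $T_{m_j}\monM_{k_j}$, not inside the proof.
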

\begin{proof}
By the previous lemmas,
\begin{equation}
\wt{u} = O(\rho_B^2\rho_D^2) 
\end{equation}
and
\begin{equation}
\wt{u}|X_j = u_j.
\end{equation}

Hence the pointwise length-squared $|\wt{u}(\ve)|^2$ on $\fb^{-1}(0)$
is $O(\rho_B^4\rho_D^4)$ and its restriction to each $X_j$ is $|u_j|^2$. 

In order to do the integration, we have to multiply by the lift of the
euclidean density $\rd \mu_e$ to $Z$. Since $\gT = \rho_B\rho_D \fbT$, the
lift of the euclidean density has the form
$\rho_B^{-3}\rho_{D}^{-3}\mu_{\bo}$ where $\mu_{\bo}$ is a smooth
positive section of $\Lambda^3\, \fbT^*$, whose restriction to each $X_j$
is the euclidean volume element $\rd \mu_j$.  Hence the density
\begin{equation}
|\wt{u}|^2 \rd \mu_e  = O(\rho_B\rho_D)
\end{equation}
and its restriction to each of the $X_j$ is the density $|u_j|^2\rd
\mu_j$.  Performing the integration, we see that
\begin{equation}
G_{\ve}(\wt{u}(\ve),\wt{u}(\ve)) =
 \sum_{j=0}^N G_j(u_j,u_j) + O(\ve)
\end{equation}
as required.
\end{proof}

\subsection{Infinitesimal translations} \label{S:metric_transl}

We now consider the differential of the gluing map $\Psi$ or $\Psi^c$ with respect to variation
in the base parameters, $\bncfg N$. In light of Proposition~4.3, it suffices to consider
translations.

Thus, fix $j \in \set{1,\ldots,N}$, let $V = \xi \cdot \pa_{z_j} = \xi_1
\pa_{z^1_j} + \xi_2 \pa_{z^2_j} + \xi_3 \pa_{z^3_j} \in \scV(\bncfg N)$ be an
infinitesimal translation along the $j$th Euclidean factor. In particular, as
an ordinary vector field, $V$ vanishes at $\ncfgs N = \pa \bncfg N$. 

In order to lift this to $\Z$, we first lift $V$ to $\Z'$ using the product
structure on the interior, $\mathring \Z' = \bbR^3 \times \ncfg N$, and
extension by continuity.  As discussed at the end of \S\ref{S:mon_ideal}, we
may then use the connection on $\base$ induced by the canonical connection on
the Gibbons-Manton torus bundle to obtain a horizontal lift $\wt V \in
\cV(\Z)$.

\begin{lem}
$\wt V$ vanishes at $\D$ and $\X_i$, $i \neq j$, and with respect to the identification 
of $\X_j$ with $\ol{\bbR^3} \times \idmon$ induced by Lemma~\ref{L:triv_Xj}, 
we have
\begin{equation}
	\wt V \rst \cX_j = (- \xi \cdot \pa_z, 0), \quad \cX_j \cong \ol{\bbR^3} \times \cI.
	\label{E:lift_translation_to_Z}
\end{equation}
\label{L:lift_translation_to_Z}
\end{lem}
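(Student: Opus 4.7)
The plan is to compute the lift $V'$ of $V$ to $\Z'$ in local coordinates near each boundary face of $\Z'$, and then pass to the horizontal lift $\wt V$ on $\Z$ using the induced GM connection.

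On the interior $\mathring \Z' = \bbR^3 \times \ncfg N$, $V'$ is $\xi \cdot \pa_{z_j}$, acting only on the base. Using the interior relations $z_l = \zeta_l/\ve$, one obtains the basic identity $\pa_{z_j} = \ve\,\pa_{\zeta_j}$ (with $\ve$ and the remaining $\zeta_l$ held fixed). This identity holds in any coordinate chart near a boundary face of $\Z'$ that uses $\ve$-rescaled base variables, such as $\zeta = \ve z$ near $\D'$ or $w_i = z - z_i$ near $\X'_i$ with $i \neq j$.

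First I would handle $\D'$ and $\X'_i$ for $i \neq j$: in coordinates $(\zeta, \ul \zeta, \ve)$ near $\mathring \D'$ and $(w_i, \ul\zeta, \ve)$ near $\mathring \X'_i$, the identity above expresses $V'$ as $\ve\,\xi \cdot \pa_{\zeta_j}$, which vanishes at $\ve = 0$. Hence $V'$ vanishes on $\D'$ and on each $\X'_i$ with $i \neq j$. This is the easy half of the computation because no $w_i$ has $z_j$-dependence.

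Next, for $\X'_j$ I would switch to coordinates $(w_j = z - z_j, \ul\zeta, \ve)$. Now $\pa w_j/\pa z_j = -1$, giving
\[
	\pa_{z_j} = -\pa_{w_j} + \ve\,\pa_{\zeta_j}
\]
(with the same conventions on what is held fixed). Restriction to $\X'_j$ (where $\ve = 0$) leaves $V'|_{\X'_j} = -\xi\cdot \pa_{w_j}$, and under Lemma~\ref{L:triv_Xj} this is exactly $-\xi \cdot \pa_z$ on the $\ol{\bbR^3}$ factor of $\X'_j \cong \ol{\bbR^3}\times \ncfgs N$, with zero projection to $\ncfgs N$.

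Finally, since $\wt V$ is the horizontal lift of $V'$ for the bundle $\Z \to \Z'$ (whose fibers are those of $\idmon \to \ncfgs N$, carrying the connection induced from the GM connection), the vanishing of $V'$ at $\D'$ and at $\X'_i$, $i \neq j$, immediately gives $\wt V|_{\D} = 0$ and $\wt V|_{\X_i} = 0$. At $\X_j$ the crucial point is that $V'|_{\X'_j}$ lies purely in the $\ol{\bbR^3}$ factor and has no component in the $\ncfgs N$ direction; consequently, its horizontal lift carries no $\idmon$-fiber component, yielding $\wt V|_{\X_j} = (-\xi \cdot \pa_z, 0)$ under $\X_j \cong \ol{\bbR^3}\times \idmon$. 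The main subtlety is this last compatibility between the horizontal lift and the trivialization of $\X_j$, which reduces to the observation that the induced connection only acts nontrivially on vectors with nonzero projection onto $\ncfgs N$.
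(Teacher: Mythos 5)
Your proposal is correct and follows essentially the same approach as the paper: you express $\pa_{z_j}$ in the coordinates $(w_i,\ul\zeta,\ve)$ (resp.\ $(\zeta,\ul\zeta,\ve)$) adapted to each boundary face of $\Z'$, obtaining $V'=\ve\xi\cdot\pa_{\zeta_j}$ at $\D'$ and $\X'_i$ for $i\neq j$ and $V'=-\xi\cdot\pa_{w_j}+\ve\xi\cdot\pa_{\zeta_j}$ at $\X'_j$, then note that the horizontal lift with respect to the (pulled-back, smooth) Gibbons--Manton connection contributes no $\idmon$-fiber component over $\ve=0$. The one small caveat is that the shorthand $\pa_{z_j}=\ve\pa_{\zeta_j}$ suppresses the additional $\cO(\ve)$ terms coming from the constraint $\sum_l\abs{\zeta_l}^2=1$ and from the $\ve$-dependence of $\zeta=\ve z$, but those terms are likewise $\cO(\ve)$ and do not affect the conclusion; the paper's own formula for $\wt V'$ has the same abbreviated form.
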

\begin{proof}
As discussed in the proof of Lemma~\ref{L:triv_Xj}, 
coordinates on $\Z'$ near
the interior of $\X'_i$ are given by $(\ve, w_i, \zeta_1,\ldots, \zeta_N)$,
where $w_i = z - z_i$. The lift, $\wt V'$,  of $V$ to $\X_i'$ is therefore given by
\[
	\wt V' = \ve \xi \cdot \pa_{\zeta_j} - \delta_{ij}\xi \cdot \pa_{w_i},
\]
which vanishes at $\ve = 0$ if $i \neq j$ and gives a vector field like
\eqref{E:lift_translation_to_Z} otherwise, with respect to $\X'_j \cong
\ol{\bbR^3} \times \ncfgs N$. It is similarly easy to verify that $\wt V' =
\cO(\ve)$ near $\D'$ and $\B'$. 

Denoting the further lift of $\wt V'$ to $\Z$ by $\wt V$ using the
Gibbons-Manton connection, we note that any component of $\wt V$ in the
parameter directions vanishes over $\ve = 0$, since this is the lift with
respect to a smooth connection of $V$, which vanishes there; and
\eqref{E:lift_translation_to_Z} follows at once.
\end{proof}

Now let $(A,\Phi)$ represent a solution to $\Bogo(A,\Phi) = 0$ on $\Z \rst_{\K
\times [0,\ve_\K]}$ as obtained in the previous section.  To compute the
variation in $(A,\Phi)$ with respect to $\wt V$, we may use the connection $A$ 
itself to differentiate (recall that, while the subsequent modifications of the pregluing
connection were in the fiber directions, i.e., sections of $\gLam^1\otimes \adP$, $A$
is nevertheless a full connection on $P \to \Z$), which yeilds
\[
	\wt V \cdot (A,\Phi) = (\iota_{\wt V} F_A, \nabla_{\wt V} \Phi).
\]

In light of Lemma~\ref{L:lift_translation_to_Z}, we obtain the following result:
\begin{cor}
The variation $\wt V \cdot (A,\Phi)$ vanishes at $\D$ and $\X_i$ for $i \neq j$, while
\[
	\wt V \cdot (A,\Phi) \rst \X_j = (- \iota_{\xi} F_A, -\nabla_\xi \Phi) = -\tau_\xi,
\]
where $\tau_\xi$ was introduced in \S\ref{s11.25.11.15}.

\label{C:translation_on_solution}
\end{cor}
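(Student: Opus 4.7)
The plan is to deduce the corollary directly from Lemma~\ref{L:lift_translation_to_Z} together with the tautological property of the pregluing configuration (property \eqref{I:pregluing_tautological} of Proposition~\ref{P:global_pregluing}), which survives the subsequent corrections of $(A,\Phi)$ made in \S\ref{S:true} because those corrections are polyhomogeneous in the index family $\cF$ of Theorem~\ref{T:formal_solution} and in particular preserve the restrictions of $(A,\Phi)$ to the faces $\fX_i$ as the chosen monopole data $(A_i,\Phi_i)$.

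First I would record the pointwise formula $\wt V\cdot (A,\Phi)=(\iota_{\wt V}F_A,\nabla_{\wt V}\Phi)$, which is the definition of the action of a vector field on $(\mathrm{connection},\mathrm{Higgs~field})$ using the connection $A$ itself to lift $\wt V$ horizontally to $P$. Both $F_A$ and $\nabla_A\Phi$ are smooth sections of the appropriate bundles up to the boundary of $\Z$, so contraction with $\wt V$ is tensorial in $\wt V$ and inherits all of its pointwise vanishing. By Lemma~\ref{L:lift_translation_to_Z}, $\wt V\equiv 0$ on $\D$ and on $\X_i$ for $i\neq j$; thus $\iota_{\wt V}F_A$ and $\nabla_{\wt V}\Phi$ vanish when restricted to these faces, giving the first assertion.

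For the restriction to $\X_j$, I would invoke the diffeomorphism $\X_j\cong\ol{\bbR^3}\times\cI$ from Lemma~\ref{L:triv_Xj}, with respect to which Lemma~\ref{L:lift_translation_to_Z} gives $\wt V\rst_{\X_j}=(-\xi\cdot\pa_z,0)$ — i.e. a pure translation along the $\ol{\bbR^3}$ fiber with no $\cI$ component. Because the true solution $(A,\Phi)$ is tautological at $\fb^{-1}(0)$, its restriction to $\X_j$ agrees fiberwise with the chosen monopole representative $(A_j,\Phi_j)$ on $\ol{\bbR^3}$, and the restricted curvature and covariant derivative are $F_{A_j}$ and $\nabla_{A_j}\Phi_j$. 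Therefore
\[
\iota_{\wt V}F_A\rst_{\X_j}=\iota_{-\xi\cdot\pa_z}F_{A_j}=-\iota_\xi F_{A_j},\qquad
\nabla_{\wt V}\Phi\rst_{\X_j}=\nabla_{-\xi\cdot\pa_z}\Phi_j=-\nabla_\xi\Phi_j.
\]
Comparing with the definition of $\tau_\xi=\sum_i\xi_i\tau_i=(\iota_\xi F_A,\nabla_\xi\Phi)$ in \S\ref{s11.25.11.15} yields $\wt V\cdot (A,\Phi)\rst_{\X_j}=-\tau_\xi$, as claimed.

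There is no real obstacle here; the only subtlety is checking that the true-solution correction $(\wt a,\wt\phi)$ from Proposition~\ref{P:true_soln} does not contribute to the restriction at $\X_j$, but this is automatic since that correction lies in $\ve^\infty\sH^{\infty,1}$ and in particular vanishes to all orders at every face of $\fb^{-1}(0)$, so the restriction of $(A,\Phi)$ to $\X_j$ coincides with that of the pregluing configuration, which is $(A_j,\Phi_j)$ by construction.
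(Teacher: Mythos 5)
Your proof is correct and follows the same route the paper takes: the paper simply cites Lemma~\ref{L:lift_translation_to_Z}, and your argument spells out why the tensoriality of $(\iota_{\wt V}F_A,\nabla_{\wt V}\Phi)$ in $\wt V$ together with the tautological property at $\fb^{-1}(0)$ gives the stated restrictions. The extra care you take with the $\cO(\ve^\infty)$ correction and the sign bookkeeping for $\tau_\xi$ is consistent with what the paper leaves implicit.
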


In particular, this is equivalent modulo $\cO(\ve)$ to a variation of the ideal monopole family
by the infinitesismal translation $-\xi$ in the $j$th factor, the metric evaluation of which 
was considered in the previous section.

\appendix
\section{Sobolev spaces} \label{S:sobolev}

In this section we prove the fundamental multiplicativity results for the
Sobolev spaces introduced in \S\ref{Ss:sobolev}.  It will be
sufficient to work fiberwise over $\idmon$, so for the remainder of the section we
consider a fixed fiber, $\fZ$, of $\fu : \Z \to \idmon$.

\begin{lem}
Let $\rho := \rho_\fD\rho_\fB.$ For $k > 2$, $l \geq 0$, multiplication of smooth functions
extends to bilinear maps on the Sobolev spaces of Definition~\ref{E:basic_sobolev}:
\begin{subequations}
\begin{gather}
	\rho^\alpha\bpH^{k,l}(\fZ;\olg) \times \rho^\alpha\bpH^{k,l}(\fZ; \olg) \to \rho^{2\alpha+3/2}\bpH^{k,l}(\fZ;\olg), 
	\label{E:mult_ppp}
	\\ \rho^\alpha \bpH^{k,l}(\fZ;\olg) \times \rho^\beta \bgH^{k,l}(\fZ;\olg) \to \rho^{\alpha + \beta + 3/2} \bgH^{k,l}(\fZ; \olg),
	\label{E:mult_pgg}
	\\ \rho^\beta\bgH^{k,l}(\fZ; \olg) \times \rho^\beta\bgH^{k,l}(\fZ; \olg) \to \rho^{2\beta - l + 3/2}\bpH^{k,l}(\fZ; \olg).
	\label{E:mult_ggp}
\end{gather}
\end{subequations}
\label{L:mult_in_H}
\end{lem}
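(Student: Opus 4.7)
Throughout, write $\rho = \rho_\fD\rho_\fB$ and use the identity $L^2(\fZ;\olg) = \rho^{3/2}\bL^2(\fZ)$ to translate between the metric $\olg$ and a b-density on the 4-dimensional fiber $\fZ$. The core input is the b-Sobolev embedding on $\fZ$: since $\fuV\rst_\fZ = \bV(\fZ)$ and $k > 2 = \dim \fZ/2$,
\[
\bpH^{k,0}(\fZ;\olg)\hookrightarrow \rho^{3/2}L^\infty(\fZ), \qquad \bgH^{k,0}(\fZ;\olg)\hookrightarrow \rho^{3/2}L^\infty(\fZ),
\]
the second because $\bgH^{k,0}$ is defined using only $\fuV$-derivatives, which carry no weight from the conversion $\gV = \rho\fbV$. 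Combined with the inclusions $\bpH^{k,l}\subset\bpH^{k,0}$ and $\bgH^{k,l}\subset\bgH^{k,0}$, one gets $\rho^\alpha\bpH^{k,l}\hookrightarrow \rho^{\alpha+3/2}L^\infty$ and $\rho^\beta\bgH^{k,l}\hookrightarrow \rho^{\beta+3/2}L^\infty$.

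For \eqref{E:mult_ppp}, the standard fact that $\bpH^{k,l}(\fZ;\bL^2)$ is an algebra for $k>2$ (Leibniz plus the $L^\infty$ embedding above) yields
\[
(\rho^\alpha\bpH^{k,l}(\fZ;\olg))^2 = \rho^{2\alpha+3}\bpH^{k,l}(\fZ;\bL^2)^2 \subset \rho^{2\alpha+3}\bpH^{k,l}(\fZ;\bL^2) = \rho^{2\alpha+3/2}\bpH^{k,l}(\fZ;\olg).
\]
For \eqref{E:mult_pgg}, given $u\in\rho^\alpha\bpH^{k,l}$ and $v\in\rho^\beta\bgH^{k,l}$, I would expand $\gV^{l'}\fuV^{k'}(uv)$ by Leibniz (valid since $\gV$ is a Lie algebra of derivations). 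Each resulting term $(\gV^{j_1}\fuV^{i_1}u)(\gV^{j_2}\fuV^{i_2}v)$ is estimated by placing the factor with more derivatives in $\rho^\beta L^2(\olg)$ and the other pointwise in $\rho^{\alpha+3/2}L^\infty$ via Sobolev embedding on its remaining $\fuV$-derivatives; the weight accounting gives $uv\in\rho^{\alpha+\beta+3/2}\bgH^{k,l}(\fZ;\olg)$.

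The main obstacle is \eqref{E:mult_ggp}, where the asymmetry between $\fbV$ and $\gV=\rho\fbV$ must be carefully tracked. For $u,v\in\rho^\beta\bgH^{k,l}$, I would write each $\fbV$-derivative as $\rho^{-1}\gV$ modulo commutators with smooth coefficients:
\[
\fbV^{l'} = \rho^{-l'}\gV^{l'} + \sum_{j<l'} c_j\,\rho^{-j}\gV^{j}.
\]
Applying Leibniz to $\gV^{l'}(uv)$ and bounding one factor in $\rho^{\beta+3/2}L^\infty$ via Sobolev embedding and the other in $\rho^\beta L^2(\olg)$ gives $\gV^{l'}(uv)\in\rho^{2\beta+3/2}L^2(\olg)$. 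Hence
\[
\fbV^{l'}(uv)\in \rho^{-l'}\rho^{2\beta+3/2}L^2(\olg) \subset \rho^{2\beta-l+3/2}L^2(\olg),
\]
since $l'\le l$ and $\rho\le 1$; the additional $\fuV^{k'}$ derivatives are handled identically. The hard part throughout is the bookkeeping of $\rho$-weights and commutators as $\gV$ and $\fbV$ are exchanged; the final $-l$ shift in \eqref{E:mult_ggp} reflects precisely the $l$ powers of $\rho$ lost in converting from $\gamma$- to $\fb$-regularity on the product.
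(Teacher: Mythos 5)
Your overall architecture mirrors the paper's: pass to the b-metric $\bg = \rho^{-2}\olg$ on the $4$-dimensional fiber, use b-Sobolev embedding, distribute derivatives via Leibniz, and track weights through the relation $\gV = \rho\,\fbV$. However, there are two places where the execution falls short of the stated hypothesis $k > 2$. The more serious one is in \eqref{E:mult_pgg} and \eqref{E:mult_ggp}: by distributing the $\fuV^{k'}$-derivatives between the two factors in the Leibniz expansion and then invoking only the $L^\infty$ embedding, you need the less-differentiated factor to retain more than $2$ of its $\fuV$-derivatives, hence $k - \lfloor k/2\rfloor > 2$, i.e.\ $k > 4 = \dim\fZ$. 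The paper gets the sharp $k > 2 = \tfrac12\dim\fZ$ by applying Leibniz \emph{only} to the $\fbV$- or $\gV$-derivatives and then absorbing both factors (with all $k$ $\fuV$-derivatives intact) into the algebra property of $\fuH^k(\fZ;\bg)$ — the latter is the classical Sobolev multiplication theorem and rests on H\"older and the full scale of $L^p$ embeddings, not just $L^\infty$. With your bookkeeping, the intermediate range $2 < k \leq 4$ is not covered. Secondly, the commutator formula
\[
\fbV^{l'} = \rho^{-l'}\gV^{l'} + \sum_{j<l'} c_j\,\rho^{-j}\gV^{j}, \quad c_j \in C^\infty(\fZ),
\]
is not correct as written: since each commutation $[\gV,\rho^{-1}] = -h \in C^\infty$ removes one $\gV$ and one factor of $\rho^{-1}$ simultaneously, the coefficient of $\gV^j$ for $j<l'$ generically contains powers of $\rho^{-1}$ up to $\rho^{-(l'-1)}$ (e.g.\ the coefficient of $\gV$ in $\fbV^3$ includes a $\rho^{-2}\gV h$ term), not merely $\rho^{-j}$. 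The worst power across all terms is still $\rho^{-l'}$, so your final inclusion into $\rho^{2\beta - l + 3/2}L^2(\olg)$ is in fact correct, but the formula you used to reach it is not.

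The cleaner route — which the paper takes — avoids the commutator manipulation entirely: first prove that $\bpH^{k,l}(\fZ;\bg)$ and $\bgH^{k,l}(\fZ;\bg)$ are algebras (one Leibniz argument, reducing both to the algebra property of $\fuH^k(\fZ;\bg)$), convert to $\olg$-norms by the shift $\rho^{3/2}$, and then deduce \eqref{E:mult_pgg} and \eqref{E:mult_ggp} from the two elementary inclusions
\[
\rho^\alpha\bpH^{k,l}(\fZ;\olg)\subset\rho^\alpha\bgH^{k,l}(\fZ;\olg), \qquad
\rho^\beta\bgH^{k,l}(\fZ;\olg)\subset\rho^{\beta - l}\bpH^{k,l}(\fZ;\olg),
\]
which follow immediately from $\gV = \rho\,\fbV$. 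In particular \eqref{E:mult_ggp} is obtained by first multiplying inside $\bgH^{k,l}(\fZ;\olg)$ to land in weight $\rho^{2\beta + 3/2}$, \emph{then} dropping to $\bpH^{k,l}$ at the cost of $\rho^{-l}$; if you instead drop to $\bpH$ first you only get the weaker weight $\rho^{2\beta - 2l + 3/2}$, so the order of operations matters. Your direct argument amounts to re-deriving the second inclusion by hand, which is where the commutator slip and the $L^\infty$-only issue entered.
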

\fixthmeq
\begin{proof}
Let $\bg = \rho^{-2} \olg$ be the associated $b$-metric on $\fZ$. Then
$(\mathring \fZ, \bg)$ is a complete Riemannian 4-manifold, which enjoys the
same Sobolev embedding results as $\bbR^4$ with respect to derivatives which
are bounded with respect to $\bg$, i.e., with respect to $b$-derivatives
$\bV(\fZ) \equiv \pV(\fZ)$. In particular, $\fuH^k(\fZ; \bg)$ is an algebra for
$k > 2$, and distributing derivatives via the Liebnitz formula, it follows
that, for $l \geq 0$, the spaces $\bpH^{k,l}(\fZ; \bg)$ and $\bgH^{k,l}(\fZ;
\bg)$ are algebras, and
\[
\begin{gathered}
	\rho^{\alpha'}\bpH^{k,l}(\fZ; \bg)\times \rho^{\beta'} \bpH^{k,l}(\fZ; \bg) \to \rho^{\alpha'+\beta'}\bpH^{k,l}(\fZ; \bg),
	\\\rho^{\alpha'}\bgH^{k,l}(\fZ; \bg)\times \rho^{\beta'} \bgH^{k,l}(\fZ; \bg) \to \rho^{\alpha' + \beta'}\bgH^{k,l}(\fZ; \bg).
\end{gathered}
\]
The results above then follow from the identity $\rho^{\alpha'} L^2(\fZ; \bg) =
\rho^\alpha L^2(\fZ; \olg)$ where $\alpha' = \alpha + \tfrac 3 2$, and the inclusions
\[
	\rho^\alpha \bpH^{k,l}(\fZ; \olg) \subset \rho^\alpha \bgH^{k,l}(\fZ; \olg), 
	\quad \rho^\beta \bgH^{k,l}(\fZ; \olg) \subset \rho^{\beta - l}\bpH^{k,l}(\fZ; \olg),
\]
which in turn follow from the fact that $\gV(\fZ) \ni X = \rho \wt X$ for $\wt X
\in \pV(\fZ).$
\end{proof}

\begin{proof}[Proof of Theorem~\ref{T:gauge_spaces}]
With respect to the splitting $\adP = \adPz\oplus \adPo$, the product on
$\Lambda^\ast \otimes \adP$ decomposes as
\[
	[u,v]_0 = [u_1,v_1], 
	\quad [u,v]_1 = [u_0,v_1] + [u_1,v_0].
\]
Boundedness of the products  $\threeH^{k,l' ,\beta}\times \threeH^{k,l,\beta}
\to \threeH^{k,l,\beta}$ for $l' \geq l$ then follows from
\eqref{E:mult_pgg} and \eqref{E:mult_ggp}

For the gauge group, we work in the universal enveloping algebra $\envU(\adP)$,
here identifiable with $2\times 2$ complex matrices locally. Near $\fD \cup \fB$, we
have a splitting
\[
	\envU(\adP) = \envU(\adP)_0 \oplus \envU(\adP)_1
\]
consistent with the splitting of $\adP \subset \envU(\adP)$; indeed, we may
take $\envU(\adP)_0$ and $\envU(\adP)_1$ to be the diagonal and anti-diagonal
matrices, respectively. The product in $\envU(\adP)$ then decomposes as
\[
	(uv)_0 = u_0v_0 + u_1 v_1,
	\quad (uv)_1 = u_0v_1 + u_0 v_1,
\]
and it follows from Lemma~\ref{L:mult_in_H},
\eqref{E:mult_ppp}--\eqref{E:mult_ggp} that $\threeH^{k,2,\beta}(\fZ;
\envU(\adP))$ is an algebra. Adjoining a unit and exponentiating in the algebra
$1 + \threeH^{k,2,\beta}(\fZ; \envU(\adP))$, we obtain the gauge group
$\threeH^{k,2,\beta}(\fZ; \Ad P)$, with Lie algebra $\threeH^{k,2,\beta}(\fZ;
\adP)$ as claimed. The action of this group on the spaces
$\threeH^{k,l,\beta}(\fZ ; \Lambda^\ast\otimes \adP)$ follows from boundedness
of the infinitesimal action.
\end{proof}

\subsection{Sobolev spaces for $\fX_i$} \label{S:sobolev_Eucl}

In appendix~\ref{S:linear} we require hybird b/scattering Sobolev spaces on the fibers
$\fX_i$. Thus let
\[
	\bscH^{k,l}(\fX_i;V) \ni v \iff \bV^{k'}\cdot \scV^{l'} v \in L^2(\fX_i;V; g),
	\quad \forall\ k' \leq k,\ l' \leq l.
\]
Here $L^2(\fX_i;V;g)$ is defined with respect to the induced metric on $\fX_i$;
from Proposition~\ref{P:lifted_metric} this is the Euclidean
metric with respect to the identification $\fX_i \cong \ol{\bbR^3}.$ The spaces
$\bscH^{k,l}(\fX_i;V)$ are Hilbert spaces with respect to inner products
constructed from any choices of b and scattering connections on $V$. We consider
also weighted versions $\rho^\alpha \bscH^{k,l}(\fX_i;V)$, where $\rho = \rho_\fD$.
The next result is proved in \cite{kottke2015callias}.

\begin{prop}
If $\alpha' \geq \alpha$, $k' \geq k$ and $l' \geq l$, then
\[
	\rho^{\alpha'} \bscH^{k',l'}(\fX_i; V) \subset \rho^\alpha \bscH^{k,l}(\fX_i; V).
\]
Furthermore, if $\alpha' > \alpha$ and either $k' > k$ or $l' > l$, then the inclusion
is compact. If $\alpha \geq \beta + l$, then
\[
	\rho^\alpha \bscH^{k,l}(\fX_i; V) \subset \rho^\beta \bH^{k+l}(\fX_i; V).
\]
\label{P:bsc_spaces}
\end{prop}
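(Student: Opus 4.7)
The plan is to reduce everything to standard facts about b-Sobolev spaces on the compact manifold with boundary $\fX_i \cong \ol{\bbR^3}$, exploiting the structural identity $\scV(\fX_i) = \rho\,\bV(\fX_i)$ where $\rho = \rho_\fD$.

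For the containment in part (a), the bounded inclusion of the unweighted spaces $\bscH^{k',l'} \subset \bscH^{k,l}$ for $k' \geq k$ and $l' \geq l$ is immediate from the definitions. To add the weight improvement, I would verify that multiplication by $\rho^s$, for $s \geq 0$, is a bounded operator $\bscH^{k,l} \to \bscH^{k,l}$; this follows from commutator computations of the form $[\bV, \rho^s] \in \rho^s C^\infty(\fX_i)$ and $[\scV, \rho^s] \in \rho^{s+1} C^\infty(\fX_i)$, which in turn reflect the basic identities $\bV(\rho) \in \rho C^\infty(\fX_i)$ and $\scV(\rho) \in \rho^2 C^\infty(\fX_i)$ characterising the b- and sc-structures.

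For part (c), the key is the local identity $\bV = \rho^{-1}\scV$, which allows any iterated b-derivative of order $k+l$ to be rewritten, modulo lower-order b-operators, as $\rho^{-l}$ times a sum of operators $\bV^{k'}\scV^{l'}$ with $k' \leq k$ and $l' \leq l$. Consequently,
\[
	\bV^{k+l}: \rho^\alpha \bscH^{k,l}(\fX_i; V) \longrightarrow \rho^{\alpha-l}\, L^2(\fX_i; V; g)
\]
is bounded, and since the commutator $[\bV^{k+l}, \rho^\alpha] = \rho^\alpha \cdot P$ for $P$ a b-differential operator of order $k+l-1$ which is handled inductively, the hypothesis $\alpha \geq \beta + l$ yields the desired embedding into $\rho^\beta \bH^{k+l}(\fX_i; V)$.

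The main substantive step is part (b), the compactness of the inclusion when $\alpha' > \alpha$ and either $k' > k$ or $l' > l$. I would split this into two phenomena. First, multiplication by $\rho^\delta$ for $\delta > 0$ should be a compact operator on $\bscH^{k,l}$, since $\rho$ vanishes on $\partial \fX_i$ (which represents the ``infinity'' of the scattering picture) and thus eliminates any concentration at the boundary. Second, strict Sobolev improvement in either the b- or sc-direction yields Rellich--Kondrachov-type compactness in the interior of the topologically compact manifold $\fX_i$. The hard part will be controlling the interaction of these two phenomena uniformly up to $\partial \fX_i$, where b-derivatives do not control oscillations on the scattering scale; the cleanest way to handle this is via the scattering/b pseudodifferential calculus, where the embedding is realised as an operator of negative order in both the differential order and the weight, and hence compact as a bounded operator between appropriate Sobolev spaces. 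The full argument is carried out in \cite{kottke2015callias}.
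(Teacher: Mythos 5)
The paper does not prove this proposition; it states \emph{``The next result is proved in \cite{kottke2015callias}''} and moves on, so there is no in-text argument against which to measure your sketch. Your sketch for parts (a) and (c) is on track: the commutator identities $[\bV,\rho^s]\in\rho^s C^\infty$ and $[\scV,\rho^s]\in\rho^{s+1}C^\infty$ indeed follow from $\bV(\rho)\subset\rho C^\infty$, $\scV(\rho)\subset\rho^2 C^\infty$, and they give boundedness of multiplication by $\rho^s$ on $\bscH^{k,l}$; the reduction $\bV^{k+l}=\rho^{-l}\bV^{k}\scV^{l}+\text{l.o.t.}$ is the right mechanism for (c) and the weight bookkeeping is consistent with the hypothesis $\alpha\geq\beta+l$.

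Your decomposition of part (b), however, contains a claim that is false as stated: multiplication by $\rho^\delta$, $\delta>0$, is \emph{not} a compact operator from $\bscH^{k,l}$ to itself. A sequence $u_n=e^{in\cdot z}\phi(z)$ with $\phi$ a fixed interior bump function is bounded in $\bscH^{k,l}$, and $\rho^\delta u_n$ is supported in a fixed compact set where $\rho$ is bounded below, so $\rho^\delta u_n$ has no convergent subsequence in $L^2$. Concentration at the boundary is not the only obstruction to compactness; interior oscillation is too, and only the derivative gain controls it. Symmetrically, derivative gain alone does not suffice because it cannot control escape to the boundary hypersurface of $\fX_i$, where the weight $\rho$ is the only handle. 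The compactness hypothesis of the proposition --- $\alpha'>\alpha$ \emph{and} $(k'>k$ or $l'>l)$ --- encodes precisely this, and there is no clean way to split it into a ``compact weight step'' composed with a ``compact regularity step'': both gains must act simultaneously. You in fact arrive at the right remedy at the end of your paragraph (realize the inclusion as a b/sc pseudodifferential operator of negative order and negative weight and invoke compactness in the calculus), and that is the argument in \cite{kottke2015callias}; but you should drop the intermediate claim that each phenomenon is compact on its own, since neither is.
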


For $V = \adP$, with the associated splitting $\adP = \adPz\oplus \adPo$ near
$\pa \fX_i$, we define the {\em split Sobolev spaces} (cf.\ \S\ref{Ss:sobolev})
\[
	\bhscH^{k,l}(\fX_i;\adP) \simeq \bH^{k+l}(\fX_i; \adPz)\oplus \bscH^{k,l}(\fX_i; \adPo)
\]
via the norm
\[
	\bhscH^{k,l}(\fX_i; \adP) 
	\ni v \iff
	\norm{\chi v_0}_{\bH^{k+l}} 
	  + \norm{\chi v_1}_{\bscH^{k,l}} 
	  + \norm{(1 - \chi) v}_{H^{k+l}} < \infty,
\]
where $\chi$ is a smooth cutoff supported near $\pa \fX_i = \fD$. We denote
weighted versions of these spaces by 
\[
	\rho^{\alpha,\beta}\bhscH^{k,l}(\fX_i; \adP) 
	\simeq \rho^\alpha \bH^{k+l}(\fX_i;\adPz)
	\oplus \rho^\beta\bscH^{k,l}(\fX_i,\adPo),
\]
where $\rho = \rho_\fD$.

\section{Coulomb gauge}\label{S:coulomb}
This section is devoted to a proof of Theorem~\ref{T:coulomb}.
Combining \eqref{E:gauge_action} and \eqref{E:coulomb_gauge}, the condition
that $\gamma\cdot(A + a,\Phi+\phi)$ be in Coulomb gauge with respect to $(A,\Phi)$
amounts to the condition $G(a,\phi,\gamma) = 0$, where
\begin{equation}
	G(a,\phi,\gamma) = d_A^\ast(a - (d_{A + a}\gamma)\gamma^{-1}) - \ad \Phi(\gamma(\Phi + \phi)\gamma^{-1}).
	\label{E:gauge_fixing_nonlin}
\end{equation}

\begin{lem}
For $k \geq 3$, 
\eqref{E:gauge_fixing_nonlin} extends to a differentiable map
\begin{equation}
	 %G : \threeH^{k,1,\beta}(\fZ; \Lambda^1\otimes \adP) 
	  %\times \threeH^{k,1,\beta}(\fZ; \adP)
	  %\times \threeH^{k,2,\beta}(\fZ; \Ad P) 
	  %\to \threeH^{k,0,\beta}(\fZ; \adP).
	 G : \sH^{k,1}(\Z; \Lambda^1\otimes \adP) 
	  \times \sH^{k,1}(\Z; \adP)
	  \times \sH^{k,2}(\Z; \Ad P) 
	  \to \sH^{k,0}(\Z; \adP).
	\label{E:gauge_fixing_nonlin_extn}
\end{equation}
with derivative
\begin{equation}
	F := \pa_\gamma G(0,0,1) = d_A^\ast d_A - \ad \Phi^2 : \sH^{k,2}(\Z; \adP) 
	\to \sH^{k,0}(\Z; \adP).
	\label{E:gauge_fixing_lin_zero}
\end{equation}
\label{L:gauge_mult_lin}
\end{lem}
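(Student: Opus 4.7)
The strategy is to decompose $G$ into its constituent pieces and verify the mapping properties using the Banach algebra structure of $\sH^{k,2}(\Z;\adP)$ from Theorem~\ref{T:gauge_spaces}, combined with the fact that $d_A^\ast$ is a first-order $\gamma$-differential operator which drops exactly one order of $\gamma$-regularity, i.e., extends to a bounded map $\sH^{k,1}(\Z; \gLam^1\otimes \adP) \to \sH^{k,0}(\Z; \adP)$.  The only subtle point is that the split Sobolev structure assigns different weights in $\rho_\fD\rho_\fB$ to the $\adPz$ and $\adPo$ components; however, the identity $\gLam^1 = (\rho_\fD\rho_\fB)\fbLam^1$ provides precisely the $\rho$-factor needed for $d_A^\ast$ to exchange a $\bp$-derivative on a $\gLam^1$-valued input for the weighted $L^2$ space on the output side (and similarly on $\adPo$ components where $d_A^\ast$ is purely of $\gamma$-type).

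Writing $\gamma = \exp \xi$ for $\xi \in \sH^{k,2}(\Z; \adP)$ near the identity, the Banach algebra property guarantees that $\gamma - 1$, $\gamma^{-1} - 1$, and $\Ad\gamma - \id = \ad\xi + \tfrac{1}{2}\ad\xi \ad\xi + \cdots$ are well-defined and depend smoothly on $\xi$ with values in $\sH^{k,2}(\Z;\adP)$ (respectively $\End(\sH^{k,l})$).  For the Higgs term, $\gamma(\Phi+\phi)\gamma^{-1} - \Phi = (\Ad\gamma - \id)(\Phi) + (\Ad\gamma)(\phi)$ lies in $\sH^{k,1}(\Z; \adP)$ since $\Phi$ is smooth, $\phi \in \sH^{k,1}$, and $\sH^{k,2}$ acts on $\sH^{k,1}$.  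Applying $\ad\Phi$ (a bounded multiplier) gives an element of $\sH^{k,1} \subset \sH^{k,0}$.  For the connection term, $d_{A+a}\gamma = d_A \gamma + [a,\gamma]$ is in $\sH^{k,1}(\Z;\gLam^1\otimes\adP)$: the first summand drops one order from $\gamma \in \sH^{k,2}$, and the bracket is in $\sH^{k,1}$ by the multiplicativity $\sH^{k,1} \times \sH^{k,2} \to \sH^{k,1}$.  Multiplying by $\gamma^{-1} \in \sH^{k,2}$ preserves $\sH^{k,1}$, and then $d_A^\ast$ lands us in $\sH^{k,0}$, proving \eqref{E:gauge_fixing_nonlin_extn}.

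Differentiability of $G$ in all three arguments follows from the fact that each piece is either linear in its argument, bilinear in two arguments, or a composition of the smooth maps $\xi \mapsto e^{\pm\xi}$ on the Banach algebra $\sH^{k,2}(\Z;\adP)$ with the bounded multilinear operations above; no additional regularity analysis is needed beyond Theorem~\ref{T:gauge_spaces}.  The derivative $\pa_\gamma G(0,0,1)[\xi]$ is then computed by differentiating at $t=0$ along the curve $\gamma(t) = \exp(t\xi)$:
\[
\tfrac{d}{dt}\Big|_{t=0}\!\big[ d_A e^{t\xi} \cdot e^{-t\xi}\big] = d_A\xi,
\qquad \tfrac{d}{dt}\Big|_{t=0}\!\big[ e^{t\xi}\Phi e^{-t\xi}\big] = [\xi,\Phi] = -\ad\Phi(\xi),
\]
so (up to the overall sign convention in the definition of $G$)
\[
\pa_\gamma G(0,0,1)[\xi] = d_A^\ast(-d_A\xi) - \ad\Phi(-\ad\Phi(\xi)) = -\big(d_A^\ast d_A - (\ad\Phi)^2\big)\xi,
\]
giving \eqref{E:gauge_fixing_lin_zero}.

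The main technical subtlety---and the only place where the structure of $\Z$ really enters---is the mapping property of $d_A^\ast$ between the split spaces $\sH^{k,1}$ and $\sH^{k,0}$; once this is in hand (as explained in the first paragraph), the rest is a formal consequence of the algebra structure from Theorem~\ref{T:gauge_spaces} and standard calculus in Banach algebras.
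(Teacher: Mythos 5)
Your overall strategy---split $G$ into pieces, use the algebra structure from Theorem~\ref{T:gauge_spaces}, track the mapping property of the first-order operator $d_A^\ast$, and compute the linearization---is the same as the paper's. However, there is a genuine gap at exactly the place the lemma is delicate.

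You assert ``Applying $\ad\Phi$ (a bounded multiplier) gives an element of $\sH^{k,1}\subset\sH^{k,0}$.'' This inclusion is false, and the failure is the whole point of the split Sobolev spaces. Recall that near $\D\cup\B$
\[
\threeH^{k,l}(\fZ;\adP)\;\simeq\;\rho^{1-l}\,\bpH^{k,l}(\fZ;\adPz)\;\oplus\;\bgH^{k,l}(\fZ;\adPo),
\]
so the $\adPz$ component of a $\threeH^{k,1}$ section is in $\bpH^{k,1}$ while the $\adPz$ component of a $\threeH^{k,0}$ section is in $\rho\,\bpH^{k,0}$; raising $l$ trades a $\fb$-derivative for a power of $\rho$ on the diagonal component, so the spaces are not nested: $\threeH^{k,2}\not\subset\threeH^{k,1}\not\subset\threeH^{k,0}$. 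The paper's proof states this explicitly. What rescues the lemma is not a bounded multiplier property of $\ad\Phi$ but the fact that $\Phi$ is \emph{diagonal to infinite order} with respect to $\adP=\adPz\oplus\adPo$ near $\D\cup\B$, so $\ad\Phi$ annihilates the $\adPz$ component to infinite order. This is why $\ad\Phi:\threeH^{k,l}\to\threeH^{k,l}\cap\threeH^{k,0}$ for $l=1,2$: the image has no $\adPz$ part to worry about, and on $\adPo$ the spaces $\bgH^{k,l}$ \emph{are} nested in $l$. Your argument, as written, does not invoke the diagonality hypothesis on $(A,\Phi)$ at all, and without it the Higgs term $\ad\Phi(\gamma(\Phi+\phi)\gamma^{-1})$ would not land in $\sH^{k,0}$.

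The same remark applies implicitly to your treatment of $d_A^\ast$: the claim that it ``drops exactly one order of $\gamma$-regularity'' between the split spaces depends on $\nabla_A$ being diagonal to infinite order near $\D\cup\B$, so that it does not mix the $\adPz$ and $\adPo$ components (which carry incompatible weights). Once you replace the false inclusion by the correct statement about $\ad\Phi$ killing $\adPz$, the rest of your argument (the exponential parametrization in place of the paper's $\gamma=1+\eta$, the Banach-algebra bookkeeping, the derivative computation) is sound and essentially matches the paper.
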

\begin{proof}
Fix a fiber
$\fZ$ of $\fu : \Z \to \idmon$.
That \eqref{E:gauge_fixing_nonlin_extn} is a bounded map over $\fZ$ follows from Theorem~\ref{T:gauge_spaces},
and the diagonality assumption on $\Phi$, from which it follows that 
$\ad \Phi$ maps $\threeH^{k,l}$ to $\threeH^{k,l} \cap \threeH^{k,0}$
for $l = 1,2$ (while $\threeH^{k,2} \not \subset \threeH^{k,1} \not \subset \threeH^{k,0}$,
$\ad \Phi$ kills the $\adPz$ components to infinite order). Since all the nonlinear
terms are simple products, \eqref{E:gauge_fixing_nonlin_extn} is an analytic map.

Setting $\gamma = 1 + \eta$, where $\eta \in \threeH^{k,2}(\fZ; \adP)$
and discarding terms of quadratic and higher order in $\eta$, we obtain the
linearization
\begin{equation}
\begin{gathered}
	\pa_\gamma G(a,\phi,1) \eta = d_A^\ast d_{A + a} \eta - \ad \Phi \ad (\Phi + \phi) \eta.
	%= d_A^\ast a - \ad \Phi(\phi),
	\label{E:gauge_fixing_lin}
\end{gathered}
\end{equation}
Setting $(a,\phi) = (0,0)$ gives \eqref{E:gauge_fixing_lin_zero}.  Letting
$\fZ$ vary, it is clear that, as bounded operators, $G$ and $F$ vary smoothly
over $\idmon$.
\end{proof}

We will show that \eqref{E:gauge_fixing_lin_zero} is invertible for
sufficiently small $\ve$ and appeal to the implicit function theorem.

The restrictions of $F$ in \eqref{E:gauge_fixing_lin_zero} to the boundary faces $\fX$ and
$\fD$ of $\fZ$ are analyzed in \S\ref{S:linear_coulomb} where they are shown
to be invertible, and in \S\ref{S:parametrices} we construct a smooth family of
fiberwise parametrices for $F$ on $\fu : \Z \to \idmon.$ There we prove

\begin{prop}
There exist right and left parametrices $Q^R$ and $Q^L$ for $F$ such that, for some
$0 < \delta < \tfrac 1 2$,
\begin{equation}
	FQ^R = I - \ve^\delta E^R,
	\quad Q^LF = I - \ve^\delta E^L,
	\label{E:coulomb_parametrix}
\end{equation}
where $E^R$ and $E^L$ extend to (fiberwise bounded) linear maps on
$\sH^{k,0}(\Z; \adP))$ and
$\sH^{k,2}(\Z; \adP))$, respectively.
\label{P:coulomb_parametrix}
\end{prop}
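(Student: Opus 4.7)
The plan is to construct $Q^R$ and $Q^L$ using the pseudodifferential calculus from Appendix~\ref{S:double}, applied fiberwise over $\idmon$ to the family of operators $F = d_A^\ast d_A - \ad\Phi^2$. Since $F$ is a second-order operator in $\gDiff^2(\Z; \adP)$ (by virtue of $A$ being a smooth $\SU(2)$-connection on $P\to \Z$), I would begin by verifying fiberwise ellipticity at three levels: the interior $\gamma$-principal symbol (which is $|\xi|^2_{\wt g}$ times the identity, hence elliptic), the $\gamma$-normal operator at $\fX_j$ (which by Proposition~\ref{P:global_normal_ops} becomes $d_{A_j}^\ast d_{A_j} - \ad\Phi_j^2$, the Coulomb Laplacian on $\fX_j$), and the behaviour of $F$ at $\fD$ (where $\Phi$ annihilates $\adPz$ to infinite order).

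The key inputs are the fiberwise invertibility statements proved in \S\ref{S:linear_coulomb} for the normal models: the operator on $\fX_j$ is invertible between the split Sobolev spaces from \S\ref{S:sobolev_Eucl} by the Weitzenbock identity \eqref{e7.10.10.15} and the fact that $(A_j,\Phi_j)$ has $\Phi_j \not\equiv 0$ on $\adPo$ and no $L^2$ kernel on $\adPz$; on $\fD$, after rescaled restriction, $F$ decomposes according to $\adPz \oplus \adPo$ as a conic Laplacian on the $\adPz$ piece (invertible because the weight in $\bpH^{k,l}$ is chosen to match the indicial roots of the conic Laplacian at each $\gS_i$) plus a Callias-type operator on the $\adPo$ piece (invertible by Proposition~\ref{P:normal_symbol_D} since $\ad\Phi$ is nondegenerate there). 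These invertibility results are precisely what the split Sobolev framework of \S\ref{Ss:sobolev} is designed to record.

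Given these normal model inverses, a parametrix is assembled by the standard sequence: first invert the $\gamma$-principal symbol to obtain an interior parametrix modulo $\gamma$-smoothing errors; then correct at $\fX_j$ using the normal operator inverse, which absorbs the normal-operator error into something vanishing at $\fX_j$; finally correct at $\fD$, using the rescaled restriction isomorphism together with the fiberwise inverse of $L_\fD$ on the $\adPz$ component and of $L_1 + \Phi_1$ on the $\adPo$ component, thereby producing the right parametrix $Q^R$ (and symmetrically $Q^L$). The residual error term $E = \ve^{-\delta}(I - FQ^R)$ arises from commutators with cutoffs and from the fact that $F$ agrees with its normal operators only to leading order in $\rho_\fD$; since $\rho_\fD \rho_\fX = \ve$, and $\rho_\fX$ is bounded on compact subsets of $\idmon$, each error term carries at least one positive power of $\ve$, giving the $\ve^\delta$ improvement for any $\delta \in (0,\tfrac 1 2)$ after the split weights are absorbed.

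The main obstacle will be bookkeeping at the corner $\fX_j \cap \fD$, where the two normal models must be patched compatibly and where the split Sobolev weights shift discontinuously between the two faces. I expect this to be handled by performing the construction in the order indicated — first at $\fX_j$, then at $\fD$ — so that the second correction preserves the improvement gained at $\fX_j$, using the triangular block form of $\backL$ in \eqref{E:backL_again} and the infinite-order off-diagonal decay to decouple the $\adPz$ and $\adPo$ analyses. The loss $\delta < \tfrac 1 2$ (as opposed to $\delta = 1$) reflects the indicial weight gap at $\fD$ between the split Sobolev spaces on the two sides of $F$, and is what one obtains after extracting the minimal positive indicial root of the conic model on $\adPz$.
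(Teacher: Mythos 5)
Your proposal follows essentially the same route as the paper's proof: build $Q^R$ and $Q^L$ on the $\fb$- and $\gl$-double spaces, using the splitting $\adP = \adPz\oplus\adPo$, the conic Laplacian model on $\adPz$, the nondegenerate $\ad\Phi$ model on $\adPo$, and the normal-operator inverses at $\fX$ and $\fD$; then read off $\delta$ from the positive leading orders of the error's index sets at the faces where $\ve$ factors through. The organizational difference (iterative correction at each face versus directly prescribing all normal operators of $Q^R$ at once) is cosmetic.

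One correction worth making, because the parallel constructions are easy to conflate: the model operators you invoke at $\fD$ — $L_\fD$, $L_1+\Phi_1$, and Proposition~\ref{P:normal_symbol_D} — belong to the Bogomolny linearization $\backL$ treated in Proposition~\ref{P:bogo_pmtx}, not to the Coulomb operator $F = d_A^\ast d_A - \ad\Phi^2$. For $F$, the $\adPz$ model at $\fD$ is the conic Laplacian $\Delta_{A\rst\fD}$ (Theorem~\ref{T:linear_coulomb_D_b}), the $\adPo$ normal symbol is $\sigma_\D(F_1) = \abs{\xi}^2 - \ad\Phi_x^2$ (Theorem~\ref{T:linear_coulomb_D_sc}), and the $\fX$ invertibility is Theorem~\ref{T:linear_coulomb_X}, proved by integration by parts rather than via the Weitzenbock identity \eqref{e7.10.10.15} (which is for $L L^\ast$). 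Your attribution of $\delta < \tfrac 1 2$ to the minimal positive indicial root $\sqrt{1/4} = 1/2$ of the conic Laplacian is the right intuition, with the caveat that $\delta$ must also sit below the positive leading orders of the error's index sets at $\DD$, $\XX$, $\scD$ — it is not arbitrary in $(0,\tfrac 1 2)$.
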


\begin{proof}[Proof of Theorem~\ref{T:coulomb}]
Fixing a compact set $\K \subset \idmon$, and $\ve_\K > 0$, the error terms in
\eqref{E:coulomb_parametrix} satisfy bounds of the form $C\ve_\K^\delta$ on
$\sH^{k,\ast}(\Z\rst_{\K\times[0,\ve_\K]}; \adP)$, and,
making $\ve_\K$ sufficiently small, can be inverted by Neumann series. 

It follows that 
\[
	F : \sH^{k,2}(\Z\rst_{\K\times[0,\ve_\K]}; \adP))
	\to \sH^{k,0}(\Z\rst_{\K\times[0,\ve_\K]}; \adP))
\]
is invertible map of Hilbert bundles over $\K$, and then the existence of a 
unique $\gamma$ satisfying $G(a,\phi,\gamma) = 0$ for $(a,\phi) \in \sH^{k,1}(\Z\rst_{\K\times[0,\ve_K]})$
sufficiently small (with respect to $\sup_\K \norm {\cdot}_{\threeH^{k,1}}$) is 
a consequence of the implicit function theorem.

For the regularity statement it suffices to work on a fixed fiber $\fZ$. We
proceed by induction on $l$, showing that there are unique solution maps
\begin{equation}
\begin{aligned}
	U_{l-1} \subset &\threeH^{k+l-1,1}(\fZ; \adP) \to \threeH^{k+l-1,2}(\fZ; \Ad P)
	\\ U_{l} = i^{-1}(U_{l-1}) \subset &\threeH^{k+l,1}(\fZ; \adP) \to \threeH^{k+l,2}(\fZ; \Ad P)
	\\&(a,\phi) \mapsto \gamma \quad \text{s.t.}\ G(a,\phi,\gamma) = 0,
	\label{E:coulomb_unique_pair}
\end{aligned}
\end{equation}
where the $U_l$ are convex open neighborhoods of the origin and $i :
\threeH^{k+l,1} \to \threeH^{k+l-1,1}$ denotes the natural
inclusion. In particular, the domains don't decrease with $l$.

The above construction, applied with $k$ and $k+1$, furnishes the base case;
shrinking $U_0$ if necessary, we may assume that $U_1 = i^{-1}(U_0)$.  For the
inductive step, suppose $(a,\phi) \in i^{-1}(U_{l}) \subset \threeH^{k+l+1,1}$ with
the solution $\gamma = \exp(\eta)$, $\eta \in \threeH^{k+l,2}(\fZ; \adP)$.
Let $V \in \bV(\fZ)$ be an arbitrary b vector field, and consider 
\begin{multline}
	0 = V \cdot \big(G(a,\phi,\exp(\eta))\big)
	\\= (V \cdot G)(a,\phi,\exp(\eta)) + G_1(a,\phi,\exp(\eta)) V(a,\phi)
	+ G_2(a,\phi,\exp(\eta)) V\eta.
	\label{E:coulomb_implicit_differentiation}
\end{multline}
Here $G_1(a,\phi,\exp(\eta)) = \pa_{(a,\phi)} G(a,\phi,\exp(\eta))$ is the
linearization of $G$ with respect to the $(a,\phi)$ variables,
$G_2(a,\phi,\exp(\eta)) = \pa_{\eta} G(a,\phi,\exp(\eta))$ is the linearization
with respect to $\eta$, and $(V \cdot G)$ denotes all terms where $V$
differentiates the coefficients of $G$, i.e., where $V$ differentiates
a term in the background configuration $(A,\Phi)$.

By the smoothness assumption on $(A,\Phi)$, the proof of
Lemma~\ref{L:gauge_mult_lin} applies to $(V\cdot G)$, and we conclude that 
\[
	(V\cdot G) : \threeH^{k+l,1}(\fZ; \adP) \times \threeH^{k+l,2}(\fZ; \Ad P)
	\to \threeH^{k+l,0}(\adP)
\]
is a $C^1$ map. Likewise, the linear map
\[
	G_1(a,\phi,\exp(\eta)) : \threeH^{k+l,1}(\fZ; \adP) \to \threeH^{k+l,0}(\fZ; \adP)
\]
is bounded, and $V(a,\phi)$ is in $\threeH^{k+l,1}$ by assumption.
Finally, as a result of the inductive hypothesis \eqref{E:coulomb_unique_pair},
it follows that
\[
\begin{aligned}
	G_2(a,\phi,\exp(\eta)) &: \threeH^{k+l-1,2}(\fZ; \adP) \to \threeH^{k+l-1,0}(\fZ; \adP),
	\\G_2(a,\phi,\exp(\eta)) &: \threeH^{k+l,2}(\fZ; \adP) \to \threeH^{k+l,0}(\fZ; \adP)
\end{aligned}
\]
are isomorphisms, with inverses which coincide where defined. Rearranging
\eqref{E:coulomb_implicit_differentiation}, we conclude that
\begin{multline*}
	V\eta = - G_2^{-1}(a,\phi,\exp(\eta))
		\big((V\cdot G)(a,\phi,\exp(\eta)) + G_1(a,\phi,\exp(\eta)) V(a,\phi)\big)
	\\ \in \threeH^{k+l,2}(\fZ; \adP).
\end{multline*}
Since $V$ was arbitrary, in fact $\eta \in \threeH^{k+l+1,2}$. Letting
$(a,\phi)$ vary in $i^{-1}(U_l)$ and appealing to the uniqueness of the
solution $\gamma$, we conclude that $(a,\phi) \mapsto \gamma$ such that
$G(a,\phi,\gamma) = 0$ defines a $C^1$ map
\[
	U_{l+1} := i^{-1}(U_l) \subset \threeH^{k+l+1,1}(\fZ; \adP) 
	  \to \threeH^{k+l+1,2}(\fZ; \Ad P),
\]
which completes the induction.
\end{proof}

\section{Linear analysis} \label{S:linear}
\subsection{Linear analysis of $L_{\fX_i}$} \label{S:linear_X}

In this section we give the analysis of the operator $L_{(A,\Phi)}$
\eqref{E:Bogo_expansion} as 
needed for the construction of the formal solution in \S\ref{S:formal_formal} over
the Euclidean boundary hypersurfaces $\X_i$.  Thus let $X$ be the
radial compactification of $\RR^3$, with boundary defining function
$\rho$ and let $(A,\Phi)$ be a smooth solution of the Bogomolny
equations.   This operator has been considered previously by Taubes
and more systematically by the first author.  We need to refine the
parametrix found by Kottke in \cite{kottke2015dimension} for applications in this paper;
on the other hand there are some simplifications that result from $X$
being the radial compactification of $\RR^3$ rather than a general
scattering manifold.  Denote by $\Lambda$ the bundle $\scLam^1\oplus \scLam^0$ 
over $\RR^3$ and by $\frp$ the complexification of the adjoint bundle $\ad(P)$. Then
\begin{equation}
L_{\fX} = L_{(A,\Phi)}\begin{bmatrix} a \\ \phi\end{bmatrix}
= \begin{bmatrix} *\rd_A & -\rd_A \\ -\rd_A^* & 0 \end{bmatrix}
\begin{bmatrix} a \\ \phi\end{bmatrix}
+ \ad(\Phi)\begin{bmatrix} a\\ \phi \end{bmatrix}.
\end{equation}
where $(a,\phi)\in \Lambda\otimes \frp$.

We begin with the consequences of the decomposition $\frp = \frp_0
\oplus \frp_1$ near $\pa X$.  Let $\hat{\Phi} = \Phi/|\Phi|$.  

\begin{lem}
Let $C = \nabla_A\hat{\Phi}$, defined over a collar neighbourhood
$\cU$ of $\pa X$. Then 
\begin{equation}
C \in \rho^\infty C^\infty(\cU,\frp_1).
\end{equation}
\end{lem}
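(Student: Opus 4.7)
The plan is to exploit the splitting $\frp = \frp_0 \oplus \frp_1$ together with the rapid off-diagonal decay \eqref{e11.8.10.15} recalled from Taubes. The key observation is that $\nabla_A \hat\Phi$ automatically lands in $\frp_1$, after which the decay is nearly immediate.

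First I would check the algebraic statement: since $\abs{\hat\Phi}^2 \equiv 1$ in $\cU$ (where $\Phi \neq 0$), applying $\nabla_A$ and using metric compatibility gives $\langle \nabla_A \hat\Phi, \hat\Phi \rangle = 0$ as sections of $T^*X$. Because $\frp_0$ is the real span of $\Phi$ (equivalently of $\hat\Phi$), and $\frp_1$ is its orthogonal complement with respect to the $\Ad$-invariant metric on $\su(2)$, it follows that $C = \nabla_A \hat\Phi$ takes values in $\frp_1$.

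Next I would extract the decay. Writing $\Phi = \abs{\Phi} \hat\Phi$ and differentiating gives the splitting
\[
\nabla_A \Phi = (\rd \abs{\Phi}) \otimes \hat\Phi + \abs{\Phi}\, \nabla_A \hat\Phi,
\]
where the first summand lies in $\Lambda^1 \otimes \frp_0$ and the second in $\Lambda^1 \otimes \frp_1$ by the previous paragraph. On $\frp_1$ the endomorphism $\ad(\Phi)$ has eigenvalues $\pm i \abs{\Phi}$, and on the collar $\cU$ we have $\abs{\Phi} \geq m/2 > 0$, so $\ad(\Phi)$ is invertible on $\frp_1$ with uniformly bounded inverse. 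Hence the off-diagonal decay hypothesis $\Ad(\Phi) \nabla_A \Phi \in \rho^\infty C^\infty(\cU, \Lambda^1 \otimes \ad(P))$ from \eqref{e11.8.10.15} is equivalent to the statement that the $\frp_1$ component of $\nabla_A \Phi$ itself lies in $\rho^\infty C^\infty(\cU, \Lambda^1 \otimes \frp_1)$, which is exactly $\abs{\Phi}\, \nabla_A \hat\Phi$.

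Finally, dividing through by $\abs{\Phi}$, which is a smooth nowhere-vanishing function on $\cU$ (and hence has a smooth reciprocal), yields $\nabla_A \hat\Phi \in \rho^\infty C^\infty(\cU, \Lambda^1 \otimes \frp_1)$, as required. The only place where a genuine analytic input is used is the rapid off-diagonal decay \eqref{e11.8.10.15}; everything else is algebra and the fact that $\abs{\Phi}$ is bounded away from zero at $\pa X$. No obstacles are anticipated, since the real work is done by the cited Taubes estimate.
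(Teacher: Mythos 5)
Your proof is correct and follows essentially the same route as the paper's: metric compatibility applied to $\abs{\hat\Phi}^2=1$ gives $C\in\frp_1$, and then differentiating $\Phi=\abs\Phi\,\hat\Phi$ to split $\nabla_A\Phi$ along $\frp_0\oplus\frp_1$ and invoking the Taubes off-diagonal decay \eqref{e11.8.10.15} yields the rapid vanishing. If anything you phrase the final reduction a bit more carefully than the paper does: the paper passes through the assertion ``$\ad\hat\Phi\,C=C$,'' which is only true up to an invertible automorphism of $\frp_1$, whereas you explicitly use the fact that $\ad(\Phi)$ is invertible on $\frp_1$ with uniformly bounded inverse near $\pa X$, which is the mechanism actually at work.
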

\begin{proof}
By differentiation of $|\hat{\Phi}|^2=1$, we get $\langle
  \hat{\Phi},C\rangle =0$, showing that $C \in
  C^\infty(\cU,\frp_1\otimes \scT^*)$.   We also have
\begin{equation}
C = \nabla_A(|\Phi|^{-1}\Phi) = \rd(|\Phi|^{-1})\Phi +
|\Phi|^{-1}\nabla_A\Phi
\end{equation}
Since also $\ad\hat{\Phi} C = C$ by definition of $\frp_1$, it follows
that
\begin{equation}
C = |\Phi|^{-1}\ad(\hat{\Phi})\nabla_A\Phi
\end{equation}
and so the rapid decay of $C$ follows from that of
$\ad(\Phi)\nabla_A\Phi$ discussed in \S\ref{S:bogo1}.
\end{proof}

Given any section $u$ of $\frp$ define $u_0 \in C^\infty(\cU,\CC)$ by
\begin{equation}
u_0 = \langle \hat{\Phi},u\rangle.
\end{equation}
Then 
\begin{equation}
u_1 = u - u_0\hat{\Phi}
\end{equation}
is a section of $\frp_1$ and thus satisfies
\begin{equation}
\ad\hat{\Phi}u_1 = u_1.
\end{equation}
Denote by $\nabla_1$ the connection on $\frp_1$ induced by projection
of $\nabla_A$ on $\frp_1$.
\begin{prop}
Under the identification $\frp = \frp_0 \oplus \frp_1$, $\frp_0 \simeq
\CC$ just described,
\begin{equation}
\nabla_A\begin{bmatrix} u_0 \\ u_1\end{bmatrix}
=
\begin{bmatrix} \rd u_0 \\ \nabla_1 u_1\end{bmatrix}
+
\begin{bmatrix}  0 & \ad(C) \\ C & 0\end{bmatrix}
\begin{bmatrix} u_0 \\ u_1\end{bmatrix}
\label{e1.11.8.15}
\end{equation}
\end{prop}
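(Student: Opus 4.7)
The plan is to derive the formula by a direct computation using the Leibniz rule for $\nabla_A$ applied to the decomposition $u = u_0\hat\Phi + u_1$, and then recognizing each piece according to the splitting $\frp = \frp_0 \oplus \frp_1$. The only non-trivial ingredient is the identification of the off-diagonal terms.

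First I would compute $\nabla_A(u_0 \hat\Phi)$ by Leibniz:
\[
\nabla_A(u_0 \hat\Phi) = (\rd u_0)\,\hat\Phi + u_0\,\nabla_A \hat\Phi = (\rd u_0)\,\hat\Phi + u_0\,C.
\]
Under the identification $\frp_0 \simeq \CC$ via $z \mapsto z\hat\Phi$, the first summand lies in $\frp_0$ and contributes $\rd u_0$ to the $\frp_0$-row, while the second summand lies in $\frp_1$ (since $C$ is a section of $\frp_1$ by the preceding lemma) and contributes $C\,u_0$ to the $\frp_1$-row. This already accounts for the diagonal entry $\rd u_0$ and for the lower-left entry $C$ in the matrix.

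Next I would decompose $\nabla_A u_1$ according to $\frp = \frp_0 \oplus \frp_1$. By definition of the induced connection $\nabla_1$ on $\frp_1$ (orthogonal projection of $\nabla_A$), the $\frp_1$-component of $\nabla_A u_1$ is precisely $\nabla_1 u_1$, which is the diagonal entry for the second row. To identify the $\frp_0$-component, differentiate the defining identity $\langle \hat\Phi, u_1\rangle = 0$ of $\frp_1$ to obtain
\[
\langle C, u_1\rangle + \langle \hat\Phi, \nabla_A u_1\rangle = 0,
\]
so the $\frp_0$-part of $\nabla_A u_1$ is determined algebraically by $C$ and $u_1$.

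The hard (well, mildly fiddly) step will be matching this projection with the expression $\ad(C)u_1$ that appears in the formula. Here I would use the crucial algebraic feature of $\su(2)$: the complexified bracket satisfies $[\frp_1,\frp_1]\subset \frp_0$, so that $\ad(C)u_1 = [C,u_1]$ is automatically a section of $\frp_0$ and thereby identifies with a complex function via the isomorphism $\frp_0 \simeq \CC$. An application of $\ad$-invariance of the Killing-type form $-\tfrac{1}{2}\tr$ then shows that $\langle \hat\Phi, [C,u_1]\rangle$ agrees (up to the chosen normalizations) with $-\langle C,u_1\rangle$, so the two expressions for the $\frp_0$-component of $\nabla_A u_1$ coincide. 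Assembling the contributions from $\nabla_A(u_0\hat\Phi)$ and $\nabla_A u_1$ gives the claimed matrix identity \eqref{e1.11.8.15}.
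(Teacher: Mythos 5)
Your Leibniz computation of $\nabla_A(u_0\hat\Phi)$ is exactly the paper's first step, and you correctly read off $\rd u_0$ (the $\frp_0$-part) and $u_0 C$ (the $\frp_1$-part, since $C\in\frp_1$). For the remaining block, however, you take a genuinely different route. The paper works entirely inside the Lie algebra: it takes the characterizing identity $u_1 = \ad(\hat\Phi)u_1$ for sections of $\frp_1$ and differentiates it with the Leibniz rule for $\ad$, giving $\nabla_A u_1 = \ad(\hat\Phi)\nabla_A u_1 + \ad(C)u_1 = \nabla_1 u_1 + \ad(C)u_1$ in a single step — no inner product, no ad-invariance, and the answer comes out already decomposed into $\frp_1\oplus\frp_0$. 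You instead differentiate the orthogonality relation $\langle\hat\Phi,u_1\rangle = 0$ to see that the $\frp_0$-component of $\nabla_A u_1$ is $-\langle C,u_1\rangle$, and then must reconcile this scalar with the $\frp_0$-element $\ad(C)u_1$, i.e.\ show $\langle\hat\Phi,[C,u_1]\rangle = -\langle C,u_1\rangle$. That reconciliation, which you flag as the fiddly step and dispatch with "up to the chosen normalizations," is really just ad-invariance combined with the \emph{same} identity $\ad(\hat\Phi)u_1 = u_1$ that the paper differentiates directly; so you haven't avoided the essential algebraic input, only routed it through the pairing and back again. Both arguments are correct, but the paper's is the shorter one precisely because it never leaves the algebra: differentiating $u_1 = \ad(\hat\Phi)u_1$ hands you $\ad(C)u_1$ as the off-diagonal term with no translation step required, whereas your approach pays a small toll in sign-bookkeeping when converting between the scalar pairing $\langle C,u_1\rangle$ and the Lie-bracket expression.
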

\begin{proof}
If $u_0\in C^\infty(\cU,\CC)$, then we calculate
\begin{equation}
\nabla(u_0\hat{\Phi}) = \rd u_0\otimes \hat{\Phi} + u_0 C.
\end{equation}
Thus the $\frp_0$ and $\frp_1$ components are precisely $\rd u_0$ and
$u_0 C$, proving the first line of \eqref{e1.11.8.15}. If $u_1 \in
C^\infty(\cU,\frp_1$, then by definition
\begin{equation}
\nabla_1 u_1 = \ad(\hat{\Phi})\nabla_A u_1.
\end{equation}
Differentiating the equation $u_1 = \ad(\hat{\Phi})u_1$, we obtain
\begin{equation}
\nabla_A u_1 = \ad(\hat{\Phi})u_1 + \ad(C)u_1= \nabla_1u_1 + \ad(C)
u_1.
\end{equation}
The result is proved.
\end{proof}
As a consequence, we have (c.f.\ \eqref{E:backL})
\begin{prop}
Over $\cU$, relative to the decomposition $\frp = \frp_0\oplus
\frp_1$, 
\begin{equation}
L_{\fX} = \begin{bmatrix} L & C_1 \\ C_1^* & L_1 +
  \Phi_1 \end{bmatrix}
\end{equation}
where
\begin{equation}
L = \begin{bmatrix} *\rd & -\rd \\ -\rd^* & 0 \end{bmatrix},
\end{equation}
is the euclidean Hodge-de Rham operator on $\RR^3$, 
$L_1$ is the same operator coupled to $\frp_1$, 
$\Phi_1$ denotes $\ad(\Phi)$
acting on $\frp_1$ and $C_1$ is a zeroth order $O(\rho^\infty)$ term.
\end{prop}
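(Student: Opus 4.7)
\medskip

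\noindent\textbf{Proof proposal.}  The statement is essentially a bookkeeping consequence of the previous proposition together with the observation that $\ad(\Phi)$ is diagonal with respect to the splitting $\frp = \frp_0\oplus\frp_1$. The plan is as follows.

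First, I would set up notation. Write any $(a,\phi) \in \Lambda\otimes\frp$ as $(a_0\hat\Phi+a_1,\phi_0\hat\Phi+\phi_1)$ with $a_0 \in C^\infty(\cU,\scT^*)$, $\phi_0\in C^\infty(\cU,\CC)$ scalar, and $a_1,\phi_1$ valued in $\frp_1$. Since $\Phi = |\Phi|\hat\Phi$ commutes with $\hat\Phi$, one has $\ad(\Phi)\hat\Phi = 0$, so the zeroth-order term satisfies
\[
\ad(\Phi)\begin{bmatrix}u_0\hat\Phi+u_1\\ v_0\hat\Phi+v_1\end{bmatrix}
= \begin{bmatrix}\Phi_1 u_1\\ \Phi_1 v_1\end{bmatrix},
\]
i.e.\ $\ad(\Phi)$ is block-diagonal, vanishing on $\frp_0$ and restricting to $\Phi_1$ on $\frp_1$.

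Second, I would extend the splitting formula \eqref{e1.11.8.15} for $\nabla_A$ to the operators $d_A$ and $d_A^*$ on $\Lambda^1\otimes\frp$ (equivalently on $\scT^*\otimes \frp$-valued forms). From \eqref{e1.11.8.15} one reads off that for scalar $u_0\in C^\infty(\cU)$ and $u_1\in C^\infty(\cU,\frp_1)$,
\[
d_A(u_0\hat\Phi+u_1) = (du_0)\otimes\hat\Phi + (\nabla_1 u_1) + u_0\,C + \ad(C)u_1,
\]
and analogously for the action of $d_A$ on $\scT^*$-valued sections and its formal adjoint $d_A^*$. The first two terms are the diagonal contributions — ordinary $d$ on the $\frp_0$ summand and the coupled $\nabla_1$ on the $\frp_1$ summand — while the remaining terms couple $\frp_0$ and $\frp_1$ via multiplication by $C$ (and $\ad(C)$, noting $[\frp_1,\frp_1]\subset\frp_0$).

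Third, I would assemble $L_\fX$ from these pieces. Applying $\begin{bmatrix}*d_A&-d_A\\-d_A^*&0\end{bmatrix}+\ad(\Phi)$ to $(a,\phi)$ and collecting by $\frp_0/\frp_1$ type, the diagonal blocks read, on $\frp_0$-components,
\[
\begin{bmatrix}*da_0-d\phi_0\\ -d^*a_0\end{bmatrix} = L\begin{bmatrix}a_0\\ \phi_0\end{bmatrix},
\]
since $\ad(\Phi)$ contributes nothing there, and on $\frp_1$-components,
\[
\begin{bmatrix}*\nabla_1 a_1 - \nabla_1\phi_1\\ -\nabla_1^* a_1\end{bmatrix}+\Phi_1\begin{bmatrix}a_1\\ \phi_1\end{bmatrix} = (L_1+\Phi_1)\begin{bmatrix}a_1\\ \phi_1\end{bmatrix}.
\]
The off-diagonal blocks consist entirely of pointwise multiplication/adjoint-multiplication by $C$; define $C_1$ to be the resulting map $\Lambda\otimes\frp_1\to\Lambda\otimes\frp_0$. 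Since $C\in\rho^\infty C^\infty(\cU,\frp_1\otimes\scT^*)$ by the lemma, $C_1$ is a zeroth order operator with coefficients in $\rho^\infty C^\infty$. Formal self-adjointness of $L_\fX$ (which is immediate from the structure of the Bogomolny linearization plus Coulomb gauge fixing) forces the lower off-diagonal block to be $C_1^*$, completing the proof.

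There is no real obstacle: the only care needed is in verifying that all the terms produced by the Leibniz-type formula \eqref{e1.11.8.15}, when applied in both the $d_A$ and $d_A^*$ directions, either land in the diagonal contribution (giving $L$, resp.\ $L_1$) or carry a factor of $C$ (giving the rapidly decaying off-diagonal block). No new analysis is required beyond the preceding lemma and proposition.
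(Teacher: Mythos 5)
Your proposal follows essentially the same route the paper intends when it presents the Proposition as a ``consequence'' of \eqref{e1.11.8.15}: split the connection into diagonal and off-diagonal pieces relative to $\frp = \frp_0 \oplus \frp_1$, observe that $\ad(\Phi)$ is block-diagonal, read off that the diagonal blocks reproduce $L$ and $L_1 + \Phi_1$, and identify the off-diagonal blocks with zeroth-order $\cO(\rho^\infty)$ operators built from $C = \nabla_A\hat\Phi$.

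There is one slip in your final step, though. You justify the appearance of $C_1^*$ by asserting that $L_{\fX}$ itself is formally self-adjoint. It is not: $L_{\fX}$ is the sum of the self-adjoint first-order operator $\begin{bmatrix} *\rd_A & -\rd_A\\ -\rd_A^* & 0\end{bmatrix}$ and the zeroth-order term $\ad(\Phi)$, and the latter is skew-adjoint (for $\Phi$ valued in $\su(2)$ with the $-\tfrac12\tr$ metric, $\ad(\Phi)^* = -\ad(\Phi)$; the paper itself carefully distinguishes $L$ from $L^*$, e.g.\ in the Weitzenb\"ock identity \eqref{e7.10.10.15}). Indeed, the resulting block form $\begin{bmatrix} L & C_1\\ C_1^* & L_1 + \Phi_1\end{bmatrix}$ is visibly non-self-adjoint precisely because $\Phi_1$ is skew. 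The correct justification for the lower-left block being $C_1^*$ is exactly the observation you make earlier in the proof: $\ad(\Phi)$ contributes only to the diagonal blocks, so all off-diagonal content comes from the first-order part, which \emph{is} self-adjoint. Alternatively, one can compute $C_1^*$ directly from the splitting of $\rd_A^*$ (the adjoint of \eqref{e1.11.8.15}). With this correction the argument is complete.
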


Thus, near $\cU$, $L_{\fX}$ behaves like a sum of the uncoupled euclidean
Hodge-de Rham operator $L$ and the fully elliptic scattering operator
$L_1 + \Phi_1$. 

As is well known, Fredholm extensions, solvability and boundary
regularity properties of the $\frp_0$ component of $L_{\fX}$ are
therefore governed by the {\em homogeneous solutions} of $Lu=0$.

\begin{prop}
Suppose that $Lu=0$ over $\RR^3\setminus 0$ and $u$ is homogeneous of
degree $\alpha$.  Then if $\alpha \geq 0$, it follows that $\alpha =
n$ is a non-negative integer, and there is a homogenous harmonic
polynomial $h$ of degree $n+1$, such that
\begin{equation}\label{e2.11.8.15}
u = L h
\end{equation}
If $\alpha<0$ then $\alpha = -2 - n$, where $n$ is a non-negative
integer, and there is a harmonic polynomial, homogeneous of degree
$n$, such that
\begin{equation}\label{e3.11.8.15}
u = L(|z|^{-2n-1}h)
\end{equation}
\label{p1.11.8.15}\end{prop}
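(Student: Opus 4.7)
The plan is to exploit $L^2=\Delta$ (the Hodge Laplacian, which in the flat frame $\{dz^1,dz^2,dz^3,1\}$ of $\Lambda$ acts componentwise as the scalar Laplacian on $\RR^3$) together with an explicit primitive of $u$ under $L$ obtained from multiplication by $|z|^2/2$.

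The first step is immediate: $Lu=0$ gives $\Delta u=0$, so each component of $u$ in the flat frame is harmonic on $\RR^3\setminus 0$, and the standard spherical harmonic decomposition of harmonic homogeneous functions on $\RR^3\setminus 0$ forces $\alpha$ to be an integer with $\alpha\geq 0$ or $\alpha\leq -1$.

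The key step will be the explicit construction of $h$. A direct Leibniz-type computation shows that, for any $f\in C^\infty$ and $u=(a,\phi)$ with $Lu=0$,
\[
L(fu)=\bigl(*(df\wedge a)-(df)\phi,\ *(df\wedge *a)\bigr).
\]
Taking $f=|z|^2/2$, so $df=z^\flat=z_i\,dz^i$, this becomes
\[
L(|z|^2u/2)=\bigl((z\times a-\phi z)^\flat,\ z\cdot a\bigr).
\]
Applying $L$ once more, using $L^2=\Delta$ together with the ordinary product rule and the Euler identity $z\cdot\nabla u=\alpha u$ on $\alpha$-homogeneous $u$, will yield
\[
\Delta(|z|^2u/2)=-(2\alpha+3)\,u.
\]
Since $2\alpha+3\neq 0$ for every integer $\alpha$, I set
\[
h=-\frac{1}{2\alpha+3}\bigl((z\times a-\phi z)^\flat,\ z\cdot a\bigr),
\]
which by construction satisfies $Lh=u$; harmonicity of $h$ then follows automatically from $\Delta h=L^2h=Lu=0$, and $h$ is homogeneous of degree $\alpha+1$.

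It remains to identify $h$ via the normal forms of harmonic homogeneous functions. If $\alpha=n\geq 0$, then $h$ is harmonic homogeneous of degree $n+1\geq 1$, so its components are harmonic polynomials of degree $n+1$, giving \eqref{e2.11.8.15}. If $\alpha=-2-n$ with $n\geq 0$, then $h$ has degree $-n-1$, and its components are of the form $|z|^{-2n-1}p$ with $p$ a harmonic polynomial of degree $n$ (since every homogeneous harmonic function of degree $-m-1$ on $\RR^3\setminus 0$ is of the form $|z|^{-2m-1}$ times a harmonic polynomial of degree $m$), giving \eqref{e3.11.8.15}. The formula still makes sense when $\alpha=-1$: $h$ then has degree $0$ (constant components), so $Lh=0$ trivially since $L$ is first order, and this forces $u=Lh=0$; thus there are no nontrivial solutions of degree $-1$, consistent with the stated classification. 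The only real content of the argument beyond these normal forms is the product-rule identity $\Delta(|z|^2u/2)=-(2\alpha+3)u$ on harmonic homogeneous $u$, which presents no genuine obstacle.
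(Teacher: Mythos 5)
Your proof is correct.  Both you and the paper handle $\alpha\geq 0$ the same way (via the explicit primitive $c\,|z|^2u$), but for $\alpha<0$ the arguments genuinely diverge.  The paper extends $Lu$ as a homogeneous distribution on all of $\RR^3$ supported at $\set 0$, identifies it as a combination of derivatives of $\delta_0$ (of degree $-3-n$, whence $\alpha=-2-n$), and then produces the primitive $f$ via a fundamental-solution argument for $\Delta f=s=Lu$.  You instead run the $|z|^2u/2$ construction uniformly: the identity $L\bigl(L(|z|^2u/2)\bigr)=-(2\alpha+3)u$ (correct in the Hodge convention $L^2=\Delta=-\Delta_{\mathrm{flat}}$) gives the primitive $h$ directly for every integer $\alpha$, harmonicity of $h$ being free from $\Delta h=L(Lh)=Lu=0$, after which the normal form of $h$ follows from the classification of homogeneous harmonic functions on $\RR^3\setminus 0$.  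Your route is more self-contained and unified—no distribution theory, and the borderline case $\alpha=-1$ is killed explicitly rather than implicitly (degree-$0$ harmonic $h$ is constant, $Lh=0$, so $u=0$)—at the modest cost of invoking the full classification of homogeneous harmonics on the punctured space rather than just elliptic regularity on $\RR^3$.  Either approach is a complete proof.
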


\begin{proof}
It is clear that \eqref{e2.11.8.15} and \eqref{e3.11.8.15} do give
homogenous solutions of the given degree, because $L^2 =
\Delta$.  Let us consider \eqref{e3.11.8.15}. If $Lu=0$ and $u$ is
homogeneous of some negative degree $\alpha$, then $Lu$ extends to
$\RR^3$ uniquely as a homogeneous distribution supported at $0$. The
only possibility is a linear combination of derivatives of the Dirac
distribution $\delta_0$.  Since this distribution is of degree $-3$,
it follows that $s=Lu$ can only be homogeneous of degree $-3-n$, for
some $n\geq 0$.  Applying $L$, $\Delta u =Ls$ and $u = Lf$, where $f$
is the unique homogeneous solution of $\Delta f = s$. Thus $f$ has the
form $|z|^{-2n-1}h$ where $h$ is an $\RR^4$-valued harmonic
polynomial, homogeneous of degree $n$, proving the result.

The proof of \eqref{e3.11.8.15} is similar; by elliptic regularity, if
the homogeneity is non-negative, then $Lu=0$ on $\RR^3$. So $u$ is an $\RR^4$-valued
harmonic polynomial, homogeneous of degree $n\geq 0$. However any such
polynomial can be written $u = \Delta v$ (take $v$ a multiple of
$|z|^2u$). Then $u = L(Lv)$ and $Lv$ is an $\RR^4$-valued function,
homogeneous of degree $n+1$. 
\end{proof}

The remainder of the section is devoted to a proof of the following 
\begin{thm}
\mbox{}
\begin{enumerate}
[{\normalfont (a)}]
\item \label{I:LX_package_fred}
For $m \geq 0$, $\beta \in \bbR$, and $\alpha \in (-1,1)$ the bounded extension 
\begin{equation}
	L_{\fX} : \rho^{\alpha-1/2,\beta}\bhscH^{m,1}(\fX; \Lambda\otimes \adP) 
	\to \rho^{\alpha+1/2,\beta}\bhscH^{m,0}(\fX; \Lambda\otimes \adP)
	\label{E:LX_extn}
\end{equation}
is Fredholm and surjective. The spaces here are defined in \S\ref{S:sobolev_Eucl}.
The null-space $N$ is of complex dimension $4k$ and if $u\in N$ is
decomposed as $u = u_0 + u_1$   relative to $\frp = \frp_0 \oplus
\frp_1$ near the boundary, 
\begin{equation}
u_0 \in \rho^2C^\infty(\cU), u_1 \in \rho^\infty C^\infty(\cU,\frp_1).
\end{equation}

\item \label{I:LX_package_solv} There is a right-inverse $G$ of
  \eqref{E:LX_extn} with range equal to the $L^2$ orthogonal
  complement of $N$ with the following roperty. 
If $f \in \cAphg^\ast(\fX; \Lambda\otimes \adP)$ with $f = f_0\oplus f_1$
near $\pa \fX$,
\begin{equation}
f_i \in \cAphg^{F_i}(\fX; \Lambda \otimes\adPi),
\end{equation} 
then $u= Gf$ solves $Lu =f$ with $u \in \cAphg^\ast(\fX;
\Lambda\otimes \adP)$.  Moreover, decomposing $u=u_0+u_1$ near $\pa
X$, we have $u_i \in \cAphg^{E_i}$, where
\[
\begin{aligned}
	E_0 &= %\bigcup_k \pns{\ol \bigcup_{l \leq k} \  2 + l}\ol \cup (F_0 - 1 + k) \subset 
	  \wh {2} \ol \cup (F_0 - 1), \\ 
	E_1 &= F_1.
\end{aligned}
\]
\item \label{I:LX_package_G}  Let $\cU$ be a product neighbourhood of
$\pa X$ as before and consider the Schwarz kernel of $G$ on
restricted to $\cU\times \cU$. Then $G|\cU\times \cU$ decomposes with
respect 
%$(L_\fX)$, has Schwartz kernel on $\fX^2$ supported
%near the diagonal, and decomposes with respect 
to $\Hom(\pi_R^\ast (\adPz\oplus \adPo), \pi_L^\ast (\adPz \oplus
\adPo))$ 
\begin{equation}
\begin{gathered}
	G = \begin{pmatrix} \rho \wt G_0\rho^{-2} & G_{01}\\G_{10} & G_1\end{pmatrix},
	\\ \wt G_0 \in \bPsi^{-1,\cF}(\fX; \Lambda\otimes\adPz),
	\quad \cF = (F_L,F_R,F_F), \quad F_L,\ F_R,\ F_F \geq 0.
	\\ G_1 \in \scPsi^{-1,0}(\fX; \Lambda\otimes\adPo),
	\quad G_{ij} \in \rho^\infty \Psi^{-1}(\fX; \Lambda\otimes\adP_j,\Lambda\otimes\adP_i).
\end{gathered}
	\label{E:linear_bogo_X_inverse}
\end{equation}
Here $\rho^\infty \Psi^{-1}$ is well-defined in either calculus as
$\rho^\infty \Psi^{-1} = \bPsi^{-1,(\infty,\infty,\infty)} = \scPsi^{,\infty}$.
\end{enumerate}
\label{T:LX_package}
\end{thm}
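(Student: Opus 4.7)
The plan is to build a parametrix $G$ for $L_\fX$ that behaves as a $b$-pseudodifferential operator on the $\adPz$-sector and as a fully-elliptic scattering pseudodifferential operator on the $\adPo$-sector near $\pa\fX$; all three parts of the theorem will then follow from its construction and mapping properties. On $\adPo$, the block $L_1+\Phi_1$ is fully elliptic in the scattering calculus since $L_1$ is Dirac-type and the scattering normal operator is invertible (because $\Phi_1 = \ad(\Phi)\rst_{\adPo}$ is nondegenerate), so the standard scattering calculus yields a parametrix $G_1 \in \scPsi^{-1,0}(\fX; \Lambda\otimes\adPo)$ with residue in $\rho^\infty\Psi^{-\infty}$. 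On $\adPz$, the restriction of $L_\fX$ is just the flat Euclidean Hodge--de Rham operator $L$; writing $L = \rho\bar L$ makes $\bar L\in \bDiff^1$ a $b$-operator, and by Proposition~\ref{p1.11.8.15} the indicial roots of $\bar L$ are exactly $\set{0,-1,-2,\ldots}\cup\set{2,3,\ldots}$. The source weight $\alpha - 1/2$ in Euclidean $L^2$ norm translates to the $b$-weight $\alpha+1 \in (0,2)$ for $\alpha\in(-1,1)$, which strictly avoids the indicial roots $0$ and $2$. Consequently the $b$-calculus furnishes a parametrix $\tilde G_0 \in \bPsi^{-1,\cF}(\fX;\Lambda\otimes\adPz)$ for an index family $\cF\geq 0$ bookkeeping the contributions from left, right, and front faces of the $b$-double space. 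Assembling the two blocks and using the $\cO(\rho^\infty)$ coupling $C_1$ to close up the iteration yields a global parametrix precisely of the form \eqref{E:linear_bogo_X_inverse}, proving (c).

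The Fredholm statement in (a) then follows from the compactness of the parametrix residues on the hybrid weighted Sobolev spaces, via the compact-inclusion results of Proposition~\ref{P:bsc_spaces}. Surjectivity is obtained by duality: the formal $L^2$ adjoint of $L_\fX$ (after the natural factor-swap $\scLam^1\leftrightarrow\scLam^0$) is $L_\fX$ itself, so the cokernel on the target weight coincides with the kernel on the dual weight, which falls within the same symmetric Fredholm range and therefore lies in $L^2$, reducing the count to the $L^2$-kernel. The boundary expansion $u_0 \in \rho^2 C^\infty$, $u_1 \in \rho^\infty C^\infty$ for any $u\in \Null(L_\fX)$ is extracted by iterating the parametrix applied to $L_\fX u = 0$: the rapid decay of $u_1$ reflects the full scattering ellipticity of $L_1+\Phi_1$, while the quadratic vanishing of $u_0$ is forced by the first positive indicial root $s=2$ of $\bar L$ (the smaller indicial roots $s=0,-1,\ldots$ being excluded by the chosen weight range). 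The dimension $4k$ is the index of the Fredholm extension, computed in \cite{kottke2015dimension}; the Weitzenbock identity \eqref{e7.10.10.15} ensures the absence of cokernel contribution.

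For (b), I would iterate the parametrix on polyhomogeneous data. The scattering parametrix $G_1$ preserves the index set $F_1$ on $\adPo$ to leading order, since full ellipticity prevents the generation of new boundary exponents, and the $\rho^\infty$ remainder is absorbed by a convergent Neumann series. On $\adPz$, the pushforward theorem for polyhomogeneous kernels on the $b$-double space \cite{CCN} shows that $\tilde G_0$ maps $\cAphg^{F_0-1}$ to $\cAphg^{\ol{(2,0)}\,\ol\cup\,(F_0-1)}$: the $-1$ shift arises from the conjugation $L = \rho\bar L$, the index set $\ol{(2,0)}$ is generated by the first indicial root of $\bar L$, and the extended union accounts for possible logarithmic resonances between $F_0-1$ and the indicial data. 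Combining with the scattering analysis on $\adPo$ gives the stated index sets.

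The principal technical obstacle is the clean integration of the $b$-calculus on $\adPz$ with the scattering calculus on $\adPo$ through the $\cO(\rho^\infty)$ off-diagonal coupling. The Schwarz kernel of $G$ lives on a hybrid $b$--scattering double space, and verifying that the off-diagonal blocks $G_{01}, G_{10}$ genuinely lie in $\rho^\infty \Psi^{-1}$, along with the precise left/right conjugation by $\rho$ and $\rho^{-2}$ in the $\adPz$-block required by (c), demands careful bookkeeping of how the two calculi compose across the $\frp_0\oplus\frp_1$ split and across the different faces of the double space.
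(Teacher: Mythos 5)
Your overall strategy mirrors the paper's: conjugate to a $b$-operator on the $\adPz$ sector, use full scattering ellipticity on the $\adPo$ sector, assemble a block parametrix, and read off Fredholmness, boundary regularity, and polyhomogeneous solvability from the parametrix structure. The indicial root bookkeeping for $\bar{L}=\rho^{-1}L$ (indicial roots $\{0,-1,-2,\ldots\}\cup\{2,3,\ldots\}$, weight $\alpha+1\in(0,2)$) is a different but internally consistent normalization compared with the paper's $\widetilde{L}=\rho^{-2}L\rho$.

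However, the surjectivity argument in part (a) contains a genuine error. You claim that ``the formal $L^2$ adjoint of $L_\fX$ (after the natural factor-swap $\scLam^1\leftrightarrow\scLam^0$) is $L_\fX$ itself.'' This is false. The operator $L_\fX = L_A + \ad(\Phi)$ has a formally self-adjoint first-order part $L_A$ and a \emph{skew}-adjoint zeroth-order term $\ad(\Phi)$ (since $\Phi$ is $\su(2)$-valued), so
\[
L_\fX^\ast = L_A - \ad(\Phi) \neq L_\fX,
\]
and no rearrangement of the $\scLam^1\oplus\scLam^0$ factors changes this. This sign matters: if the adjoint really were $L_\fX$, then the cokernel on the dual weight would be isomorphic to the $4k$-dimensional kernel of $L_\fX$, and the operator would have index zero and fail to be surjective — contradicting what you need to prove. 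The correct argument (and the one the paper gives) identifies the cokernel with $\Null(L_\fX^\ast)$ on the dual weight, shows that such null elements decay like $\rho^2$, and then applies the Bochner identity
\[
L_\fX L_\fX^\ast = \nabla_A^\ast\nabla_A + \ad(\Phi)^\ast\ad(\Phi)
\]
together with an integration by parts (justified by the $\rho^2$ decay) to force $\nabla_A u = 0$ and $\ad(\Phi) u = 0$, hence $u = 0$. You do cite the Weitzenbock identity at the end, but the preceding reasoning contradicts it: that identity controls $\Null(L_\fX^\ast)$ precisely because the adjoint has the \emph{opposite} sign on $\ad(\Phi)$, not because $L_\fX$ is self-adjoint. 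Fixing this step is essential; as written, the surjectivity claim does not follow.

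A secondary but less serious gap: in part (c) you construct a parametrix and say the off-diagonal blocks lie in $\rho^\infty\Psi^{-1}$ ``by careful bookkeeping,'' but (c) concerns the actual right inverse $G$, not merely a parametrix. The passage from parametrix to $G$ requires the algebraic identities $G = Q^R + Q^L E^R - \Pi_N Q^R + E^L G E^R$ and the bi-ideal property of $\rho^\infty\Psi^{-\infty}$ to control the last term; this is where the stated index-set lower bounds for $\wt G_0$ are actually established, and it should be spelled out rather than deferred.
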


The proof of this is an elaboration of work of the first author in \cite{kottke2015dimension} and
starts from a parametrix construction using both the $\bo$ and $\scat$ calculi
as well as the pseudo-differential operators in $\rho^{\infty}
\Psi^s(\fX)$.  Note that 
this forms a bi-ideal in either calculus, in the sense that
\begin{equation}
\begin{gathered}
	\rho^\infty\Psi^k \circ \bPsi^{l,\cE} \subset \rho^\infty \Psi^{k+l}, 
	\quad \bPsi^{l,\cE} \circ \rho^\infty\Psi^k \subset \rho^\infty \Psi^{k+l},
	\\ \rho^\infty\Psi^k \circ \scPsi^{l,e} \subset \rho^\infty \Psi^{k+l}, 
	\quad \scPsi^{l,e} \circ \rho^\infty\Psi^k \subset \rho^\infty \Psi^{k+l}.
\end{gathered}
	\label{E:sc_b_ideal}
\end{equation}

It is convenient to replace $L$ by $\wt{L} = \rho^{-2}L\rho$ in what
follows.  We also omit mention of $\Lambda$ since this is a passenger
and the important thing is the splitting $\frp = \frp_0\oplus
\frp_1$.  In order to construct a parametrix near the boundary for the
$\frp_0$ part $\wt L$, we need to know its indicial roots:
\begin{lem}
$\bspec(\wt L) = \pm \bbN_1 = \bbZ \setminus \set{0}$.
\label{P:LX_ind_roots}
\end{lem}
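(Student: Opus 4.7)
The plan is to reduce the computation of $\bspec(\wt L)$ to Proposition~\ref{p1.11.8.15}, which explicitly classifies the homogeneous solutions of $Lu=0$ on $\bbR^3\setminus\{0\}$.

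First I would factor a power of $\rho$ out of $L$. Since each of the operators $*\rd$, $\rd$, $\rd^*$ making up $L$ is a first-order scattering operator with no zeroth order term, and since $\cV_{\scat} = \rho\cdot\cV_{\bo}$, we may write $L = \rho M$ with $M$ a first-order $\bo$-elliptic operator acting between the b-bundles obtained by appropriate rescaling of $\scLam^1\oplus\scLam^0$. Consequently
\[
\wt L = \rho^{-2}L\rho = \rho^{-1}M\rho = M + \rho^{-1}[M,\rho] \in \bDiff^1,
\]
since $[M,\rho]\in\rho\cdot\bDiff^0$ for any $M\in\bDiff^1$.

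Next I would relate the indicial families of $\wt L$ and $M$. Substituting $u = \rho^\lambda u_0(y) + O(\rho^{\lambda+1})$ gives
\[
I(\wt L,\lambda)u_0 = \rho^{-\lambda}\wt L(\rho^\lambda u_0)\big|_{\rho=0} = \rho^{-\lambda-1}M(\rho^{\lambda+1}u_0)\big|_{\rho=0} = I(M,\lambda+1)u_0,
\]
so that $\bspec(\wt L) = \bspec(M) - 1$. By the analytic Fredholm theorem applied to the analytic family $I(M,\cdot)$ of first-order elliptic operators on compact $\bbS^2$, $\bspec(M)$ is discrete, and $\mu\in\bspec(M)$ iff $I(M,\mu)$ has nontrivial kernel; equivalently (via $L=\rho M$) iff $Lu=0$ admits a formal homogeneous solution $u \sim \rho^\mu u_0(y)$, i.e.\ (setting $\alpha=-\mu$ and $\rho = |z|^{-1}$) iff $Lu=0$ has a homogeneous solution of degree $\alpha$ in the Euclidean coordinate $z$.

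Finally, Proposition~\ref{p1.11.8.15} enumerates these degrees exactly: the growing solutions $Lh$ for $h$ harmonic of degree $n+1\geq 1$ exhaust $\alpha\in\bbN_0$, while the decaying solutions $L(|z|^{-2n-1}h)$ for $h$ harmonic of degree $n\geq 0$ exhaust $\alpha\in\{-2,-3,\ldots\}$; together $\alpha$ ranges over $\bbZ\setminus\{-1\}$. Hence $\bspec(M) = -(\bbZ\setminus\{-1\}) = \bbZ\setminus\{1\}$, and
\[
\bspec(\wt L) = \bbZ\setminus\{1\} - 1 = \bbZ\setminus\{0\} = \pm\bbN_1,
\]
as claimed. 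The main care required is the bookkeeping of the rescaled b-bundles on which $M$ acts—since $\scLam^0$ and $\scLam^1$ are scaled differently under the identification $\scT\fX = \rho\cdot\boT\fX$—together with the verification that every non-invertible value of $I(M,\mu)$ is accounted for by its kernel (and not by a cokernel element with trivial kernel); the latter is automatic from the analytic Fredholm theory for an analytic family of elliptic operators on a compact manifold.
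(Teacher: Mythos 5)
Your proof is correct and takes essentially the same route as the paper: the paper's own proof is a one-line reduction to Proposition~\ref{p1.11.8.15} via the definition $\wt L = \rho^{-2}L\rho$ (with the detailed computation delegated to \cite{kottke2015dimension}), and your argument simply fills in that reduction, including the intermediate factorization $L = \rho M$ and the shift $\bspec(\wt L) = \bspec(M) - 1$. The bookkeeping worry you flag about the differently scaled $\scLam^0$ and $\scLam^1$ components is real but harmless here, because the constant-coefficient model $L$ is exactly dilation-homogeneous in the Euclidean frame $\{dz_i\}$, so homogeneous solutions correspond exactly (not just formally) to indicial kernel elements.
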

\begin{proof} This follows at once from Proposition~\ref{p1.11.8.15}
  and the definition $\wt L = \rho^{-2} L \rho$.  The calculation is
  done in the more general setting that $X$ is an 
arbitrary scattering $3$-manifold in  \cite{kottke2015dimension}, by identifying
$L$ with the odd-signature operator on $X$.
\end{proof}

As an initial step, let $Q$ be a distribution on $\fX^2$ conormal to the
diagonal on the interior, with principal symbol inverting that of $L_{\fX}$,
and decomposing near $\cU^2$ as 
\[
	Q = \begin{pmatrix} \rho \wt Q_0\rho^{-2} & 0\\0 & Q_1\end{pmatrix}.
\]
Here we assume that $Q_1 \in \scPsi^{-1,0}(\fX; \adPo)$ has scattering symbol
inverting that of $L_1 + \Phi_1$ (which is invertible by the fact that $L_1$
is self-adjoint, $\Phi_1$ is skew-adjoint and nondegenerate on $\frp_1$, and
these commute to leading order at $\pa \fX$), and we assume that $\wt Q_0 \in
\bPsi^{-1,(\wh 1_L, 1_R, 0_F)}(\fX; \adPz)$ satisfies $I - \wt L \wt Q_0 \in
\bPsi^{-1,(\infty_L,1_R,1_F)}(\fX;\adPz)$, following the first few standard steps
in the construction of parametrices in the b-calculus \cite{RBMgreenbook}.  More precisely we
assume that that $\wt Q_0$ has interior principal symbol inverting that of $\wt
L$, that the indicial operator $I(\wt Q_0)$ is obtained by taking the inverse
Mellin transform of $I(\wt L, \lambda)$ along $\re (\lambda) = \alpha \in
(-1,1)$ (which is free of indicial roots by Proposition~\ref{P:LX_ind_roots};
the indicial roots to the right and left of this line contribute the index set
$1$ at the right and left faces for $\wt Q_0$), and that the Schwartz kernel of
$\wt Q_0$ is in the formal nullspace of the lift of $\wt L$ to
$\fX^2_\mathrm{b}$ at the left face, contributing the index set $\wh 1$ for
$\wt Q_0$ and the rapid vanishing of $I - \wt L \wt Q_0$ there. These
assumptions are all consistent with the choice of interior conormal symbol of
$Q$.

It follows that the initial error term $E = I - L_{\fX} Q$ has the form
\[
\begin{gathered}
	E = \begin{pmatrix} \rho^{2} \wt E_0 \rho^{-2} & E_{01} \\ 
	 E_{10} & E_1\end{pmatrix},
	\\ \wt E_0 \in \bPsi^{-1,(\infty,1,1)}(\fX; \adPz),
	\quad E_1 \in \scPsi^{-1,1}(\fX; \adPo),
	\\ E_{01} = - C_1 Q_1 \in \rho^\infty\Psi^{-1}(\fX; \adPo, \adPz)
	\\ E_{10} = - C_1^* \rho \wt Q_0 \rho^{-2} \in \rho^\infty\Psi^{-1}(\fX; \adPz, \adPo),
\end{gathered}
\]
near $(\pa \fX)^2$, with interior conormal singularity of order $-1$. This term
may now be removed by Neumann series. Indeed, absorbing terms $\rho^\infty
\Psi^{-N}$ into $\bPsi^{-N,(\infty,\infty,\infty)}$ and $\scPsi^{-N,\infty}$,
it follows that
\[
\begin{gathered}
	E^N = \begin{pmatrix} \rho^{2} \wt E_0^\pns N \rho^{-2} & E_{01}^\pns N\\E_{10}^\pns N & E_1^\pns N\end{pmatrix},
	\\ \wt E_0^\pns N \in \bPsi^{-N,(\infty,\wh 1_N, N)}(\fX; \adPz),
	\quad E_1^\pns N \in \scPsi^{-N,N}(\fX; \adPo),
	\\ E_{ij}^\pns N \in \rho^\infty \Psi^{-N}%(\fX; \ad P_j, \ad P_i),
\end{gathered}
\]
where $\wh 1_N = 1 \ol \cup 2 \ol\cup \cdots \ol \cup N.$ The series
$\sum_{k=0}^\infty E^k$ may be summed asymptotically, resulting in an operator
$I - S$, where $S$ has an expression similar to $E$, except that $\wt S_0 \in
\bPsi^{-1,(\infty, \wh 1, 1)}(\fX; \adPz).$ Denoting an improved right parametrix by
$Q^R = Q(I - S)$, it follows that the new error term $E^R = I - L_\fX Q^R$ has the form
\[
\begin{gathered}
	E^R = \begin{pmatrix} \rho^{2} \wt E_{0}^R \rho^{-2} & E_{01}^R \\E_{10}^R & E_1^R\end{pmatrix},
	\\ \wt E_0^R \in \bPsi^{-\infty,(\infty,\wh 1, \infty)}(\fX; \adPz),
	\quad E_1^R \in \scPsi^{-\infty,\infty}(\fX; \adPo),
	\\ E_{ij}^R \in \rho^\infty \Psi^{-\infty}%(\fX; \ad P_j, \ad P_i).
\end{gathered}
\]
The parametrix $Q^R$ has a block form similar to the block form of $Q$, with $\wt Q^R_0 \in
\bPsi^{-1,(\wh 1, \wh 1 \ol \cup 2, \wh 2 \ol \cup 1 \cup 0)}(\fX; \adPz)$, and
off-diagonal terms in $\rho^\infty\Psi^{-1}$. The precise form of the index
sets is not as important as the statement that $\wt Q^R_0$ lies in the
b-calculus.

A similar construction gives a left parametrix $Q^L$ with error term $E^L  = I
- Q^L L_\fX$ of the form
\[
\begin{gathered}
	E^L = \begin{pmatrix} \rho \wt E_{0}^L \rho^{-1} & E_{01}^L \\E_{10}^L & E_1^L\end{pmatrix},
	\\ \wt E_0^L \in \bPsi^{-\infty,(\wh 1,\infty, \infty)}(\fX; \adPz),
	\quad E_1^L \in \scPsi^{-\infty,\infty}(\fX; \adPo),
	\\ E_{ij}^L \in \rho^\infty \Psi^{-\infty}%(\fX; \ad P_j, \ad P_i).
\end{gathered}
\]

With these parametrices in hand, we may prove the following:
\begin{prop}
\mbox{}
\begin{enumerate}
[{\normalfont (a)}]
\item \label{I:LX_fred_null_fred}
The extension \eqref{E:LX_extn} is Fredholm and surjective for $\alpha \in (-1,1)$, $m \geq 0$ and $\beta \in \bbR$.
\item \label{I:LX_fred_null_null}
The nullspace of such an extension consists of polyhomogeneous sections, with
\[
\begin{gathered}
	\Null(L_\fX) \ni u = u_0 \oplus u_1, \quad \text{near $\pa \fX$},
	\\ u_0 \in \rho^2C^\infty(\fX; \adPz), \quad u_1 \in \rho^\infty C^\infty(\fX; \adPo).
\end{gathered}
\]
\end{enumerate}
\label{P:LX_fred_null}
\end{prop}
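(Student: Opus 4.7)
The plan is to exploit the right and left parametrices $Q^R, Q^L$ constructed in the paragraphs preceding the proposition to obtain Fredholmness, then to use a Weitzenbock argument to upgrade to surjectivity, and finally to use an iterative elliptic regularity argument with the left parametrix to establish the polyhomogeneity and the precise leading orders of elements in the null space.

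First I would verify Fredholmness of \eqref{E:LX_extn}. The construction of $Q^R$ and $Q^L$ shows that the error operators $E^R = I - L_\fX Q^R$ and $E^L = I - Q^L L_\fX$ have Schwartz kernels whose diagonal components lie in $\bPsi^{-\infty,(\infty, \wh 1, \infty)}$ (resp.\ $\bPsi^{-\infty,(\wh 1, \infty, \infty)}$) on the $\adPz$ block, in $\scPsi^{-\infty,\infty}$ on the $\adPo$ block, and in $\rho^\infty\Psi^{-\infty}$ off diagonal. By standard mapping properties for each calculus (using that the index sets at the relevant faces strictly exceed the weights in play because $\alpha \in (-1,1)$ and the indicial roots of $\wt L$ are $\bbZ\setminus\{0\}$, cf.\ Lemma~\ref{P:LX_ind_roots}), these error operators are compact on the Sobolev space in \eqref{E:LX_extn}. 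Hence $L_\fX$ is Fredholm on \eqref{E:LX_extn}.

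For surjectivity I would invoke the Weitzenbock identity \eqref{e7.10.10.15}, $L_\fX L_\fX^\ast = \nabla_A^\ast \nabla_A + (\ad \Phi)^\ast(\ad \Phi),$ which is manifestly non-negative. Together with the decay properties of $\Phi$ at infinity (which make $\ad \Phi$ non-degenerate on $\adPo$ to leading order) this rules out a non-zero cokernel for the Fredholm extension on the given weighted Sobolev space when $\alpha \in (-1,1)$: an element of the cokernel would be an $L^2$-type solution of $L_\fX^\ast v = 0$ on the dual weighted space, and pairing with itself via the Weitzenbock formula forces $\nabla_A v = 0$ and $\ad(\Phi) v = 0$, which is incompatible with the allowed weights. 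This yields surjectivity and hence (a).

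For (b), let $u$ lie in the null space. Applying the left parametrix gives $u = Q^L L_\fX u + E^L u = E^L u$, so the regularity of $u$ is controlled by the Schwartz kernel of $E^L$. The block structure of $E^L$ above shows that $E^L$ maps the weighted Sobolev space into $\cAphg^{\wh 1}(\fX; \adPz)\oplus \cAphg^{\infty}(\fX; \adPo)$ near $\pa\fX$; iterating the identity $u = E^L u$ and using the action of $E^L$ on polyhomogeneous sections, one bootstraps $u$ to be polyhomogeneous, with $u_1$ rapidly decreasing (because $L_1 + \Phi_1$ is fully elliptic in the scattering sense, forcing $E^L$ to be residual on $\adPo$) and $u_0$ polyhomogeneous with index set at $\pa\fX$ contained in the positive indicial roots of $L_\fX$ acting on $\adPz$. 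The positive indicial roots of $L = \rho^2 \wt L \rho^{-1}$ applied to $\adPz$-sections begin at $\rho^2$, by Lemma~\ref{P:LX_ind_roots} shifted by the $\rho^{-1}$ conjugation, which forces $u_0 \in \rho^2 C^\infty(\fX; \adPz)$. The main obstacle will be keeping careful track of the index sets through the iteration, and verifying that there are no log terms in the leading expansion of $u_0$; this amounts to checking that the leading indicial root is simple, which follows from the explicit description of homogeneous solutions in Proposition~\ref{p1.11.8.15}.
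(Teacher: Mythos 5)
Your overall strategy — parametrices for Fredholmness, Weitzenbock for surjectivity, and the identity $u = E^Lu$ for the regularity of the nullspace — is the same as the paper's. However, there are two genuine gaps in the details.

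\textbf{The surjectivity argument.} You write that a cokernel element $v$ solves $L_\fX^\ast v = 0$ on the dual weighted space and then pair with the Weitzenbock identity, concluding ``incompatible with the allowed weights.'' This skips the step that actually makes the pairing legitimate. The dual space $\rho^{-\alpha-1/2,-\beta}\bhscH^{m,1}$ admits elements with weight as low as $\rho^{-3/2}$, and integration by parts against the volume element $\sim\rho^{-4}\,d\rho\,d\omega$ cannot be carried out for an arbitrary element of that space. The paper first observes that $L_\fX^\ast = L_A - \ad(\Phi)$ differs from $L_\fX$ only by the sign of the zeroth-order term, so the entire parametrix/regularity analysis applies verbatim to $L_\fX^\ast$; this upgrades any cokernel element to $v = O(\rho^2)$, which is the decay that justifies the integration by parts. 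Without that step, the Weitzenbock argument has no traction, and the ``incompatibility with the allowed weights'' in your last sentence is circular: you're using the decay you haven't established.

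\textbf{The claim $u_0 \in \rho^2C^\infty$.} You correctly recognize that one must rule out log terms, but attribute this to ``the leading indicial root being simple.'' Simplicity of the leading root only rules out a $\log$ at leading order, i.e., $\rho^2\log\rho$; it says nothing about $\rho^3\log\rho$, $\rho^4(\log\rho)^2$, etc. Applying $E^L$ and iterating yields only $u_0 \in \cAphg^{\wh 2}$, whose index set $\wh 2 = \{(2+l, j): l\ge 0, 0\le j\le l\}$ allows exactly such logarithms. The paper eliminates them by a much stronger observation: since $\fX$ is (radially compactified) Euclidean space, $\wt L$ agrees \emph{identically} with its indicial operator in a collar of $\partial\fX$, not merely to leading order. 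Consequently $L_\fX u = 0$ forces $\rho^{-1}u_0$ to satisfy the indicial equation exactly modulo $\rho^\infty$, so every term in the expansion is a genuine homogeneous solution (classified by Proposition~\ref{p1.11.8.15}) with no logs at any order; hence $u_0 \in \rho^2C^\infty$. Your argument gives polyhomogeneity and the leading order $\rho^2$, but not smoothness of the coefficient.

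The Fredholmness part of your proposal is fine and matches the paper's compactness argument for $E^R$, $E^L$.
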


\begin{proof}
To see that the extension is Fredholm, it suffices to verify
that $E^L$ and $E^R$ are compact on the appropriate spaces. Over the interior of $\fX$
this is clear, and near the boundary we have
\[
\begin{aligned}
	\rho\wt E^L_0 \rho^{-1} &: \rho^{\alpha-1/2}\bH^{m+1} 
	  \to \rho^{\alpha+1/2}\bH^{\infty} \subset \rho^{\alpha-1/2} \bH^{m+1},
	\\ E^L_1 &: \rho^\beta \bscH^{m,1} 
	  \to \rho^\infty \bscH^{\infty,1} \subset \rho^\beta \bscH^{m,1}
	\\ E_{10} &: \rho^{\alpha-1/2} \bH^{m+1} 
	  \to \rho^\infty \bH^\infty \subset \rho^{\beta} \bscH^{m,1}
	\\ E_{01} &: \rho^\beta \bscH^{m,1} 
	  \to \rho^\infty \bscH^{\infty,1} \subset \rho^{\alpha-1/2} \bH^{m+1}
\end{aligned}
\]
where all inclusions are compact by Proposition~\ref{P:bsc_spaces}. The 
argument for $E^R$ is similar.

For \eqref{I:LX_fred_null_null}, observe that $E^L$ maps
$\rho^{\alpha-1/2,\beta}\bhscH^{k,1}$ into $\rho^{0,\infty}\cAphg^{\wh 2}$,
meaning polyhomogeneous sections whose $\adPo$ components are rapidly
vanishing and whose $\adPz$ components have index set $\wh 2 = \wh 1 + 1$.  Thus,
supposing $L_\fX u = 0$ and applying the left parametrix, it follows that $u =
E^L u \in \rho^{0,\infty}\cAphg^{\wh 2}$.

In fact, since $\fX$ is Euclidean, $\wt L \equiv I(\wt L)$ agrees identically
with its indicial operator in a neighborhood of $\pa \fX$. It then follows from
$L_\fX u = 0$ that $I(\wt L) \rho^{-1} u_0 = 0 \mod \rho^\infty C^\infty$, and
so $u_0 \in \rho^2C^\infty$.

Surjectivity will follow from injectivity for the adjoint operator 
\begin{equation}\label{e11.11.8.15}
	L_{\fX}^\ast : \rho^{-\alpha-1/2,-\beta} \bhscH^{m,1} \to \rho^{-\alpha+1/2,-\beta}\bhscH^{m,0}.
\end{equation}
Here we are considering the adjoint determined by the $L^2$ pairing between
weighted spaces $\rho^{\alpha,\beta} L^2$ and $\rho^{-\alpha,-\beta}L^2$, with
$\bhscH^{m,l}$ functioning as domains for the unbounded operators $L_{\fX}$ and
$L_{\fX}^\ast$. Note that $-\alpha \in (-1,1)$ since $\alpha \in (-1,1)$ by
assumption.  Since $L_{\fX}^\ast = L_A - \ad(\Phi)$ differs from
$L_{\fX}$ only in the sign of the zeroth order term, we may apply
the foregoing analysis to it, and in 
particular deduce that if $u$ is in the null space of
\eqref{e11.11.8.15}, then it is $O(\rho^2)$ (and conormal).

Now we use the basic identity 
\begin{equation}\label{E:LX_bochner}
L_{\fX}L_{\fX}^* = \nabla_A^*\nabla_A -(\ad(\Phi))^2
\end{equation}
which follows from the Bogomolny equations and the fact that $\RR^3$
is (Ricci) flat.  If $L_{\fX}^*u=0$, $u=O(\rho^2)$, we have
\begin{equation}
(u,L_{\fX}L_{\fX}^* u) = (u,\nabla_A^*\nabla_A u) +
\|\ad(\Phi)u\|^2
\end{equation}
as $u$ and its derivatives are all in $L^2$.  The decay at $\pa X$ is
sufficient to integrate by parts with no boundary term, we conclude
\begin{equation}
0=\|L_{\fX}^* u\|^2 = \|\nabla_A u\|^2 + \|\ad(\Phi)u\|^2
\end{equation}
from which $u$ is covariant constant hence zero because vanishing at
the boundary.
\end{proof}

It follows from the previous result that there exists a bounded right inverse
to $L_{\fX}$ on the Sobolev spaces in consideration:
\[
\begin{gathered}
	G : \rho^{\alpha+1/2,\beta} \bhscH^{m,0}(\fX; \adP) 
	\to \rho^{\alpha-1/2,\beta} \bhscH^{m,1}(\fX; \adP),
	\\ L_{\fX} G = I,
	\quad G L_{\fX} = I - \Pi_N
\end{gathered}
\]
where $\Pi_N$ is projection on the null space $N$ of $L_{\fX}$.
From the identities
\[
\begin{gathered}
	Q^L = Q^L L_{\fX} G = G - E^L G,
	\\ G - GE^R = G L_{\fX} Q^R = Q^R - \Pi_N Q^R,
\end{gathered}
\]
it follows that $G$ satisfies
\[
	G %= Q^L - E^L Gn
	= Q^R + Q^L E^R - \Pi_NQ^R + E^L GE^R.
\]
Because of the bi-ideal properties of $\rho^\infty \Psi^{-\infty} =
\scPsi^{-\infty,\infty}$ and left (resp.\ right) ideal properties of
$\bPsi^{-\infty,(\infty,\ast,\infty)}$ (resp.\ $\bPsi^{-\infty,(\ast,\infty,\infty)}$),
the last term has the form
\[
	E^L G E^R \in \begin{pmatrix} \rho \bPsi^{-\infty,(\wh 1, \wh 1, \infty}) \rho^{-2}  & \rho^\infty \Psi^{-\infty}
	\\\rho^\infty \Psi^{-\infty} & \scPsi^{-\infty,\infty}\end{pmatrix}
\]
near $(\pa \fX)^2$. Computing the compositions of the other terms leads to a
proof of Theorem~\ref{T:LX_package}.\eqref{I:LX_package_G}. Note that
the precise index sets are not of critical importance; for the present purpose
it suffices to only keep track of the leading orders.

As a consequence, $G$ maps polyhomogeneous sections to polyhomogeneous
sections:
\begin{equation}
	G : \cAphg^\ast \cap \rho^{\alpha+1/2,\beta} \bhscH^{k,0}(\fX; \adP) \to \cAphg^\ast(\fX; \adP).
	\label{E:LX_G_to_phg}
\end{equation}
The precise behavior of the index sets, as in
Theorem~\ref{T:LX_package}.\eqref{I:LX_package_solv}, is then determined {\em a
posteriori}, rather than from the estimates on the index sets for $G$.

The following is a general result for b differential operators:
\begin{prop}[\cite{RBMgreenbook}, Prop.~5.61, p.\ 205]
Let $P \in \bDiff^k(\fX; V)$, and suppose $u \in \cAphg^G(\fX; V) \cap
\rho^\alpha\bH^k(\fX; V)$ satisfies $P u = f \in \cAphg^F(\fX; V) \cap x^\alpha
L^2(\fX; V),$ where $\alpha \notin \bspec(P)$. Then in fact $u \in \cAphg^{E}(\fX;
V)$, where 
\begin{equation}
	E 
	= %\bigcup_{k=0}^\infty \pns{\ol\bigcup_{l \leq k} E^+(\alpha) + l}\ol\cup (F+k)
	%\subset 
	\wh E^+(\alpha) \ol \cup F,
	\label{E:b_soln_index}
\end{equation}
where $E^+(\alpha) = \set{(z,k) \in \bSpec(P) : \re(z) > \alpha}$.
\label{P:a_posteriori_phg}
\end{prop}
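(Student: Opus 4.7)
The plan is to establish this regularity improvement via the standard b-calculus parametrix method, following the Mellin-transform approach in Melrose's green book. Since $\alpha \notin \bspec(P)$, the indicial family $I(P,\lambda)$ is invertible on the line $\re(\lambda) = \alpha$, which is the key algebraic input that allows one to construct a right parametrix $Q \in \bPsi^{-k,\cE_Q}(\fX; V)$ for $P$ whose Schwartz kernel has prescribed asymptotic behavior at the three boundary faces of the b-double space $\fX^2_{\mathrm{b}}$. The interior conormal singularity inverts the principal symbol in the usual way, while the indicial part of $Q$ is obtained by inverse Mellin transform of $I(P,\lambda)^{-1}$ along $\re(\lambda) = \alpha$. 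Shifting the contour past the indicial roots with $\re(z) > \alpha$ produces residues which populate the left-face index set of $Q$ precisely with $\wh{E^+(\alpha)}$, the hat operation encoding the logarithmic powers associated to the multiplicities at each indicial root.

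With such a $Q$ in hand, write $PQ = I - R$ where the residual error $R$ has Schwartz kernel vanishing rapidly at the left face. Applying $Q$ to the equation $Pu = f$ yields the identity $u = Qf + Ru$, modulo a finite-dimensional polyhomogeneous correction coming from any nullspace contribution at the indicial threshold, which is harmless. Because $R$ is residual, $Ru$ is already polyhomogeneous with rapid decay and contributes nothing to the leading asymptotics. The content of the proposition then reduces to the assertion that $Qf$ is polyhomogeneous with the stated index set.

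This last step is an application of Melrose's pushforward theorem for polyhomogeneous conormal distributions under b-fibrations (compare \cite{CCN}): pushing the product of the kernel of $Q$ and the pullback of $f$ under the right projection $\fX^2_{\mathrm{b}} \to \fX$ down to the left factor produces a polyhomogeneous function whose index set at $\pa \fX$ is exactly the extended union of the left-face index set of $Q$ (namely $\wh{E^+(\alpha)}$) with the index set of $f$ (namely $F$), the extended union absorbing the additional log factors produced when exponents from the two index sets coincide at the front face during the integration. This gives $Qf \in \cAphg^{\wh{E^+(\alpha)}\, \ol\cup\, F}(\fX; V)$, as claimed.

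The main obstacle is bookkeeping: constructing $Q$ so that the kernel at the right and front faces of $\fX^2_{\mathrm{b}}$ has index sets compatible with applying the pushforward theorem to an $f$ whose expansion index set $F$ may be nontrivial, and verifying that the prior hypothesis $u \in \rho^\alpha \bH^k$ is strong enough to justify the identity $u = Qf + Ru$ on the nose (the weight $\alpha$ is precisely the one at which the parametrix was constructed, which is why the condition $\alpha \notin \bspec(P)$ appears). Once the parametrix is properly normalized and the pushforward theorem invoked, the index-set formula \eqref{E:b_soln_index} emerges essentially by definition.
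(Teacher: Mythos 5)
This proposition is cited from Melrose's green book (Prop.~5.61) without proof in the paper, so there is no in-text argument to compare against. Your outline is a legitimate route to the result: it packages the Mellin/indicial-root analysis underlying Melrose's own proof into a b-parametrix construction and then delegates the index-set bookkeeping to the pushforward theorem of \cite{CCN}. This is standard and yields exactly the stated formula $E = \wh{E^+(\alpha)}\,\ol\cup\, F$, with the hat coming from iterated contour shifts and the extended union from the pushforward.

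Two points need tightening. First, you call $Q$ a right parametrix and write $PQ = I - R$, yet then use $u = Qf + Ru$, which is the \emph{left}-parametrix identity: applying $Q$ on the left of $Pu = f$ gives $QPu = Qf$, so you need $QP = I - R$. What you want is a left parametrix whose kernel has the index set $\wh{E^+(\alpha)}$ at the left face of $\fX^2_{\mathrm{b}}$, obtained exactly as you describe by shifting the inverse Mellin contour of $I(P,\lambda)^{-1}$ past the indicial roots with $\re > \alpha$. Second, ``$R$ vanishes rapidly at the left face'' is not quite sufficient to conclude $Ru = \cO(\rho^\infty)$: under the pushforward $(\pi_L)_\ast$ both the left face \emph{and} the front face of $\fX^2_{\mathrm{b}}$ lie over $\pa\fX$, so you need $K_R$ to vanish to infinite order at both. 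The standard construction (match interior symbols, then indicial operators at the front face to all orders, then remove the Taylor series at the left face) does achieve this, so the argument survives, but the condition as you stated it is incomplete. You also implicitly use ellipticity of $P$, which is what makes both the parametrix construction and the discreteness of $\bspec(P)$ go; the proposition as stated here omits this hypothesis, but it is present in Melrose's original.
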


\begin{proof}[Proof of Theorem~\ref{T:LX_package}]
Parts \eqref{I:LX_package_fred} and \eqref{I:LX_package_G} were proved above;
it remains to prove part \eqref{I:LX_package_solv}. Suppose $f = f_0 \oplus
f_1$ near $\pa \fX$ with $f_0 \in \cAphg^{F_0}(\fX; \adPz)$, $f_1 \in
\cAphg^{F_1}(\fX; \adPo)$ and let $u = G f$, which is polyhomogeneous by
\eqref{E:LX_G_to_phg}.  Thus $L_\fX u = f$ and  $u = u_0 \oplus u_1$ near $\pa
\fX$ with $u_i \in \cAphg^{G_i}$, for some index sets $G_0$ and
$G_1.$ We now use the fact that $L_\fX$ has the form
\[
	L_\fX = \begin{pmatrix} \rho^2 \wt D_0 \rho^{-1} & 0 \\0 & D_1 + \ad \Phi\end{pmatrix} + \cO(\rho^\infty).
\]
with $\wt D_0 \in \bDiff^1(\fX; \adPz).$ Since the off-diagonal terms coupling
between $\adPz$ and $\adPo$ are vanishing rapidly, they have no effect on
the terms which actually appear in the index sets and will be ignored. 

It suffices therefore to suppose that, near $\pa \fX$, we have
\[
\begin{gathered}
	\rho^2 \wt D_0 \rho^{-1} u_0 = f_0 \in \cAphg^{F_0}(\fX; \adPz),
	\\fD_1 u_1 + \ad \Phi u_1 = f_1 \in \cAphg^{F_1}(\fX; \adPo).
\end{gathered}
\]
Rewriting the first equation as $\wt D_0 (\rho^{-1} u_0) = \rho^{-2}f_0 \in \cAphg^{F_0 - 2}$ 
and invoking Proposition~\ref{P:a_posteriori_phg} with $E^+(\alpha) = 1$, we obtain
\[
\begin{gathered}
	u_0 \in \cAphg^{E_0}(\fX; \adPz),
	\\ E_0 = \wh 1 \ol \cup (F_0 - 2) + 1
	%= \pns{\bigcup_{k=0}^\infty \pns{\ol \bigcup_{l \leq k} l + 1} \ol \cup (F_0 - 2 + k)} + 1
	= \wh 2 \ol \cup (F_0 - 1)
	%= \pns{\bigcup_{k=0}^\infty \pns{\ol \bigcup_{l \leq k} l + 2} \ol \cup (F_0 - 1 + k)},
\end{gathered}
\]
as claimed.

In the second equation, $\ad \Phi$ is the dominant term in the operator as far
is polyhomogeneity is concerned; thus we may write
\[
	\ad \Phi u_1 = f_1 - \rho \wt D_1 u_1
	\quad \wt D_1 \in \bDiff^1(\fX; \adPo).
\]
Proceeding inductively over the leading terms in the index set $F_1$, we
conclude that $u_1 \in \cAphg^{F_1}(\fX; \adPo).$
\end{proof}

\subsection{Linear analysis of $L_\fD$} \label{S:linear_D}

Next we consider the linear operator 
\[
	L_\fD = \begin{bmatrix} \star d  & - d \\-d^\ast & 0\end{bmatrix}
\]
on $\Lambda = \Lambda^1\oplus \Lambda^0$ the
face $\fD$.

\begin{thm}
\mbox{}
\begin{enumerate}
[{\normalfont (a)}]
\item  \label{I:LD_package_inv}
For $\alpha,\beta \in (-1,1)$ and any $k \geq 0$, 
\begin{equation}
	L_\fD : \rho_\fB^{\alpha-1/2}\rho_\fX^{\beta + 1/2} \bH^{k}(\fD; \Lambda)
	 \to \rho_\fB^{\alpha+1/2}\rho_\fX^{\beta - 1/2} \bH^{k-1}(\fD; \Lambda)
	\label{E:LD_op}
\end{equation}
is invertible. Here the Sobolev spaces are based on $L^2(\fD)$ with respect to
the rescaled metric $\ve^{-2} g \rst_\fD$ of conic/scattering type and $\rho_\fX
= \prod_i \rho_{\fX_i}.$
\item \label{I:LD_package_solv}
Given 
\[
\begin{gathered}
	f \in \cAphg^{\cF}(\fD; \Lambda) 
	  \cap \rho_\fB^{\alpha + 1/2}\rho_\fX^{\beta - 1/2}\bH^{k-1}(\fD; \Lambda),
	\\ \cF = (F_\fB, F_\fX),
\end{gathered}
\]
there exists a unique $u \in \cAphg^{\cE}(\fD; \Lambda)$
such that $L_\fD u = f$, where
\[
\begin{gathered}
	\cE  = (E_\fB,E_\fX),
	\\ E_\fB = %\bigcup_{k \geq 0} \pns{\ol \bigcup_{l \leq k}\, 2 + l}\ol \cup \pns{F_\fB - 1 + k} \subset 
	 \wh 2 \ol\cup (F_\fB - 1),
	\\ E_\fX = %\bigcup_{k \geq 0} \pns{\ol \bigcup_{l \leq k}\, 0 + l}\ol \cup \pns{F_\fX + 1 + k} \subset 
	  \wh 0 \ol\cup (F_\fX + 1).
\end{gathered}
\]
\item \label{I:LD_package_G}
The inverse $L_\fD^{-1}$ to \eqref{E:LD_op} has the form
\begin{equation}
\begin{gathered}
	L_\fD^{-1} = \rho_\fB \rho_\fX^{-1}\wt G\rho_\fX^2 \rho_\fB^{-2},
	\quad \wt G \in \bPsi^{-1,\cF}(\fD; \Lambda),
	\\ \cF = (F_L,F_R,F_F), 
	\quad F_L, F_R \geq 1, \quad F_F \geq 0.
\end{gathered}
	\label{E:linear_bogo_D_inverse}
\end{equation}
\end{enumerate}
\label{T:LD_package}
\end{thm}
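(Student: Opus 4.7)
\medskip

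My plan is to follow the template of Theorem~\ref{T:LX_package} but adapted to the corner structure of $\fD = [\ol{\bbR^3}; \set{0,\zeta_1,\ldots,\zeta_N}]$, exploiting the fact that $L_\fD$ is the abelian Hodge--de Rham operator on Euclidean $\bbR^3$ (with no Higgs term, as we work on $\adQ$). First I would conjugate by appropriate powers of the boundary defining functions to realize $L_\fD$ as a b-differential operator: by the scaling in \eqref{E:linear_bogo_D_inverse}, the natural choice is $\wt L := \rho_\fX^{-2}\rho_\fB\, L_\fD\, \rho_\fX\rho_\fB^{-1} \in \bDiff^1(\fD; \Lambda)$. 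The content of Proposition~\ref{P:global_normal_ops} is that $L_\fD$ has the scattering structure at $\fB$ (inherited from the scattering metric at infinity) and conic structure at each $\fS_j$. In either case, passing to $\wt L$ gives an elliptic b-operator whose indicial families can be computed explicitly: near $\fB$ this is precisely the indicial family analyzed in Lemma~\ref{P:LX_ind_roots}, yielding $\bbZ\setminus\set 0$; near each $\fS_j$, after the same rescaling, the indicial family acts on harmonic polynomials on $\bbR^3$ exactly as in Proposition~\ref{p1.11.8.15}, again giving indicial roots $\bbZ\setminus\set 0$. This is the crucial input for the weight hypothesis $\alpha,\beta\in(-1,1)$.

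With these indicial roots in hand, the b-calculus parametrix construction of \cite{RBMgreenbook} applies: one selects a conormal distribution $\wt Q$ on $\fD^2_{\mathrm b}$ with interior symbol inverting that of $\wt L$ and indicial operator $I(\wt Q)$ obtained by inverse Mellin transform along the lines $\re \lambda = \alpha$ at $\fB$ and $\re \lambda = \beta$ at each $\fS_j$ (both free of indicial roots). One then improves the error by Neumann summation and asymptotic summation in the b-calculus, producing both a right and a left parametrix with error terms that are residual ($\bPsi^{-\infty,\cE}$ with rapid vanishing at the relevant faces). This establishes the Fredholm property for the map \eqref{E:LD_op} on the weighted b-Sobolev spaces.

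Next I would prove that \eqref{E:LD_op} is actually an isomorphism. For injectivity I would invoke the Weitzenb\"ock-type identity $L_\fD L_\fD^\ast = \nabla^\ast\nabla$ on $\Lambda$ (the analogue of \eqref{E:LX_bochner}, with the Higgs term absent since we are on $\adQ$). Any polyhomogeneous element of the nullspace lying in the weighted domain vanishes to sufficient order at $\fB$ and $\fS_j$ so that integration by parts produces no boundary terms, forcing $\nabla u = 0$ and hence $u = 0$. Surjectivity follows by the identical argument applied to the adjoint operator on the dual weighted space, since $\alpha, \beta \in (-1,1)$ implies $-\alpha, -\beta \in (-1,1)$. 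This gives part \eqref{I:LD_package_inv}, and combined with the structure of the parametrix gives the representation \eqref{E:linear_bogo_D_inverse} of part \eqref{I:LD_package_G} (the index sets $F_L, F_R \geq 1$ come from the fact that the indicial roots immediately to the right of $\alpha,\beta$ are at $1$).

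Finally, for part \eqref{I:LD_package_solv}, given polyhomogeneous $f$, the solution $u = L_\fD^{-1} f$ is polyhomogeneous by a direct application of the general b-calculus regularity result Proposition~\ref{P:a_posteriori_phg} at each boundary hypersurface, using $\bspec(\wt L) = \bbZ \setminus\set 0$. Tracking the leading indicial root ($1$ at $\fB$ and $1$ at $\fX$ after the rescaling) together with the shifts $\wt L = \rho_\fX^{-2}\rho_\fB L_\fD \rho_\fX\rho_\fB^{-1}$ yields the claimed index sets $E_\fB = \wh 2 \ol\cup (F_\fB - 1)$ and $E_\fX = \wh 0 \ol\cup (F_\fX + 1)$. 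The main obstacle I anticipate is bookkeeping: tracking the different rescalings at $\fB$ (scattering, so the shift by $-1$ in $F_\fB - 1$) versus $\fS_j$ (conic, accounting for the $+1$ shift in $F_\fX + 1$), and ensuring that logarithmic accumulations coming from the extended unions $\ol\cup$ are correctly incorporated at the corners $\fB \cap \fS_j$. This is routine given the already-proven analogue for $L_\fX$, but requires care because $\fD$ has many more corners than $\fX$.
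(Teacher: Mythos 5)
Your proposal follows essentially the same route as the paper: conjugate $L_\fD$ to a b-operator $\wt L$, compute $\bspec(\wt L) = \bbZ\setminus\{0\}$, build a b-calculus parametrix to get Fredholmness, use a Bochner identity and integration by parts to kill the nullspace, and appeal to Proposition~\ref{P:a_posteriori_phg} for polyhomogeneity. The paper closes surjectivity slightly differently — by noting $L_\fD$ is formally self-adjoint at $\alpha=\beta=0$, which forces Fredholm index $0$ across the range $\alpha,\beta\in(-1,1)$, so injectivity suffices — whereas you run the nullspace argument a second time for the adjoint; both work, the paper's being a touch cleaner. Two points of caution are worth flagging.

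First, your conjugation formula is wrong. You wrote $\wt L := \rho_\fX^{-2}\rho_\fB\, L_\fD\, \rho_\fX\rho_\fB^{-1}$, but the correct normalization, which you can read off from your own equation \eqref{E:linear_bogo_D_inverse} (or check directly by tracking how $L_\fD$ maps the weighted spaces, including the $\rho_\fB^{3/2}\rho_\fX^{-3/2}$ shift between $L^2(\fD;\olg)$ and $L^2(\fD;\bg)$), is $\wt L = \rho_\fB^{-2}\rho_\fX^{2}\, L_\fD\, \rho_\fX^{-1}\rho_\fB$. The exponents in your version are not consistent with $\wt L$ being order $0$ in the weights at both $\fB$ and $\fX$, so the indicial computation would not come out correctly if taken literally. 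Since the remainder of the argument (including the index-set bookkeeping giving $E_\fB = \wh 2\,\ol\cup\,(F_\fB-1)$ and $E_\fX = \wh 0\,\ol\cup\,(F_\fX+1)$) only works with the correct conjugation, this needs fixing.

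Second, the phrase that the nullspace ``vanishes to sufficient order at $\fB$ \emph{and} $\fS_j$'' overstates what is true and what is needed. With $\bspec(\wt L) = \bbZ\setminus\{0\}$, elements of the nullspace are $O(\rho_\fB^2\rho_\fX^0)$: they do decay at $\fB$ but are merely \emph{bounded} at the conic faces $\fS_j$. The integration by parts nevertheless closes because the conic volume element $r^2\,dr\,d\omega$ near $\fS_j$ supplies the necessary vanishing, and the $\rho_\fB^{-4}d\rho_\fB$ volume element near $\fB$ is controlled by the $\rho_\fB^2$ decay of $u$; and, crucially, the conclusion $u\equiv 0$ from $\nabla u = 0$ uses that $u\rst_\fB = 0$. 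It is worth saying this carefully, since ``vanishing at $\fS_j$'' does not hold.
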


The proof is similar to the proof of Theorem~\ref{T:LX_package}, so we shall be
brief.

Recall that, as a compact manifold
with boundary, $\fD = [\oX;\set{\zeta_j}]$ is the radial compactification of
euclidean $\bbR^3$, blown up at a finite number of points. 

Thus, as $L_\fD$ is a homogeneous operator of order 1, of conic
type near $\fD \cap \fX_i$, and scattering type near $\fD \cap \fB,$ the first step
in the proof of Theorem~\ref{T:LD_package} is to define the related
b operator
\begin{equation}
	\wt L = \rho_\fB^{-2}\rho_\fX^2 L_\fD \rho_\fX^{-1} \rho_B \in \bDiff^1(\fD; \Lambda).
	\label{E:LD_b_op_defn}
\end{equation}
Observe that the mapping properties (boundedness, invertibility, Fredholmness,
self-adjointness, etc.) of
\begin{equation}
	\wt L: \rho_\fB^\alpha \rho_\fX^\beta \bH^k(\fD; \Lambda; \bg)
	\to \rho_\fB^\alpha \rho_\fX^\beta \bH^{k-1}(\fD; \Lambda; \bg)
	\label{E:LD_b_op}
\end{equation}
are the same as the mapping properties of \eqref{E:LD_op}, where
$\bg = \rho_\fB^2 \rho_\fX^{-2}\ve^{-2} g$ is the conformally related b-metric.

\begin{prop}
For $\alpha, \beta \in (-1,1)$, the extension \eqref{E:LD_op} is invertible,
and 
$\Null(L_\fD) \in \rho_\fB^2C^\infty(\fD; \Lambda)$
%$\Null(L_\fD) \in \cAphg^{\cE}(\fD; \Lambda)$, where $\cE = (\wh 2_\fB,\wh
%0_\fX).$
\label{P:LD_fred}
\end{prop}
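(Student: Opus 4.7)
The plan is to follow the same pattern as in Theorem~\ref{T:LX_package}: pass to the conjugated b-operator $\wt L$ of \eqref{E:LD_b_op_defn}, use b-calculus to obtain a parametrix and Fredholmness, and then rule out the kernel and cokernel by a Bochner/Weitzenbock argument, exploiting the fact that $\fD$ is a subset of flat $\bbR^3$ (blown up at finitely many points).

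First, I would identify the indicial operators of $\wt L$ at each boundary hypersurface of $\fD$. Near the scattering face $\fB$, the structure of $L_\fD$ is (after the conformal rescaling built into the definition of $\wt L$) exactly that of the flat model operator $L$ studied in \S\ref{S:linear_X}, so by Proposition~\ref{P:LX_ind_roots} the b-indicial roots at $\fB$ are $\bbZ\setminus\set 0$. Near each conic face $\fX_i \cap \fD$ (coming from blowing up $\zeta_i$), the operator $L_\fD$ is again, in local Euclidean coordinates around $\zeta_i$, a copy of the flat model $L$, and the same homogeneity analysis (Proposition~\ref{p1.11.8.15}) gives indicial roots in $\bbZ\setminus\set 0$. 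The key point is therefore that, after tracking the shifts produced by the conjugation $\wt L = \rho_\fB^{-2}\rho_\fX^2\,L_\fD\,\rho_\fX^{-1}\rho_\fB$, the weight window corresponding to $\alpha,\beta\in(-1,1)$ on \eqref{E:LD_op} maps to a b-weight strip that is free of indicial roots at every boundary face.

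Second, standard symbolic and indicial steps in the b-calculus (cf.\ the discussion leading to Theorem~\ref{T:LX_package}\eqref{I:LX_package_G}) produce a parametrix $\wt Q\in\bPsi^{-1,\cF}(\fD;\Lambda)$ for $\wt L$ with residual error compact on the weighted b-Sobolev spaces $\rho_\fB^\alpha\rho_\fX^\beta\bH^k(\fD;\Lambda;\bg)$. This immediately yields Fredholmness of \eqref{E:LD_op}, since the mapping properties of $\wt L$ in \eqref{E:LD_b_op} are identical to those of $L_\fD$ in \eqref{E:LD_op}. Conjugating back gives the structural form \eqref{E:linear_bogo_D_inverse} claimed in Theorem~\ref{T:LD_package}\eqref{I:LD_package_G}.

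Third, I would prove injectivity by a boundary-regularity plus Bochner argument. Any $u\in\Null(L_\fD)$ in the domain satisfies $\wt L(\rho_\fX\rho_\fB^{-1}u)=0$, so by the b-calculus a priori regularity result (the analogue of Proposition~\ref{P:a_posteriori_phg}), $u$ is polyhomogeneous. Since the first indicial root of $\wt L$ strictly above the weight line is $+2$ at $\fB$ and $0$ at each $\fX_i$, the b-expansion for $u$ at $\fB$ starts at order $\rho_\fB^2$ and at each $\fX_i$ starts at order $\rho_{\fX_i}^0$, i.e.\ $u\in\rho_\fB^2 C^\infty(\fD;\Lambda)$, which is the claimed regularity of the null space. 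Now observe that, because $L_\fD$ is (up to standard sign conventions) formally self-adjoint on sections of $\Lambda^1\oplus\Lambda^0$ over the flat pieces of $\fD$, one has the pointwise Bochner identity
\[
L_\fD^2 = \Delta = \nabla^\ast\nabla
\]
on the interior. Given $u\in\Null(L_\fD)$ with the decay $u=\cO(\rho_\fB^2)$ and boundedness at each conic point, $\nabla u$ decays at $\fB$ rapidly enough and grows at worst like $\rho_{\fX_i}^{-1}$ at each conic point, which is integrable in three dimensions. The boundary term in the integration by parts $\langle u, L_\fD^2 u\rangle = \|\nabla u\|^2 = 0$ therefore vanishes (the $\fB$-contribution is $\cO(\rho_\fB)$ on a 2-sphere of area $\cO(\rho_\fB^{-2})$, and each $\fX_i$-contribution vanishes as the small sphere shrinks). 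Hence $\nabla u\equiv 0$, so $u$ is covariantly constant, and since it vanishes at $\fB$ we conclude $u=0$.

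Fourth, surjectivity follows from Fredholmness combined with injectivity of the $L^2$-adjoint of \eqref{E:LD_op}. Because $L_\fD$ is formally self-adjoint, this adjoint is, up to weight duality, the same operator; the weight range $\alpha,\beta\in(-1,1)$ is symmetric under $\alpha\mapsto -\alpha$, $\beta\mapsto -\beta$, so the adjoint lives in an admissible weight window and the same Bochner argument applies to yield trivial kernel. Combining the three steps gives invertibility of \eqref{E:LD_op} for all $\alpha,\beta\in(-1,1)$.

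The main obstacle I expect is the bookkeeping at step one: carefully matching the weight exponents under the conjugation $\wt L = \rho_\fB^{-2}\rho_\fX^2 L_\fD\,\rho_\fX^{-1}\rho_\fB$ so that the open interval $\alpha,\beta\in(-1,1)$ on the scattering/conic side indeed translates to a strip of b-weights lying strictly between two consecutive indicial roots at every boundary face simultaneously. A secondary technical point is justifying the boundary-free integration by parts at the conic singularities $\fX_i\cap\fD$, where the flat coordinates degenerate; this reduces to checking that $|u|\,|\nabla u|$ is integrable against the area element of a small sphere around each $\zeta_i$, which follows from the polyhomogeneous expansion produced at step three.
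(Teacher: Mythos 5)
Your proposal is essentially correct and follows the same route as the paper: pass to the conjugated b-operator $\wt L$, identify the indicial roots $\bbZ\setminus\{0\}$ via Proposition~\ref{p1.11.8.15}, build a b-parametrix to get Fredholmness, deduce the $\rho_\fB^2 C^\infty$ regularity from the indicial data, and kill the kernel with the flat Bochner formula $L_\fD^\ast L_\fD = \nabla^\ast\nabla$ plus integration by parts. The one place you diverge slightly is the surjectivity step: you argue via injectivity of the formal adjoint on the weight-dual space $(-\alpha,-\beta)$, whereas the paper observes that $\wt L$ is self-adjoint at $\alpha=\beta=0$ and hence Fredholm of index $0$ throughout the weight window, after which injectivity alone forces bijectivity. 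These are logically equivalent given the self-adjointness and the symmetry of the weight interval, so the difference is cosmetic.
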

\begin{proof}
Proceeding as in Proposition~\ref{P:LX_ind_roots}, we see that the
indicial roots at all boundary faces of $\fD$ are given by
\[
	\bspec(\wt L) = \bbZ \setminus \set{0}.
\]
As the extension \eqref{E:LD_op} of $L_\fD$ and hence the extension
\eqref{E:LD_b_op} $\wt L$ are self-adjoint when $\alpha = \beta = 0$, it
follows from standard results for b differential operators that \eqref{E:LD_op}
is Fredholm with index 0 for $\alpha,\beta \in (-1,1)$,
and it follows from the fact that $\wt L$ agrees identically with its indicial
operator near $\fX_i$ and $\fB$ that $\Null(L_\fD) \in \rho_\fB^2C^\infty(\fD;
\Lambda)$.

To see that \eqref{E:LD_op} is invertible, we use the Bochner formula
\[
	L_\fD^\ast L_\fD = \Delta = \nabla^\ast\nabla + \Ric
	= \nabla^\ast\nabla,
\]
where the adjoints are computed with respect to $L^2(\fD; \Lambda; \ve^{-2}
g).$ This formula follows as in the proof of Proposition~\ref{P:LX_fred_null}.
%to summarize, $P_0^2 = (\star \tau (d + \delta))^2 = (d + \delta)^2$
%and the Ricci curvature of $\fD$ vanishes since metrically (i.e., with respect
%to $\ve^{-2} g$), it is the complete cone over $\pa X$ blown up at a finite
%number of points. 
Thus for $u \in \Null(L_\fD)$ with respect to any
extension \eqref{E:LD_op} with $\alpha,\beta \in (-1,1),$
\[
	\norm{\nabla u}^2_{L^2} = \pair{\nabla^\ast \nabla u, u} = \pair{L_\fD^\ast L_\fD u, u} = 0
\]
and since $u \rst_\fB = 0$, $u$ must vanish identically. The integration by
parts is justified by comparing the decay rate $\cO(\rho_\fB^2\rho_\fX^0)$
of the nullspace with the volume element near $\fB$ and $\fX$.
%the following argument. Near $\fB$, any potential boundary
%term must come from a term of the form $\int_{\fD} \pair{\rho^2\pa_\rho u,
%\rho^2\pa_\rho u}$, where $\rho = \rho_\fB$, and since $u = \cO(\rho^2)$ and the
%volume element has the form $\rho^{-4}\,d\rho\,d\Vol_{\bbS^2}$, any boundary
%terms vanish. Likewise near $\fX$, any boundary term must come from a term of
%the form $\int_\fD \pair{\pa_\rho u, \pa_\rho u}$, where now $\rho = \rho_\fX$,
%and since $u = \cO(1)$ while the volume element has the local form
%$\rho^{2}\,d\rho\,d\Vol_{\bbS^2}$, any boundary term here also vanishes.  
It
follows that \eqref{E:LD_op} is injective, and therefore surjective since it
has index 0.
\end{proof}

\begin{proof}[Proof of Theorem~\ref{T:LD_package}]
Part \eqref{I:LD_package_inv} has been shown, and part
\eqref{I:LD_package_solv} then follows immediately from
Proposition~\ref{P:a_posteriori_phg} and \eqref{E:LD_b_op_defn}. Finally, part
\eqref{I:LD_package_G} follows by expressing the inverse, $\wt G = \wt L^{-1}$, of
\eqref{E:LD_b_op} as
\[
	\wt G = \wt Q_R + \wt Q_L \wt E_R + \wt E_L \wt G \wt E_R \in \bPsi^{-1,\cF}(\fD; \Lambda),
\]
where $\wt Q_R \in \bPsi^{-1,(\wh 1 \ol \cup 2, \wh 1, \wh 2 \ol \cup 1\cup
0)}(\fD; \Lambda)$ is a right parametrix for $\wt L$ with $\wt E_R = I - \wt
Q_R \wt P_0 \in \bPsi^{-\infty,(\wh 1,\infty,\infty)}(\fD; \Lambda)$, and
likewise $\wt Q_L \in \bPsi^{-1,(\wh 1,\wh 1 \ol \cup 2, \wh 2 \ol \cup 1\cup
0)}(\fD; \Lambda)$ is a left parametrix with $\wt E_L = I - \wt P_0 \wt Q_L \in
\bPsi^{-\infty,(\infty,\wh 1,\infty)}(\fD; \Lambda)$. These may be constructed
as in the previous section.
\end{proof}

\subsection{Linear analysis for Coulomb gauge} \label{S:linear_coulomb}
Here we analyze the linearized operator 
\[
	F = \Delta_A - \ad \Phi^2 = d_A^\ast d_A - \ad \Phi^2
\]
from the Coulomb gauge fixing problem. Here $(A,\Phi)$ are assumed to be smooth
and diagonal to infinite order near $\D \cap \B$, and we restrict our analysis
to boundary faces $\fX$ and $\fD$ of a fiber $\fZ$ of $\Z$ over $\idmon.$

Over $\fX$, we may assume $A$ is in radial gauge with respect to the boundary
defining function $\rho$ for $\fX \cap \fD$. It follows that
\[
	F_\fX := \noX(F) \rst_{\fX} = \begin{pmatrix} \Delta_A & F_{01}\\F_{10} & \Delta_A - \ad \Phi^2 \end{pmatrix}
	\quad F_{ij} \in \rho^\infty\Psi^{1}(\fX; \adP)
\]
On $\adPo$, the term $- \ad \Phi^2 = (\ad \Phi)^\ast (\ad \Phi)$ is positive
and nondegenerate so that $\Delta_A - \ad \Phi^2 \in \scDiff^1(\fX; \adPo)$
is fully elliptic as a scattering operator \cite{RBM_scat}. On $\adPz$ we write
\[
\begin{gathered}
	\Delta_A = \rho^{1 + 3/2} \wt \Delta_A \rho^{1 - 3/2}
	= \rho^{5/2}\wt \Delta_A \rho^{-1/2}
	\\ \wt \Delta_A = -(\rho\pa_\rho)^2 + \tfrac 1 4 + \Delta_{A,\pa \fX} \in \bDiff^1(\fX; \adPz).
\end{gathered}
\]
Here $\Delta_{A,\pa \fX}$ denotes the scalar Laplacian on $\pa \fX = \bbS^2$
determined by the restriction of $A$ (which is well-defined as $A$ is a true
connection) to $\pa \fX$ and the rescaled metric $\rho^2 g \rst_{\pa \fX} = \hS$.
This explicit form of $\wt \Delta_A$ follows by a direct computation from the
assumption that $A$ is in radial gauge. As a consequence,
\[
	\bspec(\wt \Delta_A) = \set{\pm \sqrt{\tfrac 1 4 + \nu} : \nu \in \spec(\Delta_{A,\pa \fX})}.
\]
In particular the interval $(-\tfrac 1 2, \tfrac 1 2)$ is disjoint from
$\bspec(\wt \Delta_A)$.

\begin{thm}
For $\alpha \in (-\tfrac 1 2, \tfrac 1 2)$, and any $\beta \in \bbR$, $k \in \bbN$, 
the bounded operator
\begin{equation}
	F_{\fX} : \rho^{\alpha-1,\beta}\bhscH^{k,2}(\fX; \adP)
	\to \rho^{\alpha+1,\beta}\bhscH^{k,0}(\fX; \adP)
	\label{E:linear_coulomb_X_extn}
\end{equation}
is invertible, with inverse $G$ represented as a conormal distribution on
$\fX^2$, decomposing with respect to $\Hom(\pi_R^\ast(\adPz\oplus
\adPo),\pi_L^\ast(\adPz\oplus \adPo))$ as
\begin{equation}
\begin{gathered}
	G = \begin{pmatrix} \rho^{1/2}\wt G_0 \rho^{-5/2} & G_{01}\\G_{10} & G_1\end{pmatrix}
	\\ \wt G_0 \in \bPsi^{-2,\cF}(\fX; \adPz)
	\quad \cF = (F_L,F_R, F_F), \quad F_L,F_R \geq \tfrac 1 2, \quad F_F \geq 0.
	\\ G_1 \in \scPsi^{-2,0}(\fX; \adPo),
	\quad G_{ij} \in \rho^\infty \Psi^{-2}(\fX; \adP_j, \adP_i)
	\label{E:linear_coulomb_X_inverse}
\end{gathered}
\end{equation}
\label{T:linear_coulomb_X}
\end{thm}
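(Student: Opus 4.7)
The proof closely mirrors that of Theorem~\ref{T:LX_package}, exploiting the assumption that $A$ and $\Phi$ are diagonal to infinite order at $\pa\fX$ to decouple the $\adPz$ and $\adPo$ blocks of $F_\fX$.

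On the $\adPo$ block, $\Delta_A - \ad\Phi^2 = \Delta_A + (\ad\Phi)^\ast(\ad\Phi)$ is fully elliptic as a scattering operator of order $2$: its scattering principal symbol is $|\xi|^2$ and its normal symbol at $\pa\fX$ is $|\xi|^2 + (\ad\Phi)^\ast(\ad\Phi)$, invertible by the nondegeneracy of $\ad\Phi$ on $\adPo$. Standard scattering calculus produces a parametrix $G_1 \in \scPsi^{-2,0}(\fX; \adPo)$ with error in $\scPsi^{-\infty,\infty} = \rho^\infty\Psi^{-\infty}$; strict positivity of the operator promotes this to invertibility at any scattering weight $\beta$. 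On the $\adPz$ block, the conjugation $\Delta_A = \rho^{5/2}\wt\Delta_A\rho^{-1/2}$ produces the b-elliptic operator $\wt\Delta_A$. Since $\adPz$ is trivialized at the boundary by $\Phi/|\Phi|$ with $\nabla_A\Phi = 0$ there, $\Delta_{A,\pa\fX}$ reduces to the scalar Laplacian on $\bbS^2$ with spectrum $\{\ell(\ell+1)\}_{\ell\geq 0}$, and a direct computation of the indicial equation yields $\bspec(\wt\Delta_A) = \{\pm(\ell+\tfrac 1 2):\ell\in\bbN_0\}$. Converting between $L^2(g)$ and the b-density in dimension three (using $\mathrm{vol}_g = \rho^{-3}\mathrm{vol}_{\bg}$) translates the weight $\alpha-1$ on $u_0$ to the b-weight $\alpha$, and the hypothesis $\alpha \in (-\tfrac 1 2,\tfrac 1 2)$ places this strictly between the two smallest indicial roots $\mp \tfrac 1 2$. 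The b-calculus then yields a parametrix $\wt G_0 \in \bPsi^{-2,\cF}(\fX;\adPz)$ with $F_L, F_R \geq \tfrac 1 2$ (the first indicial root above $\alpha$) and $F_F \geq 0$, with right error in $\bPsi^{-\infty,(\wh 1,\infty,\infty)}$ and symmetrically on the left.

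Combining these into the block-diagonal parametrix
\[
Q = \begin{pmatrix} \rho^{1/2}\wt G_0 \rho^{-5/2} & 0 \\ 0 & G_1\end{pmatrix}
\]
in a collar of $\pa\fX$, the off-diagonal coupling terms $F_{ij} \in \rho^\infty \Psi^1$ are absorbed into the total error via the bi-ideal properties \eqref{E:sc_b_ideal}, yielding two-sided parametrices for $F_\fX$ with residuals in $\rho^\infty\Psi^{-\infty}$. Proposition~\ref{P:bsc_spaces} implies these residuals act compactly on the weighted Sobolev spaces, so \eqref{E:linear_coulomb_X_extn} is Fredholm; the index is zero since the adjoint pairing exchanges weights $\alpha \leftrightarrow -\alpha$, both in $(-\tfrac 1 2,\tfrac 1 2)$. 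Invertibility therefore reduces to injectivity. If $F_\fX u = 0$, elliptic regularity promotes $u$ to polyhomogeneous: the $\adPo$ component is rapidly vanishing by full scattering ellipticity, while the $\adPz$ component acquires leading order at the first indicial root above $\alpha$, giving $u_0 = O(\rho) = O(1/r)$. This decay is just enough to make boundary contributions $O(R^{-1})$ vanish as $R \to \infty$ in the Bochner identity
\[
\pair{F_\fX u, u} = \norm{d_A u}^2 + \norm{\ad\Phi\, u}^2,
\]
evaluated on exhausting balls, forcing $d_A u \equiv 0$ and $\ad\Phi\, u \equiv 0$ and hence $u \equiv 0$.

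The explicit block form \eqref{E:linear_coulomb_X_inverse} of the inverse then follows by writing $G = Q^R + Q^L E^R + E^L G E^R$, exactly as in the proof of Theorem~\ref{T:LX_package}.\eqref{I:LX_package_G}, with the bi-ideal properties \eqref{E:sc_b_ideal} controlling each term's calculus class. The principal technical obstacle is the clean combination of b- and scattering-type parametrices on a single boundary face; this succeeds precisely because the infinite-order diagonality hypothesis forces the coupling between $\adPz$ and $\adPo$ into the common bi-ideal $\rho^\infty\Psi^\ast$, absorbable into either calculus without damaging index sets.
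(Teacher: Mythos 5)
Your proof follows the same route as the paper's (decouple the $\adPz$ and $\adPo$ blocks, conjugate the $\adPz$ part to a b-elliptic operator, use the indicial root gap $(-\tfrac12,\tfrac12)$, prove injectivity via the Bochner identity, and assemble the inverse from the parametrices). However, there is one genuine factual error that also creates an internal inconsistency.

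You claim that the two-sided parametrices have residuals in $\rho^\infty\Psi^{-\infty}$. This is not achievable by the parametrix construction and in fact contradicts your own subsequent (correct) assertion that the nullspace has leading order $u_0 = O(\rho)$. Error terms in $\rho^\infty\Psi^{-\infty}$ would force $u = E^L u = O(\rho^\infty)$ for any $u \in \Null(F_\fX)$, and more to the point, $\rho^\infty\Psi^{-\infty}$ residuals would amount to having already constructed the inverse modulo rapidly decaying terms — precisely what the Fredholm theory is meant to establish. The correct picture (as in the paper) is that the refined b-parametrix has error of the form $\wt E^R \in \bPsi^{-\infty,(\infty,\wh S,\infty)}(\fX;\adPz)$ and $\wt E^L \in \bPsi^{-\infty,(\wh S,\infty,\infty)}(\fX;\adPz)$, i.e.\ vanishing to infinite order at the front face and at one of left/right, but with nontrivial polyhomogeneous index set $\wh S$ (with $\min S = \tfrac12$) at the remaining face; only the off-diagonal coupling terms and the $\adPo$ scattering error land in $\rho^\infty\Psi^{-\infty}$. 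These operators are still compact on the weighted Sobolev spaces (since they gain infinite decay on the left for $E^R$, respectively on the right for $E^L$), so your Fredholm and index-zero conclusions survive intact; and it is precisely the $\wh S$ index set at the left face of $E^L$ that accounts for the leading order $\rho^{1/2+1/2} = \rho^1$ of the nullspace, which you then correctly feed into the Bochner integration by parts. If you replace the $\rho^\infty\Psi^{-\infty}$ claim with the correct index-set statement, the rest of your argument stands and matches the paper's.

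One smaller caveat: you identify $\Delta_{A,\pa\fX}$ on $\adPz$ with the untwisted scalar Laplacian on $\bbS^2$, giving $\bspec(\wt\Delta_A) = \{\pm(\ell+\tfrac12)\}$. Diagonality of $A$ to infinite order gives a connection on the trivial line bundle $\adPz$ over $\pa\fX$ which need not be flat in the hypotheses of the theorem, so $\Delta_{A,\pa\fX}$ may be a twisted Laplacian. This does not affect the proof — all that is needed is $\Delta_{A,\pa\fX} \geq 0$, which forces $\bspec(\wt\Delta_A) \cap (-\tfrac12,\tfrac12) = \emptyset$, and this is how the paper states it — but the explicit spectrum claim is slightly too strong.
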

\begin{proof}
The construction is similar to the one in Theorem~\ref{T:LX_package}, so we shall be brief.
Fixing the parameters $\alpha \in (-\tfrac 1 2,\tfrac 1 2)$, $\beta \in \bbR$, and $k \in \bbN$,
we proceed as in \S\ref{S:linear_X}, beginning with an initial right parametrix
$Q$ of the form
\[
\begin{gathered}
	Q = \begin{pmatrix} \rho^{1/2}\wt Q_0 \rho^{-5/2} & 0\\0 & Q_1\end{pmatrix}
	\\ \wt Q_0 \in \bPsi^{-2,(\wh S,S,0)}(\fX; \adPz),
	\quad Q_1 \in \scPsi^{-2,0}(\fX; \adPo),
\end{gathered}
\]
where $S = \set{(s,k) \in \bSpec(\wt \Delta_A) : s \geq 1/2}$ and $\wh
S = \ol \bigcup_{n \in \bbN}(S + n)$, for which the error $E = I -
F_\fX Q$ has the form
\[
\begin{gathered}
	E = \begin{pmatrix} \rho^{5/2}\wt E_0 \rho^{-5/2} & E_{01}\\E_{10} & E_1\end{pmatrix}
	\\ \wt E_0 \in \bPsi^{-1,(\infty,S,1)}(\fX; \adPz),
	\quad E_1 \in \scPsi^{-1,1}(\fX; \adPo),
	\\ E_{ij} \in \rho^\infty\Psi^{-1}%(\fX; \ad P_j, \ad P_i).
\end{gathered}
\]
Summing the Neumann series for $E$ leads to the improved right parametrix $Q^R
= Q(\sum_{N = 0}^\infty E^N)$ with $E^R = I - F_\fX Q^R$ of the form
\[
\begin{gathered}
	E^R = \begin{pmatrix} \rho^{5/2}\wt E^R_0 \rho^{-5/2} & E^R_{01}\\E^R_{10} & E^R_1\end{pmatrix}
	\\ \wt E^R_0 \in \bPsi^{-\infty,(\infty,\wh S,\infty)}(\fX; \adPz),
	\quad E^R_1 \in \scPsi^{-\infty,\infty}(\fX; \adPo),
	\\ E^R_{ij} \in \rho^\infty\Psi^{-\infty}%(\fX; \ad P_j, \ad P_i),
\end{gathered}
\]
and $Q^R$ having a similar decomposition to $Q$, but with off-diagonal terms in
$\rho^{\infty}\Psi^{-2}$ and $\wt Q^R_0 \in \bPsi^{-2,(\wh S, S \ol
\cup (\wh S + 1), 1 \ol \cup (S+ \wh S)\cup 0)}(\fX; \adPz)$.

A similar procedure gives a left parametrix $Q^L$ with $E^L = I - Q^L F_\fX$ of the
form
\[
\begin{gathered}
	E^L = \begin{pmatrix} \rho^{1/2}\wt E^L_0 \rho^{-1/2} & E^L_{01}\\E^L_{10} & E^L_1\end{pmatrix}
	\\ \wt E^L_0 \in \bPsi^{-\infty,(\wh S,\infty,\infty)}(\fX; \adPz),
	\quad E^L_1 \in \scPsi^{-\infty,\infty}(\fX; \adPo),
	\\ E^L_{ij} \in \rho^\infty\Psi^{-\infty}%(\fX; \ad P_j, \ad P_i).
\end{gathered}
\]

As operators on $\rho^{\alpha+1,\beta}\bhscH^{k,0}(\fX; \adP)$ and
$\rho^{\alpha-1,\beta}\bhscH^{k,2}(\fX; \adP)$, $E^R$ and $E^L$ are compact,
so the extension \eqref{E:linear_coulomb_X_extn} is Fredholm. From the fact
that $E^L$ maps $\rho^{\alpha-1,\beta}\bhscH^{k,2}$ into
$\rho^{0,\infty}\cAphg^{\wh S + 1/2}$ and $u \in \Null(F_\fX) \iff u = E^L u$, it
follows that 
\[
	\Null(F_\fX ) \subset \rho^{0,\infty}\cAphg^{\wh S+1/2}(\fX; \adP).
\]
In particular $u \in \Null(F_\fX )$ has leading order $\cO(\rho^{1+\epsilon})$ since $S \geq \tfrac 1 2.$
This is enough decay to justify the integration by parts in
the identity $\pair{\Delta_Au,u}_{L^2} = \norm{d_A u}^2_{L^2}$, from which it
follows that
\[
	u \in \Null(F_\fX ) \implies \norm{d_A u}_{L^2} = 0,
\]
so that $u$ is covariant constant, and therefore vanishing since $u \rst_{\pa
\fX} = 0.$ Thus \eqref{E:linear_coulomb_X_extn} is injective, and applying 
the same reasoning to the $L^2$ adjoint 
\[
	\Delta_A - \ad \Phi^2 : \rho^{-\alpha-1,-\beta}\bhscH^{k,2}(\fX; \adP) 
	\to \rho^{-\alpha+1,-\beta}\bhscH^{k,0}(\fX; \adP)
\]
proves that \eqref{E:linear_coulomb_X_extn} is surjective as well.  Letting $G$
denote the inverse of \eqref{E:linear_coulomb_X_extn} and writing 
\[
	GF_\fX Q^R = G - GE^R = Q^R,
	\quad Q^L F_{\fX} G = G - E^L G = Q^L,
\]
it follows that $G$ satisfies the identity
\[
	G = Q^R + Q^L E^R + E^L G E^R.
\]
\eqref{E:linear_coulomb_X_inverse} is then a consequence of the bi-ideal
properties of $\rho^\infty \Psi^{-\infty}$ and the left (resp.\ right) ideal
properties of $\bPsi^{-\infty,(\infty,\ast,\infty)}$ (resp.\
$\bPsi^{-\infty,(\ast,\infty,\infty)}$).
\end{proof}

\bigskip

Next we consider the normal operators of $F$ at $\D$.
Again decomposing with repsect to $\adPz \oplus \adPo$, and writing
\[
	F = \begin{pmatrix} F_0 & F_{01}\\F_{10} & F_1\end{pmatrix},
\]
with $F_{ij} = \cO(\rho^\infty)$, we have
\[
	F_\fD := \noD(F_0)\rst_{\fD} = (\lim_{\ve \smallto 0} \ve^2 F_0) \rst_{\fD}
	= \Delta_{A\rst \fD}
\]
where $A\rst \fD$ is the restriction of $A$ as a smooth connection to $\fD$.

\begin{thm}
For $\alpha, \beta \in (-\tfrac 1 2, \tfrac 1 2)$, and $k \in \bbN$,
the extension
\begin{equation}
	\Delta_{A\rst \fD} : \rho_\fB^{\alpha - 1}\rho_\fB^{\beta + 1} \bH^{k+2}(\fD; \adPz)
	\to \rho_\fB^{\alpha+1}\rho_\fB^{\beta - 1} \bH^{k} (\fD; \adPz)
	\label{E:linear_coulomb_D_extn}
\end{equation}
is invertible, with inverse of the form
\begin{equation}
\begin{gathered}
	\Delta_{A\rst \fD}^{-1} = \rho_\fB^{1/2}\rho_\fX^{-1/2}\, \wt G\, \rho_\fX^{5/2} \rho_\fB^{-5/2},
	\quad \wt G \in \bPsi^{-2, \cF}(\fD; \adPz),
	\\ \cF = (F_L,F_R, F_F), \quad F_L, F_R \geq \tfrac 1 2, \quad F_F \geq 0.
	\label{E:linear_coulomb_D_inverse}
\end{gathered}
\end{equation}
\label{T:linear_coulomb_D_b}
\end{thm}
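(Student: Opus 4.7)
The plan is to follow the pattern of Theorems~\ref{T:LD_package} and \ref{T:linear_coulomb_X}, reducing $\Delta_{A\rst\fD}$ to an honest b-differential operator by a weight-conjugation and analyzing its indicial roots at each of the boundary hypersurfaces of $\fD$. Specifically, I would define
\[
	\wt \Delta := \rho_\fX^{5/2}\rho_\fB^{-5/2}\,\Delta_{A\rst\fD}\,\rho_\fB^{1/2}\rho_\fX^{-1/2} \in \bDiff^2(\fD;\adPz),
\]
and observe that the mapping properties of \eqref{E:linear_coulomb_D_extn} are equivalent to those of $\wt \Delta$ acting between the weighted b-Sobolev spaces $\rho_\fB^\alpha \rho_\fX^\beta \bH^{k+2}(\fD;\adPz)$ and $\rho_\fB^\alpha \rho_\fX^\beta \bH^k(\fD;\adPz)$ based on a b-metric; the shift from the exponents in \eqref{E:linear_coulomb_D_extn} to $(\alpha,\beta)$ absorbs both the conjugation and the conversion between the conic/scattering volume form and the b-volume form.

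Next I would compute the b-indicial roots of $\wt \Delta$ at each boundary hypersurface of $\fD$. At each conic face (arising from the blow-up of one of the points $\set{0,\zeta_1,\ldots,\zeta_N}$) the indicial operator reduces to the model Euclidean Laplacian on $\bbR^3 \setminus \set 0$ written in polar coordinates and conjugated by $r^{-1/2}$; at the scattering face at infinity one obtains the same model conjugated instead by $\rho_\fB^{1/2}$. An elementary calculation then yields
\[
	\bspec(\wt \Delta) = \set{\pm(\ell + \tfrac 1 2) : \ell \in \bbN_0}
\]
at every boundary hypersurface. In particular the open rectangle $(-\tfrac 1 2, \tfrac 1 2)^2$ is free of critical weights. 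Combining this with the ellipticity of $\wt \Delta$ and its formal self-adjointness at $(\alpha,\beta) = (0,0)$, the standard Fredholm theory for elliptic b-operators then gives that the conjugated extension is Fredholm of index zero throughout this range, so it suffices to establish injectivity.

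For injectivity I would appeal to the Bochner identity
\[
	\pair{\Delta_{A\rst\fD} u, u}_{L^2(\fD;\ve^{-2}g\rst_\fD)} = \norm{d_{A\rst\fD}u}^2_{L^2}.
\]
For $u$ in the nullspace, boundary regularity together with the indicial-root computation yields that $u$ is polyhomogeneous with leading behaviour governed by the first positive indicial root; tracing through the conjugation, this gives $u = \cO(\rho_\fB)$ at the scattering face and $u = \cO(1)$ at each conic face. The main obstacle will be justifying the integration by parts: one must approximate $\fD$ by $\fD_\epsilon = \set{\rho_\fB \geq \epsilon,\ \rho_\fX \geq \epsilon}$ and check that the boundary flux terms vanish as $\epsilon \smallto 0$. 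At the scattering face the area form on $\set{\rho_\fB = \epsilon}$ grows like $\epsilon^{-2}d\omega$ but the normal boundary integrand decays like $\epsilon^3$ (using $u = \cO(\rho_\fB)$ and the scattering unit normal $\rho_\fB^2\partial_{\rho_\fB}$); at each conic face the area form decays like $\epsilon^2 d\omega$ while the integrand is merely $\cO(1)$. In both cases the boundary contribution is $\cO(\epsilon)$ and vanishes. Once this is done, $d_{A\rst\fD} u = 0$, so $u$ is covariant-constant, and its vanishing at the scattering face forces $u \equiv 0$. Surjectivity then follows by applying the same argument to the formal adjoint $\Delta_{A\rst\fD}^\ast = \Delta_{A\rst\fD}$.

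The explicit form \eqref{E:linear_coulomb_D_inverse} of the inverse will follow by the b-parametrix construction used in the proof of Theorem~\ref{T:LD_package}.\eqref{I:LD_package_G}: writing $\wt \Delta^{-1} = \wt Q_R + \wt Q_L \wt E_R + \wt E_L \wt \Delta^{-1} \wt E_R$ with standard left and right b-parametrices $\wt Q_L, \wt Q_R \in \bPsi^{-2,\cdot}(\fD;\adPz)$ whose left- and right-face index sets consist of the positive indicial roots $\set{\ell + 1/2 : \ell \in \bbN_0}$ and their shifts, and using the left/right ideal properties of $\bPsi^{-\infty,(\infty,\ast,\infty)}$ and $\bPsi^{-\infty,(\ast,\infty,\infty)}$ to absorb the error terms, one reads off the bounds $F_L, F_R \geq 1/2$ directly.
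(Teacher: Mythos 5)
Your proposal follows essentially the same route as the paper's proof: conjugate to the b-operator $\wt\Delta = \rho_\fX^{5/2}\rho_\fB^{-5/2}\,\Delta_{A\rst\fD}\,\rho_\fB^{1/2}\rho_\fX^{-1/2}$, locate the indicial roots and note that the interval $(-\tfrac12,\tfrac12)$ is free of them at every boundary hypersurface, deduce Fredholmness of index zero from ellipticity and self-adjointness at the central weight, use the Bochner identity with a decay check at each end to kill the nullspace, and recover the stated structure of the inverse from the identity $\wt\Delta^{-1} = \wt Q_R + \wt Q_L\wt E_R + \wt E_L\wt\Delta^{-1}\wt E_R$. Two small remarks. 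First, your stated indicial spectrum $\bspec(\wt\Delta)=\set{\pm(\ell+\tfrac12):\ell\in\bbN_0}$ presupposes that the twisted spherical Laplacian $\Delta_{A,\partial\fD}$ on $\adPz$ has the bare scalar spectrum $\ell(\ell+1)$; the paper's version $\set{\pm\sqrt{\tfrac14+\nu}:\nu\in\spec(\Delta_{A,\partial\fD})}$ is the more careful general statement, but since $\nu\geq 0$ both give the same conclusion that $(-\tfrac12,\tfrac12)$ contains no indicial roots. Second, your explicit accounting of the boundary flux terms (area element $\sim\epsilon^{-2}$ versus integrand $\sim\epsilon^3$ at the scattering end, area element $\sim\epsilon^2$ versus bounded integrand at each conic tip) fills in a step the paper passes over with the remark that the homogeneity in the conic/scattering volume form suffices; the estimates you give are correct and match the paper's claimed leading orders $\cO(\rho_\fB^1,\rho_\fX^0)$ for the nullspace.
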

\begin{proof}
For the remainder of the proof, we denote $A\rst \fD$ simply by $A$.
We consider the operator
\[
	\wt \Delta_A = \rho_\fB^{-5/2}\rho_\fX^{5/2}\, \Delta_A \, \rho_\fX^{-1/2}\rho_\fB^{1/2}
	\in \bDiff^{2}(\fD; \adPz).
\]
Taking $A$ to be in radial gauge near the ends of $\fD$, it follows
that near $\fD \cap \fX$, 
\[
	\wt \Delta_A = - (r\pa_r)^2 + \tfrac 1 4 + \Delta_{A,\fD \cap \fX},
\]
with $\Delta_{A,\fD \cap \fX}$ denoting the Laplacian on $\fD \cap \fX = \bbS^2$ 
induced by $A$ and the metric $\rho_\fX^{-2}\ve^{-2}g = \hS$, 
and likewise near $\fD \cap \fB$, 
\[
	\wt \Delta_A = - (x\pa_x)^2 + \tfrac 1 4 + \Delta_{A,\fD \cap \fB}.
\]
In either case, the indicial roots near an end of $\fD$ have the form
\[
	\bspec(\wt \Delta_A) = \set{\pm \sqrt{\tfrac 1 4 + \nu} : \nu \in \spec(\Delta_{A,\pa \fD})}
\]
and in particular are always disjoint from the interval $(-\tfrac 1 2, \tfrac 1
2).$ 

Proceeding with the standard steps in the b-calculus, we may construct
right and left parametrices
\[
\begin{gathered}
	\wt Q^R \in \bPsi^{-2,(\wh S, S \ol \cup (\wh S + 1), 1 \ol \cup (S + \wh S) \cup 0)}(\fD; \adPz),
	\\ \wt Q^L \in \bPsi^{-2,(S \ol \cup (\wh S + 1), \wh S, 1 \ol \cup (S + \wh S) \cup 0)}(\fD; \adPz)
\end{gathered}
\]
for $\wt \Delta_A$, where $S = \set{(s,k) \in \bSpec(\wt \Delta_A) : s > 1/2}$ and $\wh S = \ol
\bigcup_n (S + n)$ as before. The error terms $\wt E^R = I - \wt \Delta_A \wt Q^R$ and 
$\wt E^L = I - \wt Q^L \wt \Delta_A$ have the form
\[
	\wt E^L \in \bPsi^{-\infty,(\wh S, \infty,\infty)}(\fD; \adPz),
	\quad \wt E^R \in \bPsi^{-\infty,(\infty, \wh S, \infty)}(\fD; \adPz).
\]
Then $Q^{L/R} := \rho_\fB^{1/2}\rho_\fX^{-1/2}\, \wt Q^{L/R}
\,\rho_\fX^{5/2}\rho_\fB^{-5/2}$ are left/right parametrices for $
\Delta_A$, and the error terms $E^L = I - Q^L \ol \Delta_A$ and $E^R = I - \ol
\Delta_A Q^R$ extend to compact operators on the domain and range of
\eqref{E:linear_coulomb_D_extn}, respectively, with $E^L$ mapping the domain
into $\rho_\fB^{1/2}\rho_\fX^{-1/2}\cAphg^{\wh S}(\fD; \adPz)$. 

It follows that \eqref{E:linear_coulomb_D_extn} is Fredholm, with nullspace in
$\rho_\fB^{1/2}\rho_\fX^{-1/2} \cAphg^{\wh S}$; in particular, the leading
order of $u \in \Null(\Delta_A)$ has order
$\cO(\rho_\fB^{1},\rho_\fX^{0}).$ Due to the homogeneity in the
conic/scattering volume form of $\fD$, this is enough decay to justify the
integration by parts in the identity $\pair{\Delta_A u , u}_{L^2} =
\norm{d_A u}^2_{L^2}$, from which it follows that $u \in \Null(\Delta_A)$
is covariant constant, hence vanishing since it vanishes at $\fD \cap \fB.$
Thus \eqref{E:linear_coulomb_D_extn} is injective, and by self-adjointness is
also surjective, hence invertible. \eqref{E:linear_coulomb_D_inverse} follows
by writing
\[
	\Delta_A^{-1} = Q^R + Q^L E^R + E^L \Delta_A^{-1} E^R.
\qedhere
\]
\end{proof}

As for $F_1$, we recall the normal operator homomorphism of
Proposition~\ref{P:g_normal_seq_D}. Regarding $\sigma_\D(F_1)$ as a smooth
fiberwise polynomial on $\gT \D \to \D$ with values in $\End(\adPo)$, the
observation that $\Delta_A$ is a laplacian on $\adPo$ lead to the following
Theorem.

\begin{thm}
The symbol $\sigma_\D(F_1) \in S^2(\gT \D; \adPo)$ is given at $(x,\xi) \in \gT \D$ by
\[
	\sigma_\D(F_1)(x,\xi) = \abs{\xi}^2 - \ad \Phi_x^2,
\]
and is invertible for all $(x,\xi)$.
\label{T:linear_coulomb_D_sc}
\end{thm}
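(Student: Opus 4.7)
The plan is to compute the $\D$-normal symbol $\sigma_\D$ separately for each of the two pieces of $F_1$, namely $\Delta_{A}$ and $-\ad\Phi^2$, to assemble the resulting formula, and then to verify invertibility by a positivity argument using the structure of $\adPo$.

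First I would show that $\sigma_\D(\Delta_{A})(x,\xi) = |\xi|^2\cdot \mathrm{Id}_{\adP_x}$.  The operator $d_{A} : C^\infty(\Z; \adP) \to C^\infty(\Z; \gLam^1\otimes \adP)$ lies in $\gDiff^1$, and can be written locally as $d + \omega$, where $\omega$ is a connection 1-form.  The connection term is of order zero in the $\gamma$-calculus, so by the computation in the proof of Proposition~\ref{P:g_normal_seq_D} (extended trivially to bundle coefficients by $\mathrm{Id}_{\adP}$), its $\D$-normal symbol is $\sigma_\D(d_A)(x,\xi) = i\xi\otimes \mathrm{Id}_{\adP_x}$.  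Since $\wt g$ is a smooth nondegenerate fiber metric on $\gT\Z$, the formal adjoint $d_A^\ast$ has $\sigma_\D(d_A^\ast)(x,\xi) = -i\,\iota_\xi \otimes \mathrm{Id}$ under the induced metric pairing.  Using the fact that $\sigma_\D$ is multiplicative (Proposition~\ref{P:g_normal_seq_D}), we obtain
\[
	\sigma_\D(\Delta_{A})(x,\xi) = \sigma_\D(d_{A}^\ast)(x,\xi)\,\sigma_\D(d_{A})(x,\xi) = |\xi|^2_{\wt g}\cdot \mathrm{Id}_{\adP_x}.
\]

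Next, the zeroth-order multiplication operator $\ad\Phi^2 \in \gDiff^0(\Z; \adP)$ has symbol equal to the pointwise endomorphism itself, independent of $\xi$, namely $\sigma_\D(\ad\Phi^2)(x,\xi) = \ad\Phi_x^2$.  The off-diagonal coupling terms of $F$ between $\adPz$ and $\adPo$ are $\cO(\rho_\D^\infty\rho_\B^\infty)$ by the diagonality assumption on $(A,\Phi)$, so they lie in $\rho_\D^\infty \gDiff^2$ and are annihilated by $\sigma_\D$.  Restricting to the $\adPo$ block therefore gives
\[
	\sigma_\D(F_1)(x,\xi) = |\xi|^2 - \ad\Phi_x^2 \in \End(\adPo_x),
\]
as claimed.

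Finally, for invertibility: on $\adPo_x$ the endomorphism $\ad\Phi_x$ is skew-Hermitian with respect to the Killing-type inner product used to form the splitting $\adP = \adPz\oplus\adPo = \bbC\pair{\Phi}\oplus\pair\Phi^\perp$, and it is nondegenerate there by the very definition of $\adPo$ as the orthogonal complement of $\ker\ad\Phi_x$.  Consequently $-\ad\Phi_x^2 = (\ad\Phi_x)^\ast(\ad\Phi_x)$ is a positive-definite self-adjoint endomorphism of $\adPo_x$, and $|\xi|^2\cdot \mathrm{Id}$ is positive semidefinite and commutes with it.  Hence $|\xi|^2 - \ad\Phi_x^2$ is strictly positive, and in particular invertible, for every $(x,\xi)\in \gT\D$.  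The only delicate point in the argument is checking that the connection terms in $d_A$ genuinely drop out of $\sigma_\D$ at $\D$; this is the expected main obstacle, but it is really a book-keeping exercise because $\gDiff^1 = \rho_\D\rho_\B\fbDiff^1 + \gDiff^0$ and the connection-form contribution sits in the $\gDiff^0$ summand.
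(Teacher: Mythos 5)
The overall structure is sound and the conclusion is correct; the paper gives essentially no proof of this statement, so filling in the details as you have done is worthwhile, and the final invertibility argument via positive-definiteness of $-\ad\Phi_x^2$ on $\adPo_x$ is fine. However, the justification you give for why the connection-form contribution drops out of $\sigma_\D(d_A)$ contains a genuine error, and as written would lead to a contradiction.

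You argue that since the connection form $\omega$ is ``of order zero in the $\gamma$-calculus'' it drops out of $\sigma_\D$, and you repeat this at the end: ``it is really a book-keeping exercise because $\gDiff^1 = \rho_\D\rho_\B\fbDiff^1 + \gDiff^0$ and the connection-form contribution sits in the $\gDiff^0$ summand.'' But $\sigma_\D$ is \emph{not} a principal symbol: by the exact sequence of Proposition~\ref{P:g_normal_seq_D}, its kernel is $\rho_\D\gDiff^k$, not $\gDiff^{k-1}$. A $\gDiff^0$ multiplication operator maps under $\sigma_\D$ to the constant polynomial given by its restriction to $\D$ — it does not vanish. Your own argument relies on precisely this fact, since $-\ad\Phi^2$ is also in $\gDiff^0$ and manifestly survives as the constant term $-\ad\Phi_x^2$. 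If ``lies in $\gDiff^0$'' implied ``killed by $\sigma_\D$,'' the entire zeroth-order part of the answer would disappear.

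The actual reason the connection term drops out is different and is worth spelling out. A smooth connection $1$-form $\omega$ is a section of $T^\ast\Z\otimes\adP$, and the operator $d_A$ is defined by composing $\wh d_A$ with the natural map $T^\ast\Z\to\gT^\ast\Z$ dual to the inclusion $\gV(\Z)\subset\cV(\Z)$. This map factors through $\fbT^\ast\Z = \rho_\D\rho_\B\,\gT^\ast\Z$, so a smooth $1$-form, viewed as a section of $\gLam^1$, acquires a factor $\rho_\D\rho_\B$ (cf.\ the inclusion $C^\infty(\Z;\Lambda^k)\to(\rho_\B\rho_\D)^kC^\infty(\Z;\gLam^k)$ noted in \S\ref{S:single_bogo}). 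Therefore the multiplication operator $[\omega,\,\cdot\,]:\adP\to\gLam^1\otimes\adP$ lies in $\rho_\D\rho_\B\,\gDiff^0\subset\rho_\D\,\gDiff^1$, which \emph{is} in the kernel of $\sigma_\D$, and this is what justifies $\sigma_\D(d_A)(x,\xi)=i\xi\otimes\mathrm{Id}$. With that correction made, the rest of the computation and the positivity argument go through as you describe.
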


\section{Pseudodifferential operators} \label{S:double}
Here we construct the calculi of pseudodifferential operators which `microlocalize'
the algebras $\pDiff^\ast(\Z)$ and $\gDiff^\ast(\Z)$ of differential operators
associated to the vector fields $\pV(\Z)$ and $\gV(\Z)$, respectively. To keep the notational
complexity at a minimum we consider operators on scalar functions only; the extension to operators
acting between sections of vector bundles over $\Z$ is a straightforward matter. 

The kernels of these pseudodifferential operators are defined as distributions
on appropriate geometric resolutions of the fiber product $\Z\times_\fb \Z$;
these ``double spaces'' are discussed in \S\ref{S:double_double}. The composition of
two such operators is defined via the associated ``triple
spaces''---resolutions of the triple fiber product $\Z\times_\fb \Z \times_\fb
\Z$---which are discussed next in \S\ref{S:triple}. The $\fb$ and $\gl$
pseudodifferential operators are defined and some of their essential properties
are investigated in \S\ref{S:pseudo} and their mapping properties with respect
to Sobolev spaces are proved in \S\ref{S:pseudo_sobolev}. 

Note that, in our constructions involving monopoles, we only compose
pseudodifferential operators with differential ones.  Such compositions can be
defined directly on the double spaces, avoiding the technical complexity of the
triple spaces. However, the composition of pseudodifferential operators is used
to establish their mapping properties with respect to $L^2$ via a standard
argument due to H\"ormander (see the proof of Lemma~\ref{L:dpM_mapping}); for
this reason we have developed the general composition results in
Theorems~\ref{T:dpM_package} and \ref{T:dgM_package} below.
 
\subsection{The double spaces} \label{S:double_double}

In the first place, $\dpM$ is meant to be a resolution of the fiber product $\Z^{[2]} := \M
\times_\fb \Z$ of the single space with itself with respect to the b-fibration
$\fb : \Z \to \base$.  
There are two ways to achieve this. The most direct is to use the theory
developed in \cite{kottke2015generalized} and \cite{kottke2015blow} regarding resolutions of fiber products. Alternatively,
since many readers will not be familiar with this theory, $\dpM$ may be
constructed directly via a sequence of blow-ups from the space $(\oX)^2 \times
\base$. We shall describe both approaches.

Let $Y$ be a manifold with corners and denote by $\cF(Y) = \bigsqcup_d\cF_d(Y)$ the set of boundary faces
of $Y$, where $d$ is codimension.
Recall from \cite{kottke2015generalized,kottke2015blow} that $Y$ has an associated {\em
monoidal complex} $\cP_Y$, which is a collection of monoids $\sigma_G \cong
\bbN^d$ for each boundary face $G \in \cF_d(Y)$, with canonical injective maps
$i_{FG} : \sigma_G \hookrightarrow \sigma_F$ whenever $F \subset G$ identifying
$\sigma_G$ with a face of $\sigma_F.$ These monoids $\sigma_G$ are all {\em
smooth}, meaning that they are freely generated by independent elements 
in the associated vector space $\sigma_G \otimes_\bbN \bbR \cong \bbR^d$; indeed
the generators for a given $\sigma_G$ may be identified with the faces $i_{GH}
(\sigma_H) \cong \bbN$ for the hypersurfaces $H$ meeting $G$. For any b-map $f
: Y \to Z$, there is an associated morphism $f_\natural : \cP_Y \to \cP_Z$,
mapping each $\sigma_G$ into $\sigma_{f_\#(G)}$ where $f_\#(G)$ is the boundary
face of $Z$ of maximal codimension into which $G$ maps. Expressed in a basis
with respect to the generators of $\sigma_G$ and $\sigma_{f_\#(G)},$ these maps
of monoids are represented by matrices with nonnegative integer entries given by the exponents
$e(H,H') \in \bbN$, where
\[
	f^\ast(\rho_H) = a\prod_{H \in \cF_1(Y)} \rho_H^{e(H,H')}, 
	\quad H \in \cF_1(Z), \quad 0 < a \in C^\infty.
\]
In fact, $f_\natural : \sigma_G \to \sigma_{f_\#(G)}$ may be identified with the
b-differential
\[
	\bd f_\ast : \bN G \to \bN f_\#(G).
\]
In particular, $f$ is b-normal if and only if no generator of any $\sigma_G$ is
mapped into the interior of any monoid $\sigma_F$ of $\cP_Z.$

If $f_1 : Y_1 \to Z$ and $f_2 : Y_2 \to Z$ are b-maps which are {\em
b-transversal}, a condition which is automatically satisfied if at least one is
a b-fibration, then the fiber product
\begin{equation}
	Y_1 \times_Z Y_2 \subset Y_1 \times Y_2
	\label{E:fib_product}
\end{equation}
is what is called an {\em interior binomial variety} of the manifold $Y_1
\times Y_2$.  Such a space, though smooth in its interior, is generally not a
manifold with corners; it belongs to a category of ``manifolds with generalized
corners,'' which has been developed by Joyce \cite{joyce2015generalization}, though subspaces such as
\eqref{E:fib_product} and their resolution theory is described in
\cite{kottke2015generalized} and \cite{kottke2015blow}. Indeed, a complex $\cP_{Y_1\times_Z Y_2}$ may also be associated to
the fiber product, with each monoid of dimension $l$ corresponding to a
boundary face (which is again an interior binomial variety) of codimension $l$,
and the face relations of the complex corresponding to the meeting of boundary
faces. The failure of the fiber product to be smooth is measured by the failure
of the monoids of $\cP_{Y_1\times_Z Y_2}$, which have the form $\sigma_{G_1}
\times_{\sigma_F} \sigma_{G_2}$, $F = (f_1)_\#(G_1) \cap (f_2)_\#(G_2)$, to be
smooth (meaning freely generated). To any {\em resolution} of $\cP_{Y_1\times_Z
Y_2}$, meaning a consistent way of subdividing the non-smooth
monoids into smooth ones, there corresponds an {\em generalized blow-up} of
$Y_1\times_Z Y_2$ resolving it to a smooth manifold with corners, the
combinatorial structure of whose boundary faces (i.e., codimension and meeting
of boundary faces) is again encoded by the resolving monoidal complex. Below we
will also be concerned with the problem of determining the induced resolution
of a boundary hypersurface $H \subset Y_1 \times_Z Y_2$ from a resolution of
$\cP_{Y_1 \times_Z Y_2}$. The monoidal complex of $H$ itself
is obtained from $\cP_{Y_1\times_Z Y_2}$ as the complex of quotient
monoids $\set{\sigma/\sigma_H : \sigma_H \subset \sigma \in \cP_{Y_1\times_Z
Y_2}}$, which we may denote by $\cP_{Y_1 \times_Z Y_2} / \sigma_H$. A smooth
resolution of $\cP_{Y_1\times_Z Y_2}$ induces a smooth resolution of the
quotient since the quotient of a freely generated monoid by a generator is
again freely generated.

In the present case of the b-fibration $\fb : \Z \to \base$, the space $\Z$
has boundary faces of codimension at most $2$, so $\cP_\Z$ consists of the
1-dimensional monoids $\sigma_{\X_i}, \sigma_\D, \sigma_\B \cong \bbN$ and the
2-dimensional monoids $\sigma_{\D\cap \B}$, $\sigma_{\D\cap \X_i}$. The
monoidal complex of the base $\base$ consists of the single nontrivial
monoid $\tau \cong \bbN$. The associated morphism $\fb_\natural$ sends
$\sigma_\B$ to $\set{0}$ while $\sigma_\D$ and $\sigma_{\X_i}$ are mapped
isomorphically onto $\tau.$ On the monoids of dimension 2 we have
\[
\begin{aligned}
	\fb_\natural &: \sigma_{\D \cap \X_i} \cong \bbN^2 \to \tau \cong \bbN,
	& &(m,n) \mapsto m + n,
	\\ \fb_\natural  &: \sigma_{\D \cap \B} \cong \bbN^2 \to \tau \cong \bbN,
	& &(m,n) \mapsto m.
\end{aligned}
\]
It follows that the only singular monoids in $\cP_{\Z\times_\base \Z}$ are 
\[
	\sigma_{\D \cap \X_i} \times_{\tau} \sigma_{\D\cap \X_j}, 
	\quad 0 \leq i,j \leq N.
\]
Each of these is a 3-dimensional monoid of the form 
\begin{equation}
	\nu := \set{(m_1,n_1,m_2,n_2) \in \bbN^4 : m_1 + n_1 = m_2 + n_2},
	\label{E:square_monoid}
\end{equation}
with dependent generators $(1,0,1,0)$, $(0,1,0,1)$, $(1,0,0,1)$ and $(0,1,1,0)$.
These generators may be identified respectively with the four hypersurfaces
\begin{equation}
	\D\times_\idmon\D,\ \X_i\times_\idmon \X_j,\ \D\times_\idmon \X_j,\  \X_i \times_\idmon \D
	\in \cF_1(\Z^{[2]})
	\label{E:fib_prod_bd_hyp}
\end{equation}
of the fiber product, which all meet at a face of codimension 3 (hence the 
singularity). Indeed, since $\D$ and $\X$ are both mapped to $\U
\times \set{0} \subset \base$, it follows that $\D \times_\base \D \equiv
\D\times_\idmon\D$, which is a fibration over $\idmon$ with fibers $\fD^2$. Likewise
$\D \times_\base \X_i \equiv \D \times_\idmon \X_i$ (with fibers $\fD\times \fX_i$ over
$\idmon$) and so on, and these form boundary hypersurfaces of $\Z^{[2]}$
with associated monoids $\sigma_{\D} \times_{\tau} \sigma_\D \cong
\bbN$, $\sigma_{\D} \times_\tau \sigma_{\X_i} \cong \bbN$, etc. The latter are
identified with the generating 1-dimensional submonoids of $\sigma_{\D \cap \X_i} \times_{\tau}
\sigma_{\D \cap \X_j}$ in the complex. For brevity of notation, 
we will denote the boundary hypersurfaces \eqref{E:fib_prod_bd_hyp}
as $\set{\DD,\XiXj,\DX_j,\XiD}$.

The task is to resolve the singular monoids above by smooth ones in a way which
preserves the b-fibrations to the single space $\Z$; these b-fibrations are
represented by the monoid homomorphisms
\begin{equation}
	\nu \to \bbN^2,
	\quad (n_1,m_1,n_2,m_2) \mapsto (n_i,m_i), \quad i = 1,2.
	\label{E:monoid_b_fibn}
\end{equation}
There are two inequivalent minimal resolutions, though in this case there is a
canonical choice which to which the fiber diagonal lifts to be transversal to
the boundary.  This is to replace each monoid $\sigma_{\D\cap \X_i}
\times_{\tau} \sigma_{\D\cap \X_j}$ by the two smooth monoids 
\begin{equation}
	\bbN\pair{\DD, \DX_j, \XiXj},
	\quad \bbN\pair{{\DD}, {\XiXj}, {\XiD}},
	\label{E:double_mon_resolve}
\end{equation}
which is to say
\[
	\bbN\pair{(1,0,1,0),(1,0,0,1),(0,1,0,1)},
	\quad \bbN\pair{(1,0,1,0),(0,1,0,1),(0,1,1,0)}),
\]
respectively. In doing so we
replace each singular codimension 3 face by two smooth codimension 3 faces
which are joined by a new codimension two face (corresponding to the monoid
generated by $\set{\sigma_{\DD}, \sigma_{\XiXj}}$). (See Figure~\ref{F:refinement}.) The
result is a smooth manifold with corners we shall provisionally call
$\Z^2_\mathrm{b}$ with two b-fibrations to $\Z$ lifting the right and left
projection maps from the fiber product, and a b-fibration to the parameter
space $\base$.  This is not yet the space we want; the final step is to blow-up
the codimension 2 face represented by $\B \times_\base \B$:
\begin{equation}
	\dpM := [\Z^2_\mathrm{b}; \B\times_\base \B] \to \Z^2_\mathrm{b}.
	\label{E:dpM_final_blow}
\end{equation}
At the level of monoids, this corresponds to subdividing $\sigma_{\B^2} \cong
\bbN^2$ into the two submonoids $\bbN\pair{(1,0),(1,1)}$ and
$\bbN\pair{(1,1),(0,1)}$, with the new hypersurface represented by the common
face $\bbN\pair{(1,1)}$.

\begin{figure}[htb]
\begin{center}
\begin{tikzpicture}[thick,scale=2.0]
\draw (1,0) node[right] {$\sigma_{\XD}$} -- (0,1) node[above] {$\sigma_\DD$} -- (-1,0) node[left] {$\sigma_\DX$} 
	-- (0,-1) node[below] {$\sigma_\XX$} -- cycle;
\draw[dashed] (0,1) -- (0,-1);
\end{tikzpicture}
\end{center}
\caption{Smooth refinement of the monoid $\sigma_{\DX} \times_\tau \sigma_{\DX}$}
\label{F:refinement}
\end{figure}
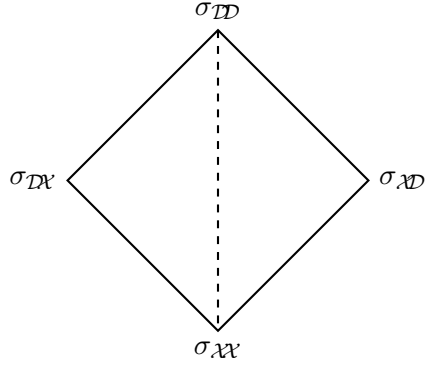

We denote the boundary hypersurfaces of $\dpM$ by their factors in the original
face of $\dpM$ lifting the product $\D\times_\base \D \equiv \D\times_\idmon \D$ in
$\Z^{[2]}$; below we show that it is diffeomorphic to the families b double
space $\dbD$.
Similarly, $\DXi$ denotes the lift of $\D\times_\base \X_i$ and so on. We
write $\BB$ to denote the front face of the blow-up \eqref{E:dpM_final_blow},
and $\BM$, etc. to denote the hypersurface lifting the original face
$\B\times_{\base} \Z$ of the fiber product. The complete list of boundary
hypersurfaces of $\dpM$ is as follows (see Figure~\ref{F:p_double}):
\[
	\DD, 
	\quad \DXi, 
	\quad \XiD,
	\quad \XiXj,
	\quad \BB,
	\quad \MB,
	\quad \BM,
\]
\begin{figure}[htb]
\begin{center}
\begin{tikzpicture}[thick,scale=1.0]
\draw (2,0) -- (1,1.73) -- (-1,1.73) -- (-2,0) -- (-1,-1.73) -- (1,-1.73) -- cycle;
\draw (-3.5,0) -- (-2,0) (2,0) -- (3.5,0);
\draw (-1,1.73) -- (-1,3.5) (1,1.73) -- (1,3.5);
\draw (-1,-1.73) -- (-2.5,-3) (1,-1.73) -- (2.5,-3);
\draw[dashed]  (0,3.5) -- (0,1.73) -- (0,-1.73) -- (0.5,-3);
\path (0,0) node[left] {$\DD$};
\path (0,2.5) node[left] {$\BB$};
\path (0,-2.5) node[fill=white] {$\XX$};
\path (2.5,-1.5) node {$\XD$};
\path (-2.5,-1.5) node {$\DX$};
\path (2.5,1.5) node {$\MB$};
\path (-2.5,1.5) node {$\BM$};
%\path (-3.5,3.5) node[below right] {$\dpM:$};
\end{tikzpicture}
\end{center}
\caption{The double space $\dpM$.}
\label{F:p_double}
\end{figure}

\begin{lem}
%\mbox{}
\begin{enumerate}
[{\normalfont (a)}]
\item \label{I:dpM_bfib}
The double space $\dpM$ has b-fibrations $\pi_R : \dpM \to \Z$ and $\pi_L :
\dpM \to \Z$ and $\fb : \dpM \to \base$ lifting the corresponding maps from the
fiber product $\Z^{[2]} = \Z\times_\fb \Z$. Composing with the projection $\base \to \idmon$
defines a fiber bundle $\fu: \dpM \to \idmon$ whose fibers, denoted generically by $\dpfM$,
are manifolds with corners.
\item \label{I:dpM_Delta}
The lifted fiber diagonal $\Delta \subset \dpM$ meets all boundary faces
transversally.
\item \label{I:dpM_doubles}
The boundary faces meeting $\Delta$ may be identified with
the family (over $\idmon$) b-double spaces $\dbD$, $\dbXi$, 
and the front face of the blow-up
\eqref{E:dpM_final_blow}, which may be identified with the product of
the b-front face of $(\ol{\bbR^3})^2_\mathrm{b}$ with $\base.$
\item \label{I:dpM_fiber}
For $\ve > 0$, the fibers $\fb^{-1}(\set p\times \set \ve) \subset \dpM$, $p \in \idmon$ are diffeomorphic to the
b double space ${\oX}^2_\mathrm{b}$.
\end{enumerate}
\label{L:dpM}
\end{lem}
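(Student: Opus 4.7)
The plan is to read off all four claims from the monoidal-complex picture developed in the preamble: the existence of b-fibrations, the transversality of the lifted fiber diagonal, and the identification of the boundary hypersurfaces can all be extracted combinatorially from the refined complex $\cP_\dpM$ produced from $\cP_{\Z\times_\base \Z}$ by the subdivision \eqref{E:double_mon_resolve} together with the subsequent blow-up \eqref{E:dpM_final_blow}. For part (a), I would check that the induced monoid morphisms $(\pi_L)_\natural,\,(\pi_R)_\natural$ and $\fb_\natural$ on $\cP_{\Z\times_\base \Z}$ descend to this refinement and send generators of smooth monoids to generators. Each refining submonoid in \eqref{E:double_mon_resolve} is freely generated by three of the four vectors $(1,0,1,0), (0,1,0,1), (1,0,0,1), (0,1,1,0)$, and the projections \eqref{E:monoid_b_fibn} take these generators to generators of $\sigma_{\D\cap \X_i}$ or $\sigma_{\D\cap \X_j}$, never into the interior of a monoid, while $\fb_\natural$ collapses each one consistently to either $\tau$ or $\set 0$. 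Adjoining the new generator associated to $\BB$ from \eqref{E:dpM_final_blow} preserves this, as under both projections it maps to the generator of $\sigma_\B$ and $\fb_\natural$ kills it. The existence theorem of \cite{kottke2015blow} for b-maps from resolutions then yields that $\pi_L,\pi_R,\fb$ lift to b-fibrations of $\dpM$, and $\fu$ is obtained by composing $\fb$ with the projection $\base \to \idmon$.

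For part (b), away from the boundary the transversality of $\Delta$ is clear, so I would work in local coordinates near a point of $\Delta \cap H$. In coordinates $(\rho_\D,\rho_{\X_i},y,m)$ on $\Z$ near $\D\cap \X_i$, the fiber product $\Z\times_\base \Z$ carries coordinates $(\rho_{\D,L},\rho_{\X_i,L},\rho_{\D,R},\rho_{\X_i,R},y_L,y_R,m)$ subject to $\rho_{\D,L}\rho_{\X_i,L} = \rho_{\D,R}\rho_{\X_i,R}$. The refinement \eqref{E:double_mon_resolve} amounts to the standard family b-resolution, introducing ratios such as $\rho_{\D,L}/\rho_{\D,R}$ and $\rho_{\X_i,L}/\rho_{\X_i,R}$ as smooth coordinates in appropriate charts; in these coordinates the lifted diagonal $\set{\rho_{\D,L}=\rho_{\D,R},\ \rho_{\X_i,L}=\rho_{\X_i,R},\ y_L = y_R}$ meets each adjacent boundary hypersurface transversally. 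The blow-up \eqref{E:dpM_final_blow} plays the analogous role for $\B\times_\base \B$, resolving the meeting of $\Delta$ with that face via the new front face $\BB$, to which $\Delta$ lifts transversally. The main obstacle will be precisely in this part: verifying transversality at the codimension-three corners where several refined hypersurfaces meet $\Delta$ simultaneously, and in particular confirming that the specific refinement \eqref{E:double_mon_resolve}, rather than the other inequivalent minimal refinement of $\sigma_{\D\cap \X_i}\times_\tau \sigma_{\D\cap \X_j}$, is the one that ensures transversality to every new hypersurface; this requires an explicit local-coordinate check in each combinatorially distinct case.

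For part (c), the monoidal complex of each boundary hypersurface $H \in \cF_1(\dpM)$ meeting $\Delta$ is the quotient complex $\cP_\dpM/\sigma_H$, and a direct bookkeeping identification shows that for $H = \DD$ this is the complex of the families b-double space $\dbD \to \idmon$; for $H$ equal to the diagonal case $\X_i\times_\idmon \X_i$ of $\XiXj$ it is $\dbXi$; and $H = \BB$ is by definition the projectivization of the inward normal bundle of $\B\times_\base \B$ in $\Z^2_\mathrm{b}$, which is canonically diffeomorphic to the product of $\base$ with the b-front face of $(\oX)^2_\mathrm{b}$. In each case the restrictions of $\pi_L,\pi_R$ intertwine with the canonical b-fibrations of the associated families double space. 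Part (d) is then immediate: for $\ve > 0$ the fiber $\fb^{-1}(p,\ve)$ misses every blow-up of $\dpM$ other than the one arising from $\B\times_\base \B$, which restricts on that fiber to the b-resolution of $\pa \oX \times \pa \oX$ inside $\oX \times \oX$, producing $(\oX)^2_\mathrm{b}$.
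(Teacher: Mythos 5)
Your treatment of parts (a), (c) and (d) matches the paper's argument essentially step for step: b-normality of the refined monoidal complex under the induced morphisms for (a), quotient-complex bookkeeping to identify the boundary hypersurfaces with the families b-double spaces for (c), and the observation that a fiber at $\ve > 0$ misses every blow-up except the one at $\B\times_\base \B$ for (d).

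For part (b) you correctly flag that the choice of minimal refinement in \eqref{E:double_mon_resolve} --- cutting $\nu$ along the $\DD$--$\XX$ diagonal rather than the $\DX$--$\XD$ diagonal --- is the crucial point, but you then propose to settle it by explicit coordinate checks at each codimension-three corner. The paper closes this gap without leaving the monoidal formalism: the subcomplex $\cP_\Delta$ of the lifted fiber diagonal meets each singular monoid $\sigma_{\D\cap\X_i}\times_\tau\sigma_{\D\cap\X_i}$ in precisely the rank-two submonoid $\bbN\pair{(1,0,1,0),(0,1,0,1)}$, and meets $\sigma_{\B\times_\base\B}\cong\bbN^2$ in $\bbN\pair{(1,1)}$. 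Both of these are introduced as faces of the resolving complex by \eqref{E:double_mon_resolve} and \eqref{E:dpM_final_blow} respectively, so the compatibility criterion of \cite{kottke2015generalized} (invoked explicitly as Prop.~10.3 in the proof of Lemma~\ref{L:tpM}) gives at once that $\Delta$ lifts to a p-submanifold. This makes transparent in one line the fact you were worried about: the alternative refinement would cut along $\DX$--$\XD$, placing $\bbN\pair{(1,0,1,0),(0,1,0,1)}$ through the interiors of the resolving cones, and $\Delta$ would then fail to be a p-submanifold. Your route via coordinate charts would reach the same conclusion but requires checking several combinatorially distinct local models and does not by itself explain why the other minimal refinement is ruled out.
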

\noindent 
\begin{rmk}
$\dbD$ is a fiber bundle over $\idmon$ with fibers given by the b-double space $\dbfD$, obtained by blowing
up the coimension 2 corner in the fiber product $\D \times_\idmon \D$. It supports Schwartz kernels
of families of b-pseudodifferential operators on the fiberr of $\D \to \idmon$, which we denote by
$\bPsi^\ast(\D/\idmon).$ Similar statements hold for $\dbXi$ and $\dbXve$.
\end{rmk}

\begin{proof}
In the first place, the space $\Z^2_\mathrm{b}$ has b-fibrations to $\Z$ and
$\base$, lifting the assocaited maps from the fiber product. Indeed, since only
boundary faces have been blown up in passing from $\Z^{[2]}$ to
$\Z^2_\mathrm{b}$, the only issue to check is b-normality, and this can be
verified at the level of the monoids. The homomorphisms \eqref{E:monoid_b_fibn}
from singular monoids $\sigma_{\D \cap \X_i} \times_{\tau} \sigma_{\D \cap
\X_j}$ have the b-normality property, which is that no 1-dimensional face of a
monoid is mapped into the interior of a monoid in the range complex $\cP_\Z$,
and the resolution described above retains this property. The subsequent
blow-up of $\B\times_\base \B$ to define $\dpM$ introduces a new boundary
hypersurface, but this is mapped via the right and left projections to the
hypersurface $\B \subset \Z$, so the lifted maps from $\dpM$ are b-fibrations
as well. Since b-fibrations are closed under composition and $\base \to \idmon$ is
a b-fibration to a manifold with no boundary, \eqref{I:dpM_bfib} follows.

For \eqref{I:dpM_Delta} and \eqref{I:dpM_doubles}
consider the boundary faces of the singular space $\Z^{[2]}$
which meet $\Delta$. These are evidently the faces $\D^{[2]} \equiv
\D\times_\idmon \D$, 
$\X_i^{[2]} \equiv \X_i\times_\idmon \X_i$ (which are hypersurfaces as
remarked above), $\B^{[2]}$ (which has codimension 2), the singular
corners $(\D\cap \X_i)^{[2]}$ of codimension 3, and the
smooth codimension 3 corner $(\D \cap \B)^{[2]}$. Upon
resolution, $\B^{[2]}$ is replaced by a hypersurface, with the lifted
fiber diagonal meeting it transverally, and the codimension 3 faces above are
replaced by codimension 2 corners. 

By considering the quotient monoids $\sigma_{\D \cap \X_i} \times_{\tau}
\sigma_{\D \cap \X_j} / \sigma_{\DD} \cong \bbN^2$ and $\sigma_{\D \cap \B} \times_\tau \sigma_{\D \cap \B} / \sigma_\D \cong \bbN^2$, and the corresponding quotients
of the resolution, which induce the ordinary blow-up (i.e., star subdivision) of $\bbN^2$,
it follows that the lift of $\D^{[2]}$ to $\dpM$ 
corresponds to the the blow-up $[\D^{[2]}; (\D\cap \B)^{[2]}, (\D\cap
\X_i)^{[2]}]$, which is precisely the b-double space $\dbD$. A similar argument
shows that the hypersurfaces $\X_i^{[2]} \subset \cM^{[2]}$ lift to 
the b-double spaces $\dbXi$.  The lifted fiber diagonal passes through the 
front faces of $\dbD$ and $\dbXi$, and is therefore a p-submanifold. This can be verified
directly in the monoidal theory; the monoidal subcomplex $\cP_\Delta \subset
\cP_{\Z\times_\base \Z}$ meets each singular monoid of the form $\nu$ in the
submonoid generated by $\set{(1,0,1,0),(0,1,0,1)}$ and meets
$\sigma_{\B\times_\base \B} \cong \bbN^2$ in the submonoid generated by
$(1,1)$. In the above resolution, we introduced each of these into the complex,
and by a result in \cite{kottke2015generalized}, it follows that $\Delta$ lifts to a p-submanifold
in the resolution.

Finally, to see \eqref{I:dpM_fiber}, note that the fiber of $\Z^{[2]}$
over $\ve >0$ is smooth, and coincides with the product $\fb^{-1}(\ve) \times_\U
\fb^{-1}(\ve) \cong \oX^2\times\idmon$. This is unchanged in passing to $\Z^2_\mathrm{b}$,
since all blow-ups take place at $\ve = 0$. The blow-up of $\B\times_\base \B$
then restricts to the blow up $[\oX^2\times\idmon; (\pa \oX)^2\times\idmon]$ in the fiber over $\ve$,
giving the b-double space as claimed.
\end{proof}

\bigskip Alternatively, $\dpM$ may be constructed as follows. For simplicity we
restrict consideration to a single fiber $\fZ$ of $\fb :\Z \to \base$.  First,
note that $\fZ$ itself may be obtained as a blow up of the product $\fD \times
[0,1)$, where $\fD = \ol{\bbR^3}$:
\[
	\fZ  = [\fD \times [0,1); \set{p_0,p_1,\ldots,p_N} \times \set{0}],
\]
where the collection of points $\set{p_1,\ldots,p_N}$ at $\ve = 0$ is
determined by the appropriate configuration in $\ncfgs N$, and $p_0 = 0$ is the
origin in $\fD$. That this is is equivalent to the definition of $\Z$ in
\S\ref{S:Mgl} can be checked in local coordinates. The intermediate space
$\Z^2_\mathrm{b}$ is obtained by iterated blow-up; fiberwise
\[
	\fZ^2_\mathrm{b} = [\fD^2 \times [0,1); \set{p_i\times p_j}\times \set{0}, \set{p_i\times \fD} \times \set{0}, \set{\fD\times p_j} \times \set{0}].
\]
First the pairs $p_i\times p_j$ are blown up at $\ve  = 0$ (they are separated
so the order is not important); once this is done the lifts of the subspaces
$p_i \times \fD\times \set{0}$ and $\fD \times p_j\times \set{0}$ are separated and
may be blown up in any order. The final step is the blow-up
\eqref{E:dpM_final_blow}. That the resulting space satisfies the properties in
Lemma~\ref{L:dpM} may be verified by straightforward but tedious
computations in local coordinates.

\medskip

Since the interiors of $\Z$ and $\dpM$ may be identified with the simple
products $\bbR^3 \times \intbase$ and $(\bbR^3)^2 \times \intbase$,
respectively, the lift of vector fields in $\pV(\Z)$ to $\dpM$ from the left or
right is well-defined by continuous extension from the interior, and we have
the following result:

\begin{lem}
Let $V \in \pV(\Z)$. The left and right lifts $\pi_L^\ast(V)$ and
$\pi_R^\ast(V)$ are tangent to all boundary faces of $\dpM$, and 
differentiate transversally to $\Delta$. In particular, the restriction of the
lift $\pi_R^\ast(V)$ to the boundary faces $\DD$, $\XiXi$, or to the fiber
$\X_\ve^2 := \pi_{[0,1)}^{-1}(\ve)$ for $\ve > 0$ may be respectively identified
with the following:
\begin{subequations}
\begin{align}
	\pi_R^\ast(V) \rst_{\DD} &\cong \pi_R^\ast (V \rst_{\D}), 
	& \pi_R &: \dbD \to \D \subset \Z,
	\label{E:dpM_lift_V_D}
	\\\pi_R^\ast(V) \rst_{\XiXi} &\cong \pi_R^\ast(V \rst_{\X_i}),
	&\pi_R &: \dbXi \to \X_i \subset \Z,
	\label{E:dpM_lift_V_X}
	\\\pi_R^\ast(V) \rst_{\X_\ve^2} &\cong \pi_R^\ast(V \rst_{\X_\ve}),
	&\pi_R &: \dbXve \to \X_\ve \subset \Z,
	\label{E:dpM_lift_V_fib}
\end{align}
\end{subequations}
and similarly for the restriction of $\pi_L^\ast(V).$
\label{L:dpM_lift_V}
\end{lem}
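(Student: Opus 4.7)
The plan is to exploit the fact that $\pi_R : \dpM \to \Z$ (and symmetrically $\pi_L$) is a b-fibration, as established in Lemma~\ref{L:dpM}\eqref{I:dpM_bfib}, which automatically controls the boundary behaviour of lifted b-vector fields. I will focus on $\pi_R^\ast V$; the argument for $\pi_L^\ast V$ is identical by symmetry. As a starting point, the interiors satisfy $\mathring{\dpM} \cong \bbR^3 \times \bbR^3 \times \mathring\base$ and $\mathring \Z \cong \bbR^3\times \mathring\base$, with $\pi_R$ restricting to the obvious coordinate projection. Hence $\pi_R^\ast V$ is unambiguously defined on $\mathring\dpM$ by pulling back the coefficients of $V$ and interpreting $\pa_{z^i}$ as differentiation in the right factor.

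For tangency to every boundary face, the key input is that $\pi_R$ is a b-fibration, so the b-differential
\[
	d\pi_R : \bT \dpM \to \pi_R^\ast \bT \Z
\]
is surjective. Choosing any splitting, a b-vector field $V \in \bV(\Z) \supset \pV(\Z)$ admits a horizontal lift $\wt V \in \bV(\dpM)$. The interior lift and $\wt V$ agree on the dense open set $\mathring \dpM$, so by continuity $\pi_R^\ast V = \wt V$ is smooth up to the boundary and in particular lies in $\bV(\dpM)$, i.e., is tangent to every boundary face of $\dpM$. Moreover, since $\fb \circ \pi_R = \fb$ at the level of maps to $\base$, and since $V \in \pV(\Z) = \fbV(\Z)$ is tangent to fibres of $\fb : \Z \to \base$, the lift $\pi_R^\ast V$ is tangent to fibres of $\fb : \dpM \to \base$, hence actually lies in $\pV(\dpM)$.

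For transversality to $\Delta$, I would work in local coordinates $(z_R, w, p)$ adapted to $\Delta = \set{w = 0}$, where $w = z_L - z_R$ and $p$ parametrizes the base. A direct change-of-variables computation gives
\[
	\pi_R^\ast(\pa_{z^i}) \;=\; \pa_{z_R^i}\big|_{z_L, p} \;=\; \pa_{z_R^i}\big|_{w, p} - \pa_{w^i}\big|_{z_R, p},
\]
so any nonzero $V$ lifts to a vector field with a nonzero $-\pa_w$ component, hence strictly transverse to $\Delta$. In the interior this is immediate; at the boundary one invokes Lemma~\ref{L:dpM}\eqref{I:dpM_Delta}, which says $\Delta$ meets every boundary face transversally, so the adapted coordinates $(z_R, w, p)$ extend across the boundary and the transversality is preserved by continuity.

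For the boundary-restriction identifications, observe that each of the faces $\DD$, $\XiXi$, and the $\ve$-fibres $\X_\ve^2$ is mapped by $\pi_R$ into the corresponding boundary subset $\D$, $\X_i$, or $\X_\ve$ of $\Z$ to which $V$ restricts as a b-vector field (since $V \in \bV(\Z)$). By Lemma~\ref{L:dpM}\eqref{I:dpM_doubles}, these faces are themselves the families b-double spaces $\dbD$, $\dbXi$ and $\dbXve$, and the restricted projections $\pi_R : \DD \to \D$, $\pi_R : \XiXi \to \X_i$, $\pi_R : \X_\ve^2 \to \X_\ve$ are again b-fibrations. Running the horizontal-lift construction there produces the right lift of $V\rst_\bullet$, which must agree with the restriction of the ambient lift $\pi_R^\ast V$ by the uniqueness principle used above (they coincide on the common dense interior, and both extend smoothly). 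The main conceptual obstacle is the smoothness at the boundary, which is not an automatic consequence of pulling back through a continuous map; all such smoothness is imported directly from the b-fibration property established in Lemma~\ref{L:dpM}, after which the remaining assertions reduce to routine verifications in local coordinates.
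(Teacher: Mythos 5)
Your transversality argument (interior coordinate computation, extended to the boundary via Lemma~\ref{L:dpM}\eqref{I:dpM_Delta}) is in the right spirit and mirrors the paper's remark that this is ``readily seen'' from the interior. The restriction identifications \eqref{E:dpM_lift_V_D}--\eqref{E:dpM_lift_V_fib} are likewise reasoned correctly, via Lemma~\ref{L:dpM}\eqref{I:dpM_doubles}.

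However, your argument for tangency has a genuine gap. You choose an arbitrary splitting of the b-differential $d\pi_R$, form the associated horizontal lift $\wt V \in \bV(\dpM)$, and then assert ``the interior lift and $\wt V$ agree on the dense open set $\mathring \dpM$.'' This is false for a generic splitting. The interior lift $\pi_R^\ast V$ is the horizontal lift with respect to the \emph{specific} splitting coming from the interior product decomposition $\mathring\dpM \cong \bbR^3\times\bbR^3\times\mathring\base$; a different smooth splitting produces a different $\wt V$, and these generally disagree already over the interior (their difference is a nonzero $\pi_R$-vertical field). Whether the \emph{interior-product} splitting extends smoothly to the boundary is precisely the content of the lemma, and this cannot be extracted from the bare statement that $\pi_R$ is a b-fibration: that property guarantees the existence of \emph{some} smooth horizontal lift, not that the canonical one from the interior extends. (Your closing remark that ``all such smoothness is imported directly from the b-fibration property'' is where the reasoning goes astray.)

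The paper's route is different and avoids this issue: near each boundary hypersurface of $\dpM$, the restriction of $\pi_R$ factors through the corresponding boundary face of the fiber product $\Z^{[2]}$, which is an honest product ($\D\times_\idmon\D$, $\X_i\times_\idmon\X_i$, etc.). On such a product face the right lift of $V\rst_\bullet$ is tautologically smooth and tangent to all boundary faces of that factor (since $V \in \pV(\Z)\subset \bV(\Z)$), and then the resolution $\dpM \to \Z^{[2]}$ is achieved by blowing up boundary faces, to which the lifted b-vector field is tangent, so it lifts further. That factorization is what simultaneously delivers tangency and the identifications $\pi_R^\ast(V)\rst_{\DD}\cong \pi_R^\ast(V\rst_\D)$, etc.; without it, your Lemma~\ref{L:dpM}\eqref{I:dpM_doubles} step for the restrictions is actually doing the work you omitted for tangency.
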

\begin{proof}
Restricted to the interior, the left and right lifts of $\pV(\Z)$ to $\dpM$ 
are readily seen to differentiate transverally to $\Delta$, so it remains to
determine their behavior at the boundary faces.

The restriction of $\pi_R : \dpM \to \Z$ to $\DD \cong \dbD$ factors through
the corresponding face of the fiber product $\Z^{[2]}$, which as noted
above is simply the product $\D\times_\idmon \D$, followwed by projection onto the
right factor, realized as the boundary face of $\Z$. It follows that 
\[
	\pi_R \rst_{\DD} : \DD \cong \dbD \to \D
\]
is identified with the conventional right projection from the b-double space.
The corresponding statements for the left projection, and the restriction of
$\pi_R$ and $\pi_L$ to the other boundary faces and the fiber $\dbXve$ are
similar. 

The identifications \eqref{E:dpM_lift_V_D}--\eqref{E:dpM_lift_V_fib} follow
immediately, and the the result that $\pi_R^\ast(V)$ and $\pi_L^\ast(V)$ are
tangent to the boundary faces away from $\Delta$ follows from factoring the
restriction of $\pi_R$ or $\pi_L$ through similar product faces of
$\Z^{[2]}$.
\end{proof}

The lift of vector fields from the left or right extends to differential
operators.  Thus for $D \in \pDiff^\ast(\Z; V)$, the pull-backs $\pi_L^\ast(D)$
and $\pi_R^\ast(D)$ are well-defined.

\bigskip
Next we consider the gluing double space $\dgM$. This is obtained from $\dpM$
by a sequence of two blow-ups:
\[
	\dgM = [\dpM; \Delta \cap \BB; \Delta \cap \DD].
\]
We denote the new boundary hypersurfaces by $\scB$ and $\scD$, respectively.
(See Figure \ref{F:g_double}.)
\begin{figure}[htb]
\begin{center}
\begin{tikzpicture}[thick,scale=1.0]
\draw (2,0) -- (1,1.73) -- (0.5,1.73) -- (0.5, 3.5);
\draw (-0.5,3.5) -- (-0.5,1.73) -- (-1,1.73) -- (-2,0) -- (-1,-1.73) -- (-0.3,-1.73) (0.3,-1.73) -- (1,-1.73) -- (2,0);
\draw (0.5,1.73) 
	.. controls (0.5,1.63) and (0.4,1.52) .. (0.3,1.52) 
	.. controls (0.3,1.73) and (-0.3, 1.73)  .. (-0.3,1.52) 
	.. controls (-0.4,1.52) and (-0.5, 1.63) .. (-0.5,1.73);
\draw (0.3,1.52) -- (0.3,-1.73) 
	.. controls (0.3,-1.94) and (-0.3, -1.94) .. (-0.3,-1.73) -- (-0.3,1.52);
\draw (-4,0) -- (-2,0) (2,0) -- (3.5,0);
\draw (-1,1.73) -- (-1,3.5) (1,1.73) -- (1,3.5);
\draw (-1,-1.73) -- (-2.5,-3) (1,-1.73) -- (2.5,-3);
\draw[dashed]  (0,3.5) -- (0,1.73) -- (0,-1.85) -- (0.5,-3);
\path (0,0) node[fill=white,inner sep=0pt] {$\scD$};
\path (0,2.5) node[fill=white] {$\scB$};
\path (-0.5,0) node[left] {$\DD$};
%\path (-0.7,2.5) node[left] {$\BB$};
\path (0,-2.5) node[fill=white] {$\XX$};
\path (2.5,-1.5) node {$\XD$};
\path (-2.5,-1.5) node {$\DX$};
\path (2.5,1.5) node {$\MB$};
\path (-2.5,1.5) node {$\BM$};
%\path (-3.5,3.5) node[below right] {$\dgM:$};
\end{tikzpicture}
\end{center}
\caption{The double space $\dgM$.}
\label{F:g_double}
\end{figure}

\begin{lem}
The b-fibrations $\pi_{R},\,\pi_L : \dpM \to \Z$ lift to b-fibrations of $\dgM$
to $\Z$. The boundary hypersurface $\scD$ is diffeomorphic to the 
fiberwise radial compactification
$\ol{\gT \D} \to \D$.  The lift of $\XiXi$ to $\dgM$ is diffeomorphic to
the scattering double space $\dscXi$ (with fibers $(\fX_i)^2_\msc$ over $\idmon$),
as is the lift of any $\ve > 0$ fiber.

The scattering face of $\dscXi$ is identified with the fiber
$\ol{\gT \D} \rst_{\D \cap \X_i}$.
\label{L:dgM_props}
\end{lem}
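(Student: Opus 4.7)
The plan is to verify the four assertions in turn, reducing each to the behavior of the two successive real blow-ups of $\Delta\cap\BB$ and $\Delta\cap\DD$ that define $\dgM$ from $\dpM$. The key input throughout is Lemma~\ref{L:dpM}\eqref{I:dpM_Delta}, which ensures that the lifted fiber diagonal $\Delta$ meets every boundary hypersurface of $\dpM$ transversally. In particular, $\Delta\cap\BB$ and $\Delta\cap\DD$ are p-submanifolds of $\dpM$ (of codimensions $3$ and $4$, respectively), each lying in a single boundary hypersurface.

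For the first claim, under either projection $\pi_L,\pi_R : \dpM \to \Z$ the diagonal $\Delta$ maps diffeomorphically onto $\Z$, and so $\Delta\cap\BB$ and $\Delta\cap\DD$ are carried diffeomorphically onto the boundary hypersurfaces $\B$ and $\D$ of $\Z$. Since each center of blow-up is a p-submanifold whose image is already a single boundary hypersurface of the target, the standard lifting lemma for b-fibrations through blow-ups (cf.~\cite{RBMgreenbook}) gives that $\pi_L$ and $\pi_R$ lift to b-fibrations $\dgM\to\Z$, sending the new faces $\scB$ and $\scD$ to $\B$ and $\D$ respectively.

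For the identification $\scD\cong\ol{\gT\D}$, I would work in local coordinates near a point $(p,p)\in\Delta\cap\DD$ with $p$ in the interior of $\D$. There $\dpM$ has coordinates $(\ve,y,y',m)$, with $\ve=\rho_\D$ a defining function for $\DD$, $y,y'\in\bbR^3$ fiber coordinates on the two factors of $\fD$, and $m\in\idmon$, so $\Delta\cap\DD = \set{\ve=0,\ y=y'}$. Polar coordinates $r=\sqrt{\ve^2+|y-y'|^2}$ and $(\hat\ve,\hat u)=(\ve,y-y')/r$ realise the front face $\set{r=0}$ as a closed upper hemisphere of $S^3$, topologically $\ol{\bbR^3}$. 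The rescaling $y-y'=\ve\,\xi$ identifies the interior coordinate $\xi\in\bbR^3$ of this fiber with the linear coordinate on the fiber of $\gT\Z\rst_\D$ dual to the frame $\set{\ve\pa_{y_i}}$ coming from the definition $\gT\Z=\rho_\D\rho_\B\fbT\Z$, while points at $|\xi|=\infty$ correspond to the equator $\set{\hat\ve=0}$. Repeating the computation near the corners $\D\cap\B$ (where the factor $\rho_\B$ enters the rescaling) and $\D\cap\X_i$, and checking compatibility at these corners, patches the local identifications together to the claimed global diffeomorphism $\scD\cong\ol{\gT\D}$. This is the main technical step; the principal obstacle is the bookkeeping for the rescaling $\gT=\rho_\D\rho_\B\fbT$ across all boundary hypersurfaces meeting $\D$.

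The remaining claims are then essentially corollaries. In $\dpM$ we have $\XiXi\cong\dbXi$ with front face identified with $\DD\cap\XiXi$ by Lemma~\ref{L:dpM}\eqref{I:dpM_doubles}, and $\Delta\cap\DD$ meets $\XiXi$ in $\Delta\cap(\DD\cap\XiXi)\cong\D\cap\X_i$---which is precisely the intersection of the b-diagonal of $\dbXi$ with its front face, the submanifold whose blow-up defines the scattering double space by construction. Hence the lift of $\XiXi$ in $\dgM$ is $[\dbXi;\,\Delta\cap\mathrm{ff}(\dbXi)]=\dscXi$. The scattering front face of this $\dscXi$ coincides with $\scD\cap(\text{lift of }\XiXi)$ in $\dgM$, which under the diffeomorphism $\scD\cong\ol{\gT\D}$ becomes $\ol{\gT\D}\rst_{\D\cap\X_i}$, as required. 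Restricting the entire analysis to a single fiber $\fb^{-1}(\ve)$ for $\ve>0$ reduces the picture to the definition of $(\fX_i)^2_\msc$ as the blow-up of the b-diagonal of $(\fX_i)^2_{\mathrm b}$ along its intersection with $\pa\fX_i$, yielding the claim for the fibers.
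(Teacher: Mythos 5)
Your overall strategy matches the paper's: argue the lift of the b-fibrations by checking the behavior of the new faces, identify $\scD$ with $\ol{\gT\D}$ via local coordinate computations, and then deduce the identification of the lift of $\XiXi$ with $\dscXi$ and the corresponding statement for the $\ve>0$ fibers by restriction. The arguments for the $\XiXi$ and $\X_\ve^2$ faces are exactly those of the paper.

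The main gap is in the identification $\scD\cong\ol{\gT\D}$. You carry out the computation at an interior point of $\D$, where $\rho_\B$ and $\rho_\X$ are bounded away from zero and the rescaling $\gT=\rho_\D\rho_\B\fbT$ is essentially trivial, and then defer the corner cases to ``checking compatibility''. But the interior case is the easy part; the entire content of the lemma lies in the corners. Near $\D\cap\X_i$, one must show that the $(\sigma,\eta)=((s-1)/x,\ (y'-y)/x)$ coordinates on the front face of the blow-up of $\Delta\cap\DD$ match the frame $\{x^2\pa_x - xr\pa_r,\ x\pa_y\}$ of $\gT\Z$ and degenerate correctly into the conic structure of $\sccT\fD$ there. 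More seriously, near $\D\cap\B$ the definition of $\dgM$ involves the \emph{iterated} blow-up of $\Delta\cap\BB$ first and then $\Delta\cap\DD$; the two rescalings interact, and the coordinates $(\varsigma,\xi)=(\sigma/\ve,\ \eta/\ve)$ that coordinatize $\scD$ near $\scB$ emerge only after this two-stage process. Your single-blow-up picture in the interior does not see this, and no compatibility check is offered. Since the lemma is used to define the $\gl$-pseudodifferential calculus (and in particular the normal symbol at $\D$), the corner computations cannot be omitted. Also a small error: both $\Delta\cap\BB$ and $\Delta\cap\DD$ have codimension $4$ in $\dpM$ (each being $\Delta$, of codimension $3$, meeting a boundary hypersurface transversally), not $3$ and $4$.
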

\begin{proof}
To show that $\pi_R$ and $\pi_L$ lift to b-fibrations it suffices to show that
no boundary hypersurface of $\dgM$ is mapped into a boundary face of $\Z$ of
codimension more than 1 under $\pi_R$ or $\pi_L$. For the lifts of boundary
hypersurfaces from $\dpM$, this follows from the fact that the original maps on
$\dpM$ are b-fibrations. This leaves only $\scD$ and $\scB$, which are mapped
into the hypersurfaces $\D$ and $\B$, respectively.

As noted previously, $\XiXi \subset \dpM$ is diffeomorphic to the b-double
space $\dbXi$.  The only blow-up which affects this boundary face is the blow
up of $\Delta \cap \DD$, which meets $\dbXi$ at the intersection $\Delta \cap
\mathrm{bf}\subset \dbXi$ of the diagonal with the b-front face. It follows
that that lift of $\XiXi$ to $\dgM$ is diffeomorphic to $\dscXi$. A similar
argument applies to $\X_\ve^2$ which likewise meets the blow-up locus $\Delta
\cap \BB$ at $\Delta \cap \mathrm{bf}$ under the idenfication $\X_\ve^2 \cong
\dbXve$.

For the identification of $\scD$, we work with local coordinates. Local
coordinates on $\dpM$ near $\DD \cap \XiXi$ are furnished by $(x,r',s,
y,y',q)$, where $s = \tfrac{x'}{x} = \tfrac{r}{r'}$ and $x$ is boundary
defining for $\DD$. Passing to the blow-up, these give local coordinates
\[
	(x,r', \sigma, \eta, y', q),
	\quad \sigma = \tfrac{s - 1}{x} \in \bbR,
	\quad \eta = \tfrac{y'-y}{x} \in \bbR^2,
\]
on $\scD$ (with boundary defining coordinate $x$) near $\XiXi$ but away from
its intersection with $\DD$. This front face has the structure of a (radially
compactified) vector bundle over $\D$ where $(\sigma,\eta) \in \bbR^3$ are the
fiber coordinates and the projection is given by $(0,r',\sigma,\eta,y',q)
\mapsto (r',y',q).$ To identify this with $\gT \D$, we consider 
the generating vector fields $\set{\tfrac 1 2(xr\pa_r - x^2\pa_x), x\pa_y}$,
which lift to
\[
	\set{\pa_\sigma, \pa_\eta} + \cO(x)
\]
Thus the coordinate basis $\set{\pa_\sigma,\pa_\eta}$ for the vector space
constituting a fiber of $\scD$ over is naturally associated to the basis
of a fiber of $\gT \D.$ For later use, we record
the fact that the lift of $\set{\tfrac 1 2 (x'r' \pa_{r'} - (x')^2\pa_{x'}, x' \pa_{y'}}$ (i.e., the lift
of gluing vector fields from the left) is given by $\set{- \pa_\sigma, -\pa_\eta} + \cO(x)$.

At the other end, where $\scD$ meets $\scB$, we start with the local
coordinates $(x,\ve, s, y, y',q)$ on $\dpM$ near $\DD \cap \BB$, where here
$s = \tfrac{x'}{x}$ and now $x$ and $x'$ are the right and left lifts of the
boundary defining function $\rho_\B$ for the big end $\D \cap \B$ of $\D.$ 
The first blow-up, of $\Delta \cap \BB$ is represented by coordinates
\[
	(x,\ve, \sigma,\eta,y',q),
	\quad \sigma = \tfrac{s - 1}{x},
	\quad \eta = \tfrac{y'-y}{x},
\]
where now $(\sigma,\eta)$ represent coordinates on the scattering tangent
bundle $\scT \D$. The subsequent blow-up of $\Delta \cap \DD$ (given locally
by $\set{\ve = \sigma = \eta = 0}$) is then represented by
\[
	(x,\ve, \varsigma, \xi, y',q),
	\quad \varsigma = \tfrac{\sigma}{\ve},
	\quad \xi = \tfrac{\eta}{\ve},
\]
where here $(\varsigma,\xi) \in \bbR^3$ are fiber coordinates on $\scD \to \D$
and $\ve$ is boundary defining for $\scD$.  The 
%semiclassical scattering 
generating
vector
fields $\set{\ve x^2\pa_x, \ve x\pa_y}$ lift to
\[
	\set{\pa_{\varsigma}, \pa_{\xi}} + \cO(\ve)
\]
from the right, and to $\set{-\pa_{\varsigma}, -\pa_\xi} + \cO(x)$ from the
left.
\end{proof}

As with the $\fb$ double space, we may lift vector fields in $\gV(\Z)$ to
$\dgM$ from the left or right; these lifts are well-defined by continuity from
the interior using the product decomposition $\mathring \dgM \cong \mathring
(\bbR^3)^2 \times \intbase$, and we have

\begin{lem}
The left/right lifts of $V  \in \gV(\Z)$ are tangent to all boundary faces of
$\dgM$ and are differentially transversal to $\Delta$. The restriction of
$\pi_R^\ast(V)$ to $\scD$, $\XiXi$ or $\X_\ve^2$ may be identified with 
the lift of the restriction to the corresponding double space:
\begin{subequations}
\begin{align}
	\pi_R^\ast(V) \rst_{\scD} &\cong \pi_R^\ast (V \rst_{\D}), 
	& \pi_R &: \ol {\gT \D} \to \D \subset \Z,
	\label{E:dgM_lift_V_D}
	\\\pi_R^\ast(V) \rst_{\XiXi} &\cong \pi_R^\ast(V \rst_{\X_i}),
	&\pi_R &: \dscXi \to \X_i \subset \Z,
	\label{E:dgM_lift_V_X}
	\\\pi_R^\ast(V) \rst_{\X_\ve^2} &\cong \pi_R^\ast(V \rst_{\X_\ve}),
	&\pi_R &: \dscXve \to \X_\ve \subset \Z,
	\label{E:dgM_lift_V_fib}
\end{align}
\end{subequations}
and similarly for $\pi_L^\ast(V)$.
\label{L:dgM_lift_V}
\end{lem}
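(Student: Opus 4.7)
The plan is to reduce Lemma~\ref{L:dgM_lift_V} to Lemma~\ref{L:dpM_lift_V} by means of the factorization $\gV(\Z) = \rho_\D \rho_\B \fbV(\Z)$, and then to verify smoothness of the lifted vector fields at the two new faces $\scD$ and $\scB$ produced by the blow-ups defining $\dgM = [\dpM; \Delta \cap \BB; \Delta \cap \DD]$. Given $V \in \gV(\Z)$, I would write $V = \rho_\D \rho_\B W$ with $W \in \fbV(\Z)$. By Lemma~\ref{L:dpM_lift_V} applied to $W$, the right lift $\pi_R^\ast(W)$ is a smooth vector field on $\dpM$, tangent to all boundary faces and differentially transversal to $\Delta$. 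Multiplying by the pullbacks $\pi_R^\ast(\rho_\D)$ and $\pi_R^\ast(\rho_\B)$ yields a smooth vector field on $\dpM$ which vanishes, in particular, along $\Delta \cap \DD$ and $\Delta \cap \BB$; this first-order vanishing compensates the inverse scaling in the normal coordinates of the subsequent blow-ups, and should ensure that $\pi_R^\ast(V)$ extends smoothly across $\scD$ and $\scB$.

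The verification is a direct local coordinate computation, for which I would reuse the coordinate systems introduced in the proof of Lemma~\ref{L:dgM_props}. Near $\scD \cap \XiXi$, the coordinates $(x,r',\sigma,\eta,y',q)$ with $\sigma = (s-1)/x$, $\eta = (y'-y)/x$ give the generators of $\gV(\Z)$ lifted from the right as $\set{\pa_\sigma, \pa_\eta} + \cO(x)$, and an analogous computation in the coordinates $(x,\ve,\varsigma,\xi,y',q)$ near $\scD \cap \scB$ (with rescalings $\varsigma = \sigma/\ve$, $\xi = \eta/\ve$) handles the corner where the two blow-ups meet. A symmetric calculation treats $\scB$ away from $\scD$. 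In each case the lifts restrict at the new face to constant vector fields in the fiber directions of $\scD \to \D$ (respectively $\scB \to \B$), with coefficients given by evaluating the symbol of $V$ at the base point; matching against the proof of Lemma~\ref{L:dgM_props}, this is precisely $\pi_R^\ast(V\rst_\D)$ under the identification $\scD \cong \ol{\gT \D}$, yielding \eqref{E:dgM_lift_V_D}. Differential transversality to $\Delta$ falls out of the same computation, since $\pa_\sigma,\pa_\eta$ (respectively $\pa_\varsigma,\pa_\xi$) are transverse to the lifted diagonal $\set{\sigma = \eta = 0}$ (respectively $\set{\varsigma = \xi = 0}$).

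For the identifications \eqref{E:dgM_lift_V_X} and \eqref{E:dgM_lift_V_fib}, I would invoke the diffeomorphisms $\XiXi \cong \dscXi$ and $\X_\ve^2 \cong \dscXve$ from Lemma~\ref{L:dgM_props}, together with the isomorphism $\gT \Z\rst_{\X_i} \cong \scT \X_i$ of Proposition~\ref{P:retricting_gT}. Under these identifications a $\gamma$ vector field on $\Z$ restricts to a scattering vector field on $\X_i$ (or on $\X_\ve$), and the lifting of such a vector field to the scattering double space is well-known to be tangent to all boundary hypersurfaces and transversal to the diagonal.

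The main obstacle I anticipate is the bookkeeping at the codimension-two corner $\scD \cap \scB$, where the two blow-ups interact. The local calculation there requires carefully tracking how both $\rho_\D$ and $\rho_\B$ lift under the nested blow-ups and verifying tangency simultaneously at $\scD$ and $\scB$; once this corner is handled, the remainder reduces to standard computations in the b- and scattering calculi combined with Lemma~\ref{L:dpM_lift_V}.
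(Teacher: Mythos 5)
Your proof follows essentially the same route as the paper's: reduce to Lemma~\ref{L:dpM_lift_V} and read off the behaviour at $\scD$ and $\scB$ from the local coordinate computations already recorded in the proof of Lemma~\ref{L:dgM_props}, combined with the boundary-face identifications $\scD\cong\ol{\gT\D}$, $\XiXi\cong\dscXi$, $\X_\ve^2\cong\dscXve$ established there. The explicit factorization $V=\rho_\D\rho_\B W$ is a clean way to package the tangency/smoothness argument at the new front faces, but the substance matches the paper.
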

\begin{proof}
For $\XiXi$ and $\X_\ve^2$, the result follows from Lemma~\ref{L:dpM_lift_V},
and Lemma~\ref{L:dgM_props}.
The result for $\scD$ follows from the computations done in the proof of
Lemma~\ref{L:dgM_props}.
\end{proof}

\subsection{The triple spaces} \label{S:triple}

To obtain the triple space $\tpM$, 
we start with the binomial subvariety (manifold with generalized corners)
\[
	\Z^{[3]} := \Z\times_\fb \Z\times_\fb \Z \subset \Z^3,
\]
and its monoidal complex, consisting of monoids of the form $\sigma_\bullet
\times_\tau \sigma'_\bullet\times_\tau \sigma''_\bullet$, where
$\sigma_{\bullet}, \sigma'_\bullet,\sigma''_\bullet \in
\set{\sigma_\D,\sigma_{\X_i}, \sigma_\B}$. Of these, the most singular are
$\sigma_{\D\cap \X_i}\times_\tau \times\sigma_{\D\cap \X_j} \times_\tau
\sigma_{\D \cap \X_k}$, which are each isomorphic to the 4-dimensional monoid
\begin{equation}
	\mu := \set{(n_1,m_1,n_2,m_2, n_3,m_3) : n_1 + m_1 = n_2 + m_2 = n_3 + m_3} \subset \bbN^6
	\label{E:cube_monoid}
\end{equation}
with the 8 generators
\[
\begin{gathered}
	(1,0,1,0,1,0),
	\quad (1,0,1,0,0,1),
	\\ (1,0,0,1,1,0),
	\quad (1,0,0,1,0,1),
	\\ (0,1,1,0,1,0),
	\quad (0,1,1,0,0,1),
	\\ (0,1,0,1,1,0),
	\quad (0,1,0,1,0,1).
\end{gathered}
\]
These may be identified respectively with the hypersurfaces
(introducing the obvious notation) $\DDD, \DDX_k,
\DXD_j, \ldots, \XXX_{ijk}.$ To resolve these, we subdivide each such monoid into the 6
smooth submonoids:
\begin{equation}
\begin{gathered}
	\bbN\pair{\DDD, \DDX_k, \DXX_{jk},\XXX_{ijk}},
	\quad \bbN\pair{{\DDD}, {\DDX_k}, {\XDX_{ik}},{\XXX_{ijk}}},
	\\ \bbN\pair{{\DDD}, {\DXD_j}, {\DXX_{jk}},{\XXX_{ijk}}},
	\quad \bbN\pair{{\DDD}, {\DXD_j}, {\XXD_{ij}},{\XXX_{ijk}}},
	\\ \bbN\pair{{\DDD}, {\XDD_i}, {\XDX_{ik}},{\XXX_{ijk}}},
	\quad \bbN\pair{{\DDD}, {\XDD_i}, {\XXD_{ij}},{\XXX_{ijk}}}.
\end{gathered}
	\label{E:resolve_cube}
\end{equation}

The remaining singular monoids are of the form $\sigma_{\D \cap \X_i}
\times_\tau \times \sigma_{\D \cap X_j} \times_\tau \sigma_\B$ (and various
permutations of the factors), which are products of the form $\nu \times \bbN$,
where $\nu$ is the monoid in \eqref{E:square_monoid} and are resolved by taking
the product of the resolution of $\nu$ discussed above with $\bbN$. The result of all this
is a smooth manifold $\Z^3_\mathrm{b}$, and the final step is the blow-up
\[
	\tpM := [\Z^3_\mathrm{b}; \BBB, \BBM, \BMB, \MBB].
\]
This corresponds to the subdivision of $\sigma_{\BBB} \cong \bbN^3$ (generated by 
the 1-dimensional submonoids $\sigma_{\BMM} = \bbN\pair{(1,0,0)}$, $\sigma_{\MBM} = \bbN\pair{(0,1,0)}$, 
and $\sigma_{\MMB} = \bbN\pair{(0,0,1)}$) into the following 6 submonoids:
\begin{equation}
\begin{gathered}
	\bbN\pair{(1,0,0),(1,0,1), (1,1,1)},
	\quad \bbN\pair{(1,0,0),(1,1,0), (1,1,1)},
	\\ \bbN\pair{(0,1,0),(0,1,1), (1,1,1)},
	\quad \bbN\pair{(0,1,0),(1,1,0), (1,1,1)},
	\\ \bbN\pair{(0,0,1),(1,0,1), (1,1,1)},
	\quad \bbN\pair{(0,0,1),(0,1,1), (1,1,1)}.
\end{gathered}
	\label{E:tbb_three}
\end{equation}
(This is equivalent to the so-called ``total boundary blow-up'' of the
codimension 3 corner $\BBB$.)

As with $\dpM$, we label the boundary hypersurfaces of $\tpM$ by the boundary
faces of $\Z^{[3]}$ which they lift, which are labeled in turn by the
corresponding products of boundary hypersurfaces of $\Z$, omitting the symbol $\times_\idmon$. The complete
list of these is:
\[
\begin{gathered}
	\DDD, \DDX_k, \DXD_j, \XDD_i, \DXX_{jk}, \XXD_{ij}, 
	\XDX_{ik}, \XXX_{ijk}, 
	\\\BBB, \BBM, \BMB, \MBB, \BMM, \MMB, \MBM.
\end{gathered}
\]
\begin{lem}
\mbox{}
\begin{enumerate}
[{\normalfont (a)}]
\item \label{I:tpM_bfibn}
$\tpM$ admits three b-fibrations $\tpM \to \dpM$ lifting the 3 projections $\Z^{[3]} \to \Z^{[2]}$.
\item \label{I:tpM_psub}
The total and partial fiber diagonals in $\Z^{[3]}$ lift to p-submanifolds of $\tpM$.
\item \label{I:tpM_boundary}
The boundary hypersurface $\DDD$ is diffeomorphic to the b-triple space $\tbD$ (with fiber $\tbfD$ over $\idmon$)
and likewise $\XXX_{iii} \cong \tbX$ (with fiber $\tbfX$). The lifted projections restrict over
these hypersurfaces to the corresponding lifted projections to the b-double
spaces.
\item \label{I:tpM_fiber}
For $\ve > 0$, the fiber $\XXX_\ve := \fb^{-1}(\idmon\times \set \ve)$ of $\fb : \tpM \to \base$
is diffeomorphic to the b triple space $(\ol{\bbR^3})^3_\mathrm{b}\times \idmon.$
\end{enumerate}
\label{L:tpM}
\end{lem}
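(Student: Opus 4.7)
The plan is to mirror the strategy used for $\dpM$ in Lemma~\ref{L:dpM}, working at the level of monoidal complexes as developed in \cite{kottke2015generalized,kottke2015blow}, and verifying the claims by restriction to charts where they require explicit computation.

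For part \eqref{I:tpM_bfibn}, I would argue that the three projections $\Z^{[3]} \to \Z^{[2]}$ (which are b-fibrations at the level of the singular fiber products because $\fb$ is a b-fibration) induce monoid homomorphisms from the complex $\cP_{\Z^{[3]}}$ to $\cP_{\Z^{[2]}}$. The task is to verify that the resolution \eqref{E:resolve_cube} of each $\sigma_{\D\cap \X_i}\times_\tau \sigma_{\D\cap \X_j}\times_\tau \sigma_{\D\cap \X_k}$ refines, under each of the three projections to the $\nu$-type monoids, the chosen resolution \eqref{E:double_mon_resolve} of the corresponding double-space monoids; and similarly that the total-boundary subdivision \eqref{E:tbb_three} of $\sigma_{\BBB}$ refines, under each projection, the subdivision $\sigma_{\B^2}$ into $\bbN\pair{(1,0),(1,1)}$ and $\bbN\pair{(1,1),(0,1)}$ used to define $\dpM$. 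Both refinements can be checked by simply listing the images of the generators of each 4-dimensional cone in \eqref{E:resolve_cube} (resp.\ each 3-dimensional cone in \eqref{E:tbb_three}) under the three coordinate projections and observing that each such image lies in some cone of the target subdivision. This produces well-defined b-maps $\tpM \to \dpM$, and b-normality follows because the six smooth cones in \eqref{E:resolve_cube} each contain the image of a b-normal generator in every target $\nu$-resolution; hence these maps are b-fibrations.

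For parts \eqref{I:tpM_psub} and \eqref{I:tpM_boundary}, I would identify the submonoidal complexes $\cP_{\Delta_{\mathrm{tot}}}$ and $\cP_{\Delta_{ij}}$ inside $\cP_{\Z^{[3]}}$ corresponding to the total diagonal and each pairwise (partial) diagonal, and check that each of their generators has been introduced as a 1-dimensional face of the refinement \eqref{E:resolve_cube}--\eqref{E:tbb_three}. By the p-submanifold criterion of \cite{kottke2015generalized}, this forces all fiber diagonals to lift to p-submanifolds of $\tpM$. For \eqref{I:tpM_boundary}, the quotient complex $\cP_{\Z^{[3]}}/\sigma_{\DDD}$ induced by the refinement prescribes a sequence of ordinary blow-ups on $\D\times_\idmon\D\times_\idmon\D$, namely the blow-ups of all triple and pairwise intersections among the faces $(\D\cap\X_i)^{[3]}$ and $(\D\cap\B)^{[3]}$, which is precisely the construction of the fiberwise b-triple space $\tbD$. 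The same quotient argument at $\XXX_{iii}$ gives $\tbX$. Compatibility of the lifted projections $\tpM\to \dpM$ with the standard b-triple to b-double projections follows by restricting the refinement of monoids to these quotient complexes.

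For part \eqref{I:tpM_fiber}, since all blow-ups in the construction of $\Z^3_\mathrm{b}$ and then $\tpM$ occur over $\ve=0$, the fiber over any $\ve>0$ is already smooth as a fiber of $\Z^{[3]}$ and coincides with $\oX^3\times\idmon$; the final total-boundary blow-up of $\BBB,\BBM,\BMB,\MBB$ restricts in that fiber to the standard total-boundary blow-up $(\pa\oX)^3, (\pa\oX)^2 \times \oX$ etc., which is the construction of $\oX^3_\mathrm{b}$.

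The main obstacle will be the combinatorial bookkeeping in part \eqref{I:tpM_bfibn}: one must verify, for each of the six cones in \eqref{E:resolve_cube} and each of the three coordinate projections (and separately for the six cones in \eqref{E:tbb_three}), that the image is contained in one of the two cones chosen in \eqref{E:double_mon_resolve}, ensuring simultaneous compatibility with all three double-space resolutions. Organizing this check into a small table indexed by the six cones and three projections should make it mechanical, and it is the essential reason why the particular ``flag-like'' subdivision \eqref{E:resolve_cube} was chosen rather than some other refinement of $\mu$.
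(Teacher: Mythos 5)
Your proposal follows the same strategy as the paper's proof: work at the level of monoidal complexes, check that the resolution \eqref{E:resolve_cube} and the total-boundary subdivision \eqref{E:tbb_three} project onto the chosen resolutions of the double-space monoids, invoke Proposition 10.3 of \cite{kottke2015generalized} for the p-submanifold claim, and use quotient complexes to identify the boundary hypersurfaces $\DDD$ and $\XXX_{iii}$ with the families b-triple spaces. Part \eqref{I:tpM_fiber} is handled as you describe.

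Two small points of precision worth tightening. First, the criterion for b-normality is that no one-dimensional monoid of $\cP_{\tpM}$ (equivalently, no boundary hypersurface of $\tpM$) is mapped by the induced monoid morphism into the \emph{interior} of any monoid of $\cP_{\dpM}$; your phrase about each cone ``containing the image of a b-normal generator'' doesn't capture this — one needs to check the images of generators, not whether cones contain them. Second, the condition for the diagonals to lift to p-submanifolds is that the resolutions \eqref{E:resolve_cube} and \eqref{E:tbb_three} be \emph{compatible} with the submonoidal complexes $\cP_{\Delta_{123}}$ and $\cP_{\Delta_{ij}}$ in the sense of \cite{kottke2015generalized} (the intersection of the subcomplex with each cone of the refinement is a face of that cone); this is implied by, but not literally the same as, the generators appearing as one-dimensional faces of the refinement. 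Neither affects the outcome since both conditions do hold here, but they are the statements one must actually verify.
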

\begin{proof}
In the first place, observe that, under the 3 projections $\bbN^6 =(\bbN^2)^3
\to \bbN^4 = (\bbN^2)^2$, the resolution \eqref{E:resolve_cube} projects to the
resolution \eqref{E:double_mon_resolve} of \eqref{E:square_monoid} described
above. By results in \cite{kottke2015generalized,kottke2015blow}, it follows
that the maps $\Z^{[3]} \to \Z^{[2]}$ lift to well-defined b-maps $\tpM \to
\dpM$. That they are b-fibrations follows from the fact that, under the
corresponding morphisms of monoids, no 1-dimensional monoid of $\cP_{\tpM}$ is
mapped into the interior of a monoid in $\cP_{\dpM}$, (i.e., the map is
b-normal; b-surjectivity is automatic here). This proves \eqref{I:tpM_bfibn}.

Part \eqref{I:tpM_psub} also follows in part from a result in \cite{kottke2015generalized}. Indeed,
the partial fiber diagonals are also binomial subvarieties of $\Z^{[3]}$, and
near the corners formed by $\D$ and $\X_i$ are associated to further submonoids
of \eqref{E:cube_monoid} where $(n_i,m_i) = (n_j,m_j)$. Near corners formed by
$\B$, they are associated to submonoids of $\sigma_\BBB \cong \bbN^3 \ni
(n_1,n_2,n_3$ where $n_i = n_j$.  The resolutions \eqref{E:resolve_cube} and
\eqref{E:tbb_three} are {\em compatible} with these submonoids, and by \cite{kottke2015generalized}
Proposition 10.3 it follows that the diagonals lift to p-submanifolds
of $\tpM$.

For \eqref{I:tpM_boundary}, we consider the effect of the resolution $\tpM \to
\Z^{[3]}$ on the hypersurfaces of $\Z^{[3]}$. For example, $\Z^{[3]}$ has a
boundary hypersurface given by the product $\D \times_\idmon \D \times_\idmon \D$.
Considered as a binomial variety in its own right, this has smooth corners of
codimension at most 3, and its monoidal complex is isomorphic to the quotient
complex $\cP_{\Z^{[3]}} / \sigma_\DDD$. For instance, it is straightforward to
verify that the quotient of \eqref{E:cube_monoid} by $\sigma_{\DDD} =
\bbN\pair{(1,0,1,0,1,0)}$ is a monoid freely generated by the images of
$\XDD_i$, $\DXD_j$ and $\DDX_k$. The image of the resolution
\eqref{E:resolve_cube} under this quotient is likewise easily seen to coincide
with the ``total boundary blow-up'' resolution \eqref{E:tbb_three} of $\bbN^3$,
which corresponds in turn to the sequence of blow-ups realizing the b-triple
space $\tbD$; thus $\DDD \cong \tbD$ in $\tpM$. A similar argument applies to
$\XXX_i$ and to \eqref{I:tpM_boundary}.
\end{proof}

\bigskip 

To define the triple space $\tgM$, let $\Delta_{123}$ and $\Delta_{ij}$, $1
\leq i < j \leq 3$ denote the maximal and partial diagonals, respectively, as
submanifolds of $\tpM$, and for a boundary face $F$ of $\tpM$ write
\[
	\Delta_{\ast}^{F} = \Delta_{\ast} \cap F
\]
for the intersection of the face with one of these diagonals. Then
\[
	\tgM := [\tpM; \Delta_{123}^\DDD,\Delta_{123}^\BBB, \Delta_{ij}^\DDD, \Delta_{ij}^\BBB,
	\Delta_{12}^\DDX, \Delta_{13}^\DXD, \Delta_{23}^\XDD, \Delta_{12}^\BBM, \Delta_{13}^\BMB,
	\Delta_{23}^\MBB]
\]
In addition to the lifts of the boundary faces of $\tpM$, for which we use the same notation,
$\tgM$ has as additional boundary faces the various front faces of the blow up, which we denote
by
\[
\begin{gathered}
	(\DDD_\msc)_{123}, (\BBB_\msc)_{123}, (\DDD_\msc)_{ij}, (\BBB_\msc)_{ij},
	\\ \DDX_\msc^k, \DXD_\msc^j, \XDD_\msc^i, \BBM_\msc, \BMB_\msc, \MBB_\msc
\end{gathered}
\]
where the superscripts indicate the corresponding face $\X_i$ while the
subscripts are used to indicate the diagonals.

\begin{prop}
\begin{enumerate}
[{\normalfont (a)}]
\item 
$\tgM$ admits three b-fibrations $\pi_{ij} : \tgM \to \dgM$ lifting the fiber projections
$\pi_{ij} : \Z^{[3]} \to \Z^{[2]}$.
\item
The partial diagonals meet all boundary hypersurfaces of $\tgM$ transversally.
\item
The boundary face $\XXX_{iii}$ of $\tgM$ is isomorphic to the families
scattering triple space $(\X\times_\idmon\X \times_\idmon \X)_\msc$ (with fiber $\fX^3_\msc$),
and the $\pi_{ij}$ restrict over this face to the corresponding b-fibrations to 
$\XX_{ii} \cong (\X\times_U \X)_\msc$.
\item
For $\ve > 0$, the fiber $\XXX_\ve := \fb^{-1}(\idmon\times \set{\ve})$ of $\tgM$ is isomorphic
to the scattering triple space $(\ol \bbR^3)^3_\msc \times \idmon$.
\item
The face $\DDD_\msc$ is diffeomorphic to a compactification of the fiber
product $\gT \D \times_\D \gT \D$.  The lifted projections $\pi_{ij}$ map
$\DDD_\msc$ to $\scD \cong \ol {\gT \D}$, and on the interior are identified
with the linear maps $\pi_{12}(p,\xi,\xi') = (p,\xi)$, $\pi_{23}(p,\xi,\xi') = (p,\xi')$
and $\pi_{13}(p,\xi,\xi') = (p,\xi + \xi')$.
\end{enumerate}
\label{P:tgM}
\end{prop}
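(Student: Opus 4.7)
The plan is to follow the same pattern that yielded Lemma~\ref{L:tpM} and Lemma~\ref{L:dgM_props}: encode everything at the level of monoidal complexes where possible, and reduce the remaining statements to explicit local coordinate computations on the blow-ups. For part (a), I would observe that each center blown up in passing from $\tpM$ to $\tgM$ is mapped by one of the three projections $\pi_{ij}: \tpM \to \dpM$ (from Lemma~\ref{L:tpM}\eqref{I:tpM_bfibn}) either to a smooth center already blown up in $\dgM$, or into the corresponding diagonal in a way that makes the lift smoothly well-defined. Specifically, $\Delta_{ij}^\DDD$ and $\Delta_{ij}^\BBB$ map under $\pi_{ij}$ to $\Delta \cap \DD$ and $\Delta\cap\BB$, which are the centers for $\dgM$, while under the other two projections they map to the diagonals themselves, which have already been blown up, so the lift is well-defined.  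Similarly, $\Delta_{123}^\DDD$ and $\Delta_{123}^\BBB$ map to $\Delta \cap \DD$ and $\Delta\cap \BB$ under each $\pi_{ij}$. That the lifted maps are b-normal is then a direct check on the morphism of complexes: no new generator of a monoid of $\cP_{\tgM}$ is mapped into the interior of a monoid of $\cP_{\dgM}$.

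For (b), the maximal diagonal $\Delta_{123}$ meets all faces of $\tpM$ transversally at interior points of the blown-up diagonals, so after the total boundary blow-up of these diagonals it meets each new front face transversally; the partial diagonals require the further remark that the remaining diagonals are disjoint from the full-diagonal centers in the interior of the face where they live (and conversely), so that the blow-up order is immaterial and transversality persists. Parts (c) and (d) I would handle simultaneously: the lift of $\XXX_{iii}$ to $\tpM$ is already the b-triple space $\tbX$ by Lemma~\ref{L:tpM}\eqref{I:tpM_boundary}, and the centers in $\tgM$ that meet it are precisely $\Delta_{123}^\XXX$ and the three $\Delta_{ij}^\XXX$ (arising implicitly since $\XXX_{iii} \cap \DDD$-type loci appear through the front faces); blowing these up turns the b-triple space into the scattering triple space by definition. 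The same computation for a fixed $\ve > 0$ fiber gives (d), since for $\ve > 0$ the fiber of $\tpM$ is $(\ol{\bbR^3})^3_\bo \times \idmon$ and the relevant diagonals restrict in the fiber to the standard diagonals of $(\ol{\bbR^3})^3$.

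The most delicate step, and the main obstacle, is (e). Here I would mimic the local computation in Lemma~\ref{L:dgM_props} that identified $\scD$ with $\ol{\gT \D}$. Start with the coordinates $(x, r', s_2, s_3, y, y', y'', q)$ on $\tpM$ near $\DDD$, where $x$ is a defining function for $\DDD$, $s_2 = x_2/x$, $s_3 = x_3/x$. The blow-up of $\Delta_{123}^\DDD$ introduces
\[
	\sigma_i = \frac{s_i - 1}{x} \in \bbR, \qquad \eta_i = \frac{y^{(i)} - y}{x} \in \bbR^2, \quad i = 2,3,
\]
as fiber coordinates on $\DDD_\msc$, which together give an isomorphism of the interior of $\DDD_\msc$ with $\bbR^3\oplus \bbR^3$ over the base point $(r',y',q)\in \mathring \D$. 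Comparing with the basis $\{\tfrac 12(xr\pa_r - x^2\pa_x), x\pa_y\}$ of $\gT\Z$, these are precisely the fiber coordinates of $\gT \D \times_\D \gT \D$. The partial diagonal blow-ups $\Delta_{ij}^\DDD$ then restrict in $\DDD_\msc$ to the subspaces where $(\sigma_i,\eta_i) = (\sigma_j,\eta_j)$ (or where one vanishes), and blowing these up is exactly what compactifies the fiber product into the natural scattering-type compactification. Finally, the projections $\pi_{ij}$ act on interior points $(p,\xi,\xi')$ precisely by the linear formulas claimed; this follows from the coordinate description and the analogous computation done at the end of the proof of Lemma~\ref{L:dgM_props} showing that the right and left lifts act with opposite signs, which when combined give the sum $\xi + \xi'$ on the middle projection. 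A parallel computation near $\BBB_\msc$ handles the scattering end of $\D$.
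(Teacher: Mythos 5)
The paper does not supply a proof of Proposition~\ref{P:tgM}: it remarks only that the proof involves tedious local coordinate computations similar to those in the proof of Lemma~\ref{L:dgM_props}, and leaves it as an exercise. Your sketch follows exactly that route, combining the monoidal-complex bookkeeping of Lemma~\ref{L:tpM} with the coordinate calculations of Lemma~\ref{L:dgM_props}, so the approach is the one the authors intended.

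Two points want tightening before this would read as a proof. In (c), the centers blown up to form $\tgM$ are not $\Delta_{123}^{\XXX}$ or $\Delta_{ij}^{\XXX}$ (those symbols do not name blow-up centers in the definition of $\tgM$); the blow-ups that actually modify $\XXX_{iii} \cong \tbX$ are those of $\Delta_{123}^{\DDD}$, $\Delta_{ij}^{\DDD}$, $\Delta_{12}^{\DDX}$, $\Delta_{13}^{\DXD}$ and $\Delta_{23}^{\XDD}$, each of which meets $\XXX_{iii}$ at the intersection of the corresponding diagonal with a b-front face, and these restrictions are precisely the diagonal-at-$\mathrm{bf}$ centers whose blow-up converts $\tbX$ into the scattering triple space. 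Your parenthetical gestures at this but it should be stated precisely. In (e), with your coordinates $\sigma_i = (s_i-1)/x$, $s_i = x_i/x_1$, the three projections give at leading order $\sigma_2$, $\sigma_3$ and $\sigma_3-\sigma_2$; the formulas $(p,\xi)$, $(p,\xi')$, $(p,\xi+\xi')$ in the statement require the increment coordinates $\xi = \sigma_2$, $\xi' = \sigma_3 - \sigma_2$, and the sign-asymmetry of right/left lifts from Lemma~\ref{L:dgM_props} is not, by itself, what produces the $\xi+\xi'$ formula for $\pi_{13}$. You also do not verify that the further blow-ups of $\Delta_{ij}^{\DDD}$ intersected with $\DDD_\msc$ give a compactification of the fiber $\bbR^3\oplus\bbR^3$ (in increment coordinates, blow-ups at infinity along $\set{\xi=0}$, $\set{\xi'=0}$ and $\set{\xi+\xi'=0}$) to which all three linear projections extend to $\ol{\gT \D}$; the plain radial compactification would not do, and this check is the substantive content of (e).
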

\noindent The proof, which involves tedious local coordinate computations similar to
those in the proof of Lemma~\ref{L:dgM_props}, is left as an exercise to the reader.

\subsection{Pseudodifferential operators} \label{S:pseudo}

Define $\fb$-pseudodifferential operators by
\[
\begin{gathered}
	\pPsi^{s,\cE}(\Z) = \cAphg^{\cE} \cI^s(\dpM; \Delta),
	\\ \cE = (E_{\DD},E_{\XX},E_{\BB},E_{\DX},E_{\XD},E_{\MB},E_{\BM}, \infty_{\XiXj})
\end{gathered}
\]
Here the index sets are indexed by the hypersurfaces of $\dpM$; $E_{\XX}$
means a fixed index set for the uniion of hypersurfaces of the form $\XiXi$,
and likewise $E_{\DX}$ means a fixed index sets for the union of hypersurfaces
of the form $\DXi$. We allow nontrivial asymptotics at all faces
meeting $\Delta$, as well as those spaces one step removed; however our kernels
will be rapidly decreasing at the hypersurfaces two steps removed from
$\Delta$; i.e., at $\XiXj$ for $i \neq j$. The extension to operators
acting on sections of bundles $V_i \to \Z$, $i = 1,2$ is achieved by 
considering coefficients in $\Hom(\pi_L^\ast V_1,\pi_R^\ast V_2)$ on $\dpM.$

By Lemma~\ref{L:dpM}.\eqref{I:dpM_Delta} and \eqref{I:dpM_fiber}, The
restriction of $Q \in \pPsi^{s,\cE}(\Z)$ to a boundary face $\DD \cong \dbD$
or $\XiXi \cong \dbXi$ (where restriction of a polyhomogeneous section to a
boundary face is defined in general to be the restriction of the leading order
term), or to a fiber $\X_\ve^2 \cong \dbXve$ may be identified with the
Schwartz kernel of a b-pseudodifferential operator on the associated space. Thus we define
the {\em normal operators} of $Q \in \pPsi^{s,\cE}(\Z)$ by
\begin{subequations}
\begin{align}
	N_\D(Q)	&:= Q\rst_{\DD} \in \bPsi^{s,\cF_\D}(\D/\idmon),
	\label{E:dpM_normal_D}
	\\ N_{\X_i}(Q) &:= Q\rst_{\XiXi} \in \bPsi^{s,\cF_{\X_i}}(\X_i/\idmon),
	\label{E:dpM_normal_X}
	\\ N_{\X_\ve}(Q) &:= Q \rst_{\X_\ve^2} \in \bPsi^{s,\cF_{\X_\ve}}(\X_\ve/\idmon), \quad \ve > 0
	\label{E:dpM_normal_fib}
\end{align}
\end{subequations}
where the index sets $\cF_\ast$ are determined from $\cE$:
\[
\begin{aligned}
	\cF_\D &= (F_{\mathrm{lf},\X}, F_{\mathrm{rf},\X},F_{\mathrm{lf},\B},F_{\mathrm{rf},\B},F_{\mathrm{bf},\X}, F_{\mathrm{bf},\B}) 
	\\&\hfil= (E_{\XD},E_{\DX}, E_{\BM}, E_{\MB} ,E_{\XX},E_{\BB})
	\\\cF_{\X_i} &= (F'_\mathrm{lf}, F'_\mathrm{rf}, F'_\mathrm{bf}) 
	= (E_{\DX}, E_{\XD}, E_{\DD})
	\\ \cF_{\X_\ve} &= (F''_\mathrm{lf},F''_\mathrm{rf},F''_\mathrm{bf})
	= (E_{\MB},E_{\BM},E_{\BB})
\end{aligned}
\]

To define the action of $\pPsi^{s,\cE}(\Z)$ on $\dot C^\infty(\Z)$ we need to
be able to pushforward with respect to $\pi_L : \dpM \to \Z$, which is only
defined for fiber densities. Since it is most straightforward to prove mapping
properties with respect to fiber b-densities, we define the action by
\[
	Qu := (\pi_L)_\ast \bpns{Q \pi_R^\ast u \pi_R^\ast \pnu}
	\quad Q \in \pPsi^{s,\cE}(\Z),
	\quad u \in \dot C^\infty(\Z),
\]
where $\pnu$ is the trivializing section of the $\fu$ density bundle on $\Z$
obtained from the volume form of $\pg = (\rho_\D \rho_\B)^2 g$. In particular,
$\pi_L^\ast\pnu \otimes
\pi_R^\ast\pnu\otimes\pi_I^\ast(\abs{\tfrac{d\ve}{\ve}})$ is a trivializing
fiber b-density on $\dpM \to \idmon$. 

Likewise, the composition of $A \in \pPsi^{s,\cE}(\Z)$ with $B \in \pPsi^{t,\cF}(\Z)$ is defined on the triple
space by
\begin{equation}
	A \circ B := (\pi_{13})_\ast (\pi_{12}^\ast A \, \pi_{23}^\ast B\,\otimes \pi_{2}^\ast \nu)
	\label{E:dpM_compn}
\end{equation}
where $\pi_{ij} : \tpM \to \dpM$ denote the lifted projections to the double
space and $\pi_{i} : \tpM \to \Z$ denote the lifted projections to the single
space $\Z$. The conditions under which this composition is well-defined are
discussed in the Theorem below.

The properties of the $\fb$-pseudodifferential oeprators that we shall need are
summarized in Theorems~\ref{T:dpM_package} and \ref{T:dpM_mapping}

\begin{thm}
\mbox{}
\begin{enumerate}
[{\normalfont (a)}]
\item \label{I:dpM_package_indicial}
Let $Q \in \pPsi^{s,\cE}(\Z)$.
At the common boundary face $\D \cap \X_i$, the indicial operators of $N_\D(Q)$
and $N_{\X_i}(Q)$ are related by
\[
	I(N_\D(Q), \lambda) = I(N_{\X_i}(Q), -\lambda).
\]
\item \label{I:dpM_package_comp_psi}
If $A \in \pPsi^{s,\cE}(\Z)$, $B \in \pPsi^{t,\cF}(\Z)$, and
$E_\MB + F_\MB > 0$, then the composition $A \circ B \in \pPsi^{s + t,\cG}(\Z)$ is well-defined, with
\[
\begin{gathered}
	G_\DD = (E_\DD + F_\DD) \ol \cup (E_\DX + F_\XD), 
	\quad G_\XD = (E_\XX + F_\XD)\ol \cup (E_\XD + F_\DD),
	\\ G_\DX = (E_\DX + F_\XX)\ol \cup (E_\DD + F_\DX),
	\quad G_\XX = (E_\XX + F_\XX)\ol \cup (E_\XD + F_\DX),
	\\ G_\BM = (E_\BB + F_\BM)\ol \cup E_\BM,
	\quad G_\MB = F_\MB\ol \cup (E_\MB + F_\BB),
	\\ G_\BB = (E_\BM + F_\MB) \ol \cup (E_\BB + F_\BB).
\end{gathered}
\]
We always have
\[
	N_{\X_\ve}(A \circ B) = N_{\X_\ve}(A) \circ N_{\X_\ve}(B) \in \bPsi^{s + t,\ast}(\X_\ve/\idmon),
\]
and, provided that $E_\DX + F_\XD > E_\DD + F_\DD \geq 0$ and $E_\XD + F_\DX >
E_\XX + F_\XX \geq 0$, respectively, then
\begin{equation}
\begin{gathered}
	N_\D(A \circ B) = N_\D(A) \circ N_\D(B) \in \bPsi^{s + t, \ast}(\D/\idmon),
	\\ N_{\X_i}(A \circ B) = N_{\X_i}(A) \circ N_{\X_i}(B) \in \bPsi^{s + t, \ast}(\X_i/\idmon),
\end{gathered}
	\label{E:dpM_normal_operator_comp}
\end{equation}
\item \label{I:dpM_package_diff}
There is an injective homomorphism of graded algebras
\[
	\pDiff^\ast(\Z) \hookrightarrow \pPsi^\ast(\Z),
\]
with respect to which $\pDiff^k(\Z) \subset \pPsi^{k,(0_\DD,0_\XX,0_\BB,\infty_\ast)}(\Z),$
and the normal operators of $P \in \pDiff^\ast(\Z)$ are identified with
restriction of $P$ to the corresponding boundary face of $\Z$:
\[
	N_\D(P) \cong P \rst_\D,
	\quad N_{\X_i}(P) \cong P \rst_{\X_i},
	\quad N_{\X_\ve}(P) \cong P \rst_{\X_\ve}.
\]
\end{enumerate}
\label{T:dpM_package}
\end{thm}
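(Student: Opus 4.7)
\medskip

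\noindent\textbf{Proof plan.} I would prove the three parts in the order (c), (a), (b), since each relies on the previous.

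For part \eqref{I:dpM_package_diff}, the embedding $\pDiff^\ast(\Z) \hookrightarrow \pPsi^\ast(\Z)$ is the standard one which sends a differential operator $P$ to the delta distribution on $\Delta \subset \dpM$ obtained by differentiating the kernel of the identity from the left via $\pi_L^\ast P$. That this lands in the claimed space of conormal distributions with trivial asymptotics at $\DD,\XX,\BB$ and rapid vanishing at the other faces follows directly from Lemma~\ref{L:dpM_lift_V}: vector fields in $\pV(\Z)$ lift to $\dpM$ to be tangent to all boundary hypersurfaces and transverse to $\Delta$, and the lifted operator has support only in a small neighborhood of $\Delta$. The identification of the normal operators with the restrictions $P\rst_\D$, $P\rst_{\X_i}$, $P\rst_{\X_\ve}$ is then an immediate consequence of the restriction identities \eqref{E:dpM_lift_V_D}--\eqref{E:dpM_lift_V_fib}, combined with Lemma~\ref{L:dpM}.\eqref{I:dpM_doubles}--\eqref{I:dpM_fiber} which identifies $\DD$, $\XiXi$, and $\X_\ve^2$ with the respective $\bo$-double spaces.

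For part \eqref{I:dpM_package_indicial}, the indicial identity is a local computation at the codimension-two corner $\DD \cap \XiXi$ inside $\dpM$. Near this corner, the restriction of $Q$ to $\DD$ is a $\bo$-operator on $\D$, and its indicial operator at $\D \cap \X_i$ is obtained by taking the Mellin transform in the boundary defining function of $\DD \cap \XiXi$ within $\DD$. Symmetrically, the indicial operator of $N_{\X_i}(Q)$ at the same corner uses the Mellin transform in the boundary defining function of $\XiXi \cap \DD$ within $\XiXi$. Using local coordinates $(x,x',\ldots)$ where $x$ (resp.\ $x'$) is the right (resp.\ left) pullback of a defining function of $\D \cap \X_i$, one sees that $\DD$ is defined by $x$ while $\XiXi$ is defined by $x'$, and the ratio $s=x'/x$ is the local coordinate used for both Mellin transforms, but with opposite orientation (the ``inward'' direction on $\DD$ corresponds to $s \to \infty$ while the ``inward'' direction on $\XiXi$ corresponds to $s \to 0$). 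This change of variable $s \mapsto s^{-1}$ manifests as $\lambda \mapsto -\lambda$ under the Mellin transform, giving the claimed identity.

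For part \eqref{I:dpM_package_comp_psi}, the composition is defined by pushforward from the triple space $\tpM$ constructed in \S\ref{S:triple}. The key tool is Melrose's pushforward theorem for $\bo$-fibrations applied to $\pi_{13} : \tpM \to \dpM$ (which is a $\bo$-fibration by Lemma~\ref{L:tpM}.\eqref{I:tpM_bfibn}). The plan is to compute, face by face in $\tpM$, the index sets of $\pi_{12}^\ast A \cdot \pi_{23}^\ast B$, then determine which faces of $\tpM$ map into which faces of $\dpM$ under $\pi_{13}$ and apply the pushforward theorem; the $\ol\cup$ operation naturally appears whenever two faces of $\tpM$ project to the same face of $\dpM$. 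The transversality of the partial diagonals (Lemma~\ref{L:tpM}.\eqref{I:tpM_psub}) ensures that the singularities of the kernels remain conormal to the appropriate diagonals throughout. The condition $E_\MB + F_\MB > 0$ is exactly the integrability condition needed so that the pushforward integral converges at the face $\MBM$ of $\tpM$ which is contracted by $\pi_{13}$. For the normal operator identities, the fiberwise statement at $\ve > 0$ reduces to the standard composition of $\bo$-pseudodifferential operators on the fibers. At $\DD$ and $\XiXi$, the identity \eqref{E:dpM_normal_operator_comp} follows from the fact that the $\DDD$ and $\XXX_{iii}$ faces of $\tpM$ are the $\bo$-triple spaces (Lemma~\ref{L:tpM}.\eqref{I:tpM_boundary}), with the composition recovered from the leading term of $\pi_{12}^\ast A \cdot \pi_{23}^\ast B$ at that face; the hypotheses $E_\DX + F_\XD > E_\DD + F_\DD$ and $E_\XD + F_\DX > E_\XX + F_\XX$ guarantee that no lower-order terms from adjacent faces contribute to the leading behavior. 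The main obstacle in this part, and the step on which I expect to spend most effort, is the careful combinatorial bookkeeping of index sets at all boundary faces of $\tpM$ (there are fifteen of them), especially keeping track of the effect of the total boundary blow-up resolving $\BBB$.
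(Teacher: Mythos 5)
Your plan follows the same route as the paper's proof and is essentially correct: part (c) via applying $\pi_L^\ast P$ to the identity's kernel together with Lemma~\ref{L:dpM_lift_V}; part (a) via a local Mellin-transform computation at the corner $\DD\cap\XiXi$; part (b) via the pullback/pushforward theorems on $\tpM$, with the normal-operator composition formulas read off from the leading contribution at $\DDD$, $\XXX_{iii}$ (the $\bo$-triple faces). One detail in your part (a) is off and worth flagging: you claim that $\DD$ and $\XiXi$ are respectively defined by the right and left pullbacks $x,x'$ of a single ``defining function of $\D\cap\X_i$,'' but $\D\cap\X_i$ is a codimension-two corner of $\Z$ and has two independent defining functions, $\rho_\D$ and $\rho_{\X_i}$. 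In the resolved double space one actually has (in one of the two charts covering the corner) $\rho_\DD = x := \pi_R^\ast\rho_\D$ and $\rho_\XX = r' := \pi_L^\ast\rho_{\X_i}$, \emph{not} $x' := \pi_L^\ast\rho_\D$; the crucial identity is $s = x'/x = r/r'$, which is forced by the relation $\pi_R^\ast\ve = \pi_L^\ast\ve$ on the fiber product, and it is this equality of two different ratios that allows one to read off both indicial families from the same function of $s$, with the reversal $s\mapsto s^{-1}$ producing $\lambda\mapsto -\lambda$. Your reasoning captures the right mechanism but states the coordinate identifications slightly incorrectly; with this correction the argument is complete and matches the paper.
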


\begin{proof}
To prove \eqref{I:dpM_package_indicial} we emply a local coordinate
description. Near the interior of $\DD \cap \XiXi$ in $\dpM$, the coordinates 
\[
	(x,r', s, y, y', q),
	\quad s = \tfrac{x'}{x} = \tfrac{r}{r'}
\]
are valid, with $x$ boundary defining for $\DD$ and $r'$ boundary defining for
$\XiXi$. (Alternatively, we may use $(x',r,s^{-1},y,y',q)$). Here $(x,r,y,q)$
denote local coordinates on $\Z$ pulled back from the right and $(x',r',y',q)$
denote coordinates pulled back from the left. In any case, $I(N_\D(Q),
\lambda)$ may be expressed as the Mellin transform of the restriction of $Q$ to
$\set{x = r' = 0}$ with respect to $s$, while $I(N_\D(Q),\lambda)$ is the
Mellin transform of the same with respect to $s^{-1}$, from which
\eqref{I:dpM_package_indicial} follows.

The composition of pseudodifferential operators in
\eqref{I:dpM_package_comp_psi} is defined by \eqref{E:dpM_compn}. By wavefront
considerations, it is clear that the only interior conormal singularities of $A
\circ B \in \dpM$ can occur along the fiber diagonal, and the fact that $A
\circ B$ has interior conormal order $s + t$ (with the principal symbolic
composition formula $\sigma(A\circ B) = \sigma(A) \sigma(B)$) follows from the
usual local considerations. To verify the index set formulae, we may assume $s
= t = - \infty$. The result is then a consequence of the pullback and
pushforward theorems for polyhomogeneous functions with respect to
b-fibrations \cite{CCN}. To invoke these theorems, it is only necessary to determine the
mapping properties of boundary hypersurfaces with respect to the lifted
projections $\pi_{ij}$ (since the boundary exponents occuring in the
b-fibrations $\pi_{ij} : \tpM \to \dpM$ are either $0$ or $1$), but these have
been made obvious from the notation.  For instance, the formula for $G_\DX$ is a
consequence of the following:
\[
\begin{gathered}
	\pi_{13}^{-1}(\DX) = \DDX \cup \DXX,
	\\(\pi_{12})_\#(\DDX) = \DD, \quad (\pi_{23})_\#(\DDX) = \DX,
	\\(\pi_{12})_\#(\DXX) = \DX, \quad (\pi_{23})_\#(\DXX) = \XX.
\end{gathered}
\]
The others are similar.

To see \eqref{E:dpM_normal_operator_comp}, observe that if $E_\DD, F_\DD \geq
0$ and $E_\DX + F_\XD > 0$, then the leading order contribution of $A \circ B$
at $\DD$ comes from $\DDD$. In other words, this leading order term is given by
\[
\begin{gathered}
	(\pi_{13})_\ast (\pi_{12}^\ast A\, \pi_{23}^\ast B\, \pi_2^\ast \nu_\D),
	\\ \pi_{ij} : \DDD \to \DD \subset \dpM,
	\quad \pi_2 : \DDD \to \D \subset \Z,
\end{gathered}
\]
(Note that the fiberwise b-density $\pi_2^\ast \nu$ on $\tpM$ restricts
canonically to a fiberwise b-density on the hypersurface $\DDD$, which is
interwtined with the pullback of the restriction of $\nu$ to $\D$.) That this
may be identified with the usual composition of b-pseudodifferential operators
then follows from Lemma~\ref{L:tpM}. A similar argument applies to the
composition of normal operators with respect to $\X_i$ and $\X_\ve.$

For part \eqref{I:dpM_package_diff}, let $P \in \pDiff^k(\Z).$ By
Lemma~\ref{L:dpM_lift_V}, the lifts $\pi_L^\ast(P)$ and $\pi_R^\ast(P)$
differentiate transversally to $\Delta$ and tangentially to all boundary faces
of $\dpM$, and the image of $P$ in $\pPsi^{k,\ast}(\Z)$ is given by applying
$\pi_L^\ast(P)$ to the kernel of $\id \in \pPsi^{0,(0_\DD,0_\XX,0_\BB)}(\Z)$.
Composing two such images of $P_1,P_2 \in \pDiff^\ast(\Z)$ in $\pPsi^\ast$ is
readily seen to be equivalent to the image of $P_1\circ P_2$. The
identification of the normal operators follows from Lemma~\ref{L:dpM_lift_V}.
\end{proof}

\bigskip
Define $\gl$-pseudodifferential operators by
\[
\begin{gathered}
	\gPsi^{s,\cE}(\Z) = \cAphg^{\cE'} \cI^s(\dgM; \Delta),
	\\ \cE = (E_{\scD}, E_{\XX}, E_{\scB}, \infty_\ast).
	\\ \cE' = (E_{\scD} - 3, E_{\XX}, E_{\scB} - 3, \infty_\ast).
\end{gathered}
\]
These kernels are required to have rapid decay at all boundary hypersurfaces
besides those which meet the lift of $\Delta$. The shift in index by $-3$ at
$\scD$ and $\scB$ may be regarded as a normalization convention; in particular
$\id \in \gPsi^{0,(0,0,0)}(\Z)$ with this convention. (The lift of the delta
function---which is homogeneous of degree $-1$ as a distrubiton---of the fiber
diagonal from $\dpM$ to $\dgM$ has polyhomogeneous order $-3$ at $\scD$ and
$\scB$.) Likewise, the pullback of the fiber b-density bundle $\bO(\dpM)$ to
$\dgM$ is isomorphic to $\rho_{\scD}^3\rho_{\scB}^3 \bO(\dgM)$, so the
shift is nullified when these Schwartz kernels are multiplied by the volume
form $\nu$ from $\Z$ prior to pushing forward.

By Lemma~\ref{L:dgM_props}, the restriction of $Q \in \gPsi^{s,\cE}(\Z)$
to a boundary face $\XiXi$ may be identified with the kernel of a family of scattering
pseudodifferential operators on $\X_i$, and likewise the restriction to the
fiber $\X_\ve^2 = \fb^{-1}(\idmon \times \set \ve)$, $\ve > 0$ may be identified with a family of scattering
pseudodifferential operators on $\X_\ve.$ The restriction of $Q$  to $\scD$
defines a conormal distribution on $\ol{\gT \D}$ conormal to the $0$-section,
whose fiberwise Fourier transform may be identified with a symbol. Thus
we define
\begin{subequations}
\begin{align}
	\sigma_\D(Q) &= \sF_\fib(Q \rst_{\scD}) \in \cAphg^{\cF_\D} S^{s}_{1,0}(\gT^\ast \D),
	\label{E:dgM_normal_D}
	\\N_{\X_i}(Q) &= Q \rst_{\XiXi} \in \scPsi^{s,f}(\X_i/\idmon)
	\label{E:dgM_normal_X}
	\\N_{\X_\ve}(Q) &= Q \rst_{\X_\ve^2} \in \scPsi^{s,f'}(\X_\ve/\idmon), \quad \ve > 0
	\label{E:dgM_normal_fib}
\end{align}
\end{subequations}
where 
\[
	\cF_\D = (E_{\scB}, E_{\XX}),
	\quad f = E_{\scD},
	\quad f' = E_{\scB}
\]

\begin{thm}
\mbox{}
\begin{enumerate}
[{\normalfont (a)}]
\item \label{I:dgM_package_indicial}
Let $Q \in \gPsi^{s,\cE}(\Z)$.
At the common boundary face $\D \cap \X_i$, the symbol $\sigma_\D(Q)$ and the 
scattering symbol $\sigma_\msc(N_{\X_i}(Q))$ agree.
\item \label{I:dgM_package_comp_psi}
If $A \in \gPsi^{s,\cE}(\Z)$, $B \in \gPsi^{t,\cF}(\Z)$ then the composition $A
\circ B \in \pPsi^{s + t,\cG}(\Z)$ is well-defined, with
\[
\begin{gathered}
	G_{\scD} = E_{\scD} + F_{\scD}
	\quad G_{\scB} = E_{\scB} + F_{\scB}
	\quad G_\XX = E_\XX + F_\XX.
\end{gathered}
\]
and the normal operator maps are homomorphisms:
\begin{equation}
\begin{gathered}
	N_{\X_\ve}(A \circ B) = N_{\X_\ve}(A) \circ N_{\X_\ve}(B) \in \scPsi^{s + t,\ast}(\X_\ve/\idmon),
	\\ N_{\X_i}(A \circ B) = N_{\X_i}(A) \circ N_{\X_i}(B) \in \scPsi^{s + t, \ast}(\X_i/\idmon),
	\\ \sigma_\D(A \circ B) = \sigma_\D(A)\sigma_\D(B) \in \cAphg^{\ast} S^{s+t}_{1,0}(\gT^\ast \D).
\end{gathered}
	\label{E:dgM_normal_operator_comp}
\end{equation}
\item \label{I:dgM_package_diff}
There is an injective homomorphism of graded algebras
\[
	\gDiff^\ast(\Z) \hookrightarrow \gPsi^\ast(\Z),
\]
with respect to which $\gDiff^k(\Z) \subset \gPsi^{k,(0_\scD,0_\XX,0_\scB)}(\Z),$
and the normal operators of $P \in \gDiff^\ast(\Z)$ are identified with
restriction of $P$ to the corresponding boundary face of $\Z$:
\[
	\quad N_{\X_i}(P) \cong P \rst_{\X_i},
	\quad N_{\X_\ve}(P) \cong P \rst_{\X_\ve}.
\]
Likewise, the semiclassical symbol of $P$ is the same, considered as a differential
or pseudodifferential operator.
\end{enumerate}
\label{T:dgM_package}
\end{thm}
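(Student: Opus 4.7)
The plan is to follow the same outline as the proof of Theorem~\ref{T:dpM_package}, adapting each step to the gluing calculus using the geometric facts about $\dgM$ and $\tgM$ already established in Lemmas~\ref{L:dgM_props}, \ref{L:dgM_lift_V} and Proposition~\ref{P:tgM}. Most of the work is formal once the geometry of the triple space $\tgM$ is known; the one genuinely new point is the multiplicative behavior of the symbol $\sigma_\D$ at $\scD$, which requires reading off the correct structure of the front face $\DDD_\msc$.

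For part \eqref{I:dgM_package_indicial}, the identification $\scD\cap\XiXi \cong \ol{\gT \D}\rst_{\D\cap\X_i}$ from Lemma~\ref{L:dgM_props}, with the transitional coordinates worked out in its proof, identifies the restriction of $Q$ at $\scD$ (pre-Fourier transform) with the restriction of $Q$ at the scattering front face of $\XiXi\cong\dscXi$. Taking Fourier transform along the fibers on one side yields $\sigma_\D(Q)$ and on the other side yields the scattering symbol $\sigma_\msc(N_{\X_i}(Q))$; the coincidence of symbols is immediate.

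Part \eqref{I:dgM_package_diff} is a direct transcription of the argument given in Theorem~\ref{T:dpM_package}\eqref{I:dpM_package_diff}: Lemma~\ref{L:dgM_lift_V} shows that $\pi_L^\ast V$ and $\pi_R^\ast V$, for $V\in\gV(\Z)$, are tangent to every boundary face and transverse to $\Delta$, so applying a differential operator from the left to the kernel of the identity in $\gPsi^{0,(0,0,0)}(\Z)$ produces an element of $\gPsi^{k,\ast}(\Z)$, the composition is preserved, and the normal operators restrict as claimed by \eqref{E:dgM_lift_V_D}--\eqref{E:dgM_lift_V_fib}.

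The bulk of the work is part \eqref{I:dgM_package_comp_psi}. I would define $A\circ B$ on $\tgM$ by
\[
A\circ B := (\pi_{13})_\ast\bigl(\pi_{12}^\ast A\,\pi_{23}^\ast B\otimes\pi_2^\ast\nu\bigr),
\]
appealing to the three b-fibrations $\pi_{ij}:\tgM\to\dgM$ of Proposition~\ref{P:tgM}(a). Since the fiber density bundle behaves correctly with respect to the shift $-3$ at $\scD$ and $\scB$ in the definition of $\gPsi^{\ast,\ast}(\Z)$, the composition is well-defined without any positivity hypothesis on the index sets (unlike the b case, the scattering-type faces $\scD$ and $\scB$ meet the fiber diagonal transversally, so one does not need to worry about non-integrable singularities in the pushforward). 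Interior conormal order $s+t$ and the principal symbol formula are standard. The polyhomogeneous index set computation reduces, after taking $s=t=-\infty$, to the pullback and pushforward theorems of \cite{CCN}; the required exponent matrices are read off from the description of which boundary hypersurfaces of $\tgM$ lie over which boundary hypersurfaces of $\dgM$ under $\pi_{12}$, $\pi_{23}$ and $\pi_{13}$. Since the faces meeting $\Delta$ (namely $\scD$, $\scB$, $\XiXi$) each lift to a unique hypersurface in $\tgM$ under $\pi_{13}^{-1}$ (namely $\DDD_\msc$, $\BBB_\msc$ and $\XXX_{iii}$ respectively), the index sets add without any extended union, giving the simple formulas $G_\scD=E_\scD+F_\scD$, etc.

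The main obstacle, and where I would spend most of the care, is the proof that $\sigma_\D(A\circ B)=\sigma_\D(A)\sigma_\D(B)$. The leading term of $A\circ B$ at $\scD$ comes from $\DDD_\msc\subset\tgM$, and by Proposition~\ref{P:tgM}(e) this face is a fiberwise compactification of $\gT\D\times_\D\gT\D$ on whose interior $\pi_{12}(p,\xi,\xi')=(p,\xi)$, $\pi_{23}(p,\xi,\xi')=(p,\xi')$, $\pi_{13}(p,\xi,\xi')=(p,\xi+\xi')$. Thus $(\pi_{13})_\ast(\pi_{12}^\ast a\cdot\pi_{23}^\ast b)(p,\eta)$ is the convolution of $a\rst_{\scD}$ and $b\rst_{\scD}$ along the fibers of $\gT\D$, which under Fourier transform becomes the product of $\sigma_\D(A)$ and $\sigma_\D(B)$. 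The analogous statements for $N_{\X_\ve}$ and $N_{\X_i}$ follow from the identifications $\XXX_{iii}\cong(\X_i^3)_\msc$ and $\XXX_\ve\cong(\X_\ve^3)_\msc$ in Proposition~\ref{P:tgM}, reducing them to the known multiplicativity of the scattering normal operator for the scattering calculus on the fibers $\fX_i$ and $\fX_\ve$.
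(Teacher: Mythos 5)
Your proposal follows the same path the paper intends (the paper explicitly says ``the proof is similar to the proof of Theorem~\ref{T:dpM_package}, and is left to the reader''), and the heart of it --- defining the composition on $\tgM$, using Proposition~\ref{P:tgM} for the structure of $\DDD_\msc$ as a compactification of $\gT\D\times_\D\gT\D$, and recognizing $\sigma_\D(A\circ B)$ as the Fourier transform of a fiberwise convolution --- is correct.

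One point of precision worth flagging. You justify the absence of positivity hypotheses and of extended unions by claiming that $\scD$, $\scB$, $\XiXi$ ``each lift to a unique hypersurface in $\tgM$ under $\pi_{13}^{-1}$.'' That is not literally true: $\pi_{13}^{-1}(\scD)$, for instance, contains $(\DDD_\msc)_{13}$, $(\DDD_\msc)_{123}$, and faces of the form $\DXD_\msc^j$, among others, since $\pi_{13}$ collapses the middle factor. The real mechanism --- which the paper emphasizes in the remark following the theorem statement --- is that elements of $\gPsi^{\ast,\ast}(\Z)$ decay rapidly away from $\scD$, $\scB$, $\XX$, so $\pi_{12}^\ast A\,\pi_{23}^\ast B$ is rapidly vanishing at every hypersurface of $\tgM$ except those adjacent to the triple diagonal; the pushforward therefore sees only $(\DDD_\msc)_{123}$ (and analogously only $\BBB_\msc$, $\XXX_{iii}$), and it is \emph{that} effective uniqueness, not a set-theoretic uniqueness of $\pi_{13}^{-1}$-preimages, which kills the extended unions and removes the integrability constraint appearing in the $\fb$-calculus. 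With that correction the argument is sound.
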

\noindent
The proof is similar to the proof of Theorem~\ref{T:dpM_package}, and is left to the reader. It is
worth remarking that, although the triple space $\tgM$ is more complex, the composition result here
is much simpler owing to the rapid decay of elements of $\gPsi^\ast(\Z)$ away from all boundary faces
except $\scD$, $\scB$ and $\XX$.

\subsection{Sobolev spaces and mapping properties} \label{S:pseudo_sobolev}

Consider the set of {\em small} $\fb$-pseudodifferential operators:
\[
	\psmPsi^{s}(\Z) := \pPsi^{s,(\geq 0_\DD,\geq 0_\XX,\geq 0_\BB, \infty_\ast)}(\Z),
\]
where the notation indicates that the index sets of the operators have leading
order $(0,0)$ at $\DD$, $\XX$ and $\BB$ and are empty everywhere else.  By
Theorem~\ref{T:dpM_package} this set is closed with respect to composition.
Furthermore, if $A \in \psmPsi^s(\Z)$ is elliptic (meaning its principal symbol
as a conormal distribution is uniformly invertible off the diagonal), then there exists a
{\em small parametrix}, meaning $B \in \psmPsi^{-s}(\Z)$ such that that $I -
AB, I - BA \in \psmPsi^{-\infty}(\Z).$ 

The (fiberwise) $\fb$ Sobolev spaces are most efficiently defined as follows.
Fix a fiber $\fZ$ of $\fu : \Z \to \idmon$ and set
\[
	\pH^s(\fZ) = \set{u \in C^{-\infty}(\fZ) : A u \in L^2(\fZ; \olg), \ \forall A \in \psmPsi^{s}(\fZ)}
\]
where $\psmPsi^{s}(\fZ)$ denotes the restriction of $\psmPsi^{s}(\Z)$ to the
corresponding fiber of $\fu : \dpM \to \idmon$.  It will follow from the results below
that if $s \geq 0$, this can be taken to be a domain in $L^2(\fZ)$.
Moreover, if $s \in \bbN$ then the $A$ can be taken
in $\pDiff^s(\fZ).$ 

\begin{lem}
\mbox{}
\begin{enumerate}
[{\normalfont (a)}]
\item \label{I:dpM_mapping_L2}
Every $A \in \psmPsi^{0}(\fZ)$ extends to a bounded operator $A : L^2(\fZ; \olg) \to L^2(\fZ;\olg)$. In particular, $\pH^0(\fZ;\olg) \equiv L^2(\fZ;\olg).$
\item \label{I:dpM_mapping_smoothing}
If $A \in \psmPsi^{-\infty}(\fZ)$, then
\[
	A : L^2(\fZ) \to \pH^{s}(\fZ), \quad \forall s \in \bbR.
\]
\item \label{I:dpM_mapping_elliptic}
Fix any elliptic element $P \in \psmPsi^{s}(\fZ)$. Then
\[
	\pH^s(\fZ) = \set{u \in C^{-\infty}(\fZ) : Pu \in L^2(\fZ)}.
\]
\end{enumerate}
\label{L:dpM_mapping}
\end{lem}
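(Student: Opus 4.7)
\medskip
\noindent\textbf{Proof proposal for Lemma~\ref{L:dpM_mapping}.}

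The plan is to follow the classical pattern of boundedness results for pseudodifferential operator calculi, with H\"ormander's square root trick reducing part (a) to the $L^2$-boundedness of residual smoothing operators, and parts (b) and (c) then following from composition closure (Theorem~\ref{T:dpM_package}) and the existence of small parametrices in $\psmPsi^{-s}(\fZ)$.

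First I would dispatch the $L^2$-boundedness of operators in $\psmPsi^{-\infty}(\fZ)$ by a Schur-type estimate. Such a kernel $K$ on $\dpfM$ is polyhomogeneous with index sets satisfying $E_\DD,E_\XX,E_\BB \geq 0$ and rapid decay at every hypersurface disjoint from $\Delta$ (that is, at $\DX_j, \XiD, \MB, \BM, \XiXj$ with $i\neq j$). Writing the volume form of $\olg$ as $\rho^{-3}\mu_{\mathrm{b}}$ with $\rho = \rho_\fD\rho_\fB$ and $\mu_{\mathrm{b}}$ a b-density, the fiber integrals $\int |K(x,y)|\,d\mu(y)$ on $\dpfM$ are computed by pushforward along the b-fibration $\pi_L$; Melrose's pushforward theorem \cite{CCN} converts the index-set conditions into the requisite uniform bounds in $x$, and symmetrically in $y$. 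Schur's test then yields boundedness of $K$ on $L^2(\fZ;\olg)$.

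Next, for part (a), fix $A \in \psmPsi^0(\fZ)$ and set $M = 2\sup|\sigma_0(A)|$, the bound on the principal symbol as a function on the compactified cotangent bundle. Then $M^2 - \sigma_0(A^\ast A) > 0$, and using the closure of $\psmPsi^0(\fZ)$ under adjoint and composition together with the principal symbol exact sequence, I would construct iteratively a self-adjoint $B \in \psmPsi^0(\fZ)$ with $B^\ast B = M^2 - A^\ast A + R$ for some $R \in \psmPsi^{-\infty}(\fZ)$. The construction proceeds by taking $B_0$ with symbol $(M^2 - \sigma_0(A^\ast A))^{1/2}$ and improving the error order by order, then asymptotically summing; this is routine given the symbol calculus inherent in Theorem~\ref{T:dpM_package}. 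For $u \in \dot C^\infty(\fZ)$ one then has
\[
\|Au\|^2 = M^2\|u\|^2 - \|Bu\|^2 + \langle Ru,u\rangle \leq M^2\|u\|^2 + |\langle Ru,u\rangle|,
\]
and the already-established boundedness of $R$ closes the estimate. Density of $\dot C^\infty(\fZ)$ in $L^2(\fZ;\olg)$ yields the extension.

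Part (b) is then immediate: if $A \in \psmPsi^{-\infty}(\fZ)$ and $B \in \psmPsi^s(\fZ)$, then $BA \in \psmPsi^{-\infty}(\fZ) \subset \psmPsi^0(\fZ)$ is $L^2$-bounded by part (a), so $Au \in \pH^s(\fZ)$ for every $u \in L^2(\fZ)$.

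For part (c), let $P \in \psmPsi^s(\fZ)$ be elliptic. One direction is tautological since $P \in \psmPsi^s(\fZ)$. For the converse, let $Q \in \psmPsi^{-s}(\fZ)$ be a small parametrix with $QP = I - R$, $R \in \psmPsi^{-\infty}(\fZ)$. Given $u \in C^{-\infty}(\fZ)$ with $Pu \in L^2(\fZ)$, write $u = QPu + Ru$. For any $B \in \psmPsi^s(\fZ)$, one has $BQ \in \psmPsi^0(\fZ)$ so $BQPu \in L^2(\fZ)$ by (a), while $BR \in \psmPsi^{-\infty}(\fZ)$. Because $\fZ$ is a compact manifold with corners, the distribution $u$ lies in $\pH^{-N}(\fZ)$ for some $N$, and part (b) (applied to $BR$ in the role of the smoothing operator, or equivalently, iterated) yields $BRu \in L^2(\fZ)$. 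Hence $Bu \in L^2(\fZ)$ for every $B \in \psmPsi^s(\fZ)$, so $u \in \pH^s(\fZ)$.

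The main obstacle I expect is the bookkeeping in the Schur estimate for $\psmPsi^{-\infty}(\fZ)$: one must verify that the combination of positive-order decay at $\DD,\XX,\BB$ with rapid decay at the remaining faces suffices, uniformly in $x$, for both $\int|K(x,y)|\,d\mu(y)$ and its transpose. Once this is in place, the remaining arguments are mechanical applications of the composition and symbol calculus established in Theorem~\ref{T:dpM_package}.
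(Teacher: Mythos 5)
Your overall architecture tracks the paper's very closely: Schur's lemma for the residual ideal $\psmPsi^{-\infty}(\fZ)$, the H\"ormander square-root trick for $L^2$-boundedness in part (a) (your $B^\ast B = M^2 - A^\ast A + R$ with $B$ self-adjoint is the same as the paper's $B^2 = c^2\id - A^\ast A + R$), composition closure from Theorem~\ref{T:dpM_package} for part (b), and the parametrix decomposition $u = QPu + Ru$ for part (c). One detail worth making explicit in the Schur step: the pushforward/pullback theorems of \cite{CCN} are stated for b-densities, so the paper first conjugates to $\wt R = \rho^{-3/2}R\rho^{3/2}$ (which preserves the index sets at $\DD$, $\XX$, $\BB$) before computing $(\pi_L)_\ast$ and $(\pi_R)_\ast$; your remark about writing the $\olg$-density as $\rho^{-3}\mu_{\mathrm{b}}$ points in that direction but doesn't carry out the conversion.

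There is, however, a genuine problem in your part (c). You assert ``Because $\fZ$ is a compact manifold with corners, the distribution $u$ lies in $\pH^{-N}(\fZ)$ for some $N$.'' Both halves of this are incorrect. First, $\fZ$ is a fiber of $\fu : \Z \to \idmon$, i.e.\ (a blow-up of) $\oX\times[0,\infty)_\ve$, which is noncompact in the $\ve$ direction; the fiber metric $\olg = \wt g + d\ve^2/\ve^2$ is complete but the space is not compact. Second, even on a compact manifold with corners, an extendible distribution $u \in C^{-\infty}$ (the dual of $\dot C^\infty$) may have arbitrary growth at the boundary and need not lie in any weighted Sobolev space $\pH^{-N}$. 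Moreover, part (b) as stated maps $L^2 \to \pH^s$; applying it ``iterated'' to $BR$ on an input merely in $\pH^{-N}$ would require a strengthening that you have not established. The paper's own argument is terse at this point (it simply cites the identity $QPu = u - Ru$), and the step your argument is trying to supply---that $Ru \in \pH^s(\fZ)$ for all $u$ satisfying the hypothesis---is precisely the nontrivial content, which cannot be reduced to a compactness argument. You should replace this with an argument that directly uses the structure of the residual kernel (rapid vanishing at every face of $\dpfM$ disjoint from $\Delta$) to justify that $Ru$ is well-defined and has the required regularity and decay; as written, the step fails.
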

\begin{proof}
The first result follows by a standard trick due to H\"ormander. Namely, let
$c > 0$ such that $c^2 \geq \sup\abs{\sigma(A)^\ast \sigma(A)}$. Then by an
iterative symbolic procedure there exists a formally self-adjoint $B \in
\pPsi^{s}(\fZ)$ and $R \in \psmPsi^{-\infty}(\fZ)$ such that
\[
	B^2 = c^2\id - A^\ast A + R
\]
with composition here defined as operators on distributions. Then for $u \in
C^\infty_c(\fZ)$,
\[
	\norm{Au}^2_{L^2} = \pair{c^2u,u} + \pair{Ru,u} - \norm{Bu}^2
	\leq c\norm{u}^2 + \pair{Ru,u}.
\]
Boundedness then follows from part \eqref{I:dpM_mapping_smoothing} with $s =
0$, which follows in turn from Schur's Lemma. 

Indeed, any $R \in \psmPsi^{-\infty}(\fZ)$ is represented by a kernel on
$\dpfM$ which is uniformly bounded (by the hypothesis that its index sets are
$\geq 0$) and smooth on the interior. Schur's Lemma states that this extends to
a bounded operator from $L^2(\fZ; \olg)$ to $L^2(\fZ; \olg)$  provided
its left and right projections $(\pi_L)_\ast(\abs R), (\pi_R)_\ast (\abs R) \in
\cAphg^\ast(\fZ)$ are uniformly bounded. Here the pushforward is with respect
to the volume form associated to $\ol g$; to convert to the natural b-volume
form on $\fZ$ we consider instead the conjugated operator $\wt R = \rho^{-3/2}
R \rho^{3/2}$, which has the same index index sets as $R$, namely $\wt R \in
\cAphg^{(0_\DD,0_\XX,0_\BB,\infty_\ast)}(\dpfM)$. By
the pushforward and pullback theorems in \cite{CCN},
this has left and right projections in
$\cAphg^0(\fZ)$, which are indeed uniformly bounded. This proves part
\eqref{I:dpM_mapping_smoothing} in the case $s = 0$ (and hence part
\eqref{I:dpM_mapping_L2}). The general case follows by composing with any $A
\in \psmPsi^{s}(\fZ)$ and using the fact that $\psmPsi^{-\infty}$ is an
ideal.

From part \eqref{I:dpM_mapping_L2} and composition, it follows that
$\psmPsi^{-s} \ni Q : L^2 \to \pH^s$, and then part
\eqref{I:dpM_mapping_elliptic} follows from the existence of a parametrix and
the identity $QPu = u - Ru$, $R \in \psmPsi^{-\infty}$.
\end{proof}

For any choice of elliptic operator $P_s \in \psmPsi^{s/2}(\fZ)$, 
\[
	P_s^\ast P_s + 1 : \pH^s(\fZ) \to L^2(\fZ)
\]
is bounded, self-adjoint, and easily seen to have no nullspace. It is therefore
an isomorphism, in terms of which $\pH^s(\fZ)$ may be given the structure of a
complete Hilbert space, with topology independent of the choice of $P_s$. The following
result follows from this observation and composition.

\begin{cor}
Every $A \in \psmPsi^{s}(\fZ)$ extends to a bounded operator
\[
	A : \pH^{m}(\fZ) \to \pH^{m-s}(\fZ), 
	\quad \forall m \in \bbR.
\]
\label{C:dpM_mapping_cor}
\end{cor}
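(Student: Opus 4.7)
The plan is to deduce the corollary from the composition rule in Theorem~\ref{T:dpM_package}.\eqref{I:dpM_package_comp_psi} together with Lemma~\ref{L:dpM_mapping}. I would split the argument into two steps: first show that $A$ sends $\pH^m(\fZ)$ into $\pH^{m-s}(\fZ)$ at the distributional level, and then promote this to a quantitative norm bound using the Hilbert space structure described just above the corollary.

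For the distributional statement, recall that by definition $u \in \pH^m(\fZ)$ means $Bu \in L^2(\fZ;\olg)$ for every $B \in \psmPsi^m(\fZ)$. Given $A \in \psmPsi^s$ and any $P \in \psmPsi^{m-s}$, observe that $PA \in \psmPsi^m$: the hypothesis $E_\MB + F_\MB > 0$ in Theorem~\ref{T:dpM_package}.\eqref{I:dpM_package_comp_psi} is vacuous for small operators (both index sets at the off-diagonal faces being $\infty$), and the composition table shows that the remaining index sets of $PA$ at $\DD,\XX,\BB$ are still $\geq 0$ while the others remain empty. Hence $P(Au) = (PA)u \in L^2$ for every such $P$, which is precisely the condition $Au \in \pH^{m-s}(\fZ)$.

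For the quantitative bound, fix elliptic reference operators $T_m = P_m^\ast P_m + 1$ and $T_{m-s} = P_{m-s}^\ast P_{m-s} + 1$ as in the discussion preceding the corollary, so that $\|u\|_{\pH^m} \asymp \|T_m u\|_{L^2}$ and $\|v\|_{\pH^{m-s}} \asymp \|T_{m-s} v\|_{L^2}$. Ellipticity of $T_m$ provides a small parametrix $Q \in \psmPsi^{-m}$ with $QT_m = I - K$, $K \in \psmPsi^{-\infty}$. Write
\[
	T_{m-s}\, A\, u \;=\; \bigl(T_{m-s} A Q\bigr)(T_m u) \;+\; \bigl(T_{m-s} A K\bigr) u.
\]
The operator $T_{m-s}AQ$ lies in $\psmPsi^{(m-s)+s+(-m)} = \psmPsi^0$ by composition and so is $L^2$-bounded by Lemma~\ref{L:dpM_mapping}.\eqref{I:dpM_mapping_L2}, giving an estimate of the first term by $C\|T_m u\|_{L^2} = C\|u\|_{\pH^m}$.

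The remaining term $(T_{m-s}AK)u$ is the only genuinely delicate point, since $u$ is merely a distribution when $m<0$. My plan is to iterate the parametrix identity, writing
\[
	u \;=\; \sum_{j=0}^{N} K^{j}\, Q\, T_m u \;+\; K^{N+1} u,
\]
and to choose $N$ so large that $K^{N+1}$, being an arbitrarily regularizing element of $\psmPsi^{-\infty}$, maps $\pH^m$ into a Sobolev space on which $T_{m-s}A$ is manifestly bounded into $L^2$ (by iterating Lemma~\ref{L:dpM_mapping}.\eqref{I:dpM_mapping_smoothing}). Each term $T_{m-s} A K^j Q$ is smoothing and therefore $L^2$-bounded, so the whole remainder is controlled by $\|T_m u\|_{L^2}$. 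Combining with the first term yields $\|Au\|_{\pH^{m-s}} \leq C\|u\|_{\pH^m}$, as required. The main obstacle, as indicated, is this last control on the smoothing tail for negative values of $m$; alternatively, one can short-circuit it via a closed graph argument once the set-theoretic mapping property is established, since $\pH^m$ and $\pH^{m-s}$ are Hilbert spaces and $A$ is continuous at the level of distributions.
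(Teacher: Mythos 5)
Your overall route is the same one the paper takes (the paper's "proof" is just the one line "follows from this observation and composition"), and the set-theoretic part is correct: the composition table in Theorem~\ref{T:dpM_package}.\eqref{I:dpM_package_comp_psi} indeed gives $PA\in\psmPsi^{m}(\fZ)$ for $P\in\psmPsi^{m-s}$ and $A\in\psmPsi^{s}$, since the off-diagonal index sets are empty and the diagonal ones stay $\geq 0$, and the hypothesis $E_\MB+F_\MB>0$ is vacuously satisfied.

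The genuine gap is in the iterated-parametrix step. Iterating $K\in\psmPsi^{-\infty}(\fZ)$ does \emph{not} make it "more regularizing'': the interior conormal order is already $-\infty$, and at the boundary faces of $\dpM$ the composition formula gives $K^{2}$ the index set $E_\DD+E_\DD$ at $\DD$ (and likewise at $\XX,\BB$), whose minimum is still $(0,0)$ if $\min E_\DD=(0,0)$. So $K^{N+1}$ is no better than $K$ as a map out of $\pH^{m}$ for $m<0$, and the remainder $T_{m-s}AK^{N+1}u$ cannot be controlled this way. Lemma~\ref{L:dpM_mapping}.\eqref{I:dpM_mapping_smoothing} only gives $K:L^{2}\to\pH^{s}$ for all $s$; iterating it never leaves $L^{2}$ as the source, which is exactly what you would need. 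Your closed-graph fallback is the one to use: the set-theoretic mapping plus the Hilbert space structure $\|u\|_{\pH^{m}}\asymp\|T_{m}u\|_{L^{2}}$ and the continuity of $\pH^{m}\hookrightarrow C^{-\infty}(\fZ)$ (so that limits in $\pH^{m}$ and $\pH^{m-s}$ can be matched in the distributional sense) give boundedness directly. Alternatively, note that boundedness is immediate for $m\geq\max(s,0)$ (where $T_{m}\geq 1$ controls the $L^{2}$ norm and the remainder is $K:L^{2}\to L^{2}$), and then dualize $A^{\ast}\in\psmPsi^{s}$ to cover negative orders; this avoids both the iteration and the closed graph theorem.
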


By definition, an operator $A \in \pPsi^\ast(\Z)$ is smoothly parameterized by $\idmon$, i.e.,
$A$ can be viewed as a family of pseudodifferential operators with respect to the fibration
$\fu: \Z \to \idmon$. It follows that the space of smooth sections of the Hilbert bundle over $\idmon$,
with fibers $\pH^s(\fZ)$, is characterized as follows:
\[
	C^\infty(\idmon; \pH^s(\fZ)) 
	= \set{u : A u \in C^\infty(\idmon; L^2(\fZ)), \text{ for all } A \in \psmPsi^s(\fZ)}
\]
where ``for all'' may be replaced by ``for some elliptic''. 

Finally, we consider the mapping properties of the full calculus $\pPsi^\ast(\Z)$ with respect to the 
spaces $C^\infty(\idmon; \bpH^{m,s}(\fZ))$, where we restrict to $m \in \bbN$ for convenience (to avoid
discussion of the associated pseudodifferential operators).

\begin{thm}
If $Q \in \pPsi^{s,\cE}(\Z)$ and 
%$s = -\infty$ and 
$\cE$ satisfies
\[
\begin{aligned}
	E_{\XX} &\geq 0,
	& E_{\DD}, E_{\BB} &\geq \alpha' - \alpha, 
	\\ E_{\DX}, E_{\BM} &> \alpha' + \tfrac 3 2,
	& E_{\XD}, E_{\MB} &> -\alpha - \tfrac 3 2,
\end{aligned}
\]
then $Q$ extends to a bounded operator
\begin{equation}
	Q : C^\infty(\idmon; \rho^\alpha \bpH^{k,l}(\fZ)) \to C^\infty(\idmon; \rho^{\alpha'}\bpH^{k,l-s}(\fZ)).
	\label{E:dpM_mapping}
\end{equation}
for any $k \in \bbN$ and $l \in \bbR$, where $\rho = \rho_\D\rho_\B$. 
\label{T:dpM_mapping}
\end{thm}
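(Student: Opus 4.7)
The plan is to adapt the H\"ormander–Schur boundedness argument from the proof of Lemma~\ref{L:dpM_mapping} to the weighted Sobolev setting, absorbing the exponents $\alpha,\alpha'$, the Sobolev orders, and the parameter derivatives into the pseudodifferential class by successive reductions.

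\textbf{Step 1 — Reductions and weight conjugation.}  Since $Q$ is by definition a smooth family over $\idmon$ and every $V \in \cV(\idmon)$ lifts to a vector field on $\dpM$ tangent to all boundary hypersurfaces and differentially transverse to $\Delta$, one has $[V,Q]\in \pPsi^{s,\cE}(\Z)$; iterating shows that it suffices to prove the fiberwise bound $Q:\rho^\alpha\bpH^{k,l}(\fZ)\to \rho^{\alpha'}\bpH^{k,l-s}(\fZ)$ uniformly over compact subsets of $\idmon$.  To absorb the weights, set $\wt Q=\rho^{-\alpha'}Q\rho^\alpha$ and compute how the orders of vanishing of $\pi_L^\ast\rho$ and $\pi_R^\ast\rho$ at each boundary hypersurface of $\dpfM$ shift the index sets of $Q$.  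Using $\rho=\rho_\D\rho_\B$ and the fact that $\pi_L^\ast\rho_\D$ (resp. $\pi_R^\ast\rho_\D$) is boundary defining precisely at $\DD\cup\DX$ (resp.\ $\DD\cup\XD$), and similarly for $\rho_\B$ at $\BB\cup\BM$ (resp.\ $\BB\cup\MB$), a direct face-by-face check shows that the hypotheses on $\cE$ are \emph{equivalent} to $\wt Q\in\pPsi^{s,\wt\cE}(\Z)$ with
\[
\wt E_\DD,\ \wt E_\XX,\ \wt E_\BB\geq 0,
\qquad
\wt E_\DX,\ \wt E_\XD,\ \wt E_\BM,\ \wt E_\MB\ >\ 3/2.
\]
It then remains to show that any $\wt Q$ satisfying these conditions is bounded $\bpH^{k,l}(\fZ)\to\bpH^{k,l-s}(\fZ)$.

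\textbf{Step 2 — Reduction to $L^2$.}  The $k$ fiberwise $\fu$-derivatives are handled by commuting $\fuV$-vector fields past $\wt Q$, which leaves the class $\pPsi^{s,\wt\cE}(\Z)$ invariant.  For the fiber Sobolev order $l$, choose elliptic $P_l\in\psmPsi^l(\fZ)$ with small parametrix (Corollary~\ref{C:dpM_mapping_cor}) and apply the composition formula of Theorem~\ref{T:dpM_package}\eqref{I:dpM_package_comp_psi}.  Since elements of $\psmPsi^\ast$ have trivial ($\infty$) index sets at the off-diagonal faces and are $\geq 0$ at $\DD,\XX,\BB$, the composition $P_{l-s}\circ\wt Q\circ P_l^{-1}\in\pPsi^{0,\wt\cE}(\Z)$ still satisfies the conditions on $\wt\cE$ (a direct inspection of each $G_\bullet$ formula in Theorem~\ref{T:dpM_package}\eqref{I:dpM_package_comp_psi} shows this, since $\infty+\ast=\infty$ and $0+\wt E_\bullet\geq 0$, $0+\wt E_\bullet>3/2$ are automatic). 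This reduces the problem to the case $s=l=k=0$, i.e.\ $L^2$-boundedness on $\fZ$ for the class defined above.

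\textbf{Step 3 — H\"ormander trick and Schur's lemma.}  For such $\wt Q\in\pPsi^{0,\wt\cE}(\Z)$, H\"ormander's symbolic square root produces, exactly as in the proof of Lemma~\ref{L:dpM_mapping}\eqref{I:dpM_mapping_L2}, a formally self-adjoint $B\in\pPsi^{0,\wt\cE}(\Z)$ and $R\in\pPsi^{-\infty,\wt\cE}(\Z)$ with $B^2=c^2\id-\wt Q^\ast\wt Q+R$; the class is stable under this iteration since the composition formula respects the conditions above.  From $\|\wt Qu\|^2\leq c^2\|u\|^2+\langle Ru,u\rangle$ it then suffices to bound the residual $R$ on $L^2(\fZ;\olg)$.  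Setting $\wt R=\rho^{-3/2}R\rho^{3/2}$ to pass from $\olg$-density to the natural b-density, the index sets at $\DD,\BB$ are unchanged (the $\pm 3/2$ shifts cancel since $\rho$ vanishes equally from both sides there) and similarly at $\XX$ (where $\rho$ vanishes from neither side), while at $\DX,\XD,\BM,\MB$ the index sets are reduced by $3/2$ and remain strictly positive by our hypothesis.  The pushforward–pullback theorem of \cite{CCN} then gives $(\pi_L)_\ast|\wt R|,\,(\pi_R)_\ast|\wt R|\in\cAphg^0(\fZ)$, uniformly bounded, and Schur's lemma applies.

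\textbf{Main obstacle.}  The main technical content is the bookkeeping of Step 1, where one must verify that the stated inequalities on $\cE$ are precisely the ones needed so that both the weight conjugation of Step 1 and the Schur density change of Step 3 leave a kernel with index sets $\geq 0$ at $\DD,\XX,\BB$ and $>0$ at $\DX,\XD,\BM,\MB$; the $3/2$ appearing in the hypotheses is explained as the combined contribution of these two $\rho^{\pm 3/2}$ shifts at the off-diagonal faces.  Once this is carried out correctly, Steps 2–3 are routine generalizations of Lemma~\ref{L:dpM_mapping}, and the rest of the argument reduces to the composition calculus already proved in Theorem~\ref{T:dpM_package}.
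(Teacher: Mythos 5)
Your overall plan — absorb the weights by conjugation, reduce the Sobolev orders by composing with parametrices in the small calculus, and then prove $L^2$-boundedness by a combination of H\"ormander's symbolic square root and Schur's lemma — is the same strategy the paper uses. (The paper combines your Steps 1 and 2 slightly differently: it first rewrites everything in terms of the b-metric, absorbing the $3/2$ into $\beta = \alpha + \tfrac 3 2$, then conjugates by $\rho^{\pm\beta}$ once.) However there are two real problems.

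\textbf{The conjugation bookkeeping is wrong.} Conjugation $\rho^{-a}\,Q\,\rho^b$ multiplies the kernel by $\pi_L^\ast\rho^{-a}\cdot\pi_R^\ast\rho^{b}$. Now $\pi_L^\ast\rho_\D$ (resp.\ $\pi_L^\ast\rho_\B$) vanishes at $\DD,\DX$ (resp.\ $\BB,\BM$) but \emph{not} at $\XD,\MB$, while $\pi_R^\ast\rho_\D$ (resp.\ $\pi_R^\ast\rho_\B$) vanishes at $\DD,\XD$ (resp.\ $\BB,\MB$) but not at $\DX,\BM$. Thus a two-sided conjugation shifts the index sets at $\DX,\BM$ and at $\XD,\MB$ by \emph{different} amounts, in opposite directions. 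After $\wt Q = \rho^{-\alpha'}Q\rho^\alpha$ the correct conditions are
\[
\wt E_\DD,\ \wt E_\XX,\ \wt E_\BB\geq 0,\qquad
\wt E_\DX,\ \wt E_\BM > \tfrac 3 2,\qquad
\wt E_\XD,\ \wt E_\MB > -\tfrac 3 2,
\]
not $>\tfrac 3 2$ at all four off-diagonal faces. Your Step 3 contains the mirror-image error: $\wt R = \rho^{-3/2}R\rho^{3/2}$ shifts $\DX,\BM$ by $-\tfrac 3 2$ but shifts $\XD,\MB$ by $+\tfrac 3 2$, not uniformly by $-\tfrac 3 2$. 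The two sign errors happen to cancel so your stated final conditions (index $>0$ at the off-diagonal faces before Schur) are correct, but neither intermediate claim is.

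\textbf{The H\"ormander step does not survive the correct bookkeeping.} Once the conditions after Step 1 are asymmetric, the class $\pPsi^{0,\wt\cE}(\Z)$ is \emph{not} closed under adjoint — the adjoint swaps $\DX\leftrightarrow\XD$ and $\BM\leftrightarrow\MB$ — so it is not stable under the iteration you invoke. Concretely, $\wt Q^\ast\wt Q$ has off-diagonal index sets only $>-\tfrac 3 2$ at $\DX,\XD$ by the composition formula of Theorem~\ref{T:dpM_package}, and the residual $R$ inherits this. After your $\rho^{\pm 3/2}$ conversion, $\wt R$ has index $>-3$ at $\DX$, so the right Schur projection $(\pi_R)_\ast|\wt R|$ has index set at $\X$ given by $\wt E_\XX\,\ol\cup\,\wt E_\DX > -3$, which need not be $\geq 0$: Schur's lemma fails. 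The paper sidesteps this entirely by \emph{first} splitting $Q = Q_{\mathrm{sm}} + Q_\infty$ into a small-calculus piece (off-diagonal index sets $\infty$, so the H\"ormander argument of Lemma~\ref{L:dpM_mapping} applies cleanly and the class is closed under adjoint) plus a smoothing piece $Q_\infty\in\pPsi^{-\infty,\cE}(\Z)$ which is handled by Schur directly — no H\"ormander trick is applied to the big-calculus operator. You should adopt that decomposition rather than applying H\"ormander to $\wt Q$ as a whole.
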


\begin{proof}
We first restate the result in terms of
b-metrics; since $L^2(\fZ; \olg) = \rho^{3/2} L^2(\fZ; \bg)$, the result to prove
is equivalent to the boundedness of
\[
	Q : C^\infty(\idmon; \rho^\beta \bpH^{k,l}(\fZ; \bg)) \to C^\infty(\idmon; \rho^{\beta'}\bpH^{k,l-s}(\fZ; \bg)),
\]
under the assumptions that
\[
\begin{aligned}
	E_{\XX} &\geq 0,
	& E_{\DD}, E_{\BB} &\geq \beta' - \beta, 
	\\ E_{\DX}, E_{\BM} &> \beta',
	& E_{\XD}, E_{\MB} &> -\beta,
\end{aligned}
\]
which is in turn equivalent to boundedness of
\[
\begin{gathered}
	\rho^{-\beta'} Q \rho^\beta : C^\infty(\idmon; \bpH^{k,l}(\Z; \bg)) \to C^\infty(\idmon; \bpH^{k,l-s}(\Z; \bg)),
	\\ \rho^{-\beta'} Q \rho^\beta \in \pPsi^{s,\cE'}(\Z),
	\\
\begin{aligned}
	E'_{\DD} &= E_{\DD} + (\beta - \beta'), 
	& E'_{\BB} &= E_{\BB} + (\beta - \beta'),
	\\ E'_{\DX} &= E_{\DX} - \beta',
	& E'_{\XD} &= E_{\XD} + \beta,
	\\ E'_{\BM} &= E_{\BM} - \beta',
	& E'_{\MB} &= E_{\MB}  + \beta,
	\\ E'_{\XX} &= E_{\XX},
\end{aligned}
\end{gathered}
\]
so it suffices to consider the case where $\beta = \beta' = 0$. 

Likewise, since everything is smoothly parameterized by $\idmon$, it
suffices to restrict $Q$ to a fixed fiber $\dpfM = \fu^{-1}(q)$. 

We first consider the case $k = 0$. 
$Q$ may be decomposed as
\[
	Q = Q_{\mathrm{sm}} + Q_\infty,
	\quad Q_{\mathrm{sm}} \in \psmPsi^{s}(\fZ) 
	\ Q_\infty \in \pPsi^{-\infty,\cE}(\fZ),
\]
into an element of the small calculus and a smoothing element, which may be considered seperately.
Then $Q_{\mathrm{sm}}$ was shown above to be bounded, and
$Q_\infty$ is seen to be bounded by an application of Schur's Lemma. Indeed,
\[
	(\pi_R)_\ast(\abs{Q} \pi_L^\ast \pnu) \in \cAphg^{\cF_R}(\fZ), 
	\quad (\pi_L)_\ast(\abs{Q} \pi_R^\ast \pnu) \in \cAphg^{\cF_L}(\fZ)
\]
are well-defined provided $E_{\BM} > 0$ (respectively $E_{\MB} > 0$), and 
\[
\begin{gathered}
	\cF_R = (F_{\D},F_{\X},F_{\B})
	= (E_{\DD} \ol \cup E_{\XD}, E_{\XX} \ol \cup E_{\DX}, E_{\BB} \ol \cup E_{\MB}),
	\\ \cF_L = (F'_{\D},F'_{\X},F'_{\B})
	= (E_{\DD} \ol \cup E_{\DX}, E_{\XX} \ol \cup E_{\XD}, E_{\BB} \ol \cup E_{\BM}).
\end{gathered}
\]
The hypotheses on $\cE$ guarantee that these index sets are $\geq 0$, hence
the pushforwards are uniformly bounded.

In the case $k > 0$, observe that a general vector field
in $\fuV(\fZ) \equiv \bV(\fZ)$ may be decomposed into an element of $\pV(\fZ)$
and a multiple of $\ve \pa_\ve$, where the latter denotes (by abuse of notation) a choice of lift of the canonical
b-vector field $\ve \pa_\ve$ on $[0,1)$. 
Since $\ve \pa_\ve$ differentiates in the fiber direction, $\ve \pa_\ve (Qu) =
(\ve\pa_\ve Q) u + Q (\ve \pa_\ve u)$, but $\ve \pa_\ve Q \in
\pPsi^{s,\cE}(\fZ)$ again since $\ve \pa_\ve$ is tangent to all boundary
faces of $\dpfM$. The general result then follows by commutation and induction.
\end{proof}

\bigskip
Proceeding in a similar manner, we may characterize the fiberwise $\gl$ Sobolev spaces by
\[
	\gH^s(\fZ) = \set{u \in C^{-\infty}(\fZ) : Au \in L^2(\fZ; \olg), 
	\text{ $\forall$ /($\exists$ elliptic) } 
	%\text{ for any/(some elliptic) } 
	A \in \gsmPsi^{s}(\fZ)},
\]
where $\gsmPsi^{s}(\fZ)$ denotes the restriction to a fiber $\dgfM$ of $\fu :
\dgM \to \idmon$ of the set $\gsmPsi^s = \gPsi^{s,\cE}(\Z)$ where $E_\scD, E_\scB,
E_\XX \geq 0.$. A nearly identical proof leads to the obvious analogue of Corollary~\ref{C:dpM_mapping_cor},
and we have

\begin{thm}
If $Q \in \gPsi^{s,\cE}(\Z)$ and 
%$s = -\infty$ and 
$\cE$ satisfies
\[
\begin{aligned}
	E_{\XX} &\geq 0,
	& E_{\scD}, E_{\scB} &\geq \alpha' - \alpha, 
\end{aligned}
\]
then $Q$ extends to a bounded operator
\begin{equation}
	Q : C^\infty(\idmon; \rho^\alpha \bpgH^{k,l,m}(\fZ)) \to C^\infty(\idmon; \rho^{\alpha'}\bpgH^{k,l,m-s}(\fZ)).
	\label{E:dgM_mapping}
\end{equation}
for any $k,l \in \bbN$ and $m \in \bbR$.
\label{T:dgM_mapping}
\end{thm}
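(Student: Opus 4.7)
The plan is to follow the template of the proof of Theorem~\ref{T:dpM_mapping} just given, adapted to the $\gl$ calculus and its associated three-index Sobolev structure. First I would reduce to the unweighted case $\alpha = \alpha' = 0$ by conjugating $Q$ with the weight function $\rho = \rho_\D\rho_\B$. As in the $\fb$ case, $\rho^{-\alpha'} Q \rho^\alpha \in \gPsi^{s,\cE'}(\Z)$ with index sets shifted by $\alpha - \alpha'$ at $\scD$ and $\scB$ and unchanged at $\XX$, so the hypotheses become $E'_\scD, E'_\scB, E'_\XX \geq 0$. Since every element of $\gPsi^{s,\cE}(\Z)$ is smoothly parameterized by $\idmon$, it suffices to prove a fiberwise bound on each fiber $\dgfM$, uniform in the parameter.

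Next I would handle the base case $k = l = 0$, where the goal is boundedness $Q : \gH^m(\fZ) \to \gH^{m-s}(\fZ)$ for every real $m$. Decompose $Q = Q_\mathrm{sm} + Q_\infty$ with $Q_\mathrm{sm} \in \gsmPsi^s(\fZ)$ in the small calculus and $Q_\infty \in \gPsi^{-\infty,\cE}(\fZ)$. For $Q_\mathrm{sm}$ the boundedness on $L^2$ is obtained by H\"ormander's square-root trick, exactly as in Lemma~\ref{L:dpM_mapping}: one constructs a formally self-adjoint $B \in \gsmPsi^{0}(\fZ)$ with $B^2 = c^2 \id - Q_\mathrm{sm}^\ast Q_\mathrm{sm} + R$, $R \in \gsmPsi^{-\infty}$, reducing the estimate to the $L^2$-boundedness of smoothing terms. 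The residual operators $Q_\infty$ and $R$ are controlled by Schur's Lemma: the pushforwards $(\pi_R)_\ast(|Q|\, \pi_L^\ast \pnu)$ and $(\pi_L)_\ast(|Q|\, \pi_R^\ast \pnu)$ are polyhomogeneous on $\fZ$ with index sets obtained as extended unions of those of $Q$ at the faces of $\dgfM$ meeting the support of $Q$, namely $\scD$, $\scB$, and $\XX$; the hypotheses $E_\scD, E_\scB, E_\XX \geq 0$ ensure these pushforward index sets are $\geq 0$, hence the densities are uniformly bounded. The standard parametrix construction using an elliptic element $P_m \in \gsmPsi^{m/2}(\fZ)$ then extends the result to the scale $\gH^m(\fZ)$ for all $m \in \bbR$.

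For the general case $k, l \geq 0$ I would proceed by a double induction using commutator arguments. For an $\fuV$ derivative $V$, which differentiates only in the parameter directions and is tangent to all boundary faces of $\Z$ and $\dgM$, the identity $V(Qu) = (V \cdot Q) u + Q(Vu)$ holds with $V \cdot Q$ again in $\gPsi^{s,\cE}(\Z)$; this handles the $\fuV$ regularity inductively in $k$. For an $\fbV$ derivative $V$ on the fiber, I would use the relation $\gV = \rho \fbV$ near $\D \cup \B$: writing locally $V = \rho^{-1} W$ with $W \in \gV$, Theorem~\ref{T:dgM_package}\eqref{I:dgM_package_diff} places $W$ in $\gPsi^{1,(0,0,0)}(\Z)$ and composition gives $WQ \in \gPsi^{s+1,\cE}(\Z)$; the resulting power $\rho^{-1}$ is absorbed by the hypothesis $E_\scD, E_\scB \geq 0$, which has a one-step margin for the shift. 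Away from $\scD \cup \scB$ the bundles $\fbV$ and $\gV$ agree so no rescaling is required. Iterating distributes the $l$ prescribed $\fbV$ derivatives across $Q$, each time producing either an $\fbV$ derivative acting on $u$ (lowering the $l$ budget) or a commutator term staying in the same calculus.

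The main obstacle will be the bookkeeping in the inductive step, namely verifying that after every commutation the resulting operator remains in $\gPsi^{s,\cE}(\Z)$ with index sets still satisfying $\geq 0$ at $\scD$, $\scB$, $\XX$, and that the weight factors of $\rho$ introduced when passing from $\fbV$ to $\gV$ are consistently absorbed into the margins of the hypotheses. This follows formally from Theorem~\ref{T:dgM_package} together with the observation from Lemma~\ref{L:dgM_lift_V} that $\fuV$ and $\fbV$ vector fields lift from either side to $\dgM$ as vector fields tangent to all its boundary hypersurfaces. Once this is settled the induction closes and the claimed mapping property follows.
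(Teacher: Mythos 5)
Your setup (conjugation to reduce to $\alpha=\alpha'=0$, fiberwise reduction, and the $k=l=0$ base case via the small/residual split, H\"ormander's square-root trick and Schur's lemma) matches the paper, as does the $\fuV$ step using $V\cdot Q \in \gPsi^{s,\cE}$. The gap is in the $\fbV$ step, which is where the genuine content of the theorem lies.

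Writing $V = \rho^{-1}W$ with $W\in\gV$ and arguing that "the resulting power $\rho^{-1}$ is absorbed by the hypothesis $E_{\scD}, E_{\scB}\geq 0$, which has a one-step margin" is not correct: after conjugation the hypothesis is exactly $\geq 0$, with no margin at all, so a loss of one order of $\rho$ is not absorbable. The deeper issue is that the boundedness of the commutator $[V,Q]$ for $V\in\fbV(\fZ)$ does not follow from estimating each lift $\pi_L^\ast V$, $\pi_R^\ast V$ separately, which is effectively what your decomposition does. Both lifts are \emph{singular} at $\scD$ and $\scB$ --- of the form $(\rho_\scD\rho_\scB)^{-1}\wt V$ and $(\rho_\scD\rho_\scB)^{-1}\wt V'$ --- so each of $\pi_L^\ast V\cdot Q$ and $\pi_R^\ast V\cdot Q$ individually leaves $\gPsi^{s,\cE}$. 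What saves the argument (and what your proposal misses) is the cancellation: the coordinate computation in Lemma~\ref{L:dgM_props} shows that $\wt V$ and $\wt V'$ restrict to \emph{opposite} fiberwise constant vector fields on $\scD$ and $\scB$, so in the combination $(\pi_L^\ast V + \pi_R^\ast V)\kappa_Q$ appearing in the kernel of $[Q,V]$ the singular leading terms cancel, leaving $[Q,V]\in\gPsi^{s,\cE}(\fZ)$. Without this cancellation the induction on $l$ does not close.

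Relatedly, your final paragraph asserts that Lemma~\ref{L:dgM_lift_V} shows $\fuV$ and $\fbV$ vector fields lift tangentially and with bounded coefficients to $\dgM$; that lemma applies only to $\gV(\Z)$. For $\pV(\Z)=\fbV(\Z)$, the paper's proof itself records that the lifts are singular at $\scD$ and $\scB$, and it is precisely this singularity (together with its cancellation in the commutator) that must be confronted, not sidestepped.
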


\begin{proof}
The proof is almost entirely similar to the proof of
Theorem~\ref{T:dpM_mapping} above. The only additional wrinkle is that, for
nonzero $k,l$, it is neccessary to consider the lift to $\dgM$ of vector fields
in $\pV(\Z)$. As shown above, these are tangent to the boundary faces of
$\dpM$, so it follows that they lift to be singular at $\scD$ and $\scB$ in
$\dgM.$ More precisely, the lifts of $V \in \pV(\Z)$ have the form
\[
	\pi_R^\ast(V) = (\rho_\scD)^{-1}(\rho_{\scB})^{-1} \wt V,
	\quad \pi_L^\ast(V) = (\rho_\scD)^{-1}(\rho_{\scB})^{-1} \wt V',
\]
where $\wt V$ and $\wt V'$ restrict to the {\em opposite} fiberwise constant
vector field on $\scD$ and $\scB$, as follows from the computations in the
proof of Lemma~\ref{L:dgM_props}. In particular it follows that the order
$(\rho_\scD\rho_\scB)^{-1}$ term of the commutator $[Q,V] = (\pi_L^\ast
(V)  + \pi_R^\ast(V))Q$ (the sign change on $\pi_R^\ast(V)$ is due to integration
by parts) vanishes, so that 
\[
	[Q,V] \in \gPsi^{s,\cE}(\fZ), \quad V \in \pV(\Z).
\]
This may be iterated to show that, for $P \in \pDiff^l(\Z)$,  $PQ - QP' \in
\gPsi^{s,\cE}$ for some $P' \in \pDiff^{l}(\Z)$, from which the result
follows.

Finally, for $k \neq 0$, it suffices to note that $\ve \pa_\ve Q \in
\gPsi^{s,\cE}(\Z)$. Then the lift any $V \in \bV(\Z)$ can be locally
decomposed into $\ve\pa_\ve$ and the lift of an element in $\pV(\Z)$. Since 
$\ve \pa_\ve (Qu) = (\ve \pa_\ve Q) u + Q(\ve\pa_\ve u)$, 
\[
	[Q,P] \in \gPsi^{-\infty,\cE}(\Z), \quad P \in \bDiff^k(\Z),
\]
and the result for general $k$ and $l$ follows.
\end{proof}

\subsection{Residual ideals} \label{S:pseudo_resid}
Consider the subset $\pPsi^{s,(\infty_\ast)}(\Z) \subset \pPsi^{s,\ast}(\Z)$.  These {\em
residual operators} have kernels which are conormal of order $s$ at the
diagonal and vanish rapidly at all boundary faces of $\dpM$. 
They lift to similarly conormal kernels with rapid vanishing on the space $\dgM$,
which is to say the subset $\gPsi^{s,(\infty_\ast)}(\Z)\subset
\gPsi^{s,\ast}(\Z)$, and conversely $\gPsi^{s,(\infty_\ast)}(\Z)$ pushes
forward under the blow-down $\dgM \to \dpM$ to
$\pPsi^{s,(\infty_\ast)}(\Z)$. 

We identify these two subspaces and denote them simply by
\[
	\rho^\infty\Psi^{s}(\Z) := \pPsi^{s,(\infty_\ast)}(\Z)
	\equiv \gPsi^{s,(\infty_\ast)}(\Z),
	\quad s \in \bbR.
\]
It follows from Theorems~\ref{T:dpM_package} and \ref{T:dgM_package} that these
subsets form graded ideals with respect to composition, i.e.,
\[
\begin{gathered}
	\rho^\infty \Psi^{s}(\Z) \circ \pPsi^{t,\cE}(\Z),
	\ \pPsi^{t,\cE}(\Z)\circ \rho^\infty \Psi^s(\Z) \subset \rho^\infty \Psi^{s + t}(\Z),
	\\\rho^\infty \Psi^{s}(\Z) \circ \gPsi^{t,\cE}(\Z),
	\ \gPsi^{t,\cE}(\Z)\circ \rho^\infty \Psi^s(\Z) \subset \rho^\infty \Psi^{s + t}(\Z).
\end{gathered}
\]
(c.f.\ \eqref{E:sc_b_ideal}).
We refer to $\bigcup_s \rho^\infty \Psi^s(\Z)$ as the {\em residual ideal}.

\subsection{Parametrices} \label{S:parametrices}
Finally, we emply the pseudodifferential operator calculus developed above
to construct parametrices for the linearized Coulomb gauge fixing operator and Bogomolny operator,
respectively.

\begin{proof}[Proof of Proposition~\ref{P:coulomb_parametrix}]
We construct $Q^R$ as a conormal distribution on the double space $\dpM$,
decomposing near $\DD \cap \BB \cap \Delta$ according to the splitting
$\adP = \adPz \oplus \adPo$ as
\begin{equation}
\begin{gathered}
	Q^R = \begin{pmatrix} \pi_L^\ast(\rho^{1/2})\,\wt Q_0 \pi_R^\ast(\rho^{-5/2}) & Q_{01}
	  \\ Q_{10}& Q_1\end{pmatrix}
	\\ \wt Q_0 \in \pPsi^{-2,\cF^0}(\Z; \adPz),
	\quad Q_1 \in \gPsi^{-2,\cF^1}(\Z; \adPo),
	\\ Q_{ij} \in \rho^\infty\Psi^{-2}%(\Z; \ad P_j, \ad P_i),
\end{gathered}
	\label{E:coulomb_right_parametrix}
\end{equation}
for some index sets $\cF^i$, $i = 1,2$, determined below.
$Q_1 \in \gPsi^{-2,\cF^1}(\Z; \adPo)$ is the pushforward to $\dpM$ of an operator
defined on the gluing double space $\dgM$. 

Working fiberwise, from Theorem~\ref{T:linear_coulomb_X}, the inverse of
\eqref{E:linear_coulomb_X_extn} for $\alpha = 0$ may be represented as a
conormal distribution on $\dbfX$ with a decomposition
\eqref{E:linear_coulomb_X_inverse}. By the smoothness of $F\rst_\fX$ over $\idmon$,
these inverses patch together smoothly as a family of distributions on $\dbX
\cong \XX \subset \dpM$ over $\idmon$.  Likewise, the fiberwise Fourier transforms of the symbolic
inverses from Theorem~\ref{T:linear_coulomb_D_sc} form a smoothly varying
family of conormal distributions on $\ol \gT \D \cong \scD \subset \dpM$, and
the inverses of \eqref{E:linear_coulomb_D_extn} for $\alpha = \beta = 0$ form a
smoothly varying family of distributions on $\dbD \cong \DD \subset \dpM$.
Moreover, the leading order terms in the expansions of these inverses at $\X
\cap \D$ are compatible; the scattering symbol of $G_1$ in
\eqref{E:linear_coulomb_X_inverse} agrees with the inverse of the symbol 
in Theorem~\ref{T:linear_coulomb_D_sc} there, and after accounting for
the various boundary defining factors, the indicial operators of $\wt G_0$ in
\eqref{E:linear_coulomb_X_inverse} and $\wt G$ in
\eqref{E:linear_coulomb_D_inverse} agree.

Consequently, there exists a distribution $Q$ on $\dpM$ whose restriction to $\XX$ agrees
with \eqref{E:linear_coulomb_X_inverse}, and which decomposes as
\eqref{E:coulomb_right_parametrix} where
\[
\begin{gathered}
	\sigma_\fD(\wt Q_0) = \sigma_\fD(F_1)^{-1}, \quad \text{from \eqref{E:linear_coulomb_D_inverse}},
	\\ N_\fX(\wt Q_0) = \wt G_0, \quad \text{near $\fX \cap \fD$, $\wt G_0$ from \eqref{E:linear_coulomb_X_inverse}},
	\\ I(N_\fX(\wt Q_0), \lambda) = I(N_\fD(\wt Q_0), - \lambda),
\end{gathered}
\]
and $Q_1 \in \gPsi^{-2,\cF^1}(\Z; \adPz)$ satifies
\[
\begin{gathered}
	N_{\scfD}(Q_1) = G_1, \quad \text{$G_1$ from Theorem~\ref{T:linear_coulomb_D_sc}}
	\\ N_{\fX_i}(Q_1) = G_1, \quad \text{$G_1$ from \eqref{E:linear_coulomb_X_inverse}},
	\\ \scsigma(N_{\fX_i}(Q_1)) \cong \sigma_\D(N_{\scfD}(Q_1)).
\end{gathered}
\]
Furthermore, by the usual iterative argument using principal symbols, we can
arrange for the interior conormal singularity of $Q^R$ to invert that of $F$ to
all orders.

The index sets $\cF^i$, $i = 1,2$ for $Q^R$ satisfy
\[
\begin{gathered}
	F^0_{\DD},\ F^0_{\XX},\ F^0_{\BB} \geq 0,
	\quad F^0_{\XD},\ F^0_{\DX},\ F^0_{\BM},\ F^0_{\MB} \geq \tfrac 1 2, 
	\\ F^1_{\scD},\ F^1_{\scB},\ F^1_{\XX} \geq 0.
\end{gathered}
\]

From Theorems~\ref{T:dpM_package} and \ref{T:dgM_package}, the error term $E'
= I - Q^RF$ has (interior conormal) order $-\infty$, and admits a similar
decomposition, with $E'_0 = \pi_L^\ast(\rho^{5/2}) \wt E'_0
\pi_R^\ast(\rho^{-5/2})$, where $\wt E'_0 \in \pPsi^{-\infty,\wt \cG^0}(\Z;
\adPz)$ and $E'_1 \in \gPsi^{-\infty,\cG^1}(\Z; \adPo)$ have the same
estimates on their index sets as $\wt Q_0$ and $Q_1$, but with extra vanishing
at the boundary faces meeting the diagonal over $\ve = 0$ since they invert $F$
there (for instance, $\wt G^0_\DD = F_\DD \setminus \min F_\DD$, etc.). Thus
\[
\begin{gathered}
	\wt G^0_{\DD},\ \wt G^0_{\XX},\ G^1_{\scD}, G^1_{\XX} > 0,
	\quad \wt G^0_{\BB},\ G^1_{\scB} \geq 0,
	\\ \wt G^0_{\XD},\ \wt G^0_{\DX},\ \wt G^0_{\BM},\ \wt G^0_{\MB} \geq \tfrac 1 2, 
\end{gathered}
\]
Once we account for the left and right factors of $\rho$, it follows that $E'_0 =
\pi_L^\ast(\rho^{5/2}) \wt E'_0 \pi_R^\ast(\rho^{-5/2})$ is in
$\pPsi^{-\infty,\cG^0}(\Z; \adPz)$ where
\[
\begin{gathered}
	G^0_{\DD},\ G^0_{\XX} > 0,
	\quad G^0_{\BB} \geq 0,
	\\G^0_{\DX},\ G^0_{\BM} \geq \tfrac 1 2 + \tfrac 5 2 = 3,
	\quad G^0_{\XD},\ G^0_{\MB} \geq \tfrac 1 2 - \tfrac 5 2 = -2.
\end{gathered}
\]

Now, on $\dpM$ (respectively $\dgM$), $\ve$ is a product of defining functions
\[
	\ve = \rho_{\DD}\rho_{\XX}\rho_{\DX}\rho_{\XD},
	\quad \text{(resp. $\ve = \rho_{\DD}\rho_{\XX}\rho_{\DX}\rho_{\XD}\rho_{\scD}$)}.
\]
Taking 
\[
	\delta < \min(G^0_{\DD},G^0_{\XX},G^1_{\scD},G^1_{\XX},\tfrac 1 2),
\]
it follows that we can write $E' = \ve^\delta E^R$,
where $E^R = \begin{pmatrix} E^R_0 & E^R_{01}\\E^R_{10} & E^R_1\end{pmatrix}$ satisfies
$E^R_0 \in \pPsi^{-\infty,\cI^0}(\Z; \adPz)$, $E^R_1 \in \gPsi^{-\infty,\cI^1}(\Z; \adPo)$,
and
\[
\begin{gathered}
	I^0_{\DD},\ I^0_{\XX}, I^1_{\scD}, I^1_{\XX} > 0,
	\quad I^0_{\BB}, I^1_{\scB} \geq 0,
	\\ I^0_{\DX},\ I^0_{\BM} > \tfrac 5 2,
	\quad I^0_{\XD},\ I^0_{\MB} > - \tfrac 5 2.
\end{gathered}
\]
By Theorems~\ref{T:dpM_mapping} and \ref{T:dgM_mapping}, it follows that
\[
\begin{aligned}
	E^R_0 &: C^\infty(\idmon;\rho^1\bpgH^{k,0,0}(\fZ;\adPz)) 
	  \to C^\infty(\idmon;\rho^1\bpgH^{k,0,0}(\fZ; \adPz))
	\\E^R_1 &: C^\infty(\idmon;\rho^\beta\bpgH^{k,0,0}(\fZ;\adPo)) 
	  \to C^\infty(\idmon;\rho^\beta\bpgH^{k,0,0}(\fZ; \adPo))
	\\E^R_{01} &: C^\infty(\idmon;\rho^\beta\bpgH^{k,0,0}(\fZ;\adPo)) 
	  \to C^\infty(\idmon;\rho^1\bpgH^{k,0,0}(\fZ; \adPz))
	\\E^R_{10} &: C^\infty(\idmon;\rho^\beta\bpgH^{k,0,0}(\fZ;\adPz)) 
	  \to C^\infty(\idmon;\rho^1\bpgH^{k,0,0}(\fZ; \adPo))
\end{aligned}
\]
are bounded (where defined), and hence $E^R$ is a smooth bundle map of
$\threeH^{k,0,\beta}(\fZ; \adP)$ over $\idmon$ as claimed.

The construction of $Q^L$ and estimates on $E^L$ follow similarly.
\end{proof}

%\begin{prop}
%There exists a parametrix $R$, defined as a conormal distribution on the double
%space $\dpM$ (see \S\ref{S:double}), such that, for some $0 < \delta < \tfrac 1
%2$, 
%%\[
	%LR = I - \ve^\delta E,
%\]
%where $E$ extends to a bounded linear map
%\begin{equation}
	%E : \threeH^{k,0,\beta}(\fZ; \gLam^\odd \otimes \adP) \to \threeH^{k,l,\beta}(\fZ; \gLam^\odd\otimes \adP)
	%\label{E:pmtx_error_mapping}
%\end{equation}
%for all $k$, $l$ and $\beta.$
%\label{P:bogo_pmtx}
%\end{prop}

\begin{proof}[Proof of Proposition~\ref{P:bogo_pmtx}]
The construction is similar to the one in the previous proof so we shall be somewhat brief.
We define $R$ on $\dpM$, such that,
with respect to the splitting $\adP = \adPz \oplus \adPo$ in a neighborhood of
$\DD\cap \BB\cap \Delta$, 
\[
	R = \begin{pmatrix} \pi_L^\ast \rho \wt R_0\pi_R^\ast \rho^{-2} & R_{01}\\R_{10} & R_1\end{pmatrix}
\]
where $\rho := \rho_\D \rho_\B$, and
$\wt R_0 \in \pPsi^{-1,\wt \cF^0}(\Z; \gLam^\ast\otimes \adPz)$, $R_1
\in \gPsi^{-1,\cF^1}(\Z; \gLam^\ast \otimes \adPo)$, and $R_{ij} \in
\rho^\infty \Psi^{-\infty}(\fZ;\gLam^\ast \otimes \adP_j,\gLam^\ast\otimes \adP_i)$, where
fiberwise over $\idmon$,
\[
\begin{gathered}
	N_\fD(\wt R_0) = \wt G, \quad \text{$\wt G$ from \eqref{E:linear_bogo_D_inverse}},
	\\ N_\fX(\wt R_0) = \wt G_0, \quad \text{near $\fX \cap \fD$, $\wt G_0$ from \eqref{E:linear_bogo_X_inverse}},
	\\ I(N_\fX(\wt R_0), \lambda) = I(N_\fD(\wt R_0), - \lambda),
\end{gathered}
\]
and $R_1 \in \gPsi^{-1,\cF^1}(\Z; \adPo)$ satifies
\[
\begin{gathered}
	\sigma_\fD(R_1) = \sigma_\fD(L_1 + \Phi)^{-1}, \quad \text{from Proposition~\ref{P:normal_symbol_D}}
	\\ N_{\fX_i}(R_1) = G_1, \quad \text{$G_1$ from \eqref{E:linear_bogo_X_inverse}},
	\\ \scsigma(N_{\fX_i}(R_1)) \cong \sigma_\D(N_{\scfD}(R_1)).
\end{gathered}
\]
That such a $R$ exists follows from compatibility of the inverses for
$L_{\fX_i}$ and $L_{\fD}$, which are smoothly parameterized over $\idmon$, and the
fact that such distributions may be extended smoothly off the relevant boundary
faces of $\dpM$ and $\dgM$. The index sets $\wt \cF^0$ and $\cF^1$ satisfy
\[
\begin{gathered}
	F^0_{\DD},\ F^0_{\XX},\ F^0_{\BB} \geq 0,
	\quad F^0_{\XD},\ F^0_{\DX},\ F^0_{\BM},\ F^0_{\MB} \geq 1
	\\ F^1_{\scD},\ F^1_{\scB},\ F^1_{\XX} \geq 0.
\end{gathered}
\]
We may futher suppose that the interior conormal singularity of $R$ inverts
that of $L$ to all orders.

It follows from Theorems~\ref{T:dpM_package} and \ref{T:dgM_package} that
\[
	LR = I - E', 
	\quad E' = \begin{pmatrix} E'_0 & E'_{01}
	  \\E'_{10} & E'_1 \end{pmatrix}
\]
where $E'_0 = \pi_L^\ast \rho^2\wt E'_0\pi_R^\ast \rho^{-2} \in
\pPsi^{-\infty,\cG^0}(\Z: \gLam^\ast \otimes \adP)$, $E'_1 \in
\gPsi^{-\infty,\cG^1}(\Z; \gLam^\ast \otimes \adP)$ and $E'_{ij} \in
\rho^\infty \Psi^{-\infty}(\Z; \gLam^\ast\otimes \adP_j,\gLam^\ast \otimes\adP_i)$. Here
\[
\begin{gathered}
	G^0_{\DD},\ G^0_{\XX},\ G^1_{\scD}, G^1_{\XX} > 0,
	\quad G^0_{\BB}, G^1_{\scB} \geq 0,
	\\G^0_{\DX},\ G^0_{\BM} \geq 1 + 2 = 3,
	\quad G^0_{\XD},\ G^0_{\MB} \geq 1  - 2 = -1.
\end{gathered}
\]
If we choose
\[
	\delta < \min(G^0_{\DD},G^0_{\XX},G^1_{\scD},G^1_{\XX},\tfrac 1 2),
\]
Then it follows that $E' =: \ve^\delta E$, where $E = \begin{pmatrix} E_0 & E_{01}\\E_{10} & E_1\end{pmatrix}$
has index sets
\[
\begin{gathered}
	E_0 \in \pPsi^{-\infty,\cI^0}, \quad E_1 \in \gPsi^{-\infty,\cI^1},
	\\ I^0_{\DD},\ I^0_{\XX}, I^1_{\scD}, I^1_{\XX} > 0,
	\quad I^0_{\BB}, I^1_{\scB} \geq 0,
	\\ I^0_{\DX},\ I^0_{\BM} > \tfrac 5 2,
	\quad I^0_{\XD},\ I^0_{\MB} > -\tfrac 3 2 > - \tfrac 5 2.
\end{gathered}
\]
Boundedness of $E$ as an operator \eqref{E:pmtx_error_mapping} follows from
Theorems~\ref{T:dpM_package} and \ref{T:dgM_package}.
\end{proof}

\bibliographystyle{amsalpha}
\bibliography{mongluing}

\end{document}